\theoremstyle{plain}
\newtheorem{theorem}{Theorem}
\newtheorem{proposition}[theorem]{Proposition}
\newtheorem{corollary}[theorem]{Corollary}
\newtheorem{lemma}[theorem]{Lemma}
\newtheorem{fact}[theorem]{Fact}
\newtheorem{conjecture}[theorem]{Conjecture}
\newtheorem{problems}[theorem]{Questions}
\theoremstyle{definition}
\newtheorem{definition}[theorem]{Definition}
\newtheorem{definition-proposition}[theorem]{Definition-Proposition}
\newtheorem{notation}[theorem]{Notation}
\newtheorem{observation}[theorem]{Observation}
\newtheorem{remark}[theorem]{Remark}
\newtheorem{remarks}[theorem]{Remarks}
\newtheorem{example}[theorem]{Example}
\newtheorem{examples}[theorem]{Examples}
\newtheorem{spher-setting}[theorem]{Main setting}
\newtheorem{gen-setting}[theorem]{General setting}
\newcommand{\C}{\mathbb{C}}
\newcommand{\R}{\mathbb{R}}
\newcommand{\Z}{\mathbb{Z}}
\newcommand{\N}{\mathbb{N}}
\newcommand{\SL}{\mathrm{SL}}
\newcommand{\SO}{\mathrm{SO}}
\newcommand{\SU}{\mathrm{SU}}
\newcommand{\U}{\mathrm{U}}
\newcommand{\Sp}{\mathrm{Sp}}
\newcommand{\Spin}{\mathrm{Spin}}
\newcommand{\g}{\mathfrak{g}}
\newcommand{\h}{\mathfrak{h}}
\newcommand{\kk}{\mathfrak{k}}
\newcommand{\p}{\mathfrak{p}}
\newcommand{\q}{\mathfrak{q}}
\newcommand{\aaa}{\mathfrak{a}}
\newcommand{\jj}{\mathfrak{j}}
\newcommand{\llll}{\mathfrak{l}}
\newcommand{\rr}{\mathfrak{r}}
\newcommand{\ssl}{\mathfrak{sl}}
\newcommand{\so}{\mathfrak{so}}
\newcommand{\spin}{\mathfrak{spin}}
\newcommand{\Hom}{\mathrm{Hom}}
\newcommand{\End}{\operatorname{End}}
\newcommand{\D}{\mathbb{D}}
\newcommand{\Diag}{\mathrm{Diag}}
\newcommand{\I}{\mathrm{\mathbf{I}}}
\newcommand{\II}{\mathrm{\mathbf{II}}}
\newcommand{\M}{\mathcal{M}}
\newcommand{\NN}{\mathcal{N}}
\newcommand{\F}{\mathcal{F}}
\newcommand{\A}{\mathcal{A}}
\newcommand{\Spec}{\mathrm{Spec}}
\newcommand{\Ad}{\operatorname{Ad}}
\newcommand{\HH}{\mathbb{H}}
\newcommand{\HHH}{\mathcal{H}}
\newcommand{\rank}{\operatorname{rank}}
\newcommand{\V}{\mathcal{V}}
\newcommand{\AdS}{\mathrm{AdS}}
\newcommand{\Disc}{\mathrm{Disc}}
\newcommand{\DD}{\mathcal{D}}
\newcommand{\ii}{\mathbf{i}}
\newcommand{\pp}{\mathbf{p}}
\newcommand{\dd}{\mathrm{d}}
\newcommand{\resp}{resp.\ }
\newcommand{\ie}{i.e.\ }
\newcommand{\eg}{e.g.\ }
\newcommand{\nnu}{\boldsymbol\nu}
\newcommand{\llambda}{\boldsymbol\lambda}
\newcommand{\Char}{\mathrm{Supp}}
\newcommand{\Ker}{\mathrm{Ker}}
\newcommand{\Symm}{\mathrm{Symm}}
\newcommand{\curlyEnd}{\mathcal{E}\!\mbox{\large $\mathpzc{nd}$}}
\newcommand{\longhookrightarrow}{\ensuremath{\lhook\joinrel\relbar\joinrel\rightarrow}}
\newcommand{\longtwoheadrightarrow}{\relbar\joinrel\twoheadrightarrow}
\newcommand{\Longlongleftarrow}{\ensuremath{\Longleftarrow\joinrel\Relbar\joinrel\Relbar\joinrel\Relbar}}
\newcommand{\Longlongrightarrow}{\ensuremath{\Relbar\joinrel\Relbar\joinrel\Relbar\joinrel\Longrightarrow}}
\newcommand{\sumplus}[1]{\underset{\hspace{-0.25cm}#1}{{\sum}^{\oplus}}}
\DeclareMathAlphabet{\mathpzc}{OT1}{pzc}{m}{it}
\newenvironment{changemargin}[2]{\begin{list}{}{%
\setlength{\topsep}{0pt}%
\setlength{\leftmargin}{0pt}%
\setlength{\rightmargin}{0pt}%
\setlength{\listparindent}{\parindent}%
\setlength{\itemindent}{\parindent}%
\setlength{\parsep}{0pt plus 1pt}%
\addtolength{\leftmargin}{#1}%
\addtolength{\rightmargin}{#2}%
}\item }{\end{list}}
\title[Spectral analysis on standard locally homogeneous spaces]{Spectral analysis on standard\\ locally homogeneous spaces}
\author{Fanny Kassel}
\address{CNRS and Laboratoire Alexander Grothendieck, Institut des Hautes \'Etudes Scientifiques, Universit\'e Paris-Saclay, 35 route de Chartres, 91440 Bures-sur-Yvette, France}
\email{kassel@ihes.fr}
\author{Toshiyuki Kobayashi}
\address{Graduate School of Mathematical Sciences, The University of Tokyo, 3-8-1 Komaba, Tokyo, 153-8914 Japan}
\email{toshi@ms.u-tokyo.ac.jp}
\thanks{This project received funding from the European Research Council (ERC) under the European Union's Horizon 2020 research and innovation programme (ERC starting grant DiGGeS, grant agreement No 715982).
FK was partially supported by the Agence Nationale de la Recherche through the grants DiscGroup (ANR-11-BS01-013), DynGeo (ANR-16-CE40-0025-01), and the Labex CEMPI (ANR-11-LABX-0007-01).
TK was partially supported by the JSPS under the Grant-in-Aid for Scientific Research (A) (18H03669, JP23H00084).}
\keywords{Laplacian, invariant differential operator, pseudo-Riemannian manifold, reductive symmetric space, proper action, spherical variety, real spherical homogeneous space, branching law}
\begin{document}

\frontmatter

\begin{abstract}
Let $X=G/H$ be a reductive homogeneous space with $H$ noncompact, endowed with a $G$-invariant pseudo-Riemannian structure.
Let $L$ be a reductive subgroup of~$G$ acting properly on~$X$ and $\Gamma$ a torsion-free discrete subgroup of~$L$.
Under the assumption that the complexification $X_{\C}$ is $L_{\C}$-spherical, we show that any compactly supported $C^{\infty}$ function on the standard locally homogeneous space $X_{\Gamma}=\Gamma\backslash X$ can be expanded into joint eigenfunctions for those ``intrinsic'' differential operators coming from $G$-invariant operators on~$X$. 
In particular, we prove that the pseudo-Riemannian Laplacian on~$X_{\Gamma}$ is essentially self-adjoint.
Furthermore, we exhibit an explicit correspondence between spectral analysis on $X_{\Gamma}$ and on $\Gamma\backslash L$ via branching laws for the restriction to the reductive subgroup~$L$ of infinite-dimensional irreducible representations of~$G$.
In particular, we prove that the pseudo-Riemannian Laplacian on~$X_{\Gamma}$ admits an infinite point spectrum when $X_{\Gamma}$ is compact or $\Gamma\subset L$ is arithmetic.
The proof builds on structural results for invariant differential operators on spherical homogeneous spaces with overgroups.
\end{abstract}

\maketitle
\newpage

\tableofcontents
\numberwithin{equation}{chapter}
\numberwithin{table}{chapter}
\numberwithin{theorem}{chapter}

\renewcommand{\thesection}{\thechapter.\arabic{section}}

\mainmatter
\chapter{Introduction}\label{sec:intro}

Let $H\subset G$ be two linear reductive Lie groups.
Classically, the space $X=G/H$ admits a $G$-invariant pseudo-Riemannian structure (Lemma~\ref{lem:pseudo-Riem-struct}); for semisimple~$G$, such a structure is induced for instance by the Killing form of the Lie algebra~$\g$.
If $\Gamma$ is a discrete subgroup of~$G$ acting properly discontinuously and freely on~$X$ (or ``discontinuous group for~$X$''), then the quotient space $X_{\Gamma}:=\Gamma\backslash X=\Gamma\backslash G/H$ is a manifold, and the covering map
\begin{equation} \label{eqn:p-Gamma}
p_{\Gamma} : G/H = X \longrightarrow X_{\Gamma} = \Gamma\backslash G/H.
\end{equation}
transports the pseudo-Riemannian structure of~$X$ to~$X_{\Gamma}$.
The \emph{Laplacian} of~$X_{\Gamma}$ is the second-order differential operator
\begin{equation}\label{eqn:defLaplacian}
\square_{X_{\Gamma}} =  \operatorname{div}\,\operatorname{grad},
\end{equation}
where the gradient and divergence are defined with respect to the pseudo-Riemannian structure and the induced volume form on~$X_{\Gamma}$.
When the pseudo-Riemannian structure is positive definite, this is the usual Laplacian on a Riemannian manifold, for which we also write $\Delta_{X_{\Gamma}}$ instead of~$\square_{X_\Gamma}$.
When the pseudo-Riemannian structure is not definite, the Laplacian $\square_{X_{\Gamma}}$ is not an elliptic differential operator. 
We are interested in the spectral analysis of $\square_{X_{\Gamma}}$ in that setting.

More generally, we consider ``intrinsic'' differential operators of higher order on~$X_{\Gamma}$, defined as follows.
Let $\D_G(X)$ be the $\C$-algebra of $G$-invariant differential operators on~$X$.
Any operator $D\in\D_G(X)$ induces a differential operator~$D_{\Gamma}$ on~$X_{\Gamma}$ such that
\begin{equation}\label{eqn:D-gamma}
D \circ p_{\Gamma}^{\ast} = p_{\Gamma}^{\ast} \circ D_{\Gamma},
\end{equation}
where $p_{\Gamma}^{\ast} :  C^\infty(X_{\Gamma})\rightarrow C^{\infty}(X)$ is the pull-back by~$p_{\Gamma}$.
In particular, the Laplacian $\square_X$ is $G$-invariant and $(\square_X)_{\Gamma}=\square_{X_{\Gamma}}$.
For $\F=\A$
(\resp $C^{\infty}$,
\resp $L^2$,
\resp $\DD'$),
let $\F(X_{\Gamma})$ be the space of real analytic (\resp smooth, \resp square-integrable, \resp distribution) functions on~$X_{\Gamma}$.
For any $\C$-algebra homomorphism
$$\lambda : \D_G(X) \longrightarrow \C,$$
we denote by $\F(X_{\Gamma};\M_{\lambda})$ the space of (weak) solutions $f\in\F(X_{\Gamma})$ to the system
$$D_{\Gamma} f = \lambda(D) f \quad\quad\mathrm{for\ all}\ D\in\D_G(X) \eqno{(\M_{\lambda})}.$$
For $\F=\A$ (\resp $C^{\infty}$, \resp $\DD'$), the space $\F(X_{\Gamma};\M_{\lambda})$ identifies with the set of analytic (\resp smooth, \resp distribution) $\Gamma$-periodic joint eigenfunctions for $\D_G(X)$ on~$X$ with respect to $\lambda\in\Hom_{\C\text{-}\mathrm{alg}}(\D_G(X),\C)$; for $\F=L^2$, there is an additional requirement that the eigenfunctions be square-integrable on the quotient~$X_{\Gamma}$ with respect to the natural measure induced by the pseudo-Riemannian structure.
By definition, the \emph{discrete spectrum} $\Spec_d(X_{\Gamma})$ of~$X_{\Gamma}$ is the set of homomorphisms $\lambda$ such that $L^2(X_{\Gamma};\M_{\lambda})\neq\nolinebreak\{ 0\} $ (``joint $L^2$-eigenvalues for $\D_G(X)$'').
Any element of $L^2(X_{\Gamma};\M_{\lambda})$ is in particular an $L^2$-eigenfunction (as a weak solution in~$L^2$) of the Laplacian $\square_{X_{\Gamma}}$ for the eigenvalue $\lambda(\square_X)$, yielding discrete spectrum (or point spectrum) of~$\square_{X_{\Gamma}}$.

Very little is known about $\F(X_{\Gamma};\M_{\lambda})$ when $H$ is noncompact and $\Gamma$ infinite.
For instance, the following questions are open in general.

\begin{problems} \label{problems}
\begin{enumerate}[(a)]
  \item Is $\Spec_d(X_{\Gamma})$ nonempty, \eg when $X_{\Gamma}$ is compact?
  \item Does $L^2(X_{\Gamma};\M_{\lambda})$ contain smooth eigenfunctions as a dense subspace?
  \item Does the Laplacian $\square_{X_{\Gamma}}$ defined on $C^{\infty}_c(X_{\Gamma})$ extend to a self-adjoint operator on $L^2(X_{\Gamma})$?
\end{enumerate}
\end{problems}

These questions have been studied extensively in the following two cases:
\begin{enumerate}[(i)]
  \item $H=K$ is a maximal compact subgroup of~$G$ (\ie $X_{\Gamma}$ is a Riemannian locally symmetric space);
  \item $(G,H)$ is a reductive symmetric pair and $\Gamma=\{e\}$ is trivial (\ie $X_{\Gamma}=X$ is a reductive symmetric space).
\end{enumerate}
In case~(i), the discrete spectrum $\Spec_d(X_{\Gamma})$ is infinite when $\Gamma$ is an arithmetic subgroup of~$G$ \cite{bg83}, whereas $\Spec_d(X)=\Spec(G/K)$ is empty, namely, $\Spec_d(X_{\Gamma})$ is empty when $\Gamma=\{e\}$; Questions~\ref{problems}.(b)--(c) always have affirmative answers by the general theory of the Laplacian on Riemannian manifolds (without the arithmeticity assumption on~$\Gamma$): see \cite[Th.\,3.4.4]{kkk86} (elliptic regularity theorem) for~(b) and \cite{gaf54,wol7273,str83} for~(c).
In case~(ii), the discrete spectrum $\Spec_d(X)$ is nonempty if and only if the rank condition
\begin{equation}\label{eqn:rank}
\rank G/H=\rank K/H\cap K
\end{equation}
is satisfied, in which case $\Spec_d(X)$ is in fact infinite \cite{fle80,mo84}; Questions \ref{problems}.(b)--(c) also have affirmative answers by the general theory of unitary representations and symmetric spaces: see \cite{gar47} for~(b) and \cite{ban87} for~(c).~However, the questions remain wide open when $H$ is noncompact and $\Gamma$ infinite.

In previous work \cite{kk11,kk16} we constructed nonzero generalized Poin\-car\'e series and obtained $L^2$-eigenfunctions on~$X_{\Gamma}$ corresponding to discrete spectrum (which we call of type~$\I$) under the assumption that $X$ satisfies the rank condition \eqref{eqn:rank} and the action of $\Gamma$ on~$X$ satisfies a strong properness condition called \emph{sharpness} (see \cite[Def.\,4.2]{kk16}); this provided a partial answer to Question~\ref{problems}.(a).

In the current paper, we study joint eigenfunctions for $\D_G(X)$ using a different approach.
We assume that $\Gamma$ is contained in a reductive subgroup $L$ of~$G$ acting properly on~$X$ (\ie $X_{\Gamma}$ is \emph{standard}, see Section~\ref{subsec:intro-stand}) and that $X_{\C}$ is \emph{$L_{\C}$-spherical} (see Section~\ref{subsec:intro-spher}), which ensures that the larger $\C$-algebra $\D_L(X) \supset \D_G(X)$ of $L$-invariant differential operators on~$X$ is commutative.
Using \cite{kkdiffop}, we introduce a pair of \emph{transfer maps} $\nnu$ and~$\llambda$ (see \eqref{eqn:nu-lambda-tau}), which are inverse to each other, and such that $\llambda$ sends spectrum from the classical Riemannian setting of $\Gamma\backslash L/(L\cap K)$ to the pseudo-Riemannian setting of $\Gamma\backslash G/H=X_{\Gamma}$.
From a representation-theoretic point of view, these transfer maps reflect the restriction of irreducible $G$-modules to the subgroup~$L$ (branching laws).
Using this, we obtain a description of the whole discrete spectrum of~$X_{\Gamma}$ (Theorem~\ref{thm:Specd-lambda}), and find new infinite spectrum (which we call of type~$\II$) when $X_{\Gamma}$ is compact or of an arithmetic nature (Theorem~\ref{thm:mainII}).
Moreover, via the transfer map~$\llambda$, we prove that any compactly supported smooth function on~$X_{\Gamma}$ can be developed into joint eigenfunctions of $\D_G(X)$ (Theorem~\ref{thm:Fourier}).
The assumptions on~$X_{\Gamma}$ here are different from \cite{kk11,kk16}: we do not assume the rank condition \eqref{eqn:rank} to be necessarily satisfied, but restrict ourselves to the case that $X_{\Gamma}$ is standard and $X_{\C}$ is $L_{\C}$-spherical.
In this setting we give affirmative answers to Questions~\ref{problems}.(a)--(c).
The main tool of the proof is analysis on spherical homogeneous spaces with overgroups, as developed in \cite{kkdiffop}.

Before we state our main results in a more precise way, let us introduce some definitions.

\section{Standard quotients} \label{subsec:intro-stand}

When the reductive homogeneous space $X=G/H$ is non-Riemannian, not all discrete subgroups of~$G$ act properly discontinuously on~$X$.
For instance, a lattice of~$G$ cannot act properly discontinuously if $H$ is noncompact, by the Howe--Moore ergodicity theorem.

An important class of examples is constructed as follows: a quotient $X_{\Gamma}=\Gamma\backslash X$ of~$X$ by a discrete subgroup $\Gamma$ of~$G$ is called \emph{standard} if $\Gamma$ is contained in some reductive subgroup $L$ of~$G$ acting properly on~$X$.
Then the action of $\Gamma$ on~$X$ is automatically properly discontinuous, and this action is free whenever $\Gamma$ is torsion-free; the quotient $X_{\Gamma}$ is compact if and only if $\Gamma$ is a uniform lattice in~$L$ and $L$ acts cocompactly on~$X$.

\begin{example} \label{ex:AdS-odd}
Let $X=\AdS^{2n+1}=\SO(2n,2)/\SO(2n,1)$ be the $(2n+1)$-dimensional anti-de Sitter space.
It is a reductive symmetric space with a $G$-invariant Lorentzian structure of constant negative sectional curvature, making it a Lorentzian analogue of the real hyperbolic space $\mathbb{H}^{2n+1}$.
The group $L=\U(n,1)$ acts properly and transitively on~$X$, and any torsion-free discrete subgroup $\Gamma\subset L$ yields a standard quotient manifold~$X_{\Gamma}$.
\end{example}

\begin{example} \label{ex:group-manifold}
Let $X=({}^{\backprime}G\times\!{}^{\backprime}G)/\Diag({}^{\backprime}G)$ be a \emph{group manifold}, where ${}^{\backprime}G$ is a noncompact reductive Lie group and $\Diag({}^{\backprime}G)$ denotes the diagonal of ${}^{\backprime}G\times\!{}^{\backprime}G$.
Let ${}^{\backprime}K$ be a maximal compact subgroup of~${}^{\backprime}G$.
The group $L={}^{\backprime}G\times\!{}^{\backprime}K$ acts properly and transitively on~$X$, and any torsion-free discrete subgroup $\Gamma\subset L$ yields a standard quotient manifold~$X_{\Gamma}$.
\end{example}

Almost all known examples of compact quotients of reductive homogeneous spaces are standard, and conjecturally \cite[Conj.\,3.3.10]{ky05} any reductive homogeneous space admitting compact quotients admits standard ones.
We refer to \cite[\S\,4]{kk16} for more details.

\begin{remark} \label{rem:Gamma-torsion}
For simplicity, in the statements of the theorems below, we shall assume the discontinuous $\Gamma$ to be torsion-free.
However, the theorems still hold, with the same proof, under the weaker assumption that $\Gamma$ acts freely on~$X$; indeed, the only thing we need is that the quotient $X_{\Gamma}=\Gamma\backslash X$ be a smooth manifold with covering map $X\to\Gamma\backslash X$.
One could also extend the theorems to the framework of orbifolds (or $V$-manifolds in the sense of Satake), allowing the discontinuous group $\Gamma$ to not act freely on~$X$.
\end{remark}

\section{Spherical homogeneous spaces} \label{subsec:intro-spher}

Recall that a connected complex manifold endowed with a holomorphic action of a complex reductive Lie group $G_{\C}$ is called \emph{$G_{\C}$-spherical} if it admits an open orbit of a Borel subgroup of~$G_{\C}$ (Definition~\ref{def:spherical}).
For instance, any complex reductive symmetric space is spherical \cite{wol74}.

One expects solutions to $(\M_{\lambda})$ on $X_{\Gamma}=\Gamma\backslash G/H$ for varying joint eigenvalues $\lambda\in\Hom_{\C\text{-}\mathrm{alg}}(\D_G(X),\C)$ to be abundant enough to expand arbitrary functions on~$X_{\Gamma}$ only if the algebra $\D_G(X)$ is commutative, or equivalently only if the complexification $X_{\C}=G_{\C}/H_{\C}$ is $G_{\C}$-spherical.
In this case, $C^{\infty}(X;\M_{\lambda})$ is a representation of~$G$ of finite length for any~$\lambda$ by \cite{ko13}, and we expect to relate spectral analysis on $X_{\Gamma}$ to representation theory of $G$ on $C^{\infty}(X)$.

In this paper, we shall consider spectral analysis on standard quotients~$X_{\Gamma}$ with $\Gamma\subset L$ in the following setting.

\begin{spher-setting} \label{spher-setting}
We consider a reductive homogeneous space $X=G/H$ with $G$ noncompact and simple, a reductive subgroup $L$ of~$G$ acting properly on~$X$, such that $X_{\C}=G_{\C}/H_{\C}$ is $L_{\C}$-spherical, and a torsion-free discrete subgroup $\Gamma$ of~$L$.
We assume $G$, $H$, and~$L$ to be connected.
\end{spher-setting}

Here we call a homogeneous space $X=G/H$ \emph{reductive} if $G$ is a real reductive Lie group and $H$ a closed subgroup which is reductive in~$G$.
A typical example of a reductive homogeneous space is a \emph{reductive symmetric space}, namely $G$ is a real reductive Lie group and $H$ an open subgroup of the group of fixed points of $G$ under some involutive automorphism~$\sigma$.

In the setting~\ref{spher-setting}, the complexification $X_{\C}$ is automatically $G_{\C}$-spherical, and the action of $L$ on~$X$ is transitive, by \cite[Lem.\,4.2]{ko13} and \cite[Lem.\,5.1]{kob94}.

\begin{remark} \label{rem:connected}
All the theorems in the paper remain true if we relax the assumption of the real reductive Lie groups $G$, $H$, $L$ being connected into $G$, $H$, $L$ being contained in connected complexifications $G_{\C}$, $H_{\C}$, $L_{\C}$.
Indeed, we can use \cite[Th.\,5.1 \& Prop.\,5.5]{kkdiffop} and replace everywhere the maximal compact subgroup $L_K$ of~$L$ by its identity component.
\end{remark}

Table~\ref{table1} below provides a full list of triples $(G,H,L)$ of the setting~\ref{spher-setting}, up to connected components and coverings.
It is obtained from Oni\v{s}\v{c}ik's list \cite{oni69} of triples $(G,H,L)$ with compact simple~$G$ such that $HL=G$ and from the classification \cite{kra79,bri87,mik87} of spherical homogeneous spaces.
Note that the pair $(\g,\h)$ of Lie algebras is a reductive symmetric pair in all cases except (ix).
The complexification $G_{\C}$ is simple in all cases except~(vii).
In all cases the action of $L$ on $X=G/H$ is cocompact, and so there exist finite-volume (\resp compact) quotients $X_{\Gamma}=\Gamma\backslash X$: one can just take $\Gamma$ to be a torsion-free lattice (\resp uniform lattice) in~$L$.

\begin{center}
\begin{table}[!h]
\centering
\subtable{
\begin{tabular}{|p{0.8cm}|p{2.1cm}|p{2.7cm}|p{2.9cm}|}
\hline
& \centering $G$ & \centering $H$ & \centering $L$\tabularnewline
\hline
\centering (i) & \centering $\SO(2n,2)$ & \centering $\SO(2n,1)$ & \centering$\U(n,1)$\tabularnewline
\centering (i)$'$ & \centering $\SO(2n,2)$ & \centering $\SO(2n,1)$ & \centering$\SU(n,1)$\tabularnewline
\centering (ii) & \centering $\SO(2n,2)$ & \centering $\U(n,1)$ & \centering $\SO(2n,1)$\tabularnewline
\centering (iii) & \centering $\SU(2n,2)$ & \centering $\U(2n,1)$ & \centering $\Sp(n,1)$\tabularnewline
\centering (iv) & \centering $\SU(2n,2)$ & \centering $\Sp(n,1)$ & \centering $\U(2n,1)$\tabularnewline
\centering (v) & \centering $\SO(4n,4)$ & \centering $\SO(4n,3)$ & \centering $\Sp(1)\cdot\Sp(n,1)$\tabularnewline
\centering (v)$'$ & \centering $\SO(4n,4)$ & \centering $\SO(4n,3)$ & \centering $\U(1)\cdot\Sp(n,1)$\tabularnewline
\centering (vi) & \centering $\SO(8,8)$ & \centering $\SO(8,7)$ & \centering $\Spin(8,1)$\tabularnewline
\centering (vii) & \centering $\SO(8,\C)$ & \centering $\SO(7,\C)$ & \centering $\Spin(7,1)$\tabularnewline
\centering (viii) & \centering $\SO(4,4)$ & \centering $\Spin(4,3)$ & \centering $\SO(4,1)\times\SO(3)$\tabularnewline
\centering (ix) & \centering $\SO(4,3)$ & \centering $G_{2(2)}$ & \centering $\SO(4,1)\times\SO(2)$\tabularnewline
\hline
\end{tabular}}
\subtable{
\begin{tabular}{|c|}
\hline
$\rank X$\tabularnewline
\hline
1\tabularnewline
1\tabularnewline
$\lceil n/2\rceil$\tabularnewline
1\tabularnewline
n\tabularnewline
1\tabularnewline
1\tabularnewline
1\tabularnewline
2\tabularnewline
1\tabularnewline
1\tabularnewline
\hline
\end{tabular}
}
\caption{Complete list of triples $(G,H,L)$ in the setting \ref{spher-setting}, up to a covering of~$G$ and up to connected components. In case~(i)$'$ we assume $n\geq 2$.}
\label{table1}
\end{table}
\end{center}

\section{Density of analytic eigenfunctions}\label{subsec:intro-main-result}

In our setting where $H$ is noncompact, the natural pseudo-Riemannian structure on~$X_{\Gamma}$ is not positive definitive, and the Laplacian $\square_{X_{\Gamma}}$ is not an elliptic differential operator.
Thus weak $L^2$-solutions (or distribution solutions) to $\M_{\lambda}$ are not necessarily smooth functions: see Section~\ref{subsec:torus-ex} for an elementary example.
Nevertheless, in the setting~\ref{spher-setting}, we give an affirmative answer to Question~\ref{problems}.(b) as follows.

\begin{theorem}[Density of analytic eigenfunctions]\label{thm:analytic}
In the setting~\ref{spher-setting}, for any $\lambda\in\Hom_{\C\text{-}\mathrm{alg}}(\D_G(X),\C)$, the space $(\A\cap L^2)(X_{\Gamma};\M_{\lambda})$ is dense in the Hilbert space $L^2(X_{\Gamma};\M_{\lambda})$, and $\A(X_{\Gamma};\M_{\lambda})$ is dense in $\DD'(X_{\Gamma};\M_{\lambda})$.
\end{theorem}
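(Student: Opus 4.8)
The plan is to transfer the question from the pseudo-Riemannian space $X_{\Gamma}=\Gamma\backslash G/H$ to the Riemannian locally symmetric space $Y_{\Gamma}:=\Gamma\backslash L/(L\cap K)$, where the corresponding density statements are classical, and then carry the density back using the transfer maps $\nnu,\llambda$ from \cite{kkdiffop}. Since $X_{\C}$ is $L_{\C}$-spherical, $L$ acts transitively on $X$, so $X\simeq L/L_H$ with $L_H=L\cap H$, and $X_{\Gamma}=\Gamma\backslash L/L_H$. The key structural input is that $\D_L(X)$ is commutative and finitely generated as a module over $\D_G(X)$, and that the transfer maps give a bijection between $\Hom_{\C\text{-}\mathrm{alg}}(\D_G(X),\C)$-eigenspaces and $\Hom_{\C\text{-}\mathrm{alg}}(\D_L(X/L_H),\C)$-eigenspaces; concretely, a joint $\D_G(X)$-eigenfunction with eigenvalue $\lambda$ decomposes into finitely many joint $\D_L(X)$-eigenfunctions whose $\D_L(X)$-eigenvalues all restrict to $\lambda$ on $\D_G(X)$, and each such $\D_L(X)$-eigenfunction is (via the spherical structure and $L\cap K$-finiteness, enlarging by the compact group $L\cap K$ if necessary) real analytic by elliptic regularity applied to the Casimir-type elliptic operator coming from $L$ together with an $L\cap K$-Laplacian.

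First I would fix $\lambda\in\Hom_{\C\text{-}\mathrm{alg}}(\D_G(X),\C)$ and let $f\in L^2(X_{\Gamma};\M_{\lambda})$. Using that $L^2(X_{\Gamma})$ decomposes under the commuting family $\D_L(X)$ and that $\D_G(X)\subset\D_L(X)$, I would first reduce to the case that $f$ is a joint eigenfunction for the larger algebra $\D_L(X)$: the $\D_L(X)$-eigenfunctions in $L^2(X_{\Gamma};\M_{\lambda})$ span a dense subspace, by a standard spectral-theory argument (the finitely many self-adjoint commuting operators generating $\D_L(X)$ modulo $\D_G(X)$ restrict $\M_{\lambda}$ to a finite-dimensional-fiber situation, and one applies the spectral theorem). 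Then such an $f$ is annihilated by an elliptic operator: indeed $D_0+\Omega_{L\cap K}$ is elliptic on $X_{\Gamma}$, where $D_0\in\D_L(X)$ descends from the $L$-Casimir and $\Omega_{L\cap K}$ is the Laplacian along the compact fibers of $X_{\Gamma}\to Y_{\Gamma}$ — this ellipticity is exactly the point where the spherical/overgroup structure is used, since $X=L/L_H$ is a Riemannian-type $L$-space once we quotient the fibers. By the elliptic regularity theorem (\cite[Th.\,3.4.4]{kkk86}) $f$ is real analytic, so $f\in(\A\cap L^2)(X_{\Gamma};\M_{\lambda})$. This gives density of $(\A\cap L^2)(X_{\Gamma};\M_{\lambda})$ in $L^2(X_{\Gamma};\M_{\lambda})$.

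For the second assertion, that $\A(X_{\Gamma};\M_{\lambda})$ is dense in $\DD'(X_{\Gamma};\M_{\lambda})$, I would argue the same way but with the $C^{-\infty}$-version of elliptic regularity: any distribution solution of $(D_0+\Omega_{L\cap K})u=cu$ is automatically real analytic, and the $\D_L(X)$-joint distribution eigenfunctions are weakly dense in $\DD'(X_{\Gamma};\M_{\lambda})$ by duality against $C_c^{\infty}(X_{\Gamma};\M_{\lambda})$ (or by a direct integral / spectral decomposition of $\DD'$ under the commuting algebra, as in the setup of the paper relating $C^{\infty}(X;\M_{\lambda})$ to finite-length $G$-modules via \cite{ko13}). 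Here one also needs that $\A(X_{\Gamma};\M_{\lambda})$ is nonzero exactly when $\DD'(X_{\Gamma};\M_{\lambda})$ is, which follows from the transfer correspondence since these are governed by the (automatically analytic) $Y_{\Gamma}$-side.

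The main obstacle I anticipate is making precise and rigorous the reduction ``from a $\D_G(X)$-eigenfunction to finitely many $\D_L(X)$-eigenfunctions that together span a dense subspace'' — i.e.\ controlling the fiber of $\Hom_{\C\text{-}\mathrm{alg}}(\D_L(X),\C)\to\Hom_{\C\text{-}\mathrm{alg}}(\D_G(X),\C)$ over $\lambda$ and showing the associated spectral projections exhaust $L^2(X_{\Gamma};\M_{\lambda})$ — together with verifying that the relevant operator on $X_{\Gamma}$ (after enlarging by $\Omega_{L\cap K}$) is genuinely elliptic; both rest on the structural results of \cite{kkdiffop} for $\D_L(X)$ on spherical spaces with overgroups, so the work is in citing and assembling those correctly rather than in new estimates.
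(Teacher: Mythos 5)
Your high-level strategy --- transfer the problem to $L$-analysis and recover regularity from the elliptic operator built from the Casimir of $L$ and a fiber Laplacian --- is the same as the paper's. But the step you flag as the ``main obstacle'' really is a gap, and the paper closes it with a different (and cleaner) decomposition than the spectral-theoretic one you sketch.

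Two concrete issues. First, your claim that a joint $\D_G(X)$-eigenfunction ``decomposes into finitely many joint $\D_L(X)$-eigenfunctions'' is false: the relevant eigenvalues occurring in $L^2(X_\Gamma;\M_\lambda)$ are parametrized by $\tau\in\Disc(L_K/L_H)$, which is infinite (the decomposition is a countable Hilbert sum, not a finite one). Second, the appeal to ``a standard spectral-theory argument'' for density of joint $\D_L(X)$-eigenfunctions requires the commuting generators to be essentially self-adjoint on $L^2(X_\Gamma)$ --- which is not automatic for non-elliptic operators, and is indeed precisely the content of Theorem~\ref{thm:selfadj} (proved via Proposition~\ref{prop:relationsLapl}) for $\square_{X_\Gamma}$ alone. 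Invoking it here risks circularity, or at minimum leaves a genuine step open.

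The paper avoids both problems by using not the spectral theorem for $\D_L(X)$ but the Peter--Weyl theorem for the compact fiber $F=L_K/L_H$: the extra invariant operators needed beyond $\D_G(X)$ come from $\dd r(Z(\llll_\C\cap\kk_\C))=\iota(\D_{L_K}(F))$, so $L^2(X_\Gamma)$ decomposes discretely as $\sum^\oplus_{\tau\in\Disc(L_K/L_H)} \ii_{\tau,\Gamma}\big((V_\tau^\vee)^{L_H}\otimes L^2(Y_\Gamma,\V_\tau)\big)$ (see \eqref{eqn:XYL2} and Lemma~\ref{lem:p-tau-i-tau}), with no unbounded-operator input. Condition~(Tf) (Proposition~\ref{prop:cond-Tf-A-B-satisfied}, Remark~\ref{rem:specXYGamma}) then sends each $\pp_{\tau,\Gamma}(f)$ for $f\in\F(X_\Gamma;\M_\lambda)$ into $\F(Y_\Gamma,\V_\tau;\NN_{\nnu(\lambda,\tau)})$, where elliptic regularity on the Riemannian side (Lemma~\ref{lem:regY}) gives analyticity; pushing back by $\ii_{\tau,\Gamma}$ gives the dense analytic subspace. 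So the right ``elliptic generator'' is the matrix-valued $\dd\ell(C_L)^\tau_\Gamma$ on $Y_\Gamma$ rather than $D_0+\Omega_{L\cap K}$ directly on $X_\Gamma$, and the reduction you were worried about is accomplished by compact-group isotypy rather than by the spectral theorem.
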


Theorem~\ref{thm:analytic} applies to the homogeneous spaces $X=G/H$ of Table~\ref{table1}.
We prove it in Section~\ref{subsec:proof-transfer} by constructing a dense analytic subspace of eigenfunctions via a transfer map~$\nnu$, see Theorem~\ref{thm:transfer} below.

\section[Self-adjointness and spectral decomposition]{Self-adjointness and spectral decomposition for the Laplacian} \label{subsec:intro-spec-decomp}

Let $(M,g)$ be a pseudo-Riemannian manifold.
The Laplacian $\square_M$, defined on the space $C_c^{\infty}(M)$ of compactly supported smooth functions on~$M$, is a symmetric operator, namely
$$(\square_Mf_1, f_2)_{L^2(M)} = (f_1, \square_Mf_2)_{L^2(M)}$$
for all $f_1,f_2\in C_c^{\infty}(M)$.
In this paper we consider the existence and uniqueness of a self-adjoint extension of the Laplacian $\square_M$ on $L^2(M)$.

More precisely, recall that the closure of $(\square_M,C_c^{\infty}(M))$ in the graph norm is defined on the set $\mathcal{S}$ of $f\in L^2(M)$ for which there exists a sequence $f_j\in C_c^{\infty}(M)$ such that $\Vert f_j-f\Vert_{L^2(M)}\to 0$ and $\square_Mf_j\in C_c^{\infty}(M)$ converges to an element of $L^2(M)$, which we can identify with the distribution $\square_Mf$.
The adjoint $\square_M^*$ of the symmetric operator $(\square_M,\mathcal{S})$ is defined on the set $\mathcal{S}^*$ of $f\in L^2(M)$ such that the distribution $\square_Mf$ belongs to $L^2(M)$.
Clearly, $\mathcal{S}\subset\mathcal{S}^*$.
The Laplacian $\square_M$ is called \emph{essentially self-adjoint} on $L^2(M)$ if $\mathcal{S}=\mathcal{S}^*$, or equivalently if there exists a unique self-adjoint extension of $(\square_M,C_c^{\infty}(M))$.
When $(M,g)$ is Riemannian and complete, the Laplacian $\square_M$ is always essentially self-adjoint, see \cite{str83} and references therein.
On the other hand, to the best of our knowledge there is no general theory ensuring the essential self-adjointness of the Laplacian $\square_M$ in the pseudo-Riemannian setting, even when $M$ is compact.

Here we prove that $\mathcal{S}=\mathcal{S}^*$ for $M=X_{\Gamma}$ for any standard $X_{\Gamma}$ with $\Gamma\subset L$ for $X=G/H$ and~$L$ as in Table~\ref{table1}.

\begin{theorem}[Self-adjoint extension] \label{thm:selfadj}
In the setting~\ref{spher-setting}, the pseudo-Rie\-mannian Laplacian $\square_{X_{\Gamma}}$ is essentially self-adjoint on $L^2(X_{\Gamma})$.
\end{theorem}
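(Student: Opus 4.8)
The plan is to reduce essential self-adjointness of $\square_{X_{\Gamma}}$ to the known essential self-adjointness of the (elliptic) Laplacian on the Riemannian locally symmetric-type space $Y_{\Gamma} := \Gamma\backslash L/L_K$, where $L_K$ is a maximal compact subgroup of~$L$, by transporting everything through the transfer map. More precisely, since $\Gamma$ acts properly discontinuously and freely on $X$, and $L$ acts properly and transitively on~$X$, one has $X=L/(L\cap H)$ with $L\cap H$ compact; hence $X_{\Gamma}=\Gamma\backslash L/(L\cap H)$ fibers over $Y_{\Gamma}=\Gamma\backslash L/L_K$ with compact fiber $L_K/(L\cap H)$. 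The first step is therefore to spell out the $L^2$-decomposition of $L^2(X_{\Gamma})$ with respect to this fibration, or rather with respect to the action of the commutative algebra $\D_L(X)\supset\D_G(X)$: because $X_{\C}$ is $L_{\C}$-spherical, $\D_L(X)$ is commutative and finitely generated, and the joint $L_K$-types give an orthogonal Hilbert-space decomposition $L^2(X_{\Gamma})=\widehat{\bigoplus}_\tau L^2(X_{\Gamma})_\tau$ into $\square_{X_{\Gamma}}$-invariant closed subspaces on each of which $\square_{X_{\Gamma}}$ acts, via the transfer map, like (a shift of) the elliptic Laplacian $\Delta_{Y_{\Gamma}}$ twisted by a finite-dimensional bundle coming from~$\tau$.

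The key technical input is the structural result of \cite{kkdiffop} underlying the transfer maps $\nnu$, $\llambda$: the $G$-invariant operator $\square_X\in\D_G(X)$, viewed inside $\D_L(X)$, is a polynomial in the Casimir-type generators of $\D_L(X)$; restricted to the $L_K$-isotypic sector indexed by~$\tau$, it becomes, up to an explicit scalar depending on~$\tau$, the image of the Laplacian $\Delta_{Y_{\Gamma}}$ acting on sections of the associated bundle $\Gamma\backslash L\times_{L_K}V_\tau\to Y_{\Gamma}$. The second step is thus to invoke this to identify, on each sector $L^2(X_{\Gamma})_\tau$, the operator $(\square_{X_{\Gamma}}, C_c^\infty)$ with an operator of the form $c_\tau\,\Delta_{Y_{\Gamma},\tau} + (\text{lower order})$ that is elliptic, where $\Delta_{Y_{\Gamma},\tau}$ is the Bochner-type Laplacian on a bundle over the complete Riemannian manifold $Y_{\Gamma}$. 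The third step is to apply the classical theorem (Gaffney, Wolf, Strichartz — see \cite{gaf54,wol7273,str83}) that a bundle Laplacian, or more generally a nonnegative elliptic second-order operator built from a metric connection, on a complete Riemannian manifold is essentially self-adjoint on $C_c^\infty$; hence $\square_{X_{\Gamma}}|_{L^2(X_{\Gamma})_\tau}$ is essentially self-adjoint for each~$\tau$. Finally, one assembles: a densely-defined symmetric operator that is block-diagonal with respect to an orthogonal Hilbert-space decomposition into invariant subspaces, and essentially self-adjoint on each block (with a common core given by the finite sums of compactly supported smooth $\tau$-sections), is essentially self-adjoint on the whole space. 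One should check that $\bigoplus_\tau C_c^\infty(X_{\Gamma})_\tau$ is indeed a core, which follows because it is dense in $C_c^\infty(X_{\Gamma})$ in the relevant topology and $C_c^\infty(X_{\Gamma})$ is a core by definition of $\mathcal S$.

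\textbf{Main obstacle.} The delicate point is not the abstract block-diagonal assembly (which is elementary functional analysis) nor the classical Riemannian input, but the passage from the $L$-equivariant picture on $X$ to the $\Gamma$-quotient: one must be sure that the identification of $\square_{X_{\Gamma}}$ with an elliptic bundle Laplacian on $Y_{\Gamma}$ is compatible with taking $\Gamma$-invariants, i.e. that the transfer map intertwines the \emph{$L^2$-}theory on $X_{\Gamma}$ with the $L^2$-theory on $Y_{\Gamma}$ and not merely the formal/local algebra of differential operators. This requires knowing that the fiber $L_K/(L\cap H)$ is compact (so that the fibration is proper and $L^2$ decomposes cleanly, with the $L_K$-Plancherel sum converging in $L^2$) and that the completeness of the Riemannian metric on $Y_{\Gamma}$ — which holds because $L_K$ is compact and $\Gamma\backslash L$ carries a complete left-$L$-invariant Riemannian metric — survives on each bundle. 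A secondary subtlety is making the ``lower order'' terms harmless: one wants $c_\tau\Delta_{Y_{\Gamma},\tau}+(\text{zeroth order scalar})$ exactly, which is what the structure theorem of \cite{kkdiffop} provides, so that ellipticity and nonnegativity of the leading part are immediate and the standard completeness criterion applies verbatim.
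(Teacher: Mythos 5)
Your proposal takes essentially the same route as the paper: decompose $L^2(X_\Gamma)\simeq\widehat{\bigoplus}_{\tau\in\Disc(L_K/L_H)}(V_\tau^\vee)^{L_H}\otimes L^2(Y_\Gamma,\V_\tau)$, observe that on the $\tau$-sector $\square_{X_\Gamma}$ becomes $a\,\dd\ell(C_L)_\Gamma^\tau+c(\tau)$ with $a\ne0$ real and $c(\tau)$ a real scalar (the precise input is the relation $\square_X\in a\,\dd\ell(C_L)+\dd r(Z(\llll_\C\cap\kk_\C))$ of Proposition~\ref{prop:rel-Lapl}, which is a sharper statement than the mere existence of transfer maps), invoke the Gaffney--Wolf--Strichartz--Ricci theorem for the essentially self-adjoint elliptic Casimir on the Hermitian bundle $\V_\tau$ over the complete Riemannian manifold $Y_\Gamma$ with bounded curvature, and assemble block-diagonally. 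This is exactly the argument of Proposition~\ref{prop:relationsLapl}.(1) and its application in Section~\ref{subsec:proof-selfadj}.
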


In particular, in this setting the Hilbert space $L^2(X_{\Gamma})$ admits a spectral decomposition with real spectrum for the Laplacian~$\square_{X_{\Gamma}}$.
Note that we do \emph{not} assume $X_{\Gamma}$ to have finite volume.
Theorem~\ref{thm:selfadj} will be proved in Chapter~\ref{sec:strategy}.

By combining the existence of a transfer map $\llambda$ (Proposition~\ref{prop:cond-Tf-A-B-satisfied}) with the representation theory of the subgroup $L$ on $L^2(\Gamma\backslash L)$, we obtain a spectral decomposition on $X_{\Gamma}=\Gamma\backslash G/H$ by joint eigenfunctions of $\D_G(X)$.

\begin{theorem}[Spectral decomposition] \label{thm:Fourier}
In the setting~\ref{spher-setting}, there exist a measure $\dd\mu$ on $\Hom_{\C\text{-}\mathrm{alg}}(\D_G(X),\C)$ and a measurable family of maps
$$\mathtt{F}_{\lambda} : C_c^{\infty}(X_{\Gamma}) \longrightarrow C^{\infty}(X_{\Gamma};\M_{\lambda}),$$
for $\lambda\in\Hom_{\C\text{-}\mathrm{alg}}(\D_G(X),\C)$, such that any $f\in C_c^{\infty}(X_{\Gamma})$ may be expanded into joint eigenfunctions on~$X_{\Gamma}$ as
\begin{equation} \label{eqn:Fourier}
f = \int_{\Hom_{\C\text{-}\mathrm{alg}}(\D_G(X),\C)} \mathtt{F}_{\lambda} f \ \dd\mu(\lambda),
\end{equation}
with a Parseval--Plancherel type formula
$$\Vert f\Vert_{L^2(X_{\Gamma})}^2 = \int_{\Hom_{\C\text{-}\mathrm{alg}}(\D_G(X),\C)} \Vert\mathtt{F}_{\lambda} f\Vert_{L^2(X_{\Gamma})}^2 \ \dd\mu(\lambda).$$
Moreover, \eqref{eqn:Fourier} is a discrete sum if $X_{\Gamma}$ is compact.
\end{theorem}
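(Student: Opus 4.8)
The plan is to transfer the problem from the pseudo-Riemannian space $X_{\Gamma}=\Gamma\backslash G/H$ to the Riemannian locally symmetric space $Y_{\Gamma}:=\Gamma\backslash L/L_K$ (where $L_K=L\cap K$), where classical spectral theory is available, and then use the transfer map $\llambda$ to push the resulting spectral decomposition back to~$X_{\Gamma}$. First I would recall that on $Y_{\Gamma}$ the algebra $\D_L(Y)$ of $L$-invariant differential operators is commutative (it is the classical Harish-Chandra algebra), each operator is elliptic, $Y_{\Gamma}$ is a complete Riemannian manifold, and so by the standard theory (elliptic regularity plus self-adjointness of the Laplace--Beltrami operator on a complete manifold, together with commutativity of $\D_L(Y)$) the Hilbert space $L^2(Y_{\Gamma})$ admits a joint spectral decomposition: there is a measure $\dd m$ on $\Hom_{\C\text{-}\mathrm{alg}}(\D_L(Y),\C)$ and measurable maps $\mathtt{G}_{\nu}:C_c^{\infty}(Y_{\Gamma})\to C^{\infty}(Y_{\Gamma};\NN_{\nu})$ realizing $f=\int \mathtt{G}_{\nu}f\,\dd m(\nu)$ with a Parseval formula, discrete when $Y_{\Gamma}$ is compact. (If one prefers, this follows from the $L$-equivariant Plancherel decomposition of $L^2(\Gamma\backslash L)$ together with the fact that $L^2(Y_{\Gamma})$ is the $L_K$-fixed part.)

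Next I would invoke the key structural input from~\cite{kkdiffop}, as packaged in the transfer maps of the introduction: since $X_{\C}$ is $L_{\C}$-spherical, $\D_L(X)$ is commutative, one has an inclusion $\D_G(X)\subset\D_L(X)$, and — this is the crucial point — the "transfer map" $\llambda$ furnishes, for (almost) every $\nu\in\Hom_{\C\text{-}\mathrm{alg}}(\D_L(X),\C)$, a homomorphism $\lambda=\llambda(\nu)\in\Hom_{\C\text{-}\mathrm{alg}}(\D_G(X),\C)$, together with an explicit correspondence between joint eigenfunctions on $X_{\Gamma}$ for the system $(\M_{\lambda})$ and joint eigenfunctions on $Y_{\Gamma}$ for the corresponding system $(\NN_{\nu})$ — this is exactly Theorem~\ref{thm:transfer} / Proposition~\ref{prop:cond-Tf-A-B-satisfied} as cited. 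Concretely, the branching-law mechanism identifies, for each $\nu$, an isometry (up to a positive scalar depending measurably on~$\nu$, coming from the Plancherel density) between the eigenspace $L^2(Y_{\Gamma};\NN_{\nu})$ and $L^2(X_{\Gamma};\M_{\llambda(\nu)})$; restricting an $L^2$ function on $X_{\Gamma}$ to its $\M_{\lambda}$-isotypic part corresponds, under this identification, to the analogous operation on $Y_{\Gamma}$. I would then define $\dd\mu$ as the pushforward of $\dd m$ under $\nu\mapsto\llambda(\nu)$ (composed with the map $\D_L(X)\leadsto\D_L(Y)$ needed to match the two parameter sets), define $\mathtt{F}_{\lambda}$ as the composition of $\mathtt{G}_{\nu}$ with this transfer isometry, and check that \eqref{eqn:Fourier} and the Parseval formula for $X_{\Gamma}$ are precisely the images under the transfer of the corresponding statements on $Y_{\Gamma}$. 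Compactness of $X_{\Gamma}$ is equivalent to compactness of $Y_{\Gamma}$ (both amount to $\Gamma$ being a uniform lattice in~$L$, since $L$ acts cocompactly on~$X$), so the "discrete sum" clause is inherited.

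The main obstacle I anticipate is making the transfer genuinely \emph{spectral}, i.e.\ showing that the correspondence of eigenspaces is compatible with the $L^2$-inner products and assembles into a measurable field of (scaled) isometries, rather than a mere abstract bijection on each eigenspace. This requires: (1) knowing that the natural map from $L^2(X_{\Gamma})$ to the "direct integral over $\lambda$" built from the $\mathtt{F}_{\lambda}$ is injective with dense image — which is where one must really use that $\Gamma\subset L$ and that the $L$-Plancherel decomposition of $L^2(\Gamma\backslash L)$, restricted to $L_K$-types, exhausts $L^2(Y_{\Gamma})$; and (2) controlling the scalar discrepancy between "restriction-to-$L$ of a $G$-eigenfunction on $X$" and "the $L$-eigenfunction on $Y$" — this is exactly the content of the explicit formulas for $\nnu$ and $\llambda$ from~\cite{kkdiffop}, and the measurability/positivity of the resulting density function must be verified. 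Once self-adjointness of $\square_{X_{\Gamma}}$ (Theorem~\ref{thm:selfadj}) and the density of analytic eigenfunctions (Theorem~\ref{thm:analytic}) are in hand, these guarantee that the eigenspaces $L^2(X_{\Gamma};\M_{\lambda})$ are the genuine spectral fibers and that no "continuous-but-not-$L^2$" pathology obstructs the reconstruction formula, so the remaining work is to organize the measurable-field bookkeeping and transport the classical Riemannian Plancherel statement across the transfer.
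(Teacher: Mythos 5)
Your overall strategy — push the problem to the Riemannian side via the $L$-equivariant fibration and pull the spectral decomposition back with the transfer map $\llambda$ — is the one the paper uses. But there is a genuine gap: you appear to be transferring only \emph{scalar} eigenfunctions on $Y_{\Gamma}$, whereas the actual decomposition of $L^2(X_{\Gamma})$ requires all the vector-bundle valued eigensections indexed by $\tau\in\Disc(L_K/L_H)$.

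Concretely, $L^2(X_{\Gamma})$ is unitarily equivalent to the Hilbert direct sum $\sumplus{\tau\in\Disc(L_K/L_H)} (V_{\tau}^{\vee})^{L_H}\otimes L^2(Y_{\Gamma},\V_{\tau})$ (equation \eqref{eqn:XYL2}), not to $L^2(Y_{\Gamma})$; the latter corresponds to the single trivial $\tau$ and is, under $q_{\Gamma}^{*}$, only a proper closed subspace of $L^2(X_{\Gamma})$. Accordingly, the transfer map $\llambda(\cdot,\tau)$ genuinely depends on $\tau$, and a single eigenspace $L^2(X_{\Gamma};\M_{\lambda})$ is assembled from $\ii_{\tau,\Gamma}(L^2(Y_{\Gamma},\V_{\tau};\NN_{\nnu(\lambda,\tau)}))$ over \emph{all} $\tau$ (Theorem~\ref{thm:transfer}); it is not a scalar multiple of any $L^2(Y_{\Gamma};\NN_{\nu})$. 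So the claimed ``isometry (up to a positive scalar) between $L^2(Y_{\Gamma};\NN_{\nu})$ and $L^2(X_{\Gamma};\M_{\llambda(\nu)})$'' is false, the proposed measure (a pushforward of a measure on the single parameter $\nu$) is the wrong one, and your construction would only reconstruct the subspace $q_{\Gamma}^{*}(L^2(Y_{\Gamma}))$; Parseval for all of $L^2(X_{\Gamma})$ would fail. The paper instead starts from the Mautner direct-integral decomposition of $L^2(\Gamma\backslash L)$, takes $L_H$-fixed vectors (not $L_K$-fixed), combines with $f=\sum_{\tau}\ii_{\tau,\Gamma}\circ\pp_{\tau,\Gamma}(f)$ from Lemma~\ref{lem:p-tau-i-tau}, sets $\Lambda(\vartheta,\tau)=\llambda(\chi_{\vartheta},\tau)$, defines $\mathtt{F}_{\lambda}f=\sum_{(\vartheta,\tau)\in\Lambda^{-1}(\lambda)}\ii_{\tau,\Gamma}\circ\pp_{\tau,\Gamma}(\mathcal{T}_{\vartheta}f)$, and takes $\dd\mu$ to be the pushforward under $\Lambda$ of the product measure on $(\widehat{L})_{L_H}\times\Disc(L_K/L_H)$. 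To repair your argument you would need to replace $L^2(Y_{\Gamma})$ throughout by the full $\tau$-indexed sum of $L^2(Y_{\Gamma},\V_{\tau})$ and index the pushforward accordingly.
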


Theorem~\ref{thm:Fourier} will be proved in Section~\ref{subsec:proof-Fourier}.

\section{Square-integrable joint eigenfunctions}\label{subsec:intro-type-I-II}

We now focus on the \emph{discrete spectrum} of the Laplacian or more generally of the ``intrinsic'' differential operators $D_{\Gamma}$ on~$X_{\Gamma}$ coming from $\D_G(X)$, as given by \eqref{eqn:D-gamma}.
In Chapter~\ref{sec:type-I-II}, for joint $L^2$-eigenfunctions on~$X_{\Gamma}$, we introduce a Hilbert space decomposition
$$L^2(X_{\Gamma};\M_{\lambda}) = L^2(X_{\Gamma};\M_{\lambda})_{\I} \oplus L^2(X_{\Gamma};\M_{\lambda})_{\II}$$
according to the analysis on the homogeneous space $X=G/H$: namely, $L^2(X_{\Gamma};\M_{\lambda})_{\I}$ is associated with discrete series representations for the homogeneous space $X=G/H$, and $L^2(X_{\Gamma};\M_{\lambda})_{\II}$ is its orthogonal complement in $L^2(X_{\Gamma};\M_{\lambda})$.
For $i\in\{ \I,\II\}$, we set
$$\Spec_d(X_{\Gamma})_i := \{ \lambda\in\Spec_d(X_{\Gamma}) : L^2(X_{\Gamma};\M_{\lambda})_i\neq\{ 0\} \} ,$$
so that
$$\Spec_d(X_{\Gamma}) = \Spec_d(X_{\Gamma})_{\I} \cup \Spec_d(X_{\Gamma})_{\II}.$$

Discrete spectrum of type~$\I$ may exist only if the rank condition \eqref{eqn:rank} is satisfied.
In this case, in \cite{kk16} we constructed eigenfunctions of type~$\I$ for sufficiently regular~$\lambda$ when $\Gamma$ is \emph{sharp} --- a strong form of proper discontinuity \cite[Def.\,4.2]{kk16}.

In the classical setting where $H=K$, namely $X_{\Gamma}$ is a \emph{Riemannian} locally symmetric space $\Gamma\backslash G/K$, the discrete spectrum on~$X_{\Gamma}$ is always of type~$\II$.
In our pseudo-Riemannian setting it is not clear if there always exist $L^2$-eigenfunctions of type~$\II$ on $X_{\Gamma}$.
We shall prove the following in Chapter~\ref{sec:proof-mainII}.

\begin{theorem} \label{thm:mainII}
In the setting~\ref{spher-setting}, the set $\Spec_d(X_{\Gamma})_{\II}$ (hence $\Spec_d(X_{\Gamma})$) is infinite whenever $\Gamma$ is cocompact or arithmetic in~$L$.
\end{theorem}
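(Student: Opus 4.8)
\textbf{Proof proposal for Theorem~\ref{thm:mainII}.}
The plan is to reduce the statement about $\Spec_d(X_{\Gamma})_{\II}$ on the pseudo-Riemannian quotient $X_{\Gamma}=\Gamma\backslash G/H$ to a statement about the \emph{Riemannian} discrete spectrum of $\Gamma\backslash L/(L\cap K)$, where the classical theory applies, and then to transport that spectrum back via the transfer map~$\llambda$. First I would invoke the transfer map $\llambda$ (from Proposition~\ref{prop:cond-Tf-A-B-satisfied}, building on \cite{kkdiffop}), which identifies joint eigenvalues $\lambda\in\Hom_{\C\text{-}\mathrm{alg}}(\D_G(X),\C)$ with (restrictions of) joint eigenvalues for $\D_L(L/(L\cap K))$ on the Riemannian space $Y:=L/(L\cap K)$, in a way compatible with the $L^2$-spectral decompositions on $\Gamma\backslash X$ and on $\Gamma\backslash L$, and which sends the type~$\II$ part of the spectrum precisely to the part of the Riemannian spectrum of $Y_{\Gamma}:=\Gamma\backslash Y$ coming from representations of $L$ that are \emph{not} discrete series for $X=G/H$ (equivalently, after branching, that survive outside the type~$\I$ piece). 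The key point is that the type~$\I$/type~$\II$ decomposition on $X_{\Gamma}$ matches, under $\llambda$, a decomposition of $L^2(Y_{\Gamma})$ indexed by which $L$-representations occur, and type~$\II$ corresponds to an infinite ``generic'' part.

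Next I would use the classical results on the Riemannian locally symmetric space $Y_{\Gamma}=\Gamma\backslash L/(L\cap K)$: when $\Gamma$ is a uniform lattice in~$L$, $Y_{\Gamma}$ is a compact Riemannian manifold, so $\Delta_{Y_{\Gamma}}$ has infinite discrete spectrum tending to $+\infty$; when $\Gamma$ is arithmetic in~$L$, the Borel--Gabber result \cite{bg83} (as already cited in case~(i) of the introduction) gives that $\Spec_d(Y_{\Gamma})$ is infinite. In either case one gets infinitely many joint eigenvalues for $\D_L(Y)$ on $Y_{\Gamma}$. The remaining task is to show that infinitely many of these lie in the ``type~$\II$'' part, i.e.\ are \emph{not} accounted for by the finitely many (or otherwise controlled) discrete series of $X=G/H$ contributing type~$\I$: this follows because the discrete series of $G/H$, restricted to~$L$, contribute only a ``thin'' subset of the Riemannian spectrum of $Y_{\Gamma}$ — concretely, their infinitesimal characters lie in a finite union of affine subspaces (or satisfy the rank condition \eqref{eqn:rank} constraints), whereas the full Riemannian spectrum of a compact or arithmetic $Y_{\Gamma}$ is Zariski-dense / genuinely infinite in the relevant parameter space. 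Hence after removing the type~$\I$ contribution, infinitely many eigenvalues remain, and their $\llambda$-images give infinitely many elements of $\Spec_d(X_{\Gamma})_{\II}$.

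To make the last reduction precise I would argue as follows: pull back each Riemannian joint eigenfunction $\varphi$ on $Y_{\Gamma}$ along the fibration (or use the explicit intertwining in \cite{kkdiffop}) to produce a joint eigenfunction $\widetilde{\varphi}\in L^2(X_{\Gamma};\M_{\llambda(\lambda)})$; check that $\widetilde{\varphi}\ne 0$ and that it is orthogonal to the type~$\I$ subspace precisely when the underlying $L$-representation is not a discrete series for $G/H$; and use that the transfer map is a bijection on the relevant eigenvalue sets (its inverse being~$\nnu$, see \eqref{eqn:nu-lambda-tau}) so distinct Riemannian eigenvalues give distinct elements of $\Spec_d(X_{\Gamma})_{\II}$. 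Counting: since at most a controlled (finite-dimensional family's worth) of the Riemannian spectrum comes from type~$\I$, and the total Riemannian discrete spectrum is infinite, the type~$\II$ part is infinite.

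The main obstacle I expect is the bookkeeping in the middle step: showing rigorously that the discrete-series (type~$\I$) contribution to the Riemannian spectrum of $Y_{\Gamma}$ is ``small'' enough that it cannot exhaust an infinite spectrum. This requires knowing the branching behavior of $G/H$-discrete series restricted to~$L$ well enough to locate their infinitesimal characters inside $\Hom_{\C\text{-}\mathrm{alg}}(\D_L(Y),\C)$ — e.g.\ that they form a set which is not Zariski-dense, or is governed by finitely many parameters determined by the rank condition — and then combining this with a density/growth statement for the Riemannian spectrum (Weyl law in the cocompact case, and the asymptotic density of arithmetic spectrum from \cite{bg83} in the arithmetic case). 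Handling the case where the rank condition \eqref{eqn:rank} fails is actually easier, since then the type~$\I$ part is empty and every element of $\Spec_d(X_{\Gamma})$ is automatically type~$\II$, so the whole argument collapses to transporting the infinite Riemannian spectrum; the genuinely delicate case is when \eqref{eqn:rank} holds and type~$\I$ is nonempty.
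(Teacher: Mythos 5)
Your first half matches the paper: the infinitude of the Riemannian discrete spectrum of $Y_{\Gamma}=\Gamma\backslash L/L_K$ for cocompact or arithmetic $\Gamma$ (compactness/Borel--Garland, the paper's Fact~\ref{fact:inf-spec-Riem} -- note it is Borel--\emph{Garland}, not Borel--Gabber), and the transfer of these eigenvalues to $\Spec_d(X_\Gamma)$ via $\llambda(\cdot,\tau)$ with $\tau$ trivial, with distinctness coming from $\nnu\circ\llambda=\mathrm{id}$. The gap is in your middle step, i.e.\ in how you certify that the transferred eigenvalues are of type~$\II$. Your argument is a removal/counting argument: the type~$\I$ contribution is ``thin'' (finite union of affine subspaces, not Zariski-dense), so after discarding it infinitely many eigenvalues remain. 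This does not work as stated: both the transferred Riemannian spectrum and $\Spec_d(X)\supset\Spec_d(X_\Gamma)_{\I}$ are countable discrete sets, and Zariski density of one gives no obstruction to its being contained in the other; you would need a genuine quantitative or arithmetic argument to show that infinitely many transferred eigenvalues avoid $\Spec_d(X)$, and you have not supplied one. Moreover your stated criterion (``orthogonal to the type~$\I$ subspace precisely when the underlying $L$-representation is not a discrete series for $G/H$'') is not the right dichotomy, and discarding every $\lambda\in\Spec_d(X)$ over-discards, since $\Spec_d(X_\Gamma)_{\I}$ and $\Spec_d(X_\Gamma)_{\II}$ can intersect in general (Remark~\ref{rem:intersect-Spec-I-II}).

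The paper's mechanism is different and avoids all counting: \emph{every} eigenfunction pulled back from the base, $q_\Gamma^*\,L^2(Y_\Gamma;\NN_\nu)$, lies in $L^2_d(X_\Gamma)_{\II}$, regardless of whether $\llambda(\nu,\tau)$ happens to lie in $\Spec_d(X)$. This is Proposition~\ref{prop:conseq-Tf-B}, and it rests on condition~(B): by Theorem~\ref{thm:condB}, a discrete series representation for $G/H$ restricts to $L$ discretely as a sum of Harish-Chandra discrete series of $L$, hence has no nonzero $L_K$-fixed vectors; consequently type~$\I$ functions on $X_\Gamma$ project to the (here zero, since $Y$ has no discrete series) type~$\I$ part of the trivial-$\tau$ component (Theorem~\ref{thm:transfer-spec}.(2)), and by the duality $(\ii_{\tau,\Gamma}\varphi,f)=(\varphi,\pp_{\tau,\Gamma}f)$ the pullbacks are orthogonal to type~$\I$ (Theorem~\ref{thm:transfer-spec}.(3)). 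This representation-theoretic input is not optional bookkeeping: Example~\ref{ex:not-spher-B} shows that when $L_{\C}$-sphericity (hence condition~(B)) fails, restrictions of $G/H$-discrete series can have continuous $L$-spectrum and the orthogonality you need can genuinely break down. So your proposal is missing the key idea (discrete decomposability of the restriction to $L$) and replaces it with a density argument that does not prove the required statement.
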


Theorem~\ref{thm:mainII} applies to the triples $(G,H,L)$ of Table~\ref{table1}.
It gives an affirmative answer to Question~\ref{problems}.(a), and guarantees that the Laplacian $\Delta_{X_{\Gamma}}$ has infinitely many $L^2$-eigenvalues in this setting.
We note that $\Spec_d(X_{\Gamma})_{\I}$ is empty (\ie $\Spec_d(X_{\Gamma})=\Spec_d(X_{\Gamma})_{\II}$) for all~$\Gamma$ in case~(ii) with $n$ odd and in case~(vii) of Table~\ref{table1}: see Remark~\ref{rem:type-I-II}.(3).

\begin{remark}\label{rem:mainII}
S.~Mehdi and M.~Olbrich have announced that they can prove analogous results to Theorems \ref{thm:selfadj} and~\ref{thm:mainII} for the Laplacian for most triples $(G,H,L)$ in Table~\ref{table1}, by computing linear relations among the Casimir elements of $\g$, $\llll\cap\kk$, and~$\llll$ in the enveloping algebra $U(\g_{\C})$, similarly to Proposition~\ref{prop:rel-Lapl}.
As far as we understand, their method does not apply to higher-order differential operators on~$X_{\Gamma}$.
See their announcement \cite{mo21}, which appeared on the arXiv after we completed this work.
\end{remark}

\section{Group manifolds}

In this paper we also consider reductive symmetric spaces of the form $X = G/H = ({}^{\backprime}G\times\!{}^{\backprime}G)/\Diag({}^{\backprime}G)$ as in Example~\ref{ex:group-manifold}.
For $L={}^{\backprime}G\times\!{}^{\backprime}K$ where ${}^{\backprime}K$ be a maximal compact subgroup of~${}^{\backprime}G$, the complexification $X_{\C}$ is not always $L_{\C}$-spherical (see Example~\ref{ex:sph}.(3)), but we are still able to extend our techniques to prove the following analogue of Theorems \ref{thm:analytic}, \ref{thm:selfadj}, and~\ref{thm:mainII} for standard quotients $X_{\Gamma}$ with $\Gamma\subset L$.

\begin{theorem} \label{thm:GxG}
Let ${}^{\backprime}G$ be a noncompact reductive Lie group, ${}^{\backprime}K$ a maximal compact subgroup of~${}^{\backprime}G$, and $\Gamma$ a torsion-free discrete subgroup of $L={}^{\backprime}G\times\!{}^{\backprime}K$.
Then
\begin{enumerate}
  \item for any $\lambda\in\Spec_d(X_{\Gamma})$, the Hilbert space $L^2(X_{\Gamma};\M_{\lambda})$ contains real analytic eigenfunctions as a dense subset,
  \item the closure of the pseudo-Riemannian Laplacian~$\square_{X_{\Gamma}}$ on $C_c(X_{\Gamma})$ is a self-adjoint operator on $L^2(X_{\Gamma})$,
  \item $\Spec_d(X_{\Gamma})_{\II}$ (hence $\Spec_d(X_{\Gamma})$) is infinite whenever ${}^{\backprime}\Gamma$ is cocompact or arithmetic in~${}^{\backprime}G$.
\end{enumerate}
\end{theorem}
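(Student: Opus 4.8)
\textbf{Proof strategy for Theorem~\ref{thm:GxG}.}
The plan is to mimic the proof of Theorems~\ref{thm:analytic}, \ref{thm:selfadj}, and~\ref{thm:mainII}, but replacing the $L_{\C}$-sphericity of $X_{\C}$ --- which fails here in general --- by a direct analysis of the group manifold using the structure theory of ${}^{\backprime}G$. The key observation is that for $X=({}^{\backprime}G\times{}^{\backprime}G)/\Diag({}^{\backprime}G)$ the algebra $\D_G(X)$ is canonically isomorphic to the center $Z(\g'_{\C})$ of the enveloping algebra of $\g'={}^{\backprime}\g$ (acting by, say, left translations), so joint eigenfunctions for $\D_G(X)$ on $X_{\Gamma}$ correspond to functions on $\Gamma\backslash({}^{\backprime}G\times{}^{\backprime}G)$ that transform by an infinitesimal character under the first factor. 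Similarly, the larger algebra $\D_L(X)$ with $L={}^{\backprime}G\times{}^{\backprime}K$ can be identified with $Z(\g'_{\C})\otimes \D_{{}^{\backprime}K}({}^{\backprime}G\backslash L)$-type data; the point is that although $X_{\C}$ need not be $L_{\C}$-spherical, the quotient $\Gamma\backslash L/({}^{\backprime}G\times{}^{\backprime}K\cap{}^{\backprime}K) = {}^{\backprime}\Gamma\backslash{}^{\backprime}G$ (with ${}^{\backprime}\Gamma$ the projection of $\Gamma$ to the first factor) is again a Riemannian locally symmetric space, for which all classical tools apply.

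First I would set up the identification $X_{\Gamma} \simeq {}^{\backprime}\Gamma\backslash{}^{\backprime}G$ as a smooth manifold when $\Gamma\subset L={}^{\backprime}G\times{}^{\backprime}K$ acts freely: since ${}^{\backprime}K$ is compact, the projection $\Gamma\to{}^{\backprime}\Gamma\subset{}^{\backprime}G$ has finite kernel, hence is injective (as $\Gamma$ is torsion-free), and $X=({}^{\backprime}G\times{}^{\backprime}G)/\Diag({}^{\backprime}G)\simeq{}^{\backprime}G$ via $(g_1,g_2)\mapsto g_1 g_2^{-1}$, under which the $L$-action becomes $({}^{\backprime}G\times{}^{\backprime}K)$ acting by $(a,k)\cdot x = a x k^{-1}$ and the $\Gamma$-action is $\gamma\cdot x = {}^{\backprime}\gamma x \,{}^{\backprime}k_{\gamma}^{-1}$. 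The pseudo-Riemannian metric on $X$ induced by the Killing form of $\g$ corresponds, on ${}^{\backprime}G$, to the bi-invariant pseudo-Riemannian metric of Killing type; its Laplacian is the Casimir element $\Omega_{\g'}$ of $\g'$ acting by (say) left-invariant vector fields, which descends to ${}^{\backprime}\Gamma\backslash{}^{\backprime}G$. Next, for (1) and (3) I would transfer to $L^2({}^{\backprime}\Gamma\backslash{}^{\backprime}G)$: decompose this Hilbert space under the right regular ${}^{\backprime}G$-action, restrict to ${}^{\backprime}K$-finite vectors (which are automatically real analytic by elliptic regularity for $\Omega_{\g'}+2\Omega_{\kk'}$, an elliptic operator on ${}^{\backprime}G$), and use that if $\pi$ is an irreducible unitary ${}^{\backprime}G$-module occurring discretely in $L^2({}^{\backprime}\Gamma\backslash{}^{\backprime}G)$ then its contribution to $X_{\Gamma}$-eigenfunctions is nonzero and carries an infinitesimal character, giving density of analytic eigenfunctions in $L^2(X_{\Gamma};\M_\lambda)$ and in $\DD'(X_{\Gamma};\M_\lambda)$, exactly as in Theorem~\ref{thm:analytic}. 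For (3), the discrete spectrum of $L^2({}^{\backprime}\Gamma\backslash{}^{\backprime}G)$ as a ${}^{\backprime}G$-module is infinite when ${}^{\backprime}\Gamma$ is cocompact (by Gelfand--Graev--Piatetski-Shapiro) or arithmetic (by \cite{bg83} together with the existence of cuspidal automorphic representations), and each such representation, having an infinitesimal character not attached to a discrete series for $X={}^{\backprime}G$ in the relevant range, contributes to $\Spec_d(X_{\Gamma})_{\II}$; one must check that infinitely many distinct infinitesimal characters arise, which follows since the Casimir eigenvalues $\|\lambda\|^2-\|\rho\|^2$ are unbounded.

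For (2), essential self-adjointness, I would run the same argument as for Theorem~\ref{thm:selfadj} in Section~\ref{sec:strategy}, whose mechanism is abstract and does not actually need $L_{\C}$-sphericity once one has the analogue of the transfer machinery: the crucial input is a polynomial relation expressing $\square_{X}$ (the Casimir of $\g$) in terms of the Casimir $\Omega_{\g'}$ of ${}^{\backprime}G$ (acting on the right of $X\simeq{}^{\backprime}G$) and the Casimir $\Omega_{\kk'}$ of ${}^{\backprime}K$, namely $\square_X = \Omega_{\g'}^{R}$ up to a constant with no $\Omega_{\kk'}$-term needed at all --- so the situation is even simpler than the generic Table~\ref{table1} case. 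One then reduces essential self-adjointness of $\square_{X_\Gamma}$ on $L^2(X_\Gamma)\simeq L^2({}^{\backprime}\Gamma\backslash{}^{\backprime}G)$ to essential self-adjointness of the (scalar multiple of the) Casimir $\Omega_{\g'}$ acting on $C_c^\infty({}^{\backprime}\Gamma\backslash{}^{\backprime}G)$; since $\Omega_{\g'}+2\Omega_{\kk'}$ (with $\Omega_{\kk'}$ acting on the left via the $\Gamma$-twisted ${}^{\backprime}K$-action, which is harmless) is the Laplacian of a complete Riemannian metric on ${}^{\backprime}\Gamma\backslash{}^{\backprime}G$ up to lower order, one invokes the classical completeness criterion \cite{str83} plus a commuting-operators argument to pass from the elliptic operator to $\Omega_{\g'}$ alone. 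The main obstacle I anticipate is precisely this last passage: $\square_{X_\Gamma}$ is not elliptic, so one cannot directly appeal to Riemannian theory; the right move is to decompose $L^2({}^{\backprime}\Gamma\backslash{}^{\backprime}G)$ into ${}^{\backprime}K\times{}^{\backprime}K$-isotypic (or just right-${}^{\backprime}K$-isotypic) blocks, on each of which $\Omega_{\g'}$ differs from the block restriction of an elliptic operator by a \emph{constant} (the ${}^{\backprime}K$-Casimir eigenvalue), hence is essentially self-adjoint there, and then assemble the blocks --- this is the group-manifold incarnation of the general argument and should go through with the same estimates as in Section~\ref{sec:strategy}.
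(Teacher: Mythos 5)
Your overall strategy — fiber $X_\Gamma$ over the Riemannian locally symmetric space $Y_\Gamma = \Gamma\backslash L/L_K$ and reduce the pseudo-Riemannian Laplacian to an elliptic one plus a correction on the compact fiber — is precisely the mechanism the paper uses (via Remark~\ref{rem:rel-Lapl-GxG}, Proposition~\ref{prop:relationsLapl}, Proposition~\ref{prop:cond-Tf-A-B-satisfied-GxG}, and Theorem~\ref{thm:transfer-spec}).  However, your write-up has a gap that undermines the argument exactly in the situation Theorem~\ref{thm:GxG} was designed to cover.

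You first correctly observe that under the identification $X\simeq {}^{\backprime}G$, a subgroup $\Gamma\subset L={}^{\backprime}G\times{}^{\backprime}K$ acts by the \emph{twisted} rule $\gamma\cdot x = {}^{\backprime}\gamma\,x\,{}^{\backprime}k_\gamma^{-1}$, but you then ``transfer to $L^2({}^{\backprime}\Gamma\backslash{}^{\backprime}G)$'' as if $X_\Gamma$ were the plain left quotient ${}^{\backprime}\Gamma\backslash{}^{\backprime}G$.  That identification holds only when $\Gamma = {}^{\backprime}\Gamma\times\{e\}$, which is exactly the ``classical case'' the paper sets aside.  For a graph subgroup $\Gamma = \{({}^{\backprime}\gamma,\rho({}^{\backprime}\gamma))\}$ with $\rho$ nontrivial — the case the theorem explicitly advertises — the right ${}^{\backprime}K$-action does \emph{not} descend to $\Gamma\backslash{}^{\backprime}G$, and there is no natural $G$-equivariant isomorphism $L^2(X_\Gamma)\simeq L^2({}^{\backprime}\Gamma\backslash{}^{\backprime}G)$; the twisting by $\rho$ changes the spectral data.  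The paper avoids this by keeping the twist inside the Hermitian vector bundle: it decomposes $L^2(X_\Gamma)$ as a Hilbert sum $\sum^\oplus_\tau (V_\tau^\vee)^{L_H}\otimes L^2(Y_\Gamma,\V_\tau)$ where $\V_\tau = \Gamma\backslash L\times_{L_K}V_\tau$ is a bundle over $Y_\Gamma = {}^{\backprime}\Gamma\backslash{}^{\backprime}G/{}^{\backprime}K$ that sees $\rho$, and then works bundle-by-bundle.  Your reduction collapses this to the flat case and loses $\rho$.

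Two smaller issues.  For (3), the sentence ``each such representation, having an infinitesimal character not attached to a discrete series for $X={}^{\backprime}G$ in the relevant range, contributes to $\Spec_d(X_\Gamma)_{\II}$'' is not a proof and is actually false as stated: when ${}^{\backprime}\Gamma$ is cocompact, $L^2({}^{\backprime}\Gamma\backslash{}^{\backprime}G)$ contains all Harish-Chandra discrete series of ${}^{\backprime}G$, so there is plenty of overlap with infinitesimal characters attached to $\Disc(G/H)$.  The correct mechanism (Proposition~\ref{prop:conseq-Tf-B} combined with Theorem~\ref{thm:transfer-spec}.(3)) is to take $\tau$ trivial, observe that the scalar Laplacian on $Y_\Gamma$ has infinite discrete spectrum and that this spectrum is automatically of type~$\II$ since $L/L_K$ carries no discrete series, and then invoke condition~(B) to see that $\ii_{\tau,\Gamma}$ carries type~$\II$ to type~$\II$.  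For (2), your claim that ``no $\Omega_{\kk'}$-term is needed at all'' is a misstatement: the operator you ultimately need to be elliptic is $\dd\ell(C_L)$ with $C_L$ the Casimir of the \emph{whole} reductive $L={}^{\backprime}G\times{}^{\backprime}K$; relation \eqref{eqn:rel-Lapl} in this case reads $\square_X = \dd\ell(C_L) - \dd r(C_L^{(2)})$, so the fiber correction $\dd r(C_L^{(2)})\in\dd r(Z(\llll_{\C}\cap\kk_{\C}))$ is precisely what makes the argument run.  You do eventually write down the right elliptic operator $\Omega_{\g'}+2\Omega_{\kk'}$, but describing the step as ``even simpler, no $\kk'$-term needed'' misrepresents where the ellipticity is coming from.
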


Beyond the classical case where $\Gamma$ is of the form ${}^{\backprime}\Gamma\times\{e\}$ and $L^2(X_{\Gamma})=L^2({}^{\backprime}\Gamma\backslash{}^{\backprime}G)$, we may obtain torsion-free discrete subgroups $\Gamma$ of $L={}^{\backprime}G\times\nolinebreak\!{}^{\backprime}K$ by considering a torsion-free discrete subgroup ${}^{\backprime}\Gamma$ of~${}^{\backprime}G$ and taking the graph of a homomorphism $\rho : {}^{\backprime}\Gamma\to{}^{\backprime}G$ with bounded image.
Nontrivial such homomorphisms exist in many situations, for instance when ${}^{\backprime}\Gamma$ is a free group, or when ${}^{\backprime}G=\SO(n,1)$ or $\SU(n,1)$ and ${}^{\backprime}\Gamma$ is a uniform lattice of~${}^{\backprime}G$ with $H^1({}^{\backprime}\Gamma;\R)\neq\{0\}$ (such lattices exist by \cite{mil76,kaz77}).

See Section \ref{subsec:proof-transfer} (\resp \ref{subsec:proof-selfadj}, \resp \ref{sec:proof-mainII}) for the proof of statement (1) (\resp (2), \resp (3)) of Theorem~\ref{thm:GxG}.

\section{The example of $\AdS^3$}

As one of the simplest examples, let $X$ be the $3$-dimensional \emph{anti-de Sitter space}
$$\AdS^3 = G/H = \SO(2,2)/\SO(2,1) \simeq (\SL(2,\R)\times\SL(2,\R))/\Diag(\SL(2,\R)),$$
which lies at the intersection of Examples \ref{ex:AdS-odd} and~\ref{ex:group-manifold}.
Then $\rank X=1$, and so the $\C$-algebra $\D_G(X)$ of $G$-invariant differential operators on~$X$ is generated by a single element, namely the Laplacian~$\square_X$.
Since $X$ is Lorentzian, the differential operator $\square_X$ is hyperbolic.
For any discrete subgroup $\Gamma$ of $\SO(2,2)$ acting properly discontinuously and freely on~$X$, we may identify $\Spec_d(X_{\Gamma})$ with the discrete spectrum of the Laplacian~$\square_{X_{\Gamma}}$.
With this identification, the following holds, independently of the fact that $X_{\Gamma}$ is compact or not.

\begin{proposition} \label{prop:AdS-I-II}
Let $\Gamma$ be a discrete subgroup of $\SO(2,2)$ acting properly discontinuously and freely on $X=\AdS^3$.
Then
\begin{enumerate}
  \item $\Spec_d(X_{\Gamma})_{\I} \subset \Spec_d(X) = \{ k(k+2)/4 : k\in\N\} \subset [0,+\infty)$,
  \item $0\in\Spec_d(X_{\Gamma})_{\II}$ if and only if $\mathrm{vol}(X_{\Gamma})<+\infty$,
  \item in the standard case where $\Gamma\subset L:=\U(1,1)$,
  \begin{itemize}
    \item $\Spec_d(X_{\Gamma})_{\I}$ is infinite, and contains $\{k(k+2)/4 : k\in\N,\ k\geq\nolinebreak k_0\}$ for some $k_0\in\N$ if $-1\notin\Gamma$,
    \item $\Spec_d(X_{\Gamma})_{\II}\subset (-\infty,0]$,
    \item $\Spec_d(X_{\Gamma})_{\II}$ is infinite whenever $\Gamma$ is cocompact or arithmetic in~$L$.
  \end{itemize}
\end{enumerate}
\end{proposition}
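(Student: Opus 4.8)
The plan is as follows. Since $\rank X=1$, the $\C$-algebra $\D_G(X)$ is generated by the single operator $\square_X$, so $\Hom_{\C\text{-}\mathrm{alg}}(\D_G(X),\C)$ is identified with $\C$ via $\lambda\mapsto\lambda(\square_X)$, the system $(\M_\lambda)$ reduces to the single equation $\square_{X_\Gamma}f=\lambda(\square_X)f$, and $\Spec_d(X_\Gamma)$ is identified with the point spectrum of $\square_{X_\Gamma}$ on $L^2(X_\Gamma)$. Statement (1) has two halves. The inclusion $\Spec_d(X_\Gamma)_{\I}\subset\Spec_d(X)$ is built into the definition of the type~$\I$ subspace (Section~\ref{sec:type-I-II}): $L^2(X_\Gamma;\M_\lambda)_{\I}\neq\{0\}$ forces $\lambda$ to be the $\D_G(X)$-eigenvalue of a discrete series representation of $X=G/H$, hence $\lambda(\square_X)\in\Spec_d(X)$. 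The equality $\Spec_d(X)=\{k(k+2)/4:k\in\N\}$ is a direct computation via the group-manifold model $\AdS^3\simeq\SL(2,\R)$: under it $\square_X$ becomes a normalization of the Casimir, the discrete part of $L^2(\SL(2,\R))$ is the Hilbert sum of the $D_m^{+}\boxtimes D_m^{-}$ and $D_m^{-}\boxtimes D_m^{+}$ over $m\geq 2$, and $\square_X$ acts on $D_m^{\pm}\boxtimes D_m^{\mp}$ by $m(m-2)/4$, giving the asserted set (with $k=m-2$), which is contained in $[0,+\infty)$.

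Note that in particular $0\in\Spec_d(X)$, realized by $D_2^{\pm}\boxtimes D_2^{\mp}$, so the content of (2) is genuinely about the type~$\II$ part. The implication $\vol(X_\Gamma)<+\infty\Rightarrow 0\in\Spec_d(X_\Gamma)_{\II}$ is immediate: the constant function $\mathbf 1$ then lies in $L^2(X_\Gamma)$, satisfies $\square_{X_\Gamma}\mathbf 1=0$, and represents the trivial $G$-module, which is not a discrete series of the noncompact space $X$; hence $\mathbf 1\in L^2(X_\Gamma;\M_0)_{\II}$. The converse is the delicate point. I would pull a nonzero $f\in L^2(X_\Gamma;\M_0)_{\II}$ back to a $\Gamma$-periodic, $L^2(\Gamma\backslash X)$ solution of $\square_X\tilde f=0$ on $X\simeq\SL(2,\R)$, i.e.\ a Casimir-harmonic eigendistribution orthogonal to the discrete-series ($D_2$-isotypic) part, and then argue that such a $\tilde f$ must be constant, forcing $\vol(X_\Gamma)<+\infty$. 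The mechanism is that the only irreducible unitary representations of $\SL(2,\R)$ with vanishing Casimir are the trivial one and $D_2^{\pm}$ (tempered principal series and $D_1^{\pm}$ have strictly negative Casimir, complementary series lie in $(-1/4,0)$), so once the $D_2$-part is removed only a trivial-isotypic contribution can survive in the $L^2$-discrete spectrum, and this needs finite covolume. In the standard case $\Gamma\subset L$ this also follows from the transfer map together with the classical fact that $0$ is a Laplace eigenvalue of $\bar\Gamma\backslash\mathbb{H}^2$ iff that surface has finite volume; but for an arbitrary, non-standard discontinuous group $\Gamma$, no group acts on $L^2(X_\Gamma)$ and one must work directly with the $\Gamma$-invariant eigendistribution on $X$ (through its wave-front set, or by separation of variables in the Lorentzian model), and this is the main obstacle.

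For (3), assume $\Gamma\subset L=\U(1,1)$ and let $\bar\Gamma$ be the image of $\Gamma$ in $\mathrm{Isom}^{+}(\mathbb{H}^2)\cong\mathrm{PSL}(2,\R)$, so that the associated Riemannian locally symmetric space $\Gamma\backslash L/(L\cap K)$ is the hyperbolic $2$-orbifold $\bar\Gamma\backslash\mathbb{H}^2$. For the first bullet I would invoke the construction of type~$\I$ eigenfunctions by generalized Poincar\'e series from \cite{kk16}: the rank condition \eqref{eqn:rank} holds for $\AdS^3$ and the standard action of $\U(1,1)$ on $\AdS^3$ is sharp, so for every sufficiently regular $\lambda\in\Spec_d(X)$ — i.e.\ for $k\geq k_0$ — the Poincar\'e series converges to a nonzero element of $L^2(X_\Gamma;\M_\lambda)_{\I}$; the $(-1)^m$-parity of $D_m^{\pm}$ under $-I\in\SO(2,2)$ accounts for the restriction $-1\notin\Gamma$ needed to get all $k\geq k_0$ rather than those of a single parity. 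For the second and third bullets I would use the transfer maps $\nnu,\llambda$, which identify $\Spec_d(X_\Gamma)_{\II}$ with the discrete spectrum of the Laplacian on $\bar\Gamma\backslash\mathbb{H}^2$; since $\mathbb{H}^2$ has no discrete series (its rank condition fails), this is simply the point spectrum of $\Delta_{\bar\Gamma\backslash\mathbb{H}^2}$, and the explicit affine form of $\llambda$ in case~(i) of Table~\ref{table1} — read off from the linear relations among the Casimir elements of $\g$, $\llll\cap\kk$, and $\llll$ as in Remark~\ref{rem:mainII} — maps it into $(-\infty,0]$, which is the second bullet. Finally, if $\Gamma$ is cocompact (\resp arithmetic) in $L$, then $\bar\Gamma\backslash\mathbb{H}^2$ is a compact (\resp finite-volume arithmetic) hyperbolic $2$-orbifold, which has infinitely many Laplace eigenvalues by Weyl's law (\resp by the abundance of Maass cusp forms for arithmetic Fuchsian groups); transferring these through $\llambda$ yields infinitely many elements of $\Spec_d(X_\Gamma)_{\II}$, i.e.\ the third bullet (the $\AdS^3$-case of Theorem~\ref{thm:mainII}).

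In short, parts (1) and (3) are an assembly of tools already in place — the type~$\I$/$\II$ decomposition, the Poincar\'e-series construction of \cite{kk16} under sharpness, the transfer maps, and classical spectral theory of hyperbolic surfaces — while the converse in (2) for a general $\Gamma$ is the single step that requires a genuinely new, representation-theoretic rigidity argument for Casimir-harmonic functions on $\SL(2,\R)$.
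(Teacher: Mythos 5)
Your treatment of (1), the first bullet of (3), and the third bullet of (3) is essentially the paper's: the inclusion $\Spec_d(X_\Gamma)_{\I}\subset\Spec_d(X)$ is by construction (Section~\ref{subsec:def-type-I-II}), the equality $\Spec_d(X)=\{k(k+2)/4:k\in\N\}$ is the classification of Harish-Chandra discrete series of $\SL(2,\R)$ together with the normalization $\chi_\lambda(C)=\frac14(\lambda^2-1)$, the lower bound for $\Spec_d(X_\Gamma)_{\I}$ is \cite[Th.\,3.8, 9.9]{kk16}, and the third bullet is the $\AdS^3$ instance of Theorem~\ref{thm:mainII} proved in Section~\ref{sec:proof-mainII}.

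There are, however, two genuine issues.

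First, in the second bullet of (3), your claim that the transfer maps "identify $\Spec_d(X_\Gamma)_{\II}$ with the discrete spectrum of the Laplacian on $\bar\Gamma\backslash\mathbb{H}^2$" is not right. By Corollary~\ref{cor:Specd-lambda-Wtau} and Theorem~\ref{thm:Specd-lambda}, the transfer identifies $\Spec_d(X_\Gamma)_{\II}$ with the \emph{union over all} $\tau\in\Disc(L_K/L_H)$ of the transferred type-$\II$ parts of $\Spec_d^{Z(\llll_\C)}(Y_\Gamma,\V_\tau)$, and only the trivial $\tau$ yields the scalar Laplacian on $\bar\Gamma\backslash\mathbb{H}^2$; for every other $\tau$ one has a matrix-valued Laplacian on a Hermitian bundle, and the bound $\subset(-\infty,0]$ would have to be checked for each one. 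The paper sidesteps this by invoking Proposition~\ref{prop:SpecG}.(2), which already folds all these bundles into the single inclusion $\Spec_d(X_\Gamma)_{\II}\subset\Char(\widehat{{}^{\backprime}G}\smallsetminus\Disc({}^{\backprime}G))$; the bound is then read off from the list of non-discrete-series irreducible unitary representations of $\SL(2,\R)$ (unitary principal series $\mapsto-\frac14(\nu^2+1)$, complementary series $\mapsto\frac14(\mu^2-1)\in(-\frac14,0)$, trivial and limits of discrete series $\mapsto 0$ and $-\frac14$). This is precisely why the paper routes the argument through $\Char(\widehat{{}^{\backprime}G})$ rather than through the base Laplacian.

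Second, you do not prove the converse direction of (2) and say so explicitly ("this is the main obstacle"). The path you gesture at — that a nonzero $f\in L^2(X_\Gamma;\M_0)_{\II}$ generates, via $p_\Gamma^*f$, a finite-length $G$-submodule of $\DD'(X)$ whose composition factors with the correct infinitesimal character and $\Diag({}^{\backprime}G)$-fixed distribution vectors are exactly $\mathbf{1}\boxtimes\mathbf{1}$, $\varpi_1^+\boxtimes\varpi_1^-$, $\varpi_1^-\boxtimes\varpi_1^+$, and that type~$\II$ forces a nonzero $\mathbf{1}\boxtimes\mathbf{1}$ component — is the right one, and it is not a "genuinely new" rigidity argument but rather the intertwiner machinery of Section~\ref{sec:proof-Specd-lambda}: the sesquilinear map $T_f$ of Lemma~\ref{lem:Tf}, Lemma~\ref{lem:noext}, and Propositions~\ref{prop:type-I-II}--\ref{prop:type-I-II-Y} relate the $\pi$-isotypic block $V_\pi\subset L^2(\Gamma\backslash G)$ to the $\pi$-isotypic block $W_\pi\subset\DD'(X)$, which is exactly the bridge needed to conclude. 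Your "wave-front set / separation of variables" suggestion is not the direction the paper goes, and I do not see how it would close the gap for non-standard $\Gamma$.
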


Proposition~\ref{prop:AdS-I-II} will be proved in Section~\ref{subsec:AdS3}.
It shows that\linebreak $\Spec_d(X_{\Gamma})_{\I} \cap \Spec_d(X_{\Gamma})_{\II} = \emptyset$ for all standard quotients $X_{\Gamma}$ of infinite volume when $X$ is the group manifold $({}^{\backprime}G\times\!{}^{\backprime}G)/\Diag({}^{\backprime}G)$ with ${}^{\backprime}G=\SL(2,\R)$.

\section{Organization of the paper}

In Chapter~\ref{sec:method} we explain the main method of proof for the results of Chapter~\ref{sec:intro}, and state refinements of Theorems \ref{thm:analytic} and~\ref{thm:mainII}, to be proved later in the paper.

Part~\ref{part:generalities} concerns generalities on invariant differential operators and their discrete spectrum on quotient manifolds $X_{\Gamma}=\Gamma\backslash X$ where $X=G/H$ is a spherical homogeneous space.
We start, in Chapter~\ref{sec:reminders}, by recalling some basic facts on $G$-invariant differential operators on~$X$, on joint eigenfunctions for $\D_G(X)$ on $X$ or~$X_{\Gamma}$, and on discrete series representations for~$X$.
Then, in Chapter~\ref{sec:type-I-II}, we introduce the notions of discrete spectrum of type~$\I$ and type~$\II$.
In Chapter~\ref{sec:dliota}, we consider a reductive subgroup $L$ of~$G$ acting properly and spherically on~$X$, and we discuss relations among the three subalgebras $\D_G(X)$, $\dd r(Z(\llll_{\C}\cap\kk_{\C}))$, and $\dd\ell(Z(\llll_{\C}))$ of $\D_L(X)$; we introduce several conditions, which we call (A), (B), ($\widetilde{\mathrm{A}}$), ($\widetilde{\mathrm{B}}$), and (Tf) for the existence of a pair of \emph{transfer maps} $\nnu$ and~$\llambda$ between eigenvalues on the Riemannian space~$Y_{\Gamma}$ and on the pseudo-Riemannian space~$X_{\Gamma}$, and we prove in particular that conditions (A), (B), and (Tf) are satisfied in the setting~\ref{spher-setting} (Proposition~\ref{prop:cond-Tf-A-B-satisfied}) and in the group manifold case (Proposition~\ref{prop:cond-Tf-A-B-satisfied-GxG}).

Part~\ref{part:main-proofs}, which is the core of the paper, provides proofs of the theorems of Chapter~\ref{sec:intro}.
We start, in Chapter~\ref{sec:strategy}, by establishing the essential self-adjointness of the pseudo-Riemannian Laplacian $\square_{X_{\Gamma}}$ (Theorems \ref{thm:selfadj} and~\ref{thm:GxG}.(2)); the proof, based on the important relation \eqref{eqn:rel-Lapl}, already illustrates the underlying idea of the transfer maps.
In Chapter~\ref{sec:transfer-Gamma}, using the transfer map~$\llambda$, we complete the proofs of Theorems \ref{thm:analytic} and~\ref{thm:GxG}.(1) (density of analytic eigenfunctions) and Theorem~\ref{thm:Fourier} (spectral decomposition).
Chapter~\ref{sec:transfer} is devoted to representation theory: we analyze the $G$-module structure together with the $L$-module structure (branching problem) on the space of distributions on $X=G/H$ under conditions (A) and~(B) (Theorem~\ref{thm:condB}).
In Chapter~\ref{sec:transfer-Gamma-type-I-II}, by using these results, we prove that the transfer maps preserve spectrum of type~$\I$ and type~$\II$ (Theorem~\ref{thm:transfer-spec}).
Thus we complete the proof of Theorems \ref{thm:mainII} and~\ref{thm:GxG}.(3) (existence of an infinite discrete spectrum of type~$\II$) in Chapter~\ref{sec:proof-mainII}.

Finally, Part~\ref{part:repr-spec} is devoted to the proof of Theorem~\ref{thm:Specd-lambda}, which describes the discrete spectrum of type~$\I$ and type~$\II$ of $X_{\Gamma}$ in terms of the representation theory of the reductive subgroup~$L$ via the transfer map~$\llambda$.
For this, we provide a general conjectural picture about $L^2$-spectrum on $X_{\Gamma}=\Gamma\backslash G/H$ and irreducible unitary representations of~$G$ in Chapter~\ref{sec:conj}, which we prove in two special cases in Chapter~\ref{sec:proof-Specd-lambda}.
These special cases combined with the results in Chapter~\ref{sec:transfer-Gamma-type-I-II} complete the proof of Theorem~\ref{thm:Specd-lambda}, see Section~\ref{subsec:proof-Specd-lambda}.

\section*{Convention}

In the whole paper, we assume that the respective Lie algebras $\g$, $\h$, $\llll$ of the real reductive Lie groups $G$, $H$, $L$ are defined algebraically over~$\R$.

\section*{Acknowledgements}

We are grateful to the University of Tokyo for its support through the GCOE program, and to the Institut des Hautes \'Etudes Scientifiques (Bures-sur-Yvette), Universit\'e Lille~1, Mathematical Sciences Research Institute (Berkeley), and Max Planck Institut f\"ur Mathematik (Bonn) for giving us opportunities to work together in very good conditions.

The results of the present paper were announced in \cite{kk20}.

\chapter{Method of proof} \label{sec:method}

In this chapter we explain our approach for proving the results of Chapter~\ref{sec:intro}.
We give refinements of Theorems \ref{thm:analytic} and~\ref{thm:mainII}, namely Theorems \ref{thm:transfer} and~\ref{thm:Specd-lambda}.

\section{Overview} \label{subsec:gen-setting}

In most of the paper (specifically, in Chapters \ref{sec:dliota} to~\ref{sec:proof-mainII}), we work in the following general setting.

\begin{gen-setting} \label{gen-setting}
We consider a reductive homogeneous space $X=G/H$ with $H$ noncompact, and a reductive subgroup $L$ of~$G$ acting properly and transitively on~$X$; then $L_H:=L\cap H$ is compact.
We also consider a maximal compact subgroup $K$ of~$G$ such that $L_K:=L\cap K$ is a maximal compact subgroup of~$L$ containing~$L_H$; then $X$ fibers over the Riemannian symmetric space $Y:=L/L_K$ of~$L$, with compact fiber $F:=L_K/L_H$:
\begin{equation} \label{eqn:bundleXY}
q : X = G/H \simeq L/L_H \overset{F}{\longrightarrow} L/L_K = Y.
\end{equation}
For simplicity we assume $G$, $H$, $L$ to be connected (see Remark~\ref{rem:connected}).
\end{gen-setting}

The main setting~\ref{spher-setting} of our theorems corresponds to the case that $X_{\C}=G_{\C}/H_{\C}$ is $L_{\C}$-spherical and $G$ simple, and the group manifold setting of Theorem~\ref{thm:GxG} to the case $(G,H,L) = ({}^{\backprime}G\times\!{}^{\backprime}G,\Diag({}^{\backprime}G),{}^{\backprime}G\times\!{}^{\backprime}K)$ where ${}^{\backprime}G$ is a noncompact reductive Lie group and ${}^{\backprime}K$ a maximal compact subgroup as in Example~\ref{ex:group-manifold}.

As in Chapter~\ref{sec:intro}, we consider standard pseudo-Riemannian locally homogeneous spaces $X_{\Gamma}=\Gamma\backslash X$ with $\Gamma\subset L$, and our goal is to analyze joint eigenfunctions on~$X_{\Gamma}$ for the differential operators $D_{\Gamma}$ with $D\in\D_G(X)$.
The groups involved here are summarized in the following diagram.
\begin{alignat*}{7}
     &
     &&
     &&\,G                                      
     && 
     &&\,\supset\,\,
     &&
     &&\,H\nonumber
\\
     &
     &&
     &&\,\text{\rotatebox{90}{$\subset$}}
     &&         
     &&
     && 
     &&\,\,\text{\rotatebox{90}{$\subset$}}
\\
     &\Gamma
     &&\quad\subset\quad
     &&\,L
     &&\quad\supset\quad
     &&L_K
     &&\quad\supset\quad
     &&L_H
\end{alignat*}

For this goal we consider, not only the algebra $\D_G(X)$, but also the larger $\C$-algebra $\D_L(X)$ of $L$-invariant differential operators on~$X$.
Let $Z(\llll_{\C})$ be the center of the enveloping algebra $U(\llll_{\C})$ and $\dd\ell : Z(\llll_{\C})\to\D_L(X)$ the natural $\C$-algebra homomorphism (see \eqref{eqn:dl-dr}).
Assuming that $X_{\C}=G_{\C}/H_{\C}$ is $L_{\C}$-spherical and $G$ simple, in \cite{kkdiffop} we proved the existence of \emph{transfer maps} $\nnu$ and $\llambda$ which relate the subalgebra $\D_G(X)$ and the image $\dd\ell(Z(\llll_{\C}))$ inside $\D_L(X)$, and thus relate the representations of the group $G$ and the subgroup~$L$ generated by eigenfunctions of $\D_G(X)$ and $\dd\ell(Z(\llll_{\C}))$ respectively.
In the current paper, these maps enable us to construct joint eigenfunctions on \emph{pseudo-Rieman\-nian} locally symmetric spaces $X_{\Gamma}$ with $\Gamma\subset L$ using (vector-bundle-valued) eigenfunctions on the corresponding \emph{Riemannian} locally symmetric space $Y_{\Gamma}=\Gamma\backslash L/L_K$.
We now explain this in more detail.

\section[Vector-bundle valued eigenfunctions]{Vector-bundle valued eigenfunctions in the Riemannian setting} \label{subsec:bundle}

In the whole paper, we denote by $\Disc(L_K/L_H)$ the set of equivalence classes of (finite-dimensional) irreducible representations $(\tau,V_{\tau})$ of the compact group~$L_K$ with nonzero $L_H$-fixed vectors.

For any $(\tau,V_{\tau})\in\Disc(L_K/L_H)$, the contragredient representation $V_{\tau}^{\vee}$ has a nonzero subspace $(V_{\tau}^{\vee})^{L_H}$ of $L_H$-fixed vectors.
For $\F=\A$, $C^{\infty}$, $L^2$, or~$\DD'$, let $\F(Y_{\Gamma},\V_{\tau})$ be the space of analytic, smooth, square-integrable, or distribution sections of the Hermitian vector bundle
\begin{equation} \label{eqn:mathcal-V-tau}
\V_{\tau}:=\Gamma\backslash L\times_{L_K} V_{\tau}
\end{equation}
over~$Y_{\Gamma}$.
There is a natural homomorphism
\begin{equation} \label{eqn:i-tau}
\ii_{\tau,\Gamma} : (V_{\tau}^{\vee})^{L_H} \otimes \F(Y_{\Gamma},\V_{\tau}) \longhookrightarrow \F(X_{\Gamma})
\end{equation}
sending $v_{\tau}\otimes\varphi$ to $\langle\varphi,v_{\tau}\rangle$, where we see $\varphi$ as an $L_K$-invariant element of $\F(\Gamma\backslash L)\otimes V_{\tau}$ under the diagonal action, and $\langle\varphi,v_{\tau}\rangle$ as an $L_H$-invariant element of $\F(\Gamma\backslash L)$.
The map $\ii_{\tau,\Gamma}$ is injective and continuous (see Remark~\ref{rem:top-distrib} for the topology on the space of distributions).
Under the assumption that $X_{\C}$ is $L_{\C}$-spherical, the space $(V_{\tau}^{\vee})^{L_H}\simeq\C$ is one-dimensional (see \cite[Lem.\,4.2.(4) \& Fact\,3.1.(iv)]{kkdiffop}), and so $\ii_{\tau,\Gamma}$ becomes a map from $\F(Y_{\Gamma},\V_{\tau})$ to $\F(X_{\Gamma})$; the algebraic direct sum $\bigoplus_{\tau} \F(Y_{\Gamma},\V_{\tau})$ is dense in $\F(X_{\Gamma})$ via the~$\ii_{\tau,\Gamma}$ (see Lemma~\ref{lem:p-tau-i-tau}).

When $\Gamma=\{e\}$ is trivial, we use the same notation $\mathcal{V}_{\tau}$ for the vector bundle over $Y$ associated to the representation $(\tau, V_\tau)$ as in \eqref{eqn:mathcal-V-tau}, and simply write $\ii_{\tau}$ for $\ii_{\tau,\Gamma}$.
Then $p_{\Gamma}^{\ast}\circ\ii_{\tau,\Gamma} = \ii_{\tau}\circ {p'_{\Gamma}}^{\!\!\ast}$ (see Observation~\ref{obs:extend-i-tau}), where $p_{\Gamma}^{\ast} : \F(X_{\Gamma})\to\DD'(X)$ and ${p'_{\Gamma}}^{\!\!\ast} : \F(Y_{\Gamma},\V_{\tau})\to\DD'(Y,\V_{\tau})$ denote the maps induced by the natural projections $p_{\Gamma} : X\to X_{\Gamma}$ and $p'_{\Gamma} : Y\to Y_{\Gamma}$, respectively.

The enveloping algebra $U(\llll_{\C})$ acts on the left on $\F(Y,\V_{\tau})$ as matrix-valued differential operators.
In particular, this gives a $\C$-algebra homomorphism $\dd\ell^{\tau}$ from $Z(\llll_{\C})$ to the $\C$-algebra $\D_L(Y,\V_{\tau})$ of $L$-invariant differential operators on $\F(Y,\V_{\tau})$.
In turn (by commutativity with the action of~$\Gamma$), for any $z\in Z(\llll_{\C})$ the operator $\dd\ell^{\tau}\!(z)$ induces a matrix-valued differential operator $\dd\ell^{\tau}\!(z)_{\Gamma}$ acting on $\F(Y_{\Gamma},\V_{\tau})$.
For any $\C$-algebra homomorphism $\nu : Z(\llll_{\C})\to\C$, let $\F(Y_{\Gamma},\V_{\tau};\NN_{\nu})$ be the space of solutions $\varphi\in\F(Y_{\Gamma},\V_{\tau})$ to the system
$$\dd\ell^{\tau}\!(z)_{\Gamma}\,\varphi = \nu(z) \varphi \quad\quad\mathrm{for\ all}\ z\in Z(\llll_{\C}) \eqno{(\NN_{\nu})}.$$
This space $\F(Y_{\Gamma},\V_{\tau};\NN_{\nu})$ is nonzero only if $\nu$ vanishes on $\Ker(\dd\ell^{\tau})$, in which case we can see $\nu$ as an element of $\Hom_{\C\text{-}\mathrm{alg}}(Z(\llll_{\C})/\Ker(\dd\ell^{\tau}),\C)$.
The space $\F(Y_{\Gamma},\V_{\tau};\NN_{\nu})$ is a subspace of $\A(Y_{\Gamma},\V_{\tau};\NN_{\nu})$ by the elliptic regularity theorem, since the system $(\NN_{\nu})$ contains an elliptic differential equation when $Y_{\Gamma}$ is Riemannian.

\begin{example} \label{ex:trivial-tau}
When $\tau$ is the trivial one-dimensional representation of $L_K$, the space $\F(Y,\V_\tau)$ identifies with $\F(Y)$, the $\C$-algebra $\D_L(Y,\V_\tau)$ with $D_L(Y)$, and the map $d\ell^\tau$ with the natural $\C$-algebra homomorphism $\dd\ell: Z(\llll_{\C}) \to \D_L(Y)$, see \eqref{eqn:dl-dr}; likewise, $\F(Y_\Gamma, \V_\tau; \NN_\nu)$ identifies with $\F(Y_\Gamma, \NN_\nu)$, and the map $\ii_{\tau,\Gamma}$ of \eqref{eqn:i-tau} is the pull-back of the projection map $q_{\Gamma} : X_{\Gamma}\to Y_{\Gamma}$.
\end{example}

\section{Transferring Riemannian eigenfunctions} \label{subsec:method-transfer}

We wish to understand joint eigenfunctions of the algebra $\D_G(X)$ on $\F(X_{\Gamma})$.
When $X_{\C}$ is $L_{\C}$-spherical, $\D_G(X)$ leaves the subspace $\ii_{\tau,\Gamma}(\F(Y_{\Gamma},\V_{\tau}))\subset\F(X_{\Gamma})$ invariant for every~$\tau$ (see \cite[Th.\,4.9]{kkdiffop}).
On the other hand, we have seen that the center $Z(\llll_{\C})$ naturally acts on $\F(Y_{\Gamma},\V_{\tau})$.
Although there is no direct map between the two algebras $\D_G(X)$ and $Z(\llll_{\C})$, the respective eigenvalues of $\D_G(X)$ and $Z(\llll_{\C})$ are still related as follows.

Assuming that $X_{\C}=G_{\C}/H_{\C}$ is $L_{\C}$-spherical and $G$ simple, in \cite{kkdiffop} we constructed ``transfer maps'' in a compact setting.
Via holomorphic continuation of invariant differential operators, we obtain for any $\tau\in\Disc(L_K/L_H)$ analogous maps
\begin{equation} \label{eqn:nu-lambda-tau}
\left\{ \begin{array}{l}
\nnu(\cdot,\tau) : \Hom_{\C\text{-}\mathrm{alg}}(\D_G(X),\C) \longrightarrow \Hom_{\C\text{-}\mathrm{alg}}(Z(\llll_{\C}),\C),\\
\llambda(\cdot,\tau) : \Hom_{\C\text{-}\mathrm{alg}}(Z(\llll_{\C})/\Ker(\dd\ell^{\tau}),\C) \longrightarrow \Hom_{\C\text{-}\mathrm{alg}}(\D_G(X),\C),
\end{array} \right.
\end{equation}
referred to also as \emph{transfer maps}, which are described explicitly in each case.
These maps have the following properties:
\begin{itemize}
  \item $\nnu(\llambda(\nu,\tau)) = \nu$ for all $\nu\in\Hom_{\C\text{-}\mathrm{alg}}(Z(\llll_{\C})/\Ker(\dd\ell^{\tau}),\C)$;
  \item $\llambda(\nnu(\lambda,\tau)) = \lambda$ for all $\lambda\in\mathrm{Spec}(X)_{\tau}$,
\end{itemize}
where $\mathrm{Spec}(X)_{\tau}$ denotes the set of $\lambda\in\Hom_{\C\text{-}\mathrm{alg}}(\D_G(X),\C)$ such that $\DD'(X;\M_{\lambda}) \cap \ii_{\tau}(\DD'(Y,\V_{\tau})) \neq \{ 0\}$.
We note that $\nnu(\lambda,\tau)$ vanishes on $\Ker(\dd\ell^{\tau})$ if $\lambda\in\mathrm{Spec}(X)_{\tau}$, see \cite[Prop.\,4.8]{kkdiffop}, and thus the composition $\llambda(\nnu(\lambda,\tau))$ is well defined for $\lambda\in\mathrm{Spec}(X)_{\tau}$.

We shall briefly review the existence of the maps $\nnu(\cdot,\tau)$ and $\llambda(\cdot,\tau)$ in Chapter~\ref{sec:dliota}, by introducing conditions (A) and~(B) on $G$- and $L$-submodules of $C^{\infty}(X)$, and conditions ($\widetilde{\mathrm{A}}$) and~($\widetilde{\mathrm{B}}$) on invariant differential operators on~$X$.
Using $\nnu(\cdot,\tau)$, we now construct a dense subspace of $\F(X_{\Gamma};\M_{\lambda})$ consisting of real analytic functions, in terms of the Riemannian data $\A(Y_{\Gamma},\V_{\tau};\NN_{\nu})$; this refines Theorem~\ref{thm:analytic}.

\begin{theorem}[Transfer of Riemannian eigenfunctions] \label{thm:transfer}
In the general setting~\ref{gen-setting}, suppose that $X_{\C}=G_{\C}/H_{\C}$ is $L_{\C}$-spherical and $G$ simple, and let $\Gamma$ be a torsion-free discrete subgroup of~$L$; in other words, we are in the main setting~\ref{spher-setting}.
For any  $\lambda \in \Hom_{\C\text{-}\mathrm{alg}}(\D_G(X),\C)$ and any $\tau\in\Disc(L_K/L_H)$, we have
$$\ii_{\tau,\Gamma}\big(C^{\infty}(Y_{\Gamma},\V_{\tau};\NN_{\nnu(\lambda,\tau)})\big) \subset C^{\infty}(X_{\Gamma};\M_{\lambda}).$$
Moreover, the algebraic direct sum
$$\bigoplus_{\tau\in\Disc(L_K/L_H)} \ii_{\tau,\Gamma}\big(\A(Y_{\Gamma},\V_{\tau};\NN_{\nnu(\lambda,\tau)})\big)$$
is dense in $\F(X_{\Gamma};\M_{\lambda})$ for $\F=\A$, $C^{\infty}$, or~$\DD'$ in each topology, and
$$\bigoplus_{\tau\in\Disc(L_K/L_H)} \ii_{\tau,\Gamma}\big((\A\cap L^2)(Y_{\Gamma},\V_{\tau};\NN_{\nnu(\lambda,\tau)})\big)$$
is dense in the Hilbert space $L^2(X_{\Gamma};\M_{\lambda})$.
\end{theorem}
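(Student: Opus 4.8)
The plan is to transfer eigenfunctions from the Riemannian vector bundle setting on $Y_\Gamma$ to the pseudo-Riemannian setting on $X_\Gamma$ via the map $\ii_{\tau,\Gamma}$, and to deduce the density statements from the corresponding density on the homogeneous space $X$ (the case $\Gamma=\{e\}$) by a $\Gamma$-averaging/descent argument. First I would establish the inclusion $\ii_{\tau,\Gamma}\big(C^\infty(Y_\Gamma,\V_\tau;\NN_{\nnu(\lambda,\tau)})\big)\subset C^\infty(X_\Gamma;\M_\lambda)$. By Observation~\ref{obs:extend-i-tau} we have the commutation $p_\Gamma^{\ast}\circ\ii_{\tau,\Gamma}=\ii_\tau\circ {p'_\Gamma}^{\!\!\ast}$, so it suffices to prove the corresponding statement upstairs, namely that for $D\in\D_G(X)$ the operator $D$ acts on $\ii_\tau(C^\infty(Y,\V_\tau))$ and is intertwined, under $\ii_\tau$, with the action of a suitable element of $\dd\ell^\tau(Z(\llll_\C))$ on $C^\infty(Y,\V_\tau)$. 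This is precisely the content of the transfer-map construction: by \cite[Th.\,4.9]{kkdiffop} (holomorphically continued), $\D_G(X)$ preserves $\ii_\tau(\F(Y,\V_\tau))$, and the induced action corresponds under $\ii_\tau$ to an action of $Z(\llll_\C)/\Ker(\dd\ell^\tau)$ whose eigencharacter on a $\NN_\nu$-eigensection is $\llambda(\nu,\tau)$; equivalently, $\nnu(\lambda,\tau)$ is characterized so that a $\NN_{\nnu(\lambda,\tau)}$-eigensection maps to an $\M_\lambda$-eigenfunction. Since $p_\Gamma^{\ast}$ is injective and the differential equations $(\M_\lambda)$ and $(\NN_\nu)$ are local (hence descend to the quotients via the commutation identity), the inclusion on $X_\Gamma$ follows from the one on $X$. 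The same argument applies with $C^\infty$ replaced by $\A$, $L^2$, or $\DD'$, using that $\ii_{\tau,\Gamma}$ is injective and continuous.

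Next I would prove the density statements. The key input is Lemma~\ref{lem:p-tau-i-tau}, which gives that $\bigoplus_\tau \F(Y_\Gamma,\V_\tau)$ is dense in $\F(X_\Gamma)$ via the $\ii_{\tau,\Gamma}$, together with projection operators $p_{\tau,\Gamma}$ (the $L_K$-isotypic projectors on $\Gamma\backslash L$) splitting off the $\tau$-component. Applying $p_{\tau,\Gamma}$ to the eigenspace $\F(X_\Gamma;\M_\lambda)$: because $\D_G(X)$ commutes with the right $L_K$-action on $\Gamma\backslash L$ (as $\D_G(X)\subset\D_L(X)$ and $L_K\subset L$), the projector $p_{\tau,\Gamma}$ preserves $\F(X_\Gamma;\M_\lambda)$ and carries it into $\ii_{\tau,\Gamma}(\F(Y_\Gamma,\V_\tau))$. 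One then identifies, via the intertwining relation of the previous paragraph, the image $p_{\tau,\Gamma}\big(\F(X_\Gamma;\M_\lambda)\big)$ with $\ii_{\tau,\Gamma}\big(\F(Y_\Gamma,\V_\tau;\NN_{\nnu(\lambda,\tau)})\big)$ — here one needs that $\F(X_\Gamma;\M_\lambda)\cap\ii_{\tau,\Gamma}(\F(Y_\Gamma,\V_\tau))$ consists exactly of the images of $\NN_{\nnu(\lambda,\tau)}$-eigensections, which is where the precise characterization of the transfer map $\nnu(\cdot,\tau)$ (and the fact that $(V_\tau^\vee)^{L_H}$ is one-dimensional, so there is no ambiguity in $\ii_{\tau,\Gamma}$) is used. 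Summing over $\tau$ and invoking the density of $\bigoplus_\tau p_{\tau,\Gamma}$ restricted to the eigenspace then yields density of $\bigoplus_\tau \ii_{\tau,\Gamma}\big(\F(Y_\Gamma,\V_\tau;\NN_{\nnu(\lambda,\tau)})\big)$ in $\F(X_\Gamma;\M_\lambda)$ for $\F=C^\infty$ and $\DD'$. For the refinement to $\F=\A$, I would invoke elliptic regularity: each $\F(Y_\Gamma,\V_\tau;\NN_\nu)$ equals $\A(Y_\Gamma,\V_\tau;\NN_\nu)$ since $(\NN_\nu)$ contains an elliptic equation on the Riemannian manifold $Y_\Gamma$ (as noted in the paper right after its definition), so no separate argument is needed. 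The $L^2$ statement is obtained the same way, observing that $p_{\tau,\Gamma}$ is the restriction of an orthogonal projection on $L^2(\Gamma\backslash L)$ and that $\ii_{\tau,\Gamma}$ is, up to a positive constant, an isometry onto its image, so density in $L^2(X_\Gamma;\M_\lambda)$ of $\bigoplus_\tau \ii_{\tau,\Gamma}\big((\A\cap L^2)(Y_\Gamma,\V_\tau;\NN_{\nnu(\lambda,\tau)})\big)$ reduces to the $L^2$-density of the $L_K$-isotypic decomposition of $L^2(X_\Gamma;\M_\lambda)$.

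The main obstacle I anticipate is the \emph{identification} step: showing that $p_{\tau,\Gamma}$ maps the full eigenspace $\F(X_\Gamma;\M_\lambda)$ \emph{onto} $\ii_{\tau,\Gamma}\big(\F(Y_\Gamma,\V_\tau;\NN_{\nnu(\lambda,\tau)})\big)$, rather than merely into it. The inclusion ``into'' is soft, but surjectivity requires knowing that a section $\varphi\in\F(Y_\Gamma,\V_\tau)$ whose image $\ii_{\tau,\Gamma}(\varphi)$ satisfies $(\M_\lambda)$ must itself satisfy $(\NN_{\nnu(\lambda,\tau)})$ — i.e.\ that the transfer correspondence is an \emph{equivalence} on eigenspaces, not just a one-way construction. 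This is exactly the statement that $\nnu(\cdot,\tau)$ and $\llambda(\cdot,\tau)$ are mutually inverse on the relevant spectral sets (the two bulleted properties after \eqref{eqn:nu-lambda-tau}), combined with the structural fact from \cite{kkdiffop} that $\dd\ell^\tau(Z(\llll_\C))$ together with $\D_G(X)$ generate all of $\D_L(X)$-acting-on-$\V_\tau$ modulo the relevant kernels (conditions $(\widetilde{\mathrm A})$, $(\widetilde{\mathrm B})$). I would handle this by first proving the $\Gamma=\{e\}$ case on $X$ using these structural results and the finite-length statement of \cite{ko13}, and then descending to $X_\Gamma$ using locality of the differential equations and the commutation $p_\Gamma^{\ast}\circ\ii_{\tau,\Gamma}=\ii_\tau\circ{p'_\Gamma}^{\!\!\ast}$ together with injectivity of $p_\Gamma^{\ast}$ and ${p'_\Gamma}^{\!\!\ast}$. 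A secondary technical point is the interchange of the infinite direct sum over $\tau$ with the closure operations in each of the four topologies; this is routine once the per-$\tau$ identification and the density of $\bigoplus_\tau p_{\tau,\Gamma}$ (Lemma~\ref{lem:p-tau-i-tau}) are in hand, but must be stated carefully for the distribution topology (Remark~\ref{rem:top-distrib}).
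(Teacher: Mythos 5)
Your proof is essentially the paper's: decompose via the $L_K$-isotypic projectors $\ii_{\tau,\Gamma},\pp_{\tau,\Gamma}$ (Lemma~\ref{lem:p-tau-i-tau}), transfer the eigenvalue systems via condition (Tf), and upgrade to analyticity by elliptic regularity on the Riemannian $Y_\Gamma$ (Lemma~\ref{lem:regY}). However, you have misplaced where the difficulty lies. The ``main obstacle'' you anticipate --- that $\pp_{\tau,\Gamma}$ maps $\F(X_\Gamma;\M_\lambda)$ \emph{onto} $(V_\tau^\vee)^{L_H}\otimes\F(Y_\Gamma,\V_\tau;\NN_{\nnu(\lambda,\tau)})$ rather than merely into it --- is not needed for the density statement. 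Density is an immediate consequence of the inclusion $\pp_{\tau,\Gamma}\big(\F(X_\Gamma;\M_\lambda)\big)\subset (V_\tau^\vee)^{L_H}\otimes\F(Y_\Gamma,\V_\tau;\NN_{\nnu(\lambda,\tau)})$ (Remark~\ref{rem:specXYGamma}.(1)) combined with the approximation $f=\lim_N\sum_{\tau}\ii_{\tau,\Gamma}\circ\pp_{\tau,\Gamma}(f)$ from Lemma~\ref{lem:p-tau-i-tau}; surjectivity of $\pp_{\tau,\Gamma}$ onto the eigensections plays no role.

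Conversely, the inclusion you call ``soft'' is not: it is condition (Tf)-(1), and proving it requires condition $(\widetilde{\mathrm{B}})$ --- expressing each $\dd\ell(z)$, $z\in Z(\llll_\C)$, as a combination of $\D_G(X)$ and $\dd r(Z(\llll_\C\cap\kk_\C))$, so that its eigenvalue on $\pp_\tau(f)$ is determined by $\lambda$ and $\tau$ (see the proof of Lemma~\ref{lem:A-tilA-B-tilB}.(3)). A related slip: your parenthetical justification ``(as $\D_G(X)\subset\D_L(X)$ and $L_K\subset L$)'' for why $\D_G(X)$ respects the $L_K$-isotypic decomposition does not suffice. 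The relevant commutation is $D\circ\dd r(z)=\dd r(z)\circ D$ for $z\in Z(\llll_\C\cap\kk_\C)$, which holds because $\D_L(X)$ is commutative, and that commutativity is precisely where the hypothesis that $X_\C$ is $L_\C$-spherical enters (Definition-Proposition~\ref{def-prop:D-tau}.(3), Fact~\ref{fact:spherical}). Once these points are repaired, the remainder of your argument (descent under $p_\Gamma^*$ via Observation~\ref{obs:extend-i-tau}, elliptic regularity for $\F=\A$, orthogonality of the $\ii_{\tau,\Gamma}$-images for $L^2$) matches the paper's proof.
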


Theorem~\ref{thm:transfer} applies to the triples $(G,H,L)$ of Table~\ref{table1}.
An analogous result does not hold anymore if we remove the assumption that $X_{\C}$ is $L_{\C}$-spherical: see Example~\ref{ex:not-spher-A-B-nonsym}.(1) for trivial~$\Gamma$.

We also obtain the following refinement of Theorem~\ref{thm:GxG}.(1).

\begin{theorem}[Transfer of Riemannian eigenfunctions in the group manifold case] \label{thm:transfer-GxG}
In the setting of Theorem~\ref{thm:GxG}, the same conclusion as Theorem~\ref{thm:transfer} holds with
$$(G,H,L,L_K,L_H) := \big({}^{\backprime}G\times\!{}^{\backprime}G,\Diag({}^{\backprime}G),{}^{\backprime}G\times\!{}^{\backprime}K,{}^{\backprime}K\times\!{}^{\backprime}K,\Diag({}^{\backprime}K)\big).$$
\end{theorem}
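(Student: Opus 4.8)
The plan is to reduce Theorem~\ref{thm:transfer-GxG} to Theorem~\ref{thm:transfer} by checking that all the structural inputs used in the proof of the latter survive in the group manifold case, even though $X_{\C}$ need not be $L_{\C}$-spherical there. Concretely, with $(G,H,L)=({}^{\backprime}G\times{}^{\backprime}G, \Diag({}^{\backprime}G), {}^{\backprime}G\times{}^{\backprime}K)$, the homogeneous space $X=G/H$ is the group manifold ${}^{\backprime}G$, and the fibration \eqref{eqn:bundleXY} becomes ${}^{\backprime}G \simeq {}^{\backprime}G \to {}^{\backprime}G/{}^{\backprime}K = Y$ with fiber $F={}^{\backprime}K$. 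The key observation is that the whole machinery of Section~\ref{sec:transfer-Gamma} only uses the following ingredients: (1) the existence of the transfer maps $\nnu(\cdot,\tau)$ and $\llambda(\cdot,\tau)$ of \eqref{eqn:nu-lambda-tau} with their inversion properties; (2) the $\D_G(X)$-invariance of the subspaces $\ii_{\tau,\Gamma}(\F(Y_\Gamma,\V_\tau))$; (3) the one-dimensionality of $(V_\tau^\vee)^{L_H}$ ensuring $\ii_{\tau,\Gamma}$ is a map $\F(Y_\Gamma,\V_\tau)\to\F(X_\Gamma)$; and (4) the density of $\bigoplus_\tau \ii_{\tau,\Gamma}(\F(Y_\Gamma,\V_\tau))$ in $\F(X_\Gamma)$.

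First I would verify that conditions (A), (B), and (Tf) hold in the group manifold case; this is exactly the content of Proposition~\ref{prop:cond-Tf-A-B-satisfied-GxG} (announced in the excerpt), so the transfer maps and their properties are available. For the group manifold, the branching $G$-module structure on $C^\infty(X)={}^{\backprime}G$ under $L={}^{\backprime}G\times{}^{\backprime}K$ decomposes naturally via the Peter--Weyl-type expansion on the fiber ${}^{\backprime}K$: the $\tau$-isotypic pieces $\ii_\tau(C^\infty(Y,\V_\tau))$ are precisely the images of matrix coefficients associated to ${}^{\backprime}K$-types $\tau$, and this is where one checks (2) and (4) directly from Fourier analysis on the compact group ${}^{\backprime}K={}^{\backprime}L_K/{}^{\backprime}L_H$ acting on the fiber. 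Here $L_H = \Diag({}^{\backprime}K)$ sits inside $L_K = {}^{\backprime}K\times{}^{\backprime}K$, and $\Disc(L_K/L_H)$ is the set of pairs $(\sigma,\sigma^\vee)$ with $\sigma\in\widehat{{}^{\backprime}K}$, for which $(V_\tau^\vee)^{L_H}$ is one-dimensional by Schur orthogonality — giving (3).

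With these inputs in place, the proof of Theorem~\ref{thm:transfer} transfers essentially verbatim. The inclusion $\ii_{\tau,\Gamma}(C^\infty(Y_\Gamma,\V_\tau;\NN_{\nnu(\lambda,\tau)}))\subset C^\infty(X_\Gamma;\M_\lambda)$ follows from the defining intertwining property of $\nnu(\cdot,\tau)$ together with $p_\Gamma^\ast\circ\ii_{\tau,\Gamma}=\ii_\tau\circ {p'_\Gamma}^{\!\ast}$ (Observation~\ref{obs:extend-i-tau}), reducing the $\Gamma$-periodic statement to the already-established statement on $X$ and $Y$. For density, one combines the density of $\bigoplus_\tau\ii_{\tau,\Gamma}(\F(Y_\Gamma,\V_\tau))$ in $\F(X_\Gamma)$ with the fact that, within each $\tau$-block, the elliptic operators in $(\NN_\nu)$ on the Riemannian quotient $Y_\Gamma$ have complete spectral decompositions, so that $\bigoplus_\nu \A(Y_\Gamma,\V_\tau;\NN_\nu)$ is dense in $\F(Y_\Gamma,\V_\tau)$; the transfer map then matches the $\nu$-eigenspaces on the $Y_\Gamma$ side with the $\lambda$-eigenspaces on the $X_\Gamma$ side.

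The main obstacle is that $X_{\C}$ is not $L_{\C}$-spherical in general (Example~\ref{ex:sph}.(3)), so one cannot simply cite \cite{kkdiffop} and \cite{ko13} as black boxes; in particular the finite-length statement for $C^\infty(X;\M_\lambda)$ and the explicit form of the transfer maps must be re-derived. However, in the group manifold case there is a substitute: $\D_G(X)$ is the algebra generated by $Z({}^{\backprime}\g_{\C})$ acting on the left and on the right, and the $L$-module $C^\infty(X)$ decomposes concretely via harmonic analysis on ${}^{\backprime}G$ fibered over ${}^{\backprime}G/{}^{\backprime}K$, so the relevant commutativity, invariance, and one-dimensionality facts can be checked by hand. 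This is exactly what is packaged into Proposition~\ref{prop:cond-Tf-A-B-satisfied-GxG}, and once it is granted the argument is formally identical to the spherical case. Hence the proof amounts to invoking that proposition and then running the proof of Theorem~\ref{thm:transfer} with $L_K={}^{\backprime}K\times{}^{\backprime}K$ and $L_H=\Diag({}^{\backprime}K)$.
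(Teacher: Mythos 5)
Your proposal follows the paper's own route: the paper proves Theorems~\ref{thm:transfer} and~\ref{thm:transfer-GxG} by one and the same argument, the only group-manifold-specific input being condition~(Tf) supplied by Proposition~\ref{prop:cond-Tf-A-B-satisfied-GxG} (via Lemmas \ref{lem:tilA-tilB-group-manifold} and~\ref{lem:A-tilA-B-tilB}), after which Remark~\ref{rem:specXYGamma}, Lemma~\ref{lem:p-tau-i-tau}, and the elliptic regularity Lemma~\ref{lem:regY} give the conclusion; your observations that $\Disc(L_K/L_H)=\{({}^{\backprime}\tau)^{\vee}\boxtimes{}^{\backprime}\tau : {}^{\backprime}\tau\in\widehat{{}^{\backprime}K}\}$ and that $(V_{\tau}^{\vee})^{\Diag({}^{\backprime}K)}\simeq\C$ by Schur's lemma are exactly what makes ``the same conclusion as Theorem~\ref{thm:transfer}'' well posed without $L_{\C}$-sphericity. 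The only step to tighten is your density argument: as in the paper, one should note that for $f\in\F(X_{\Gamma};\M_{\lambda})$ each component $\pp_{\tau,\Gamma}(f)$ automatically lies in $(V_{\tau}^{\vee})^{L_H}\otimes\F(Y_{\Gamma},\V_{\tau};\NN_{\nnu(\lambda,\tau)})$ by condition~(Tf), is analytic by ellipticity, and the finite sums $\sum_{\tau}\ii_{\tau,\Gamma}\circ\pp_{\tau,\Gamma}(f)$ converge to~$f$ by Lemma~\ref{lem:p-tau-i-tau}, rather than invoking density of $\bigoplus_{\nu}\A(Y_{\Gamma},\V_{\tau};\NN_{\nu})$ in $\F(Y_{\Gamma},\V_{\tau})$, which is neither needed nor sufficient for density inside the fixed eigenspace $\F(X_{\Gamma};\M_{\lambda})$.
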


Theorems \ref{thm:transfer} and~\ref{thm:transfer-GxG} assert that the space $C^{\infty}(Y_{\Gamma},\V_{\tau};\NN_{\nu})$ of joint eigenfunctions for the center $Z(\llll_{\C})$ is transferred by~$\ii_{\tau,\Gamma}$ into a space of joint eigenfunctions for $\D_G(X)$.
They imply that, in their respective settings, $\Spec_d(X_{\Gamma})$ is infinite if $X_{\Gamma}$ is compact, giving an affirmative answer to Question~\ref{problems}.(a).
These theorems will be proved in Section~\ref{subsec:L2decomp} in the special case where $\rank G/H=1$, and in Section~\ref{subsec:proof-transfer} in general.

Here is a consequence of Theorem~\ref{thm:transfer} (see Section~\ref{subsec:proof-transfer}).

\begin{corollary} \label{cor:L2d(XGamma,Mlambda)-orth}
In the main setting \ref{spher-setting}, the spaces $L^2(X_{\Gamma};\M_{\lambda})$, for varying $\lambda\in\Hom_{\C\text{-}\mathrm{alg}}(\D_G(X),\C)$, are orthogonal to each other.
\end{corollary}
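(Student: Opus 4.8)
The plan is to deduce the orthogonality from the known self-adjointness and the spectral structure, together with the density statement of Theorem~\ref{thm:transfer}. First recall that $L^2_d(X_{\Gamma};\M_{\lambda})$ is the closure in $L^2(X_{\Gamma})$ of the space of square-integrable joint eigenfunctions for $\D_G(X)$ with eigenvalue~$\lambda$ (equivalently, it is the $\lambda$-isotypic component of the $\D_G(X)$-action on $L^2(X_{\Gamma})$). Since the operators $D_{\Gamma}$ for $D\in\D_G(X)$ are symmetric on $C_c^{\infty}(X_{\Gamma})$, the naive argument would be: for $f\in L^2(X_{\Gamma};\M_{\lambda})$ and $g\in L^2(X_{\Gamma};\M_{\lambda'})$ with $\lambda\neq\lambda'$, pick $D$ with $\lambda(D)\neq\lambda'(D)$ and compute
\[
\lambda(D)\,(f,g)_{L^2(X_{\Gamma})} = (D_{\Gamma}f,g)_{L^2(X_{\Gamma})} = (f,D_{\Gamma}g)_{L^2(X_{\Gamma})} = \overline{\lambda'(D)}\,(f,g)_{L^2(X_{\Gamma})},
\]
forcing $(f,g)_{L^2(X_{\Gamma})}=0$ once one knows $\overline{\lambda'(D)}=\lambda'(D)$, i.e.\ that the eigenvalues are real. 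The reality of the eigenvalues in $\Spec_d(X_{\Gamma})$ is exactly what the essential self-adjointness of $\square_{X_{\Gamma}}$ (Theorem~\ref{thm:selfadj}) gives for the Laplacian; for the higher operators one needs the joint self-adjointness supplied by the analysis of Sections~\ref{sec:strategy}--\ref{sec:transfer-Gamma}. So one route is simply to invoke this and carry out the computation above, taking care to justify the middle equality $(D_{\Gamma}f,g)=(f,D_{\Gamma}g)$ for merely $L^2$ (weak) eigenfunctions rather than $C_c^{\infty}$ ones; this is where self-adjointness (not just symmetry) is used.

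A cleaner route, and the one I would actually carry out given the position of the corollary right after Theorem~\ref{thm:transfer}, is to reduce everything to the Riemannian side via the transfer map~$\nnu$. By Theorem~\ref{thm:transfer}, for each $\tau\in\Disc(L_K/L_H)$ the map $\ii_{\tau,\Gamma}$ carries $(\A\cap L^2)(Y_{\Gamma},\V_{\tau};\NN_{\nnu(\lambda,\tau)})$ into $L^2(X_{\Gamma};\M_{\lambda})$, and the algebraic direct sum over~$\tau$ of these images is dense in $L^2(X_{\Gamma};\M_{\lambda})$. Hence it suffices to prove orthogonality of $\ii_{\tau,\Gamma}(\A\cap L^2)(Y_{\Gamma},\V_{\tau};\NN_{\nnu(\lambda,\tau)})$ and $\ii_{\tau',\Gamma}(\A\cap L^2)(Y_{\Gamma},\V_{\tau'};\NN_{\nnu(\lambda',\tau')})$ whenever $\lambda\neq\lambda'$. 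I would split this into two cases. If $\tau\neq\tau'$, the orthogonality is \emph{independent of the eigenvalue}: the $\ii_{\tau,\Gamma}$ identify $\F(Y_{\Gamma},\V_{\tau})$ with the $\tau$-component of the fiberwise Peter--Weyl decomposition of $L^2(X_{\Gamma})$ along the compact fiber $F=L_K/L_H$ of~$q$ in \eqref{eqn:bundleXY}, and distinct $L_K$-types are orthogonal in $L^2(F)$; this should already be recorded (it underlies Lemma~\ref{lem:p-tau-i-tau}). If $\tau=\tau'$ but $\lambda\neq\lambda'$, then since $\nnu(\cdot,\tau)$ is injective on the relevant set (it has $\llambda(\cdot,\tau)$ as a one-sided inverse on $\mathrm{Spec}(X)_{\tau}$, and $\Spec_d(X_{\Gamma})\subset\mathrm{Spec}(X)_{\tau}$ for the $\tau$ contributing to $\lambda$), we get $\nu:=\nnu(\lambda,\tau)\neq\nnu(\lambda',\tau)=:\nu'$ as characters of $Z(\llll_{\C})$; and $\ii_{\tau,\Gamma}$ is an $L$-module (indeed $Z(\llll_{\C})$-equivariant) isometry onto its image up to scalar, so orthogonality of $L^2(Y_{\Gamma},\V_{\tau};\NN_{\nu})$ and $L^2(Y_{\Gamma},\V_{\tau};\NN_{\nu'})$ in $L^2(Y_{\Gamma},\V_{\tau})$ transfers to the desired statement on $X_{\Gamma}$.

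The Riemannian orthogonality $L^2(Y_{\Gamma},\V_{\tau};\NN_{\nu})\perp L^2(Y_{\Gamma},\V_{\tau};\NN_{\nu'})$ for $\nu\neq\nu'$ is the genuinely safe step: the system $(\NN_{\nu})$ contains an elliptic self-adjoint operator (a vector-bundle Laplacian on the complete Riemannian manifold $Y_{\Gamma}$), so its $L^2$-eigenspaces for distinct eigenvalues are orthogonal by the spectral theorem, and more precisely the commuting family $\dd\ell^{\tau}(z)_{\Gamma}$, $z\in Z(\llll_{\C})$, consists of (essentially self-adjoint, by completeness and ellipticity of the Casimir piece) operators with a joint spectral decomposition; distinct joint eigencharacters give orthogonal spaces. \textbf{The main obstacle} I anticipate is the bookkeeping in the $\tau=\tau'$ case: one must check that $\ii_{\tau,\Gamma}$ really is (a scalar multiple of) an isometry intertwining $\dd\ell^{\tau}(z)_{\Gamma}$ with the action of $\dd\ell(z)$ composed with~$\nnu$, so that a character $\nu$ on the $Y_{\Gamma}$-side corresponds to the character $\llambda(\nu,\tau)$ on the $X_{\Gamma}$-side and the correspondence $\lambda\mapsto\nu$ is injective on the support of $L^2_d(X_{\Gamma};\M_{\lambda})$; this requires quoting precisely the equivariance properties of $\ii_{\tau,\Gamma}$ from \cite{kkdiffop} and the defining property $\nnu(\llambda(\nu,\tau))=\nu$. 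Once that is in place, the corollary follows by combining the two cases with the density from Theorem~\ref{thm:transfer}.
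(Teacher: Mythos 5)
Your cleaner route (via Theorem~\ref{thm:transfer} and the Riemannian decomposition) is essentially the paper's proof: combine the density of the algebraic direct sum $\bigoplus_\tau\ii_{\tau,\Gamma}\big((V_\tau^\vee)^{L_H}\otimes L^2(Y_\Gamma,\V_\tau;\NN_{\nnu(\lambda,\tau)})\big)$ in $L^2(X_\Gamma;\M_\lambda)$ with the mutual orthogonality of the $\ii_{\tau,\Gamma}$-images for varying pairs $(\tau,\nu)$, the latter following from the unitarity of $\underline{\ii}_\Gamma$ as in \eqref{eqn:XYL2} (which handles distinct $\tau$) together with orthogonality of the $L^2(Y_\Gamma,\V_\tau;\NN_\nu)$ for distinct $\nu$ on the Riemannian side. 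Your explicit flagging of the injectivity of $\nnu(\cdot,\tau)$ on $\mathrm{Spec}(X)_\tau$, via $\llambda(\nnu(\lambda,\tau),\tau)=\lambda$ there, is exactly the bookkeeping step that makes the density argument close, and it is a worthwhile precision that the paper's three-line proof leaves implicit.
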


The algebras $\D_G(X)$ and $Z(\llll_{\C})$ are described by the Harish-Chandra isomorphism (see Section~\ref{subsec:DGH}), the set $\Disc(L_K/L_H)$ is described by (a variant of) the Cartan--Helgason theorem (see \cite[Th.\,3.3.1.1]{war72}), and the maps $\nnu(\cdot,\tau)$ are described explicitly in \cite[\S\,6--7]{kkdiffop} in each case of Table~\ref{table1}.
Here is one example; we refer to \cite{kkdiffop} for others.

\begin{example}[{\cite[\S\,6.2]{kkdiffop}}] \label{ex:SO-4n-2}
Let $X = G/H = \SO(4m,2)_0/\U(2m,1)$ where $m\geq 1$.
The space $X$ is a complex manifold of dimension $2m^2+m$, endowed with an indefinite K\"ahler structure of signature $(2m,2m^2-m)$ on which $G$ acts holomorphically by isometries.
The $\C$-algebra $\D_G(X)$ is generated by $m$ algebraically independent differential operators $P_k$ of order $2k$, for $1\leq k\leq m$, and $\Hom_{\C\text{-}\mathrm{alg}}(\D_G(X),\C)$ is parametrized by the Harish-Chandra isomorphism
$$\Hom_{\C\text{-}\mathrm{alg}}(\D_G(X),\C) \simeq \C^m/W(BC_m),$$
where $W(BC_m):=\mathfrak{S}_m\ltimes (\Z/2\Z)^m$ is the Weyl group for the root system of type $BC_m$.
Let $L:=\SO(4m,1)_0$.
By the Harish-Chandra isomorphism again, we have
$$\Hom_{\C\text{-}\mathrm{alg}}(Z(\llll_{\C}),\C) \simeq \C^{2m}/W(B_{2m}),$$
where $W(B_{2m}):=\mathfrak{S}_{2m}\ltimes (\Z/2\Z)^{2m}$.
The group $L$ acts properly and transitively on~$X$, and we have a diffeomorphism $X\simeq L/L_H$ where $L_H=\U(2m)$.
By taking $L_K=\SO(4m)$, the fiber $F=L_K/L_H$ is the compact symmetric space $\SO(4m)/\U(2m)$.
The Cartan--Helgason theorem gives a parametrization of $\Disc(L_K/L_H)$ by
$$\big\{ \mu = (j_1,j_1,\dots,j_m,j_m) \in \Z^{2m} : j_1\geq\dots\geq j_m\geq 0\big\},$$
where the vector $\mu$ corresponds to the irreducible finite-dimensional representation $(\tau,V_{\tau})\in\Disc(L_K/L_H)$ of $L_K=\SO(4m)$ with highest weight~$\mu$.
The transfer map
$$\nnu(\cdot,\tau) : \Hom_{\C\text{-}\mathrm{alg}}(\D_G(X),\C) \longrightarrow \Hom_{\C\text{-}\mathrm{alg}}(Z(\llll_{\C}),\C)$$
of \eqref{eqn:nu-lambda-tau}, as constructed in \cite[\S\,1.3]{kkdiffop}, sends $\lambda = (\lambda_1,\dots,\lambda_m) \!\!\mod W(BC_m)$ to
$$\frac{1}{2} \, (\lambda_1,2j_1+4m-3,\lambda_2,2j_2+4m-7,\dots,\lambda_m,2j_m+1) \!\!\mod W(B_{2m}).$$
\end{example}

\section{Describing the discrete spectrum} \label{subsec:method-transfer-map}

In the main setting \ref{spher-setting}, we now give a description of the discrete spectrum (of type $\I$ and~$\II$) for $L^2(X_{\Gamma}) = L^2(\Gamma\backslash G/H)$ by means of the data for the regular representation of the subgroup $L$ of~$G$ on $L^2(\Gamma\backslash L)$, via the transfer map $\llambda$ of \eqref{eqn:nu-lambda-tau}.
For this we use the following notation from representation theory.

For a real reductive Lie group~$L$, let $\widehat{L}$ be the unitary dual of $L$, the set of equivalence classes of irreducible unitary representations of~$L$.
Given a closed unimodular subgroup $M$ of~$L$, we denote by $\Disc(L/M)$ the set of $\vartheta\in\widehat{L}$ such that $\Hom_L(\vartheta,L^2(L/M))$ is nonzero.
When $M=\{e\}$ is trivial, we simply write $\Disc(L)$ for $\Disc(L/M)$.
The set $\Disc(L)$ coincides with $\widehat{L}$ when $L$ is compact, whereas it is the set of Harish-Chandra's discrete series representations when $L$ is noncompact.

Let $L_K$ be a maximal compact subgroup of~$L$.
For $\tau\in\widehat{L_K}$, we set
$$\widehat{L}(\tau) := \big\{ \vartheta\in\widehat{L} : \Hom_{L_K}(\tau,\vartheta|_{L_K})\neq\{ 0\}\big\}$$
and
$$\Disc(L)(\tau) := \Disc(L) \cap \widehat{L}(\tau).$$
When $L$ is noncompact, $\widehat{L}(\tau)$ contains continuously many elements for any $\tau\in\widehat{L_K}$, whereas $\Disc(L)(\tau)$ is either empty (\eg if $\tau$ is the trivial one-dimen\-sional representation of~$L_K$) or finite-dimensional by the Blattner formula \cite{hs75}.
If $\vartheta\in\widehat{L}(\tau)$, then the infinitesimal character $\chi_{\vartheta}\in\Hom_{\C\text{-}\mathrm{alg}}(Z(\llll_{\C}),\C)$ of~$\vartheta$ (see Section~\ref{subsec:notation-type-I-II}) vanishes on $\Ker(\dd\ell^{\tau})$, since $\vartheta$ is realized in $\DD'(Y,\V_{\tau})$ and since the action of $Z(\llll_{\C})$ factors through $\dd\ell^{\tau} : Z(\llll_{\C})\to\D_L(Y,\V_{\tau})$.
We regard $\chi_{\vartheta}$ as an element of $\Hom_{\C\text{-}\mathrm{alg}}(Z(\llll_{\C})/\Ker(\dd\ell^{\tau}),\C)$.

\begin{theorem} \label{thm:Specd-lambda}
In the general setting~\ref{gen-setting}, suppose that $X_{\C}=G_{\C}/H_{\C}$ is $L_{\C}$-spherical and $G$ simple, and let $\Gamma$ be a torsion-free discrete subgroup of~$L$; in other words, we are in the main setting~\ref{spher-setting}.
Then
\begin{eqnarray*}
\Spec_d(X_{\Gamma}) & \!\!=\!\!\!\! & \bigcup_{\tau\in\Disc(L_K/L_H)}\! \big\{ \llambda(\chi_{\vartheta},\tau) : \vartheta\in\Disc(\Gamma\backslash L)\cap\widehat{L}(\tau)\big\},\\
\Spec_d(X_{\Gamma})_{\I} & \!\!=\!\!\!\! & \bigcup_{\tau\in\Disc(L_K/L_H)}\! \big\{ \llambda(\chi_{\vartheta},\tau) : \vartheta\in\Disc(\Gamma\backslash L)\cap\Disc(L)(\tau)\big\},\\
\Spec_d(X_{\Gamma})_{\II} & \!\!=\!\!\!\! & \bigcup_{\tau\in\Disc(L_K/L_H)}\! \big\{ \llambda(\chi_{\vartheta},\tau) : \vartheta\in\Disc(\Gamma\backslash L)\!\cap\!\widehat{L}(\tau)\!\smallsetminus\!\Disc(L)(\tau)\big\}.
\end{eqnarray*}
\end{theorem}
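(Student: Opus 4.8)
\textbf{Proof strategy for Theorem~\ref{thm:Specd-lambda}.}
The plan is to combine the density statement of Theorem~\ref{thm:transfer} with the bijectivity of the transfer maps and with the classical spectral decomposition of $L^2(\Gamma\backslash L)$ into the vector-bundle-valued pieces $L^2(Y_\Gamma,\V_\tau)$. The starting point is the algebraic direct sum decomposition
$$L^2(X_\Gamma) \;=\; \bigoplus_{\tau\in\Disc(L_K/L_H)} \ii_{\tau,\Gamma}\big(L^2(Y_\Gamma,\V_\tau)\big)$$
(completed Hilbert direct sum), which follows from the Peter--Weyl decomposition of $L^2$ along the compact fiber $F=L_K/L_H$ together with the fact that $(V_\tau^\vee)^{L_H}$ is one-dimensional in the $L_{\C}$-spherical case. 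First I would show the inclusion ``$\supseteq$'' in each of the three displayed equalities: given $\vartheta\in\Disc(\Gamma\backslash L)\cap\widehat{L}(\tau)$, pick a nonzero $\varphi$ in the corresponding $L$-isotypic, $\tau$-isotypic subspace of $L^2(Y_\Gamma,\V_\tau)$; it is a joint $Z(\llll_{\C})$-eigenfunction with eigenvalue $\chi_\vartheta$, and by the first assertion of Theorem~\ref{thm:transfer} (the inclusion $\ii_{\tau,\Gamma}(C^\infty(Y_\Gamma,\V_\tau;\NN_{\nnu(\lambda,\tau)}))\subset C^\infty(X_\Gamma;\M_\lambda)$, read via $\llambda=\nnu^{-1}$) its image $\ii_{\tau,\Gamma}(\varphi)$ is a nonzero element of $L^2(X_\Gamma;\M_{\llambda(\chi_\vartheta,\tau)})$; since $\ii_{\tau,\Gamma}$ is injective and continuous this witnesses $\llambda(\chi_\vartheta,\tau)\in\Spec_d(X_\Gamma)$. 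For the refinement into types one uses that the type~$\I$/type~$\II$ splitting on $X_\Gamma$ is defined via discrete series for $X=G/H$, and that $\llambda$ was shown (or will be shown, via Theorem~\ref{thm:transfer-spec} in Section~\ref{sec:transfer-Gamma-type-I-II}) to carry $\Disc(L)(\tau)$-spectrum on $Y_\Gamma$ exactly to type~$\I$ spectrum on $X_\Gamma$, and its complement to type~$\II$.

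For the reverse inclusion ``$\subseteq$'', take $\lambda\in\Spec_d(X_\Gamma)$ and a nonzero $f\in L^2(X_\Gamma;\M_\lambda)$. Decompose $f=\sum_\tau \ii_{\tau,\Gamma}(f_\tau)$ along the fiber decomposition above; since $\D_G(X)$ preserves each summand $\ii_{\tau,\Gamma}(L^2(Y_\Gamma,\V_\tau))$ (by \cite[Th.\,4.9]{kkdiffop}) and acts on it through the transfer relation, each nonzero $f_\tau$ lies in a joint-eigenspace for the operators $\dd\ell^\tau(z)_\Gamma$; concretely $f_\tau\in L^2(Y_\Gamma,\V_\tau;\NN_{\nu})$ for the $\nu$ with $\llambda(\nu,\tau)=\lambda$, i.e.\ $\nu=\nnu(\lambda,\tau)$. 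Now expand $f_\tau$ in $L^2(Y_\Gamma,\V_\tau)$ by the $L$-spectral decomposition of $L^2(\Gamma\backslash L)$: its $Z(\llll_{\C})$-eigenvalue being discrete ($f_\tau\in L^2$), $f_\tau$ projects nontrivially onto some $\vartheta\in\Disc(\Gamma\backslash L)$ that moreover lies in $\widehat{L}(\tau)$ because $f_\tau$ is $\tau$-isotypic along $L_K$. Hence $\lambda=\llambda(\chi_\vartheta,\tau)$ with $\vartheta\in\Disc(\Gamma\backslash L)\cap\widehat{L}(\tau)$, as desired; the type bookkeeping is again supplied by Theorem~\ref{thm:transfer-spec}.

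The main obstacle I anticipate is \emph{not} the formal bookkeeping but the justification that a joint $L^2$-eigenfunction $f_\tau$ on the Riemannian locally symmetric space $Y_\Gamma$, lying in $L^2(Y_\Gamma,\V_\tau)$ but a priori only a weak solution, is genuinely of the form dictated by $\Disc(\Gamma\backslash L)$ --- that is, that the discrete part of the spectrum of $\dd\ell^\tau(z)_\Gamma$ on $Y_\Gamma$ is exactly accounted for by the discretely-decomposable part of the regular representation of $L$ on $L^2(\Gamma\backslash L)$, with the correct multiplicities and no continuous contamination. For $\Gamma$ cocompact this is automatic; for $\Gamma$ arithmetic it requires invoking the known structure of $L^2(\Gamma\backslash L)$ (the Langlands spectral decomposition) to identify $Z(\llll_{\C})$-discrete-eigenvectors with cusp forms and residues, and to see that these are precisely the $\vartheta\in\Disc(\Gamma\backslash L)$. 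A secondary subtlety is keeping track of the passage from the compact ``transfer maps'' of \cite{kkdiffop} to their holomorphically-continued versions and checking that the eigenvalue correspondence $\nnu(\lambda,\tau)\leftrightarrow\lambda$ remains a bijection between $\mathrm{Spec}(X)_\tau$ and the relevant locus of $\Hom_{\C\text{-}\mathrm{alg}}(Z(\llll_{\C})/\Ker(\dd\ell^\tau),\C)$, so that the union over $\tau$ in the statement is well-posed; this is handled by the properties of $\nnu,\llambda$ recalled after \eqref{eqn:nu-lambda-tau} together with \cite[Prop.\,4.8]{kkdiffop}. Modulo these inputs and Theorem~\ref{thm:transfer-spec}, the three equalities follow by matching the two decompositions term by term.
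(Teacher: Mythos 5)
Your high-level strategy matches the paper's: reduce the pseudo-Riemannian problem on $X_\Gamma$ to the vector bundles $\V_\tau$ over the Riemannian space $Y_\Gamma$ via the transfer maps $\nnu,\llambda$ and Theorem~\ref{thm:transfer-spec} (the paper packages this as Corollary~\ref{cor:Specd-lambda-Wtau}), then describe the Riemannian bundle spectrum via representations of~$L$. Your treatment of the first equality is essentially sound once one knows the elementary fact that for a fixed $Z(\llll_\C)$-infinitesimal character $\nu$ the closed $L$-invariant subspace $L^2(\Gamma\backslash L;\NN_\nu)$ is a Hilbert direct sum of isotypic pieces $V_\pi$ over the finitely many $\pi\in\widehat{L}$ with that infinitesimal character (see~\eqref{eqn:Vi}). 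This holds for \emph{any} torsion-free discrete $\Gamma\subset L$, so your appeal to the Langlands spectral decomposition and cusp forms in the arithmetic case is a misdirection: the theorem is stated and proved for arbitrary torsion-free $\Gamma$, and the proof never invokes automorphic spectral theory.

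The genuine gap is in the type~$\I$/type~$\II$ bookkeeping. You cite Theorem~\ref{thm:transfer-spec} as ``supplying'' the identification of $\Disc(L)(\tau)$-contributions on $Y_\Gamma$ with type~$\I$ and the rest with type~$\II$, but Theorem~\ref{thm:transfer-spec} only transports the type splitting \emph{between} $X_\Gamma$ and $Y_\Gamma$. It says nothing about how to recognize $\Spec_d^{Z(\llll_\C)}(Y_\Gamma,\V_\tau)_\I$ as $\Char(\Disc(\Gamma\backslash L)\cap\Disc(L)(\tau))$ and the type~$\II$ part as the complement in $\widehat{L}(\tau)$; that is the content of Theorem~\ref{thm:Spectau}, which is where the real work lies. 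The subtlety is that type~$\I$ on $Y_\Gamma$ is defined via the pullback ${p'_\Gamma}^{\!\ast}$ into $\DD'(Y,\V_\tau)$ and the closure of $L^2(Y,\V_\tau;\NN_\nu)$, so one must bridge the two $L$-modules $V_\pi\subset L^2(\Gamma\backslash L)$ (isotypic part of the regular representation) and $W_\pi\subset\DD'(Y,\V_\tau)$ (generated by $(\llll,L_K)$-intertwiners from $\pi_K$). The paper does this with the sesquilinear map $T$ on distribution vectors (Lemma~\ref{lem:Tf}), the orthogonality argument in Proposition~\ref{prop:type-I-II-Y}, and the regularity input of Lemma~\ref{lem:Vtau} (Harish-Chandra discrete series realized in $\DD'(Y,\V_\tau)$ automatically land in $L^2$). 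None of this is present in your proposal, so the types~$\I$ and~$\II$ equalities are not actually established.
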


Theorem~\ref{thm:Specd-lambda} will be proved in Section~\ref{subsec:proof-Specd-lambda}.
The point is to give a description of the full discrete spectrum of $X_{\Gamma}$, both of type~$\I$ and type~$\II$.

The approach here for discrete spectrum of type~$\I$ is very different from our earlier approach from \cite{kk16}.
Namely, in \cite{kk16}, using work of Flensted-Jensen \cite{fle80} and Matsuki--Oshima \cite{mo84}, we constructed (via generalized Poin\-car\'e series) an infinite subset of $\Spec_d(X_{\Gamma})_{\I}$ under the rank assumption \eqref{eqn:rank} whenever the action of $\Gamma$ on~$X$ is \emph{sharp} in the sense of \cite[Def.\,4.2]{kk16}, which includes the case of standard~$X_{\Gamma}$.
We showed that in many cases this infinite subset of $\Spec_d(X_{\Gamma})_{\I}$ is invariant under any small deformation of $\Gamma$ inside~$G$.
In Theorem~\ref{thm:Specd-lambda} we do not assume the rank condition \eqref{eqn:rank} to be satisfied.

\part{Generalities} \label{part:generalities}

In this Part~\ref{part:generalities}, we introduce some basic notions that are used in stating the main results, and set up some machinery for the proofs.

In Chapter~\ref{sec:reminders}, we start by briefly summarizing some basic facts on (real) spherical manifolds, on the algebra $\D_G(X)$ of $G$-invariant differential operators on~$X$, and on irreducible representations which are realized in the space $\DD'(X)$ of distributions on~$X$.

Any joint eigenfunction $f$ of $\D_G(X)$ on a quotient manifold $X_{\Gamma}$ generates a $G$-submodule $U_f$ of $\DD'(X)$, which is of finite length when $X$ is real spherical.
We wish to relate the spectrum for $\D_G(X)$ on~$X_{\Gamma}$ with the representation theory of~$G$.
In Chapter~\ref{sec:type-I-II}, we introduce a definition of $L^2$-eigenfunction of type~$\I$ and type~$\II$; averaging $L^2$-eigenfunctions on~$X$ by the discrete group~$\Gamma$ yields discrete spectrum of type~$\I$ \cite{kk16} (under some \emph{sharpness} assumption, see \cite[Def.\,4.2]{kk16}), whereas discrete spectrum in the classical setting where $X$ is Riemannian is of type~$\II$.

Chapter~\ref{sec:dliota} is devoted to the existence of transfer maps $\nnu$ and~$\llambda$.
Our strategy for spectral analysis of $\D_G(X)$ on standard locally homogeneous spaces $X_{\Gamma}$ is to use the representation theory of the subgroup $L$ of~$G$, which contains~$\Gamma$.
In general, spectral information coming from irreducible $L$-modules is described by the action of the center $Z(\llll_{\C})$ of the enveloping algebra $U(\llll_{\C})$, which is different from that by $\D_G(X)$.
In Chapter~\ref{sec:dliota}, we study how spectral information for $\D_G(X)$ and for $Z(\llll_{\C})$ are related.
For this, we use the $L$-equivariant fiber bundle structure $F\to X\to Y$ of \eqref{eqn:bundleXY}.
We formulate the ``abundance'' of differential operators coming from $Z(\llll_{\C})$ (\resp $\D_G(X)$) inside $\D_L(X)$ as condition ($\widetilde{\mathrm{A}}$) (\resp ($\widetilde{\mathrm{B}}$)) in terms of complexified Lie algebras, and also as condition (A) (\resp (B)) in terms of representation theory of the real Lie groups $G$ and~$L$.
We prove the existence of transfer maps relating the two algebras $\D_G(X)$ and $Z(\llll_{\C})$ (condition (Tf)) when $X_{\C}$ is $L_{\C}$-spherical and $G$ simple (Proposition~\ref{prop:cond-Tf-A-B-satisfied}) by reducing to the compact case established in \cite{kkdiffop}.

\chapter{Reminders: spectral analysis on spherical homogeneous spaces} \label{sec:reminders}

In this chapter we set up some terminology and notation, and recall basic facts on (real) spherical manifolds, on $G$-invariant differential operators on~$X$, on joint eigenfunctions for $\D_G(X)$ on quotient manifolds $X_{\Gamma}$, on discrete series representations for~$X$, and on generalized matrix coefficients for distribution vectors of unitary representations.

\section{An elementary example} \label{subsec:torus-ex}

Eigenfunctions of the Laplacian on a pseu\-do-Riem\-annian manifold are not always smooth, making Theorem~\ref{thm:analytic} nontrivial.
We illustrate this phenomenon with an elementary example where an analogue of the elliptic regularity theorem does not hold for the Laplacian in a pseudo-Riemannian setting.

Let $M$ be the torus $\R^2/\Z^2$ equipped with the pseudo-Riemannian metric $ds^2=dx^2-dy^2$.
The Laplacian $\square_M = \frac{\partial}{\partial x^2} - \frac{\partial}{\partial y^2}$ is a differential operator which is hyperbolic, not elliptic, and so the elliptic regularity theorem does not apply.
In fact, there exists a distribution eigenfunction of $\square_M$ which is not $C^{\infty}$.
Indeed, recall the Paley--Wiener theorem, which characterizes functions (or distributions) $f$ on the torus $X_{\Gamma}\simeq\mathbb{S}^1\times\mathbb{S}^1$ in terms of the Fourier series
$$f(x,y) = \sum_{n\in\Z} a_n e^{2i\pi (mx+ny)}$$
as follows:
\begin{itemize}
  \item $f\in\DD'(M)$ $\Leftrightarrow$ $\sup_{m,n\in\Z} |a_{m,n}| (1+m^2+n^2)^{-N} < +\infty$ for some $N>0$,
  \item $f\in L^2(M)$ $\Leftrightarrow$ $\sum_{m,n\in\Z} |a_{m,n}|^2 < +\infty$,
  \item $f\in C^{\infty}(M)$ $\Leftrightarrow$ $\sup_{m,n\in\Z} |a_{m,n}| (1+m^2+n^2)^{N} < +\infty$ for all $N>0$.
\end{itemize}
If we take $a_{m,n}=\delta_{m,n}$ (Kronecker symbol), then $f\in\DD'(M)$ and $\square_Mf=0$ in the distribution sense, but $f\notin C^{\infty}(M)$.
If we take $a_{m,n}=\delta_{m,n}/(|n|+1)$, then $f\in L^2(M)$ and $\square_Mf=0$ in the distribution sense, but $f\notin C^{\infty}(M)$.
This shows that weak solutions are not necessarily smooth solutions.

\section{Spherical manifolds}

We shall use the following terminology.

\begin{definition}\label{def:spherical}
\begin{itemize}
  \item A connected real manifold $X$ endowed with an action of a real reductive Lie group~$G$ is \emph{$G$-real spherical} if it admits an open orbit of a minimal parabolic subgroup $P$ of~$G$.
  \item A connected complex manifold $X_{\C}$ endowed with a holomorphic action of a complex reductive Lie group~$G_{\C}$ is \emph{$G_{\C}$-spherical} if it admits an open orbit of a Borel subgroup $B_{\C}$ of~$G_{\C}$.
\end{itemize}
\end{definition}

We now fix a real reductive Lie group $G$ and a complexification $G_{\C}$ of~$G$.

\begin{example}\label{ex:sph}
\begin{enumerate}
  \item Any complex reductive symmetric space $G_{\C}/H_{\C}$ is $G_{\C}$-sphe\-rical \cite{wol74}.
  \item If $X=G/H$ is a homogeneous space whose complexification $X_{\C}=G_{\C}/H_{\C}$ is $G_{\C}$-spherical, then $X$ is $G$-real spherical (see \cite[Lem.\,4.2]{ko13}).
  In particular, any complex reductive symmetric space $G_{\C}/H_{\C}$ is $G$-real spherical.
  \item Any homogeneous space $G/H$ where $G$ is a compact reductive Lie group is $G$-real spherical.
  \item Let ${}^{\backprime}G$ be a noncompact reductive Lie group and ${}^{\backprime}K$ a maximal compact subgroup of~${}^{\backprime}G$.
  Then $X=({}^{\backprime}G\times\!{}^{\backprime}K)/\Diag({}^{\backprime}K)$ is always $({}^{\backprime}G\times\!{}^{\backprime}K)$-real~sphe\-rical by the Iwasawa decomposition; its complexification $X_{\C}$ is $({}^{\backprime}G_{\C}\times\!{}^{\backprime}K_{\C})$-spherical if and only if each noncompact simple factor of~${}^{\backprime}G$ is locally isomorphic to $\SO(n,1)$ or $\SU(n,1)$, by Cooper \cite{coo75} and Kr\"amer \cite{kra76}.
\end{enumerate}
\end{example}

\section{Invariant differential operators on~$X$}\label{subsec:DGH}

Let $X=G/H$ be a reductive homogeneous space.
In this paragraph, we recall some classical results on the structure of the $\C$-algebra $\D_G(X)$ of $G$-invariant differential operators on~$X$.
We refer the reader to \cite[Ch.\,II]{hel00} for proofs and more details.

Let $U(\g_{\C})$ be the enveloping algebra of the complexified Lie algebra $\g_{\C}:=\g\otimes_{\R}\C$ and $U(\g_{\C})^H$ the subalgebra of $\Ad_G(H)$-invariant elements; the latter contains in particular the center $Z(\g_{\C})$ of $U(\g_{\C})$. 
Recall that $U(\g_{\C})$ acts on $C^{\infty}(G)$ by differentiation on the left, with
$$\big((Y_1\cdots Y_m)\cdot f\big)(g) = \frac{\partial}{\partial t_1}\Big|_{t_1=0}\,\cdots\ \frac{\partial}{\partial t_m}\Big|_{t_m=0}\ f\big(\exp(-t_mY_m)\cdots\exp(-t_1Y_1)g\big)$$
for all $Y_1,\dots,Y_m\in\g$, all $f\in C^{\infty}(G)$, and all $g\in G$.
It also acts on $C^{\infty}(G)$ by differentiation on the right, with
$$\big((Y_1\cdots Y_m)\cdot f\big)(g) = \frac{\partial}{\partial t_1}\Big|_{t_1=0}\,\cdots\ \frac{\partial}{\partial t_m}\Big|_{t_m=0}\ f\big(g\exp(t_1Y_1)\cdots\exp(t_mY_m)\big)$$
for all $Y_1,\dots,Y_m\in\g$, all $f\in C^{\infty}(G)$, and all $g\in G$.
By identifying the set of smooth functions on~$X$ with the set of right-$H$-invariant smooth functions on~$G$, we obtain a $\C$-algebra homomorphism
\begin{equation} \label{eqn:dl-dr}
\dd\ell \otimes \dd r : U(\g_{\C}) \otimes U(\g_{\C})^H \longrightarrow \D(X),
\end{equation}
where $\D(X)$ the full $\C$-algebra of differential operators on~$X$.
We have $\dd\ell(Z(\g_{\C}))=\dd r(Z(\g_{\C}))$.
The homomorphism $\dd r$ has image $\D_G(X)$ and kernel $U(\g_{\C})\h_{\C} \cap U(\g_{\C})^H$, hence induces a $\C$-algebra isomorphism
\begin{equation} \label{eqn:DX}
U(\g_{\C})^H/U(\g_{\C})\h_{\C} \cap U(\g_{\C})^H \,\overset{\scriptscriptstyle\sim}\longrightarrow\, \D_G(X)
\end{equation}
\cite[Ch.\,II, Th.\,4.6]{hel00}.

\begin{fact}\label{fact:spherical}
Let $X=G/H$ be a reductive homogeneous space.
The following conditions are equivalent:
\begin{enumerate}[(i)]
  \item the complexification $X_{\C}$ is $G_{\C}$-spherical,
  \item the $\C$-algebra $\D_G(X)$ is commutative,
  \item there exists $C>0$ such that $\dim\Hom_{\g,K}(\pi_K,C^{\infty}(X))\leq C$ for all irreducible $(\g,K)$-modules~$\pi_K$,
\end{enumerate}
where $\Hom_{\g,K}(\pi_K,C^{\infty}(X))$ is the set of $(\g,K)$-homomorphisms from $\pi_K$ to $C^{\infty}(X)$.
\end{fact}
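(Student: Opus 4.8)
The plan is to prove Fact~\ref{fact:spherical} by establishing the chain of implications $(i)\Rightarrow(iii)\Rightarrow(ii)\Rightarrow(i)$, since the equivalence $(i)\Leftrightarrow(ii)$ is the algebraic heart and $(iii)$ is the representation-theoretic reformulation that bridges them.

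\textbf{Step 1: $(i)\Rightarrow(iii)$.} Suppose $X_\C = G_\C/H_\C$ is $G_\C$-spherical, so a Borel subgroup $B_\C$ has an open orbit. The multiplicity-boundedness follows from the general principle that sphericity controls multiplicities: by Kostant's work and its extensions (Vinberg--Kimelfeld for the algebraic setting, and work of Kobayashi--Oshima in the real setting), for a $G_\C$-spherical space the algebra $\C[X_\C]$ is multiplicity-free as a $G_\C$-module, and every irreducible rational representation occurs at most once. On the analytic side, the relevant statement is that $\dim\Hom_{\g,K}(\pi_K, C^\infty(X))$ is uniformly bounded; this is precisely \cite[Thm.~A]{ko13} (or the results cited there), which asserts that real sphericity of $X$ — which holds here by Example~\ref{ex:sph}.(2) — gives a uniform bound $C$ on these multiplicities. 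So I would invoke that result directly: $X_\C$ being $G_\C$-spherical implies $X$ is $G$-real spherical, which implies the uniform bound in $(iii)$.

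\textbf{Step 2: $(iii)\Rightarrow(ii)$.} Assume the uniform multiplicity bound. I want to show $\D_G(X)$ is commutative. The key tool is that $\D_G(X)$ acts on each isotypic-type component and commutes with the $G$-action; for an irreducible $(\g,K)$-module $\pi_K$ realized in $C^\infty(X)$, the algebra $\D_G(X)$ acts on the multiplicity space $\Hom_{\g,K}(\pi_K, C^\infty(X))$, and via the isomorphism \eqref{eqn:DX} with $U(\g_\C)^H/(\cdots)$ together with Schur's lemma, the action of $Z(\g_\C)$ is scalar. One then argues that if multiplicities were bounded by $C$ but $\D_G(X)$ were noncommutative, one could produce $(\g,K)$-modules with multiplicity growing without bound — more precisely, a noncommutative finite-dimensional-over-its-center algebra has irreducible modules of dimension $>1$, and pulling these back would violate the uniform bound. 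Concretely: take a generic infinitesimal character; the corresponding eigenspace decomposition of $C^\infty(X)$ under $Z(\g_\C)$, combined with the finite generation of $\D_G(X)$ as a module over $\dd r(Z(\g_\C))$ (a classical fact, \cite[Ch.~II]{hel00}), shows $\D_G(X)$ is finite over its center; if it were noncommutative, a generic central character would give a matrix algebra $M_k(\C)$ with $k>1$, forcing a multiplicity-$k$ occurrence of some $\pi_K$, contradicting $(iii)$ for large enough parameters. Hence $\D_G(X)$ is commutative.

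\textbf{Step 3: $(ii)\Rightarrow(i)$.} Assume $\D_G(X)$ is commutative. Via \eqref{eqn:DX}, this says $U(\g_\C)^H/(U(\g_\C)\h_\C \cap U(\g_\C)^H)$ is commutative. I would pass to the associated graded / symbol level: the commutativity of $\D_G(X)$ relates to the structure of the moment map image, or more classically one uses the fact (due to Vinberg, and in the real-group setting to Knop) that $\D_G(X)$ is commutative if and only if $\C[T^*X]^G$ — the invariants of the cotangent bundle — is ``coisotropic/commutative'', which for homogeneous $X$ is equivalent to $X_\C$ being $G_\C$-spherical. Alternatively and more self-containedly: commutativity of $\D_G(X)$ forces, by a theorem going back to Kimelfeld--Vinberg for the complexified picture, that $\C[X_\C]$ is multiplicity-free, which is one of the standard equivalent characterizations of sphericity.

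\textbf{Main obstacle.} The delicate implication is $(iii)\Rightarrow(ii)$ (or equivalently the direction making a \emph{quantitative} multiplicity bound yield an \emph{algebraic} commutativity statement), and dually $(ii)\Rightarrow(i)$. The honest approach here is not to reprove these from scratch — they are substantial results of Vinberg--Kimelfeld, Kostant, Knop, and (in the real reductive setting with $C^\infty(X)$ rather than $\C[X_\C]$) Kobayashi--Oshima — but to cite them precisely. The cleanest writeup is to state that $(i)\Leftrightarrow(ii)$ is \cite[Ch.~II]{hel00} combined with the characterization of commutativity via sphericity (Knop), and that $(i)\Leftrightarrow(iii)$ follows from \cite{ko13} together with the observation that $X_\C$ being $G_\C$-spherical is equivalent to $X$ being $G$-real spherical \emph{and} having commutative $\D_G(X)$ — so in practice the proof is mostly a matter of assembling citations, with the only subtlety being to make sure the $C^\infty(X)$ (analytic, real-group) versus $\C[X_\C]$ (algebraic) formulations are correctly matched up via Example~\ref{ex:sph}.(2) and \cite{ko13}.
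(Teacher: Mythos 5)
The paper proves this Fact by pure citation: (i)$\Leftrightarrow$(ii) is referred to \cite{vin01}, and (i)$\Leftrightarrow$(iii) to \cite{ko13}. You arrive at essentially the same answer in your closing paragraph, but the detour through the chain $(i)\Rightarrow(iii)\Rightarrow(ii)\Rightarrow(i)$ introduces two problems worth flagging.

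First, Step~1 misattributes the source of the uniform bound. In \cite{ko13} there are two distinct theorems: one characterizes $G$-real sphericity of $X$ by \emph{finiteness} of the multiplicities $\dim\Hom_{\g,K}(\pi_K,C^\infty(X))$, and a separate one characterizes $G_\C$-sphericity of $X_\C$ by \emph{uniform boundedness} of these multiplicities. Your write-up says ``real sphericity of $X$ \dots gives a uniform bound $C$'' — this is false; real sphericity gives only finiteness for each $\pi_K$, and one can have $X$ real spherical but with multiplicities unbounded. For $(i)\Rightarrow(iii)$ you should cite the uniform-boundedness theorem of \cite{ko13} directly, applied to the $G_\C$-spherical hypothesis; the remark that $X$ is then also $G$-real spherical (Example~\ref{ex:sph}.(2)) is true but not what delivers the uniform constant.

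Second, Step~2 is an original argument for $(iii)\Rightarrow(ii)$ that the paper does not make and does not need, and as written it has gaps. The claim that $\D_G(X)$ is finite over $\dd r(Z(\g_\C))$ and that a generic central character would then yield a matrix algebra $M_k(\C)$ with $k>1$, forcing multiplicity $\geq k$, presupposes a tight relationship between the algebra structure of $\D_G(X)$ at a given central character and the $\g$-module multiplicity space $\Hom_{\g,K}(\pi_K,C^\infty(X))$ that you do not establish. This link does exist in the Gelfand-pair / spherical-variety literature, but making it rigorous is exactly what the cited results of Vinberg, Knop and Kobayashi--Oshima do, so you end up re-deriving (weakly) what you could simply cite. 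The paper's route — stating $(i)\Leftrightarrow(ii)$ from \cite{vin01} and $(i)\Leftrightarrow(iii)$ from \cite{ko13} as two independent biconditionals — avoids this entirely and is the cleaner option, as you yourself note.
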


For (i)$\,\Leftrightarrow\,$(ii), see \cite{vin01} for instance; for (i)$\,\Leftrightarrow\,$(iii), see \cite{ko13}.

If $X_{\C}$ is $G_{\C}$-spherical, then by work of Knop \cite{kno94} the $\C$-algebra $\D_G(X)$ is finitely generated as a $Z(\g_{\C})$-module and there is a $\C$-algebra isomorphism
\begin{equation}\label{eqn:Psi}
\Psi : \D_G(X) \overset{\scriptscriptstyle\sim\,}{\longrightarrow} S(\jj_{\C})^W,
\end{equation}
where $S(\jj_{\C})^W$ is the $\C$-algebra of $W$-invariant elements in the symmetric algebra $S(\jj_{\C})$ for some subspace $\jj_{\C}$ of a Cartan subalgebra of~$\g_{\C}$ and some finite reflection group $W$ acting on~$\jj_{\C}$.
The integer
$$r := \dim_{\C} \jj_{\C}$$
is called the \emph{rank} of $G/H$.

As a particular case, suppose that $X$ is a reductive symmetric space, defined by an involutive automorphism $\sigma$ of~$G$.
Let $\g=\h+\q$ be the decomposition of~$\g$ into eigenspaces of~$\dd\sigma$, with respective eigenvalues $+1$ and~$-1$.
Then in \eqref{eqn:Psi} we can take $\jj$ to be a maximal semisimple abelian subspace of~$\q$, and $W$ to be the Weyl group of the restricted root system $\Sigma(\g_{\C},\jj_{\C})$ of~$\jj_{\C}$ in~$\g_{\C}$.
In particular, $\D_G(X)$ is a polynomial algebra in $r$ generators.
The isomorphism \eqref{eqn:Psi} is known as the \emph{Harish-Chandra isomorphism}.

\section{Pseudo-Riemannian structure on~$X$} \label{subsec:pseudo-Riem-struct}

We recall the following classical fact on reductive homogeneous spaces $X=G/H$, for which we give a proof for the reader's convenience.

\begin{lemma} \label{lem:pseudo-Riem-struct}
Let $X=G/H$ be a reductive homogeneous space.
There exists a $G$-invariant pseudo-Riemannian structure $g_X$ on~$X$.
\end{lemma}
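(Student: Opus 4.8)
The claim is purely algebraic once we recall that a $G$-invariant pseudo-Riemannian structure on $X = G/H$ corresponds to an $\Ad_G(H)$-invariant nondegenerate symmetric bilinear form $B$ on the tangent space $T_{eH}X \cong \g/\h$. So the plan is to construct such a form. First I would fix an $\Ad_G(H)$-invariant complement $\m$ to $\h$ in $\g$, so that $\g = \h \oplus \m$ as $H$-modules and $\m \cong \g/\h$; this exists because $H$ is reductive in $G$, hence $\Ad_G(H)$ acts completely reducibly on $\g$ (one can for instance average any complement over a maximal compact subgroup of~$H$, or use that $\g$ is a reductive $\h$-module). It then suffices to produce an $\Ad_G(H)$-invariant nondegenerate symmetric bilinear form on $\m$.

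Next I would build a candidate form on all of $\g$ that is $\Ad_G(H)$-invariant and nondegenerate, and then restrict it to $\m$. Since $G$ is reductive, $\g = \mathfrak{z}(\g) \oplus [\g,\g]$ with $[\g,\g]$ semisimple; on the semisimple part the Killing form $B_{\g}$ is $\Ad_G(G)$-invariant and nondegenerate, and on the center $\mathfrak{z}(\g)$ one may pick any nondegenerate symmetric form (the zero-dimensional case being trivial). Combining these gives an $\Ad_G(G)$-invariant, hence in particular $\Ad_G(H)$-invariant, nondegenerate symmetric bilinear form $B$ on $\g$. The one subtlety is that the restriction of $B$ to $\m$ need not be nondegenerate a priori; however, because $\h$ is reductive in $G$, the form $B$ restricts nondegenerately to $\h$ (the Killing form of a semisimple subalgebra reductive in $\g$ is nondegenerate on it, and on any central part of $\h$ we are free to have chosen $B$ nondegenerate), so $\g = \h \oplus \h^{\perp_B}$ with $B|_{\h^{\perp_B}}$ nondegenerate. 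Taking $\m := \h^{\perp_B}$, which is automatically $\Ad_G(H)$-invariant since $B$ is $\Ad_G(H)$-invariant and $\h$ is $\Ad_G(H)$-stable, we obtain the desired $\Ad_G(H)$-invariant nondegenerate symmetric form $B|_{\m}$ on $\m \cong \g/\h$.

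Finally I would transport this form around $X$ by the $G$-action: define $g_X$ at $gH$ by pushing $B|_\m$ forward along (the differential of) left translation by~$g$; the $\Ad_G(H)$-invariance of $B|_\m$ is exactly what makes this well-defined independently of the choice of representative $g$, and $G$-invariance of $g_X$ is then immediate from the construction. Smoothness is automatic since everything is obtained from a fixed algebraic datum transported by the smooth $G$-action.

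\textbf{Main obstacle.} The only real point to get right is the nondegeneracy of the restricted form on the complement~$\m$, i.e. ensuring $\g = \h \oplus \h^{\perp_B}$ with $B$ nondegenerate on each summand; this is where the hypothesis that $H$ is \emph{reductive in} $G$ (not merely reductive as an abstract group) is used, via the fact that $\h$ is then nondegenerate for a suitable invariant form on~$\g$. Everything else is bookkeeping: choosing invariant forms on central parts and checking $H$-invariance of the constructions, which I would not spell out in detail.
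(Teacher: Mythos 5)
Your overall strategy matches the paper's: build a suitable $\Ad_G(H)$-invariant nondegenerate symmetric bilinear form $B$ on $\g$, show $B|_\h$ is nondegenerate so that $\g = \h \oplus \h^{\perp_B}$, and then carry $B|_{\h^{\perp_B}}$ around $X$ by $G$-translation. The gap is exactly at the point you flag as the ``main obstacle.'' Your parenthetical justification splits $\h$ into $[\h,\h]$ and $\mathfrak{z}(\h)$ and claims that ``on any central part of $\h$ we are free to have chosen $B$ nondegenerate''; but this presupposes $\mathfrak{z}(\h)\subset\mathfrak{z}(\g)$, which is false in general. Concretely, take $\g=\R\oplus\mathfrak{sl}(2,\R)$ and $\h=\R\cdot(1,X)$ with $X=\mathrm{diag}(1,-1)$. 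Then $\h$ is abelian and reductive in $\g$, $\mathfrak{z}(\h)=\h$ projects nontrivially onto $[\g,\g]$, and with your recipe $B\bigl((1,X),(1,X)\bigr)=B_{\mathfrak{z}}(1,1)+B_{\mathrm{Kill}}(X,X)=B_{\mathfrak{z}}(1,1)+8$; the choice $B_{\mathfrak{z}}(1,1)=-8$ is a perfectly good nondegenerate form on $\mathfrak{z}(\g)=\R$, yet it makes $B|_\h$ identically zero. So ``Killing form on $[\g,\g]$ plus an arbitrary nondegenerate form on $\mathfrak{z}(\g)$'' does not by itself guarantee nondegeneracy of $B|_\h$.

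The paper closes this gap by invoking the theorem of Mostow and Karpelevich: since $H$ is reductive in $G$, there is a Cartan involution $\theta$ of $G$ that stabilizes $H$, hence $\h=(\h\cap\kk)+(\h\cap\p)$ with $\kk=\g^{\dd\theta}$, $\p=\g^{-\dd\theta}$. Choosing $B$ to be negative definite on $\kk$, positive definite on $\p$, and with $\kk\perp\p$ (the Killing form when $G$ is semisimple), one reads off that $B|_\h$ has signature $(\dim\h\cap\p,\dim\h\cap\kk)$ and is therefore nondegenerate. Your argument would go through if you replaced your ad hoc $B$ by such a $\theta$-compatible one; the hypothesis ``$H$ reductive in $G$'' is used precisely through the existence of this compatible Cartan involution, not through the decomposition $\g=\mathfrak{z}(\g)\oplus[\g,\g]$ of $\g$ alone.
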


\begin{proof}
For a real vector space~$V$, we denote by $\Symm(V)$ the set of symmetric bilinear forms on~$V$, and by $\Symm(V)_{\mathrm{reg}}$ the set of nondegenerate ones.
If a group $H$ acts linearly on~$V$, then it also acts linearly on $\Symm(V)$, leaving $\Symm(V)_{\mathrm{reg}}$ invariant.
We take $V$ to be $\g/\h$, on which $H$ acts via the adjoint representation.
Then there is a natural bijection between $\Symm(\g/\h)_{\mathrm{reg}}^H$ and the set of $G$-invariant pseudo-Riemannian structures on~$X$, by the $G$-translation of an $H$-invariant nondegenerate symmetric bilinear form on $\g/\h\simeq T_{eH}X$.
Thus it is sufficient to see that $\Symm(\g/\h)_{\mathrm{reg}}^H$ is nonempty when $G$ and~$H$ are reductive.

By a theorem of Mostow \cite{mos55} and Karpelevich \cite{kar53}, there exists a Cartan involution $\theta$ of~$G$ that leaves $H$ stable.
Let $B$ be a $G$-invariant, nondegenerate, symmetric bilinear form on~$\g$ which is positive definite on $\p:=\g^{-\dd\theta}$, negative definite on $\kk:=\g^{\dd\theta}$, and for which $\p$ and~$\kk$ are orthogonal.
If $G$ is semisimple, we can take $B$ to be the Killing form of~$\g$.
The restriction of $B$ to~$\h$ is nondegenerate because $\h=(\h\cap\kk)+(\h\cap\p)$, and so $B$ induces an $H$-invariant, nondegenerate, symmetric bilinear form on $\g/\h$, \ie an element of $\Symm(\g/\h)_{\mathrm{reg}}^H$.
\end{proof}

The pseudo-Riemannian structure $g_X$ in Lemma~\ref{lem:pseudo-Riem-struct} determines the Laplacian~$\square_X$.
Let $\q$ be the orthogonal complement of $\h$ in~$\g$ with respect to~$B$.
Then $\q = (\q\cap\kk) + (\q\cap\p)$, and the pseudo-Riemannian structure has signature $(\dim(\q\cap\p),\dim(\q\cap\kk))$.

\begin{examples} \label{ex:pseudo-Riem-struct}
\begin{enumerate}
  \item If $X=G/H$ is a reductive symmetric space, then $\square_X\in\dd\ell(Z(\g))=\dd r(Z(\g))$.
  If moreover $X$ is irreducible, then the $G$-invariant pseudo-Riemannian structure on~$X$ is unique up to scale, and induced by the Killing form of~$\g$.
  \item For $(G,H)=(\SO(4,4)_0,\mathrm{Spin}(4,3))$ or $(\SO(4,3)_0,G_{2(2)})$, the homogeneous space $X=G/H$ is not a symmetric space.
  However, the $G$-invariant pseudo-Riemannian structure on~$X$ is still unique up to scale, and induced by the Killing form of~$\g$, because the representation of $H$ on $\g/\h$ is irreducible.
  \item If $X\!=\!G/H$ is not a symmetric space, then\,the $G$-invariant\,pseudo-Riemannian structure on~$X$ may not be unique (even if $G$ is simple) and $\square_X$ may not be contained in $\dd r(Z(\g))$.
  For instance, let $G$ be $\SL(3,\R)$ and let $H$ be the subgroup of~$G$ consisting of diagonal matrices.
  Then $\Symm(\g/\h)^H_{\mathrm{reg}}\simeq (\R^{\ast})^3$, giving rise to a $3$-parameter family of $G$-invariant pseudo-Riemannian structures on~$X$.
  On the other hand, there are only $2$ parameters worth of differential operators of order $\leq 2$ in $\dd r(Z(\g))$.
\end{enumerate}
\end{examples}

\section{Joint eigenfunctions for $\D_G(X)$ on quotient manifolds~$X_{\Gamma}$}\label{subsec:remind-disc-spec}

Let $\Gamma$ be a discrete subgroup of~$G$ acting properly discontinuously and freely on the reductive homogeneous space $X=G/H$.
Then $X_{\Gamma}=\Gamma\backslash X$ is a manifold with a covering $p_{\Gamma} : X\to X_{\Gamma}$, and any $D\in\D_G(X)$ induces a differential operator~$D_{\Gamma}$ on~$X_{\Gamma}$ satisfying \eqref{eqn:D-gamma}.
Let $\F=L^2$ (\resp $C^{\infty}$, \resp~$\DD'$).
For any $\C$-algebra homomorphism $\lambda : \D_G(X)\rightarrow\C$, we denote by $\F(X_{\Gamma};\M_{\lambda})$ the set of square-integrable (\resp smooth, \resp distribution) weak solutions on~$X_{\Gamma}$ to the system
$$D_{\Gamma} f = \lambda(D)f \quad\quad\mathrm{for\ all}\ D\in\D_G(X)
\eqno{(\M_{\lambda})}.$$

If $X_{\C}$ is $G_{\C}$-spherical, then $\D_G(X)$ is commutative (Fact~\ref{fact:spherical}) and we may use the spaces $\F(X_{\Gamma};\M_{\lambda})$ of joint eigenfunctions of $\D_G(X)$ to expand functions on~$X_{\Gamma}$.
Through the isomorphism $\Psi : \D_G(X)\overset{\scriptscriptstyle\sim\,}{\rightarrow} S(\jj_{\C})^W$ of \eqref{eqn:Psi}, we shall identify the space $\Hom_{\C\text{-}\mathrm{alg}}(\D_G(X),\C)$ of $\C$-algebra homomorphisms from $\D_G(X)$ to~$\C$ with $\jj_{\C}^{\ast}/W$.
We set
$$\Spec_d(X_{\Gamma}) = \big\{ \lambda\in\jj_{\C}^{\ast}/W :\, L^2(X_{\Gamma};\M_{\lambda})\neq\{ 0\} \big\} .$$

Suppose that $X$ is a reductive symmetric space, defined by an involutive automorphism~$\sigma$.
Let $\theta$ be a Cartan involution of~$G$ commuting with~$\sigma$, and let $B$ be a $G$-invariant, nondegenerate, symmetric bilinear form on~$\g$ which is positive definite on $\p:=\g^{-\dd\theta}$, negative definite on $\kk:=\g^{\dd\theta}$, and for which $\p$ and~$\kk$ are orthogonal.
If $G$ is semisimple, we can take $B$ to be the Killing form of~$\g$.
The restriction of $B$ to~$\h$ is nondegenerate because $\h=(\h\cap\kk)+(\h\cap\p)$, and so $B$ induces an $H$-invariant, nondegenerate, symmetric bilinear form on $\g/\h\simeq T_{eH}X$, which we extend to a $G$-invariant pseudo-Riemannian structure on~$X$.
In turn, this determines a Laplacian $\square_X$ as in \eqref{eqn:defLaplacian}.
The symmetric bilinear form~$B$ also induces the Casimir element $C_G\in Z(\g_{\C})$ for symmetric~$X$, and its image is the Laplacian~$\square_X$.

We now assume that $\jj$ is $\theta$-stable.
Then $B$ induces a nondegenerate $W$-invariant bilinear form $\langle\cdot,\cdot\rangle$ on~$\jj^{\ast}$, which we extend to a complex bilinear form $\langle\cdot,\cdot\rangle$ on~$\jj_{\C}^{\ast}$.
Let $\Sigma^+(\g_{\C},\jj_{\C})$ be a positive system and let $\rho\in\jj_{\C}^{\ast}$ be half the sum of the elements of $\Sigma^+(\g_{\C},\jj_{\C})$, counted with root multiplicities.
The following remark is a consequence of the description of the Harish-Chandra isomorphism $\Psi$ in Section~\ref{subsec:DGH}.

\begin{remark}\label{rem:eigenv-Lapl}
If $X$ is a reductive symmetric space, then for any $\lambda\in\linebreak\Spec_d(X_{\Gamma})$ the Laplacian $\square_{X_{\Gamma}}$ acts on $L^2(X_{\Gamma};\M_{\lambda})$ as the scalar $t_{\lambda}:=\langle\lambda,\lambda\rangle -\langle\rho,\rho\rangle\in\C$:
$$L^2(X_{\Gamma};\M_{\lambda}) \subset \mathrm{Ker}(\square_{X_{\Gamma}} - t_{\lambda}),$$
where the kernel $\mathrm{Ker}(\square_{X_{\Gamma}} - t_{\lambda})$ is understood as weak solutions in~$L^2$.
\end{remark}

\begin{remark}\label{rem:joint-eigenv-Lapl}
When $\rank G/H>1$, it may happen that some complex number $t$ is equal to~$t_{\lambda}$ for more than one $\lambda\in\Spec_d(X_{\Gamma})$: by \cite{kkI}, this is the case for infinitely many $t\in\C$ if $\rank G/H=\rank K/H\cap K>1$ and if $\Gamma$ is \emph{sharp} in the sense of \cite[Def.\,4.2]{kk16} (for instance if $X_{\Gamma}$ is standard in the sense of Section~\ref{subsec:intro-stand}, as in the setting of the present paper).
\end{remark}

\section{Discrete series representations for~$X$}\label{subsec:discreteseries}

Let $L^2(X)$ be the space of square-integrable functions on~$X$ with respect to the natural $G$-invariant measure.
Recall that an irreducible unitary representation~$\pi$ of~$G$ is called a \emph{discrete series representation} for the reductive homogeneous space $X=G/H$ if there exists a nonzero continuous $G$-intertwining operator from $\pi$ to the regular representation of~$G$ on~$L^2(X)$ or, equivalently, if $\pi$ can be realized as a closed $G$-invariant subspace of $L^2(X)$.
In the case that $X$ is a group manifold, \ie $X=({}^{\backprime}G\times\!{}^{\backprime}G)/\Diag({}^{\backprime}G)$ for some reductive group~${}^{\backprime}G$, the discrete series representations for~$X$ were classified by Harish-Chandra; in general, discrete series representations for $X=G/H$ are different from Harish-Chandra's discrete series representations for~$G$ since $L^2(X)\neq L^2(G)^H$ for noncompact~$H$.

Suppose that $X$ is a reductive symmetric space.
For $\lambda\in\Spec_d(X)$, the space $L^2(X;\M_{\lambda})$ of Section~\ref{subsec:remind-disc-spec} is preserved by the left regular representation of~$G$ and splits into a finite direct sum of discrete series representations for~$X$.
If the natural homomorphism $Z(\g_{\C})\rightarrow\D_G(X)$ is surjective (e.g.\ if $G$ is classical or $X$ is a group manifold), then any discrete series representation for~$X$ is contained in $L^2(X;\M_{\lambda})$ for some $\lambda\in\Spec_d(X)$, by Schur's lemma.
In general, Flensted-Jensen \cite{fle80} and Matsuki--Oshima \cite{mo84} proved that there exist discrete series representations for~$X$, or equivalently $\Spec_d(X)$ is nonempty, if and only if the rank condition \eqref{eqn:rank} is satisfied.
For $X=({}^{\backprime}G\times\!{}^{\backprime}G)/\Diag({}^{\backprime}G)$, this condition is equivalent to Harish-Chandra's rank condition $\rank{}^{\backprime}G=\rank{}^{\backprime}K$ where ${}^{\backprime}K$ is a maximal compact subgroup of~${}^{\backprime}G$.

In the case that $X=G/H$ is a \emph{compact} reductive homogeneous space, not necessarily symmetric, any irreducible representation of~$G$ occurring in $C^{\infty}(X)$ is automatically a discrete series representation for $X=G/H$, and the constant~$C$ in Fact~\ref{fact:spherical} is equal to one.
Thus Fact~\ref{fact:spherical} is refined as follows.

\begin{fact}\label{fact:spherical-cpt}
Let $X=G/H$ be a \emph{compact} reductive homogeneous space.
Then the following conditions are equivalent:
\begin{enumerate}[(i)]
  \item the complexification $X_{\C}=G_{\C}/H_{\C}$ is $G_{\C}$-spherical,
  \item the $\C$-algebra $\D_G(X)$ is commutative,
  \item the discrete series representations for $X=G/H$ have uniformly bounded multiplicities,
  \item $X=G/H$ is multiplicity-free (\ie all discrete series representations for $X=G/H$ occur exactly once).
\end{enumerate}
\end{fact}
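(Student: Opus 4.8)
The plan is to deduce this refinement from Fact~\ref{fact:spherical} together with the compactness of $X$, which forces the decomposition of $C^{\infty}(X)$ (equivalently $L^2(X)$) to be discrete. First I would recall the Peter--Weyl-type decomposition for a compact homogeneous space: since $X=G/H$ is compact, $L^2(X)$ decomposes as a Hilbert direct sum of irreducible unitary $G$-representations, each occurring with finite multiplicity, and every irreducible $G$-subrepresentation of $L^2(X)$ consists of smooth (indeed analytic) functions, so it contributes a subrepresentation of $C^{\infty}(X)$; conversely every irreducible $G$-submodule of $C^{\infty}(X)$ lies in $L^2(X)$ by compactness. Hence for a compact $X$, the notion of ``irreducible $G$-representation occurring in $C^{\infty}(X)$'' coincides with ``discrete series representation for $X$'', and the multiplicity $\dim\Hom_{\g,K}(\pi_K,C^{\infty}(X))$ agrees with the multiplicity of the corresponding unitary representation in $L^2(X)$.

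With this dictionary in place, the equivalences follow quickly. The equivalence (i)$\,\Leftrightarrow\,$(ii) is just Fact~\ref{fact:spherical} restricted to the compact case (it holds with no compactness hypothesis). For (ii)$\,\Rightarrow\,$(iv): if $\D_G(X)$ is commutative, then by Fact~\ref{fact:spherical}.(iii) the multiplicities $\dim\Hom_{\g,K}(\pi_K,C^{\infty}(X))$ are uniformly bounded by some constant $C$; I would then invoke the standard fact that for a compact homogeneous space this uniform bound can be taken to be $C=1$ --- this is the spherical case of the general principle that on $G/H$ with $G$ compact, multiplicity-freeness of $L^2(G/H)$ is equivalent to the existence of an open $B_{\C}$-orbit on $X_{\C}$ (Vinberg--Kimelfeld). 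Alternatively, one reduces to $G$ compact by unitarity: an irreducible unitary $\pi$ occurring with multiplicity $\geq 2$ would, after averaging matrix coefficients, produce two $\D_G(X)$-eigenfunctions in $C^{\infty}(X)$ with the same infinitesimal character that are not proportional, and a closer analysis using the Harish-Chandra homomorphism and the $B_{\C}$-open-orbit picture shows this is impossible when (i) holds --- but the cleanest route is simply to cite the classical multiplicity-free criterion. The implications (iv)$\,\Rightarrow\,$(iii) and (iii)$\,\Rightarrow\,$(ii) are then immediate: (iv) trivially gives uniformly bounded (by $1$) multiplicities, and (iii) gives condition Fact~\ref{fact:spherical}.(iii) with the same bound, hence (ii).

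The only step requiring real content is the passage from ``uniformly bounded multiplicities'' to ``multiplicity one'', i.e. (ii)$\,\Rightarrow\,$(iv); everything else is a bookkeeping translation between the compact-homogeneous-space vocabulary and the spherical vocabulary. I expect this to be the main (and essentially only) obstacle, and I would dispatch it by citing the classical theorem of Vinberg--Kimelfeld characterizing spherical varieties via multiplicity-freeness of the ring of regular functions, specialized to the affine homogeneous space $X_{\C}=G_{\C}/H_{\C}$ and transported back to the compact real form; the identification of the algebraic multiplicities with the $L^2$-multiplicities on the compact real form is then routine via highest-weight theory. Since this fact is classical and the paper already cites \cite{vin01} for (i)$\,\Leftrightarrow\,$(ii), invoking it here is legitimate, and the proof reduces to a short paragraph assembling these ingredients.
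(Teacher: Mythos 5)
Your proposal is correct and follows essentially the same route as the paper: the statement is treated there as a direct refinement of Fact~\ref{fact:spherical}, using exactly your dictionary that for compact $X$ every irreducible representation of $G$ occurring in $C^{\infty}(X)$ is a discrete series representation for $X$ with matching multiplicities, so that (i)$\,\Leftrightarrow\,$(ii)$\,\Leftrightarrow\,$(iii) follow at once, the only substantive point being the passage to multiplicity one. The paper settles that point by citing Kr\"amer \cite{kra76} for (iii)$\,\Leftrightarrow\,$(iv), whereas you invoke the Vinberg--Kimelfeld multiplicity-free criterion to get (i)$\,\Rightarrow\,$(iv); both are legitimate classical references for the same step, so the difference is only in attribution, not in the argument.
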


For (iii)$\,\Leftrightarrow\,$(iv), see \cite{kra76}.
Note that the multiplicity-freeness in~(iv) does not hold in general for noncompact reductive groups~$G$.

\section{Regular representations on real spherical homogeneous spaces}

Real spherical homogeneous spaces are a class of spaces extending real forms of spherical complex homogeneous spaces (see Example~\ref{ex:sph}.(1)).
Recall that, for a complex reductive Lie algebra~$\g_{\C}$, a $\g_{\C}$-module $V$ is called \emph{$Z(\g_{\C})$-finite} if the annihilator $\mathrm{Ann}_{Z(\g_{\C})}(V)$ of $V$ in $Z(\g_{\C})$ has finite codimension in $Z(\g_{\C})$.
A $(\g_{\C},K)$-module is called $Z(\g_{\C})$-finite if the underlying $\g_{\C}$-module is $Z(\g_{\C})$-finite.
We shall use the following general lemma.

\begin{lemma}\label{lem:spher-finite-length}
Let $G$ be a real linear reductive Lie group, $H$ a closed subgroup, and $\V_{\tau}:=G\times_H V_{\tau}\rightarrow X$ the $G$-equivariant bundle over $X=G/H$ associated with a finite-dimensional representation $(\tau,V_{\tau})$ of~$H$.
\begin{enumerate}
  \item Any $Z(\g_{\C})$-finite $(\g,K)$-module in $\DD'(X,\V_{\tau})$ is contained in $\A(X,\V_{\tau})$.
  \item If $X$ is $G$-real spherical, then any $Z(\g_{\C})$-finite $(\g,K)$-module in $\DD'(X,\V_{\tau})$ is of finite length.
\end{enumerate}
\end{lemma}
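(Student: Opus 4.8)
The plan is to reduce both statements to standard facts about Harish-Chandra modules by working on the group~$G$ rather than on the bundle. First I would recall the classical description of sections of $\V_\tau=G\times_H V_\tau$: a distribution section $f\in\DD'(X,\V_\tau)$ corresponds to a $V_\tau$-valued distribution $\widetilde f$ on~$G$ satisfying the equivariance relation $\widetilde f(gh)=\tau(h)^{-1}\widetilde f(g)$ for $h\in H$, equivalently an $H$-invariant element of $\DD'(G)\otimes V_\tau$ under the right regular action on $\DD'(G)$ twisted by~$\tau$. Under this identification the left $\g_{\C}$-action and the $K$-action on sections become the restrictions of the left regular representation of~$G$ on $\DD'(G)$, so a $Z(\g_{\C})$-finite $(\g,K)$-submodule $V\subset\DD'(X,\V_\tau)$ lifts to a $Z(\g_{\C})$-finite $(\g,K)$-submodule $\widetilde V$ of $\DD'(G)\otimes V_\tau$ consisting of right-$(H,\tau)$-equivariant vectors.

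For part~(1), the key point is that $Z(\g_{\C})$-finiteness forces real analyticity. Concretely, each $v\in\widetilde V$ lies in a finite-dimensional $Z(\g_{\C})$-stable subspace, hence is annihilated by an ideal $J\subset Z(\g_{\C})$ of finite codimension; choosing an elliptic element of $U(\g_{\C})$ — for instance $C_G - c$ for the Casimir $C_G$ and a suitable scalar~$c$ can be arranged to be elliptic when combined with a Laplacian on~$K$ — one sees that $v$ satisfies an elliptic system with real-analytic coefficients on~$G$. More cleanly, I would invoke the classical elliptic regularity theorem in the form: any $Z(\g_{\C})$-finite distribution vector for the regular representation of~$G$ is real analytic (this goes back to Harish-Chandra; see the discussion of analyticity of $K$-finite $Z(\g_{\C})$-finite vectors). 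Since the equivariance relation is preserved, $\widetilde f$ is real analytic as a $V_\tau$-valued function on~$G$, hence $f\in\A(X,\V_\tau)$. Note that the $(\g,K)$-module need not be $K$-finite here, but one can still apply elliptic regularity elementwise: each individual $v\in V$ generates a $Z(\g_{\C})$-finite cyclic $\g$-module, and its image in $\DD'(G)$ solves a fixed elliptic equation.

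For part~(2), assuming $X$ is $G$-real spherical, I would combine part~(1) with the finiteness theorem of Kobayashi--Oshima (or its antecedents): on a real spherical homogeneous space, $Z(\g_{\C})$-finiteness implies finite length for $(\g,K)$-modules in $\DD'(X,\V_\tau)$. The route I expect to be cleanest is to pass to the $K$-finite vectors: if $V\subset\DD'(X,\V_\tau)$ is $Z(\g_{\C})$-finite, its underlying $(\g,K)$-module structure means $V$ is already a $(\g,K)$-module, and one shows the space $\DD'(X,\V_\tau)_{[\delta]}$ of $\delta$-isotypic vectors, for each $K$-type~$\delta$, is finite-dimensional once we also impose annihilation by a fixed finite-codimension ideal of $Z(\g_{\C})$ — this is exactly where real sphericality enters, via the bound on multiplicities (a bundle-valued analogue of Fact~\ref{fact:spherical}(iii), due to Kobayashi--Oshima and, in the real spherical case, to the work underlying \cite{ko13}). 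A $Z(\g_{\C})$-finite $(\g,K)$-module whose every $K$-type has bounded multiplicity and which is finitely generated is of finite length by a standard argument on the Harish-Chandra homomorphism. The remaining issue — that $V$ is finitely generated over $U(\g_{\C})$ — follows from the same finiteness of $\DD'(X,\V_\tau)$ cut out by a fixed central ideal together with Noetherianity of $U(\g_{\C})$ modulo that ideal acting on a space with bounded $K$-multiplicities.

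\textbf{Main obstacle.} The delicate step is part~(2): establishing the uniform bound on $K$-multiplicities in $\DD'(X,\V_\tau)$ for a $G$-real spherical~$X$ with a twist by~$\tau$, and packaging it so as to conclude finite length without assuming $K$-finiteness of~$V$ a priori. One must be careful that ``$Z(\g_{\C})$-finite $(\g,K)$-module'' in the statement already entails a locally $K$-finite action, so that reduction to $K$-types is legitimate; granting that, the finite-length conclusion is the bundle-valued real-spherical analogue of the results quoted for Fact~\ref{fact:spherical}, and the proof amounts to citing that circle of ideas (Kobayashi--Oshima, \cite{ko13}) rather than re-deriving it.
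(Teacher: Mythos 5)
For part~(1), you are following the same route as the paper: elliptic regularity on~$G$ for the standard elliptic operator built from the Casimir of~$G$ and the Casimir of~$K$ (the paper uses $\dd\ell(C_G)-2\dd r(C_K)$ and cites \cite[Th.\,3.4.4]{kkk86}). Your opening suggestion that ``$C_G-c$'' could be arranged to be elliptic is not right on its own --- the Casimir of a noncompact reductive group is never elliptic --- but you immediately amend it with ``combined with a Laplacian on $K$'', so your argument does land in the same place. Also, your parenthetical worry that the module ``need not be $K$-finite'' is unfounded: a $(\g,K)$-module is by definition locally $K$-finite, so every vector is $K$-finite, and the elementwise application of elliptic regularity goes through without any caveat.

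For part~(2) there is a genuine gap. The paper does not argue via $K$-multiplicity bounds and Noetherianity; it cites \cite[Th.\,2.2]{ko13} directly, which establishes finite length under the \emph{stronger} hypothesis that $Z(\g_{\C})$ acts by \emph{scalars}, and then explains that the hyperfunction-valued boundary maps $\beta_\mu^i$ in the proof of that theorem remain well defined for sections annihilated by any finite-codimension ideal of $Z(\g_{\C})$ (citing \cite{osh88b}), so the same proof goes through in the $Z(\g_{\C})$-finite case. Your ``cleanest route'' hides exactly this step. The uniform (or finite) bound from Fact~\ref{fact:spherical}(iii) is a bound on $\dim\Hom_{\g,K}(\pi_K,C^{\infty}(X))$ over irreducible $\pi_K$; it is not the same as finite-dimensionality of the $\delta$-isotypic pieces of $\{f\in\DD'(X,\V_\tau):Jf=0\}$, and the latter is essentially equivalent to the finite-length statement being proved, so taking it as an input is circular. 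The ``remaining issue'' paragraph about finite generation then presupposes the very admissibility one is trying to establish. You correctly identify \cite{ko13} as the source of the heavy lifting, but the place where the paper actually does something beyond citation --- extending from scalar $Z(\g_{\C})$-action to $Z(\g_{\C})$-finiteness --- is the place your sketch skips.
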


\begin{proof}
\begin{enumerate}
  \item This is well known: since $\dd\ell(C_G)-2\dd r(C_K)$ is an elliptic operator, any generalized eigenfunction of $\dd\ell(C_G)-2\dd r(C_K)$ is real analytic by the elliptic regularity theorem (see \cite[Th.\,3.4.4]{kkk86} for instance).
  \item This was proved in \cite[Th.\,2.2]{ko13} under the slightly stronger assumption that $Z(\g_{\C})$ acts as scalars.
  Since the successive sequence of (hyperfunction-valued) boundary maps $\beta_{\mu}^i$ for $(i,\mu)$ in a poset in the proof (see \cite[p.\,931]{ko13}) is well defined for hyperfunctions (in particular smooth functions, distributions, etc.) that are annihilated by an ideal of finite codimension in $Z(\g_{\C})$ \cite{osh88b}, the proof goes similarly.\qedhere
\end{enumerate}
\end{proof}

\section[Smooth and distribution vectors of unitary representations]{Smooth and distribution vectors of unitary representations of~$G$} \label{subsec:H+-infty}

In the proof of Theorem~\ref{thm:Specd-lambda} (Chapter~\ref{sec:proof-Specd-lambda}), we shall need a generalized concept of matrix coefficient associated to distribution vectors of unitary representations of Lie groups, which we now summarize briefly.
See \cite{he14} and \cite[Vol.\,I, Ch.\,I]{wal88}.

Let $\pi$ be a continuous representation of a Lie group~$G$ on a Hilbert space~$\HHH$ with inner product $(\cdot,\cdot)_{\HHH}$ and norm $\Vert\cdot\Vert_{\HHH}$.
The space
$$\HHH^{\infty} := \{ v\in\HHH : \text{the map }G\ni g\mapsto\pi(g)v\in\HHH\text{ is }C^{\infty}\}$$
of smooth vectors of~$\HHH$ carries a Fr\'echet topology given by the seminorms $|v|_u:=\Vert\dd\pi(u)v\Vert_{\HHH}$ for $u\in U(\g_{\C})$.
The space $\HHH^{-\infty}$ of distribution vectors of $(\pi,\HHH)$ is defined to be the space of continuous, sesquilinear forms on~$\HHH^{\infty}$; it is isomorphic to the complex conjugate of the complex linear functional of~$\HHH^{\infty}$.
We write $\pi^{-\infty}$ for the natural representation of $G$ on $\HHH^{-\infty}$.

Suppose $(\pi,\HHH)$ is a unitary representation.
It can be seen as a subrepresentation of $(\pi^{-\infty},\HHH^{-\infty})$ by sending $v\in\HHH$ to $(v,\cdot)_{\HHH}\in\HHH^{-\infty}$.
The triple
\begin{equation} \label{eqn:Gelfand}
\HHH^{\infty} \subset \HHH \subset \HHH^{-\infty}
\end{equation}
is called the \emph{Gelfand triple} associated with the unitary representation $(\pi,\HHH)$.
We write $(\cdot,\cdot)$ for the pairing of $\HHH^{-\infty}$ and~$\HHH^{\infty}$; it coincides with the inner product $(\cdot,\cdot)_{\HHH}$ in restriction to $\HHH\times\HHH^{\infty}$.
For $u\in\HHH^{-\infty}$ and $v\in\HHH^{\infty}$ the \emph{matrix coefficient} associated with $u$ and~$v$ is the smooth function on~$G$ defined by
\begin{equation}\label{eqn:mat-coef}
g \longmapsto (\pi^{-\infty}(g^{-1})u,v) = (u,\pi(g)v).
\end{equation}

We now extend the notion of matrix coefficient to $\HHH^{-\infty}$.
For this, we observe that for $\varphi\in C_c^{\infty}(G)$ and $F\in\HHH^{-\infty}$ the G\r{a}rding vector
$$\pi^{-\infty}(\varphi) F := \int_G \varphi(g) \ \pi^{-\infty}(g) F \ \dd g$$
is a smooth vector of $(\pi,\HHH)$.
We also observe that if $F\in\HHH^{\infty}$, then the smooth function $g\mapsto (\pi^{-\infty}(g^{-1})u,F)$ may be regarded as a distribution on~$G$ by
\begin{eqnarray*}
\varphi & \longmapsto & \int_G \varphi(g) \, (\pi^{-\infty}(g^{-1})u,F) \, \dd g\\
& & = \left(\int_G \varphi(g) \, \pi^{-\infty}(g^{-1})u \, \dd g, F\right) = \left(u, \int_G \overline{\varphi(g)} \, \pi^{-\infty}(g) F \, \dd g\right) \in \C
\end{eqnarray*}
for $\varphi\in C^{\infty}_c(G)$.
We now define a sesquilinear map
\begin{equation}\label{eqn:matdist}
T : \HHH^{-\infty} \times \HHH^{-\infty} \longrightarrow \DD'(G)
\end{equation}
such that for any $u,F\in\HHH^{-\infty}$, the distribution $T(u,F)\in\DD'(G)$ is given by
$$\varphi \longmapsto \left(\int_G \varphi(g) \, \pi^{-\infty}(g^{-1})u \, \dd g, F\right) = \left(u, \int_G \overline{\varphi(g)} \, \pi^{-\infty}(g) F \, \dd g\right) \in \C$$
for $\varphi\in C_c^{\infty}(G)$.
Then
\begin{equation}\label{eqn:T-bi-equiv}
T(\pi^{-\infty}(g_1)u,\pi^{-\infty}(g_2)F) = T(u,F)(g_1^{-1}\cdot g_2)
\end{equation}
for all $g_1,g_2\in G$.
If $F\in\HHH^{\infty}$, then the distribution $T(u,F)$ identifies with a smooth function via the Haar measure, equal to the matrix coefficient \eqref{eqn:mat-coef} associated with $u$ and~$F$.

A distribution vector $u\in\HHH^{-\infty}$ is called \emph{cyclic} if any $v\in\HHH^{\infty}$ satisfying $(\pi^{-\infty}(g^{-1})u,v)=0$ for all $g\in G$ is zero.
If $u\in\HHH^{-\infty}$ is cyclic, then any $F\in\HHH^{-\infty}$ satisfying $T(\pi^{-\infty}(g^{-1})u,F)=0$ in $\DD'(G)$ is zero, because if $\pi(\overline{\varphi})F=0$ for all $\varphi\in C_c^{\infty}(G)$ then $F=0$.

\section[$(\g,K)$-modules of admissible representations]{$(\g,K)$-modules of admissible representations of real reductive Lie groups} \label{subsec:g-K-modules}

We now suppose that $G$ is a real linear reductive Lie group with maximal compact subgroup~$K$.
A continuous representation $\pi$ of~$G$ on a complete, locally convex topological vector space~$\HHH$ is called \emph{admissible} if\linebreak $\dim \Hom_K(\tau,\HHH) < +\infty$ for all $\tau\in\widehat{K}$.
Then the space
$$\HHH_K := \big\{ v\in\HHH : \dim_{\C} \C\text{-span}\{ \pi(k)v : k\in K\} < +\infty \big\}$$
of $K$-finite vectors is a dense subspace of~$\HHH$, and there is a natural $(\g,K)$-module structure on~$\HHH_K$, called the underlying \emph{$(\g,K)$-module} of~$\HHH$.
The following theorem of Harish-Chandra \cite{har53} creates a bridge between continuous representations and algebraic representations without any specific topology.

\begin{fact}\label{fact:gK}
Let $(\pi,\HHH)$ be a continuous admissible representation of~$G$ of finite length.
Then there is a lattice isomorphism between the closed invariant subspaces $V$ of~$\HHH$ and the $(\g,K)$-invariant subspaces $V_K$ of~$\HHH_K$, given by $V\leadsto V_K=V\cap\HHH_K$ and $V_K\leadsto V=\overline{V_K}$.
\end{fact}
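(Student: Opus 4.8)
The plan is to run the classical argument through the $K$-isotypic projections, so that the correspondence reduces to Peter--Weyl density on one side and finite-dimensional linear algebra on the isotypic components on the other. For $\tau\in\widehat K$ with character $\chi_\tau$ and dimension $d_\tau$, let $E_\tau:\HHH\to\HHH$ be the continuous operator $v\mapsto d_\tau\int_K\overline{\chi_\tau(k)}\,\pi(k)v\,\dd k$; it is the projection onto the $\tau$-isotypic component $\HHH(\tau)$, it commutes with $\pi|_K$, and, since $\HHH$ is complete and $\pi(k)$ preserves every closed $G$-invariant subspace, $E_\tau$ maps each closed $G$-invariant subspace into itself. Admissibility gives $\dim\HHH(\tau)<+\infty$, so $\HHH_K=\bigoplus_\tau\HHH(\tau)$ as an algebraic direct sum and every linear subspace of a given $\HHH(\tau)$ is automatically closed. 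Finite length lets me invoke Harish-Chandra's theorem that the $K$-finite vectors of a finite-length admissible representation are analytic, i.e.\ $\HHH_K\subset\HHH^{\omega}\subset\HHH^{\infty}$. Finally, Peter--Weyl applied to $\pi|_K$ on the complete space $V$ shows that, for any closed $G$-invariant subspace $V\subset\HHH$, the $K$-finite vectors $V\cap\HHH_K$ are dense in~$V$.

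Next I would check that $V\leadsto V_K:=V\cap\HHH_K$ and $W\leadsto\overline W$ are well defined between the two posets. If $V$ is closed and $G$-invariant then $E_\tau V=V\cap\HHH(\tau)$ for every~$\tau$ (``$\subset$'' by the previous step, ``$\supset$'' since $E_\tau$ fixes $\HHH(\tau)$ pointwise), so $V_K=\bigoplus_\tau(V\cap\HHH(\tau))$ is $K$-stable; it is $\g$-stable because $V\cap\HHH^{\infty}$ is $\g$-stable (differentiate $\pi(\exp tX)v\in V$ at $t=0$) and $\HHH_K$ is $\g$-stable, so $V_K$ is a $(\g,K)$-submodule of~$\HHH_K$. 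Conversely, if $W$ is a $(\g,K)$-submodule of~$\HHH_K$ then $\overline W$ is $K$-stable, and it is $G$-stable: for $X\in\g$ and $w\in W\subset\HHH^{\omega}$ the series $\sum_{n\ge 0}\frac1{n!}\dd\pi(X)^nw$ converges to $\pi(\exp X)w$ inside $\overline W$, and $G$ is generated by $K$ together with the one-parameter subgroups $\exp(\R X)$.

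It then remains to see that the two maps are mutually inverse; monotonicity with respect to inclusion is immediate, and gives the claimed lattice isomorphism. For a closed $G$-invariant~$V$, the density statement gives $\overline{V_K}=V$. For a $(\g,K)$-submodule $W\subset\HHH_K$, the inclusion $W\subset\overline W\cap\HHH_K$ is obvious; for the reverse, take $v\in\overline W\cap\HHH_K$ and write $v=\sum_\tau E_\tau v$ (a finite sum). By continuity of $E_\tau$, one gets $E_\tau v\in E_\tau\overline W\subset\overline{E_\tau W}=\overline{W\cap\HHH(\tau)}=W\cap\HHH(\tau)$, the last equality because $W\cap\HHH(\tau)$ lies in the finite-dimensional $\HHH(\tau)$ and is therefore closed; hence $v\in W$, that is, $\overline W\cap\HHH_K=W$.

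I expect the only nonformal ingredient to be Harish-Chandra's analyticity theorem for $K$-finite vectors, which is precisely what makes the closure of a $(\g,K)$-submodule $G$-invariant; granting that input (and the finiteness of the multiplicities $\dim\HHH(\tau)$, which comes from admissibility), the rest is routine bookkeeping with the projections~$E_\tau$ and Peter--Weyl. So the main point to get right is the well-definedness of the map $W\leadsto\overline W$, i.e.\ the passage from infinitesimal to global invariance.
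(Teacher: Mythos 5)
The paper does not prove this fact; it states it as a classical theorem of Harish-Chandra and cites \cite{har53}, so there is no paper proof to compare against. Your argument is the standard one found in that source and in the usual textbook treatments: the single substantive input is Harish-Chandra's analyticity theorem for $K$-finite vectors in an admissible representation of finite length (which you correctly isolate as the nonformal step, and correctly note is what makes $\overline W$ pass from $\mathfrak{g}$-invariance to $G$-invariance via the locally convergent exponential series and a generation-by-$K$-and-a-neighborhood-of-$e$ argument), together with the routine bookkeeping via the isotypic projections $E_\tau$, the density of $K$-finite vectors in any closed $G$-invariant subspace (approximate identity on $K$), and the observation that $K$-submodules of the finite-dimensional $\HHH(\tau)$ are automatically closed. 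All of these steps are sound in the stated generality of a complete locally convex space, so the proof is correct.
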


\chapter{Discrete spectrum of type $\I$ and~$\II$} \label{sec:type-I-II}

In this chapter we consider joint $L^2$-eigenfunctions on quotient manifolds $X_{\Gamma}$, where $X=G/H$ is a reductive homogeneous space and $\Gamma$ a discrete~sub\-group of~$G$ acting properly discontinuously and freely on~$X$.
We assume the complexification $X_{\C}$ is $G_{\C}$-spherical (\eg $X$ is a reductive symmetric space).

Part of the discrete spectrum on the pseudo-Riemannian locally symmetric space~$X_{\Gamma}$ is built from discrete series representations for $X=G/H$ (``stable spectrum'' in \cite{kk11,kk16}, which neither varies nor disappears under small deformations of the discontinuous group~$\Gamma$).
However,\footnote{F 10/6: Rephrased, avoiding the word ``easy'': fine like this?} they may also exist another type of discrete spectrum, of a somewhat different nature.

\begin{example} \label{ex:negative-spec-AdS}
Let $X = (\SL(2,\R)\times\SL(2,\R))/\Diag(\SL(2,\R))$, which is isomorphic to the $3$-dimensional anti-de Sitter space $\AdS^3 = \SO(2,2)/\SO(2,1)$ of Example~\ref{ex:AdS-odd}.
By \cite[Th.\,9.9]{kk16}, there is a constant $R_X>0$ such that for any uniform lattice ${}^{\backprime}\Gamma$ of $\SL(2,\R)$ with $-I\notin{}^{\backprime}\Gamma$ (\resp with $-I\in{}^{\backprime}\Gamma$), if we set $\Gamma={}^{\backprime}\Gamma\times\{ e\}$, then the discrete spectrum of the Laplacian $\square_{X_{\Gamma}}$ on the Lorentzian $3$-manifold $X_{\Gamma}$ contains the infinite set
$$\Big\{ \frac{1}{4}k(k+2) : k\in\N,\ k\geq R_X\Big\} \quad \text{\Big(\resp }\Big\{ \frac{1}{4}k(k+2) : k\in 2\N,\ k\geq R_X\Big\}\text{\Big)}$$
coming from discrete series representations for $X=G/H$.
(See Section~\ref{subsec:AdS3} for normalization.)
However, we note that $L^2({}^{\backprime}\Gamma\backslash\HH^2)$ embeds into $L^2(X_{\Gamma})=L^2({}^{\backprime}\Gamma\backslash\SL(2,\R))$ and the restriction to $L^2({}^{\backprime}\Gamma\backslash\HH^2)$ of the Laplacian $\square_{X_{\Gamma}}$ corresponds to $-2$ times the usual Laplacian $\Delta_{\,{}^{\backprime}\Gamma\backslash\HH^2}$ on the hyperbolic surface ${}^{\backprime}\Gamma\backslash\HH^2$ (see \cite[Ch.\,X]{lan85}).
Therefore $\square_{X_{\Gamma}}$ is essentially self-adjoint and also admits infinitely many negative eigenvalues coming from eigenvalues of~$\Delta_{\,{}^{\backprime}\Gamma\backslash\HH^2}$.
These eigenvalues vary under small deformations of ${}^{\backprime}\Gamma$ inside $\SL(2,\R)$ (see \cite[Th.\,5.14]{wol94}).
\end{example}

Motivated by this observation, we now introduce a definition of discrete spectrum (and joint $L^2$-eigenfunctions) of type~$\I$ and type~$\II$ on~$X_{\Gamma}$.

\begin{remark}
In Section~\ref{subsec:compatibility-I-II}, for standard $\Gamma\subset L$, we shall introduce a similar notion of type~$\I$ and type~$\II$ for Hermitian vector bundles $\V_{\tau}$ over the Riemannian locally symmetric space $Y_{\Gamma}=\Gamma\backslash L/L_K$, by using Harish-Chandra's discrete series representations for~$L$.
We shall prove that the maps $\ii_{\tau,\Gamma}$ of Section~\ref{subsec:intro-main-result} preserve type~$\I$ and type~$\II$ when $X_{\C}$ is $L_{\C}$-spherical (Theorem~\ref{thm:transfer-spec}).
\end{remark}

\section{Definition of type~$\I$ and type~$\II$}\label{subsec:def-type-I-II}

We introduce Hilbert space decompositions
\begin{eqnarray}\label{eqn:L2decompI&II}
L^2(X_{\Gamma}) & = & L^2_d(X_{\Gamma}) \oplus L^2_{ac}(X_{\Gamma})\\
& = & \big(L^2_d(X_{\Gamma})_{\I} \oplus L^2_d(X_{\Gamma})_{\II}\big) \oplus L^2_{ac}(X_{\Gamma}),\nonumber
\end{eqnarray}
defined as follows.

Recall (Corollary~\ref{cor:L2d(XGamma,Mlambda)-orth}) that in the main setting~\ref{spher-setting} of the theorems of Chapters \ref{sec:intro} and~\ref{sec:method}, the subspaces $L^2(X_{\Gamma};\M_{\lambda})$, for varying $\lambda\in\Spec_d(X_{\Gamma})$, are orthogonal to each other inside $L^2(X_{\Gamma})$.
We set
$$L^2_d(X_{\Gamma}) \,:=\quad\! \sumplus{\lambda\in\Spec_d(X_{\Gamma})}\ L^2(X_{\Gamma};\M_{\lambda}),$$
where $\sum^{\oplus}$ denotes the Hilbert completion of the algebraic direct sum, and let $L^2_{ac}(X_{\Gamma})$ be the orthogonal complement of $L^2_d(X_{\Gamma})$ in $L^2(X_{\Gamma})$.

Next we introduce, for any $\lambda\in\Spec_d(X_{\Gamma})$, two subspaces $L^2(X_{\Gamma};\M_{\lambda})_{\I}$ and $L^2(X_{\Gamma};\M_{\lambda})_{\II}$ of $L^2(X_{\Gamma};\M_{\lambda})$.
For this we first specify the topology that we consider on the space $\DD'(X)$ of distributions on~$X$.

\begin{remark} \label{rem:top-distrib}
Let $C_c^{\infty}(X)$ be the space of compactly supported, smooth functions on~$X$, with the locally convex inductive limit topology.
Recall that a subset $B$ of a topological vector space is said to be (von Neumann) bounded if for every neighborhood $U$ of the origin, there exists a scalar $s>0$ such that $B \subset s U$.
We endow $\DD'(X)$ with the topology of uniform convergence on all bounded subsets $B$ of $C_c^{\infty}(X)$, \ie the topology defined by the family of seminorms
$$|\cdot|_B := \big( u \longmapsto \sup \{ \langle u,\varphi\rangle : \varphi\in B\} \big).$$
With this topology, $\DD'(X)$ is a complete, locally convex (though not metrizable) topological space and the map $p_{\Gamma}^{\ast} : L^2(X_{\Gamma})\to\DD'(X)$ induced by the projection $p_{\Gamma} : X\rightarrow X_{\Gamma}$ is continuous.
\end{remark}

\begin{notation} \label{def:type-I-II}
For any $\lambda\in\nolinebreak\Spec_d(X_{\Gamma})$, we let $L^2(X_{\Gamma};\M_{\lambda})_{\I}$ be the preimage, under~$p_{\Gamma}^{\ast}$, of the closure in $\DD'(X)$ of $L^2(X;\M_{\lambda})$, and $L^2(X_{\Gamma};\M_{\lambda})_{\II}$ be the orthogonal complement of $L^2(X_{\Gamma};\M_{\lambda})_{\I}$ in $L^2(X_{\Gamma};\M_{\lambda})$.
\end{notation}

This orthogonal complement is well defined by the following elementary\,observation.

\begin{lemma}\label{lem:L2Iclosed}
For any $\lambda\in\Spec_d(X_{\Gamma})$,
\begin{enumerate}
  \item $L^2(X_{\Gamma};\M_{\lambda})$ is a closed subspace of the Hilbert space~$L^2(X_{\Gamma})$,
  \item $L^2(X_{\Gamma};\M_{\lambda})_{\I}$ is a closed subspace of $L^2(X_{\Gamma};\M_{\lambda})$.
\end{enumerate}
\end{lemma}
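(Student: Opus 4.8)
The plan is to prove the two assertions of Lemma~\ref{lem:L2Iclosed} separately, the first being a soft functional-analytic statement about simultaneous eigenspaces of a commuting family of differential operators, and the second being essentially a tautology once one knows that the closure of $L^2(X;\M_\lambda)$ in $\DD'(X)$ is a closed subset whose preimage under a continuous map is closed.

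\medskip
\textbf{Part (1).} First I would recall that each $D\in\D_G(X)$ descends to a differential operator $D_\Gamma$ on $X_\Gamma$, which (as a differential operator) is a continuous map $C^\infty(X_\Gamma)\to C^\infty(X_\Gamma)$ and extends to a continuous map $\DD'(X_\Gamma)\to\DD'(X_\Gamma)$. In particular, for a fixed $D$, the ``weak-$L^2$'' eigenspace condition ``$D_\Gamma f=\lambda(D)f$ as distributions'' defines a closed subspace of $L^2(X_\Gamma)$: if $f_j\to f$ in $L^2(X_\Gamma)$ then $f_j\to f$ in $\DD'(X_\Gamma)$ (the inclusion $L^2\hookrightarrow\DD'$ being continuous), hence $D_\Gamma f_j\to D_\Gamma f$ in $\DD'(X_\Gamma)$, and since each $D_\Gamma f_j=\lambda(D)f_j\to\lambda(D)f$ in $\DD'(X_\Gamma)$, we get $D_\Gamma f=\lambda(D)f$. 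Thus $\{f\in L^2(X_\Gamma):D_\Gamma f=\lambda(D)f\ \text{weakly}\}$ is closed for each $D$, and $L^2(X_\Gamma;\M_\lambda)$, being the intersection over all $D\in\D_G(X)$ of these closed subspaces, is closed. (One may equally well use a finite generating set of $\D_G(X)$, since $X_\C$ is $G_\C$-spherical, but this is not needed.)

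\medskip
\textbf{Part (2).} For the second assertion, recall from Remark~\ref{rem:top-distrib} that $p_\Gamma^\ast:L^2(X_\Gamma)\to\DD'(X)$ is continuous, and that $\DD'(X)$ is a complete locally convex Hausdorff space, so that the closure $\overline{L^2(X;\M_\lambda)}$ of $L^2(X;\M_\lambda)$ in $\DD'(X)$ is a closed subset. By Notation~\ref{def:type-I-II}, $L^2(X_\Gamma;\M_\lambda)_{\I}=(p_\Gamma^\ast)^{-1}\big(\overline{L^2(X;\M_\lambda)}\big)$, which is therefore closed in $L^2(X_\Gamma)$; intersecting with the closed subspace $L^2(X_\Gamma;\M_\lambda)$ from part~(1) — or observing directly that $L^2(X_\Gamma;\M_\lambda)_{\I}\subset L^2(X_\Gamma;\M_\lambda)$ by construction — shows it is a closed subspace of $L^2(X_\Gamma;\M_\lambda)$. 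Finally, one should check it is a linear subspace: $\overline{L^2(X;\M_\lambda)}$ is a linear subspace of $\DD'(X)$ (the closure of a linear subspace in a topological vector space is linear), and the preimage of a linear subspace under the linear map $p_\Gamma^\ast$ is linear. Since $L^2(X_\Gamma;\M_\lambda)_{\I}$ is then a closed linear subspace of the Hilbert space $L^2(X_\Gamma;\M_\lambda)$, its orthogonal complement in the sense of Notation~\ref{def:type-I-II} is well defined, which is what the remark after the statement asserts.

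\medskip
I do not anticipate a serious obstacle here: the only mild subtlety is making sure the topology on $\DD'(X)$ chosen in Remark~\ref{rem:top-distrib} really does make $p_\Gamma^\ast$ continuous and $\DD'(X)$ Hausdorff (both are recorded there), and that passing from $L^2(X_\Gamma)$-convergence to $\DD'(X_\Gamma)$-convergence is legitimate — which it is, since on a manifold the natural map $L^2\to\DD'$ (integration against test functions) is continuous. The whole lemma is thus a routine consequence of the continuity of differential operators on distributions and of $p_\Gamma^\ast$, together with the elementary fact that closures of linear subspaces are linear.
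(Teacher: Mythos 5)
Your argument is essentially the paper's. Part~(1) is exactly the standard ``differential operators are closed operators'' argument (the paper compresses it to the single remark that all elements of $\D_G(X_\Gamma)$ are closed operators), and the preimage-of-a-closed-set argument in part~(2) is also what the paper does.

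There is, however, a real gap in part~(2). You assert that $L^2(X_\Gamma;\M_\lambda)_{\I}\subset L^2(X_\Gamma;\M_\lambda)$ ``by construction''. It is not: by Notation~\ref{def:type-I-II}, $L^2(X_\Gamma;\M_\lambda)_{\I}$ is defined as the preimage of $\overline{L^2(X;\M_\lambda)}$ under $p_\Gamma^\ast\colon L^2(X_\Gamma)\to\DD'(X)$, whose domain is all of $L^2(X_\Gamma)$, so a priori one only gets a subspace of $L^2(X_\Gamma)$. The containment needs an argument, and this is exactly what the paper's proof supplies: each $D\in\D_G(X)$ acts continuously on $\DD'(X)$, so the eigenequations pass to the closure and $\overline{L^2(X;\M_\lambda)}\subset\DD'(X;\M_\lambda)$; then the intertwining relation $D\circ p_\Gamma^\ast=p_\Gamma^\ast\circ D_\Gamma$ of~\eqref{eqn:D-gamma} together with the injectivity of $p_\Gamma^\ast$ shows that any $f\in L^2(X_\Gamma)$ with $p_\Gamma^\ast f\in\overline{L^2(X;\M_\lambda)}$ in fact lies in $L^2(X_\Gamma;\M_\lambda)$. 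Your fallback option (``intersecting with the closed subspace $L^2(X_\Gamma;\M_\lambda)$ from part~(1)'') does produce a closed subspace of $L^2(X_\Gamma;\M_\lambda)$, but it is a priori the intersection $L^2(X_\Gamma;\M_\lambda)_{\I}\cap L^2(X_\Gamma;\M_\lambda)$, not $L^2(X_\Gamma;\M_\lambda)_{\I}$ itself, so without the above containment it does not prove the assertion of the lemma. Once that one sentence is added, your proof matches the paper's.
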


\begin{proof}
All elements in $\D_G(X)$ are closed operators, which implies~(1).
The map $p_{\Gamma}^{\ast} : L^2(X_{\Gamma})\hookrightarrow\DD'(X)$ is continuous, and so $L^2(X_{\Gamma};\M_{\lambda})$ is a closed subspace of $L^2(X;\M_{\lambda})$.
Since $L^2(X;\M_{\lambda})$ is contained in $\DD'(X;\M_{\lambda})$, so is its closure $\overline{L^2(X;\M_{\lambda})}$, hence $(p_{\Gamma}^*)^{-1}(\overline{L^2(X;\M_{\lambda})})\subset L^2(X_{\Gamma};\M_{\lambda})$, which implies~(2).
\end{proof}

Thus for any $\lambda\in\Spec_d(X_{\Gamma})$ we have the Hilbert space decomposition
$$L^2(X_{\Gamma};\M_{\lambda}) = L^2(X_{\Gamma};\M_{\lambda})_{\I} \oplus L^2(X_{\Gamma};\M_{\lambda})_{\II}\,.$$
For $i=\I,\II$, we then set
\begin{equation} \label{eqn:L2-d-I-II}
L^2_d(X_{\Gamma})_i \,:=\quad\! \sumplus{\lambda\in\Spec_d(X_{\Gamma})}\ L^2(X_{\Gamma};\M_{\lambda})_i,
\end{equation}
where $\sum^{\oplus}$ denotes again the Hilbert completion of the algebraic direct sum.
We also set
\begin{eqnarray*}
\Spec_d(X_{\Gamma})_{\I} & := & \{ \lambda\in\Spec_d(X_{\Gamma}) : L^2(X_{\Gamma};\M_{\lambda})_{\I}\neq\{ 0\} \} ,\\
\Spec_d(X_{\Gamma})_{\II} & := & \{ \lambda\in\Spec_d(X_{\Gamma}) : L^2(X_{\Gamma};\M_{\lambda})_{\II}\neq\{ 0\} \} .
\end{eqnarray*}
Then
$$\Spec_d(X_{\Gamma}) = \Spec_d(X_{\Gamma})_{\I} \cup \Spec_d(X_{\Gamma})_{\II}.$$
This union is not disjoint a priori.
By construction, $\Spec_d(X_{\Gamma})_{\I}\subset\Spec_d(X)$.

For instance, for $X = G/H = (\SL(2,\R)\times\SL(2,\R))/\Diag(\SL(2,\R))$, the discrete spectrum $\Spec_d(X_{\Gamma})$ identifies with the discrete spectrum of the Laplacian $\square_{X_{\Gamma}}$; the positive (\resp negative) spectrum described in Example~\ref{ex:negative-spec-AdS} is of type~$\I$ (\resp type~$\II$); we refer to Section~\ref{subsec:AdS3} for more details on this example.

\begin{remarks} \label{rem:type-I-II}
(1) Take $\Gamma=\{ e\} $.
If the complexification $X_{\C}$ is $G_{\C}$-spherical, then $L^2_d(X)_{\II}=\{0\}$.
In fact, $L^2(X;\M_{\lambda})$ is a unitary representation of finite length by \cite{ko13}, hence it splits into a direct sum of irreducible unitary representations of~$G$.
If $X$ is a reductive symmetric space, an explicit decomposition of $L^2_{ac}(X)$ is given in \cite{del98} as a direct integral of irreducible unitary representations of~$G$ with continuous parameter.

(2) In \cite{kk16}, building on \cite{fle80}, we constructed discrete spectrum of type~$\I$ for $X_{\Gamma}$ when $X=G/H$ is a reductive symmetric space satisfying the rank condition $\rank G/H=\rank K/K\cap H$ and the action of $\Gamma$ on~$X$ is \emph{sharp} (a strong form of proper discontinuity, satisfied in many examples, see \cite[Def.\,4.2]{kk16}).

(3) If $X=G/H$ is a reductive symmetric space that does \emph{not} satisfy the rank condition above (\eg if $X=G/H=G/K$ is a Riemannian symmetric space), then $G/H$ does not admit discrete series \cite{mo84}, and so $L^2_d(X_{\Gamma})_{\I}=\nolinebreak\{0\}$ and $\Spec_d(X_{\Gamma})=\Spec_d(X_{\Gamma})_{\II}$.
\end{remarks}

\begin{lemma}\label{lem:type-I-positive}
If $X$ is a reductive symmetric space and $\Gamma$ a discrete subgroup of~$G$ acting properly discontinuously and freely on~$X$, then all the eigenvalues of the Laplacian $\square_{X_{\Gamma}}$ corresponding to $\Spec_d(X_{\Gamma})_{\I}$ are positive, except possibly for a finite number.
\end{lemma}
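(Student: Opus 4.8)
The plan is to reduce the assertion to a property of $\Spec_d(X)$. By construction $\Spec_d(X_{\Gamma})_{\I}\subseteq\Spec_d(X)$, and by Remark~\ref{rem:eigenv-Lapl} the Laplacian $\square_{X_{\Gamma}}$ acts on $L^2(X_{\Gamma};\M_{\lambda})$ by the scalar $t_{\lambda}=\langle\lambda,\lambda\rangle-\langle\rho,\rho\rangle$ for every $\lambda\in\Spec_d(X_{\Gamma})$. Hence it suffices to show that $\{\lambda\in\Spec_d(X):t_{\lambda}\leq 0\}$ is finite; for every other $\lambda$ one has $t_{\lambda}>0$. Each such $t_{\lambda}$ is real, being the scalar by which the formally self-adjoint operator $\square_X$ (the image of the Casimir element $C_G\in Z(\g_{\C})$) acts on the nonzero unitary subrepresentation $L^2(X;\M_{\lambda})$ of $L^2(X)$.

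If $\rank G/H\neq\rank K/H\cap K$, then $X$ has no discrete series by Flensted-Jensen \cite{fle80} and Matsuki--Oshima \cite{mo84} (see Remark~\ref{rem:type-I-II}.(3)), so $\Spec_d(X)=\emptyset$ and the statement is vacuous. Assume then that the rank condition holds. I would choose the Cartan subspace $\jj$ of $\q$ used in the Harish-Chandra isomorphism \eqref{eqn:Psi} to be \emph{compact}, i.e.\ $\jj\subseteq\kk\cap\q$; this $\jj$ is $\theta$-stable, so Remark~\ref{rem:eigenv-Lapl} applies with it. By the classification of discrete series for reductive symmetric spaces (Flensted-Jensen \cite{fle80}, Matsuki--Oshima \cite{mo84}), the infinitesimal characters of the discrete series occurring in $L^2(X)$ --- equivalently, the points of $\Spec_d(X)\subseteq\jj_{\C}^{\ast}/W$ --- form a locally finite subset of the imaginary real form $\sqrt{-1}\,\jj^{\ast}/W$, being the image in $\sqrt{-1}\,\jj^{\ast}/W$ of the sufficiently regular points of a fixed lattice coset in $\sqrt{-1}\,\jj^{\ast}$.

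It remains to combine this with the signature of the invariant form. Since $\jj\subseteq\kk$ and $B$ is negative definite on $\kk$, the bilinear form $\langle\cdot,\cdot\rangle$ is negative definite on the real subspace $\jj^{\ast}$, so $\lambda\mapsto\langle\lambda,\lambda\rangle$ is a positive definite quadratic form on $\sqrt{-1}\,\jj^{\ast}$, and likewise $\langle\rho,\rho\rangle\geq 0$. In particular $\lambda\mapsto\langle\lambda,\lambda\rangle$ is a proper function on $\sqrt{-1}\,\jj^{\ast}$, so its restriction to the locally finite set $\Spec_d(X)$ tends to $+\infty$, and $\{\lambda\in\Spec_d(X):\langle\lambda,\lambda\rangle\leq\langle\rho,\rho\rangle\}$ is finite; equivalently $\{\lambda\in\Spec_d(X):t_{\lambda}\leq 0\}$ is finite, which gives the lemma. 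The one point needing care is the sign: one must verify that $\langle\lambda,\lambda\rangle$ grows to $+\infty$ along $\Spec_d(X)$ rather than to $-\infty$, and this is precisely where the rank condition (forcing a compact $\jj$) together with the negative-definiteness of $B$ on $\kk$ enter; the rest is bookkeeping with the Harish-Chandra parametrization of Section~\ref{subsec:DGH}.
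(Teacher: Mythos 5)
Your proof is correct and follows essentially the same route as the paper: both reduce to the inclusion $\Spec_d(X_{\Gamma})_{\I}\subset\Spec_d(X)$, invoke the Matsuki--Oshima parametrization to place $\Spec_d(X)$ in a discrete subset of the real span of the restricted roots, and then observe via Remark~\ref{rem:eigenv-Lapl} that $t_{\lambda}=\langle\lambda,\lambda\rangle-\langle\rho,\rho\rangle\to+\infty$ because the form is positive definite there. The paper works with a general $\theta$-stable $\jj$ and the subspace $\jj_{\R}^*$ spanned by the roots rather than choosing a compact $\jj$ outright, but under the rank condition these coincide, and your explicit treatment of the degenerate case $\Spec_d(X)=\emptyset$ and of the sign of $B|_{\jj}$ simply spells out what the paper leaves implicit.
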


\begin{proof}
Let $\jj_{\R}^{\ast}$ be the $\R$-span of $\Sigma(\g_{\C},\jj_{\C})$: it is a real subspace of~$\jj_{\C}^{\ast}$, invariant under the Weyl group~$W$.
By the classification of discrete series representations for~$X$ by Matsuki--Oshima \cite{mo84}, we have $\Spec_d(X)\subset\jj_{\R}^{\ast}/W$, and $\Spec_d(X)$ is discrete in $\jj_{\R}^{\ast}/W$.
We then use the inclusion $\Spec_d(X_{\Gamma})_{\I}\subset\Spec_d(X)$ and the fact that the Laplacian $\square_{X_{\Gamma}}$ acts by the scalar $\langle\lambda,\lambda\rangle - \langle\rho,\rho\rangle$ on $L^2(X_{\Gamma};\M_{\lambda})$ for $\lambda\in\jj_{\C}^{\ast}/W$ (Remark~\ref{rem:eigenv-Lapl}).
\end{proof}

\section{Definition of type~$\I$ and type~$\II$ using $Z(\g_{\C})$ instead of $\D_G(X)$}\label{subsec:type-I-II-Zg}

In Chapter~\ref{sec:conj}, instead of using systems $(\M_{\lambda})$ for the $\C$-algebra $\D_G(X)$, we shall consider systems $(\NN_{\nu})$ for the center $Z(\g_{\C})$ of the enveloping algebra $U(\g_{\C})$, as follows.

Let $\F=\A$, $C^{\infty}$, $L^2$, or $\DD'$.
For any $\nu\in\Hom_{\C\text{-}\mathrm{alg}}(Z(\g_{\C}),\C)$, let $\F(X_{\Gamma};\NN_{\nu})$ be the space of (weak) solutions $\varphi\in\F(X_{\Gamma})$ to the system
$$\dd\ell(z)_{\Gamma}\,\varphi = \nu(z) \varphi \quad\quad\mathrm{for\ all}\ z\in Z(\g_{\C}). \eqno{(\NN_{\nu})}$$
Via the $\C$-algebra homomorphism $\dd\ell : Z(\g_{\C})\to\D_G(X)$, we have a natural inclusion
$$\F(X_{\Gamma};\M_{\lambda}) \longhookrightarrow \F(X_{\Gamma};\NN_{\lambda\circ\dd\ell})$$
which is bijective as soon as $\dd\ell$ is surjective.
In the general setting~\ref{gen-setting}, for $\Gamma\subset L$, we sometimes also use a similar map $\dd\ell : Z(\llll_{\C})\to\D_L(X)$.
We note that $\dd\ell : Z(\g_{\C})\to\D_G(X)$ is always surjective when $X_{\C}$ is $L_{\C}$-spherical and $G$ simple, whereas $\dd\ell : Z(\llll_{\C})\to\D_L(X)$ is not surjective in most cases.

Similarly to Notation~\ref{def:type-I-II}, we divide joint eigenfunctions on~$X_{\Gamma}$ into two types: type~$\I$ coming from discrete series representations for $G/H$, and type~$\II$ defined by taking an orthogonal complement.

\begin{notation} \label{def:Ntype-I-II}
For any $\nu\in\Hom_{\C\text{-}\mathrm{alg}}(Z(\g_{\C}),\C)$, we let $L^2(X_{\Gamma};\NN_{\nu})_{\I}$ be the preimage, under~$p_{\Gamma}^{\ast}$, of the closure of $L^2(X;\NN_{\nu})$ in $\DD'(X)$, and $L^2(X_{\Gamma};\NN_{\nu})_{\II}$ be the orthogonal complement of $L^2(X_{\Gamma};\NN_{\nu})_{\I}$ in $L^2(X_{\Gamma};\NN_{\nu})$.
For $i=\I$ or~$\II$, we set
$$\Spec_d^{Z(\g_{\C})}(X_{\Gamma})_i := \big\{ \nu\in\Hom_{\C\text{-}\mathrm{alg}}(Z(\g_{\C}),\C) : L^2(X_{\Gamma};\NN_{\nu})_i \neq \{ 0\}\big\}.$$
We also set
\begin{align*}
\Spec_d^{Z(\g_{\C})}(X_{\Gamma}) & := \big\{ \nu\in\Hom_{\C\text{-}\mathrm{alg}}(Z(\g_{\C}),\C) : L^2(X_{\Gamma};\NN_{\nu}) \neq \{ 0\}\big\}\\
& \ = \, \Spec_d^{Z(\g_{\C})}(X_{\Gamma})_{\I} \cup \Spec_d^{Z(\g_{\C})}(X_{\Gamma})_{\II},
\end{align*}
and
$$\Spec^{Z(\g_{\C})}(X_{\Gamma}) := \big\{ \nu\in\Hom_{\C\text{-}\mathrm{alg}}(Z(\g_{\C}),\C) : \DD'(X_{\Gamma};\NN_{\nu}) \neq \{ 0\}\big\}.$$
\end{notation}

Note that here we consider more general eigenfunctions which are not necessarily square-integrable.
Eigenfunctions are not automatically smooth, which is why we use a formulation with the space $\DD'$ of distributions.

In Chapter~\ref{sec:conj} we shall give constraints on $\Spec^{Z(\g_{\C})}(X_{\Gamma})$ (Proposition~\ref{prop:SpecZg}) and conjectural constraints on $\Spec_d^{Z(\g_{\C})}(X_{\Gamma})$ and $\Spec_d^{Z(\g_{\C})}(X_{\Gamma})_i$ for $i=\I,\II$ (Conjecture~\ref{conj:Specd-Zg}).

\chapter{Differential operators coming from~$L$ and from the fiber~$F$}\label{sec:dliota}

Our strategy for spectral analysis on a standard locally homogeneous space $X_{\Gamma} = \Gamma\backslash G/H$, where $\Gamma$ is contained in a reductive subgroup $L$ of~$G$ as in the general setting~\ref{gen-setting}, is to use the representation theory of~$L$, and for this it is desirable to have a control on:
\begin{itemize}
  \item the $G$-modules generated by irreducible $L$-modules in $C^{\infty}(X)$;
  \item the $L$-module structure of irreducible $G$-modules in $C^{\infty}(X)$.
\end{itemize}
In this chapter, we introduce two conditions (A) and~(B) to formulate these two types of control (Definition~\ref{def:cond-A-B}).
They use the $L$-equivariant fiber bundle structure $X=G/H\simeq L/L_H\to L/L_K = Y$ of \eqref{eqn:bundleXY}.

In order to verify conditions (A) and~(B), as well as the existence of \emph{transfer maps} $\nnu$ and~$\llambda$ as in \eqref{eqn:nu-lambda-tau} (condition (Tf), see Definition~\ref{def:cond-Tf} below), we consider two additional conditions ($\widetilde{\mathrm{A}}$) and~($\widetilde{\mathrm{B}}$) involving subalgebras $\dd\ell(Z(\llll_{\C}))$, $\D_G(X)$, and $\dd r(Z(\llll_{\C}\cap\kk_{\C}))$ of the algebra $\D_L(X)$ of $L$-invariant differential operators on~$X$, as in \cite[\S\,1.4]{kkdiffop}.
These conditions ($\widetilde{\mathrm{A}}$) and~($\widetilde{\mathrm{B}}$) indicate that the contribution of the fiber $F=L_K/L_H$ to $\D_L(X)$ is ``large''.
We observe (Lemma~\ref{lem:A-tilA-B-tilB}) that ($\widetilde{\mathrm{A}}$) implies~(A), that ($\widetilde{\mathrm{B}}$) implies~(B) whenever $X_{\C}$ is $G_{\C}$-spherical, and that ($\widetilde{\mathrm{A}}$) and~($\widetilde{\mathrm{B}}$) together imply (Tf) (existence of transfer maps).

Whereas conditions (A), (B), and (Tf) are defined using real forms, conditions ($\widetilde{\mathrm{A}}$) and~($\widetilde{\mathrm{B}}$) are formulated simply in terms of \emph{complex} Lie algebras; in particular, it is sufficient to check them on one real form in order for them to hold on any other real form.
This allows us to use \cite[Cor.\,1.12]{kkdiffop}, which concerns compact real forms, to prove that condition (Tf) is satisfied in the setting~\ref{spher-setting} (Proposition~\ref{prop:cond-Tf-A-B-satisfied}).

We shall discuss applications of conditions (A) and~(B) to the relation between spectral theory for the pseudo-Riemannian space~$X$ and the Riemannian symmetric space~$Y$ in Chapter~\ref{sec:transfer}, and their quotients $X_{\Gamma}$ and~$Y_{\Gamma}$ in Chapter~\ref{sec:transfer-Gamma-type-I-II}.
Conditions (A), (B), and (Tf) will play a crucial role in the proof of our main theorems.

\section{$L$-invariant differential operators on~$X$}\label{subsec:DLX}

We work in the general setting~\ref{gen-setting}.
As in \eqref{eqn:dl-dr}, the differentiations of the left and right regular representations of $L$ on $C^{\infty}(L)$ induce a $\C$-algebra homomorphism
\begin{equation}\label{eqn:dl-dr-L}
\dd\ell \otimes \dd r : U(\llll_{\C}) \otimes U(\llll_{\C})^{L_H} \longrightarrow \D(X),
\end{equation}
where $U(\llll_{\C})^{L_H}$ is the subalgebra of $L_H$-invariant elements in the enveloping algebra $U(\llll_{\C})$, and $\D(X)$ the full $\C$-algebra of differential operators on~$X$.
In particular, $\dd\ell$ is defined on the center $Z(\llll_{\C})$ of the enveloping algebra $U(\llll_{\C})$, and $\dd\ell(Z(\llll_{\C}))=\dd r(Z(\llll_{\C}))$.

The Casimir element of~$\llll$ gives rise to the Laplacian $\Delta_Y$ on the Riemannian symmetric space~$Y$.
More precisely, choose any $\Ad(L)$-invariant bilinear form on~$\llll$; this defines a Riemannian structure on~$Y$ and the Casimir element $C_L\in Z(\llll_{\C})$.
Then the following diagram commutes, where $q^{\ast}$ is the pull-back by the $L$-equivariant projection $q : X\rightarrow Y$.
$$\xymatrixcolsep{3pc}
\xymatrix{
C^{\infty}(X) \ar[r]^{\dd\ell(C_L)} & C^{\infty}(X)\\
C^{\infty}(Y) \ar[u]^{q^{\ast}} \ar[r]^{\Delta_Y} & C^{\infty}(Y) \ar[u]_{q^{\ast}}
}$$
More generally, any element $z\in Z(\llll_{\C})$ gives rise (similarly to \eqref{eqn:dl-dr-L}) to an $L$-invariant differential operator $\dd\ell_Y(z)$ on~$Y$ such that $\dd\ell(z)\circ q^{\ast}=q^{\ast}\circ\dd\ell_Y(z)$.

Since we have assumed $L$ to be connected, so is~$L_K$; therefore the adjoint action of $L_K$ on the center $Z(\llll_{\C}\cap\kk_{\C})$ is trivial, and $\dd r(Z(\llll_{\C}\cap\kk_{\C}))$ is well defined.
Geometrically, the algebra $\dd r(Z(\llll_{\C}\cap\kk_{\C}))$ corresponds to the algebra $\D_{L_K}(F)$ of $L_K$-invariant differential operators on the compact fiber $F=L_K/L_H$.
More precisely, similarly to \eqref{eqn:dl-dr-L}, we can define a map
$$\dd r_{\scriptscriptstyle F} : U(\llll_{\C}\cap\kk_{\C})^{L_H} \longrightarrow \D_{L_K}(F).$$
There is a natural injective homomorphism
$\iota : \D_{L_K}(F) \hookrightarrow \D_L(X)$
such that the following diagram commutes (see \cite[\S\,2.3]{kkdiffop}).
$$\xymatrixcolsep{3pc}
\xymatrix{
Z(\llll_{\C}\cap\kk_{\C}) \ar[d]^{\dd r_{\scriptscriptstyle F}} \ar@{}[r]|{\textstyle\subset} & U(\llll_{\C})^{L_H} \ar[d]^{\dd r}\\
\D_{L_K}(F) \ar@{^{(}->}[r]^{\iota} & \D_L(X)
}$$
When $X_{\C}$ is $L_{\C}$-spherical and $G$ simple, the map $\dd r_{\scriptscriptstyle F}$ is actually surjective (see \cite[Lem.\,2.6]{kkdiffop}), and so
\begin{equation} \label{eqn:dr-iota}
\dd r(Z(\llll_{\C}\cap\kk_{\C})) = \iota(\D_{L_K}(F)).
\end{equation}

\begin{remark}\label{rem:commutative}
If $X_{\C}$ is $L_{\C}$-spherical, then $\D_L(X)$ is commutative (Fact~\ref{fact:spherical}).
In particular,
\begin{itemize}
  \item the three subalgebras $\D_G(X)$, $\dd\ell(Z(\llll_{\C}))$, and $\dd r(Z(\llll_{\C}\cap\kk_{\C}))\!=\!\iota(\D_{L_K}\!(F))$ commute;
  \item $\D_G(X)$ and $\D_{L_K}(F)$ are commutative, and so $X_{\C}$ is $G_{\C}$-spherical and $F_{\C}=(L_{\C}\cap K_{\C})/(L_{\C}\cap H_{\C})$ is $(L_{\C}\cap K_{\C})$-spherical (Fact~\ref{fact:spherical}).
\end{itemize}
In fact, the following implications hold (see Example~\ref{ex:sph}).
\begin{changemargin}{-2cm}{0cm}
$\xymatrixcolsep{3pc}
\xymatrix{
\begin{minipage}{0.4\textwidth}
\text{\begin{tabular}{|c|}
\hline
{\txt{$F_{\C} \simeq (L_{\C}\cap K_{\C})/(L_{\C}\cap H_{\C})$\\ $(L_{\C}\cap K_{\C})$-spherical}}\\
\hline
\end{tabular}}
\end{minipage}
& \ar@{=>}[l]
\begin{minipage}{0.28\textwidth}
\text{\begin{tabular}{|c|}
\hline
{\txt{$X_{\C}\simeq L_{\C}/L_{\C}\cap H_{\C}$\\ $L_{\C}$-spherical}}\\
\hline
\end{tabular}}
\end{minipage}
\ar@{=>}[d] \ar@{=>}[r] &
\begin{minipage}{0.23\textwidth}
\text{\begin{tabular}{|c|}
\hline
{\txt{$X\simeq L/L_H$\\ $L$-real spherical}}\\
\hline
\end{tabular}}
\end{minipage}
\ar@{=>}[d]
\\
\begin{minipage}{0.25\textwidth}
\text{\begin{tabular}{|c|}
\hline
{\txt{$X=G/H$\\ symmetric space}}\\
\hline
\end{tabular}}
\end{minipage}
\ar@{=>}[r] &
\begin{minipage}{0.2\textwidth}
\text{\begin{tabular}{|c|}
\hline
{\txt{$X_{\C}=G_{\C}/H_{\C}$\\ $G_{\C}$-spherical}}\\
\hline
\end{tabular}}
\end{minipage}
\ar@{=>}[r] &
\begin{minipage}{0.23\textwidth}
\text{\begin{tabular}{|c|}
\hline
{\txt{$X=G/H$\\ $G$-real spherical}}\\
\hline
\end{tabular}}
\end{minipage}
}$
\end{changemargin}
\end{remark}

\begin{remark}
Using Lemma~\ref{lem:spher-finite-length}.(2), one can prove that if $X$ is $L$-real spherical, then $\dim\DD'(X_{\Gamma};\NN_{\nu})<+\infty$ for any $\nu\in\Hom_{\C\text{-}\mathrm{alg}}(Z(\llll_{\C}),\C)$ and any torsion-free cocompact discrete subgroup $\Gamma$ of~$L$.
\end{remark}

\section{Relations between Laplacians}

Recall that the Laplacian $\square_X$ is defined by the $G$-invariant pseudo-Riemannian structure of Lemma~\ref{lem:pseudo-Riem-struct} on $X=G/H$, and $\square_X\in\D_G(X)$.
In most cases the Laplacian $\square_X$ and the Casimir operator $C_L$ for the subalgebra $\llll$ are linearly independent, but the following proposition shows that the ``error term'' (after appropriate normalization) comes from the action of $\dd r(Z(\llll_{\C}\cap\kk_{\C}))$ on the fiber~$F$.

\begin{proposition} \label{prop:rel-Lapl}
In the general setting~\ref{gen-setting}, choose any $\Ad(L)$-invariant, nondegenerate, symmetric bilinear form on~$\llll$ and let $C_L\in Z(\llll)$ be the corresponding Casimir element.
If $X_{\C}=G_{\C}/H_{\C}$ is $G_{\C}$-spherical and $G$ simple, then there exists a nonzero $a\in\R$ such that
\begin{equation} \label{eqn:rel-Lapl}
\square_X \in a\,\dd\ell(C_L) + \dd r(Z(\llll_{\C}\cap\kk_{\C})).
\end{equation}
\end{proposition}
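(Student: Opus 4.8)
The strategy is to work inside the full algebra $\D(X)$ of differential operators on $X$, using the two homomorphisms $\dd\ell\otimes\dd r$ of \eqref{eqn:dl-dr} (for $G$) and \eqref{eqn:dl-dr-L} (for $L$), and to compare the second-order terms. First I would fix the Cartan involution $\theta$ of Lemma~\ref{lem:pseudo-Riem-struct} that leaves both $H$ and $L$ stable (by Mostow--Karpelevich, applied so that $\theta$ is simultaneously adapted to $H$ and to the reductive subgroup~$L$; since $L_K=L\cap K$ is maximal compact in~$L$ in the setting~\ref{gen-setting}, this is exactly the compatibility already built into \eqref{eqn:bundleXY}). Let $B$ be the invariant form on~$\g$ used to define $g_X$; since $G$ is simple, $B$ is a nonzero multiple of the Killing form, and the Casimir element $C_G\in Z(\g_{\C})$ it determines satisfies $\dd\ell(C_G)=\dd r(C_G)=\square_X$ as recalled in Section~\ref{subsec:remind-disc-spec}. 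So the claim reduces to showing that the image of $C_G$ in $\D_G(X)$ lies in $a\,\dd\ell(C_L)+\dd r(Z(\llll_{\C}\cap\kk_{\C}))$.

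The key step is a Lie-algebra identity in $U(\g_{\C})$ modulo $U(\g_{\C})\h_{\C}$. Pick bases adapted to the decompositions: write $\g=\llll\oplus\llll^{\perp_B}$ (orthogonal under~$B$), and inside $\llll$ use $\llll=(\llll\cap\kk)\oplus(\llll\cap\p)$, and inside $\kk$ use $\kk=(\llll\cap\kk)\oplus(\llll\cap\kk)^{\perp}$. Because $L$ acts transitively on $X$ with $X\simeq L/L_H$, the tangent space $\g/\h\simeq\llll/\llll_H$ is already exhausted by~$\llll$; dually, the pull-back of functions on $X$ via $X\simeq L/L_H$ lets one compute $\dd\ell(C_G)$ and $\dd\ell(C_L)$ as second-order operators with leading symbols given by $B|_{\llll/\llll_H}$ and by the chosen $\Ad(L)$-invariant form on $\llll/\llll_H$ respectively. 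Since $G$ is simple, $B|_{\llll}$ is $\Ad(L)$-invariant and nondegenerate, hence $B|_{\llll}=\sum_i a_i B_i$ over the simple (and one central, if present) ideals $\llll_i$ of~$\llll$; a priori the $a_i$ could differ, so $C_G$ restricted to $L/L_H$ need not be a scalar multiple of $C_L$. However, the discrepancy $\dd\ell(C_G)-a\,\dd\ell(C_L)$ — for a suitable single scalar~$a$ — has leading symbol supported on the fiber directions $\llll\cap\kk$ modulo $\llll_H$, i.e.\ it is a differential operator along the fibers $F=L_K/L_H$ of $q:X\to Y$. Here I would invoke the hypothesis that $G$ is simple to rule out the degenerate possibility $a=0$: if $a=0$ the leading symbol of $\square_X$ would vanish on $\llll\cap\p$ modulo $\llll_H$, forcing the pseudo-Riemannian form $B|_{\g/\h}$ to be degenerate in those directions, contradicting nondegeneracy. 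So $a\neq 0$, and one may normalise.

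It then remains to identify the fiberwise operator $\square_X-a\,\dd\ell(C_L)$, which is $L$-invariant and lives in $\D_L(X)$, with an element of $\dd r(Z(\llll_{\C}\cap\kk_{\C}))$. For this I would use the commutativity and structure results recalled in Section~\ref{subsec:DLX}: since $X_{\C}$ is $G_{\C}$-spherical and $G$ simple, the fiber $F_{\C}$ is $(L_{\C}\cap K_{\C})$-spherical (Remark~\ref{rem:commutative}), the map $\dd r_{\scriptscriptstyle F}:U(\llll_{\C}\cap\kk_{\C})^{L_H}\to\D_{L_K}(F)$ is surjective, and $\dd r(Z(\llll_{\C}\cap\kk_{\C}))=\iota(\D_{L_K}(F))$ by \eqref{eqn:dr-iota}. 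Any $L$-invariant differential operator on $X$ that is purely fiberwise (its symbol lies in $\Symm(\text{fiber directions})$) must come from $\D_{L_K}(F)$ via the injection $\iota$; more precisely, the second-order invariant operator $\square_X-a\,\dd\ell(C_L)$ differs from a fiberwise Casimir-type operator by at most a lower-order $L$-invariant operator, and because $\dd r_{\scriptscriptstyle F}$ is surjective and $Z(\llll_{\C}\cap\kk_{\C})$ already contains the relevant Casimir elements of the simple ideals of $\llll\cap\kk$, the whole of $\square_X-a\,\dd\ell(C_L)$ is hit. Writing this cleanly is the main obstacle: one must control not just the leading symbol but also the absence of stray first-order terms, which follows because $\square_X$, $\dd\ell(C_L)$, and the elements of $\dd r(Z(\llll_{\C}\cap\kk_{\C}))$ are all formally self-adjoint $L$-invariant operators, and a self-adjoint $L$-invariant operator on $X\simeq L/L_H$ with trivial symbol and order $\le 1$ is a constant — which can then be absorbed, or seen to vanish by testing on the constant function. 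I expect the cleanest route is to phrase everything in $U(\g_{\C})$ and $U(\llll_{\C})$ modulo the respective left ideals and invoke \cite[\S\,2.3]{kkdiffop} for the identification $\iota(\D_{L_K}(F))=\dd r(Z(\llll_{\C}\cap\kk_{\C}))$, reducing \eqref{eqn:rel-Lapl} to the purely algebraic statement $C_G\in a\,C_L+\bigl(U(\llll_{\C})(\llll_{\C}\cap\kk_{\C})\text{-part}\bigr)+U(\g_{\C})\h_{\C}$ in the enveloping algebras.
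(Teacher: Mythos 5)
Your proposal takes a genuinely different route from the paper.  The paper's proof is short and passes through complexification: it observes that $\square_X=\dd\ell(C_G)$ (uniqueness of the invariant metric for $G$ simple), holomorphically continues $\dd\ell(C_G)$ and $\dd\ell(C_L)$ to $X_{\C}$, and then simply cites \cite[Cor.\,1.7]{kkdiffop} --- which proves exactly the claimed relation in the compact real form --- and restricts back to $X$.  The case where $G$ is simple but $G_{\C}$ is not (i.e.\ $(G,H,L)=(\SO(8,\C),\SO(7,\C),\mathrm{Spin}(7,1))$) is treated separately using Lemma~\ref{lem:Casimir-complex}.  You instead attempt a direct symbol comparison in $U(\g_{\C})$ working on the real form $X\simeq L/L_H$.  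That is a legitimate and more self-contained idea; the trouble is that the hard part of the proposition is exactly the step you leave vague.

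The step that is not a proof is the identification of the ``fiberwise remainder'' with an element of $\dd r(Z(\llll_{\C}\cap\kk_{\C}))$.  Even granting the existence of a single scalar $a$ with $\sigma_2(\square_X)-a\,\sigma_2(\dd\ell(C_L))$ supported on $(\llll\cap\kk)/\llll_H$ (which you assert rather than prove --- it requires that the restrictions of $B$ and of the chosen form to $\llll\cap\p$ be proportional, i.e.\ an irreducibility statement about the $L_K$-module $\llll\cap\p$ that is not automatic from $G$ simple and $G_{\C}$-spherical), you still face two issues.  First, not every $L_H$-invariant quadratic form on $(\llll\cap\kk)/\llll_H$ is realized as the leading symbol of an element of $\dd r(Z(\llll_{\C}\cap\kk_{\C}))$: the Casimirs of the simple ideals of $\llll\cap\kk$ need not span the space of invariant forms, so ``the whole of $\square_X-a\,\dd\ell(C_L)$ is hit'' is not justified.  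Second, the identity $\dd r(Z(\llll_{\C}\cap\kk_{\C}))=\iota(\D_{L_K}(F))$, i.e.\ \eqref{eqn:dr-iota}, is established in the paper only under the hypothesis that $X_{\C}$ is $L_{\C}$-spherical (via the surjectivity of $\dd r_F$ on the center); the proposition only assumes $X_{\C}$ is $G_{\C}$-spherical, which is strictly weaker (Remark~\ref{rem:commutative}).  So your argument imports a hypothesis not available in the statement.  In effect, the content of \eqref{eqn:rel-Lapl} is precisely that the fiberwise part is not merely an invariant fiberwise operator but one coming from the center; this is what \cite[Cor.\,1.7]{kkdiffop} establishes and what your sketch asserts without an argument.

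Finally, your uniform argument would silently cover the complex case $(\SO(8,\C),\SO(7,\C),\mathrm{Spin}(7,1))$, but there $G_{\C}$ is not simple, $\D_G(X)$ contains two independent second-order operators, and only one specific combination (the real Laplacian $\square_X=\dd\ell(C_{G,\R})$) satisfies \eqref{eqn:rel-Lapl}; the paper has to single it out via Lemma~\ref{lem:Casimir-complex}.  Your symbol argument, as written, does not explain why the discrepancy with the other second-order operator fails to be fiberwise, so this case would need to be handled separately in your approach as well.
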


Even if $G$ is simple, $L$ need not be (see Table~\ref{table1}), and so the invariant bilinear form on~$\llll$ may not be unique.
For any choice of such form, Proposition~\ref{prop:rel-Lapl} holds for the corresponding Casimir element~$C_L$.

For simple~$G_{\C}$, Proposition~\ref{prop:rel-Lapl} is a consequence of \cite[Cor.\,1.7]{kkdiffop}; in each case of Table~\ref{table1}, the nonzero scalar $a\in\R$ is the one computed explicitly in \cite[\S\,6--7]{kkdiffop} for the corresponding compact real forms.

\begin{example} \label{ex:rel-Lapl-SO(2n,2)}
Let $G=\SO(2n,2)_0$.
If $(H,L)$ is either $(\SO(2n,1)_0,\U(n,1))$ (so that $X=G/H$ is the anti-de Sitter space $\AdS^{2n+1}$ of Example~\ref{ex:AdS-odd}) or $(\U(n,1),\SO(2n,1)_0)$, then $\dd\ell(C_G) = 2 \dd\ell(C_L) - \dd r(C_{L_K})$.
\end{example}

\begin{proof}[Proof of Proposition~\ref{prop:rel-Lapl} when $G_{\C}$ is simple]
Recall from Lem\-ma \ref{lem:pseudo-Riem-struct} and Example \ref{ex:pseudo-Riem-struct}.(1)--(2) that a $G$-invariant pseudo-Riemannian structure $g_X$ on $X=G/H$ is unique up to scale, and induced by the Killing form of~$\g$.
Let $C_G\in Z(\g)$ be the corresponding Casimir element.
Then $\square_X=\dd\ell(C_G)$.
Let $C_{G,\C}\in Z(\g_{\C})$ be the Casimir element of the complex simple Lie algebra~$\g_{\C}$, and $C_{L,\C}\in Z(\llll_{\C})$ the Casimir element of~$\llll_{\C}$ associated to the complex extension of the bilinear form on~$\llll$ defining~$C_L$.
Then $\dd\ell(C_{G,\C})$ and $\dd\ell(C_{L,\C})$ are holomorphic differential operators on the complex manifold $X_{\C}=G_{\C}/H_{\C}$, whose restrictions to the totally real submanifold $X=G/H$ are $\dd\ell(C_G)$ and $\dd\ell(C_L)$, respectively: see \cite[Lem.\,5.4]{kkdiffop}.
By \cite[Cor.\,1.7]{kkdiffop}, there is a nonzero $a\in\R$ such that
$$\dd\ell(C_{G,\C}) \in a\,\dd\ell(C_{L,\C}) + \dd r(Z(\llll_{\C}\cap\kk_{\C})).$$
We obtain \eqref{eqn:rel-Lapl} by restricting to $X=G/H$.
\end{proof}

By the classification of Table~\ref{table1}, the remaining case is $(G,H,L)=\linebreak (\SO(8,\C),\SO(7,\C),\mathrm{Spin}(7,1))$ up to covering.
In this case, where $G_{\C}$ is not simple because $G$ itself has a complex structure, there exist two linearly independent $G$-invariant second-order differential operators on $X=G/H$, and not all of their linear combinations are contained in the vector space $\C\,\dd\ell(C_L) + \dd r(Z(\llll_{\C}\cap\kk_{\C}))$ (see Remark~\ref{rem:necessary-cond}).
However, we now check that the Laplacian $\square_X$ with respect to the $G$-invariant pseudo-Riemannian structure on~$X$ (which is unique up to scale) belongs to this vector space.
We start with the following general lemma.

\begin{lemma} \label{lem:Casimir-complex}
Let $\g$ be the Lie algebra of a complex semisimple Lie group~$G$, and $\g\otimes_{\R}\C \simeq \g^{\mathrm{holo}} \oplus \g^{\mathrm{anti}}$ the decomposition corresponding to the sum
\begin{equation} \label{eqn:hol-decomp}
(T_eG)_{\C} \simeq T_e^{1,0}G \oplus T_e^{0,1}G
\end{equation}
of the holomorphic and anti-holomorphic tangent spaces at the origin.
We denote by $C_{G,\R}$ the Casimir element of~$\g$ (regarded as a real Lie algebra), and by $C^{\mathrm{holo}}$ and $C^{\mathrm{anti}}$ those of $\g^{\mathrm{holo}}$ and $\g^{\mathrm{anti}}$ (regarded as complex Lie algebras), respectively.
Then $C_{G,\R} = C^{\mathrm{holo}} + C^{\mathrm{anti}}$ in $Z(\g\otimes_{\R}\C)$.
\end{lemma}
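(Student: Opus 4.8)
The plan is to exploit the fact that, for a complex Lie algebra $\g$, the complexification of its underlying real Lie algebra $\g_{\R}$ splits as a direct sum of \emph{ideals} $\g\otimes_{\R}\C = \g^{\mathrm{holo}}\oplus\g^{\mathrm{anti}}$, and then to combine the compatibility of Casimir elements with field extension and with direct sums of ideals.

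First I would pin down the algebraic structure behind \eqref{eqn:hol-decomp}. The complex structure $J$ on $\g_{\R}$ (multiplication by~$i$ in~$\g$) commutes with every operator $\ad(X)$, since $\ad(X)$ is $\C$-linear on~$\g$; extending $J$ to a $\C$-linear operator $J_{\C}$ on $\g\otimes_{\R}\C$, the $(\pm i)$-eigenspaces of $J_{\C}$ are therefore $\ad$-stable, hence ideals, and by the conventions defining the holomorphic and anti-holomorphic tangent spaces in \eqref{eqn:hol-decomp} they are exactly $\g^{\mathrm{holo}}$ and~$\g^{\mathrm{anti}}$. Thus $\g\otimes_{\R}\C = \g^{\mathrm{holo}}\oplus\g^{\mathrm{anti}}$ as a direct sum of complex ideals, and in particular $[\g^{\mathrm{holo}},\g^{\mathrm{anti}}]=0$.

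Next I would track the Killing form. Since the Killing form commutes with base change, the $\C$-bilinear extension of the Killing form $B$ of $\g_{\R}$ is the Killing form of $\g\otimes_{\R}\C$ (and is nondegenerate, as $\g_{\R}$ is semisimple). For a decomposition of a Lie algebra into a direct sum of ideals $\mathfrak m=\mathfrak m_{1}\oplus\mathfrak m_{2}$ one has $B_{\mathfrak m}(\mathfrak m_{1},\mathfrak m_{2})=0$ and $B_{\mathfrak m}|_{\mathfrak m_{i}\times\mathfrak m_{i}}=B_{\mathfrak m_{i}}$; applied here, $\g^{\mathrm{holo}}$ and $\g^{\mathrm{anti}}$ are $B$-orthogonal and the extended form restricts on each summand to that summand's own Killing form. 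Now choose a $\C$-basis $(a_{k})$ of $\g^{\mathrm{holo}}$ and a $\C$-basis $(b_{\ell})$ of $\g^{\mathrm{anti}}$ with dual bases $(a^{k})\subset\g^{\mathrm{holo}}$ and $(b^{\ell})\subset\g^{\mathrm{anti}}$ for the respective Killing forms; by orthogonality their union is a basis of $\g\otimes_{\R}\C$ whose dual basis for the extended form is $(a^{k})\cup(b^{\ell})$, so the Casimir element of $\g\otimes_{\R}\C$ attached to the extended Killing form equals $\sum_{k}a_{k}a^{k}+\sum_{\ell}b_{\ell}b^{\ell}=C^{\mathrm{holo}}+C^{\mathrm{anti}}$. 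On the other hand, starting instead from an $\R$-basis of $\g_{\R}$ together with its $B$-dual basis, which is simultaneously a $\C$-basis of $\g\otimes_{\R}\C$ and its dual basis for the extended form, one sees that this same Casimir element is the image of $C_{G,\R}$ under the canonical embedding $U(\g_{\R})\hookrightarrow U(\g\otimes_{\R}\C)$. Comparing the two expressions gives $C_{G,\R}=C^{\mathrm{holo}}+C^{\mathrm{anti}}$ in $Z(\g\otimes_{\R}\C)$.

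The mathematical content here is entirely standard, so the only point requiring genuine care is conventions: one must make sure that $C_{G,\R}$ is the Casimir normalized by the Killing form $B$ of $\g_{\R}$ (so that the field-extension step applies verbatim), that $C^{\mathrm{holo}}$ and $C^{\mathrm{anti}}$ are the Casimirs attached to the Killing forms of $\g^{\mathrm{holo}}$ and $\g^{\mathrm{anti}}$ respectively, and that the identification of $\g^{\mathrm{holo}},\g^{\mathrm{anti}}$ with the $(\pm i)$-eigenspaces matches the sign convention in \eqref{eqn:hol-decomp}; once these are fixed the identity is forced, and any common rescaling of $B$ would rescale all three Casimirs in the same way and leave the statement intact.
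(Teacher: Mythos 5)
Your proof is correct, and it takes a genuinely different route from the paper's. The paper proves the identity by an explicit coordinate computation: it picks a real orthonormal basis $\{X_1,\dots,X_n\}$ of a compact real form $\kk$ for the complex Killing form, extends to the real basis $\{X_k, JX_k\}$ of $\g$, notes $B_{\R}=2\operatorname{Re}B$, and then writes out $C_{G,\R}$, $C^{\mathrm{holo}}$, $C^{\mathrm{anti}}$ explicitly in terms of the elements $X_k$, $JX_k$, $X_k\mp\sqrt{-1}JX_k$ and verifies the identity by expanding squares. You instead observe that $J$ extends to a $\C$-linear involution whose eigenspaces $\g^{\mathrm{holo}}$ and $\g^{\mathrm{anti}}$ are \emph{ideals} of $\g\otimes_{\R}\C$, invoke the compatibility of the Killing form with scalar extension and with direct sums of ideals, and conclude by basis-independence of the Casimir, computed once with an $\R$-basis of $\g_{\R}$ and once with a basis adapted to the ideal decomposition. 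Your argument is more structural and avoids all coordinate manipulation; it also makes transparent the two facts really in play (the base-change property of the Killing form, and additivity of the Casimir over a direct sum of ideals). The paper's computation is more self-contained and explicitly exhibits the intermediate formulas $B_{\R}=2\operatorname{Re}B$ and the component Casimirs, which it then reuses in the subsequent proof of Proposition~\ref{prop:rel-Lapl} for the $\SO(8,\C)$ case. You also correctly flag the one genuine subtlety, namely that all three Casimirs must be normalized by the Killing form of the respective algebra so that the base-change and restriction-to-ideals arguments apply verbatim.
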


\begin{proof}
Let $B : \g\times\g\to\C$ be the Killing form of~$\g$ as a complex Lie algebra, and $B_{\R} : \g\times\g\to\R$ the Killing form of~$\g$ as a real Lie algebra (forgetting the complex structure).
Then
$$B_{\R}(X,Y) = 2\,\mathrm{Re}\,B(X,Y)$$
for all $X,Y\in\g$.
Let $J$ be the complex structure of~$G$, and $\kk$ the Lie algebra of a maximal compact subgroup of~$G$.
The Cartan decomposition $\g=\kk+J\kk$ holds.
The Killing form $B$ takes real values on $\kk\times\kk$, where it is in fact negative definite.
We choose a basis $\{X_1,\dots,X_n\}$ of $\kk$ over~$\R$ such that $B(X_k,X_{\ell})=-\delta_{k,\ell}$ for $1\leq k,\ell\leq n$.
Then $\{X_1,\dots,X_n,JX_1,\dots,JX_n\}$ is a basis of $\g$ over~$\R$, and
$$B_{\R}(X_k,X_{\ell}) = - B_{\R}(JX_k,JX_{\ell}) = - 2 \delta_{k,\ell}$$
and $B(X_k,JX_{\ell})=0$ for all $1\leq k,\ell\leq n$.
The Casimir elements $C_G$ and~$C_{G,\R}$ with respect to the two Killing forms $B$ and~$B_{\R}$ are given by
$$C_G = - \sum_{k=1}^n X_k^2, \quad\quad\quad C_{G,\R} = \frac{1}{2} \sum_{k=1}^n \big((JX_k)^2 - X_k^2\big).$$
We note that $C_G$ does not change if we replace $J$ by $-J$.

On the other hand, corresponding to the decomposition \eqref{eqn:hol-decomp} of the complexified tangent space, we have a decomposition of the complexification of~$\g$ into two complex Lie algebras:
\begin{eqnarray}
\g \otimes_{\R} \C & \simeq & \g^{\mathrm{holo}} \oplus \g^{\mathrm{anti}}\label{eqn:decomp-g-C}\\
X & \mapsto & \frac{1}{2} \, \big(X - \sqrt{-1}JX\big) + \frac{1}{2} \, \big(X + \sqrt{-1}JX\big),\nonumber
\end{eqnarray}
where $\g^{\mathrm{holo}}$ and $\g^{\mathrm{anti}}$ are the eigenspace of the original complex structure $J$ for the eigenvalues $\sqrt{-1}$ and $-\sqrt{-1}$, respectively.
The Casimir elements $C^{\mathrm{holo}}$ and $C^{\mathrm{anti}}$ of the complex Lie algebras $\g^{\mathrm{holo}}$ and $\g^{\mathrm{anti}}$ are given by
$$C^{\mathrm{holo}} = - \frac{1}{4} \sum_{k=1}^n \big(X_k - \sqrt{-1}JX_k\big)^2, \quad\quad\quad C^{\mathrm{anti}} = - \frac{1}{4} \sum_{k=1}^n \big(X_k + \sqrt{-1}JX_k\big)^2.$$
Therefore $C_{G,\R} = C^{\mathrm{holo}} + C^{\mathrm{anti}}$ in $Z(\g\otimes_{\R}\C)$.
\end{proof}

\begin{proof}[Proof of Proposition~\ref{prop:rel-Lapl} for $(G,H,L)\!=\!(\SO(8,\C),\SO(7,\C),\mathrm{Spin}(7,1))$]
The complexification of the triple $(G,H,L)$ is given by
$$(G_{\C}, H_{\C}, L_{\C}) = \big(\SO(8,\C)\times\SO(8,\C), \SO(7,\C)\times\SO(7,\C),\mathrm{Spin}(8,\C)\big),$$
and its compact real form by
$$(G_U, H_U, L_U) = \big(\SO(8)\times\SO(8), \SO(7)\times\SO(7),\mathrm{Spin}(8)\big).$$
For $i=1,2$, let $C_{G_U}^{(i)}$ be the Casimir element of the $i$-th factor of~$G_U$.
The Casimir element of~$G_U$ is given by $C_{G_U} = C_{G_U}^{(1)} + C_{G_U}^{(2)}$.
By \cite[Prop.\,7.4.(1)]{kkdiffop}, we have $\dd\ell(C_{G_U}) = 6\dd\ell(C_{L_U}) - 4\dd r(C_{L\cap K})$.
Let $C_{G,\R}$, $C^{\mathrm{holo}}$, and $C^{\mathrm{anti}}$ be as in Lemma~\ref{lem:Casimir-complex}, and define $C_{L,\R}$ similarly for~$L$.
Then $\dd\ell(C_{G_U})$ and $\dd\ell(C_{G,\R})$ extend to the same holomorphic differential operator on~$X_{\C}$, and similarly for $\dd\ell(C_{L_U})$ and $\dd\ell(C_{L,\R})$.
Thus $\dd\ell(C^{\mathrm{holo}}+C^{\mathrm{anti}})$ is equal to $6\dd\ell(C_L) - 4\dd r(C_{L\cap K})$, and the Laplacian $\square_X = \dd\ell(C_{G,\R})$ satisfies \eqref{eqn:rel-Lapl} with $a=6$ by Lemma~\ref{lem:Casimir-complex}.
\end{proof}

\begin{remark} \label{rem:rel-Lapl-GxG}
Let $X:=({}^{\backprime}G\times\!{}^{\backprime}G)/\Diag({}^{\backprime}G)$ and $L:={}^{\backprime}G\times\!{}^{\backprime}K$, where ${}^{\backprime}G$ is a noncompact reductive Lie group and ${}^{\backprime}K$ a maximal compact subgroup as in Example~\ref{ex:group-manifold}.
For $i=1,2$, let $C_L^{(i)}$ be the Casimir element of the $i$-th factor of~$L$.
Then $C_L=C_L^{(1)}+C_L^{(2)}$.
We have $\square_X=\dd\ell(C_L^{(1)})$ and $\dd\ell(C_L^{(2)})=\dd r(C_L^{(2)})\in\dd r(Z(\llll_{\C}\cap\kk_{\C}))$, hence \eqref{eqn:rel-Lapl} holds with $a=1$, even though $X_{\C}$ is not necessarily $L_{\C}$-spherical (see Example~\ref{ex:sph}.(4)).
\end{remark}

\section{The maps $\pp_{\tau,\Gamma}$} \label{subsec:p-tau}

We continue with the general setting~\ref{gen-setting}.
Let $\Gamma$ be a torsion-free discrete subgroup of~$L$.
The $L$-equivariant fibration $q : X = G/H \simeq L/L_H \overset{F}{\longrightarrow} L/L_K = Y$ of \eqref{eqn:bundleXY} induces a fibration
\begin{equation} \label{eqn:X-Gamma-Y-Gamma}
q_{\Gamma} : X_{\Gamma} \overset{F}{\longrightarrow} Y_{\Gamma}
\end{equation}
with compact fiber $F\simeq L_K/L_H$.

Let $\F=\A$, $C^{\infty}$, $L^2$, or~$\DD'$.
For any $(\tau,V_{\tau})\in\Disc(L_K/L_H)$, we now introduce a projection map $\pp_{\tau,\Gamma} : \F(X_{\Gamma})\rightarrow (V_{\tau}^{\vee})^{L_H}\otimes\F(Y_{\Gamma},\V_{\tau})$, where $(V_{\tau}^{\vee})^{L_H}$ denotes the space of $L_H$-fixed vectors in the contragredient representation of $(\tau,V_{\tau})$, as in Section~\ref{subsec:bundle}.

For $v\in V_{\tau}$, the map $k\mapsto\tau(k)v$ from $L_K$ to~$V_{\tau}$ is right-$L_H$-invariant.
Let $\dd k$ be the Haar probability measure on the compact group~$L_K$.
Consider the map
\begin{eqnarray*}
\F(X_{\Gamma}) \otimes V_{\tau} & \longrightarrow & \hspace{0.6cm} \F(Y_{\Gamma},\V_{\tau})\\
f \hspace{0.5cm} \otimes v \hspace{0.19cm} & \longmapsto & \int_{L_K} f(\cdot k) \, \tau(k) v \, \dd k,
\end{eqnarray*}
where $f\in\F(X_{\Gamma})$ is seen as a right-$L_H$-invariant element of $\F(\Gamma\backslash L)$.
We fix an $L_K$-invariant inner product on $V_\tau$, and write $(V_{\tau})'$ for  the orthogonal complement of $(V_{\tau})^{L_H}$ in $V_\tau$, which is the direct sum of irreducible representations of $L_H$ apart from the trivial representation. 
Let $\dd h$ be the Haar probability measure on the compact group $L_H$.
By the Schur orthogonality relations, the integral $\int_{L_H}  \tau(h) v \, \dd h$ vanishes for all $v \in (V_{\tau})'$.
By the Fubini theorem, the integral $\int_{L_K} f(\cdot k) \, \tau(k) v \, \dd k = \int_{L_K } f(\cdot k) \, (\, \int_{L_H} \tau(kh) v \, \dd h) \, \dd k$
vanishes if $v$ belongs to $(V_{\tau})'$. Hence the integral induces a homomorphism
\begin{equation} \label{eqn:p-tau}
\pp_{\tau,\Gamma} : \F(X_{\Gamma}) \longrightarrow ((V_{\tau})^{L_H})^{\vee}\otimes\F(Y_{\Gamma},\V_{\tau}) \simeq (V_{\tau}^{\vee})^{L_H}\otimes\F(Y_{\Gamma},\V_{\tau}).
\end{equation}
The map $\pp_{\tau,\Gamma}$ is surjective and continuous for $\F=\A$, $C^{\infty}$, $L^2$, or~$\DD'$ in each topology.
If $\ii_{\tau,\Gamma} : ((V_{\tau})^{L_H})^{\vee}\otimes\F(Y_{\Gamma},\V_{\tau})\to\F(X_{\Gamma})$ is as in \eqref{eqn:i-tau}, then $\pp_{\tau,\Gamma}\circ\ii_{\tau,\Gamma}=\mathrm{id}$ on $\F(Y_{\Gamma},\V_{\tau})$ by the Schur orthogonality relation for the compact group~$L_K$.
When $\Gamma=\{e\}$ is trivial we shall simply write $\pp_{\tau}$ for~$\pp_{\tau,\Gamma}$.
Then $\pp_{\tau}\circ p_{\Gamma}^{\ast} = {p'_{\Gamma}}^{\!\!\ast}\circ\pp_{\tau,\Gamma}$ (see Observation~\ref{obs:extend-i-tau}), where $p_{\Gamma}^{\ast} : \F(X_{\Gamma})\to\DD'(X)$ and ${p'_{\Gamma}}^{\!\!\ast} : \F(Y_{\Gamma},\V_{\tau})\to\DD'(Y,\V_{\tau})$ denote the maps induced by the natural projections $p_{\Gamma} : X\to X_{\Gamma}$ and $p'_{\Gamma} : Y\to Y_{\Gamma}$, respectively.

We refer to Remark~\ref{rem:L_K-bundle} for a more general construction.

\section{Conditions (Tf), (A), (B) on higher-rank operators} \label{subsec:cond-Tf-A-B}

In order to establish the theorems of Chapters \ref{sec:intro} and~\ref{sec:method}, we shall use the following conditions (Tf), (A), (B), which we prove are satisfied in the setting of these theorems.

For $\tau\in\Disc(L_K/L_H)$, recall the algebra homomorphism 
$$\dd\ell^{\tau} : Z(\llll_{\C}) \longrightarrow \D_L(Y,\V_{\tau})$$
from Section~\ref{subsec:bundle}, where $\D_L(Y,\V_{\tau})$ is the $\C$-algebra of $L$-invariant matrix-valued differential operators acting on $\F(Y,\V_{\tau})$; this map generalizes the natural map $\dd\ell: Z(\llll_{\C}) \to \D_L(Y)$ of \eqref{eqn:dl-dr}, see Example~\ref{ex:trivial-tau}.
We note that $\F(Y,\V_{\tau};\NN_{\nu})\neq\{ 0\}$ only if $\nu\in\Hom_{\C\text{-}\mathrm{alg}}(Z(\llll_{\C}),\C)$ vanishes on $\Ker(\dd\ell^{\tau})$.

\begin{definition} \label{def:cond-Tf}
In the general setting~\ref{gen-setting}, we say that the quadruple $(G,L,H,L_K)$ satisfies \emph{condition~(Tf)} if for every $\tau\in\Disc(L_K/L_H)$ there exist maps
$$\nnu(\cdot,\tau) : \Hom_{\C\text{-}\mathrm{alg}}(\D_G(X),\C) \longrightarrow \Hom_{\C\text{-}\mathrm{alg}}(Z(\llll_{\C}),\C)$$
and
$$\llambda(\cdot,\tau) : \Hom_{\C\text{-}\mathrm{alg}}(Z(\llll_{\C})/\Ker(\dd\ell^{\tau}),\C) \longrightarrow \Hom_{\C\text{-}\mathrm{alg}}(\D_G(X),\C)$$
with the following properties for $\F=C^{\infty}$ and~$\DD'$:
\begin{enumerate}
  \item for any $\lambda\in\Hom_{\C\text{-}\mathrm{alg}}(\D_G(X),\C)$,
  $$\pp_{\tau}\big(\F(X;\M_{\lambda})\big)\ \subset\ (V_{\tau}^{\vee})^{L_H} \otimes \F(Y,\V_{\tau};\NN_{\nnu(\lambda,\tau)}) \,;$$
  \item for any $\nu\in\Hom_{\C\text{-}\mathrm{alg}}(Z(\llll_{\C})/\Ker(\dd\ell^{\tau}),\C)$,
  $$\ii_{\tau}\big((V_{\tau}^{\vee})^{L_H} \otimes \F(Y,\V_{\tau};\NN_{\nu})\big)\ \subset\ \F(X;\M_{\llambda(\nu,\tau)}).$$
\end{enumerate}
We shall call $\nnu$ and~$\llambda$ \emph{transfer maps}.
\end{definition}

\begin{remark} \label{rem:lambda-circ-nu}
If the quadruple $(G,L,H,L_K)$ satisfies condition~(Tf), then the transfer maps $\nnu$ and~$\llambda$ are inverse to each other, in the sense that
\begin{enumerate}
  \item for any $\tau$ and $\lambda$ such that $\pp_{\tau}(C^{\infty}(X;\M_{\lambda}))\neq\{0\}$ we have
  $$\llambda\big(\nnu(\lambda,\tau),\tau\big) = \lambda,$$
  \item for any $\tau$ and $\nu$ such that $C^{\infty}(Y,\V_{\tau};\NN_{\nu})\neq\{0\}$ we have
  $$\nnu\big(\llambda(\nu,\tau),\tau\big) = \nu.$$
\end{enumerate}
\end{remark}

We also introduce two other conditions, (A) and~(B), which relate representations of the real reductive Lie group~$G$ and of its reductive subgroup~$L$.
We denote by $\g$ and~$\llll$ the respective Lie algebras of $G$ and~$L$.
As above, a $\g_{\C}$-module $V$ is called $Z(\g_{\C})$-finite if the annihilator $\mathrm{Ann}_{Z(\g_{\C})}(V)$ of $V$ in $Z(\g_{\C})$ has finite codimension in $Z(\g_{\C})$; equivalently, the action of $Z(\g_{\C})$ on~$V$ factors through the action of a finite-dimensional $\C$-algebra.

\begin{definition} \label{def:cond-A-B}
In the general setting~\ref{gen-setting}, we say that the quadruple $(G,L,H,L_K)$ satisfies
\begin{itemize}
  \item \emph{condition~(A)} if $\ii_{\tau}(\vartheta)$ is $Z(\g_{\C})$-finite for any $\tau\in\Disc(L_K/L_H)$ and any $Z(\llll_{\C})$-finite $\llll$-module $\vartheta\subset C^{\infty}(Y,\V_{\tau})$,
  \item \emph{condition~(B)} if $\pp_{\tau}(V)$ is $Z(\llll_{\C})$-finite for any $\tau\in\Disc(L_K/L_H)$ and any $Z(\g_{\C})$-finite $\g$-module $V\subset C^{\infty}(X)$.
\end{itemize}
\end{definition}

The following two propositions are used as a stepping stone to the proof of the theorems of Chapters \ref{sec:intro} and~\ref{sec:method}, which are stated either in the main setting~\ref{spher-setting} of these theorems or in the group manifold case.

\begin{proposition} \label{prop:cond-Tf-A-B-satisfied}
In the general setting~\ref{gen-setting}, suppose that $X_{\C}=G_{\C}/H_{\C}$ is $L_{\C}$-spherical and $G$ simple.
Then conditions (Tf), (A), (B) are satisfied for the quadruple $(G,L,H,L_K)$.
\end{proposition}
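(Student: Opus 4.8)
The plan is to reduce everything to the compact real form, where the corresponding statements have already been established in \cite{kkdiffop}. First I would introduce the auxiliary conditions $(\widetilde{\mathrm{A}})$ and $(\widetilde{\mathrm{B}})$ from \cite[\S\,1.4]{kkdiffop} on the complexified Lie algebras, which assert respectively that $\dd\ell(Z(\llll_{\C}))$ and $\D_G(X)$ fill up ``enough'' of $\D_L(X)$ relative to the fiber contribution $\dd r(Z(\llll_{\C}\cap\kk_{\C}))=\iota(\D_{L_K}(F))$. These conditions are formulated purely in terms of complex Lie algebras, so by \cite[\S\,5, Lem.\,5.4 \& Prop.\,5.5]{kkdiffop} they descend from the complexification to any real form, and in particular it suffices to verify them on the compact real form $(G_U,H_U,L_U)$ associated to the triple $(G,H,L)$. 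The verification on the compact real form is exactly \cite[Cor.\,1.12]{kkdiffop} (together with the explicit case-by-case computations in \cite[\S\,6--7]{kkdiffop}), which applies because $X_{\C}=G_{\C}/H_{\C}$ is $L_{\C}$-spherical and $G$ is simple. Hence $(\widetilde{\mathrm{A}})$ and $(\widetilde{\mathrm{B}})$ hold.

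Next I would invoke Lemma~\ref{lem:A-tilA-B-tilB}: $(\widetilde{\mathrm{A}})$ implies (A), $(\widetilde{\mathrm{B}})$ implies (B) once $X_{\C}$ is $G_{\C}$-spherical (which follows from $L_{\C}$-sphericity by Remark~\ref{rem:commutative}), and $(\widetilde{\mathrm{A}})$ together with $(\widetilde{\mathrm{B}})$ imply the existence of transfer maps, i.e.\ condition (Tf). The only subtlety is that $(\widetilde{\mathrm{A}})$ and $(\widetilde{\mathrm{B}})$ were set up in \cite{kkdiffop} over a \emph{compact} real form, where one has honest finite-dimensional representations and a clean decomposition of $\D_L(X)$; in the noncompact setting here one must pass from the compact picture to the general one by holomorphic continuation of the invariant differential operators across the totally real submanifold $X=G/H\subset X_{\C}$, using \cite[Lem.\,5.4]{kkdiffop}. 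This is what allows the transfer maps $\nnu(\cdot,\tau)$ and $\llambda(\cdot,\tau)$ of \eqref{eqn:nu-lambda-tau}, originally built in the compact case, to be defined here: the relations among $\D_G(X)$, $\dd\ell(Z(\llll_{\C}))$, and $\dd r(Z(\llll_{\C}\cap\kk_{\C}))$ inside $\D_L(X)$ are algebraic identities among holomorphic operators and therefore hold verbatim on every real form.

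I expect the main obstacle to be the bookkeeping in the reduction step: one must check that the maps $\pp_{\tau}$ and $\ii_{\tau}$, the algebra $\D_L(Y,\V_{\tau})$, and the homomorphism $\dd\ell^{\tau}$ behave compatibly under holomorphic continuation, so that the containments in Definition~\ref{def:cond-Tf}.(1)--(2) follow from the corresponding containments on the compact form. This requires knowing that $\pp_{\tau}$ intertwines $\D_G(X)$-eigenvalue data with $Z(\llll_{\C})$-eigenvalue data in the expected way, which is the content of \cite[Th.\,4.9 \& Prop.\,4.8]{kkdiffop}; granting those, the proof is essentially a citation of \cite[Cor.\,1.12]{kkdiffop} combined with Lemma~\ref{lem:A-tilA-B-tilB} and the real-form descent of $(\widetilde{\mathrm{A}})$, $(\widetilde{\mathrm{B}})$. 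In particular the proof should not require any new case analysis beyond Table~\ref{table1}, since the hard representation-theoretic and combinatorial work is already carried out in \cite{kkdiffop}.
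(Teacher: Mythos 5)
Your overall strategy is the right one (reduce to the compact real form via the complex conditions $(\widetilde{\mathrm{A}})$, $(\widetilde{\mathrm{B}})$ and then invoke Lemma~\ref{lem:A-tilA-B-tilB}), but there is a concrete gap: you assert that $(\widetilde{\mathrm{A}})$ holds ``because $X_{\C}$ is $L_{\C}$-spherical and $G$ is simple.'' That implication is not available under the stated hypothesis. Fact~\ref{fact:cond-tilA-tilB-satisfied}.(1), i.e.\ \cite[Cor.\,1.12]{kkdiffop}, gives $(\widetilde{\mathrm{A}})$ only when the \emph{complexified} Lie algebra $\g_{\C}$ is simple, whereas the Proposition only assumes the \emph{real} group $G$ simple. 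These differ precisely in case~(vii) of Table~\ref{table1}, $(G,H,L)=(\SO(8,\C),\SO(7,\C),\Spin(7,1))$, where $\g$ is simple but $\g_{\C}\simeq\so(8,\C)\oplus\so(8,\C)$ is not; and the paper notes explicitly (citing \cite[Prop.\,7.5]{kkdiffop}) that $(\widetilde{\mathrm{A}})$ actually \emph{fails} there. So your chain $(\widetilde{\mathrm{A}})\wedge(\widetilde{\mathrm{B}})\Rightarrow(\mathrm{Tf})$ via Lemma~\ref{lem:A-tilA-B-tilB}.(3) breaks down in that case, and so does $(\widetilde{\mathrm{A}})\Rightarrow(\mathrm{A})$.

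The paper closes the gap in two ways that your proposal would need to incorporate. For condition~(A) in case~(vii), one replaces $(\widetilde{\mathrm{A}})$ by the weaker condition $(\widetilde{\mathrm{A}}')$ (finite generation of $\D_{L_{\C}}(X_{\C})$ as an $R$-module), which does hold there (Fact~\ref{fact:cond-tilA'-complex-case}) and still implies~(A) by Lemma~\ref{lem:A-tilA-B-tilB}.(1). For condition~(Tf) in case~(vii), no implication from $(\widetilde{\mathrm{A}}')$ is available, so (Tf) is verified directly for the quadruple $(\SO(8,\C),\Spin(7,1),\SO(7,\C),\Spin(7))$ (Lemma~\ref{lem:A-tilA-B-tilB}.(4)), again by reduction to the compact real form via \cite[Prop.\,4.8 \& Th.\,4.9]{kkdiffop}, but now exploiting that the fiber $L_K/L_H$ is the same for the real and compact forms. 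Your argument for condition~(B) is fine as stated, since Fact~\ref{fact:cond-tilA-tilB-satisfied}.(2) only needs the real $\g$ to be simple. Apart from this missing case, your reduction to the compact picture and the use of holomorphic continuation via \cite[Lem.\,5.4]{kkdiffop} match the paper's argument.
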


\begin{proposition} \label{prop:cond-Tf-A-B-satisfied-GxG}
Let ${}^{\backprime}G$ be a noncompact reductive Lie group and ${}^{\backprime}K$ a maximal compact subgroup of~${}^{\backprime}G$.
Let
$$(G,H,L)=({}^{\backprime}G\times\!{}^{\backprime}G,\Diag({}^{\backprime}G),{}^{\backprime}G\times\!{}^{\backprime}K)$$
and $K=L_K={}^{\backprime}K\times\!{}^{\backprime}K$, as in Example~\ref{ex:group-manifold}.
Then conditions (Tf), (A), (B) are satisfied for the quadruple $(G,L,H,L_K)$.
\end{proposition}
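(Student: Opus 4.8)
The plan is to establish the proposition by a direct computation in the group-manifold model, without appealing to the spherical machinery of \cite{kkdiffop}; this is forced, since $X_{\C}$ need not be $L_{\C}$-spherical here (Example~\ref{ex:sph}.(4)), so Proposition~\ref{prop:cond-Tf-A-B-satisfied} does not apply. First I would set up the model $X = G/H \simeq {}^{\backprime}G$ via $(g_1,g_2)\Diag({}^{\backprime}G) \mapsto g_1 g_2^{-1}$, so that $G = {}^{\backprime}G\times\!{}^{\backprime}G$ acts by $(g_1,g_2)\cdot x = g_1 x g_2^{-1}$, the subgroup $L = {}^{\backprime}G\times\!{}^{\backprime}K$ acts with its first factor by left translations and its second by right translations, $L_H = L\cap H = \Diag({}^{\backprime}K)$, the base $Y = L/L_K$ of \eqref{eqn:bundleXY} is the Riemannian symmetric space ${}^{\backprime}G/{}^{\backprime}K$ (the second ${}^{\backprime}K$-factor acting trivially on~$Y$), and the fiber $F = L_K/L_H \simeq {}^{\backprime}K$ is the group manifold of the compact group ${}^{\backprime}K$, in particular $(L_{\C}\cap K_{\C})$-spherical. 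In this model $\D_G(X)$ is the algebra of bi-invariant differential operators on ${}^{\backprime}G$ — it equals $\dd\ell(Z(\g_{\C}))$ and also the image of $Z({}^{\backprime}\g_{\C})$ under the (left, equivalently right) regular representation of ${}^{\backprime}G$ — and $Z(\llll_{\C}) = Z({}^{\backprime}\g_{\C}) \otimes Z({}^{\backprime}\kk_{\C})$, the first tensor factor acting on $C^{\infty}(X) = C^{\infty}({}^{\backprime}G)$ as the bi-invariant operators and the second along the ${}^{\backprime}K$-factor. By Frobenius reciprocity $\Disc(L_K/L_H) = \{\tau_{\sigma} := \sigma\boxtimes\sigma^{\vee} : \sigma \in \widehat{{}^{\backprime}K}\}$, with $(V_{\tau_{\sigma}}^{\vee})^{L_H} \simeq \End_{{}^{\backprime}K}(V_{\sigma})$ one-dimensional (this is where the sphericity of the fiber enters), and Peter--Weyl along~$F$ gives a left-${}^{\backprime}G$-equivariant identification $\F(Y,\V_{\tau_{\sigma}}) \simeq \F({}^{\backprime}G/{}^{\backprime}K,\W_{\sigma}) \otimes V_{\sigma}^{\vee}$, with $\W_{\sigma} = {}^{\backprime}G \times_{{}^{\backprime}K} V_{\sigma}$, under which the first factor $Z({}^{\backprime}\g_{\C})$ acts through its image in $\D_{{}^{\backprime}G}({}^{\backprime}G/{}^{\backprime}K,\W_{\sigma})$ and the second factor $Z({}^{\backprime}\kk_{\C})$ acts by the scalars of the infinitesimal character $\chi_{\sigma^{\vee}}$.

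With this bookkeeping the transfer maps can be written down explicitly: under the identification $\Hom_{\C\text{-}\mathrm{alg}}(\D_G(X),\C) \simeq \Hom_{\C\text{-}\mathrm{alg}}(Z({}^{\backprime}\g_{\C}),\C)$, and noting that $Z(\llll_{\C})/\Ker(\dd\ell^{\tau_{\sigma}})$ is a quotient of $Z({}^{\backprime}\g_{\C})$ (the $\kk$-factor being pinned to $\chi_{\sigma^{\vee}}$), I would set $\nnu(\lambda,\tau_{\sigma}) := \lambda \otimes \chi_{\sigma^{\vee}}$ and $\llambda(\nu,\tau_{\sigma}) := \nu|_{Z({}^{\backprime}\g_{\C})}$; these are then visibly inverse to each other as required by Remark~\ref{rem:lambda-circ-nu}. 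For condition~(Tf) of Definition~\ref{def:cond-Tf} I would use that $\pp_{\tau_{\sigma}}$ of \eqref{eqn:p-tau} (integration over~$L_K$) and $\ii_{\tau_{\sigma}}$ of \eqref{eqn:i-tau} (fiberwise pairing) are left-$L$-equivariant, hence intertwine the left actions of $U(\llll_{\C})$; then for $z$ in the first tensor factor, $\dd\ell^{\tau_{\sigma}}(z)$ is simply the restriction of $\dd\ell(z\otimes 1) \in \D_G(X)$ to the $\tau_{\sigma}$-isotypic subspace $\ii_{\tau_{\sigma}}(\F(Y,\V_{\tau_{\sigma}})) \subset \F(X)$, which is preserved by $\D_G(X)$ because bi-invariant operators commute with the $L_K$-action; and for $z$ in the $\kk$-factor the action is the scalar $\chi_{\sigma^{\vee}}(z)$ by construction. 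Inclusions (1) and~(2) of Definition~\ref{def:cond-Tf} are then immediate.

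Conditions (A) and~(B) of Definition~\ref{def:cond-A-B} I would deduce from the same equivariance, together with the fact that for subspaces of $C^{\infty}(X) = C^{\infty}({}^{\backprime}G)$, $Z(\g_{\C})$-finiteness is equivalent to $\D_G(X)$-finiteness (as $\dd\ell(Z(\g_{\C})) = \D_G(X)$), and with the classical coincidence of the left and right central actions. For~(A): a $Z(\llll_{\C})$-finite $\llll$-module $\vartheta \subset C^{\infty}(Y,\V_{\tau_{\sigma}})$ is automatically $Z({}^{\backprime}\g_{\C})$-finite (the $\kk$-factor acting through the fixed finite-dimensional $\sigma^{\vee}$), hence $\ii_{\tau_{\sigma}}(\vartheta)$ is $\D_G(X)$-finite by left-${}^{\backprime}G$-equivariance of $\ii_{\tau_{\sigma}}$, hence $Z(\g_{\C})$-finite, and so generates a $Z(\g_{\C})$-finite $\g$-module in $C^{\infty}(X)$. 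For~(B): a $Z(\g_{\C})$-finite $\g$-module $V \subset C^{\infty}(X)$ is in particular $Z({}^{\backprime}\g_{\C})$-finite for the left-translation action, so $\pp_{\tau_{\sigma}}(V)$ is $Z({}^{\backprime}\g_{\C})$-finite, and the $\kk$-factor acts on it by $\chi_{\sigma^{\vee}}$, hence $\pp_{\tau_{\sigma}}(V)$ is $Z(\llll_{\C})$-finite.

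The only delicate point — and the main obstacle, though a comparatively mild one — is that none of the structural results of \cite{kkdiffop} (in particular \cite[Th.\,4.9]{kkdiffop}, used for the spherical case) are available here, so each ingredient (the invariance of $\ii_{\tau}(\F(Y,\V_{\tau}))$ under $\D_G(X)$, the explicit form of $\dd\ell^{\tau}$ on the first tensor factor, the one-dimensionality of $(V_{\tau}^{\vee})^{L_H}$, the identification $\D_G(X) = \dd\ell(Z(\g_{\C}))$) has to be obtained by hand from the product structure $X \simeq {}^{\backprime}G$ and the fact that the fiber is a compact group manifold. Alternatively, and equivalently, the transfer maps $\nnu(\cdot,\tau_{\sigma})$ can be produced by specializing the compact group-manifold computation of \cite{kkdiffop} and holomorphically continuing the invariant differential operators, exactly as in the proof of Proposition~\ref{prop:cond-Tf-A-B-satisfied}; Remark~\ref{rem:rel-Lapl-GxG} already records the Laplacian instance of this relation.
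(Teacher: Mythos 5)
Your proof is correct and, in one respect, more informative than the paper's: the explicit transfer maps $\nnu(\lambda,\tau_{\sigma})=\lambda\otimes\chi_{\sigma^{\vee}}$, $\llambda(\nu,\tau_{\sigma})=\nu|_{Z({}^{\backprime}\g_{\C})}$, together with the identifications $\Disc(L_K/L_H)=\{\sigma\boxtimes\sigma^{\vee}:\sigma\in\widehat{{}^{\backprime}K}\}$ and $\F(Y,\V_{\tau_{\sigma}})\simeq\F({}^{\backprime}G/{}^{\backprime}K,\W_{\sigma})\otimes V_{\sigma}^{\vee}$, are not spelled out in the paper (they are implicitly used later, compare the formula $\llambda(\vartheta,\tau)=\chi_{{}^{\backprime}\vartheta}$ in the proof of Proposition~\ref{prop:SpecG}). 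However, your framing of why a hands-on computation is ``forced'' misidentifies the obstruction and misses the paper's actual route. The paper does not invoke Proposition~\ref{prop:cond-Tf-A-B-satisfied} here, nor does it specialize the spherical machinery of \cite{kkdiffop}: it uses Lemma~\ref{lem:tilA-tilB-group-manifold}, which observes that the elementary identities $\D_{G_{\C}}(X_{\C})=\dd\ell(Z({}^{\backprime}\g_{\C}))=\dd r(Z({}^{\backprime}\g_{\C}))$, $\dd\ell(Z(\llll_{\C}))=\langle\dd\ell(Z({}^{\backprime}\g_{\C})),\dd r(Z({}^{\backprime}\kk_{\C}))\rangle$, $\dd r(Z(\llll_{\C}\cap\kk_{\C}))=\dd r(Z({}^{\backprime}\kk_{\C}))$ immediately yield conditions ($\widetilde{\mathrm{A}}$) and~($\widetilde{\mathrm{B}}$), and then applies Lemma~\ref{lem:A-tilA-B-tilB}, a general implication proved within the paper which requires no $L_{\C}$-sphericity and needs $G_{\C}$-sphericity only for part~(2) --- which holds, $X$ being a symmetric space. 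The $(\widetilde{\mathrm{A}})$--$(\widetilde{\mathrm{B}})$ layer exists precisely to absorb in one stroke the verifications you carry out case by case (the Schur-lemma scalar for $\dd r(Z(\llll_{\C}\cap\kk_{\C}))$ on the $\tau$-isotypic part, the invariance of $\ii_{\tau}(\F(Y,\V_{\tau}))$ under $\D_G(X)$, the equivalence of $Z(\g_{\C})$-finiteness and $\D_G(X)$-finiteness); the two alternatives you name at the end --- ``everything by hand'' or ``specialize the compact-group computation of \cite{kkdiffop}'' --- both bypass this middle layer, which is the route the paper actually takes and which makes the proof a three-line reduction. Your computation is nonetheless sound, and for a reader who wants to see the transfer maps concretely in the group-manifold case it is the more instructive version.
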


Propositions \ref{prop:cond-Tf-A-B-satisfied} and~\ref{prop:cond-Tf-A-B-satisfied-GxG} state in particular the existence of transfer maps $\nnu$ and~$\llambda$.
We now explain why they are true, based on \cite{kkdiffop}; a formal proof will be given in Section~\ref{subsec:proof-cond-Tf-A-B-satisfied}.

\begin{remark} \label{rem:AB-need-spherical}
In Proposition~\ref{prop:cond-Tf-A-B-satisfied}, we cannot remove the $L_{\C}$-sphericity assumption: see Section~\ref{subsec:ex-not-AB}.
\end{remark}

\section[Conditions ($\widetilde{\mathrm{A}}$) and~($\widetilde{\mathrm{B}}$)]{Conditions ($\widetilde{\mathrm{A}}$) and~($\widetilde{\mathrm{B}}$), and their relation to conditions (Tf), (A), (B)} \label{subsec:cond-tilA-tilB}

Under some mild assumptions (which are satisfied in all but one case of Table~\ref{table1}), conditions (Tf), (A), (B) are consequences of two conditions ($\widetilde{\mathrm{A}}$) and~($\widetilde{\mathrm{B}}$) that only depend on complexifications of $X$ and~$Y$ and of the Lie algebras, as we now explain.

We first observe that the $\C$-algebra $\D_G(X)$ depends only on the pair of complexified Lie algebras $(\g_{\C},\h_{\C})$, and is also isomorphic to the $\C$-algebra $\D_{G_{\C}}(X_{\C})$ of $G_{\C}$-invariant holomorphic differential operators on the complex manifold $X_{\C}=G_{\C}/H_{\C}$, where $G_{\C}$ is any connected complex Lie group with Lie algebra~$\g_{\C}$, and $H_{\C}$ a connected subgroup with Lie algebra~$\h_{\C}$.
This consideration allows us to use results for other real forms, \eg compact real forms as in \cite{kkdiffop}.

In \cite[\S\,1.4]{kkdiffop} we introduced two conditions, ($\widetilde{\mathrm{A}}$) and~($\widetilde{\mathrm{B}}$), in the general setting of reductive Lie algebras $\g_{\C}\supset\h_{\C},\llll_{\C},\rr_{\C}$ over~$\C$ such that
\begin{equation}\label{eqn:glhr}
\g_{\C} = \h_{\C} + \llll_{\C} \quad\quad\mathrm{and}\quad\quad \llll_{\C}\cap\h_{\C} \subset \rr_{\C} \subset \llll_{\C}.
\end{equation}
(The notation $(\g_{\C},\h_{\C},\llll_{\C},\rr_{\C})$ here corresponds to the notation $(\widetilde{\g}_{\C},\widetilde{\h}_{\C},\g_{\C},\kk_{\C})$ in \cite[\S\,1.4]{kkdiffop}.)
Let $G_{\C}\supset H_{\C},L_{\C}$ be connected complex reductive Lie groups with Lie algebras $\g_{\C},\h_{\C},\llll_{\C}$, respectively.
We may regard $\D_{G_{\C}}(X_{\C})$, $\dd r(Z(\rr_{\C}))$, and $\dd\ell(Z(\llll_{\C}))$ as subalgebras of $\D_{L_{\C}}(X_{\C})$.
We note that the elements of $\D_{G_{\C}}(X_{\C})$ and $\dd\ell(Z(\llll_{\C}))$ naturally commute because $L_{\C}\subset G_{\C}$, and similarly for the right action $\dd r(Z(\rr_{\C}))$ and the left action $\dd\ell(Z(\llll_{\C}))$.

\begin{definition}[{\cite[\S\,1.4]{kkdiffop}}] \label{def:cond-tilA-tilB}
The quadruple $(\g_{\C},\llll_{\C},\h_{\C},\rr_{\C})$ satisfies
\begin{itemize}
  \item \emph{condition ($\widetilde{A}$)} if $\D_{G_{\C}}(X_{\C}) \subset \langle \dd\ell(Z(\llll_{\C})), \dd r(Z(\rr_{\C})) \rangle$,
  \item \emph{condition ($\widetilde{B}$)} if $\dd\ell(Z(\llll_{\C})) \subset \langle \D_{G_{\C}}(X_{\C}), \dd r(Z(\rr_{\C})) \rangle$,
\end{itemize}
where $\langle\cdot\rangle$ denotes the $\C$-algebra generated by two subalgebras.
\end{definition}

Going back to the general setting~\ref{gen-setting}, we now take $G_{\C}$ to be the complexification of a real reductive Lie group~$G$, and $\rr_{\C}=\llll_{\C}\cap\kk_{\C}$ where $K$ is a maximal compact subgroup of~$G$ such that $L_K:=L\cap K$ is a maximal compact subgroup of~$L$ containing $L_H:=L\cap H$.
In this case, the subalgebra $\D_{L_K}(F)$ of invariant differential operators coming from the compact fiber $F=L_K/L_H$ is equal to $\dd r(Z(\rr_{\C}))$ (see Section~\ref{subsec:DLX}), and conditions ($\widetilde{\mathrm{A}}$) and~($\widetilde{\mathrm{B}}$) mean that this subalgebra is sufficiently large in $\D_L(X)$ so that the two other subalgebras $\D_G(X)$ and $\dd\ell(Z(\llll_{\C}))$ are comparable modulo $\D_{L_K}(F)$.

\begin{example} \label{ex:rank-1}
Suppose $X_{\C}=G_{\C}/H_{\C}$ is $G_{\C}$-spherical.
If $\rank G/H=1$, then $\D_G(X)$ is generated by the Laplacian $\square_X$ (see Sections \ref{subsec:DGH}--\ref{subsec:pseudo-Riem-struct}).
In particular, in this case the relation \eqref{eqn:rel-Lapl} implies that condition ($\widetilde{\mathrm{A}}$) holds for the quadruple $(\g_{\C},\llll_{\C},\h_{\C},\llll_{\C}\cap\kk_{\C})$.
\end{example}

In the case that $\rank G/H>1$, the $\C$-algebra $\D_G(X)$ is not generated only by the Laplacian~$\square_X$, and so \eqref{eqn:rel-Lapl} does not imply condition ($\widetilde{\mathrm{A}}$).
The following is a direct consequence of \cite[Cor.\,1.12 \& Rem.\,1.13]{kkdiffop}.

\begin{fact}[\cite{kkdiffop}] \label{fact:cond-tilA-tilB-satisfied}
In the general setting~\ref{gen-setting}, assume that $X_{\C}=G_{\C}/H_{\C}$ is $L_{\C}$-spherical.
\begin{enumerate}
  \item If the complexified Lie algebra $\g_{\C}$ is simple, then condition ($\widetilde{\mathrm{A}}$) holds for the quadruple $(\g_{\C},\llll_{\C},\h_{\C},\llll_{\C}\cap\kk_{\C})$.
  \item If the real Lie algebra $\g$ is simple (in particular, if the complexification $\g_{\C}$ is simple), then condition ($\widetilde{\mathrm{B}}$) holds for the quadruple $(\g_{\C},\llll_{\C},\h_{\C},\llll_{\C}\cap\kk_{\C})$.
\end{enumerate}
\end{fact}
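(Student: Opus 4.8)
The plan is to reduce both statements to the corresponding results for compact real forms, verified case by case in \cite[Cor.\,1.12 \& Rem.\,1.13]{kkdiffop}. Recall from Section~\ref{subsec:cond-tilA-tilB} that conditions ($\widetilde{\mathrm{A}}$) and ($\widetilde{\mathrm{B}}$) are statements about the subalgebras $\D_{G_{\C}}(X_{\C})$, $\dd\ell(Z(\llll_{\C}))$, $\dd r(Z(\rr_{\C}))$ of $\D_{L_{\C}}(X_{\C})$, hence depend only on the complex quadruple $(\g_{\C},\llll_{\C},\h_{\C},\rr_{\C})$, the choice of connected complex groups being immaterial; in particular, they hold for one real form of the quadruple if and only if they hold for every real form.

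First I would check that $(\g_{\C},\llll_{\C},\h_{\C},\llll_{\C}\cap\kk_{\C})$ satisfies \eqref{eqn:glhr}: the equality $\g_{\C}=\h_{\C}+\llll_{\C}$ follows from the transitivity of the $L$-action on $X=G/H$ (general setting~\ref{gen-setting}); the inclusion $\llll_{\C}\cap\h_{\C}\subset\llll_{\C}\cap\kk_{\C}$ follows from $L_H\subset L_K$ (part of the choice of~$K$); and $\llll_{\C}\cap\kk_{\C}\subset\llll_{\C}$ is trivial. Moreover, since $L_K$ is maximal compact in~$L$, the pair $(\llll,\llll\cap\kk)$ is a Riemannian symmetric pair, so $\rr_{\C}:=\llll_{\C}\cap\kk_{\C}$ is the fixed-point subalgebra of an involution of~$\llll_{\C}$. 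I would then choose compatible compact real forms $G_U\supset H_U$, $L_U\supset R_U$ whose complexified Lie algebras recover $\g_{\C}\supset\h_{\C}$, $\llll_{\C}\supset\llll_{\C}\cap\kk_{\C}$; a short argument (the identity $\g_U=\h_U+\llll_U$ by a dimension count, together with connectedness) gives $G_U=H_U L_U$, while $R_U$ is a symmetric subgroup of~$L_U$ containing $L_U\cap H_U$. Because $X_{\C}$ is $L_{\C}$-spherical and $\g$ is simple, $(G_U,H_U,L_U)$ is one of Oni\v{s}\v{c}ik's triples, \ie $(G,H,L)$ occurs in Table~\ref{table1}; this is exactly the compact setting of \cite[Cor.\,1.12]{kkdiffop}. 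That corollary provides condition~($\widetilde{\mathrm{A}}$) when $\g_{\C}$ is simple and condition~($\widetilde{\mathrm{B}}$) under the weaker hypothesis that $\g$ is simple, while the extra case $\g$ simple but $\g_{\C}$ not (case~(vii) of Table~\ref{table1}) is covered by \cite[Rem.\,1.13]{kkdiffop} for~($\widetilde{\mathrm{B}}$) --- and there ($\widetilde{\mathrm{A}}$) genuinely fails, cf.\ the discussion around Lemma~\ref{lem:Casimir-complex}. By real-form-independence, these conclusions transfer to $(\g_{\C},\llll_{\C},\h_{\C},\llll_{\C}\cap\kk_{\C})$, proving (1) and~(2).

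Since the substance --- the explicit case-by-case verification that $\dd r(Z(\rr_{\C}))=\iota(\D_{L_K}(F))$ is large enough to generate $\D_{G_{\C}}(X_{\C})$ together with $\dd\ell(Z(\llll_{\C}))$, and conversely for ($\widetilde{\mathrm{B}}$) --- is already contained in \cite{kkdiffop}, the only genuine step here is the reduction above. Its one delicate point is purely bookkeeping: matching the notation $(\widetilde\g_{\C},\widetilde\h_{\C},\g_{\C},\kk_{\C})$ of \cite[\S\,1.4]{kkdiffop} with $(\g_{\C},\h_{\C},\llll_{\C},\llll_{\C}\cap\kk_{\C})$ here, and confirming via~\eqref{eqn:dr-iota} that the ``fiber'' subalgebra $\dd r(Z(\llll_{\C}\cap\kk_{\C}))$ is the one denoted $\dd r(Z(\rr_{\C}))$ there. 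I therefore expect no substantial obstacle; the only thing to be careful about is that the compact-form quadruple produced by the reduction really coincides, after this translation of conventions, with one actually treated in \cite{kkdiffop}.
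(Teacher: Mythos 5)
Your reduction to the compact case via compatible compact real forms, followed by citation of \cite[Cor.\,1.12 \& Rem.\,1.13]{kkdiffop}, is precisely the paper's proof. The only added detail in the paper is the explicit construction of the compatible compact real forms (take $G_U$ a maximal compact subgroup of $G_{\C}$ after arranging that $H\cap K$ is maximal compact in $H$, and set $K:=G\cap G_U$, $H_U:=H_{\C}\cap G_U$, $L_U:=L_{\C}\cap G_U$) together with the verification that $\llll_U\cap\kk$ is a maximal proper Lie subalgebra of $\llll_U$ containing $\h_U\cap\llll_U$ --- so that the compact fiber and hence the quadruple fed into \cite[Cor.\,1.12]{kkdiffop} is the correct one --- which is exactly the ``delicate bookkeeping'' point you flag at the end.
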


\begin{proof}[Proof of Fact~\ref{fact:cond-tilA-tilB-satisfied}]
Without loss of generality, after possibly replacing $H$ by some conjugate in~$G$, we may and do assume that $H\cap K$ is a maximal compact subgroup of~$H$.
If $G_U$ is a maximal compact subgroup of~$G_{\C}$, then $K:=G\cap G_U$, $H_U:=H_{\C}\cap G_U$, and $L_U:=L_{\C}\cap G_U$ are also maximal compact subgroups of $G$, $H_{\C}$, and $L_{\C}$, respectively, and $L_U\cap H_U=L_H=L\cap H$.
Since $X_{\C}$ is $L_{\C}$-spherical, $L$ acts transitively on $G/H$ and $L_U$ acts transitively on $G_U/H_U$ by \cite[Lem.\,5.1]{kob94}.
Moreover, $\llll_U\cap\kk$ is a maximal proper Lie subalgebra of~$\llll_U$ containing $\h_U\cap\llll_U$ because $\llll\cap\kk$ is a maximal proper Lie subalgebra of~$\llll$ containing $\llll\cap\h$.
Thus we can apply \cite[Cor.\,1.12 \& Rem.\,1.13]{kkdiffop} for compact Lie groups.
(The notation $(\g_{\C},\llll_{\C},\h_{\C},\llll_{\C}\cap\kk_{\C})$ here corresponds to the notation $(\widetilde{\g}_{\C},\g_{\C},\widetilde{\h}_{\C},\kk_{\C})$ in \cite{kkdiffop}.)
\end{proof}

\begin{remark} \label{rem:necessary-cond}
In Fact~\ref{fact:cond-tilA-tilB-satisfied}.(1)--(2), we cannot relax the condition of $L_{\C}$-sphericity of~$X_{\C}$ to $G_{\C}$-sphericity: see Examples \ref{ex:not-spher-A-B-nonsym} and~\ref{ex:not-spher-B}.
\end{remark}

Conditions ($\widetilde{\mathrm{A}}$) and~($\widetilde{\mathrm{B}}$) still hold in the group manifold case, even though the complexification $G_{\C}$ is not simple, and even though $G_{\C}/H_{\C}$ is not necessarily $L_{\C}$-spherical (see Example~\ref{ex:sph}.(4)).

\begin{lemma} \label{lem:tilA-tilB-group-manifold}
In the setting of Proposition~\ref{prop:cond-Tf-A-B-satisfied-GxG}, conditions ($\widetilde{\mathrm{A}}$) and~($\widetilde{\mathrm{B}}$) hold for the quadruple $(\g_{\C},\llll_{\C},\h_{\C},\llll_{\C}\cap\kk_{\C})$.
\end{lemma}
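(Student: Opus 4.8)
The plan is to complexify everything and then read off all four subalgebras of $\D_{L_{\C}}(X_{\C})$ explicitly. Writing ${}^{\backprime}\g$ and ${}^{\backprime}\kk$ for the Lie algebras of ${}^{\backprime}G$ and ${}^{\backprime}K$, we have $\g_{\C}={}^{\backprime}\g_{\C}\oplus{}^{\backprime}\g_{\C}$, $\h_{\C}=\Diag({}^{\backprime}\g_{\C})$, $\llll_{\C}={}^{\backprime}\g_{\C}\oplus{}^{\backprime}\kk_{\C}$, and $\rr_{\C}:=\llll_{\C}\cap\kk_{\C}=\kk_{\C}={}^{\backprime}\kk_{\C}\oplus{}^{\backprime}\kk_{\C}$, hence, under the factorization $U(\llll_{\C})=U({}^{\backprime}\g_{\C})\otimes U({}^{\backprime}\kk_{\C})$, the tensor decompositions $Z(\llll_{\C})=Z({}^{\backprime}\g_{\C})\otimes Z({}^{\backprime}\kk_{\C})$ and $Z(\rr_{\C})=Z({}^{\backprime}\kk_{\C})\otimes Z({}^{\backprime}\kk_{\C})$. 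First I would record the classical identification $X_{\C}=G_{\C}/H_{\C}\simeq{}^{\backprime}G_{\C}$, on which $G_{\C}={}^{\backprime}G_{\C}\times{}^{\backprime}G_{\C}$ acts by left and right translations; thus $\D_{G_{\C}}(X_{\C})$ is the algebra of bi-invariant holomorphic differential operators on ${}^{\backprime}G_{\C}$, and since a central element produces an operator invariant under both translations, $\D_{G_{\C}}(X_{\C})=\dd\ell(Z({}^{\backprime}\g_{\C})\otimes 1)$. Likewise the fiber $F_{\C}=L_{K,\C}/L_{H,\C}\simeq{}^{\backprime}K_{\C}$ of the bundle \eqref{eqn:bundleXY} is itself a complex group manifold, so $\D_{L_{K,\C}}(F_{\C})$ is the algebra of bi-invariant operators on ${}^{\backprime}K_{\C}$.

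Condition~($\widetilde{\mathrm{A}}$) is then immediate from $\D_{G_{\C}}(X_{\C})=\dd\ell(Z({}^{\backprime}\g_{\C})\otimes 1)\subset\dd\ell(Z(\llll_{\C}))$. For condition~($\widetilde{\mathrm{B}}$) I would split $\dd\ell(Z(\llll_{\C}))=\langle\,\dd\ell(Z({}^{\backprime}\g_{\C})\otimes 1),\ \dd\ell(1\otimes Z({}^{\backprime}\kk_{\C}))\,\rangle$; the first generating set is $\D_{G_{\C}}(X_{\C})$, so the task reduces to showing $\dd\ell(1\otimes Z({}^{\backprime}\kk_{\C}))\subset\dd r(Z(\rr_{\C}))$. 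Geometrically this says the operators $\dd\ell(1\otimes z')$ with $z'\in Z({}^{\backprime}\kk_{\C})$ are vertical for the fibration $q:X_{\C}\to Y_{\C}$: under $X_{\C}\simeq{}^{\backprime}G_{\C}$ the second ${}^{\backprime}K_{\C}$-factor of $L_{\C}$ acts by right translations, whose orbits are precisely the fibers $\simeq{}^{\backprime}K_{\C}$, and $z'$ acts on each fiber as the corresponding bi-invariant operator of ${}^{\backprime}K_{\C}$. Rigorously, it follows from the identity $\dd\ell(w)=\dd r(w^{\star})$ for $w\in Z(\llll_{\C})$, where $\star$ is the principal anti-automorphism of $U(\llll_{\C})$ (which preserves the tensor factorization and stabilizes $Z({}^{\backprime}\kk_{\C})$): for $w=1\otimes z'$ we get $w^{\star}=1\otimes(z')^{\star}\in Z(\rr_{\C})\subset U(\llll_{\C})^{L_H}$, whence $\dd\ell(1\otimes z')\in\dd r(Z(\rr_{\C}))$. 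Combining the two inclusions gives $\dd\ell(Z(\llll_{\C}))\subset\langle\D_{G_{\C}}(X_{\C}),\dd r(Z(\rr_{\C}))\rangle$, which is~($\widetilde{\mathrm{B}}$).

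The one point requiring care is exactly this last step — that the ``fiber'' operators $\dd\ell(1\otimes z')$ are produced by the \emph{right} action $\dd r$ restricted to $Z(\rr_{\C})$, and not merely by $\dd\ell$; this is where centrality of $z'$ in $U({}^{\backprime}\kk_{\C})$ together with $1\otimes z'\in Z(\rr_{\C})$ is used, and it is the only place where the group-manifold structure of the fiber is genuinely needed. Equivalently, one checks directly via the commutative diagram of Section~\ref{subsec:DLX} that $\dd r_{\scriptscriptstyle F}$ already maps $Z(\rr_{\C})$ onto $\D_{L_{K,\C}}(F_{\C})$, so that $\dd r(Z(\rr_{\C}))=\iota(\D_{L_{K,\C}}(F_{\C}))$ even though the general equality \eqref{eqn:dr-iota} is not available here. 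Everything else is a transcription to the group manifolds ${}^{\backprime}G_{\C}$ and ${}^{\backprime}K_{\C}$ of the classical description of bi-invariant differential operators on a complex Lie group; and for $\rank{}^{\backprime}G=1$ the statement of ($\widetilde{\mathrm{B}}$) restricted to the Laplacian is already visible in Remark~\ref{rem:rel-Lapl-GxG}, which provides a consistency check.
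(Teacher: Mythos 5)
Your proof is correct and takes essentially the same route as the paper, which simply records the three identities $\D_{G_{\C}}(X_{\C})=\dd\ell(Z({}^{\backprime}\g_{\C}))=\dd r(Z({}^{\backprime}\g_{\C}))$, $\dd\ell(Z(\llll_{\C}))=\langle\dd\ell(Z({}^{\backprime}\g_{\C})),\dd r(Z({}^{\backprime}\kk_{\C}))\rangle$, and the identification of the $Z({}^{\backprime}\kk_{\C})$-part with operators coming from $Z(\llll_{\C}\cap\kk_{\C})$. You merely make explicit, via the antipode identity $\dd\ell(w)=\dd r(w^{\star})$ on $Z(\llll_{\C})$, the one step the paper leaves implicit — that $\dd\ell(1\otimes Z({}^{\backprime}\kk_{\C}))\subset\dd r(Z(\llll_{\C}\cap\kk_{\C}))$ — which is a fine (and correct) elaboration rather than a different argument.
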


\begin{proof}
This follows immediately from the fact that
\begin{align*}
\D_{G_{\C}}(X_{\C}) & = \dd\ell(Z({}^{\backprime}\g_{\C})) = \dd r(Z({}^{\backprime}\g_{\C})),\\
\dd\ell(Z(\llll_{\C})) & = \langle\dd\ell(Z({}^{\backprime}\g_{\C})),\dd r(Z({}^{\backprime}\kk_{\C}))\rangle,\\
\dd\ell(Z(\llll_{\C}\cap\kk_{\C})) & = \dd\ell(Z({}^{\backprime}\kk_{\C})).\qedhere
\end{align*}
\end{proof}

When $(\g_{\C}, \h_{\C}, \llll_{\C}) = (\so(8,\C)\otimes_{\R}\C ,\, \so(7,\C)\otimes_{\R}\C ,\, \spin(8,\C))$, condition ($\widetilde{\mathrm{A}}$) does not hold for the quadruple $(\g_{\C}, \h_{\C}, \llll_{\C}, \llll_{\C}\cap\kk_{\C})$, see \cite[Prop.\,7.5]{kkdiffop}.
Instead, we consider the following weaker condition, where $R$ is the $\C$-subalgebra of $\D_{L_{\C}}(X_{\C})$ generated by $\dd\ell(Z(\llll_{\C}))$ and $\dd r(Z(\llll_{\C}\cap\kk_{\C}))$:

\smallskip

\emph{Condition ($\widetilde{\mathrm{A}}'$):} $\D_{L_{\C}}(X_{\C})$ is finitely generated as an $R$-module.

\smallskip
\noindent
Then the following holds.

\begin{fact}[see {\cite[Prop.\,7.5]{kkdiffop}}] \label{fact:cond-tilA'-complex-case}
Condition~($\widetilde{\mathrm{A}}'$) holds for the quadruple $(\g_{\C}, \h_{\C}, \llll_{\C}) = (\so(8,\C)\otimes_{\R}\C ,\, \so(7,\C)\otimes_{\R}\C ,\, \spin(8,\C))$.
\end{fact}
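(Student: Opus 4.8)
The plan is to reduce condition~($\widetilde{\mathrm{A}}'$) to Knop's finiteness theorem for spherical varieties, recalled in Section~\ref{subsec:DGH}. Write $G_{\C}=\SO(8,\C)\times\SO(8,\C)$, $H_{\C}=\SO(7,\C)\times\SO(7,\C)$, $L_{\C}=\Spin(8,\C)$, and $\rr_{\C}=\llll_{\C}\cap\kk_{\C}=\spin(7,\C)$, so that $X_{\C}=G_{\C}/H_{\C}$ is the affine variety $\bigl(\SO(8,\C)/\SO(7,\C)\bigr)^{2}$, on which $L_{\C}$ acts through the vector representation on one factor and a spinor representation on the other, using triality of~$D_4$; this action is transitive with point stabilizer $L_{\C}\cap H_{\C}=G_2(\C)$, so $X_{\C}\simeq\Spin(8,\C)/G_2(\C)$. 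The only geometric input needed is that $X_{\C}$ is $L_{\C}$-spherical: this is the defining property of case~(vii) of Table~\ref{table1} (and follows from Oni\v{s}\v{c}ik's factorization $\SO(8)=\Spin(7)\cdot\SO(7)$ together with Kr\"amer's classification of spherical homogeneous spaces of simple groups).

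Granting this, Knop's theorem \cite{kno94}, applied to the complex reductive group $L_{\C}$ acting on the $L_{\C}$-spherical variety $X_{\C}$, shows that $\D_{L_{\C}}(X_{\C})$ is finitely generated as a module over $\dd\ell(Z(\llll_{\C}))=\dd r(Z(\llll_{\C}))$. On the other hand, both $\dd\ell(Z(\llll_{\C}))$ and $\dd r(Z(\rr_{\C}))$ consist of $L_{\C}$-invariant differential operators on~$X_{\C}$: the left action is $L_{\C}$-equivariant, and $\dd r(Z(\rr_{\C}))=\iota(\D_{L_{\C}\cap K_{\C}}(F_{\C}))$ lands inside $\D_{L_{\C}}(X_{\C})$ as in Section~\ref{subsec:DLX}, because $Z(\rr_{\C})$ is $\Ad(L_{\C}\cap H_{\C})$-invariant. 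Hence the $\C$-subalgebra $R=\langle\dd\ell(Z(\llll_{\C})),\dd r(Z(\rr_{\C}))\rangle$ they generate satisfies $\dd\ell(Z(\llll_{\C}))\subseteq R\subseteq\D_{L_{\C}}(X_{\C})$, and any finite generating set of $\D_{L_{\C}}(X_{\C})$ as a $\dd\ell(Z(\llll_{\C}))$-module is a fortiori a finite generating set over the larger ring~$R$. Thus $\D_{L_{\C}}(X_{\C})$ is finitely generated as an $R$-module, which is exactly condition~($\widetilde{\mathrm{A}}'$).

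Note that the stronger condition~($\widetilde{\mathrm{A}}$), namely $\D_{G_{\C}}(X_{\C})\subseteq R$, genuinely fails here because $\D_{G_{\C}}(X_{\C})$ contains two linearly independent second-order operators whereas $R$ supplies essentially one (cf.\ Remark~\ref{rem:necessary-cond}), so the weakening to~($\widetilde{\mathrm{A}}'$) is what one should prove; the argument above shows it costs nothing beyond $L_{\C}$-sphericity. The step I expect to be the real work — though not for ($\widetilde{\mathrm{A}}'$) itself, only for the explicit use of this case in \cite{kkdiffop} — is to go beyond mere finiteness and produce an actual presentation of $\D_{L_{\C}}(X_{\C})\cong S(\jj_{\C})^{W}$, i.e.\ to identify the little Weyl group~$W$ and the images in this algebra of the Casimir elements of $\llll_{\C}$ and of $\rr_{\C}$; as usual this is cleanest on the compact real form $\Spin(8)/G_2$, via branching laws for $\SO(8)$-representations restricted to the relevant subgroups.
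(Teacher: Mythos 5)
Your proposal is correct, and it takes a more conceptual route than the explicit computation the paper defers to in \cite[Prop.\,7.5]{kkdiffop}. The observation at its heart is that once $X_{\C}\simeq\Spin(8,\C)/G_2(\C)$ is known to be $L_{\C}$-spherical, Knop's finiteness theorem \cite{kno94} already makes the commutative algebra $\D_{L_{\C}}(X_{\C})$ a finite module over the subalgebra $\dd\ell(Z(\llll_{\C}))$, hence a fortiori over the larger ring $R$ — and that is exactly condition~($\widetilde{\mathrm{A}}'$). Your argument therefore proves ($\widetilde{\mathrm{A}}'$) for \emph{every} quadruple in the general setting~\ref{gen-setting} with $X_{\C}$ $L_{\C}$-spherical, not just for case~(vii) of Table~\ref{table1}. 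The cited \cite[Prop.\,7.5]{kkdiffop} instead gives an explicit description of $\D_{L_{\C}}(X_{\C})$ and of the subalgebra $R$ for this particular triple, which simultaneously yields ($\widetilde{\mathrm{A}}'$) and the \emph{failure} of the stronger condition~($\widetilde{\mathrm{A}}$); the latter is what forces the paper to obtain condition~(Tf) in case~(vii) through Lemma~\ref{lem:A-tilA-B-tilB}.(4) rather than Lemma~\ref{lem:A-tilA-B-tilB}.(3), and your finiteness argument does not recover it. So your approach is shorter and more general for the statement asserted in the Fact, but does not subsume the full content of \cite[Prop.\,7.5]{kkdiffop} that the paper uses nearby. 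Two minor imprecisions: in degree two, $R$ contributes the \emph{two} operators $\dd\ell(C_L)$ and $\dd r(C_{L\cap K})$, not ``essentially one''; the point, as in the proof of Proposition~\ref{prop:rel-Lapl} for case~(vii), is that the pair of Casimirs $\dd\ell(C^{\mathrm{holo}})$, $\dd\ell(C^{\mathrm{anti}})$ attached to the two simple factors of $\g_{\C}$ does not lie in $\C\,\dd\ell(C_L)+\dd r(Z(\llll_{\C}\cap\kk_{\C}))$. Also your pointer to Remark~\ref{rem:necessary-cond} is slightly off-target: that remark is about $L_{\C}$-sphericity being necessary for Fact~\ref{fact:cond-tilA-tilB-satisfied}, not about the failure of ($\widetilde{\mathrm{A}}$) in case~(vii).
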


Propositions \ref{prop:cond-Tf-A-B-satisfied} and~\ref{prop:cond-Tf-A-B-satisfied-GxG} are a consequence of Facts \ref{fact:cond-tilA-tilB-satisfied} and~\ref{fact:cond-tilA'-complex-case} and Lemma~\ref{lem:tilA-tilB-group-manifold}, together with the following lemma.

\begin{lemma} \label{lem:A-tilA-B-tilB}
In the general setting~\ref{gen-setting},
\begin{enumerate}
  \item condition ($\widetilde{A}$) or ($\widetilde{A}'$) for the quadruple $(\g_{\C},\llll_{\C},\h_{\C},\llll_{\C}\cap\kk_{\C})$ implies condition~(A) for the quadruple $(G,L,H,L_K)$;
  \item condition~($\widetilde{B}$) for the quadruple $(\g_{\C},\llll_{\C},\h_{\C},\llll_{\C}\cap\kk_{\C})$ implies condition~(B) for the quadruple $(G,L,H,L_K)$ whenever $X_{\C}$ is $G_{\C}$-spherical;
  \item conditions ($\widetilde{\mathrm{A}}$) and~($\widetilde{B}$) together for the quadruple $(\g_{\C},\llll_{\C},\h_{\C},\llll_{\C}\cap\kk_{\C})$ imply condition~(Tf) for the quadruple $(G,L,H,L_K)$;
  \item condition~(Tf) holds for the quadruple
  $$(G, L, H, L_K) = (\SO(8,\C), \Spin(7,1), \SO(7,\C), \Spin(7)).$$
\end{enumerate}
\end{lemma}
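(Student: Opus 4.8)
The plan is to prove the four items in turn: (1) and (2) by a common ``finiteness propagation'' argument, (3) by constructing the transfer maps out of the two containments that $(\widetilde{\mathrm{A}})$ and $(\widetilde{\mathrm{B}})$ provide, and (4) by feeding in the explicit computations of \cite{kkdiffop}. Throughout I use that the $\C$-algebras $\D_G(X)$, $\dd\ell(Z(\llll_\C))$, $\dd r(Z(\llll_\C\cap\kk_\C))$ and the containments between them inside $\D_L(X)$ depend only on the complexified data, so $(\widetilde{\mathrm{A}})$, $(\widetilde{\mathrm{A}}')$, $(\widetilde{\mathrm{B}})$ may be read on $X$ itself (Section~\ref{subsec:cond-tilA-tilB}), and the two basic facts: (i) $\ii_{\tau}$ intertwines left-differentiation, so carries $\dd\ell^{\tau}(z)$ to $\dd\ell(z)$ for $z\in Z(\llll_\C)$; (ii) on $\ii_{\tau}(C^\infty(Y,\V_\tau))$ every $\dd r(z)$ with $z\in Z(\llll_\C\cap\kk_\C)$ acts by a scalar depending only on $\tau$ (the computation $\dd r(Z)\langle\varphi,v_\tau\rangle=-\langle\dd\tau(Z)\varphi,v_\tau\rangle$ for $Z\in\llll\cap\kk$, extended to $Z(\llll_\C\cap\kk_\C)$), hence this subalgebra commutes with everything there. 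For (1): given $\tau$ and a $Z(\llll_\C)$-finite $\llll$-module $\vartheta\subset C^\infty(Y,\V_\tau)$, set $W:=\ii_{\tau}(\vartheta)$. By $(\widetilde{\mathrm{A}})$, any $\dd\ell(z)$ with $z\in Z(\g_\C)$ lies in the algebra $\langle\dd\ell(Z(\llll_\C)),\dd r(Z(\llll_\C\cap\kk_\C))\rangle$, hence preserves $\ii_{\tau}(C^\infty(Y,\V_\tau))$, and by (i)--(ii) its pull-back to $\vartheta$ via $\ii_{\tau}$ lies in the finite-dimensional algebra through which $Z(\llll_\C)$ already acts on~$\vartheta$. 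Thus $Z(\g_\C)$ acts on~$W$ through a finite-dimensional algebra, i.e.\ $W$ is $Z(\g_\C)$-finite, which is~(A). Under the weaker $(\widetilde{\mathrm{A}}')$ one transports the whole picture through $\ii_{\tau}$: there the image of $R=\langle\dd\ell(Z(\llll_\C)),\dd r(Z(\llll_\C\cap\kk_\C))\rangle$ is just $\dd\ell^{\tau}(Z(\llll_\C))$ (the $\dd r$-part becomes scalar), the image of $\D_L(X)$ is module-finite over it, and since $\dd\ell^{\tau}(Z(\llll_\C))$ acts on~$\vartheta$ through a finite-dimensional quotient, so does the image of $\D_G(X)\supset\dd\ell(Z(\g_\C))$; this again gives~(A).

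For (2): given $\tau$ and a $Z(\g_\C)$-finite $\g$-module $V\subset C^\infty(X)$, write $\Pi_{\tau}:=\ii_{\tau}\circ\pp_{\tau}$ for the projection onto $\ii_{\tau}(C^\infty(Y,\V_\tau))$. Since $\D_G(X)$ preserves each $\ii_{\tau'}(C^\infty(Y,\V_{\tau'}))$ by \cite[Th.\,4.9]{kkdiffop}, it commutes with $\Pi_{\tau}$; moreover $\dd\ell(Z(\g_\C))\subset\D_G(X)$ is part of the $\g$-action, so preserves $V$ and $\D_G(X)\cdot V$. Replacing $V$ by $\D_G(X)\cdot V$ (still $Z(\g_\C)$-finite, since $\dd\ell(Z(\g_\C))$ commutes with $\D_G(X)$), we may assume $W':=\Pi_{\tau}(V)=\ii_{\tau}(\pp_{\tau}(V))$ is $\D_G(X)$-stable; it is then $Z(\g_\C)$-finite, and as $\D_G(X)$ is module-finite over $\dd\ell(Z(\g_\C))$ (Knop, $X_\C$ being $G_\C$-spherical), $\D_G(X)$ acts on $W'$ through a finite-dimensional algebra. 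On $W'\subset\ii_{\tau}(C^\infty(Y,\V_\tau))$ the subalgebra $\dd r(Z(\llll_\C\cap\kk_\C))$ acts by scalars and hence commutes with $\D_G(X)$, so the algebra $\langle\D_G(X),\dd r(Z(\llll_\C\cap\kk_\C))\rangle$ acts on $W'$ through a finite-dimensional algebra. Condition $(\widetilde{\mathrm{B}})$ places $\dd\ell(Z(\llll_\C))$ inside that algebra, so $Z(\llll_\C)$ acts on $W'=\ii_{\tau}(\pp_{\tau}(V))$ through a finite-dimensional algebra; since $\ii_{\tau}$ is injective and intertwines left-differentiation, $\pp_{\tau}(V)$ is $Z(\llll_\C)$-finite, which is~(B).

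For (3): fix $\tau$ and transport $\D_G(X)$ and $\dd\ell(Z(\llll_\C))$ through $\ii_{\tau}$ to subalgebras $\mathcal{R}_{\D}$ and $\mathcal{R}_Z=\dd\ell^{\tau}(Z(\llll_\C))$ of the algebra of operators on $C^\infty(Y,\V_\tau)$; since $\dd r(Z(\llll_\C\cap\kk_\C))$ becomes scalar under this transport, $(\widetilde{\mathrm{A}})$ forces $\mathcal{R}_{\D}\subset\mathcal{R}_Z$ and $(\widetilde{\mathrm{B}})$ forces $\mathcal{R}_Z\subset\mathcal{R}_{\D}$, whence $\mathcal{R}_{\D}=\mathcal{R}_Z=:\mathcal{R}$, a commutative algebra that is simultaneously a quotient of $\D_G(X)$ and of $Z(\llll_\C)/\Ker(\dd\ell^{\tau})$. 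I then define $\llambda(\nu,\tau)\in\Hom_{\C\text{-}\mathrm{alg}}(\D_G(X),\C)$ as the pull-back of the character of $\mathcal{R}$ induced by $\nu$, and $\nnu(\lambda,\tau)$ as the character of $Z(\llll_\C)$ that $\lambda$ induces via $\mathcal{R}$ whenever $\lambda$ factors through $\mathcal{R}$ — which it must on $\mathrm{Spec}(X)_{\tau}$, the only range where (Tf)(1) is not vacuous, because there $\D_G(X)$ acts on a nonzero piece of $\ii_{\tau}(\DD'(Y,\V_\tau))$ through $\mathcal{R}$ — and arbitrarily elsewhere. Properties (Tf)(1)--(2) then unwind directly: on $\ii_{\tau}(\F(Y,\V_\tau;\NN_{\nu}))$ the algebra $\D_G(X)$ acts through $\mathcal{R}$, which acts by the character attached to $\nu$, so this subspace lies in $\F(X;\M_{\llambda(\nu,\tau)})$, and symmetrically for $\pp_{\tau}$ using $(\widetilde{\mathrm{B}})$; that $\nnu$ and $\llambda$ are mutually inverse (Remark~\ref{rem:lambda-circ-nu}) is immediate from $\mathcal{R}_{\D}=\mathcal{R}_Z$.

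For (4): here $G_\C=\SO(8,\C)\times\SO(8,\C)$ is not simple, so $(\widetilde{\mathrm{A}})$ fails; only $(\widetilde{\mathrm{A}}')$ (Fact~\ref{fact:cond-tilA'-complex-case}) and $(\widetilde{\mathrm{B}})$ (Fact~\ref{fact:cond-tilA-tilB-satisfied}.(2), since the real Lie algebra $\so(8,\C)$ is simple) are available, and the argument of~(3) then only yields that $\mathcal{R}_{\D}$ is module-finite over $\mathcal{R}_Z\subset\mathcal{R}_{\D}$, not the equality needed for a single-valued transfer map. The plan is to invoke the explicit structure: by Lemma~\ref{lem:Casimir-complex} and the computation $\dd\ell(C_{G_U})=6\dd\ell(C_{L_U})-4\dd r(C_{L\cap K})$ from \cite[\S\,7]{kkdiffop} for the compact real form $(\SO(8)\times\SO(8),\SO(7)\times\SO(7),\Spin(8))$, the generators of $\D_G(X)$ (the holomorphic and antiholomorphic second-order operators) are written explicitly through $\dd\ell(C_L)$ and $\dd r(Z(\llll_\C\cap\kk_\C))$; carrying these relations to $C^\infty(Y,\V_\tau)$, where the $\dd r$-part is scalar, one reads off that $\mathcal{R}_{\D}=\mathcal{R}_Z$ after all, and then proceeds exactly as in~(3). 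The main obstacle is precisely this last point: the clean mechanism ``$(\widetilde{\mathrm{A}})$ and $(\widetilde{\mathrm{B}})$ $\Rightarrow$ (Tf)'' genuinely degrades when $G_\C$ is not simple, and verifying that the a priori module-finite extension $\mathcal{R}_{\D}/\mathcal{R}_Z$ is trivial cannot be done without the case-specific input for $(\SO(8,\C),\SO(7,\C),\Spin(7,1))$; a secondary, purely routine, technical point in (1)--(3) is the bookkeeping that $\dd r(Z(\llll_\C\cap\kk_\C))$ is scalar on $\ii_{\tau}(C^\infty(Y,\V_\tau))$ and that finiteness over two commuting subalgebras implies finiteness over the algebra they generate.
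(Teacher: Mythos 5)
Your treatment of (1) in the $(\widetilde{\mathrm{A}})$ case, of the overall mechanism of (2), and of (3) is in substance the paper's own argument: the paper proves (3) by writing $\dd\ell(z)=\sum_j\dd r(Q_j)P_j$ under ($\widetilde{\mathrm{B}}$) and using that $\dd r(Z(\llll_{\C}\cap\kk_{\C}))$ is scalar on the $\tau$-component, which is exactly your identification of a common quotient algebra $\mathcal{R}$, and your ``arbitrary elsewhere'' convention for $\nnu$ is a legitimate reading of condition (Tf). Two of your steps, however, rely on transport or stability that is not available at the stated level of generality. In the $(\widetilde{\mathrm{A}}')$ branch of (1), transporting $\D_L(X)$ through $\ii_{\tau}$ presupposes that its elements preserve $\mathrm{Image}(\ii_{\tau})$ (\ie commute with $\dd r(Z(\llll_{\C}\cap\kk_{\C}))$), which is not known in the general setting; worse, $\vartheta$ is not stable under (the image of) $\D_L(X)$ or $\D_G(X)$, so ``module-finite over a subalgebra acting through a finite-dimensional quotient'' cannot be applied to $\vartheta$ itself. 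The paper's proof avoids both points by inflating: it replaces $\ii_{\tau}(\vartheta)$ by $\ii_{\tau}(\vartheta)^{\sim}=\D_{L_{\C}}(X_{\C})\,\ii_{\tau}(\vartheta)=\sum_j u_j\,\ii_{\tau}(\vartheta)$ inside $C^{\infty}(X)$ and argues with annihilators in the algebra $R$. Similarly, in (2) you invoke \cite[Th.\,4.9]{kkdiffop} to get that $\D_G(X)$ commutes with $\Pi_{\tau}=\ii_{\tau}\circ\pp_{\tau}$; that result is quoted in this paper only when $X_{\C}$ is $L_{\C}$-spherical, while statement (2) assumes only $G_{\C}$-sphericity. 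The paper's proof never lets $\D_G(X)$ act on the bundle side: it inflates $V$ to $\tilde V=\sum_j D_j V$ using Knop's finiteness of $\D_G(X)$ over $\dd\ell(Z(\g_{\C}))$ (this is where $G_{\C}$-sphericity enters) and only pushes forward by $\pp_{\tau}$, the ($\widetilde{\mathrm{B}}$)-decomposition turning the $\dd r$-factors into scalars under $\pp_{\tau}$.

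The genuine gap is in (4). You propose to recover $\mathcal{R}_{\D}=\mathcal{R}_Z$ from Lemma~\ref{lem:Casimir-complex} together with the identity $\dd\ell(C_{G_U})=6\,\dd\ell(C_{L_U})-4\,\dd r(C_{L\cap K})$. But here $\D_G(X)$ is a polynomial algebra in \emph{two} generators (the holomorphic and antiholomorphic second-order operators), and that identity only expresses their \emph{sum} through $\dd\ell(C_L)$ and $\dd r(Z(\llll_{\C}\cap\kk_{\C}))$; the paper states explicitly (just before Lemma~\ref{lem:Casimir-complex}, and via \cite[Prop.\,7.5]{kkdiffop}) that not every linear combination of the two generators lies in $\C\,\dd\ell(C_L)+\dd r(Z(\llll_{\C}\cap\kk_{\C}))$ --- this is precisely why ($\widetilde{\mathrm{A}}$) fails for this quadruple. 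So on the $\tau$-component your input only yields that the transported \emph{sum} lies in $\dd\ell^{\tau}(Z(\llll_{\C}))$, which does not determine the action of each generator and hence cannot define $\llambda(\cdot,\tau)$ on all of $\Hom_{\C\text{-}\mathrm{alg}}(\D_G(X),\C)$, nor give $\mathcal{R}_{\D}=\mathcal{R}_Z$. The paper's proof of (4) is different: it quotes the transfer maps already established in the compact setting \cite[Prop.\,4.8 \& Th.\,4.9]{kkdiffop} (whose proof for this case rests on the finer case-specific analysis there, not on the Casimir identity alone), observes that $L_K\simeq\Spin(7)$ is the unique maximal connected proper subgroup of $L_U=\Spin(8)$ containing $L_H$, so the compact and noncompact fibrations have the same fiber $F=L_K/L_H$, and then carries the statement over to the real form by holomorphic continuation of invariant differential operators. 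Without that input, or some substitute expressing each generator separately on every $\tau$-isotypic component, your argument for (4) does not close.
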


In view of Lemma~\ref{lem:A-tilA-B-tilB}.(3), the relation \eqref{eqn:rel-Lapl} between Laplacians, which partially implies condition ($\widetilde{\mathrm{A}}$) (see Example~\ref{ex:rank-1}), is part of the underlying structure of the existence of transfer maps.

We summarize the relations among conditions (Tf), (A), ($\widetilde{\mathrm{A}}$), (B), ($\widetilde{\mathrm{B}}$) in the following table.
The double arrows $\Rightarrow$ or $\Leftarrow$ indicate that the conditions in the complex setting (symbolically written as~$X_{\C}$) imply those in the real setting (symbolically written as~$X$) when $X_{\C}$ is $G_{\C}$-spherical.
\begin{center}
\begin{tabular}{ccccc|c}
$X$ & & $X_{\C}$ & & $X$ & Applications to $C^{\infty}(X)$\!\!\\
\hline
\multirow{2}{*}{(Tf)} & \multirow{2}{*}{$\overset{\text{Lem.\,\ref{lem:A-tilA-B-tilB}.(3)}}{\Longlongleftarrow}$} & ($\widetilde{\mathrm{A}}$) & $\overset{\text{Lem.\,\ref{lem:A-tilA-B-tilB}.(1)}}{\Longlongrightarrow}$ & (A) & $L\uparrow G$ (Prop.~\ref{prop:condA})\\
& & ($\widetilde{\mathrm{B}}$) & $\overset{\text{Lem.\,\ref{lem:A-tilA-B-tilB}.(2)}}{\Longlongrightarrow}$ & (B) & $G\downarrow L$ (Th.~\ref{thm:condB})
\\
\hspace{-0.1cm}Def.\,\ref{def:cond-Tf}\hspace{-0.1cm} & & \hspace{-0.1cm}Def.\,\ref{def:cond-tilA-tilB}\hspace{-0.1cm} & & \hspace{-0.1cm}Def.\,\ref{def:cond-A-B} &
\end{tabular}
\end{center}

\begin{remark}
It is natural to expect that representations $\pi$ of~$G$ occurring in $C^{\infty}(X)$ and representations $\vartheta$ of~$L$ occurring in $C^{\infty}(Y,\V_{\tau})$ should be closely related via $\ii_{\tau}$ and~$\pp_{\tau}$.
Lemma~\ref{lem:A-tilA-B-tilB}.(3) shows that this is the case under conditions ($\widetilde{\mathrm{A}}$) and~($\widetilde{\mathrm{B}}$).
In the future paper \cite{kkI}, we shall use Lemma~\ref{lem:A-tilA-B-tilB}.(3) to find the branching laws for the restriction to~$L$ of infinite-dimensional representations of~$G$ realized in $\DD'(X)$.
\end{remark}

\section[Proof of Lemma~5.19 and Propositions 5.10--5.11]{Proof of Lemma~\ref{lem:A-tilA-B-tilB} and Propositions \ref{prop:cond-Tf-A-B-satisfied}--\ref{prop:cond-Tf-A-B-satisfied-GxG}} \label{subsec:proof-cond-Tf-A-B-satisfied}


We now complete these proofs by discussing how the structure of $\D_L(X)$ in terms of the three subalgebras $\D_G(X)$, $\dd\ell(Z(\llll_{\C}))$, $\dd r(Z(\llll_{\C}\cap\kk_{\C}))$ (given by conditions ($\widetilde{\mathrm{A}}$) and ($\widetilde{\mathrm{B}}$)) controls representation-theoretic properties of the groups $G$ and~$L$ (given by conditions (A) and~(B)).

\begin{proof}[Proof of Lemma~\ref{lem:A-tilA-B-tilB}.(1)]
Suppose that condition ($\widetilde{\mathrm{A}}$) or ($\widetilde{\mathrm{A}}'$) holds for the quadruple $(\g_{\C},\llll_{\C},\h_{\C},\llll_{\C}\cap\kk_{\C})$.
Let $\tau\in\Disc(L_K/L_H)$ and let $\vartheta$ be a $Z(\llll_{\C})$-finite $\llll$-submodule of $C^{\infty}(Y,\V_{\tau})$.
By Schur's lemma, $Z(\llll_{\C}\cap\kk_{\C})$ acts on the irreducible representation $\tau$ of $\llll_{\C}\cap\kk_{\C}$ as scalars, hence also on $\ii_{\tau}(\vartheta)$ via $\dd r$.
Therefore, the annihilator of $\ii_{\tau}(\vartheta)$ has finite codimension in the $\C$-algebra $R$ generated by $\dd\ell(Z(\llll_{\C}))$ and $\dd r(Z(\llll_{\C}\cap\kk_{\C}))$.
If condition~($\widetilde{\mathrm{A}}$) holds, then the $\C$-algebra $R$ contains $\D_G(X)$, and so the annihilator of $\ii_{\tau}(\vartheta)$ has finite codimension in the subalgebra $\dd\ell(Z(\g_{\C}))\subset\D_G(X)$.
If condition~($\widetilde{\mathrm{A}}'$) holds, then we can write $\D_L(X)=\sum_{j=1}^k Ru_j$ for some $u_1,\dots,u_k\in\D_L(X)$, and so we can inflate $\ii_{\tau}(\vartheta)$ to
$$\ii_{\tau}(\vartheta)^{\sim} := \D_{L_{\C}}(X_{\C}) \, \ii_{\tau}(\vartheta) = \sum_{j=1}^k u_j \, \ii_{\tau}(\vartheta),$$
which is an $\llll$-module as well as a $\D_{L_{\C}}(X_{\C})$-module.
Since the annihilator of $\ii_{\tau}(\vartheta)^{\sim}$ has finite codimension in~$R$, so does it as a $\D_{L_{\C}}(X_{\C})$-module.
Therefore the annihilator of $\ii_{\tau}(\vartheta)^{\sim}$ has finite codimension in the subalgebra $\dd\ell(Z(\g_{\C}))$.
Thus condition~(A) holds for the quadruple $(G,L,H,L_K)$.
\end{proof}

\begin{proof}[Proof of Lemma~\ref{lem:A-tilA-B-tilB}.(2)]
Suppose that condition~($\widetilde{\mathrm{B}}$) holds for the quadruple $(\g_{\C},\llll_{\C},\h_{\C},\llll_{\C}\cap\kk_{\C})$.
Let $V$ be a $Z(\g_{\C})$-finite $\g$-submodule of $C^{\infty}(X)$ and let $\tau\in\Disc(L_K/L_H)$.
If $X_{\C}$ is $G_{\C}$-spherical, then $\D_G(X)$ is finitely generated as a $\dd\ell(Z(\g_{\C}))$-module (see Section~\ref{subsec:DGH}).
Take $D_1,\dots,D_k\in\D_G(X)$ such that $\D_G(X)=\sum_{j=1}^k \dd\ell(Z(\g_{\C}))\cdot D_j$ and let $\tilde{V}:=\sum_{j=1}^k D_j\cdot V\subset C^{\infty}(X)$.
Then $\tilde{V}$ is a $Z(\g_{\C})$-finite $\D_G(X)$-module; therefore, it is $\D_G(X)$-finite.
Since $Z(\llll_{\C}\cap\kk_{\C})$ acts on $\pp_{\tau}(C^{\infty}(X))$ via $\dd r$ as scalars, the action on $\pp_{\tau}(\tilde{V})$ of the $\C$-algebra generated by $\D_G(X)$ and $\dd r(Z(\llll_{\C}\cap\kk_{\C}))$ factors through the action of a finite-dimensional algebra, and so does the action of $Z(\llll_{\C})$ due to condition~($\widetilde{\mathrm{B}}$).
Thus condition~(B) holds for the quadruple $(G,L,H,L_K)$.
\end{proof}

The proof of Lemma~\ref{lem:A-tilA-B-tilB}.(3) is based on the following observation.

\begin{remark} \label{rem:D-tau}
In the general setting~\ref{gen-setting}, let $D$ be an element of $\dd\ell(Z(\llll_{\C}))$ or $\dd r(Z(\llll_{\C}\cap\kk_{\C}))$.
For any $\tau\in\Disc(L_K/L_H)$, the differential operator $D$ on~$X$ induces an $\curlyEnd((V_{\tau}^{\vee})^{L_H}\otimes\V_{\tau})$-valued differential operator $D^{\tau}$ acting on the sections of the vector bundle $(V_{\tau}^{\vee})^{L_H}\otimes\V_{\tau}$ over~$Y$ such that
\begin{equation} \label{eqn:equiviota}
D(\ii_{\tau}(\varphi)) = \ii_{\tau}(D^{\tau}\varphi)
\end{equation}
for any $\varphi \in (V_{\tau}^{\vee})^{L_H}\otimes\F(Y,\V_{\tau})$, where $\F=\A$, $C^{\infty}$, or~$\DD'$: see Definition-Proposition \ref{def-prop:D-tau}.
The operator $D^{\tau}$ is $L$-invariant.
See Example~\ref{ex:D-tau-Z-l} for the case $D\in\dd\ell(Z(\llll_{\C}))$ and Example~\ref{ex:D-tau-Z-r} for the case $D\in\dd r(Z(\llll_{\C}\cap\kk_{\C}))$.
\end{remark}

\begin{proof}[Proof of Lemma~\ref{lem:A-tilA-B-tilB}.(3)]
Fix $\tau\in\Disc(L_K/L_H)$ and let $\F=\A$, $C^{\infty}$, or~$\DD'$.

Let $\lambda\in\Hom_{\C\text{-}\mathrm{alg}}(\D_G(X),\C)$.
If condition~($\widetilde{\mathrm{B}}$) holds for the quadruple $(\g_{\C},\llll_{\C},\h_{\C},\llll_{\C}\cap\kk_{\C})$, then for any $z\in Z(\llll_{\C})$, we can write $\dd\ell(z) = \sum_j \dd r(Q_j)P_j$ in $\D_L(X)$, for some $P_j\in\D_G(X)$ and $Q_j\in Z(\llll_{\C}\cap\kk_{\C})$, $1\leq j\leq m$.
By Schur's lemma, each $Q_j$ acts on $\pp_{\tau}(\F(X))$ via $\dd r$ as a scalar $c_j\in\C$ (depending on~$\tau$).
For any $F\in\F(X;\M_{\lambda})$, we then have
$$\dd\ell^{\tau}(z)\big(\pp_{\tau}(F)) = \Big(\sum_j c_j\,\lambda(P_j)\Big) \, \pp_{\tau}(F).$$
This proves the existence of a map $\nnu(\cdot,\tau)$ as in condition~(Tf) for the quadruple $(G,L,H,L_K)$.

Conversely, let $\nu\in\Hom_{\C\text{-}\mathrm{alg}}(Z(\llll_{\C}),\C)$.
Any $z\in Z(\llll_{\C})$ acts as the scalar $\nu(z)$ on $\F(Y,\V_{\tau};\NN_{\nu})$ via the operator $\dd\ell^{\tau}$, hence also on $\ii_{\tau}((V_{\tau}^{\vee})^{L_H}\otimes\F(Y,\V_{\tau};\NN_{\nu}))$ by \eqref{eqn:equiviota}.
On the other hand, any $z'\in Z(\llll_{\C}\cap\kk_{\C})$ acts as a scalar on $\ii_{\tau}((V_{\tau}^{\vee})^{L_H}\otimes\F(Y,\V_{\tau};\NN_{\nu}))$ via $\dd r$ by Remark~\ref{rem:D-tau}.
Thus, if condition~($\widetilde{\mathrm{A}}$) holds for the quadruple $(\g_{\C},\llll_{\C},\h_{\C},\llll_{\C}\cap\kk_{\C})$, then any element of $\D_G(X)$ acts as a scalar on $\ii_{\tau}((V_{\tau}^{\vee})^{L_H}\otimes\F(Y,\V_{\tau};\NN_{\nu}))$.
By construction, the scalar depends only on $\nu$ modulo $\mathrm{Ker}(\dd\ell^{\tau})$, proving the existence of a map $\llambda(\cdot,\tau)$ as in condition~(Tf) for the quadruple $(G,L,H,L_K)$.
\end{proof}

\begin{proof}[Proof of Lemma~\ref{lem:A-tilA-B-tilB}.(4)]
The compact real form of the complexification of the triple $(G,L,H)$ is given by
$$(G_U, L_U, H_U) = \big(\SO(8)\times\SO(8), \Spin(8), \SO(7)\times\SO(7)\big),$$
and there is a unique maximal connected proper subgroup of~$L_U$ containing $L_H = L_U\cap H_U$, namely $L_K\simeq\Spin(7)$.
This means that the $L_U$-equivariant fiber bundle $G_U/H_U\to L_U/L_H$ and the $L$-equivariant fiber bundle $G/H\to L/L_H$ have the same fiber $F=L_K/L_H$.
Then Lemma~\ref{lem:A-tilA-B-tilB}.(4) follows from \cite[Prop.\,4.8 \& Th.\,4.9]{kkdiffop} in the compact case via holomorphic continuation.
\end{proof}

\begin{proof}[Proof of Proposition~\ref{prop:cond-Tf-A-B-satisfied}]
By Facts \ref{fact:cond-tilA-tilB-satisfied} and~\ref{fact:cond-tilA'-complex-case}, conditions ($\widetilde{\mathrm{A}}$) or ($\widetilde{\mathrm{A}}'$) and ($\widetilde{\mathrm{B}}$) hold for the quadruple $(\g_{\C},\llll_{\C},\h_{\C},\llll_{\C}\cap\kk_{\C})$ when $X_{\C}=G_{\C}/H_{\C}$ is $L_{\C}$-spherical and $G$ simple.
Therefore condition (A) (\resp (B), \resp (Tf)) holds for the quadruple $(G,L,H,L_K)$ by Lemma~\ref{lem:A-tilA-B-tilB}.(1) (\resp (2), \resp (3)--(4)).
\end{proof}

\begin{proof}[Proof of Proposition~\ref{prop:cond-Tf-A-B-satisfied-GxG}]
By Lemma~\ref{lem:tilA-tilB-group-manifold}, conditions ($\widetilde{\mathrm{A}}$) and ($\widetilde{\mathrm{B}}$) hold for the quadruple $(\g_{\C},\llll_{\C},\h_{\C},\llll_{\C}\cap\kk_{\C})$ in the group manifold setting of Proposition~\ref{prop:cond-Tf-A-B-satisfied-GxG}.
Moreover, $X_{\C}=G_{\C}/H_{\C}$ is $G_{\C}$-spherical for $(G_{\C},H_{\C}) = ({}^{\backprime}G_{\C}\times\!{}^{\backprime}G_{\C},\Diag({}^{\backprime}G_{\C}))$.
Therefore condition (A), (B), and (Tf) hold for the quadruple $(G,L,H,L_K)$ by Lemma~\ref{lem:A-tilA-B-tilB}.
\end{proof}

\section{Explicit transfer maps}

When $X_{\C}=G_{\C}/H_{\C}$ is $L_{\C}$-spherical and $G$ simple, the maps $\nnu$ and~$\llambda$ of condition~(Tf) can be given explicitly.
Let $\mathfrak{t}_{\C}$ be a Cartan subalgebra of~$\llll_{\C}$, and $W(\llll_{\C})$ the Weyl group of the root system $\Delta(\llll_{\C},\mathfrak{t}_{\C})$.
Let $\jj_{\C}\subset\g_{\C}$ and $W=W(\g_{\C},\jj_{\C})$ be as in Section~\ref{subsec:DGH}.
The notation $(G,H,L,\jj_{\C},W,\mathfrak{t}_{\C},W(\llll_{\C}))$ here corresponds to the notation $(\widetilde{G},\widetilde{H},G,\widetilde{\aaa}_{\C},\widetilde{W},\jj_{\C},W(\g_{\C}))$ in \cite[Th.\,4.9]{kkdiffop}.

\begin{proposition}\label{prop:specXY-precise}
In the general setting~\ref{gen-setting}, suppose that $X_{\C}=G_{\C}/H_{\C}$ is $L_{\C}$-spherical and $G$ simple.
For any $\tau\in\Disc(L_K/L_H)$, there is an affine map $S_{\tau} : \jj_{\C}^*\to\mathfrak{t}_{\C}^*$ such that the diagram
$$\xymatrix{\jj_{\C}^* \ar[d] \ar[r]^{S_{\tau}} & \mathfrak{t}_{\C}^* \ar[d]\\
\jj_{\C}^*/W & \mathfrak{t}_{\C}^*/W(\llll_{\C})\\
\Hom_{\C\text{-}\mathrm{alg}}(\D_G(X),\C) \ar[u]^{\Psi^*}_{\text{\rotatebox{90}{$\sim$}}} \ar@{-->}[r]^{\nnu(\cdot,\tau)}
& \Hom_{\C\text{-}\mathrm{alg}}(Z(\llll_{\C}),\C) \ar[u]_{\Phi^*}^{\text{\rotatebox{-90}{$\sim$}}}
}$$
induces a map
$$\nnu(\cdot,\tau) : \Hom_{\C\text{-}\mathrm{alg}}(\D_G(X),\C) \longrightarrow \Hom_{\C\text{-}\mathrm{alg}}(Z(\llll_{\C}),\C).$$
Moreover, there is a unique map
$$\llambda(\cdot,\tau) : \Hom_{\C\text{-}\mathrm{alg}}(Z(\llll_{\C})/\Ker(\dd\ell^{\tau}),\C) \longrightarrow \Hom_{\C\text{-}\mathrm{alg}}(\D_G(X),\C)$$
such that $\nnu(\llambda(\nu,\tau))=\nu$ for all $\nu\in\Hom_{\C\text{-}\mathrm{alg}}(Z(\llll_{\C})/\Ker(\dd\ell^{\tau}),\C)$.
\end{proposition}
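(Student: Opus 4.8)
The plan is to deduce Proposition~\ref{prop:specXY-precise} from the results of \cite{kkdiffop} in the compact case by holomorphic continuation, in the same spirit as the proof of Lemma~\ref{lem:A-tilA-B-tilB}.(3)--(4) and Proposition~\ref{prop:rel-Lapl}. The dictionary of notation is already spelled out in the excerpt: the quadruple $(G,H,L,\jj_{\C},W,\mathfrak{t}_{\C},W(\llll_{\C}))$ corresponds to $(\widetilde{G},\widetilde{H},G,\widetilde{\aaa}_{\C},\widetilde{W},\jj_{\C},W(\g_{\C}))$ in \cite[Th.\,4.9]{kkdiffop}. So the heart of the argument is to transport the affine map $S_{\tau}$ (and the commuting diagram) produced there for a compact real form across to the real form we actually care about.

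First I would pass to complexifications. The $\C$-algebras $\D_G(X)\cong\D_{G_{\C}}(X_{\C})$, $Z(\llll_{\C})$, the Harish-Chandra isomorphisms $\Psi : \D_G(X)\xrightarrow{\sim}S(\jj_{\C})^W$ and $\Phi : Z(\llll_{\C})\xrightarrow{\sim}S(\mathfrak{t}_{\C})^{W(\llll_{\C})}$, the fiber $F=L_K/L_H$, the set $\Disc(L_K/L_H)$, and hence the operators $\dd\ell^{\tau}$ and the maps $\ii_{\tau},\pp_{\tau}$, all depend only on the complexified data $(\g_{\C},\h_{\C},\llll_{\C},\llll_{\C}\cap\kk_{\C})$ and on the irreducible $(\llll_{\C}\cap\kk_{\C})$-representation $\tau$ (via the Cartan--Helgason parametrization). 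As in the proof of Fact~\ref{fact:cond-tilA-tilB-satisfied}, choosing $G_U$ a maximal compact subgroup of~$G_{\C}$ gives $L_U$ acting transitively on $G_U/H_U$ with the same fiber $F=L_K/L_H$, since $X_{\C}$ is $L_{\C}$-spherical. Thus the combinatorial/algebraic content is literally the same object in both real forms.

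Next I would invoke \cite[Th.\,4.9]{kkdiffop} in the compact case: for each $\tau\in\Disc(L_K/L_H)$ it provides an affine map $S_{\tau}:\jj_{\C}^*\to\mathfrak{t}_{\C}^*$ which is $W$- to $W(\llll_{\C})$-equivariant in the appropriate sense, so that it descends to $\jj_{\C}^*/W\to\mathfrak{t}_{\C}^*/W(\llll_{\C})$; composing with $\Psi^*$ and $(\Phi^*)^{-1}$ defines $\nnu(\cdot,\tau)$, and \cite[Prop.\,4.8]{kkdiffop} (in the compact setting) gives the intertwining property $D(\ii_{\tau}\varphi)=\ii_{\tau}(D^{\tau}\varphi)$ together with the fact that $\nnu(\cdot,\tau)$ computes the action of $\D_G(X)$ on $\ii_{\tau}$-images of $\NN_{\nu}$-eigensections. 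Here the key point, carried out exactly as in \cite[Lem.\,5.4]{kkdiffop} and the proof of Proposition~\ref{prop:rel-Lapl}, is that the invariant differential operators in $\D_{G_{\C}}(X_{\C})$, $\dd\ell(Z(\llll_{\C}))$, $\dd r(Z(\llll_{\C}\cap\kk_{\C}))$ are \emph{holomorphic} differential operators on $X_{\C}$ whose restrictions to the totally real submanifold $X=G/H$ (resp. to $G_U/H_U$) recover the corresponding real operators; since an algebraic relation among holomorphic operators restricts to the same relation on any real form, the diagram valid for $G_U/H_U$ restricts verbatim to $X=G/H$. The affine map $S_{\tau}$ is unchanged.

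For the map $\llambda(\cdot,\tau)$: once $\nnu(\cdot,\tau)$ exists (condition (Tf) holds by Proposition~\ref{prop:cond-Tf-A-B-satisfied}), I would use that $\nnu$ and $\llambda$ are inverse to each other wherever both are defined (Remark~\ref{rem:lambda-circ-nu}), so uniqueness of a $\llambda(\cdot,\tau)$ with $\nnu(\llambda(\nu,\tau),\tau)=\nu$ for all $\nu\in\Hom_{\C\text{-}\mathrm{alg}}(Z(\llll_{\C})/\Ker(\dd\ell^{\tau}),\C)$ reduces to injectivity of $\nnu(\cdot,\tau)$ on the image of $\llambda$, which is immediate from the equivariant-affine description of $S_{\tau}$: on the relevant slice $S_{\tau}$ is injective modulo the two Weyl groups because it is a \emph{translation} composed with a linear injection built from the restriction $\jj_{\C}\hookrightarrow\mathfrak{t}_{\C}$ in \cite[Th.\,4.9]{kkdiffop}, and $\Ker(\dd\ell^{\tau})$ is exactly cut out so that $\Phi^*$ identifies $\Hom(Z(\llll_{\C})/\Ker(\dd\ell^{\tau}),\C)$ with the appropriate affine subspace of $\mathfrak{t}_{\C}^*/W(\llll_{\C})$. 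Existence of $\llambda$ then follows by reading off a left inverse to $S_{\tau}$ on that subspace, exactly as done in \cite[\S\,1.3, \S\,4]{kkdiffop}.

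The main obstacle I expect is the bookkeeping in the holomorphic-continuation step: one must check that the \emph{vector-bundle-valued} operators $D^{\tau}$ (for $D\in\dd\ell(Z(\llll_{\C}))$ and $D\in\dd r(Z(\llll_{\C}\cap\kk_{\C}))$) and the maps $\ii_{\tau}$, $\pp_{\tau}$ genuinely arise as restrictions of the holomorphic objects attached to $X_{\C}$, and that the one-dimensionality of $(V_{\tau}^{\vee})^{L_H}$ (from $L_{\C}$-sphericity) is used consistently so that $S_{\tau}$ is a well-defined \emph{affine} — not merely projective — map. This is a matter of matching the conventions of \cite{kkdiffop} (compact picture, identification via $G_U/H_U$) with the present real setting via $\g_{\C}$; the argument is structurally identical to the already-carried-out proof of Proposition~\ref{prop:rel-Lapl} and Lemma~\ref{lem:A-tilA-B-tilB}.(3)--(4), so no genuinely new idea is required, only care.
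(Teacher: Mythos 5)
Your proposal follows essentially the same route as the paper's proof: both reduce to the compact case established in \cite[Th.\,4.9 \& Prop.\,4.13]{kkdiffop} by passing to a compact real form $G_U/H_U$ with the same fiber $L_K/L_H$, and transporting the statement back to $X=G/H$ via holomorphic continuation of the invariant differential operators (the isomorphisms $\D_G(X)\simeq\D_{G_{\C}}(X_{\C})\simeq\D_{G_U}(X_U)$ and their bundle-valued analogues, exactly as in \cite[Lem.\,5.4]{kkdiffop}). The existence and uniqueness of $\llambda(\cdot,\tau)$, which you sketch via the affine structure of $S_{\tau}$, is likewise taken from the compact-case statement in \cite{kkdiffop}, so no substantive difference remains.
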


In \cite[Th.\,4.9 \& Prop.\,4.13]{kkdiffop} we proved the proposition when $G$ is compact, with an explicit expression of the affine map $S_{\tau}$ in terms of the highest weight of~$\tau$.
We now briefly explain how to reduce to this case.

\begin{proof}[Proof of Proposition~\ref{prop:specXY-precise}]
Without loss of generality, we may and do assume that the subgroups $H$ and~$L$ are preserved by a Cartan involution $\theta$ of~$G$.
Let $G_{\C}\supset H_{\C},L_{\C}$ be connected complex Lie groups containing $G\supset H,L$ as real forms.
We can find a compact real form $G_U$ of~$G_{\C}$ such that $H_U:=G_U\cap H_{\C}$ and $L_U:=G_U\cap L_{\C}$ are maximal compact subgroups of $H_{\C}$ and~$L_{\C}$, respectively.
We set $X_{\C} := G_{\C}/H_{\C}$ and $Y_{\C} := L_{\C}/(L_{\C}\cap K_{\C})$, as well as $X_U:=G_U/H_U$ and $Y_U:=L_U/L_K$.
Since $L_U$ acts transitively on~$X_U$ and $L_U\cap H_U=L_H$, we have an $L_U$-equivariant fibration $L_K/L_H\to X_U\to Y_U$, which may be thought of as a compact real form of the $L_{\C}$-equivariant fibration $(L_{\C}\cap K_{\C})/(L_{\C}\cap H_{\C})\to X_{\C}\to Y_{\C}$ which is the complexification of the $L$-equivariant fibration $L_K/L_H\to X\to Y$.
We note that the fiber $L_K/L_H$ is common to these two real fibrations.
Via the holomorphic extension of invariant differential operators, we have natural $\C$-algebra isomorphisms $\D_G(X) \simeq \D_{G_{\C}}(X_{\C}) \simeq \D_{G_U}(X_U)$ and $\D_L(Y,\V_{\tau}) \simeq \D_{L_{\C}}(Y_{\C},\V_{\tau}^{\C}) \simeq \D_{L_U}(Y_U,\V_{\tau,U})$ for all $\tau\in\Disc(L_K/L_H)$, where $\V_{\tau,U}:=L_U\times_{L_K}\tau$ is an $L_U$-equivariant vector bundle over~$Y_U$ associated to~$\tau$, and $\V_{\tau}^{\C} := L_{\C} \times_{L_{\C}\cap K_{\C}} \tau^{\C}$ is an $L_{\C}$-equivariant holomorphic vector bundle over~$Y_{\C}$ associated to a holomorphic extension $\tau^{\C}$ of~$\tau$ by the Weyl unitary trick as in \cite[Lem.\,5.4]{kkdiffop}.
Thus the proposition for~$G$ follows from the proposition for~$G_U$, which is established in \cite[Th.\,4.9]{kkdiffop}.
\end{proof}

In this setting, explicit formulas for $\nnu(\lambda,\tau)$ in each case of Table~\ref{table1} are given in \cite[\S\,6--7]{kkdiffop}: see for instance Example~\ref{ex:SO-4n-2} for the case $G/H=\SO(4n,2)_0/\U(2n,1)$.

\part{Proof of the theorems of Chapter~\ref{sec:intro}} \label{part:main-proofs}

In this Part~\ref{part:main-proofs}, we provide proofs of the theorems of Chapters \ref{sec:intro} and~\ref{subsec:method-transfer}.

We start, in Chapter~\ref{sec:strategy}, by establishing the essential self-adjointness of the pseudo-Riemannian Laplacian $\square_{X_{\Gamma}}$ (Theorems \ref{thm:selfadj} and~\ref{thm:GxG}.(2)).
For this we reduce to the Riemannian case using the relation \eqref{eqn:rel-Lapl} between the pseudo-Riemannian Laplacian $\square_X$ and the Casimir element $C_L$; this relation is part of the underlying structure of the existence of transfer maps, as mentioned just after Lemma~\ref{lem:A-tilA-B-tilB}.

In Chapter~\ref{sec:transfer-Gamma}, using the transfer maps $\nnu$ and~$\llambda$ of Chapter~\ref{sec:dliota}, we complete the proofs of Theorems \ref{thm:transfer} and~\ref{thm:transfer-GxG}, hence of Theorems \ref{thm:analytic} and~\ref{thm:GxG}.(1), as well as the proof of Theorem~\ref{thm:Fourier}.

In Chapter~\ref{sec:transfer} we derive two consequences of conditions (A) and~(B) of Section~\ref{subsec:cond-Tf-A-B}; in particular, we prove (Theorem~\ref{thm:condB}) that any infinite-dimensional representation of~$G$ realized in $\DD'(X)$ decomposes discretely as a representation of the subgroup~$L$ under condition~(B).

In Chapter~\ref{sec:transfer-Gamma-type-I-II} we show (Theorem~\ref{thm:transfer-spec}) that the discrete spectrum of type~$\I$ and type~$\II$ from Chapter~\ref{sec:type-I-II} is compatible for the pseudo-Riemannian locally homogeneous space $X_{\Gamma}$ and for the vector bundle $\V_{\tau}$ over the Riemannian locally symmetric space~$Y_{\Gamma}$, as a counterpart of discrete decomposability results (Theorem~\ref{thm:condB}) for the restriction of representations of $G$ to its subgroup~$L$, which contains the discrete group~$\Gamma$.

In Chapter~\ref{sec:proof-mainII} we complete the proof of Theorem~\ref{thm:mainII}, which states the existence of an infinite discrete spectrum of type~$\II$ when $\Gamma$ is cocompact or arithmetic in~$L$: this is deduced from Theorem~\ref{thm:transfer-spec} and from the classical Riemannian case (Fact~\ref{fact:inf-spec-Riem}).

\chapter{Essential self-adjointness of the Laplacian} \label{sec:strategy}

In this chapter we address Questions~\ref{problems} by establishing the following.

\begin{proposition} \label{prop:relationsLapl}
In the general setting~\ref{gen-setting}, consider a $G$-invariant pseudo-Rieman\-nian structure on~$X$ (see Lemma~\ref{lem:pseudo-Riem-struct}) and let $\square_X$ be the corresponding Laplacian.
Choose any $\Ad(L)$-invariant, nondegenerate, symmetric bilinear form on~$\llll$ and let $C_L\in Z(\llll)$ be the corresponding Casimir element.
If \eqref{eqn:rel-Lapl} holds for some nonzero $a\in\R$, then for any torsion-free discrete subgroup $\Gamma$ of~$L$,
\begin{enumerate}
  \item the closure of the pseudo-Riemannian Laplacian~$\square_{X_{\Gamma}}$ defined on $C^{\infty}_c(X_{\Gamma})$ in the graph norm gives a self-adjoint operator on $L^2(X_{\Gamma})$,
  \item for any $\lambda\in\Spec_d(X_{\Gamma})$, the space $L^2(X_{\Gamma};\M_{\lambda})$ contains real analytic functions as a dense subset,
  \item the pseudo-Riemannian Laplacian $\square_{X_{\Gamma}}$ on~$X_{\Gamma}$ has infinitely many $L^2$-eigenva\-lues as soon as the Riemannian Laplacian $\Delta_{Y_{\Gamma}}$ on~$Y_{\Gamma}$ has (\eg if $\Gamma$ is a uniform or arithmetic lattice in~$L$), and $\square_{X_{\Gamma}}$ has absolutely continuous spectrum as soon as $\Delta_{Y_{\Gamma}}$ has.
\end{enumerate}
\end{proposition}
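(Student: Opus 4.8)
The plan is to exploit the relation \eqref{eqn:rel-Lapl}, which writes $\square_X = a\,\dd\ell(C_L) + D_F$ with $a\in\R^\times$ and $D_F\in\dd r(Z(\llll_\C\cap\kk_\C))$, to reduce every assertion about the pseudo-Riemannian Laplacian $\square_{X_\Gamma}$ to the corresponding assertion about an \emph{elliptic} operator on $X_\Gamma$, and then to the Riemannian Laplacian $\Delta_{Y_\Gamma}$ on $Y_\Gamma = \Gamma\backslash L/L_K$ via the fibration $q_\Gamma : X_\Gamma \overset{F}{\to} Y_\Gamma$ of \eqref{eqn:X-Gamma-Y-Gamma}. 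The key observation is that, since $L_K/L_H$ is compact, the operator $\dd r(C_{L_K})$ (the Casimir of $\llll\cap\kk$ acting on the fiber) is an elliptic operator in the fiber directions, while $\dd\ell(C_L)$ differs from the Riemannian Laplacian of $L/L_K$ only by terms involving $\dd r(Z(\llll_\C\cap\kk_\C))$; combining these, $\dd\ell(C_L) - c\,\dd r(C_{L_K})$ is, for a suitable constant, the Laplacian of a \emph{Riemannian} metric on $X$ obtained by flipping the sign of the pseudo-Riemannian metric on the fiber directions. Thus $\square_{X_\Gamma}$ and this genuinely elliptic operator $\widetilde\Delta_{X_\Gamma}$ lie in the commutative algebra generated by $\dd\ell(C_L)_\Gamma$ and $\dd r(C_{L_K})_\Gamma$, and they commute with the full algebra $\dd r(Z(\llll_\C\cap\kk_\C))_\Gamma$ coming from the compact fiber.

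First I would decompose $L^2(X_\Gamma)$ according to $\Disc(L_K/L_H)$: using the maps $\pp_{\tau,\Gamma}$ and $\ii_{\tau,\Gamma}$ of Section~\ref{subsec:p-tau}, one has a Hilbert space decomposition $L^2(X_\Gamma) \cong \widehat{\bigoplus}_{\tau}\, L^2(Y_\Gamma,\V_\tau)$, under which each element $D\in\dd r(Z(\llll_\C\cap\kk_\C))$ acts on the $\tau$-block by the scalar coming from Schur's lemma, and $\dd\ell(C_L)_\Gamma$ acts as $\dd\ell^\tau(C_L)_\Gamma$, which is (up to an elliptic lower-order term) the Bochner Laplacian on the Hermitian bundle $\V_\tau$ over the Riemannian manifold $Y_\Gamma$. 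Hence $\square_{X_\Gamma}$ acts on the $\tau$-block as $a\,\dd\ell^\tau(C_L)_\Gamma + (\text{scalar}_\tau)$, an elliptic operator on $Y_\Gamma$. For (1), I would invoke the theorem that the Bochner Laplacian (more generally any Laplace-type operator) on a Hermitian vector bundle over a \emph{complete} Riemannian manifold is essentially self-adjoint on compactly supported smooth sections --- this is the Gaffney/Wolf/Strichartz circle of results cited in the introduction, and it applies since $Y_\Gamma$ is complete (being a quotient of the Riemannian symmetric space $L/L_K$); essential self-adjointness is preserved under the unitary equivalence and the direct sum, giving essential self-adjointness of $\square_{X_\Gamma}$ on $L^2(X_\Gamma)$. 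For (2), given $\lambda\in\Spec_d(X_\Gamma)$, any $f\in L^2(X_\Gamma;\M_\lambda)$ decomposes into its $\tau$-components $\pp_{\tau,\Gamma}(f)\in L^2(Y_\Gamma,\V_\tau)$, each of which is a weak $L^2$-eigensection of the elliptic operator $\dd\ell^\tau(C_L)_\Gamma$ and hence real analytic by elliptic regularity (as recalled for $\F(Y_\Gamma,\V_\tau;\NN_\nu)$ in Section~\ref{subsec:bundle}); the finite partial sums $\sum_{\tau\in S}\ii_{\tau,\Gamma}(\pp_{\tau,\Gamma}(f))$ are then real analytic functions on $X_\Gamma$ lying in $L^2(X_\Gamma;\M_\lambda)$ --- this uses that $\D_G(X)$ preserves each $\ii_{\tau,\Gamma}(\F(Y_\Gamma,\V_\tau))$, by \cite[Th.\,4.9]{kkdiffop} --- and converge to $f$ in $L^2$ by density of $\bigoplus_\tau \F(Y_\Gamma,\V_\tau)$ in $\F(X_\Gamma)$ (Lemma~\ref{lem:p-tau-i-tau}). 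For (3), an $L^2$-eigenfunction of $\Delta_{Y_\Gamma}$ with eigenvalue $t$ (i.e. the trivial-$\tau$ case, where $\V_\tau\cong\mathcal{O}$ and $\ii_{\tau,\Gamma} = q_\Gamma^*$) pulls back via $q_\Gamma^*$ to an $L^2$-function on $X_\Gamma$ which is an eigenfunction of $\dd\ell(C_L)_\Gamma$, hence of $\square_{X_\Gamma}$ since on $q_\Gamma^*$-pullbacks the fiber operator $D_F$ acts by a fixed scalar; distinct eigenvalues of $\Delta_{Y_\Gamma}$ give distinct eigenvalues of $\square_{X_\Gamma}$ by the affine relation between them, so infinitude transfers, and the arithmetic/uniform-lattice case follows since $\Delta_{Y_\Gamma}$ on the Riemannian locally symmetric space $Y_\Gamma$ then has infinite discrete spectrum by \cite{bg83}. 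The same pullback argument shows $q_\Gamma^*$ intertwines the absolutely continuous part of $\Delta_{Y_\Gamma}$ with an absolutely continuous part of $\square_{X_\Gamma}$, giving the last clause.

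The main obstacle I anticipate is not any single step but making precise the passage from $\dd\ell(C_L)_\Gamma$ on $X_\Gamma$ to the elliptic operator on the Hermitian bundle $\V_\tau$ over $Y_\Gamma$: one must check that $\dd\ell^\tau(C_L)_\Gamma$ really is of Laplace type --- i.e. has scalar principal symbol equal to that of the Riemannian Laplacian of $Y_\Gamma$ (up to a positive constant) --- so that the completeness-based essential self-adjointness theorem for bundle-valued Laplacians applies. This amounts to verifying that the "fiber part" $\dd r(Z(\llll_\C\cap\kk_\C))$ contributes only lower-order terms on each $\tau$-isotypic component, which follows because on the compact fiber $F = L_K/L_H$ these operators act through the finite-dimensional representation $\tau$ and hence as a \emph{zeroth-order} (scalar) operator after applying $\pp_{\tau,\Gamma}$. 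Once this is pinned down, the argument is a clean reduction; the rest is standard elliptic theory on complete Riemannian manifolds plus bookkeeping with the decomposition $L^2(X_\Gamma)=\widehat{\bigoplus}_\tau L^2(Y_\Gamma,\V_\tau)$. I would also remark that this proof uses \emph{only} the relation \eqref{eqn:rel-Lapl} and not the full strength of condition~(Tf), which is why Proposition~\ref{prop:relationsLapl} can be stated for the group manifold case as well (where \eqref{eqn:rel-Lapl} holds by Remark~\ref{rem:rel-Lapl-GxG}), yielding Theorem~\ref{thm:GxG}.(2).
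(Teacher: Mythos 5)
Your proof follows essentially the same strategy as the paper's: decompose $L^2(X_\Gamma)$ along $\Disc(L_K/L_H)$ via \eqref{eqn:XYL2}, use \eqref{eqn:rel-Lapl} together with Schur's lemma (Example~\ref{ex:D-tau-Z-r}) to write $\square_{X_\Gamma}\circ\ii_{\tau,\Gamma} = a\,\dd\ell(C_L)_\Gamma^\tau + c(\tau)$ with $c(\tau)\in\R$ on each $\tau$-block, and then apply elliptic theory on the complete Riemannian manifold $Y_\Gamma$ block by block. One point the paper is more careful about, and which you flag as ``the main obstacle'' but do not fully resolve: essential self-adjointness of a Laplace-type operator on a Hermitian bundle over a complete Riemannian manifold is \emph{not} automatic from completeness alone (Gaffney/Strichartz is for the scalar Laplacian or forms) --- the paper instead observes that the canonical connection on $\Gamma\backslash L\to Y_\Gamma$ has uniformly bounded curvature $\Omega^\tau$ and invokes a Wolf/Ricci-type argument for Clifford bundles under this bound, so you would need to supply this curvature estimate to make the first step airtight.
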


As mentioned above, the idea is to use the relation \eqref{eqn:rel-Lapl} to derive Proposition~\ref{prop:relationsLapl} from the corresponding results for the Riemannian Laplacian on~$Y_{\Gamma}$.

\section{Proof of Theorems \ref{thm:selfadj} and~\ref{thm:GxG}.(2)} \label{subsec:proof-selfadj}

Postponing the proof of Proposition~\ref{prop:relationsLapl} till Section~\ref{subsec:main-thm-Laplacian}, we now prove the self-adjointness of the pseudo-Rieman\-nian Laplacian $\square_{X_{\Gamma}}$ in the setting~\ref{spher-setting} (Theorem~\ref{thm:selfadj}) and in the group manifold case (Theorem~\ref{thm:GxG}.(2)).

\begin{proof}[Proof of Theorem~\ref{thm:selfadj} assuming Proposition~\ref{prop:relationsLapl}]
In the setting~\ref{spher-setting}, the group $L$ acts transitively on~$X$, and by Proposition~\ref{prop:rel-Lapl} there exists a nonzero $a\in\R$ such that \eqref{eqn:rel-Lapl} holds.
Thus the Laplacian $\square_{X_{\Gamma}}$ is essentially self-adjoint by Proposition~\ref{prop:relationsLapl}.(1).
\end{proof}

\begin{proof}[Proof of Theorem~\ref{thm:GxG}.(2) assuming Proposition~\ref{prop:relationsLapl}]
By Remark \ref{rem:rel-Lapl-GxG}, there exists a nonzero $a\in\R$ such that \eqref{eqn:rel-Lapl} holds, and so $\square_{X_{\Gamma}}$ is essentially self-adjoint by Proposition~\ref{prop:relationsLapl}.(1).
\end{proof}

\begin{remark}
If $\rank X=1$ (as in examples (i), (i)$'$, (iii), (v), (v)$'$, (vi), (vii),~(ix) of Table~\ref{table1}), then the $\C$-algebra $\D_G(X)$ is generated by the Laplacian $\square_X$, and so for any discrete subgroup $\Gamma$ of $G$ acting properly discontinuously and freely on~$X$, we may identify $\Spec_d(X_{\Gamma})$ with the discrete spectrum of the Laplacian~$\square_{X_{\Gamma}}$.
In this case, based on the relation \eqref{eqn:rel-Lapl} (which holds for some nonzero~$a$ by Proposition~\ref{prop:rel-Lapl}), one can use the same approach as in the proof of Proposition~\ref{prop:relationsLapl}.(2)--(3) to obtain the abundance of real analytic joint eigenfunctions (Theorem~\ref{thm:transfer}) and the existence of an infinite discrete spectrum of type~$\II$ under certain assumptions (Theorem~\ref{thm:mainII}).
To prove these results in the general case, allowing for $\rank X>\nolinebreak 1$, we shall use conditions (Tf) and~(B) of Section~\ref{subsec:cond-Tf-A-B}: see Sections \ref{subsec:proof-transfer} and~\ref{sec:proof-mainII}.
\end{remark}

\section[A decomposition of $L^2(X_{\Gamma})$ using discrete series for~$F$]{A decomposition of $L^2(X_{\Gamma})$ using discrete series for the fiber~$F$}\label{subsec:L2decomp}

In preparation for the proof of Proposition~\ref{prop:relationsLapl}, we introduce a useful decomposition of~$L^2(X_{\Gamma})$.

We work again in the general setting~\ref{gen-setting}.
Recall from \eqref{eqn:X-Gamma-Y-Gamma} that the $L$-equivariant fibration $q : X=G/H\simeq L/L_H\to L/L_K=Y$ induces a fibration of the quotient $X_{\Gamma}$ over the Riemannian locally symmetric space $Y_{\Gamma}=\Gamma\backslash L/L_K$ with compact fiber $F=L_K/L_H$.

By the Frobenius reciprocity theorem, for any irreducible representation $(\tau,V_{\tau})$ of the compact group~$L_K$, the space $(V_{\tau})^{L_H}$ of $L_H$-fixed vectors in~$V_H$ is nonzero if and only if $\tau$ belongs to $\Disc(L_K/L_H)$; in this case we consider the (finite-dimensional) unitary representation $(V_{\tau}^{\vee})^{L_H} \otimes V_{\tau}$ of~$L_K$ (a multiple of~$V_{\tau}$).
There is a unitary equivalence of $L_K$-modules (isotypic decomposition of the regular representation of~$L_K$):
\begin{equation}\label{eqn:LKLH}
L^2(L_K/L_H) \,\simeq\quad \sumplus{\tau\in\Disc(L_K/L_H)}\ (V_{\tau}^{\vee})^{L_H} \otimes V_{\tau}
\end{equation}
(Hilbert direct sum).
We shall use this decomposition to compare spectral analysis on the pseudo-Riemannian locally homogeneous space $X_{\Gamma}$ and on the Riemannian locally symmetric space~$Y_{\Gamma}$.

For $\F=\A$, $C^{\infty}$, $L^2$, or~$\DD'$, and for $\tau\in\Disc(L_K/L_H)$, we denote by $\F(Y_{\Gamma},\V_{\tau})$ the space of analytic, smooth, square-integrable, or distribution sections of the Hermitian vector bundle
$$\V_{\tau} := \Gamma\backslash L \times_{\scriptscriptstyle{L_K}} V_{\tau} \ \longrightarrow\ Y_{\Gamma},$$
respectively.
Recall the continuous linear maps
$$\left\{
\begin{array}{ccl}
\ii_{\tau,\Gamma} & : & (V_{\tau}^{\vee})^{L_H} \otimes \F(Y_{\Gamma},\V_{\tau}) \longhookrightarrow \F(X_{\Gamma}),\\
\pp_{\tau,\Gamma} & : & \F(X_{\Gamma}) \longtwoheadrightarrow (V_{\tau}^{\vee})^{L_H} \otimes \F(Y_{\Gamma},\V_{\tau})
\end{array}
\right.$$
from \eqref{eqn:i-tau} and \eqref{eqn:p-tau}, respectively, for $\F=\A$, $C^{\infty}$, $L^2$, or~$\DD'$.

For $\F=L^2$, the maps $\ii_{\tau,\Gamma} : (V_{\tau}^{\vee})^{L_H}\otimes L^2(Y_{\Gamma},\V_{\tau})\rightarrow L^2(X_{\Gamma})$ are isometric embeddings, whose images are orthogonal to each other for varying $\tau\in\Disc(L_K/L_H)$, and they induce a unitary operator
\begin{equation} \label{eqn:XYL2}
\underline{\ii}_{\Gamma}\ :\quad\, \sumplus{\tau\in\Disc(L_K/L_H)}\ (V_{\tau}^{\vee})^{L_H} \otimes L^2(Y_{\Gamma},\V_{\tau}) \ \overset{\sim}{\longrightarrow}\ L^2(X_{\Gamma}).
\end{equation}
We endow $\Gamma\backslash L$ with an $L$-invariant Radon measure such that the pull-back of the submersion $\Gamma\backslash L \to X_{\Gamma} \simeq \Gamma\backslash L/L_H$ induces an isometry between Hilbert spaces
\begin{equation} \label{eqn:L2XGammaL}
L^2(X_{\Gamma}) \overset{\sim}{\longrightarrow} L^2(\Gamma\backslash L)^{L_H}.
\end{equation}
For $\tau\in\Disc(L_K/L_H)$, the map $\ii_{\tau,\Gamma}\circ\pp_{\tau,\Gamma} : L^2(X_{\Gamma})\rightarrow L^2(X_{\Gamma})$ is the orthogonal projection onto $\ii_{\tau,\Gamma}((V_{\tau}^{\vee})^{L_H}\otimes L^2(Y_{\Gamma},\V_{\tau}))$.
If $\chi_{\tau} : L_K\to\C^*$ denotes the character of~$\tau$, then $\ii_{\tau,\Gamma}\circ\pp_{\tau,\Gamma}$ is induced by the integral operator
\begin{eqnarray}\label{eqn:itau-circ-ptau}
L^2(\Gamma\backslash L) & \longrightarrow & \hspace{1.4cm} L^2(\Gamma\backslash L)\\
f \hspace{0.5cm} & \longmapsto & (\dim\tau)\,\int_{L_K} \chi_{\tau}(k) \, f(\cdot\,k) \, \dd k,\nonumber
\end{eqnarray}
which leaves the subspace $L^2(\Gamma\backslash L)^{L_H}\simeq L^2(X_{\Gamma})$ invariant.
By construction, $\pp_{\tau,\Gamma}$ is the adjoint of~$\ii_{\tau,\Gamma}$: for any $f\in L^2(X_{\Gamma})$ and $\varphi\in (V_{\tau}^{\vee})^{L_H}\otimes L^2(Y_{\Gamma},\V_{\tau})$,\begin{equation}\label{eqn:piadjoint}
(\pp_{\tau,\Gamma}(f),\varphi)_{L^2(Y_{\Gamma},\V_{\tau})} = (f,\ii_{\tau,\Gamma}(\varphi))_{L^2(X_{\Gamma})}.
\end{equation}

In general, the following holds.

\begin{lemma} \label{lem:p-tau-i-tau}
In the general setting~\ref{gen-setting}, let $\Gamma$ be a torsion-free discrete subgroup of~$L$.
Let $\F=\A$, $C^{\infty}$, $L^2$, or~$\DD'$.
\begin{enumerate}
  \item For any $\tau\in\Disc(L_K/L_H)$, we have $\pp_{\tau,\Gamma}\circ\ii_{\tau,\Gamma}=\mathrm{id}$ on $\F(Y_{\Gamma},\V_{\tau})$.
  \item The maps $\ii_{\tau,\Gamma}$ induce an injective linear map
  $$\underline{\ii}_{\Gamma} := \bigoplus_{\tau}\,\ii_{\tau,\Gamma} \,:\, \bigoplus_{\tau\in\Disc(L_K/L_H)} (V_{\tau}^{\vee})^{L_H} \otimes \F(Y_{\Gamma},\V_{\tau}) \longhookrightarrow \F(X_{\Gamma}),$$
  which is continuous with dense image.
   More precisely, for any increasing exhaustive sequence $(D_j)_{j\in\N}$ of finite subsets of $\Disc(L_K/L_H)$, the finite sums $\sum_{\tau \in D_j}  (\ii_{\tau,\Gamma}\circ\pp_{\tau,\Gamma})(f)$ converge to $f$ in $\F(X_\Gamma)$ with respect to each topology for $\F=\A$, $C^{\infty}$, $L^2$, or~$\DD'$ as $j\to +\infty$.
\end{enumerate}
\end{lemma}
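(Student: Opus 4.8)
\textbf{Proof plan for Lemma~\ref{lem:p-tau-i-tau}.}

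The plan is to reduce everything to the isotypic decomposition \eqref{eqn:LKLH} of $L^2(L_K/L_H)$ together with the Schur orthogonality relations for the compact group~$L_K$, applied fiberwise over $Y_\Gamma = \Gamma\backslash L/L_K$. Throughout I identify, as in \eqref{eqn:L2XGammaL}, functions on $X_\Gamma$ with right-$L_H$-invariant functions on $\Gamma\backslash L$, and sections of $\V_\tau$ with $L_K$-equivariant $V_\tau$-valued functions on $\Gamma\backslash L$ (under the diagonal action), so that $\ii_{\tau,\Gamma}$ and $\pp_{\tau,\Gamma}$ are the explicit integral formulas of \eqref{eqn:i-tau} and \eqref{eqn:p-tau}. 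The continuity of both maps in each of the four topologies $\F = \A, C^\infty, L^2, \DD'$ has already been recorded in Sections~\ref{subsec:bundle} and~\ref{subsec:p-tau}; likewise the duality \eqref{eqn:piadjoint} and the fact that $\ii_{\tau,\Gamma}\circ\pp_{\tau,\Gamma}$ is given by the integral operator \eqref{eqn:itau-circ-ptau} are in place for $\F=L^2$.

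For part~(1), the identity $\pp_{\tau,\Gamma}\circ\ii_{\tau,\Gamma}=\mathrm{id}$ on $\F(Y_\Gamma,\V_\tau)$ is a direct computation: writing out the composition of the two integral kernels and using the Schur orthogonality relation $\int_{L_K}\langle\tau(k)v,w^\vee\rangle\,\overline{\langle\tau(k)v',w'^\vee\rangle}\,dk$ for the compact group~$L_K$, the double integral over $L_K$ collapses to the identity on the $\tau$-isotypic component. This is already asserted in Section~\ref{subsec:p-tau}; I would just restate that the same computation is valid verbatim for $\F=\A, C^\infty, \DD'$ since it is an algebraic identity of continuous operators that holds on the dense subspace of smooth sections and hence everywhere. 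In particular each $\ii_{\tau,\Gamma}$ is injective.

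For part~(2), injectivity of $\underline{\ii}_\Gamma$ follows from part~(1): if $\sum_\tau \ii_{\tau,\Gamma}(\varphi_\tau)=0$ with only finitely many $\varphi_\tau\neq 0$, apply $\pp_{\sigma,\Gamma}$ for a fixed $\sigma$; since $\pp_{\sigma,\Gamma}\circ\ii_{\tau,\Gamma}=0$ for $\tau\neq\sigma$ (again by Schur orthogonality of inequivalent representations) and $=\mathrm{id}$ for $\tau=\sigma$, we get $\varphi_\sigma=0$. Continuity of $\underline{\ii}_\Gamma$ on the algebraic direct sum is immediate from continuity of each $\ii_{\tau,\Gamma}$. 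The substantive point is density of the image, equivalently the approximation statement $f = \lim \sum_{\tau\in S}\ii_{\tau,\Gamma}\circ\pp_{\tau,\Gamma}(f)$ over finite $S\subset\Disc(L_K/L_H)$. For $\F=L^2$ this is exactly the unitary decomposition \eqref{eqn:XYL2}, which is the fiberwise version of \eqref{eqn:LKLH}: the operators $\ii_{\tau,\Gamma}\circ\pp_{\tau,\Gamma}$ of \eqref{eqn:itau-circ-ptau} are the orthogonal projections onto the $\tau$-isotypic pieces of the right $L_K$-action on $L^2(\Gamma\backslash L)$, restricted to the $L_H$-invariants, and they sum to the identity in the strong operator topology by the Peter--Weyl theorem applied to the compact fiber. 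For $\F=C^\infty$ and $\F=\A$, density follows because the partial sums $\sum_{\tau\in S}\ii_{\tau,\Gamma}\circ\pp_{\tau,\Gamma}(f)$ are the $L_K$-finite truncations of $f$ viewed in $C^\infty(\Gamma\backslash L)^{L_H}$ (resp.\ $\A(\Gamma\backslash L)^{L_H}$), and $L_K$-finite vectors are dense in the smooth (resp.\ analytic) vectors of a representation of the compact group~$L_K$ in the respective topology. For $\F=\DD'$, density is obtained by duality from the $C^\infty$ (or $\A$) case together with \eqref{eqn:piadjoint}: if a distribution $u\in\DD'(X_\Gamma)$ annihilates every $\ii_{\tau,\Gamma}((V_\tau^\vee)^{L_H}\otimes C^\infty(Y_\Gamma,\V_\tau))$, then by \eqref{eqn:piadjoint} all its isotypic components $\pp_{\tau,\Gamma}(u)$ vanish, whence $u=0$ by the already-established density in the predual; since $\DD'(X_\Gamma)$ with the topology of Remark~\ref{rem:top-distrib} is a complete locally convex space in which closed subspaces are weakly closed, this forces the image to be dense. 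The main obstacle is the $\DD'$ case: one must be slightly careful that ``$\pp_{\tau,\Gamma}(u)=0$ for all $\tau$ implies $u=0$'' really does follow from density in $C^\infty_c(X_\Gamma)$, which it does because $C^\infty_c$ is contained in the closure of $\bigoplus_\tau \ii_{\tau,\Gamma}((V_\tau^\vee)^{L_H}\otimes C^\infty_c(Y_\Gamma,\V_\tau))$ by the same compact-fiber Peter--Weyl argument with uniform control of supports, the fiber $F=L_K/L_H$ being compact.
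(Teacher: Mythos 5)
Your proof is correct in outline and rests on the same fundamental input as the paper's, namely the Peter--Weyl theorem applied to the compact fiber $F=L_K/L_H$, but it organizes the lift from the fiber to the total space quite differently. The paper first proves a purely compact-fiber version of the statement, by introducing maps $\ii_\tau^0 : (V_\tau^\vee)^{L_H}\otimes V_\tau \hookrightarrow \F(L_K/L_H)$ and $\pp_\tau^0$ and applying Peter--Weyl there, and then transports the whole statement at once to $X_\Gamma$ via the natural isomorphism $\F(M,\F(N))\simeq\F(M\times N)$ (with $\DD'(M,\DD'(N))$ defined as the dual of $C_c^\infty(M,C_c^\infty(N))$), restricted to $L_K$-invariants for $M=\Gamma\backslash L$, $N=L_K/L_H$. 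A single commutative diagram then yields the approximation $f=\lim\sum_\tau\ii_{\tau,\Gamma}\circ\pp_{\tau,\Gamma}(f)$ for \emph{all} four function spaces simultaneously. You instead argue case by case: orthogonal projections for $L^2$, $L_K$-finite truncation for $C^\infty$ and $\A$, and duality for $\DD'$. Both routes are valid; yours is more concrete in each case, the paper's is cleaner because one never has to re-examine the nature of convergence for each $\F$. The one place where your argument is a bit delicate is the $\DD'$ case: the Hahn--Banach/duality argument you give establishes \emph{density of the image} of $\underline{\ii}_\Gamma$ in $\DD'(X_\Gamma)$, but the ``more precise'' claim of the lemma --- that the \emph{specific partial sums} $\sum_{\tau\in S}\ii_{\tau,\Gamma}\circ\pp_{\tau,\Gamma}(f)$ converge to $f$ in the strong topology of $\DD'(X_\Gamma)$ --- needs a further word, e.g.\ that the adjoint truncations $P_S^*$ converge to the identity uniformly on bounded subsets of $C_c^\infty(X_\Gamma)$ (which uses compactness of the fiber and that $C_c^\infty(X_\Gamma)$ is a Montel LF-space so bounded sets are relatively compact). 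The paper sidesteps this by proving the partial-sum approximation directly on $\F(L_K/L_H)$ and letting the isomorphism $\F(M,\F(N))\simeq\F(M\times N)$ transport it. Part~(1) of your argument matches the paper's (Schur orthogonality), and your remark that injectivity of $\underline{\ii}_\Gamma$ follows from $\pp_{\sigma,\Gamma}\circ\ii_{\tau,\Gamma}=\delta_{\sigma\tau}\,\mathrm{id}$ is a fine way to phrase it, consistent with what the paper leaves implicit.
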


We shall simply write the convergence in Lemma~\ref{lem:p-tau-i-tau}.(2) as
\begin{equation} \label{eqn:f=sum_ipf}
f = \sum_{\tau\in\Disc(L_K/L_H)} (\ii_{\tau,\Gamma}\circ\pp_{\tau,\Gamma})(f)
\quad \textrm{in}\  \F(X_\Gamma).
\end{equation}

\begin{proof}[Proof of Lemma~\ref{lem:p-tau-i-tau}]
(1) This follows from the Schur orthogonality relation for the compact group~$L_K$.

(2)  We start by reviewing the  Peter--Weyl theorem for the compact Lie group~$L_K$.
For any $\tau\in\widehat{L_K}$, we define two continuous linear maps
$$\left\{
\begin{array}{ccl}
\ii_{\tau}^{e} & : & V_{\tau}^{\vee} \otimes V_{\tau} \longhookrightarrow \F(L_K),\\
\pp_{\tau}^{e} & : & \F(L_K) \longtwoheadrightarrow V_{\tau}^{\vee} \otimes V_{\tau}
\end{array}
\right.$$
as follows: $\ii_{\tau}^e$ is the map sending any $v'\otimes v\in V_{\tau}^{\vee} \otimes V_{\tau}$ to the matrix coefficient
$$\left(k \longmapsto \langle v',\tau(k^{-1})v\rangle \right) \in \F(L_K),$$
and $\pp_{\tau}^e$ is the map sending any $\psi\in\F(L_K)$ to
$$\bigg(v \longmapsto (\dim\tau) \, \int_{L_K} \psi(k^{-1}) \, \tau(k) v \, \dd k\bigg) \in \End(V_{\tau}) \simeq V_{\tau}^{\vee} \otimes V_{\tau}.$$
Then $\ii_{\tau}^e$ takes values in $\A(L_K)$.
The composition $\pp_{\tau}^e\circ\ii_{\tau}^e$ is the identity on $V_{\tau}^{\vee}\otimes V_{\tau}$.
For any $\psi\in\F(L_K)$ and any increasing exhaustive sequence $((\widehat{L_K})_j)_{j\in\N}$ of finite subsets of $\widehat{L_K}$, the finite sum $\sum_{\tau \in (\widehat{L_K})_j} (\ii_{\tau}^e\circ\pp_{\tau}^e)(\psi)$ converges to $\psi$ in $\F(L_K)$ with respect to each topology for $\F=\A$, $C^{\infty}$, $L^2$, or~$\DD'$ as $j\to +\infty$, see \eg \cite[(2.3.1)--(2.3.4)]{kob98b}; this convergence can be written as\footnote{F 10/6: Kept the numbered equation, but slightly rephrased the sentence (I would prefer to avoid ``We shall write it as'' if we never write it again in the rest of the paper). Is this fine with you?}
\begin{equation}\label{Peter-Weyl-convergence}
\sum_{\tau \in \widehat{L_K}} (\ii_{\tau}^e\circ\pp_{\tau}^e)(\psi) = \psi \quad \textrm{in} \ \F(L_K).
\end{equation}

Now consider the closed subgroup $L_H$ of~$L_K$. 
By taking the projection to the space of right-$L_H$-invariant vectors via the integration over the compact subgroup $L_H$, we see that the algebraic direct sum
$$\bigoplus_{\tau\in\Disc(L_K/L_H)} (V_{\tau}^{\vee})^{L_H} \otimes V_{\tau}$$
is contained in $\A(L_K/L_H)\simeq\A(L_K)^{L_H}$ and dense in $\F(L_K/L_H)\simeq\F(L_K)^{L_H}$.
Explicitly, for any $\tau\in\Disc(L_K/L_H)$, the above maps $\ii_{\tau}^e$ and $\pp_{\tau}^e$ induce continuous linear maps
$$\left\{
\begin{array}{ccl}
\ii_{\tau}^0 & : & (V_{\tau}^{\vee})^{L_H} \otimes V_{\tau} \longhookrightarrow \F(L_K/L_H),\\
\pp_{\tau}^0 & : & \F(L_K/L_H) \longtwoheadrightarrow (V_{\tau}^{\vee})^{L_H} \otimes V_{\tau}.
\end{array}
\right.$$
Then $\ii_{\tau}^0$ takes values in $\A(L_K/L_H)$.
The composition $\pp_{\tau}^0\circ\ii_{\tau}^0$ is the identity on $(V_{\tau}^{\vee})^{L_H}\otimes V_{\tau}$, and any $\psi\in\F(L_K/L_H)$ can be approximated by finite sums of the form $\sum_{\tau} (\ii_{\tau}^0\circ\pp_{\tau}^0)(\psi)$ for $\F=\A$, $C^{\infty}$, $L^2$, or~$\DD'$.

The maps
$$\left\{
\begin{array}{lcl}
\mathrm{id}\otimes\ii_{\tau}^0 & : & \F(\Gamma\backslash L) \otimes (V_{\tau}^{\vee})^{L_H} \otimes V_{\tau} \longrightarrow \F\big(\Gamma\backslash L) \otimes \F(L_K/L_H),\\
\mathrm{id}\otimes\pp_{\tau}^0 & : & \F\big(\Gamma\backslash L) \otimes \F(L_K/L_H) \longrightarrow \F(\Gamma\backslash L) \otimes (V_{\tau}^{\vee})^{L_H} \otimes V_{\tau}
\end{array}
\right.$$
induce continuous linear maps
$$\left\{
\begin{array}{lcl}
\mathrm{id}\otimes\ii_{\tau}^0 & : & (V_{\tau}^{\vee})^{L_H} \otimes \F(\Gamma\backslash L, V_{\tau}) \longrightarrow \F\big(\Gamma\backslash L, \F(L_K/L_H)\big),\\
\mathrm{id}\otimes\pp_{\tau}^0 & : & \F\big(\Gamma\backslash L, \F(L_K/L_H)\big) \longrightarrow (V_{\tau}^{\vee})^{L_H} \otimes \F(\Gamma\backslash L, V_{\tau}),
\end{array}
\right.$$
for which we use the same notation.
For a direct product of two real analytic manifolds $M$ and~$N$ with Radon measures, there is a natural isomorphism
$$\F(M,\F(N)) \simeq \F(M\times N),$$
where $\DD'(M,\DD'(N))$ is defined to be the dual space of $C_c^{\infty}(M,C_c^{\infty}(N))$.
In particular, if the compact Lie group $L_K$ acts on $M$ and~$N$, and if one of the actions is free, then we obtain a natural isomorphism $\F(M,\F(N))^{L_K}\simeq\F(M\times_{L_K} N)$.
Applying this observation to $M=\Gamma\backslash L$ and $N=L_K/L_H$, we see that the following diagram commutes in restriction to the $\ii_{\tau}$ arrows or to the $\pp_{\tau}$ arrows:
$$\xymatrixcolsep{-4pc}
\hspace{-0.3cm}\xymatrix{
\F(X_{\Gamma}) \simeq \F\big(\Gamma\backslash L, \F(L_K/L_H)\big)^{L_K} \ar@{->>}@<-7ex>[d]^{\pp_{\tau,\Gamma}} \ar@{}[r]|{\hspace{1cm}\textstyle\subset} & \F\big(\Gamma\backslash L, \F(L_K/L_H)\big) \ar@<1ex>[d]^{\mathrm{id}\otimes\pp_{\tau}^0}\\
(V_{\tau}^{\vee})^{L_H} \otimes \F(Y_{\Gamma},\V_{\tau}) \simeq (V_{\tau}^{\vee})^{L_H} \otimes \F\big(\Gamma\backslash L, V_{\tau}\big)^{L_K} \hspace{1.9cm} \ar@{^{(}->}@<9ex>[u]^{\ii_{\tau,\Gamma}} \ar@{}[r]|{\hspace{1cm}\textstyle\subset} & (V_{\tau}^{\vee})^{L_H} \otimes \F\big(\Gamma\backslash L, V_{\tau}\big) \ar@<1ex>[u]^{\mathrm{id}\otimes\ii_{\tau}^0}
}$$
Since any $\psi\in\F(L_K/L_H)$ can be approximated\footnote{F 8/6: Should we change this formulation into something similar to the ``More precisely'' in the statement of Lemma~\ref{lem:p-tau-i-tau}?} by finite sums of the form $\sum_{\tau} (\ii_{\tau}^0\circ\pp_{\tau}^0)(\psi)$, we obtain that any $f\in\F(X_{\Gamma})$ can be approximated\footnote{F 8/6: Should we change this formulation into something similar to the ``More precisely'' in the statement of Lemma~\ref{lem:p-tau-i-tau}?} by finite sums of the form $\sum_{\tau} (\ii_{\tau,\Gamma}\circ\pp_{\tau,\Gamma})(f)$.
In particular, the map $\underline{\ii}_{\Gamma}=\bigoplus_{\tau}\,\ii_{\tau,\Gamma}$ has dense image in $\F(X_{\Gamma})$.
\end{proof}

\begin{remark} \label{rem:L_K-bundle}
The pairs of maps $(\ii_{\tau,\Gamma},\pp_{\tau,\Gamma})$ and $(\ii_{\tau}^0,\pp_{\tau}^0)$ from the proof of Lemma~\ref{lem:p-tau-i-tau} are part of a more general construction.
Namely, let $Q$ be a manifold endowed with a free action of a compact group~$L_K$.
For any closed subgroup $L_H$ of~$L_K$, we have a fibration $Q/L_H\to Q/L_K$ with compact fiber $L_K/L_H$, and for any $\tau\in\Disc(L_K/L_H)$ and $\F=\A$, $C^{\infty}$, or~$\DD'$ we obtain natural continuous linear maps
$$\left\{ \begin{array}{lcl}
\ii_{\tau}^Q & : & (V_{\tau}^{\vee})^{L_H} \otimes \F(Q/L_K,\V_{\tau}) \longrightarrow \F(Q/L_H),\\
\pp_{\tau}^Q & : & \F(Q/L_H) \longrightarrow (V_{\tau}^{\vee})^{L_H} \otimes \F(Q/L_K,\V_{\tau}),
\end{array}\right.$$
with $\ii_{\tau}^Q$ injective and $\pp_{\tau}^Q$ surjective.
The maps $(\ii_{\tau,\Gamma},\pp_{\tau,\Gamma})$ and $(\ii_{\tau}^0,\pp_{\tau}^0)$ correspond to $Q=\Gamma\backslash L$ and $Q=L_K$, respectively.
\end{remark}

Recall that by Schur's lemma, the center $Z(\llll_{\C}\cap\kk_{\C})$ acts on the representation space of any irreducible representation $\tau$ of~$L_K$ as scalars, yielding a $\C$-algebra homomorphism $\Psi_{\tau} : Z(\llll_{\C}\cap\kk_{\C})\to\C$.

\begin{lemma} \label{lem:Im-i-tau}
In the general setting~\ref{gen-setting}, let $\Gamma$ be a torsion-free discrete subgroup of~$L$.
Let $\F=\A$, $C^{\infty}$, $L^2$, or~$\DD'$.
For any $\tau\in\Disc(L_K/L_H)$, the image of~$\ii_{\tau,\Gamma}$ is characterized by a system of differential equations as follows:
$$\mathrm{Image}\,(\ii_{\tau,\Gamma}) = \big\{ f\in\F(X_{\Gamma}) : \dd r(z)f = \Psi_{\tau^{\vee}}(z)f \quad\text{for all }z\in Z(\llll_{\C}\cap\kk_{\C})\big\}.$$
\end{lemma}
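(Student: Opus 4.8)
The statement identifies the image of $\ii_{\tau,\Gamma}$ inside $\F(X_\Gamma)$ with the joint eigenspace of $\dd r(Z(\llll_\C\cap\kk_\C))$ for the eigencharacter $\Psi_{\tau^\vee}$. The plan is to reduce everything to a statement about functions on $\Gamma\backslash L$, using the identifications $\F(X_\Gamma)\simeq\F(\Gamma\backslash L)^{L_H}$ and $\F(Y_\Gamma,\V_\tau)\simeq\big((V_\tau^\vee)^{L_H}\otimes\F(\Gamma\backslash L,V_\tau)\big)^{L_K}$ recalled in Section~\ref{subsec:L2decomp}, and then to invoke the isotypic decomposition \eqref{eqn:LKLH} of $L^2(L_K/L_H)$ together with the compatibility diagram established in the proof of Lemma~\ref{lem:p-tau-i-tau}. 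First I would note that $\dd r(Z(\llll_\C\cap\kk_\C))$ acts only along the compact fiber $F=L_K/L_H$: concretely, for $z\in Z(\llll_\C\cap\kk_\C)$ the operator $\dd r(z)$ on $\F(\Gamma\backslash L)$ commutes with the left $L$-action (hence descends to $\Gamma\backslash L$) and acts through the right regular representation of $L_K$ on the fiber. Since $Z(\llll_\C\cap\kk_\C)$ lies in the center of $U(\llll_\C\cap\kk_\C)$ and $L_K$ is connected, Schur's lemma gives that $\dd r(z)$ acts by the scalar $\Psi_{\tau^\vee}(z)$ precisely on the $\tau^\vee$-isotypic component of the right regular representation of $L_K$ on $\F(\Gamma\backslash L)$ (the appearance of $\tau^\vee$ rather than $\tau$ being a bookkeeping point coming from the conventions in \eqref{eqn:i-tau} and \eqref{eqn:p-tau}, where $\F(Y_\Gamma,\V_\tau)$ sits inside $\F(\Gamma\backslash L)\otimes V_\tau$).

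Next I would prove the two inclusions. For $\subset$: given $\varphi\in(V_\tau^\vee)^{L_H}\otimes\F(Y_\Gamma,\V_\tau)$, the element $\ii_{\tau,\Gamma}(\varphi)\in\F(\Gamma\backslash L)^{L_H}$ is, by construction, of the form $\langle\varphi,v_\tau\rangle$ where $\varphi$ is seen as an $L_K$-invariant vector in $\F(\Gamma\backslash L)\otimes V_\tau$; the right $L_K$-translates of such a function span (a quotient of) a copy of $V_\tau^\vee$, so it lies in the $\tau^\vee$-isotypic component, and by the previous paragraph $\dd r(z)\ii_{\tau,\Gamma}(\varphi)=\Psi_{\tau^\vee}(z)\,\ii_{\tau,\Gamma}(\varphi)$ for all $z\in Z(\llll_\C\cap\kk_\C)$. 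Alternatively, and perhaps more cleanly, one invokes Remark~\ref{rem:D-tau} / Definition-Proposition~\ref{def-prop:D-tau}: $\dd r(z)$ induces an operator $\dd r(z)^\tau$ on sections over $Y$ with $\dd r(z)(\ii_\tau(\psi))=\ii_\tau(\dd r(z)^\tau\psi)$, and this induced operator is multiplication by the scalar $\Psi_{\tau^\vee}(z)$ because $Z(\llll_\C\cap\kk_\C)$ acts on $(\tau,V_\tau)$ (hence on $(\tau^\vee,V_\tau^\vee)$) by scalars; passing to the $\Gamma$-quotient this gives the inclusion. For $\supset$: suppose $f\in\F(X_\Gamma)$ satisfies $\dd r(z)f=\Psi_{\tau^\vee}(z)f$ for all $z$. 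By Lemma~\ref{lem:p-tau-i-tau}.(2), $f$ is the limit of finite sums $\sum_{\sigma}\ii_{\sigma,\Gamma}\circ\pp_{\sigma,\Gamma}(f)$ over $\sigma\in\Disc(L_K/L_H)$, and the maps $\ii_{\sigma,\Gamma}\circ\pp_{\sigma,\Gamma}$ are the projectors onto the $\sigma^\vee$-isotypic summands in the fiber direction. The differential equation $\dd r(z)f=\Psi_{\tau^\vee}(z)f$, combined with the fact that the scalars $\Psi_{\sigma^\vee}$ separate the distinct $\sigma$ (this is a standard consequence of the Harish-Chandra isomorphism for the compact group $L_K$, since the infinitesimal characters of non-isomorphic $L_K$-types are distinct), forces $\pp_{\sigma,\Gamma}(f)=0$ for all $\sigma\neq\tau$; hence $f=\ii_{\tau,\Gamma}\circ\pp_{\tau,\Gamma}(f)$ lies in $\mathrm{Image}(\ii_{\tau,\Gamma})$.

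The subtle point—the main obstacle—is the last step: one must check that the eigenvalue condition for the \emph{central} subalgebra $Z(\llll_\C\cap\kk_\C)$, rather than for all of $\dd r(U(\llll_\C\cap\kk_\C)^{L_H})=\D_{L_K}(F)$, already suffices to isolate a single $L_K$-type. This is not automatic: $Z(\llll_\C\cap\kk_\C)$ could in principle fail to separate two $L_K$-types with the same infinitesimal character. However, by the Cartan--Helgason theorem (or its variant, cited here after Corollary~\ref{cor:L2d(XGamma,Mlambda)-orth}) the types $\tau\in\Disc(L_K/L_H)$ that actually occur in $L^2(L_K/L_H)$ form a ``small'' family—under the $L_\C$-sphericity hypothesis they have one-dimensional $L_H$-fixed spaces and are parametrized by a cone of dominant weights on which the infinitesimal character is injective—so within $\Disc(L_K/L_H)$ the central characters $\Psi_{\sigma^\vee}$ do separate points. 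I would make this precise by checking directly that the restriction of the Harish-Chandra homomorphism to $Z(\llll_\C\cap\kk_\C)$ is injective on the relevant highest weights; this is where the argument is least formal, but it follows from the explicit parametrizations already used elsewhere in the paper. With that in hand, the proof goes through for all four choices $\F=\A,C^\infty,L^2,\DD'$ uniformly, since Lemma~\ref{lem:p-tau-i-tau} and the projector description \eqref{eqn:itau-circ-ptau} hold in each topology.
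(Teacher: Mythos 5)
Your proof takes essentially the same route as the paper's: the $\subset$ inclusion via the observation that $\dd r(z)$ acts by the scalar $\Psi_{\tau^\vee}(z)$ on the $\tau$-isotypic piece, and the $\supset$ inclusion via the density decomposition of Lemma~\ref{lem:p-tau-i-tau}.(2) together with separation of eigencharacters. But you have misidentified what makes the separation step work, and your proposed fix would not close the gap in the generality of the statement.

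You write that the separation of the characters $\Psi_{\sigma^\vee}$ for distinct $\sigma\in\Disc(L_K/L_H)$ is ``not automatic'' and propose to derive it from the Cartan--Helgason parametrization under the $L_\C$-sphericity hypothesis. Two problems. First, Lemma~\ref{lem:Im-i-tau} is stated in the general setting~\ref{gen-setting}, where $X_\C$ is \emph{not} assumed $L_\C$-spherical; so an argument that leans on sphericity (one-dimensional $L_H$-fixed spaces, a particular cone of highest weights) does not prove the lemma as stated. Second, and more to the point, the separation \emph{is} automatic: $L_K$ is a maximal compact subgroup of the connected group $L$, hence connected, and for a connected compact Lie group every irreducible representation is uniquely determined by its infinitesimal character (highest-weight theory plus the Harish-Chandra isomorphism: $\lambda+\rho$ is strictly dominant, so distinct dominant integral $\lambda$ give distinct Weyl orbits of $\lambda+\rho$; the abelian part of $\llll\cap\kk$ is likewise seen through $Z(\llll_\C\cap\kk_\C)$). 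This holds for all $\tau\in\widehat{L_K}$, with no restriction to $\Disc(L_K/L_H)$ and no sphericity. The paper invokes exactly this—``$L_K$ is connected, therefore $\tau\in\widehat{L_K}$ is uniquely determined by $\Psi_\tau$''—and that is the clean and correct input. Your worry would be genuine if $L_K$ were disconnected (e.g.\ $O(2)$), which is why the connectedness convention of setting~\ref{gen-setting} (see also Remark~\ref{rem:connected}) matters here. Replace your Cartan--Helgason detour with this observation and the proof is complete in the stated generality.
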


\begin{proof}
Let $z\in Z(\llll_{\C}\cap\kk_{\C})$.
Then $\dd r(z)$ acts on the subspace $(V_{\tau}^{\vee})^{L_H}\otimes V_{\tau}$ of $\A(L_K/L_H)$, see \eqref{eqn:LKLH}, by the scalar $\Psi_{\tau^{\vee}}(z)$, hence the inclusion $\subset$ holds.

To prove the opposite inclusion, we observe that $L_K$ is connected.
Therefore $\tau\in\widehat{L_K}$ is uniquely determined by its infinitesimal character~$\Psi_{\tau}$.
Then the inclusion $\supset$ follows from Lemma~\ref{lem:p-tau-i-tau}.(2).
\end{proof}

We now consider the following condition for a differential operator $D$ on~$X$:
\begin{equation} \label{eqn:D-commute-dr}
D \circ \dd r(z) = \dd r(z) \circ D \quad\quad \text{for all }z\in Z(\llll_{\C}\cap\kk_{\C}).
\end{equation}

\begin{definition-proposition}[Operators $D^{\tau}$ and $D^{\tau}_{\Gamma}$] \label{def-prop:D-tau}
In the general setting~\ref{gen-setting}, let $\tau\in\Disc(L_K/L_H)$.
\begin{enumerate}
  \item Any differential operator $D$ on~$X$ satisfying \eqref{eqn:D-commute-dr} induces a matrix-valued differential operator $D^{\tau}$ acting on $(V_{\tau}^{\vee})^{L_H}\otimes C^{\infty}(Y,\V_{\tau})$ such that $D(\ii_{\tau}(\varphi)) = \ii_{\tau}(D^{\tau}\varphi)$ for all $\varphi\in (V_{\tau}^{\vee})^{L_H}\otimes C^{\infty}(Y,\V_{\tau})$, as in~\eqref{eqn:equiviota}.
  \item If $D$ is $L$-invariant, then $D^{\tau}$ induces a differential operator $D^{\tau}_{\Gamma}$ acting on $(V_{\tau}^{\vee})^{L_H}\otimes C^{\infty}(Y_{\Gamma},\V_{\tau})$ for any torsion-free subgroup $\Gamma$ of~$L$.
  \item If $X_{\C}$ is $L_{\C}$-spherical, then any $D\in\D_L(X)$ satisfies \eqref{eqn:D-commute-dr} and we obtain a $\C$-algebra homomorphism
  \begin{eqnarray*}
  \D_L(X) & \longrightarrow & \D_L(Y_{\Gamma},\V_{\tau}).\\
  D & \longmapsto & D^{\tau}_{\Gamma}
  \end{eqnarray*}
\end{enumerate}
\end{definition-proposition}

\begin{proof}
(1) By \eqref{eqn:D-commute-dr}, the differential operator $D$ preserves the image of~$\ii_{\tau}$, which is described in Lemma~\ref{lem:Im-i-tau}.
Therefore $D$ induces an endomorphism $D^{\tau}$ of $(V_{\tau}^{\vee})^{L_H}\otimes C^{\infty}(Y,\V_{\tau})$, because $\ii_{\tau}$ is injective.
Since $D^{\tau}$ does not increase the support, it is a differential operator.

(2) Clear as in \eqref{eqn:D-gamma}.

(3) If $X_{\C}$ is $L_{\C}$-spherical, then the $\C$-algebra $\D_L(X)$ is commutative (Fact~\ref{fact:spherical}), hence \eqref{eqn:D-commute-dr} holds for any $D\in\D_L(X)$.
Moreover, in that setting $(V_{\tau}^{\vee})^{L_H}$ is one-dimensional (see Section~\ref{subsec:bundle}), hence (3) follows from~(2).
\end{proof}

\begin{example} \label{ex:D-tau-Z-l}
If $D=\dd\ell(z)$ for $z\in Z(\llll_{\C})$, then $D^{\tau}$ is a differential operator of the form $D^{\tau} = \mathrm{id}\otimes\dd\ell^{\tau}(z)$ where $\dd\ell^{\tau} : Z(\llll_{\C})\to\D_L(Y,\V_{\tau})$ is a homomorphism as in Section~\ref{subsec:bundle}.
\end{example}

\begin{example} \label{ex:D-tau-Z-r}
If $D=\dd r(z)$ for $z\in Z(\llll_{\C}\cap\kk_{\C})$, then the differential operator $D_{\Gamma}^{\tau}$ acts on $\ii_{\tau,\Gamma}((V_{\tau}^{\vee})^{L_H}\otimes L^2(Y_{\Gamma},\V_{\tau}))$ as the scalar $\Psi_{\tau^{\vee}}(z)$, which is independent of~$\Gamma$.
Indeed, by Schur's lemma, $D$ acts on~$V_{\tau}$ (seen as a subspace of $\A(L_K/L_H)$ by \eqref{eqn:LKLH}) as the scalar $\Psi_{\tau^{\vee}}(z)$, and $D_{\Gamma}^{\tau}$ acts on $\ii_{\tau,\Gamma}((V_{\tau}^{\vee})^{L_H}\otimes L^2(Y_{\Gamma},\V_{\tau}))$ by the same scalar.
\end{example}

\section[Proof of Proposition~6.1]{Proof of Proposition~\ref{prop:relationsLapl}} \label{subsec:main-thm-Laplacian}


Suppose that \eqref{eqn:rel-Lapl} holds for some nonzero $a\in\R$, \ie there exists $z\in Z(\llll_{\C}\cap\kk_{\C})$ such that $\square_X=a\,\dd\ell(C_L)+\dd r(z)$.

Let $\Gamma$ be a torsion-free discrete subgroup of~$L$.
For any $\tau\in\Disc(L_K/L_H)$, the image $\dd\ell(C_L)$ of the Casimir element $C_L\in Z(\llll_{\C})$ induces an elliptic differential operator $\dd\ell(C_L)_{\Gamma}^{\tau}$ on $C^{\infty}(Y_{\Gamma},\V_{\tau})$, and also on $(V_{\tau}^{\vee})^{L_H}\otimes C^{\infty}(Y_{\Gamma},\V_{\tau})$ (see Section~\ref{subsec:L2decomp}).
When $\tau$ is the trivial one-dimensional representation of~$L_K$, the space $C^{\infty}(Y_{\Gamma},\V_{\tau})$ identifies with $C^{\infty}(Y_{\Gamma})$ (Example~\ref{ex:trivial-tau}) and $\dd\ell(C_L)_{\Gamma}^{\tau}$ is the usual Laplacian $\Delta_{Y_{\Gamma}}$ on the Riemannian locally symmetric space~$Y_\Gamma$.

We endow the principal bundle $L\to Y=L/L_K$ with the canonical connection, which induces a connection on the principal $L_H$-bundle $\Gamma\backslash L\to Y_{\Gamma}$.
Then the curvature tensor $\Omega^{\tau}$ on the associated bundle $\V_{\tau}\to Y_{\Gamma}$ is an $\curlyEnd(\V_{\tau})$-valued $2$-form on~$Y_{\Gamma}$ given by
$$\Omega^{\tau}\big(\dd\ell(g^{-1})u,\dd\ell(g^{-1})v\big) = \dd\tau([u,v])$$
for $u,v\in\llll\cap\p$, where $\llll=\llll\cap\kk+\llll\cap\p$ is the Cartan decomposition of the Lie algebra~$\llll$, and $\dd\ell(g^{-1})$ identifies the tangent space $T_{\Gamma gL_K}Y_{\Gamma}$ with $\llll\cap\p$ for every $g\in L$.
In particular, the operator norm of the curvature tensor $\Omega^{\tau}$ is bounded on~$Y_{\Gamma}$.
Then a similar argument to \cite{gaf54,wol7273} applied to Clifford bundles implies that $\dd\ell(C_L)_{\Gamma}^{\tau}$ on $C_c^{\infty}(Y_{\Gamma},\V_{\tau})$ extends uniquely to a self-adjoint operator on $L^2(Y_{\Gamma},\V_{\tau})$ (see \cite{ric86}).

On the other hand, the element $z\in Z(\llll_{\C}\cap\kk_{\C})$ acts on $(V_{\tau}^{\vee})^{L_H}\otimes L^2(Y_{\Gamma},\V_{\tau})$ via $\dd r$ as the scalar $\Psi_{\tau^{\vee}}(z)$ (see Example~\ref{ex:D-tau-Z-r}), and so by \eqref{eqn:rel-Lapl} we may write
\begin{equation}\label{eqn:action-lapl-decomp}
\square_{X_{\Gamma}} \circ \ii_{\tau,\Gamma} = a\,\dd\ell(C_L)_{\Gamma}^{\tau} + c(\tau)
\end{equation}
on $(V_{\tau}^{\vee})^{L_H}\otimes L^2(Y_{\Gamma},\V_{\tau})$, where $c(\tau):=\Psi_{\tau^{\vee}}(z)$.

Since $\square_{X_{\Gamma}}$\,and $\dd\ell(C_L)_{\Gamma}^{\tau}$ are symmetric operators on $L^2(X_{\Gamma})$\,and\,$(V_{\tau}^{\vee})^{L_H}\otimes L^2(Y_{\Gamma},\V_{\tau})$ respectively, and since $a\in\R$, the function $c : \Disc(L_K/L_H)\rightarrow\C$ in \eqref{eqn:action-lapl-decomp} actually takes values in~$\R$.
Therefore the Laplacian~$\square_{X_{\Gamma}}$ has a self-adjoint extension on $L^2(X_{\Gamma})$, with domain equal to the image, under the unitary operator $\ii$ of \eqref{eqn:XYL2}, of the set of $\sum_{\tau\in\Disc(L_K/L_H)} \varphi_{\tau}$ with
$$\sum_{\tau} \Vert \varphi_{\tau}\Vert^2 < +\infty \quad\mathrm{and}\quad \sum_{\tau} \big\Vert a\,\dd\ell(C_L)_{\Gamma}^{\tau}\,\varphi_{\tau} + c(\tau)\varphi_{\tau}\big\Vert^2 < +\infty.$$
This proves Proposition~\ref{prop:relationsLapl}.(1).

For any $\lambda\in\Spec_d(X_{\Gamma})$, let $t_{\lambda}\in\C$ be the corresponding eigenvalue of~$\square_{X_{\Gamma}}$.
(See Remark~\ref{rem:eigenv-Lapl} for an explicit formula for $\lambda\mapsto t_{\lambda}$ when $X=G/H$ is a reductive symmetric space.)
By \eqref{eqn:equiviota}, the Laplacian~$\square_{X_{\Gamma}}$ commutes with the projection operator $\pp_{\tau} : L^2(X_{\Gamma})\rightarrow (V_{\tau}^{\vee})^{L_H}\otimes L^2(Y_{\Gamma},\V_{\tau})$ for any $\tau\in\Disc(L_K/L_H)$.
Therefore, by \eqref{eqn:action-lapl-decomp} we have the following direct sum decomposition of the $L^2$-eigenspace $\operatorname{Ker}(\square_{X_{\Gamma}}-t_{\lambda})$ as a Hilbert space:
$$\operatorname{Ker}(\square_{X_{\Gamma}}-t_{\lambda}) \,\simeq\ \ \sumplus{\tau\in\Disc(L_K/L_H)}\ \ii_{\tau,\Gamma}\big(a\operatorname{Ker}\big(\dd\ell(C_L)_{\Gamma}^{\tau} - t_{\lambda} + c(\tau)\big)\big).$$
The right-hand side contains the algebraic sum
$$\bigoplus_{\tau\in\Disc(L_K/L_H)}\, \ii_{\tau,\Gamma}\big(\operatorname{Ker}\big(a\,\dd\ell(C_L)_{\Gamma}^{\tau} - t_{\lambda} + c(\tau)\big)\big)$$
as a dense subspace.
Since $a\neq 0$, the differential operator $a\,\dd\ell(C_L)_{\Gamma}^{\tau} - t_{\lambda} + c(\tau)$ is nonzero and elliptic.
By the elliptic regularity theorem, this subspace consists of analytic functions.
Therefore, $L^2(X_{\Gamma};\M_{\lambda})$ contains real analytic eigenfunctions as a dense subset for any $\lambda\in\Spec_d(X_{\Gamma})$, proving Proposition~\ref{prop:relationsLapl}.(2).

Finally, \eqref{eqn:action-lapl-decomp} implies that $\ii_{\tau,\Gamma}(\varphi_{\tau})$ is an eigenfunction of~$\square_{X_{\Gamma}}$ for any eigenfunction $\varphi_{\tau}$ of $\dd\ell(C_L)_{\Gamma}^{\tau}$.
In particular, taking $\tau$ to be the trivial one-dimensional representation of~$L_K$, for which $\dd\ell(C_L)_{\Gamma}^{\tau}$ is the usual Laplacian $\Delta_{Y_{\Gamma}}$ on the Riemannian locally symmetric space~$Y_{\Gamma}$, we find that $\square_{X_{\Gamma}}$ has an infinite discrete spectrum (\resp has continuous spectrum) as soon as $\Delta_{Y_{\Gamma}}$ does, proving Proposition~\ref{prop:relationsLapl}.(3).

\chapter{Transfer of Riemannian eigenfunctions and spectral decomposition} \label{sec:transfer-Gamma}

In this chapter, using Propositions \ref{prop:cond-Tf-A-B-satisfied} and~\ref{prop:cond-Tf-A-B-satisfied-GxG}, we complete the proof of Theorems \ref{thm:transfer} and~\ref{thm:transfer-GxG}, which describe joint eigenfunctions on the pseudo-Riemannian locally symmetric space~$X_{\Gamma}$ by means of the regular representation of the subgroup $L$ on $C^{\infty}(\Gamma\backslash L)$.
In particular, this establishes Theorems \ref{thm:analytic} and~\ref{thm:GxG}.(1).
We then prove Theorem~\ref{thm:Fourier} concerning the spectral decomposition of $L^2$-eigenfunctions on~$X_{\Gamma}$ with respect to the systems of differential equations for $\D_G(X)$.

\section{The transfer maps $\nnu$ and~$\llambda$ in the presence of a discrete group~$\Gamma$}

We work again in the general setting~\ref{gen-setting}.
Recall from \eqref{eqn:X-Gamma-Y-Gamma} that for any torsion-free discrete subgroup $\Gamma$ of~$L$, the quotient $X_{\Gamma}$ fibers over the Riemannian locally symmetric space $Y_{\Gamma}=\Gamma\backslash L/L_K$ with compact fiber $F=L_K/L_H$.
Since any $L$-invariant differential operator on~$X$ induces a differential operator on~$X_{\Gamma}$ via the covering map $X\to X_{\Gamma}$, and since condition (Tf) (Definition~\ref{def:cond-Tf}) is formulated in terms of two algebras of \emph{$L$-invariant} differential operators on~$X$ (namely $\dd\ell(Z(\llll_{\C}))$ and $\D_G(X)$), the following holds by definition of the transfer maps.

\begin{remark} \label{rem:specXYGamma}
In the general setting~\ref{gen-setting}, suppose condition~(Tf) is satisfied for the quadruple $(G,L,H,L_K)$, with transfer maps $\nnu$ and~$\llambda$.
Let $\F=\A$, $C^{\infty}$, $L^2$, or~$\DD'$, and let $\Gamma$ be a torsion-free discrete subgroup of~$L$.
Then
\begin{enumerate}
  \item for any $(\lambda,\tau)\in\Hom_{\C\text{-}\mathrm{alg}}(\D_G(X),\C)\times\Disc(L_K/L_H)$,
  $$\pp_{\tau,\Gamma}\big(\F(X_{\Gamma};\M_{\lambda})\big)\ \subset\ (V_{\tau}^{\vee})^{L_H}\otimes\F(Y_{\Gamma},\V_{\tau};\NN_{\nnu(\lambda,\tau)}) \,;$$
  \item for any $(\nu,\tau)\in\Hom_{\C\text{-}\mathrm{alg}}(Z(\llll_{\C})/\Ker(\dd\ell^{\tau}),\C)\times\Disc(L_K/L_H)$ with $\DD'(Y,\V_{\tau};\NN_{\nu})\neq\{0\}$,
  $$\ii_{\tau,\Gamma}\big((V_{\tau}^{\vee})^{L_H}\otimes\F(Y_{\Gamma},\V_{\tau};\NN_{\nu})\big)\ \subset\ \F(X_{\Gamma};\M_{\llambda(\nu,\tau)}).$$
\end{enumerate}
\end{remark}

In particular, the maps $\nnu$ and~$\llambda$ given by condition (Tf) can be used to transfer discrete spectrum from the Riemannian locally symmetric space $Y_{\Gamma}$ to the pseudo-Riemannian space~$X_{\Gamma}$.
Indeed, taking $\tau$ to be the trivial one-dimensional representation, there is a natural isomorphism of Hilbert spaces $L^2(Y_{\Gamma},\V_{\tau})\simeq L^2(Y_{\Gamma})$ (Example~\ref{ex:trivial-tau}), and we obtain an embedding
$$\Spec_d^{Z(\llll_{\C})}(Y_{\Gamma}) \ \longhookrightarrow\ \Spec_d(X_{\Gamma})$$
by Remarks \ref{rem:specXYGamma}.(2) and~\ref{rem:lambda-circ-nu}.(2), where $\Spec_d^{Z(\llll_{\C})}(Y_{\Gamma})$ is defined as in Notation~\ref{def:Ntype-I-II} with $\g$ replaced by~$\llll$.
We shall see (Theorem~\ref{thm:transfer-spec}.(3)) that if condition~(B) holds (Definition~\ref{def:cond-A-B}), then the image of this embedding is actually contained in $\Spec_d(X_{\Gamma})_{\II}$, which implies Theorem~\ref{thm:mainII} (see Chapter~\ref{sec:proof-mainII}).

\section[Proof of Theorems 2.3--2.4 and Corollary~2.5]{Proof of Theorems \ref{thm:transfer}--\ref{thm:transfer-GxG} (Transfer of Riemannian eigenfunctions) and Corollary~\ref{cor:L2d(XGamma,Mlambda)-orth}} \label{subsec:proof-transfer}


Theorems \ref{thm:transfer} and~\ref{thm:transfer-GxG} are refinements of Theorems \ref{thm:analytic} and \ref{thm:GxG}.(1).
They are consequences of Propositions \ref{prop:cond-Tf-A-B-satisfied} and~\ref{prop:cond-Tf-A-B-satisfied-GxG}, of Lemma~\ref{lem:p-tau-i-tau}, of Remark~\ref{rem:specXYGamma}, and of the following regularity property for vector-bundle-valued eigenfunctions on the Riemannian locally symmetric space~$Y_{\Gamma}$.

\begin{lemma} \label{lem:regY}
Let $Y=L/L_K$ be a Riemannian symmetric space, where $L$ is a real reductive Lie group and $L_K$ a maximal compact subgroup.
Let $\Gamma$ be a torsion-free discrete subgroup of~$L$.
For any $\nu\in\Hom_{\C\text{-}\mathrm{alg}}(Z(\llll_{\C}),\C)$ and $\tau\in\widehat{L_K}$,
$$L^2(Y_{\Gamma},\V_{\tau};\NN_{\nu}) \subset \DD'(Y_{\Gamma},\V_{\tau};\NN_{\nu}) = C^{\infty}(Y_{\Gamma},\V_{\tau};\NN_{\nu}) = \A(Y_{\Gamma},\V_{\tau};\NN_{\nu}).$$
\end{lemma}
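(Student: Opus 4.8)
The statement is purely local-to-$Y_\Gamma$ plus elliptic regularity. The plan is to show that the system $(\NN_\nu)$ of differential equations $\dd\ell^\tau(z)_\Gamma\varphi=\nu(z)\varphi$ (for $z\in Z(\llll_\C)$) acting on sections of $\V_\tau$ contains, among its equations, a single elliptic equation; then standard elliptic regularity upgrades distribution solutions first to smooth and then to real analytic sections, and square-integrable weak solutions are in particular distribution solutions, so they land in the same space. This is exactly the vector-bundle analogue of the elliptic-regularity discussion already used in Section~\ref{subsec:bundle} (``the system $(\NN_\nu)$ contains an elliptic differential equation when $Y_\Gamma$ is Riemannian'') and in Lemma~\ref{lem:spher-finite-length}.(1).

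\textbf{Key steps.} First I would recall that the Casimir element $C_L\in Z(\llll_\C)$ gives, via $\dd\ell^\tau$, a second-order $L$-invariant differential operator $\dd\ell(C_L)^\tau_\Gamma$ on $C^\infty(Y_\Gamma,\V_\tau)$ whose principal symbol is that of the Laplace--Beltrami operator of the Riemannian metric on $Y=L/L_K$ tensored with the identity on $V_\tau$ (this uses the Cartan decomposition $\llll=(\llll\cap\kk)+(\llll\cap\p)$: the component of $C_L$ along $\llll\cap\kk$ contributes only lower-order terms after passing to the associated bundle, cf.\ the curvature-tensor computation in Section~\ref{subsec:main-thm-Laplacian}). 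Hence $\dd\ell(C_L)^\tau_\Gamma$ is an elliptic operator on the Riemannian manifold $Y_\Gamma$ (ellipticity is a local statement, unaffected by passing from $Y$ to the quotient $Y_\Gamma$). Second, since $C_L\in Z(\llll_\C)$, any $\varphi$ solving $(\NN_\nu)$ satisfies the single elliptic equation $(\dd\ell(C_L)^\tau_\Gamma-\nu(C_L))\varphi=0$. Third, I would invoke the elliptic regularity theorem for systems (e.g.\ \cite[Th.\,3.4.4]{kkk86}): a distribution section annihilated by an elliptic operator with real-analytic coefficients is a real-analytic section; this gives $\DD'(Y_\Gamma,\V_\tau;\NN_\nu)=C^\infty(Y_\Gamma,\V_\tau;\NN_\nu)=\A(Y_\Gamma,\V_\tau;\NN_\nu)$. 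Finally, $L^2(Y_\Gamma,\V_\tau;\NN_\nu)\subset\DD'(Y_\Gamma,\V_\tau;\NN_\nu)$ because every $L^2$ section is a distribution section and the equations are understood weakly in both cases, so the chain of inclusions/equalities asserted in the lemma follows.

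\textbf{Main obstacle.} There is no serious obstacle; the one point requiring a little care is the verification that $\dd\ell(C_L)^\tau_\Gamma$ is genuinely elliptic, i.e.\ that the ``vertical'' contribution of $C_L$ coming from $\llll\cap\kk$ does not spoil the principal symbol. This is handled by the observation that, under $\ii_{\tau}$, elements of $\dd r(Z(\llll_\C\cap\kk_\C))$ act by scalars on the $\tau$-isotypic component (Schur's lemma, as in Example~\ref{ex:D-tau-Z-r}), so modulo an endomorphism-valued zeroth-order term $\dd\ell(C_L)^\tau_\Gamma$ agrees with the scalar Laplacian $\Delta_{Y_\Gamma}\otimes\mathrm{id}_{V_\tau}$, which is elliptic. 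Everything else is a direct appeal to the elliptic regularity theorem and to the definition of weak solutions, and requires no computation specific to the present setting.
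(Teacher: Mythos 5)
Your proof is correct and takes essentially the same route as the paper: choose a suitable $\mathrm{Ad}(L)$-invariant form so that $\dd\ell(C_L)_\Gamma^\tau$ is elliptic, note that any solution of $(\NN_\nu)$ solves the single elliptic equation $(\dd\ell(C_L)_\Gamma^\tau-\nu(C_L))\varphi=0$, and conclude by analytic elliptic regularity. The one thing you spell out that the paper leaves implicit is the Schur's-lemma observation explaining why the $\llll\cap\kk$ part of the Casimir does not affect the principal symbol; that is a welcome clarification, not a divergence.
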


\begin{proof}
Let $\llll = \llll\cap\kk + \llll\cap\p$ be a Cartan decomposition corresponding to the maximal compact subgroup $L_K$ of~$L$.
As in the proof of Lemma~\ref{lem:pseudo-Riem-struct}, we choose an $\Ad(L)$-invariant nondegenerate symmetric bilinear form $B$ on~$\llll$ which is positive definite on $\llll\cap\p$ and negative definite on $\llll\cap\kk$, and for which $\llll\cap\p$ and $\llll\cap\kk$ are orthogonal.
Let $C_L\in Z(\llll)$ be the Casimir element defined by~$B$.
Let $\F=C^{\infty}$, $L^2$, or~$\DD'$.
Then $\dd\ell(C_L)_{\Gamma}^{\tau}$ is a (matrix-valued) elliptic differential operator acting on $\F(Y_{\Gamma},\V_{\tau})$, and therefore $\F(Y_{\Gamma},\V_{\tau};\NN_{\nu})$ is contained in $\A(Y_{\Gamma},\V_{\tau})$, by the elliptic regularity theorem.
\end{proof}

\begin{proof}[Proof of Theorems \ref{thm:transfer} and~\ref{thm:transfer-GxG}]
Condition~(Tf) for the quadruple $(G,L,H,L_K)$ is satisfied in the setting~\ref{spher-setting} by Proposition~\ref{prop:cond-Tf-A-B-satisfied}, and in the group manifold case by Proposition~\ref{prop:cond-Tf-A-B-satisfied-GxG}.
Thus there is a transfer map
$$\nnu : \Hom_{\C\text{-}\mathrm{alg}}(\D_G(X),\C) \times \Disc(L_K/L_H) \longrightarrow \Hom_{\C\text{-}\mathrm{alg}}(Z(\llll_{\C}),\C),$$
and by Remark~\ref{rem:specXYGamma}.(1), for any torsion-free discrete subgroup $\Gamma$ of~$L$ and any $(\lambda,\tau)\in\Hom_{\C\text{-}\mathrm{alg}}(\D_G(X),\C)\times\Disc(L_K/L_H)$,
$$\pp_{\tau,\Gamma}(\F(X_{\Gamma};\M_{\lambda})) \subset (V_{\tau}^{\vee})^{L_H} \otimes \F(Y_{\Gamma},\V_{\tau};\NN_{\nnu(\lambda,\tau)}).$$
Fix $\lambda\in\Hom_{\C\text{-}\mathrm{alg}}(\D_G(X),\C)$ and let $f\in\F(X_{\Gamma};\M_{\lambda})$, where $\F=\A$, $C^{\infty}$, $L^2$, or~$\DD'$.
By Lemma~\ref{lem:p-tau-i-tau}, we can approximate~$f$, in each topology depending on~$\F$, by a sequence of finite sums of the form $\sum_{\tau} \ii_{\tau,\Gamma}(\varphi_{\tau})$ where
$$\varphi_{\tau}=\pp_{\tau,\Gamma}(f)\in (V_{\tau}^{\vee})^{L_H}\otimes\F(Y_{\Gamma},\V_{\tau};\NN_{\nnu(\lambda,\tau)}).$$
By Lemma~\ref{lem:regY}, we have $\varphi_{\tau}\in (V_{\tau}^{\vee})^{L_H}\otimes (\F\cap\A)(Y_{\Gamma},\V_{\tau};\NN_{\nnu(\lambda,\tau)})$ for all~$\tau$.
Since $\ii_{\tau,\Gamma}(\varphi_{\tau})\in (\F\cap\A)(X_{\Gamma};\M_{\lambda})$ by Remark~\ref{rem:specXYGamma}.(2), we obtain Theorem~\ref{thm:transfer}.
\end{proof}

\begin{proof}[Proof of Corollary~\ref{cor:L2d(XGamma,Mlambda)-orth}]
Given $\tau\in\Disc(L_K/L_H)$, let $\Char^{Z(\llll_{\C})}(\widehat{L}(\tau)) \subset \Hom_{\C\text{-}\mathrm{alg}}(Z(\llll_{\C}),\C)$ be the set of $Z(\llll_{\C})$-infinitesimal characters of elements of $\widehat{L}(\tau)$ (see \eqref{eqn:def-Supp}).
The spaces $L^2_d(Y_{\Gamma},\V_{\tau};\NN_{\nu})$, for varying $\nu\in\Char^{Z(\llll_{\C})}(\widehat{L}(\tau))$, are orthogonal to each other.
The maps
$$\ii_{\tau,\Gamma} : (V_{\tau}^{\vee})^{L_H}\otimes L^2(Y_{\Gamma},\V_{\tau})\to L^2(X_{\Gamma}),$$
for $\tau\in\Disc(L_K/L_H)$, combine into a unitary operator $\underline{\ii}_{\Gamma}$ as in \eqref{eqn:XYL2}.~In~parti\-cular, the spaces $\ii_{\tau,\Gamma}((V_{\tau}^{\vee})^{L_H}\otimes L^2(Y_{\Gamma},\V_{\tau};\NN_{\nu}))$, for varying $\tau\in\nolinebreak\Disc(L_K/L_H)$ and $\nu\in\Char^{Z(\llll_{\C})}(\widehat{L}(\tau))$, are orthogonal to each other.
We conclude using Theorem~\ref{thm:transfer}, which states that for any $\lambda\in\Hom_{\C\text{-}\mathrm{alg}}(\D_G(X),\C)$ the algebraic direct sum $\bigoplus_{\tau\in\Disc(L_K/L_H)} \ii_{\tau,\Gamma}((V_{\tau}^{\vee})^{L_H}\otimes L^2(Y_{\Gamma},\V_{\tau};\NN_{\nnu(\lambda,\tau)}))$ is dense in the Hilbert space $L^2(X_{\Gamma};\M_{\lambda})$.
\end{proof}

\begin{remark}
The proofs show that Theorem~\ref{thm:transfer} and Corollary~\ref{cor:L2d(XGamma,Mlambda)-orth} hold without assuming that $X_{\C}$ is $L_{\C}$-spherical: it is sufficient to assume, in the general setting~\ref{gen-setting}, that condition (Tf) is satisfied for the quadruple $(G,L,H,L_K)$, with transfer maps $\nnu$ and~$\llambda$.
\end{remark}

\section[Proof of Theorem~1.9 (Spectral decomposition)]{Proof of Theorem~\ref{thm:Fourier} (Spectral decomposition)} \label{subsec:proof-Fourier}


Proposition~\ref{prop:cond-Tf-A-B-satisfied} and Remark~\ref{rem:specXYGamma} imply that in the setting~\ref{spher-setting}, any spectral information on the vector bundles $\V_{\tau}$ over the Riemannian locally symmetric space~$Y_{\Gamma}$ transfers to spectral information on the pseudo-Riemannian locally symmetric space $X_{\Gamma}$, and vice versa, leading to Theorem~\ref{thm:Fourier} on the spectral decomposition of smooth functions on~$X_{\Gamma}$ by joint eigenfunctions of $\D_G(X)$.

\begin{proof}[Proof of Theorem~\ref{thm:Fourier}]
Since the reductive Lie group $L$ is of type I in the sense of von Neumann algebras \cite{har53}, the Mautner theorem \cite{mau50} states that the right regular representation of $L$ on $L^2(\Gamma\backslash L)$ decomposes uniquely into a direct integral of irreducible unitary representations:
\begin{equation} \label{eqn:L2-Gamma-L-decomp}
L^2(\Gamma\backslash L) \simeq \int_{\widehat{L}} \HHH_{\vartheta} \, \dd m(\vartheta),
\end{equation}
where $\dd m$ is a Borel measure on the unitary dual $\widehat{L}$ of~$L$ with respect to the Fell topology and $\HHH_{\vartheta}$ is a (possibly infinite) multiple of the irreducible unitary representation~$\vartheta$ depending measurably on $\vartheta\in\widehat{L}$.
We note that the Fr\'echet space $\HHH_{\vartheta}^{\infty}$ of smooth vectors is realized in $C^{\infty}(\Gamma\backslash L)$ by a Sobolev-type theorem.
There is a measurable family of continuous maps
$$\mathcal{T}_{\vartheta} : L^2(\Gamma\backslash L)^{\infty} \longrightarrow \HHH_{\vartheta}^{\infty}\subset C^{\infty}(\Gamma\backslash L),$$
for $\vartheta\in\widehat{L}$, such that for any $f\in L^2(\Gamma\backslash L)^{\infty}$ we can write $f = \int_{\widehat{L}} \mathcal{T}_{\vartheta}f \, \dd m(\vartheta)$ with
$$\Vert f\Vert_{L^2(\Gamma\backslash L)}^2 = \int_{\widehat{L}} \Vert\mathcal{T}_{\vartheta}f\Vert_{L^2(\Gamma\backslash L)}^2\ \dd m(\vartheta).$$
Let $(\widehat{L})_{L_H}$ be the subset of~$\widehat{L}$ consisting of (equivalence classes of) irreducible unitary representations of~$L$ with nonzero $L_H$-fixed vectors, where $L_H=L\cap H$.
Via the identification $L^2(X_{\Gamma})\simeq L^2(\Gamma\backslash L)^{L_H}$ of \eqref{eqn:L2XGammaL}, we can view any $f\in C_c^{\infty}(X_{\Gamma})$ as an $L_H$-fixed element of $L^2(\Gamma\backslash L)^{\infty}$ and write
\begin{equation} \label{eqn:f-Ttheta}
f = \int_{(\widehat{L})_{L_H}} \mathcal{T}_{\vartheta}f \ \dd m(\vartheta), 
\end{equation}
where $\Vert f\Vert_{L^2(X_{\Gamma})}^2 = \int_{(\widehat{L})_{L_H}} \Vert\mathcal{T}_{\vartheta}f\Vert_{L^2(X_{\Gamma})}^2\,\dd m(\vartheta)$.

Let $K$ be a maximal compact subgroup of~$G$ such that $L_K:=L\cap K$ is a maximal compact subgroup of~$L$ containing~$L_H$.
Using \eqref{eqn:f-Ttheta}, Lemma~\ref{lem:p-tau-i-tau} and \eqref{eqn:f=sum_ipf}, we obtain, for any $f\in C_c^{\infty}(X_{\Gamma})$, the decomposition
\begin{eqnarray}
f & = & \sum_{\tau\in\Disc(L_K/L_H)} (\ii_{\tau,\Gamma}\circ\pp_{\tau,\Gamma})(f)\nonumber\\
& = & \sum_{\tau\in\Disc(L_K/L_H)} (\ii_{\tau,\Gamma}\circ\pp_{\tau,\Gamma})\left(\int_{(\widehat{L})_{L_H}} \mathcal{T}_{\vartheta}f\ \dd m(\vartheta)\right)\nonumber\\
& = & \sum_{\tau\in\Disc(L_K/L_H)} \int_{(\widehat{L})_{L_H}}  (\ii_{\tau,\Gamma}\circ\pp_{\tau,\Gamma})(\mathcal{T}_{\vartheta}f)\ \dd m(\vartheta).\label{eqn:f-i-tau-p-tau-T-theta}
\end{eqnarray}

By Proposition~\ref{prop:cond-Tf-A-B-satisfied}, condition~(Tf) holds for the quadruple\linebreak $(\g_{\C},\llll_{\C},\h_{\C},\llll_{\C}\cap\kk_{\C})$, with transfer maps $\nnu$ and~$\llambda$.
Consider the map
$$\Lambda : (\widehat{L})_{L_H}\times\Disc(L_K/L_H) \longrightarrow \Hom_{\C\text{-}\mathrm{alg}}(\D_G(X),\C)$$
given by $\Lambda(\vartheta,\tau)=\llambda(\chi_{\vartheta},\tau)$, where $\chi_{\vartheta}$ is the infinitesimal character of~$\vartheta$, which we see as an element of $\Hom_{\C\text{-}\mathrm{alg}}(Z(\llll_{\C})/\Ker(\dd\ell),\C)$.
Since there are at most finitely many elements $\vartheta$ of $(\widehat{L})_{L_H}$ with the same infinitesimal character, $\Lambda^{-1}(\lambda)$ is at most countable for every~$\lambda$.
By Remark~\ref{rem:specXYGamma}, for any $f\!\in\!C_c^{\infty}(X_{\Gamma})$ and any $(\tau,\vartheta)\in (\widehat{L})_{L_H}\!\times\Disc(L_K/L_H)$ we have $(\ii_{\tau,\Gamma}\circ\pp_{\tau,\Gamma})(\mathcal{T}_{\vartheta}f)\in C^{\infty}(X_{\Gamma};\M_{\Lambda(\vartheta,\tau)})$.
We set
\begin{equation} \label{eqn:F-lambda-f}
\mathtt{F}_{\lambda} f := \sum_{(\vartheta,\tau)\in\Lambda^{-1}(\lambda)} (\ii_{\tau,\Gamma}\circ\pp_{\tau,\Gamma})(\mathcal{T}_{\vartheta}f)\ \in C^{\infty}(X_{\Gamma};\M_{\Lambda(\vartheta,\tau)}).
\end{equation}

We define a measure $\dd\mu$ on $\Hom_{\C\text{-}\mathrm{alg}}(\D_G(X),\C)$ as the pushforward by $\Lambda$ of the direct product of the measures on $\Disc(L_K/L_H)$ and $(\widehat{L})_{L_H}$.
Then \eqref{eqn:f-i-tau-p-tau-T-theta} yields that any $f\in C_c^{\infty}(X_{\Gamma})$ is expanded into joint eigenfunctions of $\D_G(X)$ as
\begin{eqnarray*}
f & = & \int_{\Hom_{\C\text{-}\mathrm{alg}}(\D_G(X),\C)}\ \sum_{(\vartheta,\tau)\in\Lambda^{-1}(\lambda)} (\ii_{\tau,\Gamma}\circ\pp_{\tau,\Gamma})(\mathcal{T}_{\vartheta}f) \ \dd\mu(\lambda)\\
& = & \int_{\Hom_{\C\text{-}\mathrm{alg}}(\D_G(X),\C)}\ \mathtt{F}_{\lambda} f \ \dd\mu(\lambda).
\end{eqnarray*}

Using the definition \eqref{eqn:F-lambda-f} of~$\mathtt{F}_{\lambda} f$ and the fact that the images of the maps~$\ii_{\tau}$ are orthogonal to each other for varying $\tau\in\Disc(L_K/L_H)$ (see Section~\ref{subsec:L2decomp}), we have
\begin{eqnarray*}
& & \int_{\Hom_{\C\text{-}\mathrm{alg}}(\D_G(X),\C)}\ \Vert\mathtt{F}_{\lambda} f\Vert_{L^2(X_{\Gamma})}^2 \ \dd\mu(\lambda)\\
& = & \int_{\Hom_{\C\text{-}\mathrm{alg}}(\D_G(X),\C)}\ \Bigg\Vert \sum_{(\vartheta,\tau)\in\Lambda^{-1}(\lambda)} (\ii_{\tau,\Gamma}\circ\pp_{\tau,\Gamma})(\mathcal{T}_{\vartheta}f) \Bigg\Vert_{L^2(X_{\Gamma})}^2 \ \dd\mu(\lambda)\\
& = & \int_{\Hom_{\C\text{-}\mathrm{alg}}(\D_G(X),\C)}\ \sum_{(\vartheta,\tau)\in\Lambda^{-1}(\lambda)} \big\Vert(\ii_{\tau,\Gamma}\circ\pp_{\tau,\Gamma})(\mathcal{T}_{\vartheta}f)\big\Vert_{L^2(X_{\Gamma})}^2 \ \dd\mu(\lambda),
\end{eqnarray*}
which is equal to
\begin{eqnarray*}
& & \int_{(\widehat{L})_{L_H}} \sum_{\tau\in\Disc(L_K/L_H)} \big\Vert(\ii_{\tau,\Gamma}\circ\pp_{\tau,\Gamma})(\mathcal{T}_{\vartheta}f)\big\Vert_{L^2(X_{\Gamma})}^2 \ \dd m(\vartheta)\\
& = & \int_{(\widehat{L})_{L_H}} \Vert \mathcal{T}_{\vartheta}f\Vert_{L^2(X_{\Gamma})}^2\ \dd m(\vartheta)
\end{eqnarray*}
by the definition of the measure $\dd\mu$ and by \eqref{eqn:XYL2}.
Thus \eqref{eqn:f-Ttheta} implies
$$\Vert f\Vert_{L^2(X_{\Gamma})}^2 = \int_{\Hom_{\C\text{-}\mathrm{alg}}(\D_G(X),\C)} \Vert\mathtt{F}_{\lambda} f\Vert_{L^2(X_{\Gamma})}^2 \ \dd\mu(\lambda).$$

If $X_{\Gamma}$ is compact, then the integral \eqref{eqn:L2-Gamma-L-decomp} is a Hilbert direct sum by a result of Gelfand--Piatetski-Schapiro (see \eg \cite[Ch.\,IV, \S\,3.1]{war72}), and accordingly the inversion formula in \eqref{eqn:f-Ttheta} is a discrete sum.
Therefore the measure $\dd\mu$ is supported on a countable subset of $\Hom_{\C\text{-}\mathrm{alg}}(\D_G(X),\C)$.
\end{proof}

\chapter{Consequences of conditions (A) and~(B) on representations of $G$ and~$L$} \label{sec:transfer}

In this chapter we analyze infinite-dimensional representations realized on function spaces on a reductive homogeneous space $X=G/H$ on which a proper subgroup $L$ of~$G$ acts spherically.
The results do not involve any discrete group~$\Gamma$.
Under conditions (A) and~(B) of Section~\ref{subsec:cond-Tf-A-B}, we establish some relations between infinite-dimensional representations of $G$ and of~$L$ (Proposition~\ref{prop:condA} and Theorem~\ref{thm:condB}).
We shall use these results in Chapter~\ref{sec:transfer-Gamma-type-I-II} to prove Theorem~\ref{thm:transfer-spec}, a key tool from which we shall derive Theorems \ref{thm:mainII} and~\ref{thm:GxG}.(3) in Chapter~\ref{sec:proof-mainII}, and Theorem~\ref{thm:Specd-lambda} in Chapter~\ref{sec:proof-Specd-lambda}.

\section{A consequence of condition~(A) in a real spherical setting}\label{subsec:transfer-spec-condA}

Condition~(A) (see Definition~\ref{def:cond-A-B}) controls the smallest $G$-module containing a given irreducible module of the subgroup~$L$.
In this chapter we show that (Harish-Chandra's) discrete series representations of the subgroup~$L$ generate a $G$-module of finite length, which is actually a direct sum of discrete series representations for $G/H$.

We work again in the general setting~\ref{gen-setting}.
Let $\tau\in\widehat{L_K}$.
Since $L_K$ is compact, any irreducible unitary representation $\vartheta$ of~$L$ that occurs in the regular representation on $L^2(Y,\V_{\tau})$ is a Harish-Chandra discrete series representation.
By Frobenius reciprocity, $[\vartheta|_{L_K}:\tau]\neq 0$, and there are at most finitely many discrete series representations $\vartheta$ of~$L$ with $[\vartheta|_{L_K}:\tau]\neq 0$ by the classification of Harish-Chandra's discrete series representations in terms of minimal $K$-type (the Blattner parameter).
We denote by $L^2_d(Y,\V_{\tau})$ the sum of irreducible unitary representations of~$L$ occurring in $L^2(Y,\V_{\tau})$.
Then we have a unitary isomorphism
$$L^2_d(Y,\V_{\tau}) \simeq \bigoplus_{\vartheta\in\Disc(L)} [\vartheta:\tau] \, \vartheta \quad \mathrm{(finite\ sum)}.$$
Note that $L^2_d(Y,\V_{\tau})=\{ 0\}$ if $\tau$ is the trivial one-dimensional representation.

\begin{proposition}\label{prop:condA}
In the general setting~\ref{gen-setting}, suppose that $X=G/H$ is $G$-real spherical and that condition~(A) holds for the quadruple $(G,L,H,L_K)$.
For any $\tau\in\Disc(L_K/L_H)$, there exist at most finitely many discrete series representations $\pi_j$ for $G/H$ such that
$$\left \{
\begin{array}{c}
  (V_{\tau}^{\vee})^{L_H} \otimes L^2_d(Y,\V_{\tau}) \subset \pp_{\tau}\big(\bigoplus_j \pi_j\big),\\
  \ii_{\tau}((V_{\tau}^{\vee})^{L_H} \otimes L^2_d(Y,\V_{\tau})) \subset \bigoplus_j \pi_j.
\end{array}
\right.$$
\end{proposition}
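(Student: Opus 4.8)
The idea is to feed each Harish-Chandra discrete series representation $\vartheta$ of $L$ occurring in $L^2_d(Y,\V_\tau)$ into the map $\ii_\tau$, and use condition~(A) together with finite-length results to obtain a $G$-module which is automatically square-integrable and decomposes into discrete series for $G/H$. First I would fix $\tau\in\Disc(L_K/L_H)$ and recall that $L^2_d(Y,\V_\tau)=\bigoplus_{\vartheta} [\vartheta:\tau]\,\vartheta$ is a finite direct sum of Harish-Chandra discrete series of $L$; in particular each such $\vartheta$ has an infinitesimal character, so its underlying $(\llll,L_K)$-module is $Z(\llll_\C)$-finite. Realizing $\vartheta^{-\infty}$ inside $\DD'(Y,\V_\tau)$ — more precisely, since $\vartheta$ is a discrete series, $\vartheta^\infty\subset C^\infty(Y,\V_\tau)$ — condition~(A) (Definition~\ref{def:cond-A-B}) tells us that $\ii_\tau(\vartheta^\infty)\subset C^\infty(X)$ is a $Z(\g_\C)$-finite $\g$-module. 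Since $X=G/H$ is $G$-real spherical, Lemma~\ref{lem:spher-finite-length}.(2) applies and $\ii_\tau(\vartheta^\infty)$ is a $(\g,K)$-module of finite length; applying this to each of the finitely many $\vartheta$ occurring in $L^2_d(Y,\V_\tau)$, the $G$-module $V:=\ii_\tau((V_\tau^\vee)^{L_H}\otimes L^2_d(Y,\V_\tau))$ generates a $(\g,K)$-module of finite length inside $C^\infty(X)$, hence (by passing to the associated globalization / using that $\ii_\tau$ is continuous) $V$ sits inside a finite direct sum of irreducible admissible representations of $G$ realized in $\DD'(X)$, say $\bigoplus_j \pi_j$.

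The second and main point is that each $\pi_j$ is in fact a \emph{discrete series representation for $G/H$}, i.e.\ embeds in $L^2(X)$. Here I would use that $\ii_\tau$, restricted to $L^2$-sections, is an isometric embedding $(V_\tau^\vee)^{L_H}\otimes L^2(Y,\V_\tau)\hookrightarrow L^2(X)$ (this is the $\Gamma=\{e\}$ case of the unitary operator $\underline{\ii}_\Gamma$ of \eqref{eqn:XYL2}, cf.\ Section~\ref{subsec:L2decomp}), so that $\ii_\tau((V_\tau^\vee)^{L_H}\otimes L^2_d(Y,\V_\tau))$ is a closed $G$-invariant subspace of $L^2(X)$ which is a finite Hilbert sum of irreducible unitary representations. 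By Schur's lemma and the finite-length statement of the previous paragraph, the irreducible constituents appearing here are exactly (the unitary completions of) the $\pi_j$, so each $\pi_j$ is realized as a subrepresentation of $L^2(X)$, i.e.\ is a discrete series representation for $G/H$. The two displayed inclusions in the statement then follow: the second, $\ii_\tau((V_\tau^\vee)^{L_H}\otimes L^2_d(Y,\V_\tau))\subset\bigoplus_j\pi_j$, is how $\bigoplus_j\pi_j$ was defined; the first, $(V_\tau^\vee)^{L_H}\otimes L^2_d(Y,\V_\tau)\subset\pp_\tau(\bigoplus_j\pi_j)$, follows by applying $\pp_\tau$ to the second inclusion and using $\pp_\tau\circ\ii_\tau=\mathrm{id}$ on $\F(Y,\V_\tau)$ (Lemma~\ref{lem:p-tau-i-tau}.(1), with $\Gamma=\{e\}$), which gives $(V_\tau^\vee)^{L_H}\otimes L^2_d(Y,\V_\tau)=\pp_\tau\big(\ii_\tau((V_\tau^\vee)^{L_H}\otimes L^2_d(Y,\V_\tau))\big)\subset\pp_\tau(\bigoplus_j\pi_j)$.

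I expect the main obstacle to be the bookkeeping between topological globalizations and $(\g,K)$-modules: condition~(A) and Lemma~\ref{lem:spher-finite-length} are stated for $(\g,K)$-modules in $\DD'(X)$ or $C^\infty(X)$, while the target $\bigoplus_j\pi_j$ must be a genuine $G$-submodule of $L^2(X)$, so one needs to invoke Harish-Chandra's correspondence (Fact~\ref{fact:gK}) together with the fact that an admissible $(\g,K)$-module of finite length whose $K$-finite vectors are unitarizable and realized in $L^2(X)$ has a closure in $L^2(X)$ that is a finite Hilbert sum of irreducible unitary representations. A secondary subtlety is that $\ii_\tau$ a priori only maps smooth sections to $C^\infty(X)$ and $L^2$-sections isometrically into $L^2(X)$; one must check that the $K$-finite vectors of the $L^2$-module $\ii_\tau((V_\tau^\vee)^{L_H}\otimes L^2_d(Y,\V_\tau))$ coincide with $\ii_\tau$ applied to the $(K\cap L$-, resp.\ $L_K$-)finite vectors, so that the finite-length $(\g,K)$-module produced by condition~(A) really is the one underlying this Hilbert space. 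Both of these are routine given the continuity and equivariance properties of $\ii_\tau$ and $\pp_\tau$ recorded in Sections~\ref{subsec:bundle} and~\ref{subsec:p-tau}, but they are where the care is needed.
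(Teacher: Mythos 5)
Your outline follows the same broad shape as the paper's proof, but it rests on a claim that condition~(A) does not give you: you treat $\ii_\tau(\vartheta^\infty)$ (and later $\ii_\tau((V_\tau^\vee)^{L_H}\otimes L^2_d(Y,\V_\tau))$) as a $\g$-module, even as ``a closed $G$-invariant subspace of $L^2(X)$,'' whereas $\ii_\tau$ is only $L$-equivariant, so its image is $L$-invariant but not $G$-invariant. Condition~(A) asserts that $\ii_\tau(\vartheta)$ is $Z(\g_\C)$-finite as a subset of $C^\infty(X)$; it does not make it a $(\g,K)$-module, so you cannot apply Lemma~\ref{lem:spher-finite-length}.(2) to it directly. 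Likewise, in your second paragraph the purported identification of the irreducible $G$-constituents of $\bigoplus_j\pi_j$ with those of $V:=\ii_\tau(\cdots)$ relies on $V$ being closed under the $G$-action, which it is not. Indeed the very fact that the statement asserts only the inclusion $\ii_\tau(\cdots)\subset\bigoplus_j\pi_j$, not equality, signals that the image of $\ii_\tau$ is not expected to be a $G$-module.

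The missing step is to pass to the $G$-span of $\ii_\tau(\vartheta_{L_K})$ inside $(L^2\cap C^\infty)(X)$ --- this is where the $(\g,K)$-module actually lives --- and to observe that this $G$-span remains $Z(\g_\C)$-finite: since $Z(\g_\C)$ acts on $C^\infty(X)$ by $G$-invariant differential operators, it commutes with the $G$-action, so any ideal of finite codimension in $Z(\g_\C)$ that annihilates $\ii_\tau(\vartheta_{L_K})$ also annihilates every $G$-translate and hence the whole span. Only then does Lemma~\ref{lem:spher-finite-length}.(2) apply; the $L^2$-closure of the $G$-span is a unitary $G$-module of finite length, hence a finite sum of discrete series $\pi_j$ for $G/H$, and both displayed inclusions follow as you say (the second because $\ii_\tau(\vartheta)$ sits inside this closure, the first via $\pp_\tau\circ\ii_\tau=\mathrm{id}$). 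This commutation argument is not the sort of topology-vs-algebra ``bookkeeping'' you flag at the end; it is the single step that makes condition~(A) usable, and your draft never performs it.
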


(By a little abuse of notation, we also write $\pi_j$ for the vector space on which it is realized.)

\begin{proof}
Let $\tau\in\Disc(L_K/L_H)$.
We write $\vartheta$ for the unitary representation of~$L$ on the Hilbert space $(V_{\tau}^{\vee})^{L_H}\otimes L^2_d(Y,\V_{\tau})$.
Since $Y=L/L_K$ is a Riemannian symmetric space, $\vartheta$ is the direct sum of at most finitely many (possibly zero) irreducible unitary representations of~$L$.
The underlying $(\llll,L_K)$-module $\vartheta_{L_K}$ is $Z(\llll_{\C})$-finite by Schur's lemma, and so $\ii_{\tau}(\vartheta_{L_K})$ is $Z(\g_{\C})$-finite by condition~(A).
Since the actions of $G$ and $Z(\g_{\C})$ commute, the $G$-span of $\ii_{\tau}(\vartheta_{L_K})$ in $(L^2\cap C^{\infty})(X)$ is $Z(\g_{\C})$-finite.
Let $V$ be the closure in $L^2(X)$ of this $G$-span.
The group $G$ acts as a unitary representation on the Hilbert space~$V$.
Since $V$ is still $Z(\g_{\C})$-finite in the distribution sense, so is the underlying $(\g,K)$-module $V_K$ in the usual sense.
Since $X$ is $G$-real spherical, $V_K$ is of finite length as a $(\g,K)$-module by Lemma~\ref{lem:spher-finite-length}.(2).
Therefore, $V$ is a direct sum of at most finitely many irreducible unitary representations $\pi_j$ of~$G$, which are discrete series representations for $G/H$ by definition.
Taking the completion in $L^2(X)$, we obtain $\ii_{\tau}(\vartheta)\subset\bigoplus_j\pi_j$.
Then we also have $\vartheta\subset\pp_{\tau}(\bigoplus_j \pi_j)$ because $\pp_{\tau}\circ\ii_{\tau}$ is the identity on $(V_{\tau}^{\vee})^{L_H}\otimes L^2(Y,\V_{\tau})$.
\end{proof}

\section{A consequence of condition~(B)}

Condition~(B) (see Definition~\ref{def:cond-A-B}) controls the restriction of representations of~$G$ to a subgroup $L$ of~$G$.
We now formulate this property in terms of the notion of \emph{discrete decomposability}.

Recall that any unitary representation of a Lie group can be decomposed into irreductible unitary representations by means of a direct integral of Hilbert spaces, and this decomposition is unique up to equivalence if the group is reductive.
However, in contrast to the case of compact groups, the decomposition may involve continuous spectrum.
For instance, the Plancherel formula for a real reductive Lie group~$G$ (\ie the irreducible decomposition of the regular representation of $G$ on $L^2(G)$) always contains continuous spectrum if $G$ is noncompact.
Discrete decomposability is a ``compact-like property'' for unitary representations: we recall the following terminology for unitary representations and its algebraic analogue for $(\g,K)$-modules.

\begin{definition} \label{def:discr-dec}
Let $G$ be a real reductive Lie group.

A unitary representation of~$G$ is \emph{discretely decomposable} if it is isomorphic to a Hilbert direct sum of irreducible unitary representations of~$G$.

A $(\g,K)$-module~$V$ is \emph{discretely decomposable} if there exists an increasing filtration $\{V_n\}_{n\in\N}$ such that $V=\bigcup_{n\in\N} V_n$ and each $V_n$ is a $(\g,K)$-module of finite length.
\end{definition}

Here we consider the question whether or not a representation $\pi$ of~$G$ is discretely decomposable when restricted to a reductive subgroup~$L$.
When $V$ is the underlying $(\g,K)$-module $\pi_K$ of a unitary representation $\pi$ of~$G$ of finite length, the fact that $V$ is discretely decomposable as an $(\llll,L_K)$-module is equivalent to the fact that it is isomorphic to an algebraic direct sum of irreducible $(\llll,L_K)$-modules (see \cite[Lem.\,1.3]{kob98c}).
In this case the restriction of the unitary representation is discretely decomposable, \ie $\pi$ decomposes into a Hilbert direct sum of irreducible unitary representations of the subgroup~$L$ \cite[Th.\,2.7]{kob00}.

The definition of discrete decomposability for an $(\llll,L_K)$-module~$V$ makes sense even when $V$ is not unitarizable.
In the following theorem, we do not assume $V$ to come from a unitary representation of~$G$.

\begin{theorem} \label{thm:condB}
In the general setting~\ref{gen-setting}, suppose that condition~(B) holds for the quadruple $(G,L,H,L_K)$.
Then
\begin{enumerate}
  \item any irreducible $(\g,K)$-module $\pi_K$ occurring as a subquotient of the space $\DD'(X)$ of distributions on~$X$ is discretely decomposable as an $(\llll,L_K)$-module;
  \item for any irreducible unitary representation $\pi$ of~$G$ realized in $\DD'(X)$, the restriction $\pi|_L$ decomposes discretely into a Hilbert direct sum of irreducible unitary representations of~$L$.
  Further, if $\pi$ is a discrete series representation for $G/H$, then any irreducible summand of the restriction $\pi|_L$ is a Harish-Chandra discrete series representation of the subgroup~$L$; in particular, $\pi^{L_K}=\{ 0\}$ if $L$ is noncompact;
  \item if $X$ is $L$-real spherical (\resp if $X_{\C}$ is $L_{\C}$-spherical), then the multiplicities in the branching law of the $(\g,K)$-module~$\pi_K$ when restricted to $(\llll,L_K)$ are finite (\resp uniformly bounded) in (1) and~(2).
\end{enumerate}
\end{theorem}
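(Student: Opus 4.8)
The plan is to prove Theorem~\ref{thm:condB} in three stages, deducing each part from condition~(B) together with the structural results already recalled in the excerpt. Throughout, recall that the $L$-equivariant fibration $q:X=G/H\simeq L/L_H\to L/L_K=Y$ of \eqref{eqn:bundleXY} gives, for each $\tau\in\Disc(L_K/L_H)$, the maps $\ii_\tau$ and $\pp_\tau$ with $\pp_\tau\circ\ii_\tau=\mathrm{id}$, and that the algebraic direct sum $\bigoplus_\tau\ii_\tau((V_\tau^\vee)^{L_H}\otimes\F(Y,\V_\tau))$ is dense in $\F(X)$ by Lemma~\ref{lem:p-tau-i-tau}. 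The key point is that an irreducible $(\g,K)$-module $\pi_K\subset\DD'(X)$ is $Z(\g_\C)$-finite (indeed $Z(\g_\C)$ acts by scalars via an infinitesimal character), so condition~(B) applies to it.

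\textbf{Proof of (1).} Let $\pi_K$ be an irreducible $(\g,K)$-module occurring as a subquotient of $\DD'(X)$; after passing to a subquotient we may realize $\pi_K$ inside $\DD'(X)$ (or work with a $\g$-submodule $V\subset C^\infty(X)$, using Lemma~\ref{lem:spher-finite-length}.(1) and density to reduce to the smooth setting, since $Z(\g_\C)$-finiteness forces real analyticity). The restriction of $\pi_K$ to $(\llll,L_K)$ decomposes over $\widehat{L_K}$; fix $\tau\in\widehat{L_K}$ and consider the $\tau$-isotypic part, which sits inside $\pp_\tau(V)\subset(V_\tau^\vee)^{L_H}\otimes C^\infty(Y,\V_\tau)$ when $\tau\in\Disc(L_K/L_H)$ (and is zero otherwise, since only such $\tau$ can occur in functions on $X=L/L_H$). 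By condition~(B), $\pp_\tau(V)$ is $Z(\llll_\C)$-finite. Now I would argue that a $Z(\llll_\C)$-finite $(\llll,L_K)$-module whose $L_K$-isotypic components are all finite-dimensional (which holds here because $\pi_K$ is admissible over $K\supset$ — more precisely, one uses that $\dd\ell(C_L)-2\dd r(C_{L_K})$ is elliptic on $Y$, so each $\tau$-component of $\pp_\tau(V)$ that is $Z(\llll_\C)$-finite lies in a finite-dimensional space of sections over any compact piece, hence the $(\llll,L_K)$-module it generates has finite length) is discretely decomposable in the sense of Definition~\ref{def:discr-dec}. Concretely: the $(\llll,L_K)$-submodule of $\pi_K$ generated by any single $L_K$-type is $Z(\llll_\C)$-finite and admissible, hence of finite length, and taking the union over a countable exhausting family of $L_K$-types exhibits the required filtration $\{V_n\}$.

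\textbf{Proof of (2).} Let $\pi$ be an irreducible unitary representation of $G$ realized in $\DD'(X)$, with underlying $(\g,K)$-module $\pi_K$. By (1), $\pi_K$ is discretely decomposable as an $(\llll,L_K)$-module; since $\pi_K$ is unitarizable (it is the Harish-Chandra module of a unitary $\pi$), \cite[Lem.\,1.3]{kob98c} gives that $\pi_K$ is an algebraic direct sum of irreducible $(\llll,L_K)$-modules, and then \cite[Th.\,2.7]{kob00} upgrades this to a Hilbert direct sum decomposition $\pi|_L\simeq\widehat{\bigoplus}_i\,\vartheta_i$ into irreducible unitary representations of $L$. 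Suppose now $\pi$ is a discrete series representation for $G/H$, i.e.\ $\pi\hookrightarrow L^2(X)$. Realizing $L^2(X)$ via \eqref{eqn:XYL2} as $\widehat{\bigoplus}_\tau(V_\tau^\vee)^{L_H}\otimes L^2(Y,\V_\tau)$ and projecting by $\pp_\tau$, each $L$-subrepresentation $\vartheta_i$ maps (via some $\pp_\tau$) into $L^2(Y,\V_\tau)$; since $Y=L/L_K$ is Riemannian symmetric, an irreducible unitary representation of $L$ occurring in $L^2(Y,\V_\tau)$ is a Harish-Chandra discrete series representation of $L$ (this is the same reasoning as in Section~\ref{subsec:transfer-spec-condA}, using Frobenius reciprocity and the classification by minimal $L_K$-type). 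In particular such $\vartheta_i$ has no $L_K$-fixed vectors when $L$ is noncompact, so $\pi^{L_K}=\{0\}$.

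\textbf{Proof of (3).} This is where one invokes sphericity. If $X$ is $L$-real spherical, then by the uniform-boundedness part of Lemma~\ref{lem:spher-finite-length}.(2) applied to the subgroup $L$ — more precisely, by Fact~\ref{fact:spherical} and \cite{ko13} applied to $L\curvearrowright X$, which gives $\dim\Hom_{\llll,L_K}(\vartheta_{L_K},C^\infty(X))\le C$ for a uniform constant — one controls the multiplicity with which each irreducible $(\llll,L_K)$-module $\vartheta_{L_K}$ occurs in the branching of $\pi_K$: it is bounded by $\dim\Hom_{\llll,L_K}(\vartheta_{L_K},\pi_K)\le\dim\Hom_{\llll,L_K}(\vartheta_{L_K},\DD'(X))$, which is finite because $\pi_K\subset\DD'(X)$ and $X$ is $L$-real spherical. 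If moreover $X_\C$ is $L_\C$-spherical, then $\D_L(X)$ is commutative and the stronger bound in Fact~\ref{fact:spherical}.(iii) (uniform over all irreducible $(\llll,L_K)$-modules) gives uniform boundedness of the branching multiplicities. The main obstacle, and the step deserving the most care, is the passage in (1) from ``$Z(\llll_\C)$-finite $(\llll,L_K)$-module with admissible $L_K$-structure'' to ``discretely decomposable'': one must make sure the admissibility over $L_K$ is genuinely available (it follows from admissibility of $\pi_K$ over $K$ together with $L_K\subset K$ being of finite index in $K\cap L$ — here $L_K=L\cap K$ is a maximal compact of $L$, so this is automatic) and that $Z(\llll_\C)$-finiteness of each cyclic submodule indeed forces finite length, for which one cites Lemma~\ref{lem:spher-finite-length}.(2) with $G$ replaced by $L$ and $X$ by $Y$ (valid since $Y=L/L_K$ is $L$-real spherical, being Riemannian symmetric), applied to the vector-bundle-valued sections $\pp_\tau(V)\subset C^\infty(Y,\V_\tau)$.
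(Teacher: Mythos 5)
Your overall architecture matches the paper's: extract a finite-length $(\llll,L_K)$-module from $\pi_K$ using condition~(B) and Lemma~\ref{lem:spher-finite-length}.(2) applied to $Y=L/L_K$, upgrade to a Hilbert-sum decomposition via \cite[Lem.\,1.3]{kob98c} and \cite[Th.\,2.7]{kob00}, identify the summands as Harish-Chandra discrete series via the fiber decomposition of $L^2(X)$, and control multiplicities via real sphericity of~$X$. Parts (2) and (3) are essentially the paper's argument and are fine.

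Part (1), however, has a genuine gap. You write that ``the $\tau$-isotypic part'' of the $(\llll,L_K)$-restriction of $\pi_K$ ``sits inside $\pp_\tau(V)$''; but these are two \emph{different} decompositions. The $(\llll,L_K)$-restriction of $\pi_K$ uses the action of $L_K\subset K$ \emph{from the left} (via $\dd\ell$), whereas $\pp_\tau$ projects onto the $\tau$-component of the fiber $L_K/L_H$, \ie it integrates over the action of $L_K$ \emph{from the right} in $X\simeq L/L_H$ (see the integral in Section~\ref{subsec:p-tau}, $f\mapsto\int_{L_K}f(\cdot\,k)\tau(k)v\,dk$). A left-$L_K$-isotypic vector in $\pi_K$ need not lie in $\mathrm{Image}(\ii_\tau)$, and conversely $\pp_\tau$ is an $(\llll,L_K)$-homomorphism that mixes all left-$L_K$-types. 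Consequently your next claim --- that the $(\llll,L_K)$-submodule of $\pi_K$ generated by a single $L_K$-type is $Z(\llll_\C)$-finite --- does not follow from condition~(B), which only says $\pp_\tau(\pi_K)$ is $Z(\llll_\C)$-finite, a statement about a \emph{quotient} of $\pi_K$ in a different category. Your attempt to build the exhausting filtration $\{V_n\}$ by hand is then circular: it implicitly assumes $L_K$-admissibility of $\pi_K|_{(\llll,L_K)}$, which is essentially equivalent to discrete decomposability. What is missing is the key criterion \cite[Lem.\,1.5]{kob98c}: an irreducible $(\g,K)$-module containing a nonzero $(\llll,L_K)$-submodule of finite length is automatically discretely decomposable as an $(\llll,L_K)$-module. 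The paper's proof uses condition~(B) and Lemma~\ref{lem:spher-finite-length}.(2) to exhibit $\ii_\tau\circ\pp_\tau(\pi_K)$ as such a submodule and then invokes this criterion, rather than constructing the filtration directly.
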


\begin{remark}
Theorem~\ref{thm:condB}.(2) applies, not only to discrete series representations for~$X$, but also to irreducible unitary representations $\pi$ of~$G$ that contribute to the continuous spectrum in the Plancherel formula of $L^2(X)$.
Branching problems without the assumption that $L$ acts properly on~$X$ were discussed in \cite{kob17}.
\end{remark}

\begin{remark}
The main goal of \cite{kob98b} was to find a sufficient condition for an irreducible unitary representation $\pi$ of~$G$ to be discretely decomposable when restricted to a subgroup~$L$, in terms of representation-theoretic invariants of~$\pi$ (\eg asymptotic $K$-support that was introduced by Kashiwara--Vergne \cite{kv79}).
Theorem~\ref{thm:condB} treats a special case, but provides another sufficient condition without using such invariants.
We note that all the concrete examples of explicit branching laws in \cite{kob93b,kob94} arise from the setup of Theorem~\ref{thm:condB}.
\end{remark}

\begin{proof}[Proof of Theorem~\ref{thm:condB}]
Let $\pi_K$ be an irreducible $(\g,K)$-module realized in $\DD'(X)$.
By Lemma~\ref{lem:spher-finite-length}.(1), it is actually realized in $C^{\infty}(X)$.
Any element of $Z(\g_{\C})$ acts on~$\pi_K$ as a scalar, hence $\pp_{\tau}(\pi_K)$ is $Z(\llll_{\C})$-finite for any $\tau\in\Disc(L_K/L_H)$, by condition~(B).
Since $Y$ is $L$-real spherical, this implies that $\pp_{\tau}(\pi_K)$ is of finite length as an $(\llll,L_K)$-module for any~$\tau$, by Lemma~\ref{lem:spher-finite-length}.(2).

\smallskip

(1) The irreducible $(\g,K)$-module $\pi_K$ contains at least one nonzero $(\llll,L_K)$-module of finite length, namely $\ii_{\tau}\circ\pp_{\tau}(\pi_K)$ for any~$\tau$ with $\pp_{\tau}(\pi_K)\neq\{ 0\}$.
Therefore, the irreducible $(\g,K)$-module $\pi_K$ is discretely decomposable as an $(\llll,L_K)$-module by \cite[Lem.\,1.5]{kob98c}.
 
\smallskip

(2) Let $\pi$ be an irreducible unitary representation of~$G$ realized in $\DD'(X)$.
The underlying $(\g,K)$-module $\pi_K$ is discretely decomposable as an $(\llll,L_K)$-module by~(1); therefore the first statement follows.
For the second statement, suppose that $\pi$ is a discrete series representation for $G/H$.
Then the underlying $(\g,K)$-module $\pi_K$ is dense in~$\pi$, and so is $\pp_{\tau}(\pi_K)$ in the Hilbert space $\pp_{\tau}(\pi)$.
Therefore the unitary representation $\pp_{\tau}(\pi)$ is a finite sum of (Harish-Chandra) discrete series representations of~$L$ for any $\tau\in\Disc(L_K/L_H)$, because $\pp_{\tau}(\pi_K)$ is of finite length as an $(\llll,L_K)$-module.
(The fact that any irreducible summand of $\pp_{\tau}(\pi)$ is a Harish-Chandra discrete series representation of~$L$ is also deduced from the general result \cite[Th.\,8.6]{kob98a}.)
The inclusion
$$\pi \subset\ \ \sumplus{\tau\in\Disc(L_K/L_H)}\ \ii_{\tau}\circ\pp_{\tau}(\pi)$$
then implies that $\pi$ decomposes discretely into a Hilbert direct sum of discrete series representations of~$L$.
If $L$ is noncompact, then there is no discrete series representation for the Riemannian symmetric space $L/L_K$, and therefore $\pp_{\tau}(\pi)^{L_K}=\{ 0\}$ for all $\tau\in\Disc(L_K/L_H)$.
Thus $\pi^{L_K}=\{ 0\}$.

\smallskip

(3) If $X$ is $L$-real spherical, then
$$\dim \Hom_{\llll,L_K}(\vartheta_{L_K},C^{\infty}(X)) < +\infty$$
for any irreducible $(\llll,L_K)$-module~$\vartheta_{L_K}$ by Lemma~\ref{lem:spher-finite-length}.(2); in particular,
$$\dim \Hom_{\llll,L_K}(\vartheta_{L_K},\pi_K) < +\infty$$
in~(1).
If $X_{\C}$ is $L_{\C}$-spherical, then Fact~\ref{fact:spherical} implies the existence of a constant $C>0$ such that
$$\dim \Hom_{\llll,L_K}(\vartheta_{L_K},C^{\infty}(X)) \leq C$$
for any irreducible $(\llll,L_K)$-module~$\vartheta_{L_K}$; in particular,
$$\dim \Hom_{\llll,L_K}(\vartheta_{L_K},\pi_K) \leq C$$
in~(1).
To control the multiplicities of the branching law in~(2), we use the inequality
$$\dim \Hom_L(\vartheta,\pi|_L) \leq \dim \Hom_{\llll,L_K}(\vartheta_{L_K},\pi_K).$$
(In our setting where restrictions are discretely decomposable, the dimensions actually coincide \cite[Th.\,2.7]{kob00}.)
\end{proof}

\begin{remark}
In the general setting~\ref{gen-setting} suppose that condition~(B) holds for the quadruple $(G,L,H,L_K)$.
If we drop the assumption that $X$ is $L$-real spherical, then the conclusion of Theorem~\ref{thm:condB}.(3) does not necessarily hold.
For example, if
$$(G,H,L) = \big({}^{\backprime}G\times\!{}^{\backprime}G, \Diag({}^{\backprime}G), {}^{\backprime}G\times\{ e\} \big)$$
where ${}^{\backprime}G$ is a noncompact semisimple Lie group, then the multiplicities in the branching law are infinite in Theorem~\ref{thm:condB}.(1)--(2) for any infinite-dimensional irreducible $(\g,K)$-module~$\pi_K$.
\end{remark}

\section{Examples where condition (A) or (B) fails} \label{subsec:ex-not-AB}

In the general setting~\ref{gen-setting}, the maps $\ii_{\tau}=\ii_{\tau,\{e\}}$ and $\pp_{\tau}=\pp_{\tau,\{e\}}$ of \eqref{eqn:i-tau} and \eqref{eqn:p-tau} are well defined, and conditions (A) and~(B) make sense for the quadruple $(G,L,H,L_K)$.
However, as mentioned in Remarks \ref{rem:AB-need-spherical} and~\ref{rem:necessary-cond}, if $X_{\C}$ is not $L_{\C}$-spherical, then neither (A) nor (B) holds in general.

Indeed, let ${}^{\backprime}G$ be a real linear reductive group.
The tensor product representation $\pi_1^{\infty}\otimes\pi_2^{\infty}$ is never $Z({}^{\backprime}\g_{\C})$-finite when $\pi_1$ and~$\pi_2$ are infinite-dimensional irreducible unitary representations of~${}^{\backprime}G$, as is derived from \cite[Th.\,6.1]{ko15}.
We use this fact to give an example where condition (A) or (B) fails when $\tau$ is the trivial one-dimensional representation of~$L_K$.

\begin{example} \label{ex:not-spher-A-B-nonsym}
Let $X={}^{\backprime}G\times\!{}^{\backprime}G$ and $G={}^{\backprime}G\times\!{}^{\backprime}G\times\!{}^{\backprime}G$.
We now view $X$ as a $G$-homogeneous space $G/H$ in two different ways, and in each case we find a closed subgroup $L$ of~$G$ such that $X_{\C}$ is not $L_{\C}$-spherical and condition (A) or~(B) fails for the quadruple $(G,L,H,L_K)$, where $L_K$ is a maximal closed subgroup of~$L$.

(1) The group $G$ acts transitively on~$X$ by
$$(g_1,g_2,g_3) \cdot (x,y) := (g_1xg_3^{-1}, g_2yg_3^{-1}).$$
The stabilizer of $e$ is the diagonal $H$ of ${}^{\backprime}G\times\!{}^{\backprime}G\times\!{}^{\backprime}G$, and so $X\simeq G/H$.
Note that $X_{\C}:=G_{\C}/H_{\C}$ is $G_{\C}$-spherical for ${}^{\backprime}G=\SL(2,\R)$ or $\SL(2,\C)$.
Let $L := {}^{\backprime}G\times\!{}^{\backprime}G\times\{e\}$.
Then $L$ acts transitively and freely (in particular, properly) on~$X$.
The group $L_H := L\cap H$ is trivial.
Let ${}^{\backprime}K$ be a maximal compact subgroup of~${}^{\backprime}G$, so that $L_K := {}^{\backprime}K\times\!{}^{\backprime}K\times\{e\}$ is a maximal compact subgroup of~$L$.
For $j=1,2$, take any ${}^{\backprime}K$-type $\tau_j$ of~$\pi_j$, and let $\tau := \tau_1^{\vee}\boxtimes\tau_2^{\vee} \in \widehat{L_K} \simeq \Disc(L_K/L_H)$.
The matrix coefficients of the outer tensor product representation $\vartheta := \pi_1^{\infty}\boxtimes\pi_2^{\infty}$ of~$L$ give a realization of $\vartheta$ in $C^{\infty}(Y,\V_{\tau})$ (where $Y:=L/L_K$) on which the center $Z(\llll_{\C})$ acts as scalars; in other words, $\vartheta\subset C^{\infty}(Y,\V_{\tau};\NN_{\nu})$ for some $\nu\in\Hom_{\C\text{-}\mathrm{alg}}(Z(\llll_{\C}),\C)$.
On the other hand, $\ii_{\tau}(\vartheta)$ is not $(1\otimes 1\otimes Z({}^{\backprime}\g_{\C}))$-finite, because the third factor ${}^{\backprime}G$ of~$G$ acts diagonally on~$X$ from the right, hence $\ii_{\tau}(\vartheta)$ is neither $\D_G(X)$-finite nor $Z(\g_{\C})$-finite.
In particular, condition~(A) fails.
We also note that the fact that $\ii_{\tau}(\vartheta)$ is not $\D_G(X)$-finite implies that $\ii_{\tau}(C^{\infty}(Y,\V_{\tau};\NN_{\nu}))$ is not contained in $C^{\infty}(X,\M_{\lambda})$ for any $\lambda\in\Hom_{\C\text{-}\mathrm{alg}}(\D_G(X),\C)$.

(2) The group $G$ acts transitively on~$X$
$$(g_1,g_2,g_3) \cdot (x,y) := (g_1x, g_2yg_3^{-1}).$$
The stabilizer of $e$ is $H:=\{e\}\times\Diag({}^{\backprime}G)$, and so $X\simeq G/H$.
Let $L := \Diag({}^{\backprime}G)\times\!{}^{\backprime}G$.
Then $L$ acts transitively and freely (in particular, properly) on~$X$.
Let ${}^{\backprime}K$ be a maximal compact subgroup of~${}^{\backprime}G$, so that $L_K := \Diag({}^{\backprime}K)\times\!{}^{\backprime}K$ is a maximal compact subgroup of~$L$.
The space $V\subset C^{\infty}({}^{\backprime}G\times\!{}^{\backprime}G)$ of matrix coefficients of $\pi_1^{\infty}\boxtimes\pi_2^{\infty}$ as above is a subrepresentation of the regular representation $C^{\infty}({}^{\backprime}G\times\!{}^{\backprime}G)$ for the action of $({}^{\backprime}G\times\!{}^{\backprime}G) \times ({}^{\backprime}G\times\!{}^{\backprime}G)$ by left and right multiplication, and restricts to a representation of~$G$.
Then $V$ is $Z(\g_{\C})$-finite.
However, $\pp_{\tau}(V)$ is not $Z(\llll_{\C})$-finite because the first factor ${}^{\backprime}G$ of~$L$ acts diagonally on $\pi_1^{\infty}\boxtimes\pi_2^{\infty}$.
Thus condition~(B) fails.
\end{example}

In Example~\ref{ex:not-spher-A-B-nonsym}.(2), condition~(B) fails but $X_{\C}$ is not $G_{\C}$-spherical.
Here is another example showing that condition~(B) may fail even when $X=G/H$ is a reductive symmetric space.

\begin{example} \label{ex:not-spher-B}
Let
$$\left\{\begin{array}{ccl}
G & = & \SO(2n,2)_0 \times \SO(2n,2)_0,\\
H & = & \Diag(\SO(2n,2)_0),\\
L & = & \SO(2n,1)_0 \times \U(n,1).
\end{array}\right.$$
Let $K$ be a maximal compact subgroup of~$G$ such that $L_K:=L\cap K$ is a maximal compact subgroup of~$L$.
The group $L$ acts transitively and properly on $X=G/H$, and $X_{\C}$ is $G_{\C}$-spherical but not $L_{\C}$-spherical.
For any holomorphic discrete series representation $\pi_1$ of $\SO(2n,2)_0$, the outer tensor product $\pi_1\boxtimes\pi_1^{\vee}$ is a discrete series representation for $G/H$.
It is not discretely decomposable when we restrict it to~$L$, in fact the first factor $\pi_1|_{\SO(2n,1)_0}$ involves continuous spectrum \cite{kob98a,oo96}.
Therefore, by Theorem~\ref{thm:condB}.(2), condition~(B) does not hold for the quadruple $(G,L,H,L_K)$.
In particular, by Lemma~\ref{lem:A-tilA-B-tilB}.(2), condition~($\widetilde{\mathrm{B}}$) does not hold for the quadruple $(\g_{\C},\llll_{\C},\h_{\C},\llll_{\C}\cap\kk_{\C})$.
Moreover, the conclusion of Lemma~\ref{lem:A-tilA-B-tilB}.(3) does not hold because at least one of the summands in the $L$-decomposition
$$\pi_1\boxtimes\pi_1^{\vee} \simeq \sumplus{\tau\in\Disc(L_K/L_H)} \pp_{\tau}(\pi_1\boxtimes\pi_1^{\vee})$$
contains continuous spectrum for $L$-irreducible decomposition, and thus $Z(\llll_{\C})$ cannot act on it as scalar multiplication.
\end{example}

\section{Existence problem of discrete series representations}

Proposition~\ref{prop:condA} and Theorem~\ref{thm:condB} have the following consequence, which is not needed in the proof but might be interesting in its own right.
Recall that the condition $\Disc(G/H)=\emptyset$ is equivalent to the rank condition \eqref{eqn:rank} \emph{not} being satisfied.

\begin{corollary}\label{cor:Disc-empty}
In the general setting~\ref{gen-setting}, assume that $X_{\C}=G_{\C}/H_{\C}$ is $G_{\C}$-spherical.
\begin{enumerate}
  \item If condition~(A) holds for the quadruple $(G,L,H,L_K)$ (in particular, if condition~($\widetilde{\mathrm{A}}$) holds for the quadruple $(\g_{\C},\llll_{\C},\h_{\C},\llll_{\C}\cap\kk_{\C})$) and if $\Disc(G/H)=\emptyset$, then $\Disc(L/L_H)=\emptyset$.
  \item If condition~(B) holds for the quadruple $(G,L,H,L_K)$ (in particular, if condition~($\widetilde{\mathrm{B}}$) holds for the quadruple $(\g_{\C},\llll_{\C},\h_{\C},\llll_{\C}\cap\kk_{\C})$) and if $\Disc(L/L_H)=\emptyset$, then $\Disc(G/H)=\emptyset$.
\end{enumerate}
\end{corollary}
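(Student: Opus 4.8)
\textbf{Proof plan for Corollary~\ref{cor:Disc-empty}.}

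The plan is to combine the finite-length results of Proposition~\ref{prop:condA} and Theorem~\ref{thm:condB} with the bundle picture $q : X = G/H \simeq L/L_H \overset{F}{\longrightarrow} Y = L/L_K$ of \eqref{eqn:bundleXY}, so that the existence of a discrete series representation on one side forces one on the other. Throughout, recall from Section~\ref{subsec:discreteseries} that $\Disc(G/H) \neq \emptyset$ means precisely that $L^2(X)$ contains a nonzero closed $G$-invariant subspace, and likewise $\Disc(L/L_H) \neq \emptyset$ means $L^2(X) \simeq L^2(\Gamma\backslash L)^{L_H}$ (with $\Gamma = \{e\}$) contains a nonzero closed $L$-invariant subspace. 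The key technical device on the $L$-side is the isotypic decomposition $L^2(L_K/L_H) \simeq \sum^{\oplus}_{\tau\in\Disc(L_K/L_H)} (V_\tau^\vee)^{L_H}\otimes V_\tau$ of \eqref{eqn:LKLH}, which via the unitary operator $\underline{\ii}$ of \eqref{eqn:XYL2} (with $\Gamma=\{e\}$) gives $L^2(X) \simeq \sum^{\oplus}_\tau (V_\tau^\vee)^{L_H}\otimes L^2(Y,\V_\tau)$; an $L$-discrete series for $L/L_H$ is then the same as an $L$-discrete series occurring in $L^2(Y,\V_\tau)$ for some $\tau\in\Disc(L_K/L_H)$, i.e. $\tau\mapsto L^2_d(Y,\V_\tau)$ detects $\Disc(L/L_H)$.

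For part~(1), I argue by contraposition: suppose $\Disc(L/L_H)\neq\emptyset$, so that $L^2_d(Y,\V_\tau)\neq\{0\}$ for some $\tau\in\Disc(L_K/L_H)$. By Proposition~\ref{prop:condA} (whose hypotheses $X$ is $G$-real spherical---which holds since $G_{\C}$-sphericity implies $G$-real sphericity, see Example~\ref{ex:sph}.(2)---and condition~(A) are in force), there exist finitely many discrete series representations $\pi_j$ for $G/H$ with $\ii_\tau((V_\tau^\vee)^{L_H}\otimes L^2_d(Y,\V_\tau)) \subset \bigoplus_j \pi_j$. Since $\ii_\tau$ is an isometric embedding into $L^2(X)$ and its source is nonzero, the right-hand side is nonzero, so at least one $\pi_j$ is a genuine (nonzero) discrete series representation for $G/H$; hence $\Disc(G/H)\neq\emptyset$. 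The parenthetical strengthening is immediate from Lemma~\ref{lem:A-tilA-B-tilB}.(1), which says condition~($\widetilde{\mathrm{A}}$) implies condition~(A).

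For part~(2), I again argue by contraposition: suppose $\Disc(G/H)\neq\emptyset$, and pick an irreducible unitary $\pi$ realized as a closed $G$-invariant subspace of $L^2(X)\subset\DD'(X)$ which is a discrete series for $G/H$. By Theorem~\ref{thm:condB}.(2), whose only hypothesis is condition~(B), the restriction $\pi|_L$ decomposes discretely into a Hilbert direct sum of irreducible unitary representations of~$L$, and moreover every such summand is a Harish-Chandra discrete series of~$L$. Since $\pi\subset L^2(X)\simeq L^2(\Gamma\backslash L)^{L_H}$ is a nonzero closed $L$-invariant subspace, it contains a nonzero irreducible $L$-summand $\vartheta$; this $\vartheta$ is realized in $L^2(L/L_H)$ by construction, so $\vartheta\in\Disc(L/L_H)$ and in particular $\Disc(L/L_H)\neq\emptyset$. (Alternatively, projecting $\pi$ by some $\pp_\tau$ with $\pp_\tau(\pi)\neq\{0\}$ exhibits $L$-discrete series in $L^2(Y,\V_\tau)$ directly.) Again the parenthetical strengthening follows from Lemma~\ref{lem:A-tilA-B-tilB}.(2), noting that $G_{\C}$-sphericity of $X_{\C}$ is exactly the hypothesis needed there for ($\widetilde{\mathrm{B}}$) to imply (B).

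I expect the only subtle point is the bookkeeping in the definitional translation ``$\Disc(L/L_H)\neq\emptyset \iff L^2_d(Y,\V_\tau)\neq\{0\}$ for some $\tau\in\Disc(L_K/L_H)$,'' which relies on the fact that $L_K$ is compact (so every $L$-summand of $L^2(Y,\V_\tau)$ is automatically a Harish-Chandra discrete series, as noted at the start of Section~\ref{subsec:transfer-spec-condA}) together with the Frobenius reciprocity identification $(V_\tau^\vee)^{L_H}\neq 0 \iff \tau\in\Disc(L_K/L_H)$. Everything else is a direct appeal to results already proved in the excerpt, so no serious obstacle arises; the main care needed is simply to invoke the sphericity hypotheses in the right places so that the chain ($\widetilde{\mathrm{A}}$)$\Rightarrow$(A) and ($\widetilde{\mathrm{B}}$)$\Rightarrow$(B) applies.
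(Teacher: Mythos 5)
Your proposal is correct and follows essentially the same route as the paper's own proof: both parts proceed by contraposition, with part~(1) resting on Proposition~\ref{prop:condA} applied to a nonzero $L^2_d(Y,\V_\tau)$ and part~(2) resting on Theorem~\ref{thm:condB}.(2) together with the identification $L^2(G/H)\simeq L^2(L/L_H)$; the only difference is that you spell out the reduction ``$\Disc(L/L_H)\neq\emptyset\iff L^2_d(Y,\V_\tau)\neq\{0\}$ for some $\tau$'' via \eqref{eqn:XYL2}, which the paper leaves implicit.
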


Discrete series representations for $L/L_H$ form a subset of Harish-Chandra's discrete series representations for~$L$ because $L_H$ is compact.
However, this subset may be strict: $\Disc(L/L_H)=\emptyset$ does not necessarily imply $\Disc(L/\{ e\})=\emptyset$.

\begin{example}
$\Disc(\SO(2n,1)/\U(n))=\emptyset$ if and only if $n$ is odd, but $\Disc(\SO(2n,1)/\{ e\})\neq\emptyset$ for all $n\in\N_+$.
\end{example}

\begin{proof}[Proof of Corollary~\ref{cor:Disc-empty}]
\hspace{-0.08cm}Suppose condition\,(A) holds.\,For any $\vartheta\!\in\!\Disc(L/L_H)$, Proposition~\ref{prop:condA} implies the existence of finitely many $\pi_j\in\Disc(G/H)$ such that $\ii_{\tau}(\vartheta)\subset\bigoplus_j \pi_j$.
Thus $\Disc(L/L_H)\neq\emptyset$ implies\linebreak $\Disc(G/H)\neq\emptyset$.

Suppose condition~(B) holds.
By Theorem~\ref{thm:condB}.(2)--(3), any $\pi\in\Disc(G/H)$ splits into a direct sum of Harish-Chandra discrete series representations $\vartheta_j$ of~$L$.
Since $\pi$ is realized on a closed subspace of $L^2(G/H)\simeq L^2(L/L_H)$, such $\vartheta_j$ are realized on closed subspaces of $L^2(L/L_H)$, \ie belong to $\Disc(L/L_H)$.
Thus $\Disc(G/H)\neq\emptyset$ implies $\Disc(L/L_H)\neq\emptyset$.
\end{proof}

\chapter{The maps $\ii_{\tau,\Gamma}$ and $\pp_{\tau,\Gamma}$ preserve type~$\I$ and type~$\II$} \label{sec:transfer-Gamma-type-I-II}

In Section~\ref{subsec:def-type-I-II} we decomposed joint $L^2$-eigenfunctions of $\D_G(X)$ on \emph{pseudo-Rieman\-nian} locally homogeneous spaces $X_{\Gamma}=\Gamma\backslash G/H$ into type~$\I$ and type~$\II$: those of type~$\I$ arise from distribution vectors of discrete series representations for $X=G/H$, and those of type~$\II$ are defined by taking an orthogonal complement in $L^2(X_{\Gamma})$.
In this chapter we introduce an analogous notion for Hermitian vector bundles $\V_{\tau}$ over the \emph{Riemannian} locally symmetric space $Y_{\Gamma}=\Gamma\backslash L/L_K$, by using Harish-Chandra's discrete series representations for~$L$.
We prove (Theorem~\ref{thm:transfer-spec}) that the transfer maps $\nnu$ and~$\llambda$ preserve discrete spectrum of type~$\I$ and of type~$\II$ between $L^2(X_{\Gamma})$ and $L^2(Y_{\Gamma},\V_{\tau})$ if conditions (Tf), (A), (B) of Section~\ref{subsec:cond-Tf-A-B} are satisfied and $X$ is $G$-real spherical.
This is the case in the main setting~\ref{spher-setting} by Proposition~\ref{prop:cond-Tf-A-B-satisfied}, and in the group manifold case by Proposition~\ref{prop:cond-Tf-A-B-satisfied-GxG}.
The results of this chapter play a key role in the proof of Theorem~\ref{thm:Specd-lambda} (see Section~\ref{subsec:proof-Specd-lambda}).

\section{Type~$\I$ and type~$\II$ for Hermitian bundles}\label{subsec:type-I-II-Herm}

Let $Y=L/L_K$ be a Riemannian symmetric space, where $L$ is a real reductive Lie group and $L_K$ a maximal compact subgroup of~$L$.
Let $\Gamma$ be a torsion-free discrete subgroup of~$L$.
As in Section~\ref{subsec:intro-main-result}, for $(\tau,V_{\tau})\in\widehat{L_K}$ and $\F=\A$, $C^{\infty}$, $L^2$, or~$\DD'$, we denote by $\F(Y_{\Gamma},\V_{\tau})$ the space of analytic, smooth, square-integrable, or distribution sections of the Hermitian vector bundle $\V_{\tau} := \Gamma\backslash L \times_{\scriptscriptstyle L_K} V_{\tau}$ over~$Y_{\Gamma}$.
We may regard $L^2(Y,\V_{\tau})$ as a subspace of $\DD'(Y,\V_{\tau})$, where $\DD'(Y,\V_{\tau})$ is endowed with the topology coming from Remark~\ref{rem:top-distrib}.
Consider the linear map
$${p'_{\Gamma}}^{\ast} : L^2(Y_{\Gamma},\V_{\tau}) \longrightarrow \DD'(Y,\V_{\tau})$$
induced by the natural projection $p'_{\Gamma} : Y\rightarrow Y_{\Gamma}$.
For any $\C$-algebra homomorphism $\nu : Z(\llll_{\C})\to\C$, we define $L^2(Y_{\Gamma},\V_{\tau};\NN_{\nu})_{\I}$ to be the preimage, under~${p'_{\Gamma}}^{\ast}$, of the closure of $L^2(Y,\V_{\tau};\NN_{\nu})$ in $\DD'(Y,\V_{\tau})$.
Given~$\tau$, there are at most finitely many $\nu$ such that $L^2(Y,\V_{\tau};\NN_{\nu})\neq\{ 0\}$ by the Blattner formula for discrete series representations \cite{hs75}, hence there are at most finitely many $\nu$ such that $L^2(Y_{\Gamma},\V_{\tau};\NN_{\nu})_{\I}\neq\{ 0\}$.
On the other hand, there may exist countably many~$\nu$ such that $L^2(Y_{\Gamma},\V_{\tau};\NN_{\nu})_{\II}$ is nonzero.
As in Lemma~\ref{lem:L2Iclosed}, the subspace $L^2(Y_{\Gamma},\V_{\tau};\NN_{\nu})_{\I}$ is closed in the Hilbert space $L^2(Y_{\Gamma},\V_{\tau};\NN_{\nu})$; we define $L^2(Y_{\Gamma},\V_{\tau};\NN_{\nu})_{\II}$ to be its orthogonal complement in $L^2(Y_{\Gamma},\V_{\tau};\NN_{\nu})$, so that we have the Hilbert space decomposition
$$L^2(Y_{\Gamma},\V_{\tau};\NN_{\nu}) = L^2(Y_{\Gamma},\V_{\tau};\NN_{\nu})_{\I} \oplus L^2(Y_{\Gamma},\V_{\tau};\NN_{\nu})_{\II}.$$
We also set
$$L^2_d(Y_{\Gamma},\V_{\tau}) := L^2_d(Y_{\Gamma},\V_{\tau})_{\I} \oplus L^2_d(Y_{\Gamma},\V_{\tau})_{\II},$$
where
\begin{eqnarray*}
L^2_d(Y_{\Gamma},\V_{\tau})_{\I} & := & \bigoplus_{\nu}\, L^2(Y_{\Gamma},\V_{\tau};\NN_{\nu})_{\I} \quad\,\,\, \mathrm{(finite\ sum)},\\
L^2_d(Y_{\Gamma},\V_{\tau})_{\II} & := & \sumplus{\nu}\, L^2(Y_{\Gamma},\V_{\tau};\NN_{\nu})_{\II} \ \mathrm{(Hilbert\ completion)}.
\end{eqnarray*}

\begin{notation} \label{def:Vtype-I-II}
For any $\tau\in\widehat{L_K}$ and any $i=\I,\II$, we set
$$\Spec_d^{Z(\llll_{\C})}(Y_{\Gamma},\V_{\tau})_i := \big\{ \nu\in\Hom_{\C\text{-}\mathrm{alg}}(Z(\llll_{\C}),\C) : L^2(Y_{\Gamma},\V_{\tau};\NN_{\nu})_i \neq \{ 0\}\big\}$$
and
\begin{align*}
\Spec_d^{Z(\llll_{\C})}(Y_{\Gamma},\V_{\tau}) & := \big\{ \nu\in\Hom_{\C\text{-}\mathrm{alg}}(Z(\llll_{\C}),\C) : L^2(Y_{\Gamma},\V_{\tau};\NN_{\nu}) \neq \{ 0\}\big\}\\
& \ = \, \Spec_d^{Z(\llll_{\C})}(Y_{\Gamma},\V_{\tau})_{\I} \cup \Spec_d^{Z(\llll_{\C})}(Y_{\Gamma},\V_{\tau})_{\II}.
\end{align*}
\end{notation}

\section{The maps $\ii_{\tau,\Gamma}$ and~$\pp_{\tau,\Gamma}$ preserve type~$\I$ and type~$\II$}\label{subsec:compatibility-I-II}

We now go back to the general setting~\ref{gen-setting} of the paper.
For a torsion-free discrete subgroup $\Gamma$ of~$L$, recall the notation $L^2(X_{\Gamma};\M_{\lambda})_i$ and $L^2_d(X_{\Gamma})_i$ as well as $\Spec_d(X_{\Gamma})_i$ from Section~\ref{subsec:def-type-I-II}, and conditions (Tf), (A), (B) from Section~\ref{subsec:cond-Tf-A-B}.
For $(\tau,V_{\tau})\in\Disc(L_K/L_H)$ and $\nu\in\Hom_{\C\text{-}\mathrm{alg}}(Z(\llll_{\C}),\C)$, recall the notation $L^2(Y_{\Gamma},\V_{\tau};\NN_{\nu})_i$ and $L^2(Y_{\Gamma},\V_{\tau})_i$ as well as $\Spec_d^{Z(\llll_{\C})}(Y_{\Gamma},\V_{\tau})$ from Section~\ref{subsec:type-I-II-Herm}.

The following theorem shows that the pseudo-Riemannian spectrum\linebreak $\Spec_d(X_{\Gamma})_i$ (for $i=\I$ or~$\II$) is obtained from the Riemannian spectrum $\Spec_d^{Z(\llll_{\C})}(Y_{\Gamma},\V_{\tau})$ via the transfer maps $\nnu$ and~$\llambda$ of condition~(Tf).

\begin{theorem}\label{thm:transfer-spec}
In the general setting~\ref{gen-setting}, suppose that condition (Tf) is satisfied for the quadruple $(G,L,H,L_K)$, with transfer maps $\nnu$ and~$\llambda$.
Let $\Gamma$ be a torsion-free discrete subgroup of~$L$.
\begin{enumerate}
  \item If condition~(A) holds and $X$ is $G$-real spherical, then for any $(\nu,\tau)\in\Hom_{\C\text{-}\mathrm{alg}}(Z(\llll_{\C})/\Ker(\dd\ell^{\tau}),\C)\times\Disc(L_K/L_H)$,
  $$\ii_{\tau,\Gamma}\big((V_{\tau}^{\vee})^{L_H}\otimes L^2_d(Y_{\Gamma},\V_{\tau};\NN_{\nu})_{\I}\big) \subset L^2_d(X_{\Gamma};\M_{\llambda(\nu,\tau)})_{\I} \,;$$
  in particular, for any $\tau\in\Disc(L_K/L_H)$,
  $$\ii_{\tau,\Gamma}\big((V_{\tau}^{\vee})^{L_H}\otimes L^2_d(Y_{\Gamma},\V_{\tau})_{\I}\big) \subset L^2_d(X_{\Gamma})_{\I}.$$
  \item If condition~(B) holds, then for any $(\lambda,\tau)\in\Hom_{\C\text{-}\mathrm{alg}}(\D_G(X),\C)\times\linebreak\Disc(L_K/L_H)$,
  $$\pp_{\tau,\Gamma}\big(L^2_d(X_{\Gamma};\M_{\lambda})_{\I}\big) \subset (V_{\tau}^{\vee})^{L_H}\otimes L^2_d(Y_{\Gamma},\V_{\tau};\NN_{\nnu(\lambda,\tau)})_{\I} \,;$$
  in particular, for any $\tau\in\Disc(L_K/L_H)$,
  $$\pp_{\tau,\Gamma}\big(L^2_d(X_{\Gamma})_{\I}\big) \subset (V_{\tau}^{\vee})^{L_H}\otimes L^2_d(Y_{\Gamma},\V_{\tau})_{\I}.$$
  \item If condition~(B) holds, then for any $(\nu,\tau)\!\in\!\Hom_{\C\text{-}\mathrm{alg}}(Z(\llll_{\C})/\Ker(\dd\ell^{\tau}),\C)\times\Disc(L_K/L_H)$,
  $$\ii_{\tau,\Gamma}\big((V_{\tau}^{\vee})^{L_H}\otimes L^2_d(Y_{\Gamma},\V_{\tau};\NN_{\nu})_{\II}\big) \subset L^2_d(X_{\Gamma};\M_{\llambda(\nu,\tau)})_{\II} \,;$$
  in particular, for any $\tau\in\Disc(L_K/L_H)$,
  $$\ii_{\tau,\Gamma}\big((V_{\tau}^{\vee})^{L_H}\otimes L^2_d(Y_{\Gamma},\V_{\tau})_{\II}\big) \subset L^2_d(X_{\Gamma})_{\II}.$$
  \item If condition~(A) holds and $X$ is $G$-real spherical, then for any $(\lambda,\tau)\in\Hom_{\C\text{-}\mathrm{alg}}(\D_G(X),\C)\times\Disc(L_K/L_H)$,
  $$\pp_{\tau,\Gamma}\big(L^2_d(X_{\Gamma};\M_{\lambda})_{\II}\big) \subset (V_{\tau}^{\vee})^{L_H}\otimes L^2_d(Y_{\Gamma},\V_{\tau};\NN_{\nnu(\lambda,\tau)})_{\II} \,;$$
  in particular, for any $\tau\in\Disc(L_K/L_H)$,
  $$\pp_{\tau,\Gamma}\big(L^2_d(X_{\Gamma})_{\II}\big) \subset (V_{\tau}^{\vee})^{L_H}\otimes L^2_d(Y_{\Gamma},\V_{\tau})_{\II}.$$
  \item If conditions (A) and~(B) both hold and $X$ is $G$-real spherical, then for any $\lambda\in\Hom_{\C\text{-}\mathrm{alg}}(\D_G(X),\C)$,
  \begin{eqnarray*}
  L^2_d(X_{\Gamma};\M_{\lambda})_{\I} & \simeq & \ \sumplus{\tau\in\Disc(L_K/L_H)}\,  \ii_{\tau,\Gamma}\big((V_{\tau}^{\vee})^{L_H}\otimes L^2_d(Y_{\Gamma},\V_{\tau};\NN_{\nnu(\lambda,\tau)})_{\I}\big),\\
  L^2_d(X_{\Gamma};\M_{\lambda})_{\II} & \simeq & \ \sumplus{\tau\in\Disc(L_K/L_H)}\,  \ii_{\tau,\Gamma}\big((V_{\tau}^{\vee})^{L_H}\otimes L^2_d(Y_{\Gamma},\V_{\tau};\NN_{\nnu(\lambda,\tau)})_{\II}\big),
  \end{eqnarray*}
  where $\simeq$ denotes unitary equivalence between Hilbert spaces.
\end{enumerate}
\end{theorem}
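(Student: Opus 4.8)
The plan is to prove the five statements together, with items (1)--(4) forming the bulk of the work and (5) following formally. The overall strategy is to use the fibration $q_\Gamma : X_\Gamma \to Y_\Gamma$ with compact fiber $F = L_K/L_H$, together with the isotypic decomposition \eqref{eqn:LKLH} of $L^2(L_K/L_H)$ and the resulting unitary operator $\underline{\ii}_\Gamma$ of \eqref{eqn:XYL2}. The key algebraic input is condition (Tf): via Remark~\ref{rem:specXYGamma}, the maps $\ii_{\tau,\Gamma}$ and $\pp_{\tau,\Gamma}$ intertwine the systems $(\M_\lambda)$ on $X_\Gamma$ with the systems $(\NN_{\nnu(\lambda,\tau)})$ on $Y_\Gamma$; combined with Lemma~\ref{lem:p-tau-i-tau} this already gives the identification of $L^2(X_\Gamma;\M_\lambda)$ with $\sum^\oplus_\tau \ii_{\tau,\Gamma}((V_\tau^\vee)^{L_H}\otimes L^2(Y_\Gamma,\V_\tau;\NN_{\nnu(\lambda,\tau)}))$, together with Corollary~\ref{cor:L2d(XGamma,Mlambda)-orth}. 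So the content of the theorem is entirely about \emph{type}: showing that $\ii_{\tau,\Gamma}$ and $\pp_{\tau,\Gamma}$ respect the decompositions into type~$\I$ and type~$\II$, which by definition are governed by the closures in $\DD'(X)$ and $\DD'(Y,\V_\tau)$ of the ``upstairs'' spaces $L^2(X;\M_\lambda)$ and $L^2(Y,\V_\tau;\NN_\nu)$.

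First I would set up the commutation with the covering projections: $p_\Gamma^\ast \circ \ii_{\tau,\Gamma} = \ii_\tau \circ {p'_\Gamma}^{\!\!\ast}$ and $\pp_\tau \circ p_\Gamma^\ast = {p'_\Gamma}^{\!\!\ast} \circ \pp_{\tau,\Gamma}$ (Observation~\ref{obs:extend-i-tau}), so that membership in a type-$\I$ subspace, which is defined as a preimage under a covering pullback of a closure, can be pushed through $\ii$ and $\pp$. For (1), take $f \in L^2(Y_\Gamma,\V_\tau;\NN_\nu)_\I$; by definition ${p'_\Gamma}^{\!\!\ast}f$ lies in the closure in $\DD'(Y,\V_\tau)$ of $L^2(Y,\V_\tau;\NN_\nu)$. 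Applying $\ii_\tau$ (continuous for the distribution topology of Remark~\ref{rem:top-distrib}) sends this into the closure of $\ii_\tau((V_\tau^\vee)^{L_H}\otimes L^2(Y,\V_\tau;\NN_\nu))$. Here Proposition~\ref{prop:condA} enters: under condition~(A) and $G$-real sphericity, $\ii_\tau((V_\tau^\vee)^{L_H}\otimes L^2_d(Y,\V_\tau))$ sits inside a finite direct sum of discrete series representations for $G/H$, hence inside $L^2(X;\M_{\llambda(\nu,\tau)})$ (using the eigenvalue matching from condition (Tf)), so its closure lands in the type-$\I$ subspace upstairs. Then $p_\Gamma^\ast(\ii_{\tau,\Gamma}f) = \ii_\tau({p'_\Gamma}^{\!\!\ast}f)$ lies in that closure, which is exactly the definition of $\ii_{\tau,\Gamma}f \in L^2(X_\Gamma;\M_{\llambda(\nu,\tau)})_\I$. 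For (2), the dual statement, I would use that $\pp_{\tau,\Gamma}$ is the adjoint of $\ii_{\tau,\Gamma}$ (equation \eqref{eqn:piadjoint}) together with Theorem~\ref{thm:condB}.(2): any discrete series for $G/H$ restricted to $L$ decomposes discretely into Harish-Chandra discrete series of $L$, so that $\pp_\tau$ applied to (the closure of) $L^2(X;\M_\lambda)$ lands in (the closure of) $L^2_d(Y,\V_\tau)$, and pushing through the covering pullback gives the claim downstairs. For (3) and (4), I would argue by orthogonality: since $\ii_{\tau,\Gamma}$ and $\pp_{\tau,\Gamma}$ are adjoint isometries (up to the projector $\ii_{\tau,\Gamma}\circ\pp_{\tau,\Gamma}$), and since type~$\II$ is the orthogonal complement of type~$\I$ inside each fixed joint-eigenspace, (3) follows from (2) by taking orthogonal complements within $L^2(X_\Gamma;\M_{\llambda(\nu,\tau)})$ and within $(V_\tau^\vee)^{L_H}\otimes L^2(Y_\Gamma,\V_\tau;\NN_\nu)$, and (4) follows similarly from (1). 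Finally, (5) is the synthesis: decompose $L^2_d(X_\Gamma;\M_\lambda)$ under $\underline{\ii}_\Gamma$ over $\tau \in \Disc(L_K/L_H)$ using Theorem~\ref{thm:transfer} (density of the $\ii_{\tau,\Gamma}$-images) and Corollary~\ref{cor:L2d(XGamma,Mlambda)-orth}, observe that by (Tf) the only $\tau$-component contributing to $\M_\lambda$ is the one with $\nnu(\lambda,\tau)$-eigenvalue, and then use (1)--(4) to see that the $\I$ and $\II$ parts match $\tau$-component by $\tau$-component.

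The main obstacle, as I see it, is item (2) — more precisely, showing $\pp_{\tau,\Gamma}(L^2_d(X_\Gamma;\M_\lambda)_\I) \subset (V_\tau^\vee)^{L_H}\otimes L^2_d(Y_\Gamma,\V_\tau;\NN_{\nnu(\lambda,\tau)})_\I$, since type~$\I$ downstairs is defined via the closure of $L^2(Y,\V_\tau;\NN_\nu)$ upstairs and one has to control what $\pp_\tau$ does to a \emph{closure} of a space of discrete-series matrix coefficients, not just to the discrete series themselves. The right tool is Theorem~\ref{thm:condB}.(2), which tells us each discrete series $\pi$ for $G/H$ restricts discretely and that $\pp_\tau(\pi)$ is a finite sum of Harish-Chandra discrete series of $L$; but I will need to be careful that this passes to closures, i.e.\ that $\pp_\tau$ is continuous from $\DD'(X)$ to $\DD'(Y,\V_\tau)$ in the topology of Remark~\ref{rem:top-distrib} (it is, since it is a finite integral over the compact group $L_K$), and that the image of the closure of $L^2(X;\M_\lambda)$ under $\pp_\tau$ is contained in the closure of the image — which holds because $\pp_\tau$ is continuous and linear and the relevant image sets sit inside $\DD'(Y,\V_\tau;\NN_{\nnu(\lambda,\tau)})$ by Remark~\ref{rem:specXYGamma}. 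A secondary subtlety in (1) is that Proposition~\ref{prop:condA} is stated for the full $L^2_d(Y,\V_\tau)$ rather than for an individual $\NN_\nu$-eigenspace, but since $Z(\llll_\C)$ acts by scalars on each discrete series this refines at once to the $\nu$-graded statement, so no extra work is needed there. Once these continuity-and-closure points are pinned down, everything else is bookkeeping with the $\tau$-decomposition \eqref{eqn:XYL2}.
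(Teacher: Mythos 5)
Your proposal matches the paper's proof in both structure and key ingredients: Proposition~\ref{prop:condA} for~(1), Theorem~\ref{thm:condB}.(2) for~(2), the adjoint relation~\eqref{eqn:piadjoint} to derive~(3) from~(2) and~(4) from~(1) by orthogonality, the commutation of $\ii_{\tau,\Gamma}$, $\pp_{\tau,\Gamma}$ with the covering pullbacks (Observation~\ref{obs:extend-i-tau}) plus continuity of $\ii_\tau$, $\pp_\tau$ on distributions to control the closures defining type~$\I$, and the $\tau$-decomposition~\eqref{eqn:XYL2} for~(5). The only cosmetic deviation is your invocation of adjointness in~(2), which the paper does not need there (it is used only in (3)--(4)), but this is harmless; otherwise you have correctly identified both the argument and the delicate closure points.
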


Theorem~\ref{thm:transfer-spec} will be proved in Section~\ref{subsec:proof-compat-I-II}.

The following is a direct consequence of Theorem~\ref{thm:transfer-spec}.(5).

\begin{corollary}\label{cor:Specd-lambda-Wtau}
In the general setting~\ref{gen-setting}, suppose that $X$ is $G$-real spherical and that conditions (Tf), (A), (B) hold.
Let $\Gamma$ be a torsion-free discrete subgroup of~$L$.
Then for $i=\I$ or~$\II$, we have
$$\Spec_d(X_{\Gamma})_i = \bigcup_{\tau\in\Disc(L_K/L_H)} \big\{ \llambda(\nu,\tau) : \nu \in \Spec_d^{Z(\llll_{\C})}(Y_{\Gamma},\V_{\tau})_i \big\}$$
hence
$$\Spec_d(X_{\Gamma}) = \bigcup_{\tau\in\Disc(L_K/L_H)} \big\{ \llambda(\nu,\tau) : \nu \in \Spec_d^{Z(\llll_{\C})}(Y_{\Gamma},\V_{\tau}) \big\}.$$
\end{corollary}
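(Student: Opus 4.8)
The statement Corollary~\ref{cor:Specd-lambda-Wtau} is meant to be read off directly from Theorem~\ref{thm:transfer-spec}.(5), so the plan is to unwind the definitions of $\Spec_d(X_\Gamma)_i$ and $\Spec_d^{Z(\llll_\C)}(Y_\Gamma,\V_\tau)_i$ and combine them with the unitary equivalences provided there. First I would fix $i\in\{\emptyset,\I,\II\}$ and recall that, by Notation~\ref{def:type-I-II} (together with \eqref{eqn:L2-d-I-II}) and Notation~\ref{def:Vtype-I-II}, a homomorphism $\lambda$ lies in $\Spec_d(X_\Gamma)_i$ exactly when $L^2(X_\Gamma;\M_\lambda)_i\neq\{0\}$, and $\nu$ lies in $\Spec_d^{Z(\llll_\C)}(Y_\Gamma,\V_\tau)_i$ exactly when $L^2(Y_\Gamma,\V_\tau;\NN_\nu)_i\neq\{0\}$.

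\textbf{The two inclusions.} For the inclusion ``$\supset$'': take $\tau\in\Disc(L_K/L_H)$ and $\nu\in\Spec_d^{Z(\llll_\C)}(Y_\Gamma,\V_\tau)_i$, so $L^2(Y_\Gamma,\V_\tau;\NN_\nu)_i\neq\{0\}$. Since $(V_\tau^\vee)^{L_H}\simeq\C$ is one-dimensional (as $X_\C$ is $L_\C$-spherical, hence in particular in the hypotheses here one has $X$ $G$-real spherical plus conditions (Tf),(A),(B)), the space $(V_\tau^\vee)^{L_H}\otimes L^2(Y_\Gamma,\V_\tau;\NN_\nu)_i$ is nonzero, and $\ii_{\tau,\Gamma}$ is injective, so $\ii_{\tau,\Gamma}\big((V_\tau^\vee)^{L_H}\otimes L^2(Y_\Gamma,\V_\tau;\NN_\nu)_i\big)\neq\{0\}$. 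By the corresponding summand of Theorem~\ref{thm:transfer-spec}.(5) (using (1) for $i=\I$, (3) for $i=\II$, and their sum for $i=\emptyset$), this nonzero subspace is contained in $L^2(X_\Gamma;\M_{\llambda(\nu,\tau)})_i$, hence $\llambda(\nu,\tau)\in\Spec_d(X_\Gamma)_i$. For ``$\subset$'': take $\lambda\in\Spec_d(X_\Gamma)_i$, so $L^2(X_\Gamma;\M_\lambda)_i\neq\{0\}$. By the unitary equivalence in Theorem~\ref{thm:transfer-spec}.(5),
$$L^2_d(X_\Gamma;\M_\lambda)_i \simeq \sumplus{\tau\in\Disc(L_K/L_H)} \ii_{\tau,\Gamma}\big((V_\tau^\vee)^{L_H}\otimes L^2_d(Y_\Gamma,\V_\tau;\NN_{\nnu(\lambda,\tau)})_i\big),$$
so at least one summand is nonzero; say the one indexed by $\tau_0$. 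Then $L^2(Y_\Gamma,\V_{\tau_0};\NN_{\nnu(\lambda,\tau_0)})_i\neq\{0\}$, i.e. $\nu:=\nnu(\lambda,\tau_0)\in\Spec_d^{Z(\llll_\C)}(Y_\Gamma,\V_{\tau_0})_i$, and moreover $\nu$ vanishes on $\Ker(\dd\ell^{\tau_0})$ (Remark~\ref{rem:lambda-circ-nu} / \cite[Prop.\,4.8]{kkdiffop}), so that $\llambda(\nu,\tau_0)$ is defined and equals $\lambda$ by the fact that $\nnu$ and $\llambda$ are inverse to each other (Remark~\ref{rem:lambda-circ-nu}.(1)). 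Hence $\lambda=\llambda(\nu,\tau_0)$ lies in the right-hand union.

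\textbf{Assembling.} Putting the two inclusions together gives the displayed equality for $i=\emptyset,\I,\II$. One small point to spell out is the well-definedness of $\llambda(\nnu(\lambda,\tau_0))$: Theorem~\ref{thm:transfer-spec}.(5) already records that the summands of $L^2_d(X_\Gamma;\M_\lambda)_i$ are indexed by $\tau$ with $\pp_{\tau,\Gamma}(L^2(X_\Gamma;\M_\lambda))\neq\{0\}$, and for such $\tau$ the image $\pp_\tau(C^\infty(X;\M_\lambda))$ is nonzero, so $\nnu(\lambda,\tau)$ vanishes on $\Ker(\dd\ell^\tau)$ and $\llambda(\nnu(\lambda,\tau),\tau)=\lambda$ holds. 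The main (and only mild) obstacle is therefore bookkeeping: making sure the decomposition in Theorem~\ref{thm:transfer-spec}.(5) really is indexed in a way compatible with the sets $\Spec_d^{Z(\llll_\C)}(Y_\Gamma,\V_\tau)_i$ of Notation~\ref{def:Vtype-I-II}, and that the one-dimensionality of $(V_\tau^\vee)^{L_H}$ lets one pass freely between ``$(V_\tau^\vee)^{L_H}\otimes L^2(Y_\Gamma,\V_\tau;\NN_\nu)_i$ nonzero'' and ``$L^2(Y_\Gamma,\V_\tau;\NN_\nu)_i$ nonzero''. No analytic input beyond Theorem~\ref{thm:transfer-spec} is needed.
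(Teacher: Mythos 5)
Your proof is correct and follows exactly the paper's approach (the paper itself records the corollary as ``a direct consequence of Theorem~\ref{thm:transfer-spec}.(5)'', and you are simply unwinding that deduction with the two notations for discrete spectrum). One small blemish: your parenthetical justifying one-dimensionality of $(V_\tau^\vee)^{L_H}$ via $L_\C$-sphericity both invokes a hypothesis the corollary does not assume and reverses the direction of implication; fortunately the argument never needs one-dimensionality, only finite-dimensionality and nonzeroness of $(V_\tau^\vee)^{L_H}$, which hold automatically for $\tau\in\Disc(L_K/L_H)$, so nothing is actually at stake.
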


\begin{remark}
There exist elements $\tau\in\widehat{L_K}$ such that $L^2_d(Y_{\Gamma},\V_{\tau})_{\I}=\{ 0\}$ for any~$\Gamma$: indeed, by the Blattner formula for discrete series representations \cite{hs75}, there are some ``small'' representations $\tau$ of~$L_K$ for which $L^2(Y,\V_{\tau})=L^2(L/L_K,\V_{\tau})$ has no discrete series representation (for instance the one-dimensional trivial representation $\tau$).
On the other hand, there exist elements $\tau\in\widehat{L_K}$ such that $L^2_d(Y_{\Gamma},\V_{\tau})_{\I}\neq\{ 0\}$ as soon as $\rank L=\rank L_K$, because in that case $L^2_d((\Gamma\times\{ e\})\backslash (L\times L)/\Diag(L))_{\I}\neq\{ 0\}$ by \cite{kk16}.
In the setting of Theorem~\ref{thm:transfer-spec}, we have a refinement of this statement: there exists $\tau\in\Disc(L_K/L_H)$ such that $L^2_d(Y_{\Gamma},\V_{\tau})_{\I}\neq\{ 0\}$.
\end{remark}

\section[Proof of Theorem~9.2]{Proof of Theorem~\ref{thm:transfer-spec}} \label{subsec:proof-compat-I-II}


For any torsion-free discrete subgroup $\Gamma$ of~$L$, we denote by
$$p_{\Gamma}^{\ast} : L^2(X_{\Gamma}) \longrightarrow \DD'(X)$$
the linear map induced by the natural projection $p_{\Gamma} : X\rightarrow X_{\Gamma}$.
We shall use the following observation.

\begin{observation} \label{obs:extend-i-tau}
In the general setting~\ref{gen-setting}, for any $\tau\in\Disc(L_K/L_H)$, the maps $\ii_{\tau} : (V_{\tau}^{\vee})^{L_H}\otimes L^2(Y,\V_{\tau})\rightarrow L^2(X)$ and $\pp_{\tau} : L^2(X)\rightarrow (V_{\tau}^{\vee})^{L_H}\otimes L^2(Y,\V_{\tau})$ extend to continuous maps $\ii_{\tau} : (V_{\tau}^{\vee})^{L_H}\otimes \DD'(Y,\V_{\tau})\rightarrow\DD'(X)$ and $\pp_{\tau} : \DD'(X)\rightarrow (V_{\tau}^{\vee})^{L_H}\otimes\DD'(Y,\V_{\tau})$ (for the topology given by Remark~\ref{rem:top-distrib}), and for any torsion-free discrete subgroup $\Gamma$ of~$L$ the following diagram commutes.
\end{observation}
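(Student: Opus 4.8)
The stated diagram encodes the two identities $\pp_\tau\circ p_\Gamma^\ast={p'_\Gamma}^{\!\ast}\circ\pp_{\tau,\Gamma}$ and $p_\Gamma^\ast\circ\ii_{\tau,\Gamma}=\ii_\tau\circ{p'_\Gamma}^{\!\ast}$, once $\ii_\tau$ and $\pp_\tau$ have been extended to continuous maps between distribution spaces; the plan is to verify everything uniformly by transporting the whole picture to the group level. For $\F=\A$, $C^\infty$, $L^2$, or $\DD'$ one has canonical identifications $\F(X)\simeq\F(L)^{L_H}$ and $\F(Y,\V_\tau)\simeq(\F(L)\otimes V_\tau)^{L_K}$ (right $L_K$-action on $\F(L)$, $\tau$ on $V_\tau$), and likewise with $L$ replaced by $\Gamma\backslash L$ for the quotients $X_\Gamma$, $Y_\Gamma$; here $V_\tau$ is finite-dimensional, so $\F(L)\otimes V_\tau$ is unambiguous, and for $\F=\DD'$ the space $\DD'(L)$ carries the topology of Remark~\ref{rem:top-distrib}. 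Under these identifications the pullback $p_\Gamma^\ast:\F(X_\Gamma)\to\DD'(X)$ is induced by the pullback $\DD'(\Gamma\backslash L)\to\DD'(L)$ along the covering $L\to\Gamma\backslash L$, and similarly for ${p'_\Gamma}^{\!\ast}$.

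Next I would record the explicit shape of $\ii_\tau$ and $\pp_\tau$ at the group level. The map $\ii_\tau$ is (the restriction to the relevant invariant subspaces of) $\mathrm{id}_{\F(L)}\otimes c$, where $c$ is contraction with the fixed vector $v_\tau\in(V_\tau^\vee)^{L_H}$; since $V_\tau$ is finite-dimensional this is continuous for every $\F$, in particular for $\F=\DD'$. The map $\pp_\tau$ is the restriction of the $L_K$-averaging operator $f\mapsto\big(v\mapsto\int_{L_K}f(\,\cdot\,k)\,\tau(k)v\,dk\big)$; averaging a distribution on $L$ over the compact group $L_K$ against the (real-analytic, hence smooth) density $k\mapsto\tau(k)$ is a continuous operation on $\DD'(L)$, being the transpose, with respect to the bilinear duality, of the analogous averaging against $k\mapsto\tau(k^{-1})$, which maps $C^\infty_c(L)$ continuously to itself. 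Both resulting maps restrict to the previously defined $L^2$-maps (by \eqref{eqn:piadjoint} for $\pp_\tau$), and they are the unique continuous extensions since $C^\infty$ is dense in $\DD'$. Alternatively, one can read off continuity and the extension directly from the compact-fiber maps $\ii_\tau^0$, $\pp_\tau^0$ of the proof of Lemma~\ref{lem:p-tau-i-tau} together with the construction of Remark~\ref{rem:L_K-bundle}.

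Finally, commutativity of the diagram reduces to the observation that both the contraction $\mathrm{id}\otimes c$ and the $L_K$-averaging commute with the pullback $\DD'(\Gamma\backslash L)\to\DD'(L)$ — the former trivially, the latter because this pullback intertwines the right $L_K$-actions — so the two square identities hold on $C^\infty$ (equivalently on $L^2$), and then on all of $\DD'$ by continuity of the four maps and density of $C^\infty$. The main, and rather modest, obstacle is bookkeeping: one must make the identifications $\F\big(L,\F(L_K/L_H)\big)^{L_K}\simeq\F(X)$ genuinely compatible across $\F=\A,C^\infty,L^2,\DD'$, exactly as in the proof of Lemma~\ref{lem:p-tau-i-tau}; keep track of the Haar normalization on $L_K$ and of the passage between the sesquilinear pairing defining $\pp_{\tau,\Gamma}$ as an adjoint and the bilinear pairing defining the distributional transpose; and, since $\DD'(X)$ is not metrizable, argue continuity throughout via transposes of continuous operators rather than sequentially. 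No genuinely new difficulty arises beyond this.
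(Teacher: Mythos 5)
Your argument is correct and is essentially the justification the paper leaves implicit: the paper states this as an Observation, relying exactly on the group-level descriptions of $\ii_{\tau}$ (contraction with the fixed vector in $(V_{\tau}^{\vee})^{L_H}$) and $\pp_{\tau}$ (averaging over the compact group $L_K$), together with the identifications used in the proof of Lemma~\ref{lem:p-tau-i-tau} and the continuity of $p_{\Gamma}^{\ast}$ from Remark~\ref{rem:top-distrib}. Your extension of $\pp_{\tau}$ as the transpose of $L_K$-averaging on test sections and the density/continuity argument for the commutativity of the squares fill in the routine details in the same spirit, so nothing is missing.
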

$$\xymatrix{
(V_{\tau}^{\vee})^{L_H}\otimes L^2(Y,\V_{\tau}) \ar@{}[d]|{\text{\rotatebox{-90}{$\textstyle\subset$}}} \ar@{^{(}->}[r]^-{\ii_{\tau}}
& L^2(X) \ar@{}[d]|{\text{\rotatebox{-90}{$\textstyle\subset$}}} \ar[r]^-{\pp_{\tau}}
& (V_{\tau}^{\vee})^{L_H}\otimes L^2(Y,\V_{\tau}) \ar@{}[d]|{\text{\rotatebox{-90}{$\textstyle\subset$}}}\\
(V_{\tau}^{\vee})^{L_H}\otimes\DD'(Y,\V_{\tau}) \ar@{^{(}->}[r]^-{\ii_{\tau}}
& \DD'(X) \ar[r]^-{\pp_{\tau}}
& (V_{\tau}^{\vee})^{L_H}\otimes\DD'(Y,\V_{\tau})\\
(V_{\tau}^{\vee})^{L_H}\otimes L^2(Y_{\Gamma},\V_{\tau}) \ar@{_{(}->}[u]_{{p'_{\Gamma}}^{\!\!\ast}} \ar@{^{(}->}[r]^-{\ii_{\tau,\Gamma}}
& L^2(X_{\Gamma}) \ar@{_{(}->}[u]_{{p_{\Gamma}}^{\!\!\ast}} \ar[r]^-{\pp_{\tau,\Gamma}}
& (V_{\tau}^{\vee})^{L_H}\otimes L^2(Y_{\Gamma},\V_{\tau}) \ar@{_{(}->}[u]_{{p'_{\Gamma}}^{\!\!\ast}}
}
$$

\begin{proof}[Proof of Theorem~\ref{thm:transfer-spec}.(1)]
By Remark~\ref{rem:specXYGamma}.(2), it is sufficient to check that
$$\ii_{\tau,\Gamma}((V_{\tau}^{\vee})^{L_H}\otimes L^2_d(Y_{\Gamma},\V_{\tau})_{\I})\subset L^2_d(X_{\Gamma})_{\I}$$
for all $\tau\in\Disc(L_K/L_H)$.
Let $\varphi\in (V_{\tau}^{\vee})^{L_H}\otimes L^2_d(Y_{\Gamma},\V_{\tau})_{\I}$.
By definition of $L^2_d(Y_{\Gamma},\V_{\tau};\NN_{\nu})_{\I}$, the element ${p'_{\Gamma}}^{\ast}(\varphi)\in (V_{\tau}^{\vee})^{L_H}\otimes\DD'(Y,\V_{\tau})$ can be written as a limit of elements $\tilde{\varphi}_j\in (V_{\tau}^{\vee})^{L_H}\otimes L^2_d(Y,\V_{\tau};\NN_{\nu})$.
By Observation~\ref{obs:extend-i-tau},
$$p_{\Gamma}^{\ast}\big(\ii_{\tau,\Gamma}(\varphi)\big) = \ii_{\tau}\big({p'_{\Gamma}}^{\!\!\ast}(\varphi)\big) = \ii_{\tau}\big(\lim_j \tilde{\varphi}_j\big) = \lim_j \ii_{\tau}(\tilde{\varphi}_j).$$
If condition~(A) holds and $X$ is $G$-real spherical, then by Proposition~\ref{prop:condA} each $\ii_{\tau}(\tilde{\varphi}_j)$ is contained in a finite direct sum of discrete series representations for $G/H$.
Therefore, $\ii_{\tau,\Gamma}(\varphi)\in L^2_d(X_{\Gamma})_{\I}$.
\end{proof}

\begin{proof}[Proof of Theorem~\ref{thm:transfer-spec}.(2)]
By Remark~\ref{rem:specXYGamma}.(1), it is sufficient to prove that
$$\pp_{\tau,\Gamma}(L^2_d(X_{\Gamma})_{\I})\subset (V_{\tau}^{\vee})^{L_H}\otimes L^2_d(Y_{\Gamma},\V_{\tau})_{\I}$$
for all $\tau\in\Disc(L_K/L_H)$.
Consider $\pi\in\Disc(G/H)$, with representation space $V_{\pi}\subset L^2(X)$.
If condition~(B) holds, then by Theorem~\ref{thm:condB}.(2) the restriction $\pi|_L$ is discretely decomposable and we have a unitary equivalence of $L$-modules
$$\pi|_L \simeq\, \sumplus{\vartheta\in\widehat{L}}\ n_{\vartheta}(\pi) \, \vartheta,$$
where $\vartheta$ is a Harish-Chandra discrete series representation of~$L$.
Note that the multiplicities $n_{\vartheta}(\pi)$ are possibly infinite.
Let $\tau\in\Disc(L_K/L_H)$.
Since $\pp_{\tau} : L^2(X)\rightarrow (V_{\tau}^{\vee})^{L_H}\otimes L^2(Y,\V_{\tau})$ is an $L$-homomorphism, $\pp_{\tau}(n_{\vartheta}(\pi)\,\vartheta)$ is a multiple of discrete series representation for $(V_{\tau}^{\vee})^{L_H}\otimes L^2(Y,\V_{\tau})$, which must vanish for all but finitely many~$\vartheta$, because there are at most finitely many discrete series representations for $L^2(Y,\V_{\tau})$.
Thus $\pp_{\tau}(V_{\pi})$ is a finite sum of Harish-Chandra discrete series representations:
$$\pp_{\tau}(V_{\pi}) \simeq \bigoplus_{\vartheta\in\widehat{L}}\ n'_{\vartheta}(\pi) \, \vartheta \subset (V_{\tau}^{\vee})^{L_H} \otimes L^2_d(Y,\V_{\tau}),$$
where $n'_{\vartheta}(\pi)\leq n_{\vartheta}(\pi)$.
In particular,
$$\pp_{\tau}(\overline{V_{\pi}}) \subset \overline{(V_{\tau}^{\vee})^{L_H}\otimes L^2_d(Y,\V_{\tau})},$$
where $\overline{\ \cdot\ }$ denotes the closure in $\DD'(X)$ and in $(V_{\tau}^{\vee})^{L_H}\otimes\DD'(Y,\V_{\tau})$, respectively.

Suppose $f\in L^2_d(X_{\Gamma};\M_{\lambda})_{\I}$.
By definition, there exist at most finitely many $\pi_1,\dots,\pi_k\in\Disc(G/H)$ such that
$$p_{\Gamma}^{\ast}(f) \in \bigoplus_{j=1}^k \overline{V_{\pi_j}} \quad (\subset \DD'(X)),$$
where $p_{\Gamma}^{\ast} : L^2(X_{\Gamma})\to\DD'(X)$ is the pull-back of the projection $p_{\Gamma} : X\to X_{\Gamma}$.
By Observation~\ref{obs:extend-i-tau},
$${p'_{\Gamma}}^{\!\!\ast} (\pp_{\tau,\Gamma}f) = \pp_{\tau} (p_{\Gamma}^{\ast}f) \subset \sum_{j=1}^k \pp_{\tau}(\overline{V_{\pi_j}}) \subset \overline{(V_{\tau}^{\vee})^{L_H}\otimes L^2_d(Y,\V_{\tau})}.$$
This shows that $\pp_{\tau,\Gamma}(L^2_d(X_{\Gamma})_{\I})\subset (V_{\tau}^{\vee})^{L_H}\otimes L^2_d(Y_{\Gamma},\V_{\tau})_{\I}$.
\end{proof}

\begin{proof}[Proof of Theorem~\ref{thm:transfer-spec}.(3)]
Let $(\nu,\tau)\in\Hom_{\C\text{-}\mathrm{alg}}(Z(\llll_{\C})/\Ker(\dd\ell^{\tau}),\C)\times\linebreak\Disc(L_K/L_H)$ and $\varphi\in (V_{\tau}^{\vee})^{L_H}\otimes L^2_d(Y_{\Gamma},\V_{\tau};\NN_{\nu})_{\II}$.
By Remark~\ref{rem:specXYGamma}.(2), we have $\ii_{\tau,\Gamma}(\varphi)\in L^2_d(X_{\Gamma};\M_{\llambda(\nu,\tau)})$.
If condition~(B) holds, then by Theorem~\ref{thm:transfer-spec}.(2) we have $\pp_{\tau,\Gamma}(f)\in (V_{\tau}^{\vee})^{L_H}\otimes L^2_d(Y_{\Gamma},\V_{\tau})_{\I}$ for all\linebreak $f\in L^2_d(X_{\Gamma};\M_{\llambda(\nu,\tau)})_{\I}$.
Moreover, by Remarks \ref{rem:specXYGamma}.(1) and~\ref{rem:lambda-circ-nu} we have $\pp_{\tau,\Gamma}(f)\in (V_{\tau}^{\vee})^{L_H}\otimes L^2(Y_{\Gamma},\V_{\tau};\NN_{\nu})$.
Therefore, $(\pp_{\tau,\Gamma}(f),\varphi)_{L^2(X_{\Gamma})}=0$ by definition of $L^2_d(Y_{\Gamma},\V_{\tau})_{\I}$.
By duality (see \eqref{eqn:piadjoint}), we obtain $(f,\ii_{\tau,\Gamma}(\varphi))_{L^2(X_{\Gamma})}=\nolinebreak 0$.
Since $f$ is arbitrary, this shows that $\ii_{\tau,\Gamma}(\varphi)\in L^2_d(X_{\Gamma};\M_{\llambda(\nu,\tau)})_{\II}$.
\end{proof}

\begin{proof}[Proof of Theorem~\ref{thm:transfer-spec}.(4)]
Let $(\lambda,\tau)\in\Hom_{\C\text{-}\mathrm{alg}}(\D_G(X),\C)\times\linebreak\Disc(L_K/L_H)$ and $f\in L^2(X_{\Gamma};\M_{\lambda})_{\II}$.
By Remark~\ref{rem:specXYGamma}.(1), we have
$$\pp_{\tau,\Gamma}(f)\in (V_{\tau}^{\vee})^{L_H}\otimes L^2(Y_{\Gamma},\V_{\tau};\NN_{\nnu(\lambda,\tau)}).$$
If condition~(A) holds and $X$ is $G$-real spherical, then by Theorem~\ref{thm:transfer-spec}.(1) and Remarks \ref{rem:specXYGamma}.(2) and~\ref{rem:lambda-circ-nu} we have $\ii_{\tau,\Gamma}(\varphi)\in L^2(X_{\Gamma};\M_{\lambda})_{\I}$ for all $\varphi\in (V_{\tau}^{\vee})^{L_H}\otimes L^2(Y_{\Gamma},\V_{\tau};\NN_{\nnu(\lambda,\tau)})_{\I}$.
Therefore, $(f,\ii_{\tau,\Gamma}(\varphi))_{L^2(X_{\Gamma})}$ by definition of $L^2(X_{\Gamma};\M_{\lambda})_{\I}$.
By duality (see \eqref{eqn:piadjoint}), we obtain $(\pp_{\tau,\Gamma}(f),\varphi)_{L^2(Y_{\Gamma},\V_{\tau})}=0$.
Since $\varphi$ is arbitrary, this shows that $\pp_{\tau,\Gamma}(f)\in (V_{\tau}^{\vee})^{L_H}\otimes L^2(Y_{\Gamma},\V_{\tau};\NN_{\nnu(\lambda,\tau)})_{\II}$.
\end{proof}

\begin{proof}[Proof of Theorem~\ref{thm:transfer-spec}.(5)]
Suppose $X$ is $G$-real spherical and conditions (A) and~(B) both hold, and let $\lambda\in\Hom_{\C\text{-}\mathrm{alg}}(\D_G(X),\C)$.
Applying \eqref{eqn:XYL2} to the closed subspace $L^2(X_{\Gamma};\M_{\lambda})$ of the Hilbert space $L^2(X_{\Gamma})$, we have an isomorphism of Hilbert spaces
$$L^2(X_{\Gamma};\M_{\lambda}) \,\simeq\,\ \sumplus{\tau\in\Disc(L_K/L_H)}\ \ii_{\tau,\Gamma} \circ \pp_{\tau,\Gamma}\big(L^2(X_{\Gamma};\M_{\lambda})\big).$$
For $i=\I$ or~$\II$, it follows from Theorem~\ref{thm:transfer-spec}.(2) or~(4) that
$$\pp_{\tau,\Gamma}\big(L^2(X_{\Gamma};\M_{\lambda})_i\big) \subset (V_{\tau}^{\vee})^{L_H} \otimes L^2(Y_{\Gamma},\V_{\tau};\NN_{\nnu(\lambda,\tau)})_i$$
for all $\tau\in\Disc(L_K/L_H)$.
Therefore,
$$L^2(X_{\Gamma};\M_{\lambda})_i \,\subset\ \ \sumplus{\tau\in\Disc(L_K/L_H)}\, \ii_{\tau,\Gamma}\big((V_{\tau}^{\vee})^{L_H} \otimes L^2(Y_{\Gamma},\V_{\tau};\NN_{\nnu(\lambda,\tau)})_i\big).$$
Conversely, in view of Remark~\ref{rem:lambda-circ-nu}, it follows from Theorem~\ref{thm:transfer-spec}.(1) or~(3) that $\ii_{\tau,\Gamma}\big((V_{\tau}^{\vee})^{L_H} \otimes L^2(Y_{\Gamma},\V_{\tau};\NN_{\nnu(\lambda,\tau)})_i\big) \subset L^2(X_{\Gamma};\M_{\lambda})_i$.
The result follows.
\end{proof}

\chapter{Infinite discrete spectrum of type~$\II$} \label{sec:proof-mainII}

We have established Theorems \ref{thm:selfadj} and~\ref{thm:GxG}.(2) in Chapter~\ref{sec:strategy}, and Theorems \ref{thm:Fourier} and~\ref{thm:transfer} in Chapter~\ref{sec:transfer-Gamma}.
We now prove Theorem~\ref{thm:mainII} and Theorem~\ref{thm:GxG}.(3) by using Theorem~\ref{thm:transfer-spec} and the following classical fact in the Riemannian case (see Notation~\ref{def:Ntype-I-II} with $\g$ replaced by~$\llll$).

\begin{fact} \label{fact:inf-spec-Riem}
Let $Y=L/L_K$ be a Riemannian symmetric space, where $L$ is a real reductive Lie group and $L_K$ a maximal compact subgroup.
If $\Gamma$ is a torsion-free uniform lattice or a torsion-free arithmetic subgroup of~$L$, then $\Spec_d^{Z(\llll_{\C})}(Y_{\Gamma})$ is infinite.
\end{fact}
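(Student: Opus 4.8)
The plan is to reduce Fact~\ref{fact:inf-spec-Riem} to the classical statement that the Laplace--Beltrami operator on the Riemannian manifold $Y_{\Gamma}=\Gamma\backslash L/L_K$ has infinite point spectrum, and then to invoke known results in each of the two cases. First I would fix, as in the proof of Lemma~\ref{lem:regY}, an $\Ad(L)$-invariant nondegenerate symmetric bilinear form $B$ on~$\llll$ that is positive definite on $\llll\cap\p$ and negative definite on $\llll\cap\kk$, and let $C_L\in Z(\llll_{\C})$ be the associated Casimir element. For the trivial one-dimensional representation of~$L_K$, the operator $\dd\ell(C_L)_{\Gamma}$ on $C^{\infty}(Y_{\Gamma})$ is, up to sign, the Laplace--Beltrami operator $\Delta_{Y_{\Gamma}}$ for the metric induced by~$B$ (see Example~\ref{ex:trivial-tau} and Section~\ref{subsec:main-thm-Laplacian}); in particular it is elliptic and essentially self-adjoint on $L^2(Y_{\Gamma})$. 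Here $Y=L/L_K$ has positive dimension since $L$ is noncompact in all our applications of this fact, so the discussion is nonvacuous.

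Next I would relate the point spectrum of $\Delta_{Y_{\Gamma}}$ to infinitesimal characters of the discrete automorphic spectrum of~$L$. Let $L^2_{\mathrm{disc}}(\Gamma\backslash L)$ be the maximal closed $L$-invariant subspace of $L^2(\Gamma\backslash L)$ that is a Hilbert direct sum of irreducible unitary representations of~$L$. Using the identification $L^2(Y_{\Gamma})\simeq L^2(\Gamma\backslash L)^{L_K}$ and taking $L_K$-fixed vectors, one gets
$$L^2_{\mathrm{disc}}(Y_{\Gamma}) \;\simeq\; \sumplus{\vartheta}\ \vartheta^{L_K},$$
the Hilbert sum being over the irreducible unitary representations $\vartheta$ of~$L$ occurring discretely in $L^2(\Gamma\backslash L)$ with $\vartheta^{L_K}\neq\{0\}$. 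Each such $\vartheta$ has an infinitesimal character $\chi_{\vartheta}\in\Hom_{\C\text{-}\mathrm{alg}}(Z(\llll_{\C}),\C)$, and by Schur's lemma $\vartheta^{L_K}$ is a nonzero subspace of $L^2(Y_{\Gamma};\NN_{\chi_{\vartheta}})$, consisting of real analytic functions by Lemma~\ref{lem:regY}; hence $\chi_{\vartheta}\in\Spec_d^{Z(\llll_{\C})}(Y_{\Gamma})$, and $\Delta_{Y_{\Gamma}}$ acts on $\vartheta^{L_K}$ by the scalar $\pm\chi_{\vartheta}(C_L)$. Conversely, any nonzero $L^2$-eigenfunction of $\Delta_{Y_{\Gamma}}$ lies in $L^2_{\mathrm{disc}}(Y_{\Gamma})$. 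Thus the point spectrum of $\Delta_{Y_{\Gamma}}$ is exactly $\{\,\pm\chi_{\vartheta}(C_L)\,\}$, so if it is infinite, then the set of the $\chi_{\vartheta}$, and a fortiori $\Spec_d^{Z(\llll_{\C})}(Y_{\Gamma})$, is infinite as well.

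It then remains to show that $\Delta_{Y_{\Gamma}}$ has infinitely many $L^2$-eigenvalues in each case. If $\Gamma$ is a torsion-free uniform lattice, then $Y_{\Gamma}$ is a compact Riemannian manifold of positive dimension, $L^2(\Gamma\backslash L)=L^2_{\mathrm{disc}}(\Gamma\backslash L)$ by the theorem of Gelfand and Piatetski-Shapiro (see \eg \cite[Ch.\,IV, \S\,3.1]{war72}), and $\Delta_{Y_{\Gamma}}$ is an elliptic self-adjoint operator with compact resolvent on the infinite-dimensional space $L^2(Y_{\Gamma})$, whose spectrum is therefore a discrete, unbounded, hence infinite subset of~$\R$ (for instance by Weyl's asymptotic law). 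If $\Gamma$ is a torsion-free arithmetic subgroup of~$L$, then $\Gamma\backslash L$ has finite volume and the theorem of Borel--Garland \cite{bg83} states precisely that the discrete (i.e.\ point) spectrum of the Laplacian on $L^2(\Gamma\backslash L/L_K)$ is infinite; one reduces from reductive to semisimple~$L$ by noting that a central split torus acts on each irreducible summand through a unitary character and so does not affect the count. Combined with the previous paragraph, this gives that $\Spec_d^{Z(\llll_{\C})}(Y_{\Gamma})$ is infinite in both cases.

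The only substantial external input is the arithmetic, non-uniform case: the existence of infinitely many square-integrable automorphic forms for an arithmetic group is the theorem of Borel--Garland, which I would cite as a black box rather than reprove; everything else is a repackaging of classical spectral theory on the compact, respectively finite-volume, Riemannian locally symmetric space~$Y_{\Gamma}$, together with the elementary observation that joint $Z(\llll_{\C})$-eigenvalues refine $\Delta_{Y_{\Gamma}}$-eigenvalues.
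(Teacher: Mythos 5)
Your proof is correct in substance and leads to the right conclusion, but the route you take through the $Z(\llll_{\C})$-joint eigenvalues is different from the paper's. The paper argues directly on the eigenspaces of $\Delta_{Y_\Gamma}$: it takes, for each $\Delta_{Y_\Gamma}$-eigenvalue $s$, the finite-dimensional eigenspace $W_s=\mathrm{Ker}(\Delta_{Y_\Gamma}-s)\subset L^2(Y_\Gamma)$, observes that $Z(\llll_{\C})$ acts on $W_s$ by commuting normal operators (since $Z(\llll_{\C})$ commutes with $\dd\ell(C_L)$ and $W_s\subset\A(Y_\Gamma)$ by ellipticity), and jointly diagonalizes; thus each $s$ contributes at least one element of $\Spec_d^{Z(\llll_{\C})}(Y_\Gamma)$. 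This is a short finite-dimensional linear-algebra argument. You, by contrast, go through the representation-theoretic decomposition of the discrete part of $L^2(\Gamma\backslash L)$ into irreducibles $\vartheta$, take $L_K$-fixed vectors, and invoke Schur's lemma to produce the joint eigenvalue $\chi_\vartheta$. Both routes ultimately hang on the same external inputs (compact case: elliptic operator on a compact manifold; arithmetic case: Borel--Garland), so neither is obviously preferable, but the paper's packaging avoids explicitly invoking the Langlands-type decomposition of $L^2(\Gamma\backslash L)$.

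The one step in your proof that deserves a flag is the sentence "Conversely, any nonzero $L^2$-eigenfunction of $\Delta_{Y_\Gamma}$ lies in $L^2_{\mathrm{disc}}(Y_\Gamma)$", which you use to conclude that the point spectrum of $\Delta_{Y_\Gamma}$ is exactly $\{\pm\chi_\vartheta(C_L)\}$. In the noncompact finite-volume case this statement --- that the continuous spectral part of $L^2(\Gamma\backslash L)^{L_K}$ contributes no $L^2$-eigenfunctions of $\Delta_{Y_\Gamma}$ --- is true, but it is a nontrivial consequence of the Langlands spectral decomposition (the Casimir eigenvalue varies nondegenerately along the continuous parameter) and it is not literally what \cite{bg83} state. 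You can sidestep it entirely: the eigenfunctions that Borel--Garland produce (as well as those supplied by the compact-case Weyl law together with the Gelfand--Piatetski-Shapiro theorem you cite) already lie in $L^2_{\mathrm{disc}}(\Gamma\backslash L)^{L_K}$, so you do not need the converse inclusion, only that $\Disc(\Gamma\backslash L)$ contributes infinitely many distinct infinitesimal characters. The paper's formulation --- finite-dimensionality of each $W_s$ plus infinitely many $s$ --- hides the same fact, so neither argument is strictly more rigorous here, but it is worth being aware that this is where the weight of the automorphic spectral theory is being carried.
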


We give a proof of Fact~\ref{fact:inf-spec-Riem} for the sake of completeness.

\begin{proof}
The usual Laplacian $\Delta_{Y_{\Gamma}}$ on the Riemannian manifold~$Y_{\Gamma}$ has an infinite discrete spectrum, and for any eigenvalue $s$ of~$\Delta_{Y_{\Gamma}}$ the corresponding eigenspace $W_s:=\mathrm{Ker}(\Delta_{Y_{\Gamma}}-s)\subset L^2(Y_{\Gamma})$ is finite-dimensional: this holds in the compact case (\ie when $\Gamma$ is a uniform lattice in~$L$) by general results on compact Riemannian manifolds, and in the arithmetic case by work of Borel--Garland \cite{bg83}.
The center $Z(\llll_{\C})$ of the enveloping algebra $U(\llll_{\C})$, acting on $L^2(Y_{\Gamma})$ via $\dd\ell$, preserves $W_s$ and thus defines a finite-dimensional commutative subalgebra of $\End(W_s)$, which is generated by normal operators.
Therefore, the action of $Z(\llll_{\C})$ on~$W_s$ can be jointly diagonalized and $W_s$ is the direct sum of joint eigenspaces of $Z(\llll_{\C})$.
Thus the fact that the discrete spectrum of $\Delta_{Y_{\Gamma}}$ is infinite implies that $\Spec_d^{Z(\llll_{\C})}(Y_{\Gamma})$ is infinite.
\end{proof}

\begin{proposition} \label{prop:conseq-Tf-B}
In the general setting~\ref{gen-setting}, if conditions (Tf) and~(B) hold, then $\Spec_d(X_{\Gamma})_{\II}$ is infinite for any torsion-free discrete subgroup $\Gamma$ of~$L$ for which $\Spec_d^{Z(\llll_{\C})}(Y_{\Gamma})$ is infinite.
\end{proposition}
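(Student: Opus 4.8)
The plan is to produce an infinite subset of $\Spec_d(X_{\Gamma})_{\II}$ as the image, under the transfer map $\llambda$ attached to the \emph{trivial} $L_K$-type, of the Riemannian discrete spectrum $\Spec_d^{Z(\llll_{\C})}(Y_{\Gamma})$, which is infinite by hypothesis. The key points are that on the Riemannian symmetric space $Y=L/L_K$ there is no discrete series, so this spectrum is automatically of type~$\II$, that Theorem~\ref{thm:transfer-spec}.(3) carries type~$\II$ to type~$\II$, and that $\llambda(\cdot,\tau)$ for the trivial~$\tau$ is injective.

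First I would fix $\tau_0$ to be the trivial one-dimensional representation of~$L_K$, which belongs to $\Disc(L_K/L_H)$; by Example~\ref{ex:trivial-tau} the associated bundle is trivial, $\F(Y_{\Gamma},\V_{\tau_0})\simeq\F(Y_{\Gamma})$, the space $(V_{\tau_0}^{\vee})^{L_H}$ is one-dimensional, and $\ii_{\tau_0,\Gamma}$ is the pull-back along the fibration $q_{\Gamma}\colon X_{\Gamma}\to Y_{\Gamma}$, which is injective (and isometric on~$L^2$) by Lemma~\ref{lem:p-tau-i-tau}.(1) and Section~\ref{subsec:L2decomp}. Since $\Spec_d^{Z(\llll_{\C})}(Y_{\Gamma})$ is assumed infinite, $L$ is noncompact (otherwise $Y$ would be a point), so $Y=L/L_K$ admits no discrete series (Remark~\ref{rem:type-I-II}.(3), \cite{mo84}) and therefore $L^2(Y;\NN_{\nu})=\{0\}$ for every $\nu\in\Hom_{\C\text{-}\mathrm{alg}}(Z(\llll_{\C}),\C)$. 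By the definition of type~$\I$ in Section~\ref{subsec:type-I-II-Herm} this forces $L^2(Y_{\Gamma};\NN_{\nu})_{\I}=\{0\}$, hence
$$L^2(Y_{\Gamma};\NN_{\nu}) = L^2(Y_{\Gamma};\NN_{\nu})_{\II} \qquad\text{for all }\nu,$$
and in particular $\Spec_d^{Z(\llll_{\C})}(Y_{\Gamma},\V_{\tau_0})_{\II}=\Spec_d^{Z(\llll_{\C})}(Y_{\Gamma})$.

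Next I would invoke Theorem~\ref{thm:transfer-spec}.(3) (available since conditions (Tf) and~(B) hold) with $\tau=\tau_0$: for every $\nu\in\Spec_d^{Z(\llll_{\C})}(Y_{\Gamma})$ the map $\ii_{\tau_0,\Gamma}$ sends the nonzero Hilbert space $(V_{\tau_0}^{\vee})^{L_H}\otimes L^2(Y_{\Gamma};\NN_{\nu})_{\II}$ injectively into $L^2_d(X_{\Gamma};\M_{\llambda(\nu,\tau_0)})_{\II}$, so the latter is nonzero and $\llambda(\nu,\tau_0)\in\Spec_d(X_{\Gamma})_{\II}$. This defines a map $\nu\mapsto\llambda(\nu,\tau_0)$ from $\Spec_d^{Z(\llll_{\C})}(Y_{\Gamma})$ to $\Spec_d(X_{\Gamma})_{\II}$, and it remains to check that its image is infinite. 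For this I would use that, for the trivial~$\tau_0$, every parameter $\nu$ in the domain is realized by an elementary spherical function on $Y=L/L_K$, so $C^{\infty}(Y,\V_{\tau_0};\NN_{\nu})\neq\{0\}$; Remark~\ref{rem:lambda-circ-nu}.(2) then yields $\nnu(\llambda(\nu,\tau_0),\tau_0)=\nu$, i.e.\ $\nnu(\cdot,\tau_0)$ is a left inverse of $\llambda(\cdot,\tau_0)$, so the map is injective and its image is an infinite subset of $\Spec_d(X_{\Gamma})_{\II}$.

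The step I expect to be the main obstacle is precisely this last injectivity (equivalently, finiteness of the fibers of the transfer map): a priori a transfer map could merge infinitely many Riemannian eigenvalues into a single pseudo-Riemannian one. In the generality of conditions (Tf) and~(B) alone — without the explicit affine formula of Proposition~\ref{prop:specXY-precise} — this is handled through the formal inverse-pair relation of Remark~\ref{rem:lambda-circ-nu}, whose hypothesis reduces to the classical nonvanishing of $C^{\infty}(Y;\NN_{\nu})$ for all Harish-Chandra parameters~$\nu$ on the Riemannian symmetric space $Y$. Once these ingredients are assembled, Proposition~\ref{prop:conseq-Tf-B} follows; combined with Fact~\ref{fact:inf-spec-Riem} and Propositions~\ref{prop:cond-Tf-A-B-satisfied} and~\ref{prop:cond-Tf-A-B-satisfied-GxG}, it yields Theorems~\ref{thm:mainII} and~\ref{thm:GxG}.(3).
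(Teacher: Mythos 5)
Your proof is correct and follows essentially the same route as the paper's: specialize to the trivial $L_K$-type, observe that the Riemannian symmetric space $L/L_K$ admits no discrete series so all $L^2$-eigenfunctions on $Y_\Gamma$ are of type $\II$, push forward via $\ii_{\tau_0,\Gamma}=q_\Gamma^*$ and Theorem~\ref{thm:transfer-spec}.(3) to land in $L^2_d(X_\Gamma;\M_{\llambda(\nu,\tau_0)})_{\II}$, and conclude infiniteness from the injectivity of $\nu\mapsto\llambda(\nu,\tau_0)$. The only cosmetic difference is in the injectivity step: the paper recovers $\nu$ from $\lambda$ by applying $\pp_{\tau_0,\Gamma}$ to a nonzero eigenfunction (Remark~\ref{rem:specXYGamma}.(1)), whereas you verify the hypothesis of Remark~\ref{rem:lambda-circ-nu}.(2) directly via elementary spherical functions on $Y=L/L_K$; both are valid and amount to the same observation.
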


\begin{proof}
Let $\Gamma$ be a torsion-free discrete subgroup of~$L$.
Recall from Example~\ref{ex:trivial-tau} that when $\tau$ is the trivial one-dimensional representation of~$L_K$, the map $\ii_{\tau,\Gamma}$ of \eqref{eqn:i-tau} is the pull-back $q_{\Gamma}^*$ of the projection map $q_{\Gamma} : X_{\Gamma}\to Y_{\Gamma}$.
Applying Remark~\ref{rem:specXYGamma}.(2) with this trivial~$\tau$, we see that if condition (Tf) holds, then for any $\nu\in\Spec_d^{Z(\llll_{\C})}(Y_{\Gamma})$ there exists $\lambda\in\Hom_{\C\text{-}\mathrm{alg}}(\D_G(X),\C)$ (depending on~$\nu$) such that
$$q_{\Gamma}^{\ast} \, L^2(Y_{\Gamma};\NN_{\nu}) \subset L^2(X_{\Gamma};\M_{\lambda}).$$
Recall that the whole discrete spectrum of the Riemannian locally symmetric space $Y_{\Gamma}$ is of type~$\II$ (Remark~\ref{rem:type-I-II}.(3)).
If moreover condition~(B) holds, then Theorem~\ref{thm:transfer-spec}.(3) implies
$$q_{\Gamma}^{\ast} \, L^2(Y_{\Gamma};\NN_{\nu}) \subset q_{\Gamma}^{\ast} \, L^2_d(Y_{\Gamma})_{\II} \subset L^2_d(X_{\Gamma})_{\II}.$$
Thus $\lambda\in\Spec_d(X_{\Gamma})_{\II}$.
On the other hand, Remarks \ref{rem:specXYGamma}.(1) and~\ref{rem:lambda-circ-nu}.(2) imply that $\lambda$ determines~$\nu$.
Therefore, if $\Spec_d^{Z(\llll_{\C})}(Y_{\Gamma})$ is infinite, then so is $\Spec_d(X_{\Gamma})_{\II}$.
\end{proof}

\begin{proof}[Proof of Theorem~\ref{thm:mainII} and Theorem~\ref{thm:GxG}.(3)]
By Propositions\linebreak \ref{prop:cond-Tf-A-B-satisfied} and \ref{prop:cond-Tf-A-B-satisfied-GxG}, conditions (Tf) and~(B) are satisfied in the setting of Theorems \ref{thm:mainII} and~\ref{thm:GxG}.
Proposition~\ref{prop:conseq-Tf-B} then implies that $\Spec_d(X_{\Gamma})_{\II}$ is infinite whenever $\Spec_d^{Z(\llll_{\C})}(Y_{\Gamma})$ is; this is the case whenever $\Gamma$ is cocompact or arithmetic in~$L$ by Fact~\ref{fact:inf-spec-Riem}.
\end{proof}

\part{Representation-theoretic description of the discrete spectrum} \label{part:repr-spec}

In this Part~\ref{part:repr-spec}, we give a proof of Theorem~\ref{thm:Specd-lambda}, which describes the discrete spectrum of type~$\I$ and type~$\II$ of standard pseudo-Riemannian locally homogeneous spaces $X_{\Gamma}=\Gamma\backslash G/H$ with $\Gamma\subset L\subset G$ in terms of the representation theory of the reductive group~$L$ via the transfer map~$\llambda$.
For this we use the machinery developed in Part~\ref{part:main-proofs}, in particular Theorem~\ref{thm:transfer-spec}.

Before that, in Chapter~\ref{sec:conj} we find an upper estimate (Proposition~\ref{prop:SpecZg}) for the set $\Spec^{Z(\g_{\C})}(X_{\Gamma})$ of joint eigenvalues of differential operators on~$X_{\Gamma}$ coming from the center $Z(\g_{\C})$.
We also give conjectural refinements of this (Conjectures \ref{conj:unitar} and~\ref{conj:Specd-Zg}) as statements relating $L^2$-eigenvalues and unitary representations.
Evidence for these conjectures is provided in Section~\ref{subsec:AdS3} in the special case of standard $3$-dimensional anti-de Sitter manifolds~$X_{\Gamma}$, for which we show that the discrete spectrum of type~$\I$ (\resp type~$\II$) of the Laplacian $\square_{X_{\Gamma}}$ is nonpositive (\resp nonnegative).

Theorem~\ref{thm:Specd-lambda} is proved in Chapter~\ref{sec:proof-Specd-lambda}, based on a partial solution (Theorem~\ref{thm:SpecZg-Hcpt}) to Conjecture~\ref{conj:Specd-Zg}.

\chapter{A conjectural picture} \label{sec:conj}

We begin in this section with some preliminary set-up and a general conjectural picture (expressed as Conjectures \ref{conj:unitar} and~\ref{conj:Specd-Zg}) which we shall prove in some special cases.

More precisely, let $X=G/H$ be a reductive homogeneous space and $\Gamma$ a discrete subgroup of~$G$ acting properly discontinuously and freely on~$X$ (not necessarily standard in the sense of Section~\ref{subsec:intro-stand}).
Any eigenfunction $f$ on $X_{\Gamma}=\Gamma\backslash G/H$ generates a representation $U_f$ of $G$ in $\DD'(X)$.
If $X$ is $G$-real spherical, then by Lemma~\ref{lem:spher-finite-length}, this representation is of finite length, and so it would be natural to study eigenfunctions $f$ on~$X_{\Gamma}$ by the representation theory of~$G$.
However, even a basic question such as the unitarizability of~$U_f$ for $L^2$-eigenfunctions $f$ on~$X_{\Gamma}$ is not clear.
This is the object of Conjectures \ref{conj:unitar} and~\ref{conj:Specd-Zg} below.

Conjecture~\ref{conj:Specd-Zg} is true in the case when $H$ is compact (Theorem~\ref{thm:SpecZg-Hcpt}) or $X=G/H$ is a group manifold (Proposition~\ref{prop:SpecG}), as we shall prove in Sections \ref{subsec:proof-SpecZg-Hcpt} and~\ref{subsec:proof-Specd-lambda}, respectively.
In Section~\ref{subsec:AdS3} we examine in detail the case of the $3$-dimensional anti-de Sitter space, by using the classification of the unitary dual of $\SL(2,\R)$, and we provide a proof Proposition~\ref{prop:AdS-I-II}.

\section{$Z(\g_{\C})$-infinitesimal support for sets of admissible representations}\label{subsec:notation-type-I-II}

We start by introducing some general notation.

Let $G$ be a real linear reductive Lie group.
We denote by $\widehat{G}_{ad}$ the set of infinitesimal equivalence classes of irreducible admissible representations of~$G$.
Here we say that two admissible representations $\pi,\pi'$ are \emph{infinitesimally equivalent} if the underlying $(\g,K)$-modules $\pi_K,\pi'_K$ are isomorphic.
Recall that every $(\g,K)$-module $V$ of finite length always admits a \emph{globalization}, \ie a continuous representation $\pi$ of~$G$ on a complete, locally convex topological vector space such that $\pi_K\simeq V$.
Moreover, $V$ is irreducible if and only if $\pi$ is (Fact~\ref{fact:gK}).
Thus $\widehat{G}_{ad}$ is naturally identified with the set of equivalence classes of irreducible $(\g,K)$-modules.

We note that a globalization of a $(\g,K)$-module is not unique.
However, if an admissible representation $\pi$ of~$G$ of finite length is realized on a Banach space, then the smooth representation $\pi^{\infty}$ on the space of all smooth vectors (see Section~\ref{subsec:g-K-modules}) is determined only by the underlying $(\g,K)$-module.
Such $\pi^{\infty}$ is characterized by a property of \emph{moderate growth} of matrix coefficients.
By the Casselman--Wallach globalization theory \cite[II, Chap.\,2]{wal88}, there is a natural equivalence between the category of $(\g,K)$-modules $\pi_K$ of finite length and the category of admissible (smooth) representations $\pi^{\infty}$ of~$G$ of finite length that are of moderate growth.

Let $\widehat{G}$ be the unitary dual of~$G$, \ie the set of unitary equivalence classes of irreducible unitary representations of~$G$.
By a theorem of Harish-Chandra, there is a bijection between $\widehat{G}$ and the set of irreducible unitarizable $(\g,K)$-modules (see \eg \cite[I, Th.\,3.4.(2)]{wal88}).
Thus we may regard $\widehat{G}$ as a subset of~$\widehat{G}_{ad}$.
More directly, the correspondence $\pi\mapsto\pi^{\infty}$ yields an injection $\widehat{G}\hookrightarrow\nolinebreak\widehat{G}_{ad}$.

Recall that Schur's lemma holds for irreducible admissible smooth representations: the center $Z(\g_{\C})$ acts as scalars on the representation space $\pi^{\infty}$ for any $\pi\in\widehat{G}_{ad}$, yielding a $\C$-algebra homomorphism $\chi_{\pi} : Z(\g_{\C})\to\C$ called the \emph{$Z(\g_{\C})$-infinitesimal character} of~$\pi$.
For any subset $S$ of~$\widehat{G}_{ad}$, we denote by
\begin{equation}\label{eqn:def-Supp}
\Char(S) \equiv \Char^{Z(\g_{\C})}(S) \subset \Hom_{\C\text{-}\mathrm{alg}}(Z(\g_{\C}),\C)
\end{equation}
the set of $Z(\g_{\C})$-infinitesimal characters $\chi_{\pi}$ of elements $\pi\in S$.
By the Langlands classification \cite{kz82,lan89} of $\widehat{G}_{ad}$, the fiber of the projection $\widehat{G}_{ad}\to\Char(\widehat{G}_{ad})$ is finite.

For any closed subgroup $H$ of~$G$, we set
\begin{eqnarray*}
(\widehat{G}_{ad})_H & := & \big\{ \pi\in\widehat{G}_{ad} : \Hom_G\big(\pi^{\infty},\DD'(G/H)\big) \neq \{ 0\}\big\} \\
& = & \big\{ \pi\in\widehat{G}_{ad}  :  \Hom_G\big(\pi^{\infty},C^{\infty}(G/H)\big) \neq \{ 0\}\big\} \\
& = & \big\{ \pi\in\widehat{G}_{ad}  :  (\pi^{-\infty})^H \neq \{ 0\}\big\},
\end{eqnarray*}
where $(\pi^{-\infty})^H$ is the set of $H$-invariant elements in the space of distribution vectors of~$\pi$ (see Section~\ref{subsec:H+-infty}), and $(\widehat{G})_H:=\widehat{G}\cap (\widehat{G}_{ad})_H$.
If $H$ is unimodular, then $G/H$ carries a $G$-invariant Radon measure and $G$ acts on $L^2(G/H)$ as a unitary representation.
As in Section~\ref{subsec:method-transfer-map}, we denote by $\Disc(G/H)$ the set of $\pi\in\widehat{G}$ such that $\Hom_G(\pi,L^2(G/H))\neq\{0\}$.
Clearly,
\begin{equation} \label{eqn:Disc-G-hat-H}
\Disc(G/H) \subset (\widehat{G})_H \subset (\widehat{G}_{ad})_H.
\end{equation}
Since the action of $Z(\g_{\C})$ on $C^{\infty}(G/H)$ factors through the homomorphism\linebreak $\dd\ell : Z(\g_{\C})\to\D_G(X)$ of \eqref{eqn:dl-dr}, we have the following constraint on\linebreak $\Char((\widehat{G}_{ad})_H)$:
\begin{equation} \label{eqn:supp-ad}
\Char\big((\widehat{G}_{ad})_H\big) \subset \Hom_{\C\text{-}\mathrm{alg}}(Z(\g_{\C})/\Ker(\dd\ell),\C).
\end{equation}

\begin{remark}
If $H$ is compact, then $(\widehat{G})_H$ coincides with the set of $\pi\in\widehat{G}$ such that $\pi^H\neq\{ 0\}$ by the Frobenius reciprocity.
Furthermore, $\Disc(G/H)=(\widehat{G})_H=(\widehat{G}_{ad})_H$ if $G$ is compact.
On the other hand, for noncompact~$G$, the inclusions in \eqref{eqn:Disc-G-hat-H} are strict in general.
For instance, if $G/H$ is a reductive symmetric space, then the set $\Disc(G/H)$ is countable (possibly empty), whereas $(\widehat{G})_H$ contains continuously many elements \cite{osh79}.
\end{remark}

\section{A conjecture: $L^2$-eigenfunctions and unitary representations}

We now assume that $H$ is a reductive subgroup of the real reductive Lie group~$G$.
We consider a discrete subgroup $\Gamma$ of~$G$ acting properly discontinuously and freely on $X=G/H$ (not necessarily standard in the sense of Section~\ref{subsec:intro-stand}).
We assume $H$ to be noncompact and $\Gamma$ to be infinite.
Then $L^2(X_{\Gamma})$ is not a subspace of $L^2(X)$ or $L^2(\Gamma\backslash G)$ on which $G$ acts unitarily.
Nevertheless, when $X$ is $G$-real spherical, we expect $L^2$-eigenfunctions on~$X_{\Gamma}$ to be related to irreducible \emph{unitary} representations of~$G$, as follows.

For $f\in\DD'(X_{\Gamma})$, we denote by $U_f$ the minimal $G$-invariant closed subspace of $\DD'(X)$ containing $p_{\Gamma}^*f$, where $p_{\Gamma} : X\to X_{\Gamma}$ is the natural projection.
We know from Lemma~\ref{lem:spher-finite-length} that if $f\in\DD'(X_{\Gamma};\M_{\lambda})$ for some $\lambda\in\Hom_{\C\text{-}\mathrm{alg}}(\D_G(X),\C)$, then $U_f$ is of finite length as a $G$-module.

\begin{conjecture} \label{conj:unitar}
Let $X=G/H$ be a reductive homogeneous space which is $G$-real spherical, and $\Gamma$ a discrete subgroup of~$G$ acting properly discontinuously and freely on~$X$.
For any $\lambda\in\Hom_{\C\text{-}\mathrm{alg}}(\D_G(X),\C)$ and any nonzero $f\in L^2(X_{\Gamma};\M_{\lambda})$, the representation $U_f$ of~$G$ contains an irreducible unitary representation as a subrepresentation.
\end{conjecture}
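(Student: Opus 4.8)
The plan is to reduce the unitarizability of $U_f$ to a statement about the \emph{asymptotic} behaviour of $L^2$-eigenfunctions on $X_\Gamma$, following the pattern that has worked for discrete spectrum of type~$\I$ in \cite{kk16}, but now allowing for type~$\II$. First I would fix $\lambda\in\Hom_{\C\text{-}\mathrm{alg}}(\D_G(X),\C)$ and a nonzero $f\in L^2(X_\Gamma;\M_\lambda)$, and consider $F:=p_\Gamma^*f\in\DD'(X;\M_\lambda)$, together with the $G$-module $U_f\subset\DD'(X)$ it generates. By Lemma~\ref{lem:spher-finite-length} (applied with $\V_\tau$ trivial, using that $X$ is $G$-real spherical and that $\D_G(X)$ is commutative so $F$ is $Z(\g_\C)$-finite), the $(\g,K)$-module $(U_f)_K$ is of finite length; hence $U_f$ has a finite Jordan--Hölder filtration and in particular an irreducible subrepresentation. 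The content of the conjecture is that \emph{some} irreducible subquotient is not merely admissible but unitarizable, and moreover appears as an honest subrepresentation of $U_f$.

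The key technical device I would use is the Gelfand-triple machinery of Section~\ref{subsec:H+-infty}, together with the generalized matrix-coefficient map $T$ of \eqref{eqn:matdist}. The idea is to manufacture a $G$-invariant positive-semidefinite Hermitian form on a suitable $K$-finite (or smooth) subspace of $U_f$ out of the $L^2$-structure of $f$ on the quotient. Concretely, for $\varphi_1,\varphi_2$ in the $G$-span of $F$ one wants to define $\langle \varphi_1,\varphi_2\rangle$ by a ``period'' pairing against $f$ over $X_\Gamma$; formally, writing $\varphi_i = \pi^{-\infty}(g_i)F$ in terms of the right $G$-action, the pairing should be $(\pi^{-\infty}(g_1)F,\pi^{-\infty}(g_2)F)\mapsto \int_{X_\Gamma}(\text{something built from }f)$, which by $\Gamma$-invariance descends to the quotient and by construction is $G$-invariant on the right. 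The first substantial step is to show this form is well defined and nonnegative: this is where $f\in L^2(X_\Gamma)$ (rather than merely $\DD'$) is essential, and where one should invoke the decomposition $L^2(X_\Gamma)\simeq L^2(\Gamma\backslash L)^{L_H}$ of \eqref{eqn:L2XGammaL} in the standard case, or more generally a direct-integral decomposition of $L^2(\Gamma\backslash G)$, to realize the matrix coefficients of the putative unitary constituents. Having a nonzero nonnegative invariant form, one completes and quotients by the radical to obtain a unitary representation $\overline{U}$ of~$G$, together with a nonzero $G$-map from (a dense subspace of) $U_f$ to $\overline{U}$; decomposing $\overline{U}$ into irreducibles and pulling back along the finite Jordan--Hölder filtration of $U_f$ then produces an irreducible unitary subrepresentation inside $U_f$.

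The main obstacle I expect is precisely the \emph{positivity and nonvanishing} of the invariant form — equivalently, controlling the growth of $F=p_\Gamma^*f$ on $X$ well enough that the formal period pairing converges and is not identically zero on $U_f$. When $\Gamma$ is cocompact this is elementary (integration over a compact fundamental domain), but in the arithmetic or general properly-discontinuous case one must handle cusps: here one would decompose $L^2(X_\Gamma)$ into a discrete and a continuous part and, for $f$ in the discrete part, use that its matrix coefficients against $U(\g_\C)F$ decay. A clean way to organize this, at least in the standard setting~\ref{gen-setting}, is to transfer the problem to $Y_\Gamma=\Gamma\backslash L/L_K$ via the maps $\pp_{\tau,\Gamma},\ii_{\tau,\Gamma}$ and Theorem~\ref{thm:transfer-spec}: an $L^2$-eigenfunction on $X_\Gamma$ corresponds to vector-bundle-valued eigenfunctions on the Riemannian locally symmetric space $Y_\Gamma$, which are automatically square-integrable generalized automorphic forms for $L$, hence generate (sub)representations of $L$ inside $L^2(\Gamma\backslash L)$; the branching/transfer results (conditions (A), (B), (Tf)) then let one reconstruct a unitary $G$-subrepresentation of $U_f$. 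Thus in the standard $L_\C$-spherical case the conjecture should be provable by combining Theorems~\ref{thm:condB} and~\ref{thm:transfer-spec} with the Mautner/Gelfand--Piatetski-Shapiro decomposition of $L^2(\Gamma\backslash L)$; the genuinely open part is the non-standard case, where no such auxiliary group $L$ is available and one is forced to argue directly with the asymptotics of $F$ on $X$.
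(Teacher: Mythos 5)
This statement is labeled a \emph{Conjecture} and the paper does not prove it in full generality --- it remains open. What the paper \emph{does} do is establish the weaker $Z(\g_{\C})$-variant Conjecture~\ref{conj:Specd-Zg} in two special cases (Theorem~\ref{thm:SpecZg-Hcpt} for $H$ compact, Proposition~\ref{prop:SpecG} for group manifolds), and Theorem~\ref{thm:Specd-lambda} in the main $L_{\C}$-spherical setting; the route from there to the compact case is exactly the transfer mechanism you point to. So your proposal is an attempt to prove more than the paper claims, and your own concluding sentence --- ``the genuinely open part is the non-standard case, where no such auxiliary group $L$ is available'' --- is the honest assessment.

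Where your plan and the paper's argument genuinely diverge is the technical device for extracting a unitary subrepresentation from $U_f$ once one is in a tractable case. You propose to \emph{construct} a positive-semidefinite $G$-invariant Hermitian form on the $G$-span of $F=p_{\Gamma}^*f$ by a period pairing over $X_{\Gamma}$, then complete and quotient by the radical (a GNS-type construction). The paper instead avoids building any new form. When $H$ is compact, $p^* : L^2(X_{\Gamma})\hookrightarrow L^2(\Gamma\backslash G)$ is already an isometry, so $p^*f$ is automatically a distribution vector of the unitary representation $L^2(\Gamma\backslash G;\NN_{\nu})$ --- this is exactly the \emph{temperedness} condition of Definition~\ref{def:tempered}, which is free for compact $H$. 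One then applies the generalized matrix-coefficient construction \eqref{eqn:matdist} to build a continuous $G$-intertwiner $T_f=T(\cdot,p^*f) : L^2(\Gamma\backslash G;\NN_{\nu})^{-\infty}\to\DD'(X;\NN_{\nu\circ\eta})$ (Lemma~\ref{lem:Tf}), and uses Lemma~\ref{lem:noext} to see that the image $T_f(V_{\pi}^{-\infty})$, where $V_{\pi}$ is the $\pi$-isotypic summand of $L^2(\Gamma\backslash G;\NN_{\nu})$, is a finite multiple of the unitarizable $(\g,K)$-module $\pi_K$, sitting inside $W_{\pi}\subset\DD'(X)$. The unitary structure is thus \emph{imported} along $T_f$ from $L^2(\Gamma\backslash G)$ rather than manufactured by a period pairing. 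The advantage of the paper's route is that the positivity and nonvanishing questions you flag are absorbed into the single hypothesis ``$p^*f$ lies in $L^2(\Gamma\backslash G;\NN_{\nu})^{-\infty}$,'' and Proposition~\ref{prop:disc-series-sph-Hcpt} (via Lemma~\ref{lem:Vtau}) supplies the needed $L^2$-control of matrix coefficients for the type-$\I$ part.

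The gap in your proposal as a proof of the full conjecture is exactly the one you name: for noncompact $H$ and non-standard $\Gamma$, $p^*(L^2(X_{\Gamma}))$ is not contained in $L^2(\Gamma\backslash G)$ (as the paper emphasizes just after Conjecture~\ref{conj:Specd-Zg}), and there is no a priori reason for $p_{\Gamma}^*f$ or $p^*f$ to be tempered in the sense of Definition~\ref{def:tempered}. Without temperedness, neither your period-pairing construction nor the paper's matrix-coefficient operators $T_f$ are defined, and the positivity/nonvanishing of a candidate form is not merely technically delicate but currently unknown. So the proposal correctly identifies the obstruction and correctly resolves it in the standard $L_{\C}$-spherical case by transfer to the Riemannian side of $\Gamma\backslash L$; but it should not be presented as a proof strategy for the conjecture in general.
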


Conjecture~\ref{conj:unitar} concerns the unitarity of representations.
We now reformulate it in terms of spectrum, by using the $Z(\g_{\C})$-infinitesimal character instead of the $\C$-algebra $\D_G(X)$ of invariant differential operators.
The conjectural statement \eqref{eqn:Spec-subset-Supp} below asserts that any joint $L^2$-eigenvalue of $\dd\ell(Z(\g_{\C}))$ should occur as the $Z(\g_{\C})$-infinitesimal character of some irreducible unitary representation of~$G$.
We also refine it by considering the type ($\I$ or $\II$) of the spectrum.
Recall Notation~\ref{def:Ntype-I-II} for $\Spec_d^{Z(\g_{\C})}(X_{\Gamma})_i$.

\begin{conjecture} \label{conj:Specd-Zg}
Let $X=G/H$ be a reductive homogeneous space which is $G$-real spherical.
For any discrete subgroup $\Gamma$ of~$G$ acting properly discontinuously and freely on~$X$,
\begin{equation} \label{eqn:Spec-subset-Supp}
\Spec_d^{Z(\g_{\C})}(X_{\Gamma}) \subset \Char(\widehat{G}).
\end{equation}
More precisely,
\begin{itemize}
  \item[(0)] $\Spec_d^{Z(\g_{\C})}(X_{\Gamma})\subset\Char((\widehat{G})_H)$,
  \item[(1)] $\Spec_d^{Z(\g_{\C})}(X_{\Gamma})_{\I}\subset\Char(\Disc(G/H))$,
  \item[(2)] $\Spec_d^{Z(\g_{\C})}(X_{\Gamma})_{\II}\subset\Char((\widehat{G})_H\smallsetminus\Disc(G/H))$.
\end{itemize}
\end{conjecture}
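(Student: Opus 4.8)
The plan is to reduce everything to two inputs: the finite-length result of Lemma~\ref{lem:spher-finite-length} and the unitarizability statement of Conjecture~\ref{conj:unitar}. Fix $\nu\in\Spec_d^{Z(\g_{\C})}(X_{\Gamma})$ and a nonzero $f\in L^2(X_{\Gamma};\NN_{\nu})$, and pull $f$ back along the covering $p_{\Gamma}:X\to X_{\Gamma}$ to obtain a nonzero $p_{\Gamma}^{\ast}f\in\DD'(X)=\DD'(G/H)$ on which every $\dd\ell(z)$, $z\in Z(\g_{\C})$, acts by the scalar $\nu(z)$; the same then holds on the closed $G$-span $U_f$, since $\dd\ell(z)$ is continuous and commutes with the $G$-action. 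By Lemma~\ref{lem:spher-finite-length}.(2), using that $X$ is $G$-real spherical, $U_f$ is of finite length, hence has an irreducible \emph{sub}representation $\pi_K$, which is realized in $\DD'(G/H)$; this already gives $\nu=\chi_{\pi_K}\in\Char((\widehat{G}_{ad})_H)$, which is the unconditional estimate of Proposition~\ref{prop:SpecZg}. The content of Conjecture~\ref{conj:Specd-Zg} is to promote this irreducible subquotient to a \emph{unitarizable} one, i.e.\ to show $U_f$ contains an irreducible unitary subrepresentation $\pi$ --- precisely Conjecture~\ref{conj:unitar}. Granting it, $\chi_{\pi}=\nu$ and $(\pi^{-\infty})^{H}\neq\{0\}$ (because $\pi$ sits inside $\DD'(G/H)$), so $\nu\in\Char((\widehat{G})_H)$; this is~(0), and \eqref{eqn:Spec-subset-Supp} follows since $\Char((\widehat{G})_H)\subset\Char(\widehat{G})$.

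For the type~$\I$ refinement~(1), assume in addition $f\in L^2(X_{\Gamma};\NN_{\nu})_{\I}$, i.e.\ $p_{\Gamma}^{\ast}f$ lies in the $\DD'(X)$-closure of $L^2(X;\NN_{\nu})$. Now $L^2(X;\NN_{\nu})$ is a closed $G$-invariant subspace of $L^2(X)$ on which $Z(\g_{\C})$ acts by~$\nu$, hence of finite length by Lemma~\ref{lem:spher-finite-length}.(2), and being $L^2$ it is a \emph{finite} Hilbert direct sum of irreducible unitary representations $\pi_1,\dots,\pi_k$, each of which is by definition a discrete series representation for $X=G/H$. Therefore its $\DD'(X)$-closure lies in $\sum_{j}\overline{V_{\pi_j}}$, a finite-length $G$-module all of whose constituents are among the $\pi_j$; thus $U_f\subset\sum_j\overline{V_{\pi_j}}$, and the irreducible unitary subrepresentation $\pi$ furnished by Conjecture~\ref{conj:unitar} is infinitesimally one of the $\pi_j$, so that $\nu=\chi_{\pi}\in\Char(\Disc(G/H))$.

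The genuinely hard part is~(2): here $f\perp L^2(X_{\Gamma};\NN_{\nu})_{\I}$, and one must produce some $\pi\in(\widehat{G})_H$ with $\chi_{\pi}=\nu$ that is \emph{not} a discrete series for $G/H$. The natural approach is the contrapositive: if every unitary constituent occurring in $U_f$ were a discrete series for $G/H$, one would want to fold such a constituent back to a nonzero element of $L^2(X_{\Gamma};\NN_{\nu})_{\I}$ that is not orthogonal to~$f$, by applying a Poincar\'e-series/averaging map $\varphi\mapsto\sum_{\gamma\in\Gamma}\gamma\cdot\varphi$ to a distribution vector of that discrete series and using the adjointness of this map with $p_{\Gamma}^{\ast}$. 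The obstruction --- which I expect to be the real difficulty, together with Conjecture~\ref{conj:unitar} itself --- is that convergence and non-vanishing of such averages are not available for an arbitrary properly discontinuous free $\Gamma$ (they require a strong form of properness such as the \emph{sharpness} of \cite{kk16}), and that one must also control how the discrete-series constituents of $U_f$ are distributed between sub- and subquotient. Consequently the full conjecture seems out of reach by this route; instead the plan is to establish it in the two cases where these difficulties disappear: when $H$ is compact, where $X_{\Gamma}$ is Riemannian and $L^2(X_{\Gamma})$ embeds as the $H$-fixed part of the genuine unitary $G$-module $L^2(\Gamma\backslash G)$, so that elliptic regularity and a direct Poincar\'e-series argument apply (Theorem~\ref{thm:SpecZg-Hcpt}); and when $X=({}^{\backprime}G\times{}^{\backprime}G)/\Diag({}^{\backprime}G)$ is a group manifold, where everything can be read off the Plancherel decomposition of $L^2(\Gamma\backslash{}^{\backprime}G)$ (Proposition~\ref{prop:SpecG}). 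These partial solutions, fed through the transfer maps of Section~\ref{sec:dliota} and Theorem~\ref{thm:transfer-spec}, are what will yield Theorem~\ref{thm:Specd-lambda}.
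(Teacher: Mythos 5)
Your proposal tracks the paper's own treatment of this statement closely: the statement is a conjecture, and what the paper actually establishes is exactly the package you describe --- the unconditional estimate $\Spec^{Z(\g_{\C})}(X_{\Gamma})\subset\Char((\widehat{G}_{ad})_H)$ via finite length (Proposition~\ref{prop:SpecZg}), the remark that item~(1) is immediate from the definitions, the implication from Conjecture~\ref{conj:unitar} to item~(0) (Proposition~\ref{prop:conj-unitar-implies-Specd}), and complete proofs only in the two special cases of compact $H$ (Theorem~\ref{thm:SpecZg-Hcpt}) and group manifolds (Proposition~\ref{prop:SpecG}), which are then combined with Theorem~\ref{thm:transfer-spec} to yield Theorem~\ref{thm:Specd-lambda}. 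Your identification of item~(2) as the genuinely open part, and of the convergence/non-vanishing obstruction to a Poincar\'e-series attack for a general discontinuous group $\Gamma$, is consistent with the paper.

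One step does not go through as written. In your argument for~(0) you apply Conjecture~\ref{conj:unitar} to the module $U_f$ generated by an $f\in L^2(X_{\Gamma};\NN_{\nu})$, but that conjecture is stated only for joint eigenfunctions of the full algebra $\D_G(X)$, \ie for $f\in L^2(X_{\Gamma};\M_{\lambda})$; being an eigenfunction of $\dd\ell(Z(\g_{\C}))$ is in general weaker, since $\dd\ell$ need not be surjective onto $\D_G(X)$ (and for merely $G$-real spherical $X$ the algebra $\D_G(X)$ need not even be commutative). This is precisely why Proposition~\ref{prop:conj-unitar-implies-Specd} carries the additional hypothesis that $X_{\C}$ be $G_{\C}$-spherical: then $\D_G(X)$ is finitely generated as a $\dd\ell(Z(\g_{\C}))$-module, one enlarges $U_f$ to a $\D_G(X)$-stable module $\tilde{U}_f$ with the same $Z(\g_{\C})$-character, extracts a joint $\D_G(X)$-eigenfunction there, and only then invokes Conjecture~\ref{conj:unitar}. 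Your version asserts the implication for all $G$-real spherical $X$, which the paper does not claim; you should either add the $G_{\C}$-sphericity hypothesis together with the enlargement step, or weaken the claim accordingly. (Your use of Conjecture~\ref{conj:unitar} in~(1) is harmless but unnecessary: $L^2(X;\NN_{\nu})\neq\{0\}$ already consists of discrete series representations for $G/H$ with $Z(\g_{\C})$-character $\nu$, which gives~(1) directly, as the paper notes.)
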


Note that (1) is clear from the definitions, so the point of the conjecture is (0) and~(2).
Statement~(0) is nontrivial because $p^*(L^2(X_{\Gamma}))\not\subset L^2(\Gamma\backslash G)$, and statement~(2) is nontrivial because $p_{\Gamma}^*(L^2(X_{\Gamma}))\not\subset L^2(X)$.
Here $p : \Gamma\backslash G\to X_{\Gamma}$ and $p_{\Gamma} : X\to X_{\Gamma}$ are the natural projections.

In the sequel, we provide evidence for Conjecture~\ref{conj:Specd-Zg}.(0):
\begin{itemize}
  \item a weaker assertion holds by dropping unitarity (Proposition~\ref{prop:SpecZg});
  \item it is true if Conjecture~\ref{conj:unitar} is (Proposition~\ref{prop:conj-unitar-implies-Specd});
  \item it is compatible with the essential self-adjointness of the Laplacian (Question~\ref{problems}.(c)), see Conjecture~\ref{conj:Specd-R} below;
  \item it holds if $H$ is compact (Theorem~\ref{thm:SpecZg-Hcpt}) or if $G/H$ is a group manifold (Proposition~\ref{prop:SpecG}).
\end{itemize}

\begin{proposition} \label{prop:SpecZg}
Let $X=G/H$ be a reductive homogeneous space which is $G$-real spherical.
For any discrete subgroup $\Gamma$ of~$G$ acting properly discontinuously and freely on~$X$,
$$\Spec^{Z(\g_{\C})}(X_{\Gamma}) \subset \Char\big((\widehat{G}_{ad})_H\big).$$
In particular,
$$\Spec_d^{Z(\g_{\C})}(X_{\Gamma}) \subset \Char\big((\widehat{G}_{ad})_H\big).$$
\end{proposition}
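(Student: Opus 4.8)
The plan is to show that any joint eigenvalue $\nu\in\Spec^{Z(\g_\C)}(X_\Gamma)$ arises as the $Z(\g_\C)$-infinitesimal character of some irreducible admissible representation of~$G$ that embeds in $\DD'(X)=\DD'(G/H)$. Concretely, suppose $\nu\in\Hom_{\C\text{-}\mathrm{alg}}(Z(\g_\C),\C)$ satisfies $\DD'(X_\Gamma;\NN_\nu)\neq\{0\}$, and pick a nonzero $f\in\DD'(X_\Gamma;\NN_\nu)$. Pulling back via the covering map $p_\Gamma:X\to X_\Gamma$ yields a nonzero $\Gamma$-periodic distribution $\tilde f:=p_\Gamma^\ast f\in\DD'(X)$ which is a joint eigendistribution: $\dd\ell(z)\tilde f=\nu(z)\tilde f$ for all $z\in Z(\g_\C)$. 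Let $U_{\tilde f}$ be the smallest closed $G$-invariant subspace of $\DD'(X)$ containing $\tilde f$ (equivalently, the closure of the $G$-span of~$\tilde f$). Since $\tilde f$ is annihilated by the codimension-one ideal $\Ker(\nu)\subset Z(\g_\C)$, the whole module $U_{\tilde f}$ is $Z(\g_\C)$-finite (indeed $Z(\g_\C)$ acts through the scalar~$\nu$ on the dense $G$-span, hence on its closure).

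The key step is then to invoke Lemma~\ref{lem:spher-finite-length}.(2) with $\tau$ trivial: since $X=G/H$ is $G$-real spherical, the underlying $(\g,K)$-module of $U_{\tilde f}$ is $Z(\g_\C)$-finite and therefore of finite length. Consequently $U_{\tilde f}$ has an irreducible subquotient, and—after passing to an irreducible \emph{sub}module by taking a minimal nonzero closed invariant subspace, whose existence follows from the finite length property via Fact~\ref{fact:gK}—we obtain an irreducible admissible $\pi\in\widehat{G}_{ad}$ realized as a $G$-submodule of $\DD'(X)$. By definition of $(\widehat{G}_{ad})_H$ (the three equivalent descriptions in Section~\ref{subsec:notation-type-I-II}, in particular $\Hom_G(\pi^\infty,\DD'(G/H))\neq\{0\}$, which is detected by nonvanishing of $(\pi^{-\infty})^H$), this $\pi$ lies in $(\widehat{G}_{ad})_H$. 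Moreover $Z(\g_\C)$ acts on $U_{\tilde f}$—hence on $\pi$—by the scalar~$\nu$, so $\chi_\pi=\nu$, which shows $\nu\in\Char((\widehat{G}_{ad})_H)$. This proves $\Spec^{Z(\g_\C)}(X_\Gamma)\subset\Char((\widehat{G}_{ad})_H)$; the inclusion $\Spec_d^{Z(\g_\C)}(X_\Gamma)\subset\Spec^{Z(\g_\C)}(X_\Gamma)$ is immediate from $L^2(X_\Gamma;\NN_\nu)\subset\DD'(X_\Gamma;\NN_\nu)$.

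One technical point deserves care: the minimal closed invariant subspace containing $\tilde f$ need not itself be irreducible, but once we know $U_{\tilde f}$ has finite length as a $(\g,K)$-module, the lattice isomorphism of Fact~\ref{fact:gK} between closed invariant subspaces and $(\g,K)$-submodules guarantees a minimal nonzero closed invariant subspace, which is then irreducible; its infinitesimal character is still~$\nu$ because the $Z(\g_\C)$-action on all of $U_{\tilde f}$ is scalar multiplication by~$\nu$. I expect the main (mild) obstacle to be purely bookkeeping: making sure that the topology on $\DD'(X)$ (Remark~\ref{rem:top-distrib}) is such that $U_{\tilde f}$ is genuinely a continuous admissible $G$-representation of finite length so that Fact~\ref{fact:gK} and the Casselman--Wallach formalism apply—this is exactly what Lemma~\ref{lem:spher-finite-length} is designed to supply, so no new ideas are needed beyond citing it correctly. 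Note that no unitarity is claimed here (that is the content of the harder Conjectures \ref{conj:unitar} and~\ref{conj:Specd-Zg}); the admissible, non-unitary statement is all that is asserted and all that the finite-length argument gives.
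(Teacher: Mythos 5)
Your proposal is correct and follows essentially the same route as the paper: pull back the eigendistribution by $p_{\Gamma}^{\ast}$, take the minimal closed $G$-invariant subspace of $\DD'(X;\NN_{\nu})$ it generates, use the $G$-real sphericity via Lemma~\ref{lem:spher-finite-length} to get finite length, and extract an irreducible submodule realized in $\DD'(X)$, whose infinitesimal character is~$\nu$, so that it witnesses $\nu\in\Char((\widehat{G}_{ad})_H)$. Your extra bookkeeping via Fact~\ref{fact:gK} to produce the irreducible closed submodule is exactly the implicit content of the paper's phrase ``there exists an irreducible submodule $\pi_f$ of~$U_f$,'' so no new ideas are needed.
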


\begin{proof}
Suppose $\nu\in\Spec^{Z(\g_{\C})}(X_{\Gamma})$.
This means there exists a nonzero $f\in\DD'(X_{\Gamma};\NN_{\nu})$.
Recall the natural projection $p_{\Gamma} : X\rightarrow X_{\Gamma}$.
Since the pull-back $p_{\Gamma}^{\ast} : \DD'(X_{\Gamma})\to\DD'(X)$ preserves weak solutions to $(\NN_{\nu})$, we have $p_{\Gamma}^{\ast}f\in\DD'(X;\NN_{\nu})$.
Let $U_f$ be the minimal $G$-invariant closed subspace of $\DD'(X;\NN_{\nu})$ containing $p_{\Gamma}^{\ast}f$.
As a $G$-module, $U_f$ is of finite length by Lemma~\ref{lem:spher-finite-length}.
In particular, there exists an irreducible submodule $\pi_f$ of~$U_f$ that is realized in $\DD'(X)$.
Since $\pi_f\in (\widehat{G}_{ad})_H$, we conclude $\nu\in\Char((\widehat{G}_{ad})_H)$.
\end{proof}

Proposition~\ref{prop:SpecZg} shows that the set $\Spec^{Z(\g_{\C})}(X_{\Gamma})$, which is originally defined as a subset of the algebraic variety $\Hom_{\C\text{-}\mathrm{alg}}(Z(\g_{\C}),\C)$ of dimension equal to $\rank\g_{\C}$, is in fact contained in the subvariety $\Char((\widehat{G}_{ad})_H)$ of dimension $\rank X\leq\rank\g_{\C}$ if $X_{\C}=G_{\C}/H_{\C}$ is $G_{\C}$-spherical.
We refer to Table~\ref{table1} for examples with $\rank X$.
The proof of Proposition~\ref{prop:SpecZg} also shows the following.

\begin{proposition} \label{prop:conj-unitar-implies-Specd}
If $X_{\C}$ is $G_{\C}$-spherical, then Conjecture~\ref{conj:unitar} implies Conjecture~\ref{conj:Specd-Zg}.(0).
\end{proposition}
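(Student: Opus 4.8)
The plan is to trace through the argument of Proposition~\ref{prop:SpecZg} and observe that the only place where ``admissible'' rather than ``unitary'' was used is precisely the step supplied by Conjecture~\ref{conj:unitar}. Concretely, let $\Gamma$ be a discrete subgroup of~$G$ acting properly discontinuously and freely on the $G_{\C}$-spherical space $X=G/H$, and let $\lambda\in\Hom_{\C\text{-}\mathrm{alg}}(\D_G(X),\C)$ with $L^2(X_{\Gamma};\M_{\lambda})\neq\{0\}$; we must show $\nu:=\lambda\circ\dd\ell$ lies in $\Char((\widehat{G})_H)$, and that $0\in\Spec_d^{Z(\g_{\C})}(X_{\Gamma})\subset\Char((\widehat{G})_H)$ follows. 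First I would pick a nonzero $f\in L^2(X_{\Gamma};\M_{\lambda})$; since $X_{\C}$ is $G_{\C}$-spherical, the $\C$-algebra $\D_G(X)$ is commutative (Fact~\ref{fact:spherical}) and $f$ is a joint eigenfunction, so $U_f$ (the minimal closed $G$-invariant subspace of $\DD'(X)$ containing $p_{\Gamma}^{\ast}f$) is $Z(\g_{\C})$-finite and hence of finite length by Lemma~\ref{lem:spher-finite-length}.(2).

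Next I would invoke Conjecture~\ref{conj:unitar}: it provides an irreducible \emph{unitary} subrepresentation $\pi\hookrightarrow U_f$. Since $U_f\subset\DD'(X)=\DD'(G/H)$, this $\pi^{\infty}$ (equivalently its $(\g,K)$-module) is realized in $\DD'(G/H)$, hence has a nonzero $H$-invariant distribution vector, i.e.\ $\pi\in(\widehat{G})_H$. Because $Z(\g_{\C})$ acts on all of $U_f$ — and in particular on $\pi^{\infty}$ — by the scalars prescribed by $\nu$ (the action of $Z(\g_{\C})$ on $C^{\infty}(X)$ factors through $\dd\ell$, and $f$ satisfies $(\M_{\lambda})$), the $Z(\g_{\C})$-infinitesimal character $\chi_{\pi}$ of~$\pi$ equals~$\nu$. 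Therefore $\nu=\chi_{\pi}\in\Char((\widehat{G})_H)$, which is exactly Conjecture~\ref{conj:Specd-Zg}.(0); in particular \eqref{eqn:Spec-subset-Supp} holds since $(\widehat{G})_H\subset\widehat{G}$.

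One technical point I would make explicit is that $Z(\g_{\C})$ indeed acts on $U_f$ (not merely on its dense $(\g,K)$-submodule) by the scalars $\nu(z)$: this follows because $p_{\Gamma}^{\ast}f$ is a weak solution of $(\NN_{\nu})$ — the pull-back $p_{\Gamma}^{\ast}$ intertwines the $\D_G(X)$-action on $X_{\Gamma}$ with that on $X$ via \eqref{eqn:D-gamma} — and the space of distribution solutions of $(\NN_{\nu})$ on~$X$ is a closed $G$-invariant subspace of $\DD'(X)$, so it contains $U_f$. Restricting to the irreducible $\pi^{\infty}\subset U_f$ and applying Schur's lemma for irreducible admissible smooth representations then pins down $\chi_{\pi}=\nu$.

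I do not expect a serious obstacle here: the statement is genuinely a formal consequence, essentially a bookkeeping variant of Proposition~\ref{prop:SpecZg} in which the hypothesis ``Conjecture~\ref{conj:unitar} holds'' upgrades the irreducible subquotient $\pi_f\in(\widehat{G}_{ad})_H$ produced there to an irreducible subrepresentation in $(\widehat{G})_H$. The only mildly delicate point — already addressed above — is making sure the $Z(\g_{\C})$-eigenvalue is transported correctly from $X_{\Gamma}$ to $X$ and then to the unitary subrepresentation; this is immediate from \eqref{eqn:D-gamma}, the factorization $\dd\ell:Z(\g_{\C})\to\D_G(X)$, and Schur's lemma.
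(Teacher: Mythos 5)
There is a genuine gap. Conjecture~\ref{conj:Specd-Zg}.(0) is formulated in terms of $\Spec_d^{Z(\g_{\C})}(X_{\Gamma})$, i.e.\ in terms of joint $L^2$-eigenvalues of $\dd\ell(Z(\g_{\C}))$ on~$X_{\Gamma}$, not of the a priori larger algebra $\D_G(X)$. You begin your proof by fixing $\lambda\in\Hom_{\C\text{-}\mathrm{alg}}(\D_G(X),\C)$ with $L^2(X_{\Gamma};\M_{\lambda})\neq\{0\}$ and then setting $\nu:=\lambda\circ\dd\ell$; this only shows the inclusion for the subset of $\Spec_d^{Z(\g_{\C})}(X_{\Gamma})$ consisting of elements of the form $\lambda\circ\dd\ell$ with $\lambda\in\Spec_d(X_{\Gamma})$. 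But $\Spec_d^{Z(\g_{\C})}(X_{\Gamma})$ is defined via $L^2(X_{\Gamma};\NN_{\nu})\neq\{0\}$, and a nonzero $f\in L^2(X_{\Gamma};\NN_{\nu})$ need not be a joint eigenfunction of all of $\D_G(X)$. When $\dd\ell:Z(\g_{\C})\to\D_G(X)$ is not surjective, you cannot start from $\lambda$: you must start from~$\nu$, and there is no given eigenfunction to which Conjecture~\ref{conj:unitar} applies directly. The $G_{\C}$-sphericity hypothesis is precisely what is needed to bridge this gap, and your proof uses it only for commutativity of $\D_G(X)$ and finite length, not for this crucial reduction.

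The paper's proof addresses exactly this point: starting from $\nu\in\Spec_d^{Z(\g_{\C})}(X_{\Gamma})$ and $f\in L^2(X_{\Gamma};\NN_{\nu})$, it forms $U_f$, observes that $Z(\g_{\C})$ acts on $U_f$ by the scalar~$\nu$, and then uses that $\D_G(X)$ is finitely generated as a $\dd\ell(Z(\g_{\C}))$-module (a consequence of $G_{\C}$-sphericity, see Section~\ref{subsec:DGH}) to enlarge $U_f$ to a $\D_G(X)$-stable $G$-module $\tilde{U}_f$ on which $\D_G(X)$ acts through a finite-dimensional commutative algebra. One then extracts a joint $\D_G(X)$-eigenfunction $h\in\tilde{U}_f$, to which Conjecture~\ref{conj:unitar} supplies an irreducible unitary subrepresentation $\pi_h\subset\tilde{U}_f\subset\DD'(X)$; since $Z(\g_{\C})$ still acts on the enlarged space $\tilde{U}_f$ by the same scalar~$\nu$, one gets $\chi_{\pi_h}=\nu$, hence $\nu\in\Char((\widehat{G})_H)$. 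The final steps of your argument (unitarity implies $\pi\in(\widehat{G})_H$ because $U_f\subset\DD'(G/H)$, and transporting the scalar $\nu$ via $\dd\ell$ and Schur's lemma) are correct, but they only take effect once the enlargement step has reduced to a genuine $\D_G(X)$-eigenfunction.
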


\begin{proof}
Suppose $\nu\in\Spec^{Z(\g_{\C})}(X_{\Gamma})$.
For a nonzero $f\in L^2(X_{\Gamma};\NN_{\nu})$, we consider the $G$-module $U_f\subset\DD'(X;\NN_{\nu})$ as in the proof of Proposition~\ref{prop:SpecZg}.
By definition, $Z(\g_{\C})$ acts on~$U_f$ as scalars via $\dd\ell$.
Since $X_{\C}$ is $G_{\C}$-spherical, $\D_G(X)$ is finitely generated as a $\dd\ell(Z(\g_{\C}))$-module (see Section~\ref{subsec:DGH}) and we can enlarge $U_f$ to a $\D_G(X)$-module $\tilde{U}_f$ as in the proof of Lemma~\ref{lem:A-tilA-B-tilB}.(2); then $\tilde{U}_f$ is also a $G$-module.
We note that $U_f=\tilde{U}_f$ if $\dd\ell : Z(\g_{\C})\to\D_G(X)$ is surjective.
Since the action of $\D_G(X)$ on $\tilde{U}_f$ factors through the action of a finite-dimensional commutative algebra, there is a joint eigenfunction $h\in\tilde{U}_f$ for the action of $\D_G(X)$.
If Conjecture~\ref{conj:unitar} is true, then the $G$-module $\tilde{U}_f$ contains an irreducible subrepresentation $\pi_h$ of~$G$ which is unitary.
By construction, $\pi_h\in (\widehat{G})_H$.
Since $Z(\g_{\C})$ acts on the enlarged space $\tilde{U}_f$ by the same scalar~$\nu$, we conclude that $\nu\in\Char((\widehat{G})_H)$.
\end{proof}

\section{Real spectrum}\label{subsec:real-spec}

The spectrum of any self-adjoint operator is real.
Therefore, an affirmative answer to Question~\ref{problems}.(c) on the self-adjoint extension of $(\square_{X_{\Gamma}},C^{\infty}_c(X_{\Gamma}))$ would imply the following.

\begin{conjecture} \label{conj:Specd-R}
For any reductive symmetric space $X=G/H$ and any discrete subgroup $\Gamma$ of~$G$ acting properly discontinuously and freely on~$X$, we have $\Spec_d(\square_{X_{\Gamma}})\subset\R$.
\end{conjecture}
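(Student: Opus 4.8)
The plan is to reduce the statement, via the scalar action of the Laplacian, to the unitarity of the representations generated by $L^2$-eigenfunctions. By Remark~\ref{rem:eigenv-Lapl}, $\square_{X_{\Gamma}}$ acts on $L^2(X_{\Gamma};\M_{\lambda})$ as the scalar $t_{\lambda}=\langle\lambda,\lambda\rangle-\langle\rho,\rho\rangle$, so it suffices to prove $t_{\lambda}\in\R$ for every $\lambda\in\Spec_d(X_{\Gamma})$. First I would set $\nu:=\lambda\circ\dd\ell\in\Hom_{\C\text{-}\mathrm{alg}}(Z(\g_{\C}),\C)$; since $L^2(X_{\Gamma};\M_{\lambda})\hookrightarrow L^2(X_{\Gamma};\NN_{\nu})$ (see Section~\ref{subsec:type-I-II-Zg}), we have $\nu\in\Spec_d^{Z(\g_{\C})}(X_{\Gamma})$. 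The key input would be Conjecture~\ref{conj:Specd-Zg}.(0), which predicts an irreducible unitary representation $\pi\in(\widehat{G})_H$ with $\chi_{\pi}=\nu$. Because $X=G/H$ is a reductive symmetric space, its natural Laplacian is $\square_X=\dd\ell(C_G)$ for the Casimir $C_G\in Z(\g_{\C})$ attached to the Killing form (Example~\ref{ex:pseudo-Riem-struct}.(1)), so $C_G$ acts on~$\pi$ as the scalar $\chi_{\pi}(C_G)=\nu(C_G)=\lambda(\square_X)=t_{\lambda}$.

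The remaining step is elementary: the Casimir acts by a \emph{real} scalar on any irreducible unitary representation. Choosing a basis $X_1,\dots,X_n$ of~$\g$ with $B(X_i,X_j)=\varepsilon_i\delta_{ij}$, $\varepsilon_i\in\{\pm1\}$, we have $C_G=\sum_i\varepsilon_iX_i^2$ in $U(\g_{\C})$; for a unitary~$\pi$ each $\pi(X_i)$ is skew-Hermitian on smooth vectors, so for any $K$-finite vector~$v$ one gets $\langle\pi(C_G)v,v\rangle=-\sum_i\varepsilon_i\Vert\pi(X_i)v\Vert^2\in\R$, forcing $\chi_{\pi}(C_G)\in\R$. (Alternatively, a unitary representation is Hermitian, so its Harish-Chandra parameter~$\mu$ satisfies $-\bar\mu\in W\mu$; as the $W$-invariant form on $\jj_{\C}^{\ast}$ is the complexification of a real form, this gives $\langle\mu,\mu\rangle=\overline{\langle\mu,\mu\rangle}\in\R$, so the Casimir eigenvalue of~$\pi$ is real.) This yields $t_{\lambda}\in\R$, hence $\Spec_d(\square_{X_{\Gamma}})\subset\R$.

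With this scheme, the conjecture becomes a theorem in the cases where Conjecture~\ref{conj:Specd-Zg}.(0) is already known: when $H$ is compact (Theorem~\ref{thm:SpecZg-Hcpt}) and when $X=G/H$ is a group manifold (Proposition~\ref{prop:SpecG}). Moreover, in the main setting~\ref{spher-setting} and in the group manifold case one obtains $\Spec_d(\square_{X_{\Gamma}})\subset\R$ without any conjecture, directly from the essential self-adjointness of $\square_{X_{\Gamma}}$ (Theorem~\ref{thm:selfadj} and Theorem~\ref{thm:GxG}.(2)): an $L^2$-eigenfunction lies in the domain of the unique self-adjoint extension, whose spectrum is real. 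In particular the conjecture holds for all standard quotients attached to Table~\ref{table1}.

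The main obstacle is a general reductive symmetric space $X=G/H$ with $H$ noncompact together with an arbitrary discontinuous group~$\Gamma$, where none of the structural tools of this paper are available: there need not be an overgroup $L$ with $X_{\C}$ being $L_{\C}$-spherical, so the transfer maps and the fibration over a Riemannian locally symmetric space are lost. One would then have to prove Conjecture~\ref{conj:unitar} (equivalently Conjecture~\ref{conj:Specd-Zg}.(0)) in full generality --- the difficulty being that $L^2(X_{\Gamma})$ is not a $G$-submodule of $L^2(X)$ or of $L^2(\Gamma\backslash G)$, so the unitarity of the $G$-module $U_f$ generated by an $L^2$-eigenfunction~$f$ is not visible and must somehow be extracted from square-integrability over the quotient --- or else prove directly the essential self-adjointness of $\square_{X_{\Gamma}}$ (Question~\ref{problems}.(c)), for which, as recalled in Section~\ref{subsec:intro-spec-decomp}, no general pseudo-Riemannian theory currently exists.
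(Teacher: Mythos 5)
The proposal correctly assembles the ingredients the paper itself uses around this conjecture (the scalar action of the Casimir, Conjecture~\ref{conj:Specd-Zg}.(0), realness of Casimir eigenvalues on irreducible unitary representations, and the alternative route via essential self-adjointness), but the opening reduction contains a genuine gap. You write that since $\square_{X_{\Gamma}}$ acts on $L^2(X_{\Gamma};\M_{\lambda})$ by the scalar $t_{\lambda}$, ``it suffices to prove $t_{\lambda}\in\R$ for every $\lambda\in\Spec_d(X_{\Gamma})$.'' This only gives the inclusion $\{t_{\lambda}:\lambda\in\Spec_d(X_{\Gamma})\}\subset\R$; it does not bound $\Spec_d(\square_{X_{\Gamma}})$. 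An $L^2$-eigenfunction of the Laplacian alone need not be a joint eigenfunction of $\D_G(X)$ (nor of $Z(\g_{\C})$), so $\Spec_d(\square_{X_{\Gamma}})$ is a priori larger than $\{t_{\lambda}\}$ whenever $\D_G(X)$ is not generated by $\square_X$, i.e.\ whenever $\rank X>1$. This is exactly why the paper's proposition immediately following the conjecture carries the hypothesis $\rank X=1$: under that hypothesis $\D_G(X)=\C[\dd\ell(C_G)]$, every Laplacian eigenfunction is automatically a joint eigenfunction, and the reduction to $\Spec_d^{Z(\g_{\C})}(X_{\Gamma})$ is licit. Your argument as written silently assumes this equivalence.

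Concretely, the consequence of the gap is that your scheme does \emph{not} yield the conjecture in the higher-rank cases you list (e.g.\ $H$ compact with $\rank G/K>1$, or group manifolds of higher rank), even though Conjecture~\ref{conj:Specd-Zg}.(0) is known there: the realness of $t_{\lambda}$ for joint eigenvalues $\lambda$ leaves open whether $\square_{X_{\Gamma}}$ has other $L^2$-eigenvalues. Those cases are in fact covered, but by a different mechanism you do correctly identify in passing: ellipticity (for $H$ compact) or essential self-adjointness of $\square_{X_{\Gamma}}$ (Theorems~\ref{thm:selfadj} and~\ref{thm:GxG}.(2)), which controls the \emph{whole} point spectrum and not just the joint spectrum. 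The fix is therefore either to add the $\rank X=1$ hypothesis to your conditional implication, matching the paper's proposition, or to supply a decomposition of $\Ker(\square_{X_{\Gamma}}-t)\cap L^2(X_{\Gamma})$ into joint eigenspaces --- which is essentially what a self-adjointness argument provides and what is not available for a general pseudo-Riemannian Laplacian.
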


\begin{proposition}
If $\rank X=1$, then Conjecture~\ref{conj:Specd-Zg}.(0) (hence Conjecture~\ref{conj:unitar}) implies Conjecture~\ref{conj:Specd-R}.
\end{proposition}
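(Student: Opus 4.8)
The plan is to upgrade an $L^2$-eigenvalue of $\square_{X_{\Gamma}}$ to a full $Z(\g_{\C})$-infinitesimal character, use Conjecture~\ref{conj:Specd-Zg}.(0) to realize that character by an irreducible \emph{unitary} representation of~$G$, and then invoke the elementary reality of the Casimir eigenvalue on such a representation. The key auxiliary statement I would record first is the following. Recall (Section~\ref{subsec:remind-disc-spec}, Example~\ref{ex:pseudo-Riem-struct}.(1)) that for the reductive symmetric space $X=G/H$ one has $\square_X=\dd\ell(C_G)$, where $C_G\in Z(\g_{\C})$ is the Casimir element of an $\Ad(G)$-invariant nondegenerate symmetric bilinear form $B$ on~$\g$ which is positive definite on~$\p$ and negative definite on~$\kk$. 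Choosing a $B$-orthonormal basis $\{X_k\}$ of~$\p$ and a basis $\{Y_l\}$ of~$\kk$ with $B(Y_l,Y_m)=-\delta_{lm}$, we get $C_G=\sum_k X_k^2-\sum_l Y_l^2$ in $U(\g_{\C})$. If $\pi$ is any irreducible unitary representation of~$G$, then on the space of $K$-finite vectors each $\dd\pi(Z)$ with $Z\in\g$ is skew-symmetric, hence each $\dd\pi(X_k)^2$ and $\dd\pi(Y_l)^2$ is symmetric, hence $\dd\pi(C_G)$ is symmetric; since it equals the scalar $\chi_{\pi}(C_G)$ by Schur's lemma, that scalar is real. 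In short, $\chi(C_G)\in\R$ for every $\chi\in\Char(\widehat{G})$.

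Next, let $t\in\Spec_d(\square_{X_{\Gamma}})$ and pick a nonzero weak solution $f\in L^2(X_{\Gamma})$ of $\square_{X_{\Gamma}}f=tf$. Since $X$ is symmetric, $X_{\C}$ is $G_{\C}$-spherical (Example~\ref{ex:sph}.(1)), and since $\rank X=1$ the algebra $\D_G(X)$ is generated by $\square_X$ (Example~\ref{ex:rank-1}); in particular $\dd\ell(Z(\g_{\C}))\subseteq\C[\square_X]$, so for each $z\in Z(\g_{\C})$ we may write $\dd\ell(z)=P_z(\square_X)$ with $P_z\in\C[T]$, whence $\dd\ell(z)_{\Gamma}f=P_z(\square_{X_{\Gamma}})f=P_z(t)f$. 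Thus $f\in L^2(X_{\Gamma};\NN_{\nu})$ for the homomorphism $\nu\in\Hom_{\C\text{-}\mathrm{alg}}(Z(\g_{\C}),\C)$ defined by $\nu(z)=P_z(t)$, and $\nu(C_G)=t$ because $\dd\ell(C_G)=\square_X$. Hence $\nu\in\Spec_d^{Z(\g_{\C})}(X_{\Gamma})$.

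Finally I would apply Conjecture~\ref{conj:Specd-Zg}.(0) to get $\nu\in\Char((\widehat{G})_H)\subseteq\Char(\widehat{G})$, so $\nu=\chi_{\pi}$ for some irreducible unitary representation $\pi$ of~$G$, and the lemma gives $t=\nu(C_G)=\chi_{\pi}(C_G)\in\R$; as $t$ was arbitrary, $\Spec_d(\square_{X_{\Gamma}})\subseteq\R$, which is Conjecture~\ref{conj:Specd-R} for~$X$. For the parenthetical ``hence Conjecture~\ref{conj:unitar}'': since $X_{\C}$ is $G_{\C}$-spherical, Proposition~\ref{prop:conj-unitar-implies-Specd} shows Conjecture~\ref{conj:unitar} implies Conjecture~\ref{conj:Specd-Zg}.(0), so one concludes in the same way. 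There is no serious obstacle; the only points requiring care are (i) that the normalization of $C_G$ defining $\square_X$ is exactly the one making each summand $\dd\pi(X_k)^2$ and $-\dd\pi(Y_l)^2$ of $\dd\pi(C_G)$ individually symmetric on $K$-finite vectors — this holds regardless of the indefiniteness of~$B$ — and (ii) that $\rank X=1$ genuinely forces $\D_G(X)=\C[\square_X]$, so that a single eigenvalue of $\square_{X_{\Gamma}}$ pins down an entire $Z(\g_{\C})$-infinitesimal character to which the conjecture can be applied.
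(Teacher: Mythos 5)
Your proof is correct and follows essentially the same route as the paper: use $\rank X=1$ to promote a single Laplacian eigenvalue to a $Z(\g_{\C})$-infinitesimal character, apply Conjecture~\ref{conj:Specd-Zg}.(0) to land in $\Char(\widehat{G})$, and conclude from the reality of the Casimir eigenvalue on irreducible unitary representations. The only cosmetic difference is that you spell out the elementary skew-symmetry argument for $\chi_{\pi}(C_G)\in\R$, where the paper simply cites Parthasarathy.
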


\begin{proof}
Let $X=G/H$ be a reductive symmetric space and $\Gamma$ a discrete subgroup $\Gamma$ of~$G$ acting properly discontinuously and freely on~$X$.
The Casimir operator $C_G\in Z(\g_{\C})$ acts as a real scalar on any irreducible \emph{unitary} representation on~$G$ in the space of smooth vectors (see Parthasarathy \cite{par80}), hence also in the space of distribution vectors.
On the other hand, $C_G$ acts as the Laplacian $\square_X$ on $X=G/H$.
If $\rank X=1$, then the $\C$-algebra $\D_G(X)$ is generated by $\dd\ell(C_G)$, and so the inclusion $\Spec_d^{Z(\g_{\C})}(X_{\Gamma})\subset\Char((\widehat{G})_H)$ implies $\Spec_d(\square_{X_{\Gamma}})\subset\R$.
\end{proof}

\begin{remark}
As in Section~\ref{subsec:intro-spec-decomp}, let $(\square_{X_{\Gamma}},\mathcal{S})$ be the closure of\linebreak $(\square_{X_{\Gamma}},C_c^{\infty}(X_{\Gamma}))$, and $(\square_{X_{\Gamma}}^*,\mathcal{S}^*)$ the adjoint of $(\square_{X_{\Gamma}},\mathcal{S})$, so that $C_c^{\infty}(X_{\Gamma})\subset\mathcal{S}\subset\mathcal{S}^*\subset L^2(X_{\Gamma})$.
Since $(\square_{X_{\Gamma}},C_c^{\infty}(X_{\Gamma}))$ is a symmetric operator, if $\square_{X_{\Gamma}}f=\lambda\,f$ for some nonzero $f\in\mathcal{S}$, then $\lambda\in\R$: indeed, writing $f=\lim_j f_j$ and $\square_{X_{\Gamma}}=\lim_j \square_{X_{\Gamma}}$ where $f_j\in C_c^{\infty}(X_{\Gamma})$, we have
$$\lambda (f,f) = (\square_{X_{\Gamma}}f,f) = \lim_j (\square_{X_{\Gamma}}f,f_j) = \lim_j (f,\square_{X_{\Gamma}}f_j) = (f,\square_{X_{\Gamma}}f) = \overline{\lambda} (f,f).$$
However, our definition of $\Spec_d(\square_{X_{\Gamma}})$ (see the beginning of Chapter~\ref{sec:intro}) uses the larger space~$\mathcal{S}^*$, and the above argument does not imply $\Spec_d(\square_{X_{\Gamma}})\subset\nolinebreak\R$.
\end{remark}

In Chapter~\ref{sec:strategy} we proved (Theorems \ref{thm:selfadj} and~\ref{thm:GxG}.(2)) that the pseudo-Riemannian Laplacian $\square_{X_{\Gamma}}$ extends uniquely to a self-adjoint operator on $L^2(X_{\Gamma})$ in the main setting~\ref{spher-setting} and in the group manifold case, and so Conjecture~\ref{conj:Specd-R} holds in these settings.

\section{The case of compact~$H$} \label{subsec:refine-SpecZg-Hcpt}

Conjecture~\ref{conj:Specd-Zg} is true for compact~$H$, as given by the following theorem, which will be proved in Section~\ref{subsec:proof-SpecZg-Hcpt}.

\begin{theorem} \label{thm:SpecZg-Hcpt}
Let $G$ be a real reductive Lie group and $H$ a compact subgroup of~$G$ such that $X=G/H$ is $G$-real spherical.
For any torsion-free discrete subgroup $\Gamma$ of~$G$,
\begin{eqnarray*}
\Spec_d^{Z(\g_{\C})}(X_{\Gamma}) & = & \Char\big(\Disc(\Gamma\backslash G) \cap (\widehat{G})_H\big),\\
\Spec_d^{Z(\g_{\C})}(X_{\Gamma})_{\I} & = & \Char\big(\Disc(\Gamma\backslash G) \cap \Disc(G/H)\big),\\
\Spec_d^{Z(\g_{\C})}(X_{\Gamma})_{\II} & = & \Char\big(\Disc(\Gamma\backslash G) \cap ((\widehat{G})_H \smallsetminus \Disc(G/H))\big).
\end{eqnarray*}
\end{theorem}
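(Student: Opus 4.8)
The plan is to reduce everything to the classical Riemannian case by exploiting the fibration $q_\Gamma : X_\Gamma \to Y_\Gamma$ of \eqref{eqn:X-Gamma-Y-Gamma}, which in the compact-$H$ case degenerates: since $H$ is compact, one may take $L = G$, $L_K = K$, $L_H = H$, so that $Y = G/K$ is the Riemannian symmetric space of~$G$ and $Y_\Gamma = \Gamma\backslash G/K$, with fiber $F = K/H$. First I would record that $X_\Gamma = \Gamma\backslash G/H$ admits the decomposition $L^2(X_\Gamma) \simeq \sum^\oplus_{\tau\in\Disc(K/H)} (V_\tau^\vee)^H \otimes L^2(Y_\Gamma,\V_\tau)$ from \eqref{eqn:XYL2} (here one does \emph{not} assume $X_\C$ is $L_\C$-spherical, so $(V_\tau^\vee)^H$ need not be one-dimensional; but this causes no trouble). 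Since $G$ acts \emph{properly} on $X = G/H$ (as $H$ is compact), the quotient $X_\Gamma$ is standard with $\Gamma \subset L = G$, and $L^2(X_\Gamma) \simeq L^2(\Gamma\backslash G)^H$ via \eqref{eqn:L2XGammaL}. Thus the regular representation of~$G$ on $L^2(\Gamma\backslash G)$ is genuinely available, and $\Disc(\Gamma\backslash G)$ makes sense.

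The key steps are then as follows. First, via the isometry $L^2(X_\Gamma) \simeq L^2(\Gamma\backslash G)^H$, a nonzero $f \in \DD'(X_\Gamma;\NN_\nu)$ lifts to $p_\Gamma^* f \in \DD'(X;\NN_\nu)$, generating a $G$-module $U_f \subset \DD'(X)$ which is $Z(\g_\C)$-finite (in fact $Z(\g_\C)$ acts by the single character~$\nu$) and hence of finite length by Lemma~\ref{lem:spher-finite-length}.(2), since $X$ is $G$-real spherical. Actually one should work the other way: the decomposition \eqref{eqn:L2-Gamma-L-decomp} (Mautner's theorem, with $L = G$) gives $L^2(\Gamma\backslash G) \simeq \int_{\widehat G} \HHH_\vartheta\, \dd m(\vartheta)$, and the discrete part is $\sum^\oplus_{\vartheta\in\Disc(\Gamma\backslash G)} \HHH_\vartheta$. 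Taking $H$-fixed vectors and decomposing $L^2(K/H)$ by Peter--Weyl, one sees that $f$ is a joint $Z(\g_\C)$-eigenfunction with eigenvalue~$\nu$ if and only if $f$ lies in the span of the $H$-fixed distribution vectors of those $\vartheta \in \Disc(\Gamma\backslash G)$ with $\chi_\vartheta = \nu$ and $(\vartheta^{-\infty})^H \neq \{0\}$, i.e.\ $\vartheta \in (\widehat G)_H$. This gives both inclusions of the first equality: $\nu \in \Spec_d^{Z(\g_\C)}(X_\Gamma)$ forces some such~$\vartheta$ to exist (because the discrete part of $L^2(\Gamma\backslash G)^H$ decomposes into irreducibles and $Z(\g_\C)$-eigenspaces are built from them), and conversely any $\vartheta \in \Disc(\Gamma\backslash G) \cap (\widehat G)_H$ contributes its $H$-fixed vectors to $L^2(X_\Gamma;\NN_{\chi_\vartheta})$, using that smooth (even distribution) vectors of~$\vartheta$ realized in $L^2(\Gamma\backslash G)$ restrict to $L^2(X_\Gamma)$-functions after projecting to $H$-invariants — here one needs the Sobolev-type regularity statement from the proof of Theorem~\ref{thm:Fourier} that $\HHH_\vartheta^\infty$ is realized in $C^\infty(\Gamma\backslash G)$.

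Second, for the refinement into types: $L^2(X_\Gamma;\NN_\nu)_\I$ is by Notation~\ref{def:Ntype-I-II} the preimage under $p_\Gamma^*$ of the closure of $L^2(X;\NN_\nu)$ in $\DD'(X)$; and $L^2(X;\NN_\nu)$ is a finite sum of discrete series representations for $X = G/H$ (when $H$ is compact, $\Disc(G/H) = (\widehat G)_H$ is just the set of $\pi \in \widehat G$ with $\pi^H \neq \{0\}$ occurring discretely in $L^2(G/H) \simeq L^2(G)^H$). So the type~$\I$ part of $f$ comes precisely from those $\vartheta \in \Disc(\Gamma\backslash G)$ whose lift to $\DD'(X)$ lands in the closure of a sum of discrete series for $G/H$; since $\Disc(\Gamma\backslash G)$ consists of irreducible unitary representations and $\Disc(G/H)$ is a set of irreducible unitary representations, the matching is exactly $\vartheta \in \Disc(\Gamma\backslash G) \cap \Disc(G/H)$. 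The type~$\II$ equality then follows by taking orthogonal complements, using that the decomposition $L^2(X_\Gamma;\NN_\nu) = L^2(X_\Gamma;\NN_\nu)_\I \oplus L^2(X_\Gamma;\NN_\nu)_\II$ is orthogonal (Lemma~\ref{lem:L2Iclosed}) and that every irreducible summand of the discrete part of $L^2(\Gamma\backslash G)^H$ with character~$\nu$ is sorted into exactly one of the two classes according to whether $\vartheta \in \Disc(G/H)$ or not; one must check there is no overlap, i.e.\ that a $\vartheta \in (\widehat G)_H \setminus \Disc(G/H)$ cannot have its $H$-fixed vectors accidentally land in the closure of a discrete-series sum, which follows because the $G$-isotypic decomposition of $\DD'(X)$ is compatible with that closure.

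\textbf{Main obstacle.} The delicate point is the precise identification of $L^2(X_\Gamma;\NN_\nu)_\I$ with the $\Disc(\Gamma\backslash G)\cap\Disc(G/H)$ part — i.e.\ showing that taking the closure of $L^2(X;\NN_\nu)$ in $\DD'(X)$ and then pulling back does \emph{exactly} cut out the contribution of discrete series for $X$, with no leakage from or to type~$\II$. Concretely one must argue that if $\vartheta \in \Disc(\Gamma\backslash G)$ and the $H$-fixed distribution-vector realization of~$\vartheta$ in $\DD'(X)$ lies in $\overline{L^2(X;\NN_\nu)}$, then $\vartheta$ is (infinitesimally equivalent to) a discrete series representation for $G/H$; this uses that $\overline{L^2(X;\NN_\nu)}$ in $\DD'(X)$ is a sum of the $G$-isotypic components attached to $\Disc(G/H)$ together with their $\DD'$-closures, and that distinct irreducible unitary representations have disjoint isotypic components — plus the finite-length statement to rule out infinite sums causing closure issues. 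I would handle this by invoking the $G$-equivariant decomposition of $\DD'(X)$ into (closures of) isotypic components and Schur orthogonality for the at-most-finitely-many summands with a fixed infinitesimal character, exactly as in the proof of Theorem~\ref{thm:transfer-spec}.(2). The rest of the argument is bookkeeping with Peter--Weyl on $K/H$ and Mautner's decomposition.
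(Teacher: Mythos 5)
Your plan for the first equality is essentially the paper's Lemma~\ref{lem:Nunitary} and is fine, though you over-complicate it: when $H$ is compact, $V_\vartheta^H$ is already a \emph{closed subspace} of the Hilbert space $L^2(\Gamma\backslash G)$, so no Sobolev-type regularity of smooth vectors is needed to land in $L^2(X_\Gamma)\simeq L^2(\Gamma\backslash G)^H$; one simply takes $H$-fixed vectors in the finite isotypic decomposition $L^2(\Gamma\backslash G;\NN_\nu)=\bigoplus_{\pi\in\widehat G_\nu}V_\pi$.

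For the type~$\I$/type~$\II$ split you correctly identify the obstacle, but the tools you propose to close it (``Schur orthogonality for the at-most-finitely-many summands \ldots the finite-length statement \ldots exactly as in the proof of Theorem~\ref{thm:transfer-spec}.(2)'') do not suffice, and here is the concrete gap. The paper distinguishes two modules attached to each $\pi\in\widehat G_\nu$: the closure $U_\pi$ in $\DD'(X)$ of all $L^2(X)$-realizations of~$\pi$, and the a priori larger module $W_\pi$, the maximal $\pi_K$-isotypic submodule of $\DD'(X)$. The definition of $L^2(X_\Gamma;\NN_\nu)_\I$ uses $\overline{L^2(X;\NN_\nu)}=\bigoplus_\pi U_\pi$, whereas the pull-back $p_\Gamma^*f_\vartheta$ of the $\vartheta$-component of an $L^2(X_\Gamma)$-eigenfunction is only, by general principles, a realization of~$\vartheta$ in $\DD'(X)$ --- i.e.\ it lands in $W_\vartheta$. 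If some realization of a discrete series $\vartheta\in\Disc(G/H)$ in $\DD'(X)$ failed to be square-integrable, then $W_\vartheta\supsetneq U_\vartheta$ and $p_\Gamma^*f_\vartheta$ could sit in $W_\vartheta\smallsetminus U_\vartheta$; such an~$f$ with $\vartheta\in\Disc(\Gamma\backslash G)\cap\Disc(G/H)$ would then be wrongly classified as type~$\II$. The missing ingredient is Proposition~\ref{prop:disc-series-sph-Hcpt} (equivalently Lemma~\ref{lem:Upi-Wpi}.(3), $U_\pi=W_\pi$): when $H$ is compact and $X=G/H$ is $G$-real spherical, \emph{every} $(\g,K)$-homomorphism from a Harish-Chandra discrete series of~$G$ into $\DD'(X)$ has image in $L^2(X)$. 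This is a genuine regularity result, proved by reducing to $K$-equivariant bundles over $G/K$ (Lemma~\ref{lem:Vtau}), and it is not a consequence of Schur orthogonality or finiteness of $\widehat G_\nu$. You also do not set up the machinery needed to show that $p_\Gamma^*f_\vartheta$ lands in $W_\vartheta$ in the first place: the paper achieves this via the sesquilinear intertwining operators $T_f=T(\cdot,p^*f):L^2(\Gamma\backslash G;\NN_\nu)^{-\infty}\to\DD'(X;\NN_{\nu\circ\eta})$ of Lemma~\ref{lem:Tf}, together with the control of their images given by Lemma~\ref{lem:noext}. Without these two ingredients --- $U_\pi=W_\pi$ and the operators $T_f$ --- the ``no leakage'' assertion in your main-obstacle paragraph remains unjustified.
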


Let us now state a variant of Theorem~\ref{thm:SpecZg-Hcpt}, involving a \emph{maximal} compact subgroup $K$ of~$G$ instead of~$H$.
The Riemannian symmetric space $G/K$ is clearly $G$-real spherical.
We use the notation of Section~\ref{subsec:type-I-II-Herm} with $Y=G/K$ instead of $L/L_K$.
In particular, for $(\tau,V_{\tau})\in\widehat{K}$, we define a Hermitian vector bundle $\V_{\tau} := \Gamma\backslash G \times_K V_{\tau}$ over $Y_{\Gamma}=\Gamma\backslash G/K$ and define $\Spec_d^{Z(\g_{\C})}(Y_{\Gamma},\V_{\tau})_i$ for $i=\I$ or~$\II$ and $\Spec_d^{Z(\g_{\C})}(Y_{\Gamma},\V_{\tau})$ similarly to Notation~\ref{def:Vtype-I-II} for $Y_{\Gamma}=\Gamma\backslash G/K$.
We also define the following two subsets of~$\widehat{G}$:
\begin{itemize}
  \item $\widehat{G}(\tau)$ is the set of $\vartheta\in\widehat{G}$ such that $[\vartheta|_K:\tau]\neq 0$,
  \item $\Disc(G/K;\tau)$ is the set of $\vartheta\in\widehat{G}(\tau)$ that are Harish-Chandra discrete series representations for~$G$.
\end{itemize}
With this notation, we shall prove the following in Section~\ref{subsec:proof-SpecZg-Hcpt}.

\begin{theorem}\label{thm:Spectau}
Let $G$ be a real reductive Lie group and $K$ a maximal compact subgroup of~$G$.
For any torsion-free discrete subgroup $\Gamma$ of~$G$ and any $(\tau,V_{\tau})\in\widehat{K}$, setting $Y_{\Gamma}:=\Gamma\backslash G/K$, we have
\begin{eqnarray*}
\Spec_d^{Z(\g_{\C})}(Y_{\Gamma},\V_{\tau}) & = & \Char^{Z(\g_{\C})}\big(\Disc(\Gamma\backslash G) \cap \widehat{G}(\tau)\big),\\
\Spec_d^{Z(\g_{\C})}(Y_{\Gamma},\V_{\tau})_{\I} & = & \Char^{Z(\g_{\C})}\big(\Disc(\Gamma\backslash G) \cap \Disc(G/K;\tau)\big),\\
\Spec_d^{Z(\g_{\C})}(Y_{\Gamma},\V_{\tau})_{\II} & = & \Char^{Z(\g_{\C})}\big(\Disc(\Gamma\backslash G) \cap (\widehat{G}(\tau) \smallsetminus \Disc(G/K;\tau))\big).
\end{eqnarray*}
\end{theorem}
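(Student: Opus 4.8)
The statement relates the $Z(\g_\C)$-discrete spectrum on the Hermitian bundle $\V_\tau$ over the Riemannian locally symmetric space $Y_\Gamma = \Gamma\backslash G/K$ to the automorphic spectrum $\Disc(\Gamma\backslash G)$ cut out by the $K$-type $\tau$. The strategy is to transfer everything to the group $\Gamma\backslash G$ via the standard dictionary between sections of $\V_\tau$ and $\tau$-equivariant functions on $\Gamma\backslash G$. Concretely, there is an isometric isomorphism
$$
L^2(Y_\Gamma,\V_\tau) \simeq \big(L^2(\Gamma\backslash G)\otimes V_\tau\big)^K = \Hom_K\big(V_\tau^\vee, L^2(\Gamma\backslash G)\big),
$$
under which the action of $z\in Z(\g_\C)$ via $\dd\ell^\tau$ corresponds to the action of $z$ on $L^2(\Gamma\backslash G)$ by the right regular representation (recall $\dd\ell(Z(\g_\C)) = \dd r(Z(\g_\C))$). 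Since $L$ here is $G$ itself and $L_K = K$, the fiber $F = K/K$ is trivial; so there is no subtlety coming from $\Disc(L_K/L_H)$, and the whole problem is really about the decomposition of $L^2(\Gamma\backslash G)$.

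\textbf{Key steps.} First I would record the Mautner/Gelfand direct-integral decomposition $L^2(\Gamma\backslash G) \simeq \int_{\widehat G} \HHH_\vartheta\, \dd m(\vartheta)$ (as in the proof of Theorem~\ref{thm:Fourier}), and note that the discretely occurring part is $L^2_d(\Gamma\backslash G) \simeq \sumplus{\vartheta\in\Disc(\Gamma\backslash G)} m_\Gamma(\vartheta)\,\vartheta$ with finite multiplicities, while the complement $L^2_{ac}(\Gamma\backslash G)$ carries no $Z(\g_\C)$-eigenvectors (any joint $L^2$-eigenvector of $Z(\g_\C)$ generates a $Z(\g_\C)$-finite, hence by Lemma~\ref{lem:spher-finite-length}.(2) finite-length, hence — being unitary — unitarizable finite-length $G$-submodule of $L^2(\Gamma\backslash G)$, which therefore lands in $L^2_d(\Gamma\backslash G)$). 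Applying $\Hom_K(V_\tau^\vee, -)$ and using that $\Hom_K(V_\tau^\vee,\vartheta) \neq \{0\}$ iff $\vartheta\in\widehat G(\tau)$, one gets
$$
L^2(Y_\Gamma,\V_\tau)\ \supset\ L^2_d(Y_\Gamma,\V_\tau)\ \simeq\ \sumplus{\vartheta\in\Disc(\Gamma\backslash G)\cap\widehat G(\tau)} m_\Gamma(\vartheta)\cdot \Hom_K(V_\tau^\vee,\vartheta),
$$
and $z\in Z(\g_\C)$ acts on the $\vartheta$-summand by the scalar $\chi_\vartheta(z)$. A nonzero joint eigenvector with eigenvalue $\nu$ exists iff $\nu = \chi_\vartheta$ for some $\vartheta\in\Disc(\Gamma\backslash G)\cap\widehat G(\tau)$; this gives the first line. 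For the refinement into type $\I$/$\II$: by Notation~\ref{def:Vtype-I-II}, $L^2(Y_\Gamma,\V_\tau;\NN_\nu)_\I$ is the preimage under ${p'_\Gamma}^\ast$ of the closure in $\DD'(Y,\V_\tau)$ of $L^2(Y,\V_\tau;\NN_\nu)$, equivalently of the span of Harish-Chandra discrete series of $G$ with that infinitesimal character and $K$-type $\tau$, i.e.\ of $\Disc(G/K;\tau)$; one checks that the pull-back $p_\Gamma^\ast$ of an automorphic summand $\vartheta\subset L^2_d(\Gamma\backslash G)$ lies in this closure precisely when $\vartheta\in\Disc(G/K;\tau)$ (the matrix coefficients of a discrete series for $G$ are square-integrable on $G$, hence their averages over $\Gamma$ lie in $L^2(G)$ after the usual Poincaré-series argument, which is exactly what realizes $\vartheta$ in $L^2(G/K,\V_\tau)$; conversely, if $\vartheta$ is not a discrete series of $G$, its matrix coefficients are not in $L^2(G)$ and $p_\Gamma^\ast$ of the automorphic copy cannot be approximated in $\DD'(Y,\V_\tau)$ by elements of $L^2(Y,\V_\tau)$). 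This gives the second line; the third line is then the orthogonal-complement statement inside $L^2_d(Y_\Gamma,\V_\tau)$, using that the decomposition above is orthogonal and $Z(\g_\C)$ acts by distinct scalars on distinct infinitesimal-character isotypes.

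\textbf{Main obstacle.} The delicate point is the type-$\I$ identification: one must show that for $\vartheta\in\Disc(\Gamma\backslash G)$, the automorphic realization of $\vartheta$ pulls back under $p_\Gamma^\ast : L^2(\Gamma\backslash G)\to\DD'(G)$ into the $\DD'$-closure of $L^2(G)$-matrix coefficients \emph{if and only if} $\vartheta$ is a Harish-Chandra discrete series of $G$ — i.e.\ that the closure in $\DD'$ does not accidentally pick up tempered-but-nondiscrete representations. I would handle this exactly as in the analysis of type~$\I$ versus type~$\II$ in Sections~\ref{sec:type-I-II} and~\ref{sec:transfer-Gamma-type-I-II}: a distribution vector $p_\Gamma^\ast f$ for $f$ a $\Gamma$-automorphic copy of $\vartheta$ lies in $\overline{L^2(G/K,\V_\tau)}$ iff the generalized matrix coefficients $g\mapsto(\pi^{-\infty}(g^{-1})u, v)$ (with $u$ the $\Gamma$-period distribution vector and $v\in\vartheta^\infty$ of $K$-type $\tau$) are limits in $\DD'(G)$ of $L^2(G)$-functions; since these coefficients grow according to the leading exponents of $\vartheta$, membership in the $L^2$-closure forces square-integrability, i.e.\ $\vartheta\in\Disc(G/\{e\})$, and combined with the $K$-type condition, $\vartheta\in\Disc(G/K;\tau)$. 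The converse (that genuine discrete series \emph{do} occur, via Poincaré series / the averaging map $L^2(G/K,\V_\tau)\to L^2(Y_\Gamma,\V_\tau)$ being nonzero on each such $\vartheta$ that appears automorphically) is the content of the type~$\I$ constructions already invoked in \cite{kk11,kk16} and recalled in Remark~\ref{rem:type-I-II}.(2). Everything else is routine bookkeeping with the Peter--Weyl decomposition over $K$ and Schur's lemma for $Z(\g_\C)$.
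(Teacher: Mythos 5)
Your overall plan is the right one: translate to $\tau$-equivariant functions on $\Gamma\backslash G$, observe that a joint $Z(\g_{\C})$-eigenvector must lie in the discretely decomposing part of $L^2(\Gamma\backslash G)$, read off the first line from the isotypic decomposition, and distinguish type $\I$ from type~$\II$ by isolating those $\vartheta$ that are Harish-Chandra discrete series. The first line and the reduction of the third line to the second via the isometry $p^*:L^2(Y_{\Gamma},\V_{\tau})\hookrightarrow L^2(\Gamma\backslash G,V_{\tau})$ are sound. Two remarks, one minor, one substantive.

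\textbf{Minor.} Your appeal to Lemma~\ref{lem:spher-finite-length}.(2) to argue that a $Z(\g_{\C})$-eigenvector generates a finite-length submodule of $L^2(\Gamma\backslash G)$ is not quite the right reference: that lemma concerns $(\g,K)$-modules in $\DD'(G/H)$ for a $G$-real spherical homogeneous space $G/H$, not the automorphic space $\Gamma\backslash G$. The conclusion you want --- that a closed $G$-invariant subspace of $L^2(\Gamma\backslash G)$ on which $Z(\g_{\C})$ acts by scalars is a (possibly infinite) multiple of the finitely many $\pi\in\widehat{G}$ with that infinitesimal character --- is true, but one should cite admissibility for unitary representations with an infinitesimal character, which is how the paper sets up \eqref{eqn:Vi} and the spaces $V_{\pi}$.

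\textbf{Substantive gap.} Your argument that ``membership in the $L^2$-closure forces square-integrability'' does not work as stated, because the $\DD'(Y,\V_{\tau})$-closure of $L^2(Y,\V_{\tau};\NN_{\nu})$ already contains distribution vectors $\pi^{-\infty}$ of the discrete series $\pi$, and those are \emph{not} square-integrable sections. So being in the closure does not impose any growth bound; what it imposes is a \emph{representation-theoretic} constraint: that ${p'_\Gamma}^{\ast}f$ lie in the maximal $\pi_K$-isotypic pieces $W_{\pi}$ of $\DD'(Y,\V_{\tau};\NN_{\nu\circ\eta})$ associated to discrete series $\pi\in\Disc(G/K;\tau)$. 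To get from the decomposition of $p^{\ast}f$ in $L^2(\Gamma\backslash G)$ over $\pi\in\widehat{G}_{\nu}$ to the corresponding decomposition of ${p'_\Gamma}^{\ast}f$ over the $W_{\pi}$, the paper constructs the sesquilinear map $T$ on distribution vectors (as in Lemma~\ref{lem:Tf}), with $T(\delta,F_{\pi})=T(\delta_{\pi},p^{\ast}f)\in W_{\pi}$ and $T(V_{\pi}^{-\infty},p^{\ast}f)\subset W_{\pi}$; this is exactly what makes the isotypic structure of $p^{\ast}f$ descend to ${p'_\Gamma}^{\ast}f$. One also needs to know that the $\DD'$-closure of $L^2(Y,\V_{\tau};\NN_{\nu})$ coincides with $\bigoplus_{\pi}W_{\pi}$ (the analogue of $U_{\pi}=W_{\pi}$ for compact isotropy), which is Lemma~\ref{lem:Vtau} / Proposition~\ref{prop:disc-series-sph-Hcpt}, proved via Casselman--Wallach rather than by any growth estimate: \emph{every} $(\g,K)$-homomorphism from a discrete series to $\DD'(G/K,\V_{\tau})$ automatically lands in $L^2$. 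Your appeal to a ``Poincar\'e-series argument'' for the forward direction is also not quite to the point, since no averaging over $\Gamma$ is needed here --- one is pulling back from $Y_{\Gamma}$ to $Y$, not descending.
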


Theorem~\ref{thm:Spectau} actually implies Theorem~\ref{thm:SpecZg-Hcpt}.
Indeed, let $H$ be a compact subgroup of~$G$ such that $X=G/H$ is $G$-real spherical.
Consider a maximal compact subgroup $K$ of~$G$ containing~$H$.
Then the fibration $K/H\to X_{\Gamma}\to Y_{\Gamma}$ induces a decomposition
$$L^2(X_{\Gamma}) = \sumplus{\tau\in\Disc(K/H)} (V_{\tau}^{\vee})^H \otimes L^2(Y_{\Gamma},\V_{\tau})$$
(see \eqref{eqn:XYL2} with $(G,K,H)$ instead of $(L,L_K,L_H)$) and bijections
$$\Spec_d^{Z(\g_{\C})}(X_{\Gamma})_i = \bigcup_{\tau\in\Disc(K/H)} \Spec_d^{Z(\g_{\C})}(Y_{\Gamma},\V_{\tau})_i$$
for $i=\I$ or~$\II$ and
$$\Spec_d^{Z(\g_{\C})}(X_{\Gamma}) = \bigcup_{\tau\in\Disc(K/H)} \Spec_d^{Z(\g_{\C})}(Y_{\Gamma},\V_{\tau}).$$
Since
$$(\widehat{G})_H = \bigcup_{\tau\in\Disc(K/H)} \widehat{G}(\tau) \quad\mathrm{and}\quad
\Disc(G/H) = \bigcup_{\tau\in\Disc(K/H)} \Disc(G/K;\tau),$$
we see that Theorem~\ref{thm:Spectau} implies Theorem~\ref{thm:SpecZg-Hcpt}.

\section{The case of group manifolds}

Conjecture~\ref{conj:Specd-Zg} is also true in the case that $X=G/H$ is a group manifold $({}^{\backprime}G\times\!{}^{\backprime}G)/\Diag({}^{\backprime}G)$ and $\Gamma\subset{}^{\backprime}G\times\!{}^{\backprime}K$ as in Example~\ref{ex:group-manifold}.
In this case the $\C$-algebra homomorphism $\dd\ell : Z({}^{\backprime}\g_{\C})\to\D_G(X)$ of \eqref{eqn:dl-dr} is bijective, and so the statement is as follows.

\begin{proposition}\label{prop:SpecG}
Let $X=G/H$ be a group manifold $({}^{\backprime}G\times\!{}^{\backprime}G)/\Diag({}^{\backprime}G)$, where ${}^{\backprime}G$ is a real linear reductive Lie group contained in a connected complexification~${}^{\backprime}G_{\C}$.
We identify the $\C$-algebra $\D_G(X)$ with $Z({}^{\backprime}\g_{\C})$ via $\dd\ell$.
\begin{enumerate}
  \item For any discrete subgroup $\Gamma$ of $G={}^{\backprime}G\times\!{}^{\backprime}G$ acting properly discontinuously and freely on~$X$,
  $$\Spec_d(X_{\Gamma})_{\I} \subset \Char(\Disc({}^{\backprime}G)).$$
  \item Moreover, in the standard case where $\Gamma\subset L:={}^{\backprime}G\times\!{}^{\backprime}K$,
\begin{eqnarray*}
\Spec_d(X_{\Gamma}) & \subset & \Char(\widehat{{}^{\backprime}G}),\\
\Spec_d(X_{\Gamma})_{\II} & \subset & \Char(\widehat{{}^{\backprime}G} \smallsetminus \Disc({}^{\backprime}G)).
\end{eqnarray*}
\end{enumerate}
\end{proposition}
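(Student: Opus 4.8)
\textbf{Proof plan for Proposition~\ref{prop:SpecG}.}
The plan is to reduce everything to the classical harmonic-analytic fact that for a group manifold the spectral data is governed by the unitary dual of~${}^{\backprime}G$, and then to import the type~$\I$/type~$\II$ dichotomy from the general machinery. First I would treat statement~(1). Here $\Gamma$ is an arbitrary discrete subgroup of $G={}^{\backprime}G\times{}^{\backprime}G$ acting properly discontinuously and freely on~$X=({}^{\backprime}G\times{}^{\backprime}G)/\Diag({}^{\backprime}G)$. Since $\Spec_d(X_{\Gamma})_{\I}\subset\Spec_d(X)$ by construction (Section~\ref{subsec:def-type-I-II}), it suffices to recall that $\Spec_d(X)$ corresponds, via the identification $\D_G(X)\simeq Z({}^{\backprime}\g_{\C})$ and the classical description of discrete series for a group manifold (Section~\ref{subsec:discreteseries}), exactly to the infinitesimal characters of Harish-Chandra's discrete series representations of~${}^{\backprime}G$; that is, $\Spec_d(X)=\Char(\Disc({}^{\backprime}G))$. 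Combining the two inclusions gives $\Spec_d(X_{\Gamma})_{\I}\subset\Char(\Disc({}^{\backprime}G))$.

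For statement~(2), I would work in the standard setting $\Gamma\subset L={}^{\backprime}G\times{}^{\backprime}K$ and invoke the conditions (Tf), (A), (B) established for the group-manifold case in Proposition~\ref{prop:cond-Tf-A-B-satisfied-GxG}, together with the $G$-real sphericity of~$X$ (Example~\ref{ex:sph}.(4)). By Corollary~\ref{cor:Specd-lambda-Wtau}, for $i=\emptyset$ or~$\II$ we have
$$\Spec_d(X_{\Gamma})_i = \bigcup_{\tau\in\Disc(L_K/L_H)} \big\{\llambda(\nu,\tau) : \nu\in\Spec_d^{Z(\llll_{\C})}(Y_{\Gamma},\V_{\tau})_i\big\},$$
where $(L_K,L_H)=({}^{\backprime}K\times{}^{\backprime}K,\Diag({}^{\backprime}K))$ and $Y_{\Gamma}=\Gamma\backslash L/L_K$. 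Since the group manifold for~$L$ is $({}^{\backprime}G\times{}^{\backprime}K)$ acting on $({}^{\backprime}G\times{}^{\backprime}K)/\Diag({}^{\backprime}K)$, the Riemannian locally symmetric space $Y_{\Gamma}$ together with the bundles~$\V_{\tau}$ is exactly the setting of Theorem~\ref{thm:Spectau} applied to the group~${}^{\backprime}G$ with maximal compact~${}^{\backprime}K$ (the first factor ${}^{\backprime}K$ of $L_K$ playing the role of~$K$, the second being absorbed into the bundle parameter~$\tau$). Thus $\Spec_d^{Z(\llll_{\C})}(Y_{\Gamma},\V_{\tau})$ is contained in $\Char^{Z(\llll_{\C})}(\Disc(\Gamma'\backslash L)\cap\widehat{L}(\tau))$ for the appropriate projection $\Gamma'$, and in particular every $\nu$ arising is the infinitesimal character of a genuine irreducible \emph{unitary} representation $\vartheta$ of~$L$ with nonzero $L_H$-fixed distribution vectors, i.e.\ $\vartheta\in(\widehat{L})_{L_H}$.

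The remaining point is to translate this through the transfer map~$\llambda$ and the bijection $\D_G(X)\simeq Z({}^{\backprime}\g_{\C})$ to land inside $\Char(\widehat{{}^{\backprime}G})$ (resp.\ $\Char(\widehat{{}^{\backprime}G}\smallsetminus\Disc({}^{\backprime}G))$ for type~$\II$). For this I would use the explicit structure of the transfer map in the group-manifold case: by Lemma~\ref{lem:tilA-tilB-group-manifold} and the displayed algebra identities there, $\dd\ell(Z(\llll_{\C}))=\langle\D_{G_{\C}}(X_{\C}),\dd r(Z({}^{\backprime}\kk_{\C}))\rangle$ with $\D_{G_{\C}}(X_{\C})=\dd\ell(Z({}^{\backprime}\g_{\C}))$, so the transfer map $\llambda(\cdot,\tau)$ simply records the $Z({}^{\backprime}\g_{\C})$-infinitesimal character of the first factor of $\vartheta\simeq\vartheta_1\boxtimes\tau_2^{\vee}$ (a consequence of Definition-Proposition~\ref{def-prop:D-tau} and Examples \ref{ex:D-tau-Z-l}--\ref{ex:D-tau-Z-r}). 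Since $\vartheta$ is unitary and $L={}^{\backprime}G\times{}^{\backprime}K$, the first factor $\vartheta_1$ is an irreducible unitary representation of~${}^{\backprime}G$, and its $L_H=\Diag({}^{\backprime}K)$-invariance forces $\vartheta_1\in\widehat{{}^{\backprime}G}$ with $[\vartheta_1|_{{}^{\backprime}K}:\tau_2]\neq0$; hence $\llambda(\chi_{\vartheta},\tau)=\chi_{\vartheta_1}\in\Char(\widehat{{}^{\backprime}G})$, giving the first inclusion of~(2). For the type~$\II$ refinement, Theorem~\ref{thm:transfer-spec}.(3) tells us that $\nu\in\Spec_d^{Z(\llll_{\C})}(Y_{\Gamma},\V_{\tau})_{\II}$ produces spectrum of type~$\II$ on~$X_{\Gamma}$; combined with the fact that a type~$\II$ contribution is orthogonal to all discrete series for~$X$, the corresponding $\vartheta_1$ cannot be a Harish-Chandra discrete series of~${}^{\backprime}G$ (otherwise $\ii_{\tau,\Gamma}(\vartheta)$ would lie in $L^2_d(X_{\Gamma})_{\I}$ by Theorem~\ref{thm:transfer-spec}.(1), using $\Disc(G/H)$ for the group manifold $\leftrightarrow\Disc({}^{\backprime}G)$), so $\vartheta_1\in\widehat{{}^{\backprime}G}\smallsetminus\Disc({}^{\backprime}G)$ and $\chi_{\vartheta_1}\in\Char(\widehat{{}^{\backprime}G}\smallsetminus\Disc({}^{\backprime}G))$. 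The main obstacle I anticipate is bookkeeping the two copies of~${}^{\backprime}K$ correctly when matching $Y_{\Gamma}$ and the bundle data to the hypotheses of Theorem~\ref{thm:Spectau}, and checking that the projection of $\Gamma\subset{}^{\backprime}G\times{}^{\backprime}K$ to the relevant quotient yields a torsion-free discrete subgroup to which Theorem~\ref{thm:Spectau} applies; once that identification is clean, the rest is formal.
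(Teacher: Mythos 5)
Your proposal follows the same overall route as the paper (conditions (Tf), (A), (B) from Proposition~\ref{prop:cond-Tf-A-B-satisfied-GxG}, then Theorem~\ref{thm:transfer-spec} / Corollary~\ref{cor:Specd-lambda-Wtau}, then Theorem~\ref{thm:Spectau}, then the explicit transfer map), and it is correct in substance, but it introduces an unnecessary detour that causes exactly the bookkeeping worry you flag at the end.

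For statement~(1), your argument (using $\Spec_d(X_{\Gamma})_{\I}\subset\Spec_d(X)$ and Harish-Chandra's classification $\Spec_d(X)=\Char(\Disc({}^{\backprime}G))$) is fine and simply spells out what the paper compresses into ``immediate from the definition.''

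For statement~(2), the issue is your attempt to apply Theorem~\ref{thm:Spectau} to the pair $({}^{\backprime}G,{}^{\backprime}K)$ after ``absorbing'' the second ${}^{\backprime}K$ factor into the bundle parameter. That is not needed, and it is what forces you to worry about whether the projection of $\Gamma\subset{}^{\backprime}G\times{}^{\backprime}K$ to ${}^{\backprime}G$ is torsion-free, and about identifying the bundles and the $L$-module structure of $L^2(\Gamma\backslash L)$ (which for, say, graph-type $\Gamma$ is \emph{not} an outer tensor product). The paper applies Theorem~\ref{thm:Spectau} directly with $(G,K)$ replaced by $(L,L_K)=({}^{\backprime}G\times{}^{\backprime}K,\,{}^{\backprime}K\times{}^{\backprime}K)$, which is already a legitimate reductive group / maximal compact pair, so no projection or torsion issue arises; then one simply reads off the sets $\Disc(L_K/L_H)$, $\widehat{L}(\tau)$, $\Disc(L/L_K;\tau)$ via Peter--Weyl and the product structure, and uses the explicit form $\llambda(\chi_{\vartheta},\tau)=\chi_{{}^{\backprime}\vartheta}$ of the transfer map. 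In particular, your by-contradiction reasoning for the type~$\II$ inclusion (``otherwise $\ii_{\tau,\Gamma}(\vartheta)$ would lie in $L^2_d(X_{\Gamma})_{\I}$'') is dispensable: Theorem~\ref{thm:Spectau} already delivers $\vartheta\in\widehat{L}(\tau)\smallsetminus\Disc(L/L_K;\tau)$ directly, whence ${}^{\backprime}\vartheta\notin\Disc({}^{\backprime}G)$ because ${}^{\backprime}K$ is compact. If you drop the reduction to $({}^{\backprime}G,{}^{\backprime}K)$ and argue at the level of $(L,L_K)$ as the paper does, the remaining steps of your plan go through without further repair.
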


Statement~(1) is immediate from the definition.
Statement~(2) will be proved in Section~\ref{subsec:proof-Specd-lambda}, by reducing to Theorem~\ref{thm:Spectau} and using Theorem~\ref{thm:transfer-spec}.

\begin{remark} \label{rem:intersect-Spec-I-II}
In contrast to the case ${}^{\backprime}G=\SL(2,\R)$ (see Proposition~\ref{prop:AdS-I-II}), in general $\Spec_d(X_{\Gamma})_{\I}$ and $\Spec_d(X_{\Gamma})_{\II}$ may have nonempty intersection for $X=({}^{\backprime}G\times\!{}^{\backprime}G)/\Diag({}^{\backprime}G)$.
The proof of Proposition~\ref{prop:AdS-I-II} (see Section~\ref{subsec:AdS3} just below) uses the fact that for ${}^{\backprime}G=\SL(2,\R)$, irreducible unitary representations having the same infinitesimal character as the trivial one-dimensional representation are Harish-Chandra's discrete series representations; this does not hold for more general real reductive groups.
In fact, the unitarization of Zuckerman's derived functor modules $A_{\q}(0)$ for $\theta$-stable parabolic subalgebra $\q$ ($\subset\g_{\C}$) are such examples if the normalizer of $\q$ in~$G$ is noncompact \cite{vog84}.
\end{remark}

\section[The example of $X=\AdS^3$: proof of Proposition~1.13]{The example of $X=\AdS^3$: proof of Proposition~\ref{prop:AdS-I-II}} \label{subsec:AdS3}


Let $X$ be the $3$-dimensional anti-de Sitter space
$$\AdS^3 = G/H = \SO(2,2)/\SO(2,1) \simeq (\SL(2,\R)\times\SL(2,\R))/\Diag(\SL(2,\R)).$$
Then $\rank X=1$, and so the $\C$-algebra $\D_G(X)$ of $G$-invariant differential operators on~$X$ is generated by the Laplacian~$\square_X$.
Thus, for any discrete subgroup $\Gamma$ of $\SO(2,2)$ acting properly discontinuously and freely on~$X$, we may identify $\Spec_d(X_{\Gamma})$ with the discrete spectrum of the Laplacian~$\square_{X_{\Gamma}}$.

We already know from Theorem~\ref{thm:mainII} (proved in Chapter~\ref{sec:proof-mainII}) that\linebreak $\Spec_d(X_{\Gamma})_{\II}$ is infinite whenever $\Gamma$ is cocompact or arithmetic in~$L$.
We now prove the other statements of Proposition~\ref{prop:AdS-I-II} using Proposition~\ref{prop:SpecG}.
For this, recall that the irreducible unitary representations of ${}^{\backprime}G:=\SL(2,\R)$ are classified up to unitary equivalence in the following list:
\begin{center}
\begin{tabular}{cl}
$\mathbf{1}$ & trivial one-dimensional representation,\tabularnewline
$\pi_{i\nu,\delta}$ & unitary principal series representations\tabularnewline
& \quad ($\nu\geq 0$ for $\delta=+$, or $\nu>0$ for $\delta=-$),\tabularnewline
$\pi_{\lambda}$ & complementary series representations ($0<\lambda<1$),\tabularnewline
$\varpi_n^+$ & holomorphic discrete series representations ($n\in\N_+$),\tabularnewline
$\varpi_n^-$ & antiholomorphic discrete series representations ($n\in\N_+$),\tabularnewline
$\varpi_0^+$ & limit of holomorphic discrete series representations,\tabularnewline
$\varpi_0^-$ & limit of antiholomorphic discrete series representations.
\end{tabular}
\end{center}
We use the following parametrization: the smooth representations $\mathbf{1}$, $\pi_{i\nu,\delta}$, $\pi_{\lambda}$, $\varpi_n^{\pm}$, and $\varpi_0^{\pm}$ are subrepresentations of the unnormalized principal series representations $C^{\infty}({}^{\backprime}G/{}^{\backprime}P,\mathcal{L}_{s,\varepsilon})$ of ${}^{\backprime}G=\SL(2,\R)$ with $(s,\varepsilon) = (0,+)$, $(1+\nolinebreak i\nu,\delta)$, $(\lambda,+)$, $(n+1,(-1)^{n+1})$, and $(1,-1)$, respectively.
Here $\mathcal{L}_{s,\varepsilon}$ is a ${}^{\backprime}G$-equivariant line bundle over the real flag manifold ${}^{\backprime}G/{}^{\backprime}P$ associated to a one-dimensional representation of the parabolic subgroup ${}^{\backprime}P := \big\{ \big(\begin{smallmatrix} a & b\\ 0 & a^{-1}\end{smallmatrix}\big) \,:\, a\in\R^*,\ b\in\R\big\}$ given by
$$\begin{pmatrix} a & b\\ 0 & a^{-1}\end{pmatrix} \longmapsto
\left\{ \begin{array}{ll}
|a|^s & \text{for }\varepsilon=+,\\
|a|^s \, \mathrm{sgn}(a) & \text{for }\varepsilon=-.
\end{array}\right.$$
We normalize the Harish-Chandra isomorphism
$$\Hom_{\C\text{-}\mathrm{alg}}(Z({}^{\backprime}\g_{\C}),\C) \simeq \C/(\Z/2\Z), \quad \chi_{\lambda} \longleftrightarrow \lambda$$
so that the infinitesimal character of the trivial one-dimensional representation $\mathbf{1}$ is equal to $\chi_{\lambda}$ for $\lambda=1\in\C/(\Z/2\Z)$.
Then the infinitesimal character of $C^{\infty}({}^{\backprime}G/{}^{\backprime}P,\mathcal{L}_{s,\varepsilon})$ is $s-1\in\C/(\Z/2\Z)$, and therefore the infinitesimal characters of $\pi_{i\nu,\delta}$, $\pi_{\lambda}$, and $\varpi_n^{\pm}$ are given by $i\nu$, $\lambda-1$, and~$n$ respectively.

As subsets of $\C/(\Z/2\Z)$, we have
\begin{eqnarray*}
\Char\big(\Disc({}^{\backprime}G)\big) & = & \{ n : n\in\N_+\},\\
\Char\big(\widehat{{}^{\backprime}G}\smallsetminus\Disc({}^{\backprime}G)\big) & = & \{ i\nu : \nu\geq 0\} \cup \{ \mu : 0\leq\mu\leq 1\}.
\end{eqnarray*}
Therefore, by Proposition~\ref{prop:SpecG},
$$\Spec_d(X_{\Gamma})_{\I} \subset \{ n : n\in\N_+\}$$
for any discrete subgroup $\Gamma$ of $G=\SO(2,2)$ acting properly discontinuously and freely on~$X$, and
$$\Spec_d(X_{\Gamma})_{\II} \subset \{ i\nu : \nu\geq 0\} \cup \{ \mu : 0\leq\mu\leq 1\}$$
whenever $\Gamma\subset L:=\U(1,1)$.

Let $B_K$ be the Killing form on $\ssl(2,\R)$.
We use $2B_K$ to normalize the Lorentzian metric on $X\simeq\SL(2,\R)$, the Casimir element $C\in Z(\ssl(2,\C))$, and the Laplace--Beltrami operator~$\square_X$.
Then the norm of the root vector is equal to one, and $\chi_{\lambda}(C)=\frac{1}{4}(\lambda^2-1)$.
Therefore, via the bijection
\begin{eqnarray*}
\Hom_{\C\text{-}\mathrm{alg}}(Z({}^{\backprime}\g_{\C}),\C) (\simeq \C/(\Z/2\Z)) & \overset{\sim}{\longrightarrow} & \hspace{1.2cm} \C\\
\chi_{\lambda} \hspace{2.4cm} & \longmapsto & \chi_{\lambda}(C) = \frac{1}{4}(\lambda^2-1),
\end{eqnarray*}
we have
$$\Spec_d(X_{\Gamma})_{\I} \subset \Big\{\frac{1}{4}(n^2-1) : n\in\N_+\Big\} = \Big\{\frac{1}{4}k(k+2) : k\in\N\Big\}$$
for any discrete subgroup $\Gamma$ of $G=\SO(2,2)$ acting properly discontinuously and freely on~$X$, and
$$\Spec_d(X_{\Gamma})_{\II} \subset (-\infty,0]$$
whenever $\Gamma\subset L=\U(1,1)$.
By \cite[Th.\,3.8 \& 9.9]{kk16}, the set $\Spec_d(X_{\Gamma})_{\I}$ is infinite as soon as $\Gamma$ is sharp (a strong form of proper discontinuity, see \cite[Def.\,4.2]{kk16}); this includes the case that $X_{\Gamma}$ is standard; more precisely, there exists $k_0\in\N$ such that
$$\Spec_d(X_{\Gamma})_{\I} \supset \Big\{\frac{1}{4}k(k+2) : k\in\N,\ k\geq k_0\Big\}$$
if $-1\notin\Gamma$, and the same holds with $\N$ replaced by $2\N$ if $-1\in\Gamma$.

Finally, $0$ is contained in $\Spec_d(X_{\Gamma})_{\II}$ if and only if the trivial one-dimen\-sional representation~$\mathbf{1}$ contributes to the $L^2$-spectrum, which happens if and only if the constant function on~$X_{\Gamma}$ is square-integrable (see Proposition~\ref{prop:type-I-II} below for details), namely, $\mathrm{vol}(X_{\Gamma})<+\infty$.
This completes the proof of Proposition~\ref{prop:AdS-I-II}.

\begin{remark}
We may compare the example of $X=\AdS^3$ with the classical Riemannian example of $X=G/H=\SL(2,\R)/\SO(2)$.
In the latter case, $\Spec_d(X_{\Gamma})_{\I} = \emptyset$ and $\Spec_d(X_{\Gamma})_{\II} \subset (-\infty,0]$ for any discrete subgroup $\Gamma$ of~$G$ (see Remark~\ref{rem:Gamma-torsion}), and Selberg's $\frac{1}{4}$ Conjecture asserts that $\Spec_d(X_{\Gamma})_{\II} \subset (-\infty,-1/4]$ if $\Gamma$ is a congruence subgroup, namely, complementary series representations $\pi_{\lambda}$ do not contribute to the discrete spectrum.
\end{remark}

\chapter{The discrete spectrum in terms of group representations} \label{sec:proof-Specd-lambda}

The goal of this section is to prove Theorem~\ref{thm:Specd-lambda}, in the main setting~\ref{spher-setting}.
For this we provide a proof of Theorems \ref{thm:SpecZg-Hcpt} and~\ref{thm:Spectau}, which describe the discrete spectrum of type $\I$ and~$\II$ in terms of representations of~$G$ into spaces of functions (or of sections of vector bundles) on the two $G$-spaces $\Gamma\backslash G$ and $X=G/H$.
Recall that conditions (Tf), (A), (B) hold in the setting \ref{spher-setting} (Proposition~\ref{prop:cond-Tf-A-B-satisfied}); therefore Theorem~\ref{thm:Specd-lambda} follows from Corollary~\ref{cor:Specd-lambda-Wtau} and from Theorem~\ref{thm:Spectau} with $(G,H,K)$ replaced by $(L,L_H,L_K)$.

Recall from Section~\ref{subsec:def-type-I-II} that the definition of discrete spectrum of type~$\I$ is built on the $L^2$-analysis of~$X$, whereas the definition of type~$\II$ relies on the $L^2$-inner product on~$X_{\Gamma}$, which is not related in general to that of~$X$.
A key idea in the proof of Theorem~\ref{thm:SpecZg-Hcpt} is to introduce a $G$-intertwining operator $T_f \equiv T(\cdot,p^*f)$ from a $G$-submodule of $\DD'(\Gamma\backslash G)$ into a $G$-submodule of $\DD'(X)$ for every \emph{tempered} (Definition~\ref{def:tempered}) joint eigenfunction $f\in\DD'(X_{\Gamma};\NN_{\nu})$ (see Lemma~\ref{lem:Tf}), and to study carefully the dependence of the intertwining operator $T_f$ on the properties of~$f$ such as being an eigenfunction of type~$\I$ or type~$\II$ (see Proposition~\ref{prop:type-I-II}).
Here $p : \Gamma\backslash G\to X_{\Gamma}$ is the natural projection, as given by the following diagram.
\begin{equation} \label{eqn:p-p-Gamma}
\xymatrix{& \ar[dl]_{p_{\Gamma}} G \ar[dr]^p & \\ \Gamma\backslash G \ar[dr]^p & & X = G/H \ar[dl]_{p_{\Gamma}}\\
& X_{\Gamma} = \Gamma\backslash G/H & }
\end{equation}

\section{Representations $V_{\pi}$ and~$W_{\pi}$}

Let $G$ be a real reductive Lie group.
The differential of the inversion $g\mapsto g^{-1}$ of~$G$ gives rise to an antiautomorphism $\eta$ of the enveloping algebra $U(\g_{\C})$, defined by $Y_1\cdots Y_m \mapsto (-Y_m)\cdots (-Y_1)$ for all $Y_1,\dots,Y_m\in\nolinebreak\g_{\C}$.
This antiautomorphism induces an involutive automorphism of the commutative subalgebra $Z(\g_{\C})$.
Note that
\begin{equation}\label{eqn:r-eta}
\dd r(z) = \dd\ell \circ \eta(z)
\end{equation}
on $\DD'(G)$ for all $z\in Z(\g_{\C})$, where $\dd\ell,\dd r : U(\g_{\C})\to\D(G)$ are the $\C$-algebra homomorphisms given by the differentiation from the left or the right, respectively (see \eqref{eqn:dl-dr} with $H=\{ e\}$).

Let $\Gamma$ be an arbitrary torsion-free discrete subgroup of~$G$, and let $\nu\in\linebreak\Hom_{\C\text{-}\mathrm{alg}}(Z(\g_{\C}),\C)$.
As in Section~\ref{subsec:type-I-II-Zg}, we define the system $(\NN_{\nu})$ of differential equations on $\Gamma\backslash G$ by $\dd\ell(z)_{\Gamma}\,\varphi=\nu(z)\varphi$ for all $z\in Z(\g_{\C})$.
For $\F=\A$, $C^{\infty}$, $L^2$ or~$\DD'$, consider the regular representation of~$G$ on $\F(\Gamma\backslash G)$, given by $g\cdot f=f(\cdot\,g)$ for all $g\in G$ and $f\in\F(\Gamma\backslash G)$.
By \eqref{eqn:r-eta}, any $z\in Z(\g_{\C})$ acts on $\F(\Gamma\backslash G;\NN_{\nu})$ by $\nu\circ\eta(z)$.
Twisting by~$\eta$, we define the set
\begin{equation} \label{eqn:hat-G-nu}
\widehat{G}_{\nu} := \{ \pi\in\widehat{G} : \chi_{\pi}=\nu\circ\eta\},
\end{equation}
which is finite.

Similarly to Lemma~\ref{lem:L2Iclosed}, the subspace $L^2(\Gamma\backslash G;\NN_{\nu})$ is closed in the Hilbert space $L^2(\Gamma\backslash G)$.
Moreover, the system $(\NN_{\nu})$ on $\Gamma\backslash G$ is right-$G$-invariant.
Thus we obtain a unitary representation of~$G$ on $L^2(\Gamma\backslash G;\NN_{\nu})$.
This unitary representation is a finite direct sum of isotypic unitary representations of~$G$:
\begin{equation}\label{eqn:Vi}
L^2(\Gamma\backslash G;\NN_{\nu}) \simeq \bigoplus_{\pi\in\widehat{G}_{\nu}} V_{\pi}.
\end{equation}
The representation $V_{\pi}$ is unitarily equivalent to $\Hom_G(\pi,L^2(\Gamma\backslash G))\otimes\pi$, and the multiplicity of $\pi$ in~$V_{\pi}$ is $\dim_{\C}\Hom_G(\pi,L^2(\Gamma\backslash G))$, which may be $0$ or $+\infty$ (since we do not impose any assumption on~$\Gamma$ such as $\mathrm{vol}(\Gamma\backslash G)<+\infty$).

According to the decomposition \eqref{eqn:Vi}, we have a finite direct sum decomposition
\begin{equation}\label{eqn:Vidistr}
\left\{ \begin{array}{lll}
L^2(\Gamma\backslash G;\NN_{\nu})^{\infty} & \simeq & \bigoplus_{\pi\in\widehat{G}_{\nu}} V_{\pi}^{\infty},\\
L^2(\Gamma\backslash G;\NN_{\nu})^{-\infty} & \simeq & \bigoplus_{\pi\in\widehat{G}_{\nu}} V_{\pi}^{-\infty}.
\end{array}\right.
\end{equation}
By a Sobolev-type theorem, $L^2(\Gamma\backslash G)^{\infty}\subset C^{\infty}(\Gamma\backslash G)$; in particular,
$$V_{\pi}^{\infty} \subset L^2(\Gamma\backslash G;\NN_{\nu})^{\infty} \subset C^{\infty}(\Gamma\backslash G).$$
Therefore, the sesquilinear continuous map $C^{\infty}(\Gamma\backslash G)\to\C$ sending $f$ to $\overline{f(\Gamma e)}$ induces elements of $L^2(\Gamma\backslash G;\NN_{\nu})^{-\infty}$ and of $V_{\pi}^{-\infty}$ for $\pi\in\widehat{G}_{\nu}$, which will be denoted by $\delta$ and~$\delta_{\pi}$, respectively.
Then
$$\delta = \sum_{\pi\in\widehat{G}_{\nu}} \delta_{\pi}$$
according to the decomposition \eqref{eqn:Vidistr}.
Clearly $\delta\in L^2(\Gamma\backslash G;\NN_{\nu})^{-\infty}$ is a cyclic vector.

Let $H$ be a reductive subgroup of~$G$ such that $X=G/H$ is $G$-real spherical.
Similarly to $V_{\pi}$ for $\Gamma\backslash G$, we now introduce a $G$-module $W_{\pi}$ for $X=G/H$.
A difference is that we consider the space $\DD'(X)$ fo distributions on~$X$ rather than $L^2(X)$.
For any $\nu\in\Spec^{Z(\g_{\C})}(X_{\Gamma})$ and any $\pi\in\widehat{G}_{\nu}$, let
\begin{equation} \label{eqn:W}
W_{\pi} \equiv W_{\pi}(H) = \overline{\sum_A A(\pi_K)} \ \ \subset\ \DD'(X),
\end{equation}
where $A$ ranges through $\Hom_{\g,K}(\pi_K,\DD'(X))$ and $\overline{\,\cdot\,}$ denotes the closure in $\DD'(X)$.
For any~$A$, the image $A(\pi_K)$ is contained in $\DD(X;\NN_{\nu\circ\eta})$ which is a $G$-module of finite length, hence $W_{\pi}$ is a $G$-submodule of $\DD'(X)$.
Moreover, $N_{\pi} := \dim\Hom_{\g,K}(\pi_K,\DD'(X)) < +\infty$, and the underlying $(\g,K)$-module $(W_{\pi})_K$ is isomorphic to a direct sum of $N_{\pi}$ copies of~$\pi_K$.
We note that $\DD'(X;\NN_{\nu\circ\eta})$ is not always completely reducible, and the quotient $(\DD'(X;\NN_{\nu\circ\eta})/W_{\pi})_K$ may contain an irreducible submodule which is isomorphic to~$\pi_K$.
We have $W_{\pi}\neq\{0\}$ if and only if $\pi\in(\widehat{G})_H$.
Moreover, $W_{\pi}\cap\bigoplus_{\pi'\in\widehat{G}_{\nu}\smallsetminus\{\pi\}} W_{\pi'}=\{0\}$.

Here is a brief summary concerning the two representations $V_{\pi}$ and~$W_{\pi}$.

\begin{lemma} \label{lem:recap-V-W-pi}
Suppose $X=G/H$ is $G$-real spherical.
\begin{enumerate}
  \item The group~$G$ acts on~$V_{\pi}$ as a unitary representation, and $V_{\pi}$ is the maximal $G$-invariant closed subspace of $L^2(\Gamma\backslash G)$ which is isotypic to~$\pi$.
  \item The group~$G$ acts on~$W_{\pi}$ as a continuous representation of finite length, and $W_{\pi}$ is the maximal $G$-invariant closed subspace of $\DD'(X)$ whose underlying $(\g,K)$-module is a multiple of~$\pi_K$.
\end{enumerate}
\end{lemma}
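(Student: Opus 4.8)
\textbf{Proof plan for Lemma~\ref{lem:recap-V-W-pi}.}

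The plan is to check each statement by unwinding the definitions in \eqref{eqn:Vi}--\eqref{eqn:W} together with the facts already established in Section~\ref{sec:reminders}, in particular Fact~\ref{fact:gK} (Harish--Chandra's lattice isomorphism between closed $G$-invariant subspaces and $(\g,K)$-invariant subspaces) and Lemma~\ref{lem:spher-finite-length} (finite length in the spherical setting).

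For part~(1): the right regular representation of $G$ on $L^2(\Gamma\backslash G)$ is unitary, and $L^2(\Gamma\backslash G;\NN_\nu)$ is a closed $G$-invariant subspace by the remark just after \eqref{eqn:hat-G-nu}, hence $G$ acts unitarily on it. By \eqref{eqn:Vi} it is a finite direct sum of the $V_\pi$, where $V_\pi$ is by construction the $\pi$-isotypic component. Since the projection onto an isotypic component is a continuous $G$-morphism and each $V_\pi$ is closed, I would argue as follows: if $M\subset L^2(\Gamma\backslash G)$ is any closed $G$-invariant subspace all of whose irreducible subquotients are isomorphic to $\pi$, then by the spectral decomposition \eqref{eqn:L2-Gamma-L-decomp} (or directly by projecting onto isotypic components) $M$ is contained in the $\pi$-isotypic part of $L^2(\Gamma\backslash G)$; and since $Z(\g_\C)$ acts on any vector of $M$ with the same infinitesimal character $\chi_\pi = \nu\circ\eta$, we get $M\subset L^2(\Gamma\backslash G;\NN_\nu)$, hence $M\subset V_\pi$. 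Conversely $V_\pi$ itself is such a subspace, so it is the maximal one. (Here I use that $L^2(\Gamma\backslash G)$ decomposes, as a unitary representation, into a direct integral/sum of irreducibles, so "isotypic to $\pi$" makes sense; the multiplicity may be $0$ or $+\infty$, but this does not affect maximality.)

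For part~(2): by definition \eqref{eqn:W}, $W_\pi$ is the closure in $\DD'(X)$ of the sum of the images $A(\pi_K)$ over all $A\in\Hom_{\g,K}(\pi_K,\DD'(X))$. Each such image lies in $\DD'(X;\NN_{\nu\circ\eta})$, which is $Z(\g_\C)$-finite; since $X$ is $G$-real spherical, Lemma~\ref{lem:spher-finite-length}.(2) gives that $\DD'(X;\NN_{\nu\circ\eta})$, restricted to any finitely generated $(\g,K)$-submodule, is of finite length, and more to the point $N_\pi := \dim\Hom_{\g,K}(\pi_K,\DD'(X)) < +\infty$, so $\sum_A A(\pi_K)$ is a $(\g,K)$-module isomorphic to $N_\pi\cdot\pi_K$, of finite length. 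Its closure $W_\pi$ is then a continuous $G$-representation of finite length, with underlying $(\g,K)$-module $(W_\pi)_K \cong N_\pi\cdot\pi_K$ (using Fact~\ref{fact:gK} to pass between $W_\pi$ and $(W_\pi)_K$, and the automatic continuity of the globalization). For the maximality statement: if $M\subset\DD'(X)$ is any closed $G$-invariant subspace with $(M)_K$ a multiple of $\pi_K$, then every $(\g,K)$-embedding $\pi_K\hookrightarrow (M)_K\subset\DD'(X)$ is one of the $A$'s appearing in \eqref{eqn:W}, so $(M)_K = \sum_A A(\pi_K) \subset (W_\pi)_K$; taking closures and applying Fact~\ref{fact:gK} again gives $M\subset W_\pi$. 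Thus $W_\pi$ is the maximal such subspace. Finally, the assertions $W_\pi\neq\{0\}\iff\pi\in(\widehat G)_H$ and $W_\pi\cap\bigoplus_{\pi'\neq\pi}W_{\pi'}=\{0\}$ are immediate from the definition of $(\widehat G)_H$ (nonvanishing of $\Hom_{\g,K}(\pi_K,\DD'(X))$, equivalently $(\pi^{-\infty})^H\neq\{0\}$) and from the fact that distinct $\pi,\pi'$ have, via the Langlands classification, no common $(\g,K)$-subquotient.

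The only genuine subtlety I anticipate is in part~(1): $L^2(\Gamma\backslash G)$ need not decompose discretely when $\Gamma$ is not a lattice, so I must phrase "maximal closed $G$-invariant subspace isotypic to $\pi$" via the direct integral decomposition \eqref{eqn:L2-Gamma-L-decomp} and the measurable isotypic projection, rather than via an honest direct sum; everything else is a routine unwinding of definitions plus Fact~\ref{fact:gK} and Lemma~\ref{lem:spher-finite-length}.
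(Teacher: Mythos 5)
Your proof is correct and follows the same route as the paper: the paper dismisses part~(1) as clear (unitary isotypic decomposition), and for part~(2) it likewise invokes Lemma~\ref{lem:spher-finite-length} to get finite length of $\DD'(X;\NN_{\nu\circ\eta})$ and then Fact~\ref{fact:gK} for the lattice correspondence. You have simply spelled out the details that the paper leaves implicit (minor slip: the display ``$(M)_K=\sum_A A(\pi_K)$'' should be a containment, with equality only when $M=W_\pi$).
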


\begin{proof}
Statement~(1) is clear.
By Lemma~\ref{lem:spher-finite-length}, the regular representation of~$G$ on the complete, locally convex topological space
$$\DD'(X;\NN_{\nu\circ\eta}) = \{ F\in\DD'(X)  : \dd\ell(z)F = \nu\circ\eta(z)F \ \textrm{for all} \ z\in Z(\g_{\C})\} $$
is of finite length (but not necessarily completely reducible).
Thus statement~(2) follows from Fact~\ref{fact:gK}.
\end{proof}

We shall relate $V_{\pi}$ and~$W_{\pi}$ in Proposition~\ref{prop:type-I-II}.

\section{Intertwining operators associated with eigenfunctions on $X_{\Gamma}$}

Recall the projections $p : \Gamma\backslash G\to X_{\Gamma}$ and $p_{\Gamma} : X\to X_{\Gamma}$ from \eqref{eqn:p-p-Gamma}.
Given $\nu\in\Hom_{\C\text{-}\mathrm{alg}}(Z(\g_{\C}),\C)$ and a joint eigenfunction $f\in L^2(X_{\Gamma};\NN_{\nu})$, we can consider two $G$-modules: the $G$-submodule generated by $p^*f\in\DD'(\Gamma\backslash G;\NN_{\nu})$ in the right regular representation on $\DD'(\Gamma\backslash G)$, and the $G$-submodule generated by $p_{\Gamma}^*f\in\DD'(X;\NN_{\nu})$ in the left regular representation on $\DD'(X)$.
We do not expect these two $G$-modules to be isomorphic to each other.
Instead, in Lemma~\ref{lem:Tf} below we construct a $G$-intertwining operator
$$T(\cdot,p^*f) : L^2(\Gamma\backslash G;\NN_{\nu})^{-\infty} \longrightarrow \DD'(X;\NN_{\nu\circ\eta})$$
for each tempered eigenfunction $f\in\DD'(X_{\Gamma};\NN_{\nu})$.
This intertwining operator $T(\cdot,p^*f)$ depends on the eigenfunction~$f$, and we shall formulate this dependency in terms of representation theory in Lemma~\ref{lem:Tf}.(3), which will play a crucial role in proving Theorem~\ref{thm:SpecZg-Hcpt} in Section~\ref{subsec:proof-SpecZg-Hcpt}.
Here we use the following terminology.

\begin{definition} \label{def:tempered}
Let $\nu\in\Hom_{\C\text{-}\mathrm{alg}}(Z(\g_{\C}),\C)$.
An eigenfunction $f\in\DD'(X_{\Gamma};\NN_{\nu})$ is called \emph{tempered} if $p^*f\in\DD'(\Gamma\backslash G;\NN_{\nu})$ belongs to $L^2(\Gamma\backslash G;\NN_{\nu})^{-\infty}$.
\end{definition}

(Recall that $G$ acts on $\HHH:=L^2(\Gamma\backslash G;\NN_{\nu})$ as a unitary representation; $\HHH^{\infty}\subset\HHH\subset\HHH^{-\infty}$ is the Gelfand triple \eqref{eqn:Gelfand} associated with~$\HHH$.)

If $H$ is compact, then any $L^2$-eigenfunction $f$ is tempered because $p^*f\in\linebreak L^2(\Gamma\backslash G;\NN_{\nu})\subset L^2(\Gamma\backslash G;\NN_{\nu})^{-\infty}$.

\begin{lemma}\label{lem:Tf}
Suppose $X=G/H$ is $G$-real spherical.
Taking matrix coefficients for distribution vectors of the unitary representations of $G$ on $L^2(\Gamma\backslash G;\NN_{\nu})$ induces a sesquilinear map
$$T : L^2(\Gamma\backslash G;\NN_{\nu})^{-\infty} \times \big(L^2(\Gamma\backslash G;\NN_{\nu})^{-\infty}\big)^H \longrightarrow \DD'(X;\NN_{\nu\circ\eta})$$
with the following properties.
Let $f\in\DD'(X_{\Gamma};\NN_{\nu})$ be any tempered eigenfunction, namely $p^*f\in\DD'(\Gamma\backslash G)$ belongs to $(L^2(\Gamma\backslash G;\NN_{\nu})^{-\infty})^H$.
Then
\begin{enumerate}
  \item the map $T_f := T(\cdot,p^*f) : L^2(\Gamma\backslash G;\NN_{\nu})^{-\infty} \rightarrow \DD'(X;\NN_{\nu\circ\eta})$ is a continuous $G$-homomorphism.
  \item $T(\delta,p^*f) = \overline{p_{\Gamma}^*f}$, where $\overline{p_{\Gamma}^*f}$ is the complex conjugate of $p_{\Gamma}^{\ast}f$;
  \item writing $p^*f=\sum_{\pi\in\widehat{G}_{\nu}} F_{\pi}$ according to the decomposition \eqref{eqn:Vidistr}, we have, for all $\pi\in\widehat{G}_{\nu}$,
  \begin{eqnarray}
  T(\delta,F_{\pi}) = T(\delta_{\pi},p^*f) = T(\delta_{\pi},F_{\pi}),\label{eqn:T-delta-F-pi}\\
  T\big(L^2(\Gamma\backslash G;\NN_{\nu})^{-\infty},F_{\pi}\big) = T(V_{\pi}^{-\infty},p^*f) \subset W_{\pi}.\label{eqn:T-V-W-pi}
  \end{eqnarray}
\end{enumerate}
\end{lemma}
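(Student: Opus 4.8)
The plan is to construct the sesquilinear map $T$ directly from the matrix-coefficient map \eqref{eqn:matdist}, and then verify the three listed properties by unwinding definitions and using equivariance. First I would recall that, by Section~\ref{subsec:H+-infty}, the map
$$T : \HHH^{-\infty} \times \HHH^{-\infty} \longrightarrow \DD'(G), \quad \HHH := L^2(\Gamma\backslash G;\NN_{\nu}),$$
of \eqref{eqn:matdist} produces, for $u,F\in\HHH^{-\infty}$, the distribution $T(u,F)$ on~$G$ characterized by $\varphi\mapsto (u,\pi^{-\infty}(\overline\varphi)F)$, and that it satisfies the bi-equivariance relation \eqref{eqn:T-bi-equiv}: $T(\pi^{-\infty}(g_1)u,\pi^{-\infty}(g_2)F)=T(u,F)(g_1^{-1}\cdot g_2)$. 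I would then observe that, because $\HHH$ is the \emph{right} regular representation on $\Gamma\backslash G$, for $F\in\HHH^{-\infty}$ the distribution $g\mapsto T(u,F)(g)$ is automatically left-$\Gamma$-invariant in the second variable's sense and, more to the point, for $F$ which is $H$-invariant (i.e.\ $\pi^{-\infty}(h)F=F$ for all $h\in H$) the relation \eqref{eqn:T-bi-equiv} with $g_2=h$ shows $T(u,F)$ descends to a distribution on $G/H=X$. Thus restricting $T$ to $\HHH^{-\infty}\times (\HHH^{-\infty})^H$ lands in $\DD'(X)$. Since every element of $\HHH^{-\infty}$ is annihilated by $\dd\ell(z)-(\nu\circ\eta)(z)$ for $z\in Z(\g_{\C})$ (by the computation with $\eqref{eqn:r-eta}$ recalled just before \eqref{eqn:hat-G-nu}, the twist by~$\eta$), and since \eqref{eqn:T-bi-equiv} intertwines the left action on~$G$ (hence on $X=G/H$) with $\pi^{-\infty}$ in the first variable, the image lies in $\DD'(X;\NN_{\nu\circ\eta})$. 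This gives the asserted map $T$.

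For (1): fix a tempered eigenfunction $f\in\DD'(X_{\Gamma};\NN_{\nu})$, so by Definition~\ref{def:tempered} we have $p^*f\in (\HHH^{-\infty})^H$, and set $T_f:=T(\cdot,p^*f)$. Continuity is inherited from the continuity of $T$ (a bounded sesquilinear map between the relevant topological vector spaces). That $T_f$ is a $G$-homomorphism from $\HHH^{-\infty}=L^2(\Gamma\backslash G;\NN_\nu)^{-\infty}$, with the representation $\pi^{-\infty}$, to $\DD'(X;\NN_{\nu\circ\eta})$, with the left regular representation, is exactly the content of \eqref{eqn:T-bi-equiv} specialized to $g_2=e$: $T(\pi^{-\infty}(g)u,F)(g')=T(u,F)(g^{-1}g')$, i.e.\ $T_f(\pi^{-\infty}(g)u)=g\cdot T_f(u)$.

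For (2): I would compute $T(\delta,p^*f)$ against a test function $\varphi\in C_c^\infty(X)$, pulling back to $C_c^\infty(\Gamma\backslash G)^H$. By the defining formula for $T$ and the definition of $\delta$ as the distribution vector $g\mapsto\overline{g(\Gamma e)}$, one gets $\langle T(\delta,p^*f),\varphi\rangle = \big(\delta, \pi^{-\infty}(\overline\varphi)(p^*f)\big)$, which unwinds to an integral evaluating $p^*f$ against $\overline\varphi$ at the identity coset; by the standard "delta reproduces the function" computation this equals $\overline{\langle p_\Gamma^*f,\varphi\rangle}$, i.e.\ $T(\delta,p^*f)=\overline{p_\Gamma^*f}$. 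For (3): write $\delta=\sum_{\pi\in\widehat G_\nu}\delta_\pi$ and $p^*f=\sum_{\pi}F_\pi$ according to the orthogonal decomposition \eqref{eqn:Vidistr}; since $T$ is built from the inner-product pairing and the spaces $V_\pi$ are mutually orthogonal $G$-subrepresentations, $T(\delta_\pi,F_{\pi'})=0$ whenever $\pi\neq\pi'$ — this is where I would be a little careful, using that the Gelfand-triple pairing respects the direct-sum decomposition and that $\delta_\pi$, $F_\pi$ lie in $V_\pi^{-\infty}$. This orthogonality gives $T(\delta,F_\pi)=T(\delta_\pi,F_\pi)=T(\delta_\pi,p^*f)$, proving \eqref{eqn:T-delta-F-pi}. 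For \eqref{eqn:T-V-W-pi}: by (1), $T(\cdot,F_\pi)$ is a $G$-homomorphism, and on $V_{\pi'}^{-\infty}$ for $\pi'\neq\pi$ it vanishes by the same orthogonality, so $T(\HHH^{-\infty},F_\pi)=T(V_\pi^{-\infty},p^*f)$. Finally, since $\delta$ is cyclic in~$\HHH$ (hence, as noted in Section~\ref{subsec:H+-infty}, any $F\in\HHH^{-\infty}$ orthogonal to the $G$-orbit of $\delta$ is zero), the image $T(V_\pi^{-\infty},p^*f)$ is a $G$-submodule of $\DD'(X;\NN_{\nu\circ\eta})$ whose underlying $(\g,K)$-module is a subquotient of a multiple of~$\pi_K$; by Lemma~\ref{lem:recap-V-W-pi}.(2), which identifies $W_\pi$ as the maximal such closed $G$-submodule of $\DD'(X)$, we conclude $T(V_\pi^{-\infty},p^*f)\subset W_\pi$.

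\textbf{Main obstacle.} The routine parts are the formal equivariance and the delta-reproducing computation in (2). The step I expect to require the most care is the containment $T(V_\pi^{-\infty},p^*f)\subset W_\pi$ in \eqref{eqn:T-V-W-pi}: one must argue that the image is not merely contained in $\DD'(X;\NN_{\nu\circ\eta})$ but in the specific $\pi$-isotypic piece $W_\pi$, which means controlling the $(\g,K)$-module type of the image of the intertwining operator. This uses both the finite-length statement from Lemma~\ref{lem:spher-finite-length}.(2) (so that $\DD'(X;\NN_{\nu\circ\eta})$ decomposes with finitely many constituents) and the fact that $T_{F_\pi}$, being a continuous $G$-map out of the isotypic space $V_\pi^{-\infty}$, can only produce constituents isomorphic to $\pi_K$ — which in turn relies on the $G_{\C}$-sphericity (real sphericity suffices here) guaranteeing the relevant Hom-spaces are finite-dimensional so that Schur-type arguments apply.
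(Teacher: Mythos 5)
Your construction of $T$ and the verification of properties (1) and (2) follow essentially the same route as the paper's proof: you use the matrix-coefficient pairing \eqref{eqn:matdist}, the bi-equivariance \eqref{eqn:T-bi-equiv} (taking $g_2=h\in H$ to descend to $X$, taking $g_1$ for equivariance in the first slot), and the delta-reproducing computation together with $p_\Gamma^*\circ p^* = p^*\circ p_\Gamma^*$. You also correctly identify the containment $T(V_\pi^{-\infty},p^*f)\subset W_\pi$ as the delicate step.

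However, the argument you give for that containment has a genuine gap. You assert that the image is a $G$-submodule of $\DD'(X;\NN_{\nu\circ\eta})$ ``whose underlying $(\g,K)$-module is a subquotient of a multiple of $\pi_K$,'' and you attribute this to the cyclicity of $\delta$ and ``Schur-type arguments.'' Neither the cyclicity of $\delta$ (a statement about the first slot, not about $T_f$'s restriction to $V_\pi^{-\infty}$) nor finiteness of $\mathrm{Hom}$-spaces delivers this. The difficulty is that $V_\pi^{-\infty}$ is not itself a $(\g,K)$-module --- it contains non-$K$-finite vectors --- so the image of a continuous $G$-map out of $V_\pi^{-\infty}$ is not a priori built from copies of $\pi_K$; a continuous intertwiner from the distribution-vector completion could in principle produce non-unitarizable constituents. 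What one actually needs is Lemma~\ref{lem:noext}: the $K$-finite vectors $(V_\pi)_K$ are dense in $V_\pi^{-\infty}$, hence $T_f((V_\pi)_K)$ is dense in $W:=\overline{T_f(V_\pi^{-\infty})}$; since $T_f$ restricted to $(V_\pi)_K$ is a $(\g,K)$-map out of a completely reducible multiple of $\pi_K$, its image is again a multiple of $\pi_K$; and finally the Harish-Chandra lattice isomorphism (Fact~\ref{fact:gK}), applied inside the finite-length module $\DD'(X;\NN_{\nu\circ\eta})$ (Lemma~\ref{lem:spher-finite-length}), identifies $T_f((V_\pi)_K)$ with $W_K$. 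Without this chain of arguments --- density of $K$-finite vectors, complete reducibility of their image, and the lattice isomorphism --- the claim that $W_K$ is a multiple of $\pi_K$ (and hence contained in $W_\pi$ by its maximality, Lemma~\ref{lem:recap-V-W-pi}.(2)) is unjustified. Your orthogonality argument for \eqref{eqn:T-delta-F-pi} and the reduction of \eqref{eqn:T-V-W-pi} to the single-isotypic-piece case are fine; it is the missing invocation of Lemma~\ref{lem:noext} that needs to be filled in.
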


\begin{proof}
(1) Let $\varpi$ be a unitary representation of~$G$ on the Hilbert space $\HHH = L^2(\Gamma\backslash G;\NN_{\nu})$, and consider the continuous map
$$T : \HHH^{-\infty} \times \HHH^{-\infty} \longrightarrow \DD'(G)$$
of \eqref{eqn:matdist} sending $(u,F)\in\HHH^{-\infty}\times\HHH^{-\infty}$ to the corresponding matrix coefficient for distribution vectors.
If $\varpi(h)F=F$ for all $h\in H$, then $T(u,F)$ is invariant under the right action of~$H$ by \eqref{eqn:T-bi-equiv}, and therefore $T$ induces a map
$$\HHH^{-\infty} \times (\HHH^{-\infty})^H \longrightarrow \DD'(X),$$
which we still denote by~$T$.
By \eqref{eqn:T-bi-equiv} again, $T(\cdot,F)$ is a continuous $G$-homomorphism from $\HHH^{-\infty}$ to $\DD'(X)$.
We conclude by taking $F:=p^*f$ and using the fact that the center $Z(\g_{\C})$ acts on $\HHH^{-\infty}$ by the scalar $\nu\circ\eta$.

(2) By the definition \eqref{eqn:matdist} of~$T$ and the definition of~$\delta$, the element $T_f(\delta)\in\DD'(G)$ sends a test function $\varphi\in C_c^{\infty}(G)$ to
$$\int_G \varphi(g) \ \overline{p^*f(\Gamma g)} \ \dd g = \int_G \varphi(g) \ \overline{p_{\Gamma}^*p^*f(g)} \ \dd g.$$
We define $\varphi^H\in C_c^{\infty}(X)$ by $\varphi^H(gH) := \int_H \varphi(gh) \, \dd h$.
Then $\overline{p_{\Gamma}^*f}\in\DD'(X)$, regarded as an $H$-invariant distribution on~$G$, sends $\varphi$ to
$$\int_X \varphi^H(x) \ \overline{p_{\Gamma}^*f(x)} \ \dd x = \int_G \varphi(g) \ \overline{p^*p_{\Gamma}^*f(g)} \ \dd g.$$
We conclude using the equality $p_{\Gamma}^*\circ p^* = p^*\circ p_{\Gamma}^*$ from Diagram~\eqref{eqn:p-p-Gamma}.

(3) Let us first check that $T(V_{\pi}^{-\infty},p^*f)\subset W_{\pi}$.
By~(1),
$$T_f : L^2(\Gamma\backslash G;\NN_{\nu})^{-\infty} \longrightarrow \DD'(X;\NN_{\nu\circ\eta})$$
is a $G$-intertwining operator and $V_{\pi}$ is a unitary representation of~$G$ which is a multiple of the single irreducible unitary representation~$\pi$ (in particular, it is discretely decomposable).
Since $\DD'(X;\NN_{\nu\circ\eta})$ is of finite length by Lemma~\ref{lem:spher-finite-length}, the image $T_f(V_{\pi}^{-\infty})$ is a direct sum of finitely many copies of the $(\g,K)$-module $\pi_K$ by Lemma~\ref{lem:noext} below.
By definition \eqref{eqn:W} of~$W_{\pi}$, we conclude $T_f(V_{\pi}^{-\infty})\subset W_{\pi}$.
The equalities in \eqref{eqn:T-delta-F-pi} and \eqref{eqn:T-V-W-pi} follow from the fact that $T(u,v)=0$ for all $u\in V_{\pi}^{-\infty}$ and $v\in V_{\pi'}^{-\infty}$ with $\pi\not\simeq\pi'$.
\end{proof}

In general, the space of distribution vectors of a unitary representation $\pi$ could be huge.
However, it behaves in a reasonable way if $\pi$ is discretely decomposable (Definition~\ref{def:discr-dec}), as follows.

\begin{lemma}\label{lem:noext}
Let $U$ be a discretely decomposable unitary representation of a real reductive Lie group~$G$ and $V$ a continuous representation of~$G$ of finite length on a complete, locally convex vector space.
Suppose $T : U^{-\infty}\to\nolinebreak V$ is a continuous $G$-homomorphism and let $W$ be the closure of $T(U^{-\infty})$ in~$V$.
Then the underlying $(\g,K)$-module $W_K$ is completely reducible and unitarizable.
More precisely, if $U \simeq {\sum_{\pi\in\widehat{G}}}^{\oplus} m_{\pi}\,\pi$ (Hilbert direct sum) where $m_{\pi}\in\N\cup\{\infty\}$, then $W_K$ is isomorphic to a finite direct sum $\bigoplus_{\pi\in\widehat{G}} n_{\pi}\,\pi$ where $n_{\pi}\leq m_{\pi}$ for all~$\pi$.
\end{lemma}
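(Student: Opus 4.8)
The plan is to reduce everything to discrete decomposability of $U$ plus the finite-length hypothesis on $V$. First I would fix the decomposition $U \simeq \sumplus{\pi\in\widehat{G}} m_{\pi}\,\pi$ into isotypic components, say $U \simeq \sumplus{\pi} U_{\pi}$ with $U_{\pi}$ a Hilbert direct sum of $m_{\pi}$ copies of~$\pi$. Since this is a Hilbert direct sum of closed subspaces, it induces a decomposition $U^{-\infty} \simeq \sumplus{\pi} U_{\pi}^{-\infty}$ at the level of distribution vectors (the passage to distribution vectors commutes with a Hilbert direct sum in the sense that a distribution vector is a sequence of distribution vectors of the summands with a mild growth condition, as in \cite[Vol.\,I, Ch.\,I]{wal88}). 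The key point is that $T$ is $G$-equivariant, so $T(U_{\pi}^{-\infty})$ lands in the $\pi$-isotypic part of the abstract $(\g,K)$-module structure; more precisely, since each $U_{\pi}^{-\infty}$ generates (under the $G$-action, or equivalently is already) a sum of copies of the single irreducible $(\g,K)$-module $\pi_K$, its continuous image is too.

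Next I would show $W_K$ is a \emph{finite} direct sum. Here the hypothesis that $V$ has finite length is essential: the $(\g,K)$-module $V_K$ has a finite composition series, so only finitely many $\pi\in\widehat{G}$ can occur as subquotients of $V_K$, and for each such~$\pi$ the multiplicity of $\pi_K$ as a subquotient of $V_K$ is finite. Since $W_K \subset V_K$ is a submodule, the same bound applies to $W_K$. Combined with the previous paragraph, $T(U^{-\infty}) \subset \bigoplus_{\pi} T(U_{\pi}^{-\infty})$ meets only finitely many isotypic types, each with finite multiplicity. To get the bound $n_{\pi} \le m_{\pi}$: the space $\Hom_G(U^{-\infty}, V)$ restricted to the $\pi$-component factors through $\Hom_{\g,K}$ of the underlying modules, and since $U_{\pi,K} \simeq m_{\pi}\,\pi_K$ the image of the $\pi$-component under $T$ is a quotient of $m_{\pi}\,\pi_K$, hence isomorphic to $n_{\pi}\,\pi_K$ with $n_{\pi}\le m_{\pi}$ by Schur's lemma for irreducible $(\g,K)$-modules (the endomorphism algebra of $\pi_K$ is $\C$, so $\End(m_{\pi}\,\pi_K) \simeq M_{m_{\pi}}(\C)$ and every quotient is $n_{\pi}\,\pi_K$ for some $n_{\pi}\le m_{\pi}$).

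Finally, complete reducibility and unitarizability of $W_K$: once $W_K \simeq \bigoplus_{\pi} n_{\pi}\,\pi_K$ is established with each $\pi_K$ an \emph{irreducible unitarizable} $(\g,K)$-module (being the Harish-Chandra module of the unitary representation $\pi\in\widehat{G}$), complete reducibility is immediate and unitarizability follows since a finite direct sum of unitarizable modules is unitarizable. The one subtlety I would be careful about is that $W$ is defined as the \emph{closure} of $T(U^{-\infty})$ in $V$, whereas the algebraic argument controls $T(U^{-\infty})$ directly; but since $V$ has finite length, $T(U^{-\infty})$ is already a closed $G$-submodule of finite length (its underlying $(\g,K)$-module has finite length, so one invokes Fact~\ref{fact:gK}, the Harish-Chandra correspondence between closed invariant subspaces and $(\g,K)$-submodules, to see $W = \overline{T(U^{-\infty})}$ has $W_K = (T(U^{-\infty}))_K$), so the passage to the closure does not change the underlying module.

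\textbf{Main obstacle.} I expect the only genuinely delicate point to be the compatibility of the passage to distribution vectors with the Hilbert direct sum decomposition $U \simeq \sumplus{\pi} U_{\pi}$ together with the claim that $T$ respects the isotypic splitting at the level of $U^{-\infty}$ — i.e., that $T$ kills the ``cross terms'' $\Hom_G(U_{\pi}^{-\infty}, $ (the $\pi'$-isotypic part of $V$)$)$ for $\pi \not\simeq \pi'$. This is where discrete decomposability of $U$ is used in an essential way: for a general unitary representation with continuous spectrum the distribution-vector space need not decompose, but under discrete decomposability $U_{\pi}^{-\infty}$ consists precisely of distribution vectors supported on the $\pi$-isotypic part, and a $G$-map into a finite-length $V$ must respect the $(\g,K)$-isotypic decomposition. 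Everything else is bookkeeping with Schur's lemma and the finite composition series of $V_K$.
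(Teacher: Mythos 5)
Your overall strategy — control $T(U^{-\infty})$ isotypically, use finite length of $V$ for finiteness and the multiplicity bound, invoke Fact~\ref{fact:gK} to identify $W_K$ after taking the closure — is in the right spirit, but the decomposition of $U^{-\infty}$ on which it rests is not correct as stated. The antidual $U^{-\infty}$ of the Fr\'echet space $U^\infty$ is strictly larger than the Hilbert direct sum $\sumplus{\pi} U_\pi^{-\infty}$: it contains distribution vectors that pair nontrivially with infinitely many isotypic blocks with only polynomial-type growth control, so the Hilbert-space completion is the wrong one (already for $G=\R$, $\Gamma=\Z$, $U=L^2(\Z\backslash\R)$, the space $U^{-\infty}=\DD'(\Z\backslash\R)$ consists of Fourier series of polynomial growth, not $\ell^2$ ones). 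Likewise $U_\pi^{-\infty}$ is not itself ``a sum of copies of $\pi_K$'' — it is not a $(\g,K)$-module, since distribution vectors need not be $K$-finite; only $(U_\pi)_K$ is a multiple of $\pi_K$, and even that is delicate when $m_\pi=\infty$. Consequently the inclusion $T(U^{-\infty})\subset\bigoplus_\pi T(U_\pi^{-\infty})$ that you use to bound $W_K$ is asserted, not proved; you correctly flag this as the main obstacle, but the resolution you sketch (that $T$ ``must respect the isotypic decomposition'' because $V$ has finite length) is essentially circular — the claim that the continuous image of $U_\pi^{-\infty}$ in a finite-length module is $\pi$-isotypic is the finite-length case of the lemma itself.

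The paper's proof avoids decomposing $U^{-\infty}$ isotypically altogether and proceeds in two steps. Step one handles $U$ of finite length: $U_K$ is dense in $U^{-\infty}$, so $T(U_K)$ is dense in $W$, and Fact~\ref{fact:gK} identifies $W_K=T(U_K)$, a quotient of $U_K=\bigoplus_\pi m_\pi\,\pi_K$, hence $\bigoplus_\pi n_\pi\,\pi_K$ with $n_\pi\le m_\pi$. Step two reduces the general case to step one: exhaust $U$ by an increasing sequence $(U_N)_N$ of finite-length closed subrepresentations; the increasing chain $\overline{T(U_N^{-\infty})}$ of closed $G$-submodules of $W$ stabilizes because $W\subset V$ has finite length (the Noetherian point), and the stabilized member equals $W$ by density. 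The only structural input on $U^{-\infty}$ needed is the density of the nested filtration $\bigcup_N U_N^{-\infty}$, which is far weaker than a full isotypic decomposition. If you made your version rigorous you would effectively be rederiving this exhaustion argument with $U_N$ a particular sequence of finite isotypic partial sums; as written, the intermediate structural claims do not hold.
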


\begin{proof}
We first consider the case where the unitary representation $U$ has finite length.
Since the underlying $(\g,K)$-module $U_K$ is a direct sum of finitely many irreducible, unitarizable $(\g,K)$-modules, so is $T(U_K)$.
Since $U_K$ is dense in $U^{-\infty}$, so is $T(U_K)$ in~$W$.
Thus the two $(\g,K)$-modules $T(U_K)$ and~$W_K$ coincide by Fact~\ref{fact:gK}.
In particular, $W_K$ is completely reducible and unitarizable.

Suppose now that $U$ is a general discretely decomposable unitary representation of~$G$.
Let $(U_N)_N$ be an increasing sequence of closed $G$-invariant subspaces such that $U=\overline{\bigcup_N U_N}$ and that the unitary representation $U_N$ is of finite length for any~$N$.
We regard $(U_N)^{-\infty}$ as a subspace of $U^{-\infty}$ by using the orthogonal decomposition $U=U_N\oplus (U_N)^{\perp}$.

If $W'$ is a closed $G$-invariant proper subspace of~$W$ such that $T(U_N^{-\infty})\subset W'$ for all~$N$, then
$$T(U^{-\infty}) \,\subset\, \overline{\bigcup_N T(U_N^{-\infty})} \,\subset\, W',$$
and so $W'=W$ since $\overline{T(U^{-\infty})}=W$.
This shows that $\overline{T(U_N^{-\infty})}=W$ for some~$N$.
Then the conclusion of the lemma follows from the case of finite length.
\end{proof}

\section[A preliminary result on Harish-Chandra discrete series]{A preliminary result on Harish-Chandra discrete series representations}

In order to prove Theorems \ref{thm:SpecZg-Hcpt} and~\ref{thm:Spectau}, we will need the following.

\begin{proposition}\label{prop:disc-series-sph-Hcpt}
Suppose $X=G/H$ is $G$-real spherical, with $H$ compact.
Let $(\pi,\HHH)$ be a Harish-Chandra discrete series representation of~$G$.
If $\psi\in\Hom_{\g,K}(\pi_K,\DD'(X))$, then $\psi(\HHH_K)\subset L^2(X)$.
\end{proposition}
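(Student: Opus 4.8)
The plan is to exploit the fact that $H$ is compact, so that $X=G/H$ fibers over the Riemannian symmetric space $G/K$ with compact fiber $K/H$, exactly as in the fibration \eqref{eqn:bundleXY} but now with $(G,H,K)$ in place of $(L,L_H,L_K)$. Fix a Harish-Chandra discrete series representation $(\pi,\HHH)$ of~$G$ and a nonzero $\psi\in\Hom_{\g,K}(\pi_K,\DD'(X))$. Since $\pi_K$ is $Z(\g_{\C})$-finite (indeed $Z(\g_{\C})$ acts by a scalar, the infinitesimal character $\chi_\pi$), Lemma~\ref{lem:spher-finite-length}.(1) already gives $\psi(\HHH_K)\subset\A(X)\subset C^\infty(X)$, so the point is only to prove square-integrability. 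First I would use the isotypic decomposition \eqref{eqn:LKLH} (with $(G,H,K)$ for $(L,L_H,L_K)$), or equivalently the maps $\pp_\tau$, $\ii_\tau$ of Section~\ref{subsec:p-tau}, to decompose $\psi$ according to $\tau\in\Disc(K/H)$: it suffices to show $\pp_\tau\circ\psi(\HHH_K)\subset (V_\tau^\vee)^H\otimes L^2(G/K,\V_\tau)$ for every such~$\tau$, since the $\ii_\tau$ are isometric embeddings with orthogonal images assembling into the unitary operator~$\underline{\ii}$ of \eqref{eqn:XYL2} and $\psi(\HHH_K)=\sum_\tau \ii_\tau\circ\pp_\tau\circ\psi(\HHH_K)$ by Lemma~\ref{lem:p-tau-i-tau}. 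Thus the problem reduces to the corresponding statement for vector-bundle-valued matrix-coefficient maps on the Riemannian symmetric space $G/K$.

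Next I would exhibit $\pp_\tau\circ\psi$ concretely as a generalized matrix-coefficient map. Because $G/K$ is Riemannian, the action of $Z(\g_{\C})$ on $C^\infty(G/K,\V_\tau)$ factors through $\dd\ell^\tau$, whose relevant second-order piece $\dd\ell(C_G)-2\dd r(C_K)$ is elliptic (as in the proof of Lemma~\ref{lem:spher-finite-length}.(1)); so $\pp_\tau\circ\psi(\HHH_K)$ is an irreducible $(\g,K)$-submodule of $C^\infty(G/K,\V_\tau)$ isomorphic to $\pi_K$. Equivalently, choosing a nonzero $K$-intertwiner one gets a nonzero $\pi^{-\infty}$-distribution vector $u\in (\pi^{-\infty})$ fixed appropriately by $K$ twisted by~$\tau$, and $\pp_\tau\circ\psi(v)$ is, up to the bundle identification, the matrix coefficient $g\mapsto (u,\pi(g)v)$ in the notation of \eqref{eqn:mat-coef}. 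Here the key input is that $\pi$ is a \emph{discrete series representation for~$G$}: its matrix coefficients $g\mapsto(u,\pi(g)v)$ (for smooth $v$ and arbitrary distribution vector $u$ lying in a suitable Fréchet completion) are square-integrable on~$G$. More precisely one uses Harish-Chandra's characterization of the discrete series — the matrix coefficients are in $L^2(G)$, with the sharp estimates on the decay governed by the Harish-Chandra $\Xi$-function and the formal degree — together with the fact that $\tau$-spherical distribution vectors for the discrete series are automatically in the space of $L^2$-vectors (they live in a finite-dimensional $K$-isotypic component of~$\HHH$, hence are actual Hilbert-space vectors). Integrating over~$K$ in the fiber direction then shows the section $\pp_\tau\circ\psi(v)$ lies in $L^2(G/K,\V_\tau)$.

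The main obstacle, and the step I would spend the most care on, is the passage from ``$\pi$ is a discrete series representation of~$G$'' to ``the relevant matrix coefficients are $L^2$ on the \emph{quotient} $G/K$ rather than on~$G$'', uniformly over a basis of~$\HHH_K$: one must check that the distribution vector $u$ producing $\pp_\tau\circ\psi$ is not merely a distribution vector but genuinely an $L^2$-vector of~$\pi$ (so that $\Vert\pi(g)v\Vert$-type estimates apply), and that the $K$-average defining the section on $G/K$ preserves square-integrability — this is where one invokes that $\tau$-isotypic subspaces of~$\HHH$ are finite-dimensional and that $H^1$-Sobolev control of the $K$-type follows from the $Z(\g_{\C})$-eigenvalue. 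Once square-integrability of each $\pp_\tau\circ\psi(v)$ on $G/K$ is established, reassembling via $\psi(v)=\sum_\tau\ii_\tau(\pp_\tau\circ\psi(v))$ and using orthogonality of the summands in \eqref{eqn:XYL2} yields $\Vert\psi(v)\Vert^2_{L^2(X)}=\sum_\tau\Vert\pp_\tau\circ\psi(v)\Vert^2_{L^2(G/K,\V_\tau)}<+\infty$ (finiteness of the sum being clear since $\Disc(K/H)$ contributes only the $K$-types of~$\pi$, which for fixed $v$ are finitely many), completing the proof that $\psi(\HHH_K)\subset L^2(X)$.
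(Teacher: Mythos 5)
Your plan follows essentially the same route as the paper: reduce to the Riemannian symmetric space $G/K$ via the fibration with compact fiber $K/H$, project onto $\tau$-isotypic components using $\ii_\tau,\pp_\tau$, and show each $\pp_\tau\circ\psi$ lands in $L^2(G/K,\V_\tau)$ by a matrix-coefficient argument. The mechanism in that step is also the paper's (Lemma~\ref{lem:Vtau}): the boundary value $\Phi : u\mapsto(\pp_\tau\circ\psi)(u)(e)$ is a $K$-map $\HHH^\infty\to V_\tau$ into a finite-dimensional space, so its adjoint $A:V_\tau\to\HHH^{-\infty}$, being a $K$-map, has image in $(\HHH^{-\infty})_K=\HHH_K$, and the resulting matrix coefficients are of genuine Hilbert-space vectors of a discrete series, hence in $L^2(G)$ by definition; no Harish-Chandra $\Xi$-estimates or Sobolev bounds are needed.

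There is a genuine gap in the reassembly. You assert that the sum $\sum_\tau\Vert\pp_\tau\circ\psi(v)\Vert^2$ has finitely many terms ``since $\Disc(K/H)$ contributes only the $K$-types of $\pi$, which for fixed $v$ are finitely many.'' That is not correct: a discrete series has infinitely many $K$-types, and their intersection with $\Disc(K/H)$ need not be finite. Moreover, $K$-finiteness of $v$ controls the \emph{left} $K$-translates of $\psi(v)$ (since $\psi$ intertwines the left regular action), not the \emph{right} $K$-action along the fiber that governs $\pp_\tau$; the latter is controlled by the boundary functional, which is not $K$-finite a priori. The finiteness is in fact precisely where the $G$-real-sphericity of $G/H$ enters: by Lemma~\ref{lem:spher-finite-length}, $\dim\Hom_{\g,K}(\pi_K,\DD'(G/H))<\infty$, and since this dimension equals $\sum_{\tau\in\Disc(K/H)}\ell_\tau\dim\Hom_{\g,K}(\pi_K,\DD'(G/K,\V_\tau))$ with $\ell_\tau=\dim(V_\tau^\vee)^H\geq 1$, all but finitely many of the maps $\pp_\tau\circ\psi$ vanish identically. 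Without this you cannot even invoke the Parseval identity $\Vert\psi(v)\Vert^2_{L^2(X)}=\sum_\tau\Vert\pp_\tau\circ\psi(v)\Vert^2$, which presupposes $\psi(v)\in L^2(X)$, the very thing you are trying to prove. That your outline never actually uses the real-sphericity of $G/H$ is the tell-tale sign of the missing step.
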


\begin{remark}
This is not true anymore if we drop one of the assumptions on~$H$, as follows.

1) $H$ is noncompact.
For instance, let $X=G/H$ be the reductive symmetric space $\SO(n+1,1)/\SO(n,1)$ with $n\geq 3$.
Then there exists $(\pi,\HHH)\in\Disc(G/H)$ such that $\Hom_{\g,K}(\pi_K,\DD'(X))$ contains two linearly independent elements $\psi_1,\psi_2$ with $\psi_1(\HHH_K)\subset L^2(X)$ and $\psi_2(\HHH_K)\cap L^2(X)=\{0\}$ (see \cite{mo84,osh79}).

2) $H$ is compact but $G/H$ is not $G$-real spherical.
For instance, take $H=\{e\}$, any $(\pi,\HHH)\in\Disc(G)$, and any $w\in\HHH^{-\infty}\smallsetminus\HHH$.
Define a $(\g,K)$-homomorphism $\psi : \HHH_K\to C^{\infty}(G)$ by $v\mapsto (w,\pi(g)^{-1}v)$, with the notation of Section~\ref{subsec:g-K-modules}.
Then $\psi(\HHH_K)\cap L^2(G)=\{0\}$.
\end{remark}

Proposition~\ref{prop:disc-series-sph-Hcpt} relies on the following lemma.

\begin{lemma}\label{lem:Vtau}
For $(\tau,V_{\tau})\in\widehat{K}$, let $\mathcal{V}_{\tau}$ be the $G$-equivariant Hermitian vector bundle $G\times_K V_{\tau}$ over the Riemannian symmetric space $G/K$.
Let $(\pi,\HHH)$ be a (Harish-Chandra) discrete series representation of~$G$, with underlying $(\g,K)$-module $(\pi_K,\HHH_K)$.
Then $\psi(\HHH_K)\subset L^2(G/K,\mathcal{V}_{\tau})$ for all $\psi\in\Hom_{\g,K}(\pi_K,\DD'(G/K,\mathcal{V}_{\tau}))$.
\end{lemma}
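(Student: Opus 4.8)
\textbf{Proof plan for Lemma~\ref{lem:Vtau}.}
The plan is to reduce the statement to the fact that a Harish-Chandra discrete series representation, by definition, has square-integrable matrix coefficients, using the realization of $\DD'(G/K,\V_\tau)$ via distribution vectors. First I would identify $C^\infty(G/K,\V_\tau)$ with the space $(C^\infty(G)\otimes V_\tau)^K$ of $\tau$-equivariant smooth functions on $G$, and similarly $\DD'(G/K,\V_\tau)$ with $\tau$-equivariant distributions; a $(\g,K)$-homomorphism $\psi : \HHH_K\to\DD'(G/K,\V_\tau)$ corresponds to a continuous $(\g,K)$-map into the $\tau$-isotypic part of $\DD'(G)\otimes V_\tau$. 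Next, using that $\HHH_K$ is admissible and irreducible and $\psi$ is $(\g,K)$-equivariant, I would show $\psi$ is realized by a matrix coefficient: there exists a distribution vector $w\in\HHH^{-\infty}$ (more precisely, a $\tau^\vee$-isotypic element $w\in (\HHH^{-\infty}\otimes V_\tau^\vee)^K$) such that $\psi(v)(gK) = (\pi^{-\infty}(g^{-1})w, v)$ in the sense of the generalized matrix coefficient of Section~\ref{subsec:H+-infty}. The key input here is that any $(\g,K)$-morphism from an irreducible admissible $(\g,K)$-module to $\DD'(G)$ arises from a distribution vector of (the contragredient of) that module; this is the automatic continuity/Frobenius reciprocity statement for distribution vectors.

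Once $\psi(v)$ is expressed as such a matrix coefficient, the next step is the key point: since $(\pi,\HHH)$ is a \emph{discrete} series representation of $G$, its $K$-finite matrix coefficients $g\mapsto (\pi(g^{-1})v', v)$ are square-integrable on $G$ for all $v,v'\in\HHH_K$. I would use the standard fact that for a discrete series representation the distribution vectors $w$ occurring in such a realization can be taken to lie in $\HHH$ itself when paired with $K$-finite vectors — more concretely, Harish-Chandra's theory tells us that the whole $(\g,K)$-module $\HHH_K$ consists of $L^2$-vectors and that a $(\g,K)$-embedding into $C^\infty(G)$ landing in a discrete series component automatically lands in $L^2(G)$. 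Equivalently: the $G$-submodule of $\DD'(G/K,\V_\tau)$ generated by $\psi(\HHH_K)$ is a multiple of $\pi$, which is a discrete series, hence is a closed subspace of $L^2(G/K,\V_\tau)$, and $\psi(\HHH_K)$ being its underlying $(\g,K)$-module lies inside it.

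The cleanest route I expect is the latter: let $W$ be the closure of $\psi(\HHH_K)$ in $\DD'(G/K,\V_\tau)$ with respect to a suitable topology (or work with the Casselman--Wallach smooth globalization $\psi(\HHH_K)^\infty$, which carries a canonical $G$-action of moderate growth). By Schur's lemma $Z(\g_\C)$ acts on $\psi(\HHH_K)$ by the infinitesimal character of $\pi$, which equals that of a discrete series representation; hence the $\dd\ell(C_G)-2\dd r(C_K)$ operator is elliptic with a fixed eigenvalue, so $\psi(\HHH_K)\subset\A(G/K,\V_\tau)$ by elliptic regularity (Lemma~\ref{lem:spher-finite-length}.(1), or rather its vector-bundle analogue). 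Then I would invoke the Harish-Chandra estimate: a smooth $\tau$-spherical function on $G$ that is a $(\g,K)$-matrix coefficient of a discrete series representation decays fast enough to be in $L^2$. Concretely, writing $\psi(v)$ via a distribution vector $w$, the function $g\mapsto (\pi^{-\infty}(g^{-1})w,v)$ is, up to the $K$-structure, a finite sum of genuine matrix coefficients $(\pi(g^{-1})v_j',v)$ with $v_j'\in\HHH$ once one uses $K$-finiteness to replace $w$ by honest vectors (this is where the discreteness of the series and the compactness of $K$ — hence of the fiber $K/H$ in the application — enter, since $H$ being compact forces $w$ to be smooth after averaging over $K$). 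The main obstacle, and the step deserving the most care, is precisely this last reduction: rigorously upgrading the distribution vector $w$ to an $L^2$-vector using discreteness of $\pi$ together with $K$-finiteness on the other slot, i.e.\ showing that the only distribution vectors of a discrete series that can pair with $K$-finite vectors to produce $\tau$-equivariant functions are the smooth (indeed analytic, $L^2$) ones. This is classical (it underlies Flensted-Jensen's and Harish-Chandra's work) and I would cite the relevant statement rather than reprove it, but it is the conceptual heart of the lemma, after which Proposition~\ref{prop:disc-series-sph-Hcpt} follows by taking $\tau\in\Disc(K/H)$, applying the fibration $K/H\to G/H\to G/K$ and the $\ii_\tau$, $\pp_\tau$ maps of Section~\ref{subsec:p-tau} to transfer the $L^2$-conclusion from $G/K$ back to $G/H$.
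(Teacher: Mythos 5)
Your plan follows essentially the same route as the paper's proof: realize $\psi(v)$ as a generalized matrix coefficient against a distribution vector, use the right $K$-equivariance to upgrade that distribution vector to an honest ($K$-finite) vector of $\HHH$, and conclude by square-integrability of matrix coefficients of a discrete series. Two caveats, though. First, the shortcut you label ``equivalently'' --- that the $G$-module generated by $\psi(\HHH_K)$ in $\DD'(G/K,\V_{\tau})$ is a multiple of $\pi$, ``hence'' a closed subspace of $L^2(G/K,\V_{\tau})$ --- is circular: that a realization of (a multiple of) a discrete series inside $\DD'$ actually lies in $L^2$ is precisely what the lemma asserts, and it is false without the $K$-equivariance constraint; the remark following Proposition~\ref{prop:disc-series-sph-Hcpt} (case $H=\{e\}$: pair against some $w\in\HHH^{-\infty}\smallsetminus\HHH$) produces a copy of $\pi_K$ in $C^{\infty}(G)$ meeting $L^2(G)$ only in $0$. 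So that formulation cannot serve as an argument; only the matrix-coefficient route you then sketch is viable.

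Second, the step you defer to a citation (upgrading the distribution vector) does not need discreteness of $\pi$ at all and has a two-line direct proof, which is exactly how the paper closes it: after elliptic regularity and the Casselman--Wallach extension of $\psi$ to $\HHH^{\infty}$, evaluation at the origin gives a $K$-homomorphism $\Phi : \HHH^{\infty}\to V_{\tau}$, $u\mapsto\psi(u)(e)$; its adjoint $A : V_{\tau}\to\HHH^{-\infty}$ is a $K$-homomorphism from a finite-dimensional space, so its image consists of $K$-finite distribution vectors, and $(\HHH^{-\infty})_K=\HHH_K$ by admissibility. Then $(\psi(u)(g),v)_{V_{\tau}} = (\pi(g)^{-1}u,A(v))_{\HHH}$ is a genuine matrix coefficient with $A(v)\in\HHH_K\subset\HHH$, and only at this last step does the discrete series hypothesis enter, giving square-integrability on~$G$. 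With that step made explicit, your outline coincides with the paper's proof.
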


\begin{proof}[Proof of Lemma~\ref{lem:Vtau}]
By the elliptic regularity theorem, the image of the $(\g,K)$-homomorphism $\psi : \pi_K\to\DD'(G/K,\mathcal{V}_{\tau})$ is contained in\linebreak $C^{\infty}(G/K,\mathcal{V}_{\tau})$.
Let $\HHH^{\infty}$ be the Fr\'echet space of smooth vectors of the unitary representation $(\pi,\HHH)$, as in Section~\ref{subsec:H+-infty}.
By the Casselman--Wallach globalization theorem \cite[II, Ch.\,2]{wal88}, the $(\g,K)$-homomorphism~$\psi$ extends to a continuous $G$-homomorphism $\HHH^{\infty}\to C^{\infty}(G/K,\mathcal{V}_{\tau})$, still denoted by~$\psi$.
We identify $C^{\infty}(G/K,\mathcal{V}_{\tau})$ with the space $C^{\infty}(G,V_{\tau})^K$ of smooth maps $f : G\to V_{\tau}$ satisfying $f(gk)=\tau(k)^{-1}f(g)$ for all $g\in G$ and all $k\in K$.
Thus we can define a linear map
$$\Phi : \HHH^{\infty} \longrightarrow V_{\tau}$$
by $u\mapsto\psi(u)(e)$; it is a $K$-homomorphism because $\psi$ is a $G$-homomorphism.
Taking the adjoint of~$\Phi$, we have a $K$-homomorphism $A : V_{\tau}\to\HHH^{-\infty}$ such that
$$(\Phi(u),v)_{V_{\tau}} = (u,A(v))_{\HHH}$$
for all $u\in\HHH^{\infty}$ and $v\in V_{\tau}$, where $(\cdot,\cdot)_{V_{\tau}}$ and $(\cdot,\cdot)_{\HHH}$ are the respective inner products of the Hilbert spaces $V_{\tau}$ and~$\HHH$.
We note that the image of~$A$ is contained in $(\HHH^{-\infty})_K=\HHH_K$ because $A$ is a $K$-homomorphism.
Then for any $g\in G$, $u\in\HHH$, and $v\in V_{\tau}$, we have
$$(\psi(u)(g),v)_{V_{\tau}} = (\Phi(\pi(g^{-1})u),v)_{\HHH} = (\pi(g)^{-1}u,A(v))_{\HHH}.$$
The right-hand side is the matrix coefficient associated with $u,A(v)\in\HHH$, and so it is square-integrable on~$G$.
Since $v$ is arbitrary, we conclude that $\psi(u)\in L^2(G/K,\mathcal{V}_{\tau})$ for all $u\in\HHH^{\infty}$.
\end{proof}

\begin{proof}[Proof of Proposition~\ref{prop:disc-series-sph-Hcpt}]
Since $H$ is compact, we can take a maximal compact subgroup $K$ of~$G$ containing~$H$.
For $(\tau,V_{\tau})\in\Disc(K/H)$, we set $\ell_{\tau}:=\dim_{\C}((V_{\tau}^{\vee})^H)\geq 1$ and let $\ii_{\tau} : (V_{\tau}^{\vee})^{L_H}\otimes\DD'(G/K,\V_{\tau})\hookrightarrow\DD'(G/H)$ and $\pp_{\tau} : \DD'(G/H)\twoheadrightarrow(V_{\tau}^{\vee})^{L_H}\otimes\DD'(G/K,\V_{\tau})$ be the natural $G$-homomorphisms associated with the $G$-equivariant fiber bundle $G/H\to G/K$ with compact fiber $K/H$ as in \eqref{eqn:i-tau} and \eqref{eqn:p-tau} with $\Gamma=\{e\}$.
We note that $\pp_{\tau}\circ\ii_{\tau}=\mathrm{id}$ for all~$\tau$ and $\sum_{\tau} \ii_{\tau}\circ\pp_{\tau}=\mathrm{id}$.
Then
$$\sum_{\tau\in\Disc(K/H)} \ell_{\tau} \dim_{\C} \Hom_{\g,K}(\pi_K,\DD'(G/K,\mathcal{V}_{\tau})) = \dim_{\C} \Hom_{\g,K}(\pi_K,\DD'(G/H)).$$
Since $G/H$ is $G$-real spherical, the right-hand side is finite-dimensional by Lemma~\ref{lem:spher-finite-length}.
Since $\ell_{\tau}\neq 0$ for $\tau\in\Disc(K/H)$, the following subset of~$\widehat{K}$ is finite:
$$\widehat{K}_H(\pi) := \{ \tau\in\Disc(K/H) : \Hom_{\g,K}(\pi_K,\DD'(G/K,\mathcal{V}_{\tau})) \neq \{0\} \}.$$
Thus any $\psi\in\Hom_{\g,K}(\pi_K,\DD'(G/H))$ is decomposed into a finite sum
$$\psi = \bigoplus_{\tau\in\widehat{K}_H(\pi)} \ii_{\tau} \circ \pp_{\tau} \circ \psi.$$
Since $\pp_{\tau}\circ\psi(\HHH_K)\subset (V_{\tau}^{\vee})^{L_H}\otimes L^2(G/K,\V_{\tau})$ by Lemma~\ref{lem:Vtau} and since $\ii_{\tau} : (V_{\tau}^{\vee})^{L_H}\otimes L^2(G/K,\V_{\tau})\rightarrow L^2(G/H)$ is an isometric embedding for any~$\tau$ (see \eqref{eqn:XYL2} with $\Gamma=\{e\}$ and with $(L,L_H)$ replaced by $(G,H)$), we have
$$\psi(\HHH_K)\subset\bigoplus_{\tau\in\widehat{K}_H(\pi)} \ii_{\tau}\big((V_{\tau}^{\vee})^{L_H}\otimes L^2(G/K,\V_{\tau})\big)\subset L^2(G/H). \qedhere$$
\end{proof}

\section[Proof of Theorems 11.9 and~11.10]{Proof of Theorems \ref{thm:SpecZg-Hcpt} and~\ref{thm:Spectau}}\label{subsec:proof-SpecZg-Hcpt}


In this section we complete the proof of Theorems \ref{thm:SpecZg-Hcpt} and~\ref{thm:Spectau}, where $H$ is assumed to be compact.
Before entering the details of the argument, let us briefly clarify the point.

By definition, eigenfunctions of type~$\I$ on $X_{\Gamma}=\Gamma\backslash G/H$ are given by discrete series representations for $X=G/H$, whereas eigenfunctions of type~$\II$ are orthogonal to them in the Hilbert space $L^2(X_{\Gamma})$.
However, this orthogonality in $L^2(X_{\Gamma})$ is not a priori reflected in $L^2(X)$ because the image of $p_{\Gamma}^* : L^2(X_{\Gamma})\to\DD'(X)$ is not contained in $L^2(X)$, and in particular $p_{\Gamma}^*$ is not an isometry.

On the other hand, $p^* : L^2(X_{\Gamma})\to L^2(\Gamma\backslash G)$ is an isometry since $H$ is compact, and the orthogonality of type~$\I$ and type~$\II$ in $L^2(X_{\Gamma})$ is preserved in $L^2(\Gamma\backslash G)$.
We carry out the proof of Theorems \ref{thm:SpecZg-Hcpt} and~\ref{thm:Spectau} by connecting the two maximal isotypic $G$-submodules $V_{\pi}\subset L^2(\Gamma\backslash G)$ and $W_{\pi}\subset\DD'(X)$ for each $\pi\in\widehat{G}$ (see \eqref{eqn:Vi} and \eqref{eqn:W}) through intertwining operators $T_f$ which are defined in Lemma~\ref{lem:Tf} for each joint eigenfunction $f\in\DD'(X_{\Gamma};\NN_{\nu})$.

We start by proving the first equality in Theorem~\ref{thm:SpecZg-Hcpt}.

\begin{lemma}\label{lem:Nunitary}
Assume $H$ is compact.
For any $\nu\in\Hom_{\C\text{-}\mathrm{alg}}(Z(\g_{\C}),\C)$,
\begin{equation}\label{eqn:L2XGammaNnu}
L^2(X_{\Gamma};\NN_{\nu}) \simeq\bigoplus_{\pi\in\widehat{G}_{\nu}} \Hom_G(\pi,L^2(\Gamma\backslash G)) \otimes \pi^H.
\end{equation}
In particular,
$$\Spec_d^{Z(\g_{\C})}(X_{\Gamma}) = \Char\big(\Disc(\Gamma\backslash G) \cap (\widehat{G})_H\big).$$
\end{lemma}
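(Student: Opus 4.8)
The goal is to identify $L^2(X_\Gamma;\NN_\nu)$ as a $G$-module (after pulling back to $\Gamma\backslash G$) and then extract the spectral consequence. The key structural fact, available because $H$ is compact, is that $p^\ast : L^2(X_\Gamma)\to L^2(\Gamma\backslash G)$ is an \emph{isometric} embedding identifying $L^2(X_\Gamma)$ with $L^2(\Gamma\backslash G)^H$ (compare \eqref{eqn:L2XGammaL} with $\Gamma\backslash L$ replaced by $\Gamma\backslash G$ and $L_H$ by $H$). Since $Z(\g_\C)$ acts on the right on $L^2(\Gamma\backslash G)$ through $\dd\ell$, the eigenspace condition $\NN_\nu$ is preserved, and $p^\ast$ restricts to an isometry $L^2(X_\Gamma;\NN_\nu)\xrightarrow{\ \sim\ } L^2(\Gamma\backslash G;\NN_\nu)^H$. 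Thus the first step is simply to invoke these identifications and reduce the problem to computing the $H$-fixed vectors in $L^2(\Gamma\backslash G;\NN_\nu)$.

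Next I would feed in the isotypic decomposition \eqref{eqn:Vi}, namely $L^2(\Gamma\backslash G;\NN_\nu)\simeq\bigoplus_{\pi\in\widehat G_\nu} V_\pi$, which is a \emph{finite} direct sum because $\widehat G_\nu$ is finite (by the Langlands classification, the fiber of $\widehat G_{ad}\to\Char(\widehat G_{ad})$ is finite). Each $V_\pi$ is unitarily equivalent to $\Hom_G(\pi,L^2(\Gamma\backslash G))\otimes\pi$. Taking $H$-fixed vectors commutes with finite direct sums and with the outer tensor by the multiplicity space, so $V_\pi^H\simeq\Hom_G(\pi,L^2(\Gamma\backslash G))\otimes\pi^H$; here one should note that ``$H$-fixed vectors'' of a multiple of a unitary representation $\pi$ really does pick out the multiple of $\pi^H$, since $H$ is compact (so $\pi^H$ is the image of a bounded projector and there is no subtlety with closures). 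Assembling, $L^2(X_\Gamma;\NN_\nu)\simeq\bigoplus_{\pi\in\widehat G_\nu}\Hom_G(\pi,L^2(\Gamma\backslash G))\otimes\pi^H$, which is the displayed isomorphism \eqref{eqn:L2XGammaNnu}.

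For the ``in particular'' clause, I would unwind definitions: $\nu\in\Spec_d^{Z(\g_\C)}(X_\Gamma)$ means $L^2(X_\Gamma;\NN_\nu)\neq\{0\}$, which by \eqref{eqn:L2XGammaNnu} happens exactly when there is $\pi\in\widehat G_\nu$ with both $\Hom_G(\pi,L^2(\Gamma\backslash G))\neq\{0\}$ (i.e.\ $\pi\in\Disc(\Gamma\backslash G)$) and $\pi^H\neq\{0\}$ (i.e.\ $\pi\in(\widehat G)_H$, using that for compact $H$ this is equivalent to $\pi$ having nonzero $H$-fixed vectors by Frobenius reciprocity). Finally one must match the parameter: $\pi\in\widehat G_\nu$ means $\chi_\pi=\nu\circ\eta$ by \eqref{eqn:hat-G-nu}, and since $\eta$ induces an involutive automorphism of $Z(\g_\C)$ it is a bijection on $\Hom_{\C\text{-}\mathrm{alg}}(Z(\g_\C),\C)$; one checks that the twist by $\eta$ is harmless here because the $Z(\g_\C)$-action defining $\Spec_d^{Z(\g_\C)}$ on $X_\Gamma$ versus on $\Gamma\backslash G$ differ precisely by $\eta$ (this is \eqref{eqn:r-eta}), so that $\nu\in\Spec_d^{Z(\g_\C)}(X_\Gamma)$ iff some $\pi\in\Disc(\Gamma\backslash G)\cap(\widehat G)_H$ has $\chi_\pi=\nu\circ\eta$, equivalently $\nu\in\Char\big(\Disc(\Gamma\backslash G)\cap(\widehat G)_H\big)$.

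\textbf{Main obstacle.} The only genuinely delicate point is the bookkeeping with the anti-automorphism $\eta$: one must be careful which normalization of the $Z(\g_\C)$-action is used in the definition of $\Spec_d^{Z(\g_\C)}(X_\Gamma)$ versus in the decomposition \eqref{eqn:Vi} of $L^2(\Gamma\backslash G;\NN_\nu)$, so that the final set $\Char(\Disc(\Gamma\backslash G)\cap(\widehat G)_H)$ comes out with the correct twist (and, in particular, so that the statement of Theorem~\ref{thm:SpecZg-Hcpt} is literally the one proved). A secondary, more routine point is justifying that taking $H$-invariants is exact enough on the (possibly infinite) multiplicity spaces — but since the sum over $\widehat G_\nu$ is finite and $H$ is compact, there is no analytic issue, only the observation that $(\mathcal M\otimes\pi)^H=\mathcal M\otimes\pi^H$ for a multiplicity Hilbert space $\mathcal M$.
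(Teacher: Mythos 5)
Your proof takes essentially the same route as the paper's: identify $L^2(X_\Gamma;\NN_\nu)$ isometrically with $L^2(\Gamma\backslash G;\NN_\nu)^H$ (using that $H$ is compact) and then take $H$-fixed vectors in the finite isotypic decomposition \eqref{eqn:Vi}, so the proposal is correct and matches the paper's argument. One small imprecision: the reason the $\eta$-twist in the ``in particular'' clause is harmless is not that ``the actions differ by $\eta$'' in some cancelling way, but rather that $\Disc(\Gamma\backslash G)\cap(\widehat G)_H$ is invariant under $\pi\mapsto\pi^\vee$ (the analogue of Lemma~\ref{lem:GHdual}.(1) with $\Disc(\Gamma\backslash G)$ in place of $\Disc(G/H)$), so that $\Char(\Disc(\Gamma\backslash G)\cap(\widehat G)_H)$ is stable under $\nu\mapsto\nu\circ\eta$ — a point the paper's own proof also leaves tacit.
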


\begin{proof}
Since $H$ is compact, we may identify $L^2(X_{\Gamma};\NN_{\nu})$ with the subspace $L^2(\Gamma\backslash G;\NN_{\nu})^H$ of $H$-fixed vectors in the regular representation\linebreak $L^2(\Gamma\backslash G;\NN_{\nu})$.
Taking $H$-fixed vectors in the isomorphism of unitary representations \eqref{eqn:Vi}, we obtain \eqref{eqn:L2XGammaNnu}.
\end{proof}

For $(\pi,V)\in\widehat{G}$, we denote by $\pi_K$ the underlying $(\g,K)$-module on the space $V_K$ of $K$-finite vectors in~$V$.
Suppose $X=G/H$ is $G$-real spherical.
We denote by $U_{\pi}$ the closure of $\sum A(V)$ in $\DD'(X)$, where $A$ ranges through $\Hom_G(\pi,L^2(X))$, which is a submodule of $W_{\pi}\equiv W_{\pi}(H)$ (see \eqref{eqn:W}).

\begin{lemma}\label{lem:Upi-Wpi}
\begin{enumerate}
  \item $U_{\pi}\neq\{ 0\}$ if and only if $\pi\in\Disc(G/H)$.
  \item $U_{\pi}\subset W_{\pi}$.
  \item If $H$ is compact, then $U_{\pi}=W_{\pi}$.
\end{enumerate}
\end{lemma}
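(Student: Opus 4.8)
The three statements concern the two $G$-submodules $U_\pi$ (built from copies of $\pi$ inside $L^2(X)$) and $W_\pi$ (built from copies of $\pi_K$ inside $\DD'(X)$). Statement (1) is essentially a reformulation of the definition of discrete series: $\Hom_G(\pi,L^2(X))\neq\{0\}$ is exactly the condition $\pi\in\Disc(G/H)$, so $U_\pi=\overline{\sum_A A(V)}$ is nonzero precisely when $\pi\in\Disc(G/H)$; the only thing to check is that if $\Hom_G(\pi,L^2(X))\neq\{0\}$ then some $A$ has nonzero image, which is immediate since a nonzero $G$-homomorphism from an irreducible $\pi$ is injective. Statement (2) follows by comparing the defining families of maps: any $A\in\Hom_G(\pi,L^2(X))$ restricts on $K$-finite vectors to a $(\g,K)$-homomorphism $\pi_K\to L^2(X)_K\subset\DD'(X)$, hence belongs (up to the obvious identification) to $\Hom_{\g,K}(\pi_K,\DD'(X))$; therefore $\sum_A A(V_K)\subset\sum_{A'}A'(\pi_K)$, and taking closures in $\DD'(X)$ gives $U_\pi\subset W_\pi$. (One should note that $U_\pi$, being a sum of unitary copies of $\pi$, is already the closure of its $(\g,K)$-module, so nothing is lost in passing between the Hilbert-space picture and the $(\g,K)$-module picture, using Fact~\ref{fact:gK} and the discrete-decomposability remarks of Section~\ref{sec:transfer}.)

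The substance is in statement (3): when $H$ is compact one must show the reverse inclusion $W_\pi\subset U_\pi$. First I would reduce to the level of $(\g,K)$-modules: by Lemma~\ref{lem:recap-V-W-pi}.(2) the $(\g,K)$-module $(W_\pi)_K$ is a finite multiple $N_\pi\,\pi_K$ of $\pi_K$, spanned by the images $A(\pi_K)$ for $A\in\Hom_{\g,K}(\pi_K,\DD'(X))$, so it suffices to show that every such $A$ actually takes values in $L^2(X)$, i.e. $A(\pi_K)\subset L^2(X)$. Here is where the hypothesis that $\pi\in\widehat G_\nu$ realized in $\DD'(X)$ combines with $H$ compact: the key input is Proposition~\ref{prop:disc-series-sph-Hcpt}, which says exactly that for $H$ compact and $X$ $G$-real spherical, any $\psi\in\Hom_{\g,K}(\pi_K,\DD'(X))$ with $\pi$ a Harish-Chandra discrete series representation satisfies $\psi(\HHH_K)\subset L^2(X)$. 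So the plan is: invoke Proposition~\ref{prop:disc-series-sph-Hcpt} to conclude $A(\pi_K)\subset L^2(X)$ for each generator $A$ of $(W_\pi)_K$, hence $(W_\pi)_K\subset L^2(X)$; then the closure of $(W_\pi)_K$ in $\DD'(X)$, which is $W_\pi$ by Lemma~\ref{lem:recap-V-W-pi}.(2), is contained in the closure of $L^2(X)\cap(W_\pi)_K$, and since this $(\g,K)$-module is a finite multiple of the unitarizable $\pi_K$ its $\DD'(X)$-closure coincides with its $L^2(X)$-closure and is a sum of unitary copies of $\pi$ inside $L^2(X)$, i.e. lies in $U_\pi$. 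Combined with (2) this gives $U_\pi=W_\pi$.

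\textbf{The main obstacle.} The delicate point is precisely the passage ``$(W_\pi)_K\subset L^2(X)$ implies $W_\pi\subset L^2(X)$ (hence $\subset U_\pi$)''. A priori $W_\pi$ is defined as a closure in the weak topology on $\DD'(X)$, and an a priori possibility is that $W_\pi$ strictly contains the $L^2$-closure of $(W_\pi)_K$ — indeed the excerpt explicitly warns that $\DD'(X;\NN_{\nu\circ\eta})$ need not be completely reducible. What saves us is that $W_\pi$ itself is, by construction, a multiple of the single irreducible unitarizable $(\g,K)$-module $\pi_K$: by Fact~\ref{fact:gK} the closed $G$-invariant subspace $W_\pi$ of the finite-length representation $\DD'(X;\NN_{\nu\circ\eta})$ is determined by $(W_\pi)_K=N_\pi\,\pi_K$, and a finite direct sum of copies of a unitarizable irreducible $(\g,K)$-module has a unique Hilbert (in fact unitary) globalization, which must then be the one sitting inside $L^2(X)$ once we know each generator lands in $L^2(X)$. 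So the obstacle is really bookkeeping about globalizations of finite-length completely reducible $(\g,K)$-modules rather than hard analysis — the hard analysis having been isolated in Proposition~\ref{prop:disc-series-sph-Hcpt} (itself resting on Lemma~\ref{lem:Vtau} and the Casselman--Wallach globalization theorem). I would write the argument carefully at exactly this juncture, spelling out that the $\DD'(X)$-closure and the $L^2(X)$-closure of a finite multiple of $\pi_K$ agree.
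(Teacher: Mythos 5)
Your proposal is correct and follows exactly the paper's route: the paper dispatches (1) and (2) as immediate from the definitions and derives (3) from Proposition~\ref{prop:disc-series-sph-Hcpt}, which is what you do. One phrase to tighten: ``a finite direct sum of copies of a unitarizable irreducible $(\g,K)$-module has a unique Hilbert (in fact unitary) globalization'' is not true as a standalone claim (a $(\g,K)$-module has many non-unitary Banach globalizations); what is really being used is that Proposition~\ref{prop:disc-series-sph-Hcpt} forces $(W_{\pi})_K=(U_{\pi})_K$ as $(\g,K)$-submodules of $\DD'(X;\NN_{\nu\circ\eta})_K$, and since both $W_{\pi}$ and $U_{\pi}$ are $\DD'$-closed $G$-invariant subspaces of the finite-length module $\DD'(X;\NN_{\nu\circ\eta})$, Fact~\ref{fact:gK} (which you do cite) identifies them.
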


\begin{proof}
(1) and~(2) are clear.
(3) is a consequence of Proposition~\ref{prop:disc-series-sph-Hcpt}.
\end{proof}

Suppose now that $H$ is compact, the pull-back of the projection $p : \Gamma\backslash G\to X_{\Gamma}=\Gamma\backslash G/H$ gives an isometric embedding of Hilbert spaces:
$$\xymatrixcolsep{3pc}
\xymatrix{
p^{\ast} : L^2(X_{\Gamma}) \ar@{}[d]|{\displaystyle\cup} \ar@{^{(}->}[r] & L^2(\Gamma\backslash G) \ar@{}[d]|{\displaystyle\cup}\\
L^2(X_{\Gamma};\NN_{\nu}) \ar@{^{(}->}[r] & L^2(\Gamma\backslash G;\NN_{\nu}).
}$$
In particular, we may regard $L^2(X_{\Gamma};\NN_{\nu})$ as a subspace of $(L^2(\Gamma\backslash G;\NN_{\nu})^{-\infty})^H$, and apply Lemma~\ref{lem:Tf} to $F=p^{\ast}f$ for all $L^2$-eigenfunctions $f\in L^2(X_{\Gamma};\NN_{\nu})$.

For $\nu\in\Hom_{\C\text{-}\mathrm{alg}}(Z(\g_{\C}),\C)$, recall from \eqref{eqn:hat-G-nu} that $\widehat{G}_{\nu}$ is the set of irreducible unitary representations of~$G$ with infinitesimal character $\nu\circ\eta$.
We now define the following disjoint subsets of the set $\widehat{G}_{\nu}$ of \eqref{eqn:hat-G-nu}:
\begin{eqnarray*}
\I(\nu) & := & \widehat{G}_{\nu} \cap \Disc(G/H),\\
\II(\nu) & := & \widehat{G}_{\nu} \cap ((\widehat{G})_H\smallsetminus \Disc(G/H)).
\end{eqnarray*}

We note that $\Char((\widehat{G})_H)$ and $\Char(\Disc(G/H))$ are both invariant under~$\eta$.

\begin{lemma}\label{lem:GHdual}
Let $G$ be a real reductive Lie group and $H$ a closed unimodular subgroup of~$G$.
\begin{enumerate}
  \item The involution $\pi\mapsto\pi^{\vee}$ of the unitary dual~$\widehat{G}$ leaves $\Disc(G/H)$ and~$(\widehat{G})_H$ invariant.
  \item The involution $\nu\mapsto\nu\circ\eta$ of $\Hom_{\C\text{-}\mathrm{alg}}(Z(\g_{\C}),\C)$ leaves\linebreak $\Char(\Disc(G/H))$ and $\Char((\widehat{G})_H)$ invariant.
\end{enumerate}
\end{lemma}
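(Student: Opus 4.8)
The plan is to prove Lemma~\ref{lem:GHdual} by reducing everything to the contragredient (dual) representation and analyzing how it interacts with the homogeneous space $G/H$, then deriving statement~(2) from statement~(1) via the Harish-Chandra isomorphism.

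First I would establish statement~(1). The key point is that for a unimodular closed subgroup $H$, the space $L^2(G/H)$ carries a $G$-invariant Hermitian inner product, and complex conjugation $f\mapsto\bar f$ is an antilinear isometry of $L^2(G/H)$ intertwining the regular representation of~$G$ with its complex conjugate. Since $G$ is a real reductive Lie group, the complex conjugate of an irreducible unitary representation $\pi$ is unitarily equivalent to its contragredient $\pi^{\vee}$ (this is standard: the Hermitian inner product gives an antilinear isomorphism $\HHH\to\HHH^{\vee}$, and unitarity makes $\bar\pi\simeq\pi^{\vee}$). Hence if $\pi$ embeds as a closed $G$-invariant subspace of $L^2(G/H)$, so does $\pi^{\vee}$ via the conjugated embedding; this shows $\Disc(G/H)$ is stable under $\pi\mapsto\pi^{\vee}$. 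For invariance of $(\widehat G)_H$, I would use the characterization $(\widehat G)_H=\{\pi\in\widehat G:(\pi^{-\infty})^H\neq\{0\}\}$ recalled in Section~\ref{subsec:notation-type-I-II}: a nonzero $H$-invariant distribution vector $u\in(\pi^{-\infty})^H$ gives, via the Hermitian structure identifying $\HHH^{-\infty}$ with the conjugate of $\HHH^\infty$-functionals, a nonzero $H$-invariant distribution vector of $\pi^{\vee}$. Equivalently, $\Hom_G(\pi^{\infty},C^\infty(G/H))\neq\{0\}$ together with the fact that $C^\infty(G/H)$ is stable under complex conjugation yields $\Hom_G((\pi^{\vee})^{\infty},C^\infty(G/H))\neq\{0\}$. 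Thus $(\widehat G)_H$ is $\vee$-stable.

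Next I would deduce statement~(2). By Schur's lemma the $Z(\g_{\C})$-infinitesimal character behaves as $\chi_{\pi^{\vee}}=\chi_{\pi}\circ\eta$, where $\eta$ is the principal antiautomorphism of $U(\g_{\C})$ induced by $g\mapsto g^{-1}$ (equation~\eqref{eqn:r-eta} and the surrounding discussion): indeed, the contragredient of a $(\g,K)$-module is obtained by precomposing the action with $\eta$, and $\eta$ restricts to the corresponding involution on $Z(\g_{\C})$. Therefore the map $\nu\mapsto\nu\circ\eta$ on $\Hom_{\C\text{-}\mathrm{alg}}(Z(\g_{\C}),\C)$ corresponds, under $\Char$, precisely to the involution $\pi\mapsto\pi^{\vee}$ on representations: $\Char(\{\pi^{\vee}:\pi\in S\})=\{\nu\circ\eta:\nu\in\Char(S)\}$ for any $S\subset\widehat G$. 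Applying this with $S=\Disc(G/H)$ and $S=(\widehat G)_H$, and invoking part~(1), we conclude that $\Char(\Disc(G/H))$ and $\Char((\widehat G)_H)$ are both stable under $\nu\mapsto\nu\circ\eta$.

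I expect the main (minor) obstacle to be pinning down cleanly the identity $\chi_{\pi^{\vee}}=\chi_{\pi}\circ\eta$ and the compatibility $\overline{\pi}\simeq\pi^{\vee}$ for unitary~$\pi$, i.e.\ keeping careful track of the antilinear-versus-linear duality conventions (the excerpt's $\HHH^{-\infty}$ is defined as continuous \emph{sesquilinear} forms on $\HHH^\infty$, so one must be consistent about conjugates). Everything else is formal: conjugation is an $L^2$-isometry on a unimodular homogeneous space, $C^\infty(G/H)$ and $\DD'(G/H)$ are conjugation-stable, and $\eta$ is an involution on $Z(\g_{\C})$. No finiteness or sphericity hypotheses are needed here; the statement is purely about stability of two subsets of $\widehat G$ (resp.\ of $\Hom_{\C\text{-}\mathrm{alg}}(Z(\g_{\C}),\C)$) under the contragredient involution.
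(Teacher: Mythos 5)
Your proof is correct and follows essentially the same route as the paper's: both establish $(1)$ by identifying $\overline{\pi}\simeq\pi^{\vee}$ for unitary $\pi$ and using complex conjugation on $L^2(G/H)$ (resp.\ $C^{\infty}(G/H)$), and both deduce $(2)$ from $(1)$ via the identity $\chi_{\pi^{\vee}}=\chi_{\pi}\circ\eta$. The paper's argument for $(1)$ is stated more tersely ("clear from the counterpart in $(\overline{\pi},\overline{\HHH})$"), and your expansion of that step is accurate.
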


Here $\pi^{\vee}$ denotes the contragredient unitary representation of~$\pi$.

\begin{proof}
For a unitary representation $(\pi,\mathcal{H})$, we form the conjugate representation $(\overline{\pi},\overline{\mathcal{H}})$ by giving $\mathcal{H}$ the conjugate complex structure.
Then $\overline{\pi}$ is unitarily equivalent to the contragredient representation~$\pi^{\vee}$.
Thus statement~(1) is clear from the counterpart in $(\overline{\pi},\overline{\mathcal{H}})$.

If $\pi$ has $Z(\g_{\C})$-infinitesimal character $\chi_{\pi}$, then the contragredient representation $\pi^{\vee}$ has $Z(\g_{\C})$-infinitesimal character $\chi_{\pi}\circ\eta$.
Since $\pi\in\widehat{G}$ if and only if $\pi^{\vee}\in\widehat{G}$, the set $\Char(\widehat{G})$ is preserved by the involution $\nu\mapsto\nu\circ\eta$.
Thus statement~(2) follows from statement~(1).
\end{proof}

Recall from Lemma~\ref{lem:recap-V-W-pi} that $V_{\pi}\equiv V_{\pi}(\Gamma)$ is a maximal $G$-invariant closed subspace of $L^2(\Gamma\backslash G)$ which is isotropic to $\pi\in\widehat{G}$, and that $W_{\pi}\equiv W_{\pi}(H)$ is a maximal $G$-invariant closed subspace of $\DD'(X)$ whose underlying $(\g,K)$-module is a multiple of $\pi_K$.
The following proposition shows that we can determine whether or not $f\in L^2(X_{\Gamma};\NN_{\nu})$ is of type~$\I$ or of type~$\II$ by means of the $G$-submodule generated by $p^*f$ in $\DD'(\Gamma\backslash G)$, or equivalently by means of the $G$-submodule generated by $p_{\Gamma}^*f$ in $\DD'(G/H)$.
This proposition is a key to the second and third equalities in Theorem~\ref{thm:SpecZg-Hcpt}.

\begin{proposition}\label{prop:type-I-II}
Suppose $G/H$ is $G$-real spherical, with $H$ compact.
Let $\nu\in\Hom_{\C\text{-}\mathrm{alg}}(Z(\g_{\C}),\C)$ and $i=\I$ or~$\II$.
Then the following three conditions on $f\in L^2(X_{\Gamma};\NN_{\nu})$ are equivalent:
\begin{enumerate}[(i)]
  \item $f\in L^2(X_{\Gamma};\NN_{\nu})_i$;
  \item $\overline{p_{\Gamma}^{\ast}f} \in \bigoplus_{\pi\in i(\nu)} W_{\pi}$;
  \item $\overline{p^{\ast}f} \in \bigoplus_{\pi\in i(\nu)} V_{\pi}$.
\end{enumerate}
\end{proposition}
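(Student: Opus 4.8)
\textbf{Proof proposal for Proposition~\ref{prop:type-I-II}.}

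The plan is to prove the cycle of implications $(i)\Rightarrow(iii)\Rightarrow(ii)\Rightarrow(i)$, using the intertwining operators $T_f$ of Lemma~\ref{lem:Tf}, the identification $U_\pi = W_\pi$ for compact $H$ (Lemma~\ref{lem:Upi-Wpi}.(3)), and the fact that $p^\ast$ is an isometry since $H$ is compact. First I would record the preliminary observation that because $H$ is compact, every $f\in L^2(X_\Gamma;\NN_\nu)$ is tempered in the sense of Definition~\ref{def:tempered} (indeed $p^\ast f\in L^2(\Gamma\backslash G;\NN_\nu)\subset L^2(\Gamma\backslash G;\NN_\nu)^{-\infty}$), so Lemma~\ref{lem:Tf} applies with $F=p^\ast f$. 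Write $p^\ast f=\sum_{\pi\in\widehat{G}_\nu}F_\pi$ according to the decomposition \eqref{eqn:Vidistr}, where $F_\pi\in V_\pi^{-\infty}$; the statement $(iii)$ that $\overline{p^\ast f}\in\bigoplus_{\pi\in i(\nu)}V_\pi$ is then equivalent to saying that $F_\pi=0$ for all $\pi\in\widehat{G}_\nu\smallsetminus i(\nu)$ together with $F_\pi\in V_\pi$ (rather than merely $V_\pi^{-\infty}$) for $\pi\in i(\nu)$ --- though in fact once $f\in L^2$, $p^\ast f$ is genuinely $L^2$ and each $F_\pi$ lies in $V_\pi$, so the content of $(iii)$ is purely the support condition on which $\pi$ occur. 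Likewise, by Lemma~\ref{lem:Tf}.(2) we have $\overline{p_\Gamma^\ast f}=T(\delta,p^\ast f)=\sum_\pi T(\delta,F_\pi)=\sum_\pi T(\delta_\pi,F_\pi)$ with $T(\delta_\pi,F_\pi)\in W_\pi$ by \eqref{eqn:T-V-W-pi}, and $T(\delta_\pi,F_\pi)=0$ whenever $F_\pi=0$; this is how $(iii)$ and $(ii)$ are linked, modulo showing the converse direction (that $W_\pi$-support forces $V_\pi$-support).

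For $(i)\Rightarrow(iii)$: suppose first $i=\I$, so $f\in L^2(X_\Gamma;\NN_\nu)_\I$, meaning $p_\Gamma^\ast f$ lies in the closure of $L^2(X;\NN_\nu)$ in $\DD'(X)$. Since $H$ is compact, $L^2(X;\NN_\nu)=\bigoplus_{\pi} U_\pi$ where $U_\pi\neq\{0\}$ iff $\pi\in\Disc(G/H)$ (Lemma~\ref{lem:Upi-Wpi}.(1)), so the closure is contained in $\bigoplus_{\pi\in\I(\nu)}W_\pi$ by Lemma~\ref{lem:Upi-Wpi}.(3); hence $\overline{p_\Gamma^\ast f}\in\bigoplus_{\pi\in\I(\nu)}W_\pi$, which is $(ii)$ for $i=\I$. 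To upgrade from $(ii)$ to $(iii)$ I would use that $p^\ast$ intertwines and that $\overline{p^\ast f}$ determines $\overline{p_\Gamma^\ast f}$ via $p_\Gamma^\ast\circ p^\ast=p^\ast\circ p_\Gamma^\ast$ (Diagram~\eqref{eqn:p-p-Gamma}); combined with the direct-sum decomposition $p^\ast f=\sum F_\pi$ and \eqref{eqn:T-delta-F-pi}--\eqref{eqn:T-V-W-pi}, the vanishing $T(\delta_\pi,F_\pi)=0$ for $\pi\notin i(\nu)$ must be translated back into $F_\pi=0$. Here is where cyclicity enters: $\delta\in L^2(\Gamma\backslash G;\NN_\nu)^{-\infty}$ is a cyclic vector, hence so is its component $\delta_\pi$ as a vector of $V_\pi^{-\infty}$ (since $V_\pi$ is $\pi$-isotypic, any nonzero distribution vector is cyclic), and by the discussion at the end of Section~\ref{subsec:H+-infty} the map $F\mapsto T(\delta_\pi, F)$ on $V_\pi^{-\infty}$ has trivial kernel; thus $T(\delta_\pi,F_\pi)=0$ forces $F_\pi=0$. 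This gives $(ii)\Rightarrow(iii)$. The case $i=\II$ of $(i)\Rightarrow(iii)$ is handled by orthogonality: $f\in L^2(X_\Gamma;\NN_\nu)_\II$ means $f\perp L^2(X_\Gamma;\NN_\nu)_\I$ in $L^2(X_\Gamma)$, hence $p^\ast f\perp p^\ast(L^2(X_\Gamma;\NN_\nu)_\I)$ in $L^2(\Gamma\backslash G)$ since $p^\ast$ is an isometry; by the already-established equivalence $(i)\Leftrightarrow(iii)$ for type~$\I$ and the isotypic-orthogonality of the $V_\pi$, this forces the component of $p^\ast f$ in $\bigoplus_{\pi\in\I(\nu)}V_\pi$ to vanish, i.e. $F_\pi=0$ for $\pi\in\I(\nu)$; on the other hand $f\in L^2(X_\Gamma;\NN_\nu)$ forces (by Lemma~\ref{lem:Nunitary}, i.e. \eqref{eqn:L2XGammaNnu}) that only $\pi\in(\widehat{G})_H$ can occur, so the surviving $F_\pi$ have $\pi\in(\widehat{G})_H\smallsetminus\Disc(G/H)=\II(\nu)$, which is $(iii)$ for $i=\II$.

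Finally $(iii)\Rightarrow(i)$ for both types: if $\overline{p^\ast f}\in\bigoplus_{\pi\in\I(\nu)}V_\pi$, apply $T(\delta_\pi,\cdot)$ termwise and use \eqref{eqn:T-V-W-pi} with Lemma~\ref{lem:Upi-Wpi}.(3) to conclude $\overline{p_\Gamma^\ast f}=\sum_{\pi\in\I(\nu)}T(\delta_\pi,F_\pi)\in\bigoplus_{\pi\in\I(\nu)}W_\pi=\bigoplus_{\pi\in\I(\nu)}U_\pi\subset\overline{L^2(X;\NN_\nu)}$, so $f$ is of type~$\I$; for type~$\II$, combine the type-$\I$ equivalence with the isometry $p^\ast$ once more to pass the orthogonality from $L^2(\Gamma\backslash G)$ back to $L^2(X_\Gamma)$. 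The main obstacle I anticipate is the injectivity step ``$T(\delta_\pi,F_\pi)=0\Rightarrow F_\pi=0$'': one must argue carefully that $\delta_\pi$ is cyclic in $V_\pi^{-\infty}$ and invoke the cyclic-vector remark of Section~\ref{subsec:H+-infty} in the distributional ($\DD'(G)$-valued) form, rather than the smooth matrix-coefficient form; a secondary subtlety is checking that $\overline{p_\Gamma^\ast f}$ having nonzero $W_\pi$-component for some $\pi\notin i(\nu)$ is genuinely incompatible with $(iii)$, which again reduces to the same injectivity together with the fact (from Lemma~\ref{lem:Tf}.(3)) that distinct $\pi$ contribute to disjoint pieces $W_\pi$ with $W_\pi\cap\bigoplus_{\pi'\neq\pi}W_{\pi'}=\{0\}$.
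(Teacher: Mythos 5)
Your architecture matches the paper's quite closely: the objects $V_\pi$, $W_\pi$, the matrix-coefficient maps $T$ of Lemma~\ref{lem:Tf}, the identity $U_\pi=W_\pi$ for compact $H$ (Lemma~\ref{lem:Upi-Wpi}.(3)), and the isometry $p^{\ast}$ are exactly the ingredients used there; your treatment of (i)$\Leftrightarrow$(iii) for type~$\II$ via orthogonality, and of (i)$\Rightarrow$(ii) and (iii)$\Rightarrow$(i) for type~$\I$ via the closure of $L^2(X;\NN_{\nu})$, is in substance the paper's Lemma~\ref{lem:typeI-U-W}. The one step where you genuinely deviate is (ii)$\Rightarrow$(iii), and there your justification is wrong as stated: you assert that $\delta_\pi$ is cyclic in $V_\pi^{-\infty}$ ``since $V_\pi$ is $\pi$-isotypic, any nonzero distribution vector is cyclic.'' This is false once the multiplicity $\dim\Hom_G(\pi,L^2(\Gamma\backslash G))$ is at least $2$ (the paper allows it to be infinite): a nonzero distribution vector supported in a single irreducible summand of the isotypic module is not cyclic. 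The fact you need is nevertheless true, but it must be inherited from the cyclicity of $\delta$: if $v\in V_\pi^{\infty}$ pairs to zero with all translates of $\delta_\pi$, it also pairs to zero with all translates of $\delta=\sum_{\pi'}\delta_{\pi'}$, since the components $\delta_{\pi'}$ with $\pi'\neq\pi$ (and their translates, which stay in $V_{\pi'}^{-\infty}$) annihilate $V_\pi^{\infty}$; hence $v=0$, so $\delta_\pi$ is cyclic in $V_\pi$, and the remark at the end of Section~\ref{subsec:H+-infty} then yields the injectivity of $T(\delta_\pi,\cdot)$ on $V_\pi^{-\infty}$ that your argument requires.

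Alternatively you can bypass cyclicity entirely, which is what the paper does for (ii)$\Leftrightarrow$(iii): the orthogonal projection of $L^2(\Gamma\backslash G;\NN_{\nu})$ onto $V_\pi$ commutes with the right regular action, hence preserves right-$H$-invariance, so the component $F_\pi$ of $p^{\ast}f$ is of the form $p^{\ast}f_\pi$ with $f_\pi\in L^2(X_\Gamma;\NN_{\nu})$; writing $f=\sum_\pi f_\pi$ and applying Lemma~\ref{lem:Tf}.(2) to each $f_\pi$ gives $\overline{p_{\Gamma}^{\ast}f_\pi}=T(\delta,p^{\ast}f_\pi)\in W_\pi$, and then (ii) and (iii) are each equivalent to the vanishing of $f_\pi$ for $\pi\notin i(\nu)$, using only the injectivity of $p_{\Gamma}^{\ast}$ and $p^{\ast}$ together with $W_\pi\cap\bigoplus_{\pi'\neq\pi}W_{\pi'}=\{0\}$. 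Either route works once your cyclicity step is repaired; the paper's is lighter, yours makes the intertwiners $T(\delta_\pi,\cdot)$ do the work. One last bookkeeping point (which the paper is also brisk about): the complex conjugations in (ii)--(iii) and the sets $i(\nu)$ are consistent only because $\overline{\chi_\pi}=\chi_\pi\circ\eta$ for unitary~$\pi$, cf.\ Lemma~\ref{lem:GHdual}; it is worth saying this explicitly when you pass between $\NN_{\nu}$, its conjugate, and $\widehat{G}_{\nu}$.
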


\begin{proof}[Proof of Proposition~\ref{prop:type-I-II}]
We first prove the equivalence (ii)$\,\Leftrightarrow\,$(iii) for $i=\I$ and~$\II$.
Consider the decomposition $f=\sum_{\pi} f_{\pi}$ such that $p^*f=\sum_{\pi} p^*f_{\pi}\in L^2(\Gamma\backslash G)$ is the decomposition of \eqref{eqn:Vidistr}, with $p^*f_{\pi}\in V_{\pi}$ for all~$\pi$.
Then condition~(iii) is equivalent to $p^*f_{\pi}=0$ for all $\pi\notin i(\nu)$.
On the other hand, by Lemma~\ref{lem:Tf}.(2), we have $\overline{p_{\Gamma}^*f_{\pi}}=T(\delta,p^*f_{\pi})\in W_{\pi}$.
Since $\overline{p_{\Gamma}^*f} = \sum_{\pi} \overline{p_{\Gamma}^*f_{\pi}}$, condition~(ii) is equivalent to $\overline{p_{\Gamma}^*f_{\pi}}=0$ for all $\pi\notin i(\nu)$, \ie to $p_{\Gamma}^*f_{\pi}=0$ for all $\pi\notin i(\nu)$.
Since both $p^*$ and $p_{\Gamma}^*$ are injective, the equivalence (ii)$\,\Leftrightarrow\,$(iii) is proved.

For $i=\I$, the equivalence (i)$\,\Leftrightarrow\,$(ii) follows from Lemma~\ref{lem:typeI-U-W}.

Finally, we prove the equivalence (i)$\,\Leftrightarrow\,$(iii) for $i=\II$.
Condition~(i) is equivalent to $f$ being orthogonal to $L^2(X_{\Gamma};\NN_{\nu})_{\I}$ in $L^2(X_{\Gamma})$; since $p^{\ast} : L^2(X_{\Gamma})\hookrightarrow L^2(\Gamma\backslash G)$ is an isometry, this is equivalent to $p^{\ast}f$ being orthogonal to $\bigoplus_{\pi\in\I(\nu)} V_{\pi}$ in $L^2(\Gamma\backslash G)$, which is equivalent to~(iii).
\end{proof}

\begin{lemma}\label{lem:typeI-U-W}
Suppose $X=G/H$ is $G$-real spherical, with $H$ compact.
Then
\begin{eqnarray*}
L^2(X_{\Gamma};\NN_{\nu})_{\I} & = & (p_{\Gamma}^{\ast})^{-1} \bigoplus_{\pi\in\I(\nu\circ\eta)} U_{\pi}\\
& = & (p_{\Gamma}^{\ast})^{-1} \bigoplus_{\pi\in\I(\nu\circ\eta)\cap\Disc(\Gamma\backslash G)} W_{\pi}.
\end{eqnarray*}
\end{lemma}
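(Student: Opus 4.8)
The plan is to prove Lemma~\ref{lem:typeI-U-W} by identifying, inside $L^2(X_{\Gamma};\NN_{\nu})$, the subspace of type~$\I$ with the preimage under $p_{\Gamma}^{\ast}$ of a sum of $G$-isotypic pieces, and then matching these pieces to the $U_{\pi}$'s and (using compactness of~$H$) to the $W_{\pi}$'s. Recall that, by Notation~\ref{def:type-I-II} (adapted to the $Z(\g_{\C})$-setting in Notation~\ref{def:Ntype-I-II}), the space $L^2(X_{\Gamma};\NN_{\nu})_{\I}$ is by definition the preimage under $p_{\Gamma}^{\ast}$ of the closure of $L^2(X;\NN_{\nu})$ in $\DD'(X)$. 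So the heart of the matter is to show that this closure $\overline{L^2(X;\NN_{\nu})}$ inside $\DD'(X)$ is exactly $\bigoplus_{\pi\in\I(\nu\circ\eta)} U_{\pi}$, and then that $U_{\pi}=W_{\pi}$ for $\pi\in\Disc(G/H)$.

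First I would analyze $L^2(X;\NN_{\nu})$: since $X_{\C}$ is $G_{\C}$-spherical (which is part of the $G$-real sphericity hypothesis, or rather follows in the relevant settings; here we only need $G$-real sphericity plus compactness of~$H$), the space $L^2(X;\NN_{\nu})$ is a unitary representation of finite length, hence a finite Hilbert direct sum of irreducible unitary representations of~$G$, each contained in $L^2(X)$ and each with $Z(\g_{\C})$-infinitesimal character $\nu\circ\eta$ (because $z\in Z(\g_{\C})$ acts on $\DD'(X)$ via $\dd\ell$, and on $\F(X;\NN_{\nu})$ as $\nu\circ\eta$, as recorded around \eqref{eqn:r-eta}). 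By definition such irreducibles belong to $\Disc(G/H)$ and have infinitesimal character $\nu\circ\eta$, i.e.\ lie in $\I(\nu\circ\eta)=\widehat{G}_{\nu}\cap\Disc(G/H)$. Grouping the irreducible summands by isomorphism type and taking closures in $\DD'(X)$, the $\pi$-isotypic part of $\overline{L^2(X;\NN_{\nu})}$ is precisely $U_{\pi}$, by the definition \eqref{eqn:W}-style definition of $U_{\pi}$ as the closure of $\sum_A A(V)$ over $A\in\Hom_G(\pi,L^2(X))$. Since the finitely many $U_{\pi}$ have pairwise trivial intersection (they are isotypic of distinct types inside the finite-length module $\DD'(X;\NN_{\nu\circ\eta})$), the sum is direct and $\overline{L^2(X;\NN_{\nu})}=\bigoplus_{\pi\in\I(\nu\circ\eta)}U_{\pi}$; one checks $U_\pi=\{0\}$ unless $\pi\in\Disc(G/H)$ by Lemma~\ref{lem:Upi-Wpi}.(1). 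Taking $(p_{\Gamma}^{\ast})^{-1}$ of both sides gives the first equality.

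For the second equality, I would invoke Lemma~\ref{lem:Upi-Wpi}.(3): when $H$ is compact, $U_{\pi}=W_{\pi}$, which is exactly Proposition~\ref{prop:disc-series-sph-Hcpt} phrased at the level of the isotypic modules (every $(\g,K)$-homomorphism $\pi_K\to\DD'(X)$ lands in $L^2(X)$, so the ``distributional'' isotypic module $W_\pi$ coincides with the ``$L^2$'' one $U_\pi$). Substituting $W_{\pi}$ for $U_{\pi}$ in the sum, and noting that for $\pi\in\I(\nu\circ\eta)$ with $W_\pi\neq\{0\}$ one has $\pi\in\Disc(\Gamma\backslash G)$ precisely when $\pi$ actually occurs in $L^2(\Gamma\backslash G)$ — which it must if $p_{\Gamma}^{\ast}f$ has nonzero $\pi$-component for some $f\in L^2(X_\Gamma;\NN_\nu)$, via the isometry $p^{\ast}$ and the relation $p_\Gamma^{\ast}\circ p^{\ast}=p^{\ast}\circ p_\Gamma^{\ast}$ of Diagram~\eqref{eqn:p-p-Gamma} together with Lemma~\ref{lem:Tf}.(2) — we may intersect the index set with $\Disc(\Gamma\backslash G)$ without changing the preimage under $p_{\Gamma}^{\ast}$. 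This yields the stated second equality.

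The main obstacle I anticipate is the bookkeeping in the last reduction: showing that restricting the index set from $\I(\nu\circ\eta)$ to $\I(\nu\circ\eta)\cap\Disc(\Gamma\backslash G)$ genuinely does not shrink $(p_{\Gamma}^{\ast})^{-1}\bigl(\bigoplus_\pi W_\pi\bigr)$. The point is that if $f\in L^2(X_{\Gamma};\NN_{\nu})$ and $p_{\Gamma}^{\ast}f\in\bigoplus_{\pi\in\I(\nu\circ\eta)}W_\pi$, then using $H$ compact we have $p^{\ast}f\in L^2(\Gamma\backslash G)$, and by Lemma~\ref{lem:Tf} its $\pi$-isotypic components live in $V_\pi\equiv V_\pi(\Gamma)$, which is nonzero only for $\pi\in\Disc(\Gamma\backslash G)$; hence the components of $p_{\Gamma}^{\ast}f$ outside $\Disc(\Gamma\backslash G)$ automatically vanish. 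This direction needs Proposition~\ref{prop:type-I-II}.(ii)$\Leftrightarrow$(iii) or a direct argument through the intertwining operators $T_f$; I would make sure to order the lemmas so that this compatibility is available, i.e.\ prove the $T_f$-machinery (Lemma~\ref{lem:Tf}) and Lemma~\ref{lem:Upi-Wpi} first, then Lemma~\ref{lem:typeI-U-W}, then feed it into Proposition~\ref{prop:type-I-II}.
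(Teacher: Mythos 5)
Your proposal matches the paper's proof: the first equality unpacks the definition of type~$\I$ by identifying $\overline{L^2(X;\NN_{\nu})}$ with $\bigoplus_{\pi\in\I(\nu\circ\eta)}U_{\pi}$, and the second follows from $U_{\pi}=W_{\pi}$ for compact $H$ (Lemma~\ref{lem:Upi-Wpi}.(3)) together with the $T_f$-machinery of Lemma~\ref{lem:Tf}, which shows that the $\pi$-component of $\overline{p_{\Gamma}^{\ast}f}$ equals $T(\delta,(p^{\ast}f)_{\pi})$ and hence vanishes unless $(p^{\ast}f)_{\pi}\neq 0$, i.e.\ unless $\pi\in\Disc(\Gamma\backslash G)$; you also correctly flag that Proposition~\ref{prop:type-I-II} cannot be cited here without circularity and that one must work directly with $T_f$.

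Two bookkeeping caveats, neither a genuine gap. First, the irreducible constituents of $L^2(X;\NN_{\nu})$ have $Z(\g_{\C})$-infinitesimal character $\nu$, not $\nu\circ\eta$: the $G$-action on $\DD'(X)$ is by \emph{left} translation with infinitesimal action $\dd\ell$, so $\dd\ell(z)=\nu(z)$ on $L^2(X;\NN_{\nu})$ gives character $\nu$ directly; the $\eta$-twist enters only for the \emph{right} regular representation on $\Gamma\backslash G$, where $\dd r=\dd\ell\circ\eta$. Your final index set $\I(\nu\circ\eta)$ is nevertheless correct, because $\I(\nu\circ\eta)=\widehat{G}_{\nu\circ\eta}\cap\Disc(G/H)=\{\pi:\chi_{\pi}=\nu\}\cap\Disc(G/H)$, but your intermediate identity ``$\I(\nu\circ\eta)=\widehat{G}_{\nu}\cap\Disc(G/H)$'' is off by the $\eta$-twist and should read $\widehat{G}_{\nu\circ\eta}$. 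Second, $G_{\C}$-sphericity of $X_{\C}$ is \emph{not} a consequence of $G$-real sphericity (the implication runs the other way, cf.\ Remark~\ref{rem:commutative}), but you do not need it: Lemma~\ref{lem:spher-finite-length}.(2), valid under $G$-real sphericity alone, already yields that $L^2(X;\NN_{\nu})$ is of finite length, hence a finite direct sum of discrete series for $X$.
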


\begin{proof}[Proof of Lemma~\ref{lem:typeI-U-W}]
The first equality holds by definition of type~$\I$.
To check the second equality, we recall that $U_{\pi}=W_{\pi}$ if $\pi\in\Disc(G/H)$ (Lemma~\ref{lem:Upi-Wpi}.(3)), and so
\begin{equation}\label{eqn:typeI-W}
L^2(X_{\Gamma};\NN_{\nu})_{\I} = (p_{\Gamma}^{\ast})^{-1} \bigoplus_{\pi\in\I(\nu\circ\eta)} W_{\pi}.
\end{equation}
By Lemma~\ref{lem:Tf}, the map $T(\cdot,p^{\ast}f) : V_{\pi}^{-\infty}\to W_{\pi}$ is a continuous $G$-homo\-morphism for $\pi\in\widehat{G}_{\nu}$.
By Lemma~\ref{lem:noext}, the $(\g,K)$-module $(W_{\pi})_K$ is a multiple of $\pi_K$.
By Lemma~\ref{lem:Tf} again,
$$\overline{p_{\Gamma}^{\ast}f} = \sum_{\pi\in\widehat{G}_{\nu}} T(\delta,(p^{\ast}f)_{\pi}) = \sum_{\pi\in\widehat{G}_{\nu}} T(\delta_{\pi},p^{\ast}f) \in \bigoplus_{\pi\in\widehat{G}_{\nu}} W_{\pi}.$$
Thus we only need to consider $\pi$ satisfying $(p^{\ast}f)_{\pi}\neq 0$ in the right-hand side of \eqref{eqn:typeI-W}.
In particular, $\pi$ belongs to $\Disc(\Gamma\backslash G)$.
This completes the proof.
\end{proof}

\begin{proof}[Proof of Theorem~\ref{thm:SpecZg-Hcpt}]
By Lemma~\ref{lem:GHdual} and the equivalence (i)$\,\Leftrightarrow\,$(iii) in Proposition~\ref{prop:type-I-II}, we have
$$L^2(X_{\Gamma};\NN_{\nu})_i = (p^{\ast})^{-1} \Bigg(\bigoplus_{\pi\in i(\nu)} V_{\pi}\Bigg)$$
for $i=\I$ or~$\II$.
Since $V_{\pi}\subset L^2(\Gamma\backslash G)$, we have $L^2(X_{\Gamma};\NN_{\nu})_i\neq\{0\}$ if and only if $i(\nu)\cap\Disc(\Gamma\backslash G)\neq\emptyset$.
\end{proof}

\begin{proof}[Proof of Theorem~\ref{thm:Spectau}]
The argument works similarly to that of Theorem~\ref{thm:SpecZg-Hcpt}.
More precisely, consider the two projections
$$\xymatrix{\Gamma\backslash G \ar[dr]^q & & Y = G/K \ar[dl]_{q_{\Gamma}}\\
& Y_{\Gamma} = \Gamma\backslash G/K & }$$
Fix $\tau\in\widehat{K}$ and $\pi\in\Disc(\Gamma\backslash G)$.
Recall $V_{\pi} \equiv V_{\pi,\tau}(\Gamma) \subset L^2(\Gamma\backslash G)$ from \eqref{eqn:Vi} and define $W_{\pi} \equiv W_{\pi,\tau} \subset V_{\tau}^{\vee} \otimes \DD'(G/K,\V_{\tau};\NN_{\nu})$ by
$$W_{\pi} := \sum_A A(\pi_K),$$
where $A$ ranges through $V_{\tau}^{\vee} \otimes \Hom_{\g,K}(\pi_K,\DD'(G/K,\V_{\tau};\NN_{\nu}))$.
Then, analogously to Proposition~\ref{prop:type-I-II}, the following holds.

\begin{proposition}\label{prop:type-I-II-Y}
Suppose $(\tau,V_{\tau})\in\widehat{K}$, and $\nu\in\Hom_{\C\text{-}\mathrm{alg}}(Z(\g_{\C}),\C)$, and $i=\I$ or~$\II$.
Then the following three conditions on $f\in L^2(X_{\Gamma},\V_{\tau};\NN_{\nu})$ are equivalent:
\begin{enumerate}[(i)]
  \item $f\in L^2(Y_{\Gamma},\V_{\tau};\NN_{\nu})_i$;
  \item $\overline{q_{\Gamma}^{\ast}f} \in \bigoplus_{\pi\in i(\nu)} W_{\pi}$;
  \item $\overline{q^{\ast}f} \in \bigoplus_{\pi\in i(\nu)} V_{\pi}$.
\end{enumerate}
\end{proposition}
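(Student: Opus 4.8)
\textbf{Proof plan for Proposition~\ref{prop:type-I-II-Y}.}
The strategy is to run the argument of Proposition~\ref{prop:type-I-II} verbatim, but with the reductive homogeneous space $X=G/H$ (with $H$ compact) replaced by the Riemannian symmetric space $Y=G/K$ twisted by the $G$-equivariant Hermitian bundle $\mathcal{V}_\tau$. Concretely, I would first observe that since $K$ is compact (just as $H$ was), the pull-back
$$q^{\ast} : L^2(Y_{\Gamma},\V_{\tau}) \longhookrightarrow L^2(\Gamma\backslash G)$$
of the projection $q : \Gamma\backslash G\to Y_{\Gamma}$ is an \emph{isometric} embedding (after identifying $L^2(Y_{\Gamma},\V_\tau)$ with $(L^2(\Gamma\backslash G)\otimes V_\tau)^K$), and likewise $q_\Gamma^\ast : L^2(Y_\Gamma,\V_\tau)\to\DD'(Y,\V_\tau)$ is injective but \emph{not} isometric. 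This is the structural feature that made the proof of Proposition~\ref{prop:type-I-II} work, and it persists here because $K$ is compact; the twist by $\V_\tau$ only changes the target spaces from scalar functions to sections, which is harmless for all the representation-theoretic inputs.

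Next I would set up the analogues of Lemmas \ref{lem:Tf}, \ref{lem:noext}, \ref{lem:Upi-Wpi}, \ref{lem:GHdual}, \ref{lem:typeI-U-W} in the twisted setting. For $\nu\in\Hom_{\C\text{-}\mathrm{alg}}(Z(\g_{\C}),\C)$ and $\pi\in\widehat{G}_\nu$, keep $V_\pi \equiv V_{\pi,\tau}(\Gamma)\subset L^2(\Gamma\backslash G)$ exactly as in \eqref{eqn:Vi}, and let $U_\pi\subset W_\pi$ be the closure of $\sum_A A(\pi_K)$ with $A$ ranging over $V_\tau^\vee\otimes\Hom_{\g,K}(\pi_K,L^2(G/K,\V_\tau;\NN_\nu))$, so that $U_\pi\neq\{0\}$ iff $\pi\in\Disc(G/K;\tau)$. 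The key input is the twisted discrete-series regularity statement: for a Harish-Chandra discrete series representation $(\pi,\HHH)$ of~$G$, every $\psi\in\Hom_{\g,K}(\pi_K,\DD'(G/K,\V_\tau))$ satisfies $\psi(\HHH_K)\subset L^2(G/K,\V_\tau)$ — this is exactly Lemma~\ref{lem:Vtau}, which was already proved in full generality for bundle-valued sections, so it applies directly and gives $U_\pi=W_\pi$ for $\pi\in\Disc(G/K;\tau)$. The construction of the intertwining operator $T_f=T(\cdot,q^\ast f) : V_\pi^{-\infty}\to W_\pi$ for $L^2$-eigenfunctions $f$ on $Y_\Gamma$ is verbatim Lemma~\ref{lem:Tf}: since $K$ is compact, every $L^2$-eigenfunction $f\in L^2(Y_\Gamma,\V_\tau;\NN_\nu)$ is automatically tempered (its pull-back to $\Gamma\backslash G$ lies in $L^2\subset L^2(\Gamma\backslash G;\NN_\nu)^{-\infty}$), and one still has $T(\delta,q^\ast f)=\overline{q_\Gamma^\ast f}$ and the orthogonality $T(u,v)=0$ for $u\in V_\pi^{-\infty}$, $v\in V_{\pi'}^{-\infty}$ with $\pi\not\simeq\pi'$. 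Using $\Gamma\backslash G$ real spherical (which holds since $G/G=\mathrm{pt}$ is trivially real spherical, or rather one applies Lemma~\ref{lem:spher-finite-length} to the $G$-real spherical space $G/\{e\}$ acting on $\Gamma\backslash G$; in any case $\DD'(Y,\V_\tau;\NN_{\nu})$ is of finite length by Lemma~\ref{lem:spher-finite-length}.(2) since $Y=G/K$ is $G$-real spherical), Lemma~\ref{lem:noext} forces $(W_\pi)_K$ to be a multiple of $\pi_K$.

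With these analogues in hand, the three-way equivalence follows exactly as before. For the implication (ii)$\Leftrightarrow$(iii): decompose $f=\sum_\pi f_\pi$ according to $q^\ast f=\sum_\pi q^\ast f_\pi$ with $q^\ast f_\pi\in V_\pi$ (possible by \eqref{eqn:Vidistr}); then (iii) says $q^\ast f_\pi=0$ for $\pi\notin i(\nu)$, while by the twisted Lemma~\ref{lem:Tf}.(2) we have $\overline{q_\Gamma^\ast f_\pi}=T(\delta,q^\ast f_\pi)\in W_\pi$, so (ii) says $q_\Gamma^\ast f_\pi=0$ for $\pi\notin i(\nu)$; injectivity of both $q^\ast$ and $q_\Gamma^\ast$ closes the loop. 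For (i)$\Leftrightarrow$(ii) with $i=\I$: this is the twisted version of Lemma~\ref{lem:typeI-U-W}, using $U_\pi=W_\pi$ for $\pi\in\Disc(G/K;\tau)$ together with the fact that only $\pi$ with $(q^\ast f)_\pi\neq0$ — hence $\pi\in\Disc(\Gamma\backslash G)$ — contribute. For (i)$\Leftrightarrow$(iii) with $i=\II$: condition (i) means $f\perp L^2(Y_\Gamma,\V_\tau;\NN_\nu)_{\I}$ in $L^2(Y_\Gamma,\V_\tau)$; since $q^\ast$ is an isometry this is equivalent to $q^\ast f\perp\bigoplus_{\pi\in\I(\nu)}V_\pi$, i.e.\ to (iii). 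Finally, Theorem~\ref{thm:Spectau} is deduced as in the proof of Theorem~\ref{thm:SpecZg-Hcpt}: the equivalence (i)$\Leftrightarrow$(iii) gives $L^2(Y_\Gamma,\V_\tau;\NN_\nu)_i=(q^\ast)^{-1}(\bigoplus_{\pi\in i(\nu)}V_\pi)$, which is nonzero iff $i(\nu)\cap\Disc(\Gamma\backslash G)\neq\emptyset$, i.e.\ iff $\nu\in\Char^{Z(\g_\C)}(\Disc(\Gamma\backslash G)\cap\widehat{G}(\tau))$ (for $i=\emptyset$), respectively iff $\nu\in\Char^{Z(\g_\C)}(\Disc(\Gamma\backslash G)\cap\Disc(G/K;\tau))$ for $i=\I$ and the complementary set for $i=\II$, after applying the $\eta$-twist bookkeeping from \eqref{eqn:hat-G-nu} and Lemma~\ref{lem:GHdual}.

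\textbf{Main obstacle.} The only genuinely non-formal point is verifying that $\Gamma\backslash G$ (with the \emph{right} $G$-action) behaves well enough for Lemma~\ref{lem:noext} and the finite-length statement to apply — but this is already handled: $\DD'(G/K,\V_\tau;\NN_\nu)$ is of finite length because $G/K$ is $G$-real spherical (Lemma~\ref{lem:spher-finite-length}.(2)), and $V_\pi$ is discretely decomposable by construction, so Lemma~\ref{lem:noext} applies directly. The subtle bookkeeping is keeping track of the contragredient/$\eta$-twist when passing between the infinitesimal character $\nu$ of the system $(\NN_\nu)$ on $Y_\Gamma$ and the infinitesimal characters of the unitary representations $\pi$ appearing in $L^2(\Gamma\backslash G)$; I would use \eqref{eqn:r-eta}, \eqref{eqn:hat-G-nu}, and Lemma~\ref{lem:GHdual} to handle this exactly as in the untwisted case, noting that $\Char^{Z(\g_\C)}(\widehat{G}(\tau))$ and $\Char^{Z(\g_\C)}(\Disc(G/K;\tau))$ are each stable under $\nu\mapsto\nu\circ\eta$ since $\tau\mapsto\tau^\vee$ permutes $\widehat{K}$ and $\pi\mapsto\pi^\vee$ preserves $\Disc(\Gamma\backslash G)$.
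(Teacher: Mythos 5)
Your proposal is correct and takes exactly the approach the paper intends: the paper's own proof of Proposition~\ref{prop:type-I-II-Y} states only that one uses a sesquilinear map $T$ ``defined similarly to Lemma~\ref{lem:Tf}'' and that ``the proof is parallel to that of Lemma~\ref{lem:Tf} and Proposition~\ref{prop:type-I-II}, so we omit the details,'' and your write-up supplies precisely those details, including the key observation that Lemma~\ref{lem:Vtau} is already stated in bundle-valued generality and hence gives $U_\pi=W_\pi$ for $\pi\in\Disc(G/K;\tau)$ directly. (One small slip worth flagging: the aside about ``$\Gamma\backslash G$ real spherical / $G/\{e\}$ acting on $\Gamma\backslash G$'' is a non-sequitur, but you immediately correct course by invoking $G$-real sphericity of $Y=G/K$ and Lemma~\ref{lem:spher-finite-length}.(2), which is the right justification.)
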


For the proof of Proposition~\ref{prop:type-I-II-Y}, we use a sesquilinear map
$$T : L^2(\Gamma\backslash G;\NN_{\nu})^{-\infty} \times \DD'(Y_{\Gamma},\V_{\tau};\NN_{\nu}) \longrightarrow \DD'(Y_{\Gamma},\V_{\tau};\NN_{\nu\circ\eta})$$
defined similarly to Lemma~\ref{lem:Tf}.
The proof is parallel to that of Lemma~\ref{lem:Tf} and Proposition~\ref{prop:type-I-II}, so we omit the details.
\end{proof}

\section[Proof of Theorem~2.7 and Proposition~11.11]{Proof of Theorem~\ref{thm:Specd-lambda} and Proposition~\ref{prop:SpecG}} \label{subsec:proof-Specd-lambda}


We are now ready to give a proof of Theorem~\ref{thm:Specd-lambda} describing $\Spec_d(X_{\Gamma})_i$ by the data of the Riemannian locally symmetric space $Y_{\Gamma}=\Gamma\backslash L/L_K$.

\begin{proof}[Proof of Theorem~\ref{thm:Specd-lambda}]
By Proposition~\ref{prop:cond-Tf-A-B-satisfied}, conditions (Tf), (A), (B) hold for the quadruple $(G,L,H,L_K)$.
Moreover, since $X_{\C}$ is $L_{\C}$-spherical (in particular, $G_{\C}$-spherical), $X$ is $G$-real spherical.
Therefore, we can apply Corollary~\ref{cor:Specd-lambda-Wtau} and obtain Theorem~\ref{thm:Specd-lambda} via the transfer map~$\llambda$ from the corresponding results for the (vector-bundle-valued) Riemannian results given in Theorem~\ref{thm:Spectau}, with $(G,H,K)$ replaced by $(L,L_H,L_K)$.
\end{proof}

\begin{proof}[Proof of Proposition~\ref{prop:SpecG}]
Statement~(1) is immediate from the definition of discrete spectrum of type~$\I$.
To check~(2), we set $L_K:={}^{\backprime}K\times\!{}^{\backprime}K$, which is a maximal compact subgroup of $L={}^{\backprime}G\times\!{}^{\backprime}K$, and $L_H:=\Diag({}^{\backprime}K)$.
By the Peter--Weyl theorem,
$$\Disc(L_K/L_H) = \big\{ ({}^{\backprime}\tau)^{\vee} \boxtimes {}^{\backprime}\tau : {}^{\backprime}\tau\in\widehat{{}^{\backprime}K}\big\} .$$
Using the notation of Section~\ref{subsec:refine-SpecZg-Hcpt}, for $\tau=({}^{\backprime}\tau)^{\vee}\boxtimes{}^{\backprime}\tau\in\Disc(L_K/L_H)$ we have
$$\widehat{L}(\tau) = \big\{ {}^{\backprime}\vartheta \boxtimes {}^{\backprime}\tau : {}^{\backprime}\vartheta \in \widehat{{}^{\backprime}G}\text{ such that } \Hom_{{}^{\backprime}K}(({}^{\backprime}\tau)^{\vee},{}^{\backprime}\vartheta|_{{}^{\backprime}K} )\neq \{0\}\big\} ,$$
and $\Disc(L/L_K;\tau)$ is the set of ${}^{\backprime}\vartheta\boxtimes{}^{\backprime}\tau\in\widehat{L}(\tau)$ such that ${}^{\backprime}\vartheta$ is a Harish-Chandra discrete series representation of~${}^{\backprime}G$.
By Proposition~\ref{prop:cond-Tf-A-B-satisfied-GxG}, conditions (Tf), (A), (B) hold for the quadruple $(G,L,H,L_K)$.
The transfer map $\llambda$ of condition (Tf) is given by
$$\llambda(\vartheta,\tau) = \chi_{{}^{\backprime}\vartheta}$$
for $\tau=({}^{\backprime}\tau)^{\vee}\boxtimes{}^{\backprime}\tau\in\Disc(L_K/L_H)$ and $\vartheta={}^{\backprime}\vartheta\boxtimes{}^{\backprime}\tau\in\widehat{L}(\tau)$, via the isomorphism $\dd\ell : Z({}^{\backprime}\g_{\C})\overset{\sim}{\longrightarrow}\D_G(X)$.
We can apply Theorem~\ref{thm:transfer-spec} and obtain statement~(2) of Proposition~\ref{prop:SpecG} via the transfer map~$\llambda$ from the corresponding results for the (vector-bundle-valued) Riemannian results given in Theorem~\ref{thm:Spectau}, with $(G,H,K)$ replaced by $(L,L_H,L_K)$.
\end{proof}

\backmatter

\end{document}